\newcommand{\Mm}{\mathcal{M}}
\newcommand{\Oo}{\mathcal{O}}
\newcommand{\jj}{\mathcal{J}}
\newcommand{\J}{\mathcal{J}}
\newcommand{\T}{\mathbb{T}^2}
\newcommand{\N}{\mathbb{N}}
\newcommand{\Z}{\mathbb{Z}}
\newcommand{\Q}{\mathbb{Q}}
\newcommand{\R}{\mathbb{R}}
\newcommand{\C}{\mathbb{C}}
\newcommand{\om}{\omega}
\newcommand{\joml}{\J_{\om_\lambda}}
\newcommand{\oml}{\omega_\lambda}
\newcommand{\del}{\partial}
\newcommand{\delbar}{\bar\partial}
\newcommand{\dd}[1]{\frac{\partial}{\partial #1}}
\newcommand{\from}{\leftarrow}
\newcommand{\into}{\hookrightarrow}
\newcommand{\lto}{\longrightarrow}
\newcommand{\lmapsto}{\longmapsto}
\newcommand{\lie}[1]{\mathfrak{#1}}
\newcommand{\near}{\mathrm{Near}}
\DeclareRobustCommand\longtwoheadrightarrow
\newcommand{\SSS}{S^2 \times S^2}
\newcommand{\CP}{\mathbb{C}P}
\newcommand{\CCC}{\CP^2\# \overline{\CP^2}}
\newcommand{\jsoml}{\mathcal{J}^{S^1}_{\om_\lambda, l}}
\newcommand{\jsom}{\mathcal{J}^{S^1}_{\om_\lambda}}
\DeclareMathOperator{\Diff}{Diff}
\DeclareMathOperator{\Symp}{Symp}
\DeclareMathOperator{\Sym}{Sym}
\DeclareMathOperator{\Ham}{Ham}
\DeclareMathOperator{\Stab}{Stab}
\DeclareMathOperator{\Fix}{Fix}
\DeclareMathOperator{\Hom}{Hom}
\DeclareMathOperator{\Aut}{Aut}
\DeclareMathOperator{\End}{End}
\DeclareMathOperator{\Maps}{Maps}
\DeclareMathOperator{\AGL}{AGL}
\DeclareMathOperator{\Sp}{Sp}
\DeclareMathOperator{\lin}{lin}
\DeclareMathOperator{\U}{U}
\DeclareMathOperator{\SO}{SO}
\DeclareMathOperator{\PU}{PU}
\DeclareMathOperator{\PSL}{PSL}
\DeclareMathOperator{\SU}{SU}
\DeclareMathOperator{\hocolim}{hocolim}
\DeclareMathOperator{\Ker}{Ker}
\DeclareMathOperator{\Coker}{Coker}
\DeclareMathOperator{\pushout}{pushout}
\DeclareMathOperator{\Met}{Met}
\DeclareMathOperator{\image}{Im}
\DeclareMathOperator{\id}{id}
\DeclareMathOperator{\rk}{rk}
\DeclareMathOperator{\ev}{ev}
\DeclareMathOperator{\Det}{Det}
\DeclareMathOperator{\im}{im}
\newtheorem{thm}{Theorem}[chapter]
\newtheorem{prop}[thm]{Proposition}
\newtheorem{lemma}[thm]{Lemma}
\newtheorem{cor}[thm]{Corollary}
\theoremstyle{definition}
\newtheorem{defn}[thm]{Definition}
\theoremstyle{remark}
\newtheorem{remark}[thm]{Remark}
\numberwithin{section}{chapter}
\numberwithin{equation}{chapter}
\begin{document}

%
%

\title[Centralizers of Hamiltonian circle actions]{Centralizers of Hamiltonian circle actions\\ on rational ruled surfaces}

\author[P. Chakravarthy]{Pranav.V.Chakravarthy}
\address{Department of Mathematics \\ Universit\'e Libre de Bruxelles\\ Belgium}
\email{pranav.vijay.chakravarthy@ulb.be}
\thanks{The paper contains results from the first author's PhD thesis at the University of Western Ontario under the supervision of the second author. The first author would like to thank Yael Karshon and Mohan Swaminathan for helpful discussions during the completion of the project, and Weimin Chen for sharing the manuscript~\cite{ChenUnpub}. The first author would also like to thank the Hebrew University of Jerusalem where this project was completed.}

\author[M. Pinsonnault]{Martin Pinsonnault}
\address{Department of Mathematics\\  The University of Western Ontario\\ Canada}
\email{mpinson@uwo.ca}
\thanks{The second author is supported by the Natural Science and Engineering Research Council of Canada.}

\subjclass[2020]{Primary 53D35; Secondary 57R17,57S05,57T20}
\keywords{symplectic geometry, symplectomorphism group, Hamiltonian circle actions, homotopy type}

\date{\today}

\begin{abstract}
We compute the homotopy type of the group of equivariant symplectomorphisms of $\SSS$ and $\CCC$ under the presence of a Hamiltonian action of the circle $S^1$. We prove that the group of equivariant symplectomorphisms is homotopy equivalent to either a torus, or to the homotopy pushout of two tori depending on whether the circle action extends to a single toric action or to exactly two non-equivalent toric actions. This follows from the analysis of the action of equivariant symplectomorphisms on the space of compatible and invariant almost complex structures $\J^{S^1}_{\om}$. In particular, we show that this action preserves a decomposition of $\J^{S^1}_{\om}$ into strata which are in bijection with toric extensions of the circle action. Our results rely on $J$-holomorphic techniques, on Delzant's classification of toric actions, and on Karshon's classification of Hamiltonian circle actions on $4$-manifolds.
\end{abstract}

\maketitle

\tableofcontents

\chapter{Introduction} 

{\it Background.} Let $(M, \omega)$ be a compact symplectic manifold without boundary. The group $\Symp(M, \omega)$ of symplectomorphisms, equipped with the standard $C^\infty$-topology, is an infinite dimensional Fréchet Lie group. Although the isomorphism type of $\Symp(M,\om)$ as a discrete group determines the conformal symplectomorphism type of $(M,\om)$ (see Banyaga~\cite{Banyaga-Isomorphism}), the ways algebraic and topological properties of $\Symp(M,\om)$ relate to the geometry of $(M,\om)$ remain largely unknown. An interesting situation to consider is when $(M,\om)$ admits symplectic or Hamiltonian actions of some compact Lie group $G$ (possibly finite). Viewing effective actions as injective homomorphisms $G\into\Symp(M,\om)$, two actions are considered equivalent if the corresponding two subgroups are in the same conjugacy class. By analogy with the theory of finite dimensional Lie groups, it is then natural to consider the following questions:
\begin{enumerate}[label=\textbf{Q\arabic*.}]
\item Can we list the conjugacy classes of $G$ subgroups in $\Symp(M,\om)$\,?
\item Can we characterize the centralizer $C(G)$ and normalizer $N(G)$\,? 
\item Can we understand the inclusions 
\[G\into \Symp(M,\om)\quad \text{~and~}\quad C(G)\into N(G)\into \Symp(M,\om)\]
from a homotopy theoretic point of view\,?
\end{enumerate}

The first question amounts to classifying symplectic or Hamiltonian $G$-actions on a given symplectic manifold $(M,\om)$ up to conjugation and reparametrization. It is important to distinguish this problem from \emph{equivariant} classifications whose aim is to classify \emph{pairs} \{symplectic manifold, $G$-action\} up to equivariant symplectomorphisms, and which usually do not directly produce the list of $G$-actions on a given manifold $(M,\om)$. In the symplectic category, examples of equivariant classifications include Delzant's classification of Hamiltonian toric actions~\cite{Delzant}, Karshon-Tolman's classification of tall complexity one Hamiltonian torus actions~\cite{KT-TallComplexityOne}, Iglesias' classification of Hamiltonian $\SO(3)$ actions~\cite{Iglesias1991}, and Karshon's classification of Hamiltonian circle actions on $4$-manifolds. On the other hand, satisfying answers to Question 1 have only been given in dimension $2$ and $4$. The $2$-dimensional case is classical: the only surface that admits a Hamiltonian action of a continuous Lie group $G$ is $S^2$, and all finite subgroups of $\Symp(\Sigma^2,\sigma)$ are K\"ahler isometries of a compatible complex structure. In dimension $4$, the diffeomorphism types of Hamiltonian $S^1$, $\T$, or $\SO(3)$ manifolds are known, and one can devise algorithms that produce the list of inequivalent $G$-actions on a given manifold $(M^4,\om)$ from the equivariant classifications mentioned above, see~\cite{P-MaxTori, KKP, KK-Counting}.\\

The second and third questions on the properties of centralizers and normalizers subgroups can be fully answered for all Hamiltonian toric actions. Let $\mathbb{T}^n\subset\Ham(M^{2n},\om)$ acts effectively on $(M^{2n},\om)$ with moment map $\mu:M^{2n}\to\R^n$. Using standard moment map techniques, it was shown in~\cite{P-MaxTori} that 
\begin{enumerate}
\item the centralizer $C(\mathbb{T}^n)$ is equal to the group of all symplectomorphisms $\phi$ that preserve the moment map, that is, such that $\mu \circ \phi=\mu$.
\item $C(\mathbb{T}^n)$ is a maximal torus in $\Symp(M^{2n},\om)$, that is, a maximal, connected, and abelian subgroup of $\Symp(M^{2n},\om)$. In particular, since toric manifolds are simply-connected, $C(\mathbb{T}^n) \subset\Ham(M^{2n},\omega)$. 
\item $C(\mathbb{T}^n)$ deformation retracts onto $\mathbb{T}^{n}$. In particular, the homotopy type of the centraliser of a toric action only depends on the dimension of the torus and is independent of the symplectic manifold $(M,\om)$ and of the specific toric action considered.
\item The Weyl group $W(\mathbb{T}^n):=N(\mathbb{T}^n)/C(\mathbb{T}^n)$ is always finite\footnote{Furthermore, as for maximal tori in compact Lie groups, it can be shown that the number of conjugacy classes of toric centralizers is finite, and that each $C(\mathbb{T}^n)$ is flat and totally geodesic in $\Symp(M^{2n},\om)$ for the $L^2$  metric.}.
\end{enumerate}
Moreover, it was shown by McDuff and Tolman in~\cite{McDuff-Tolman-MassLinear1} that the map $\pi_1(T^n)\into\pi_1(\Ham(M^{2n},\om))$ induced by a toric action fails to be injective only in the rare cases the moment polytope $\Delta=\mu(M^{2n})$ satisfies one of two special conditions.  Consequently, for a generic toric action, the inclusion maps
\[T^n\into C(T^n)=(N(T^n))_0\into\Ham(M^{2n},\om)=\Symp_0(M^{2n},\om)\]
are injective in homotopy. To our knowledge, beside the toric case, no other general results on $C(G)$ and $N(G)$ are known.\\

Regarding the homotopy type of symplectomorphism groups, most efforts as been devoted to rational $4$-manifolds. Following the seminal work of M. Gromov~\cite{Gr} who showed that the group of compactly supported symplectomorphisms of $\R^4$ is contractible, the homotopical properties of the group of symplectomorphisms of $\CP^2$, $\SSS$ and of the $k$-fold symplectic blow-ups $\CP^2\#k\overline{\CP}^2$, $k\leq 5$, were studied in several papers such as \cite{abreu}, \cite{AGK}, \cite{MR1775741}, \cite{P-com}, \cite{AG}, \cite{AP}, \cite{AE}, and~\cite{LiLiWu2022}. In particular, for $\CP^2$, $\SSS$, and $\CP^2\#k\overline{\CP^2}$, $k\leq 3$, the rational homotopy type of $\Symp(M,\om)$ can be described precisely in terms of the cohomology class $[\om]$. For $k\geq 4$, partial results are known, including various bounds on the ranks of $\pi_0(\Symp(M,\om))$ and $\pi_1(\Symp(M,\om))$. As a byproduct, combining this with our understanding of Hamiltonian actions on rational $4$ manifolds of small Euler number, this allows us to understand the subrings of $\pi_*(\Symp(M,\om))$ or $H_*(\Symp(M,\om))$ that the various Hamiltonian actions on $(M^4,\om)$ generate.\\

{\it Main results.} In the present paper, we combine pseudo-holomorphic curve techniques with moment map techniques to determine the homotopy type of centralizers of Hamiltonian $S^1$ actions on $\SSS$ and $\CCC$. Our main results are Theorem~\ref{full_homo} and Theorem~\ref{full_homo_CCC} that give the full homotopy type of the centralizer $C(S^1)\subset\Symp(M,\om)$ for all choices of symplectic forms and of Hamiltonian circle actions on these two $4$-manifolds. Contrary to the toric case, the homotopy type of the stabilizer $C(S^1)$ is not constant and depends, essentially, on whether the circle action extends to a single toric action or to two distinct toric actions. In the former case, apart from a few exceptional circle actions that must be treated separately, $C(S^1)$ retracts onto the unique torus $\mathbb{T}^2$ the circle extends to. In the latter case, the two toric actions $\mathbb{T}^2_1$, $\mathbb{T}^2_2$ that extend the circle action to do not commute, even up to homotopy. We show that the Pontryagin products of the generators of $H_1(\mathbb{T}^2_1)$ and $H_1(\mathbb{T}^2_2)$ generate a subalgebra $P^{alg}\subset H_*(C(S^1))$ that contains classes of arbitrary large degrees. In particular, $C(S^1)$ does not have the homotopy type of a finite dimensional $H$-space. Moreover, looking at the action of the centralizer $C(S^1)$ on the space of invariant almost complex structures, we prove that there is an equality of Pontryagin rings $P^{alg}= H_*(C(S^1))$. This homology equivalence implies that $C(S^1)$ has the homotopy type of a pushout $\mathbb{T}^2_1 \from S^1\to \mathbb{T}^2_2$ in the category of topological groups. Finally, when viewed as a bare topological space, we show that $C(S^1)$ is homotopy equivalent to the product $\Omega S^3 \times S^1 \times S^1 \times S^1$.\\

It seems likely that our results on centralizers of Hamiltonian circle actions could be proven using moment map techniques alone. The main advantage of introducing pseudo-holomorphic curve techniques is that our setting may apply to actions of other compact, possibly finite, abelian group $A\subset\Ham(M,\om)$. For instance, in the companion paper~\cite{Zn_symp}, we use the same framework to determine the homotopy type of the centralizers of most finite cyclic subgroups $\Z_n$ acting on $(\SSS,\oml)$ and $(\CCC,\oml)$ through Hamiltonian diffeomorphisms.\\

Finally, it is worth pointing out that all the above results on the topology of symplectomorphism groups of rational $4$-manifolds depend crucially on our ability to understand the action of $\Symp$ on the space of compatible almost complex structures. Since this analysis relies on properties of $J$-holomorphic curves that are specific to $4$-manifolds, it does not generalize to higher dimensions.\\ 

{\it Organization of the paper.} In an effort to make the paper understandable to both equivariant geometers and symplectic topologists, we include more details than would be needed in a document aimed at either audience alone. The paper is structured as follows:\\

In Chapter 2, we recall T. Delzant's equivariant classification of toric actions in terms of moment polytopes, and Y. Karshon's equivariant classification of Hamiltonian circle actions on $4$-manifolds in terms of labelled graphs. We then determine all possible toric extensions, up to equivalences, of an arbitrary Hamiltonian circle actions on $\SSS$ and $\CCC$.\\ 

The crux of the paper lies in Chapters 3, 4, and 5 in which we adapt the framework of \cite{MR1775741} to study groups of equivariant symplectomorphisms in the presence of an $S^1$ action. In particular we show that the space of invariant almost complex structures $\mathcal{J}^{S^1}_\om$ decomposes into disjoint strata, each of them being homotopy equivalent to an orbit of the centralizer, with stabilizer homotopy equivalent to $S^1$ equivariant K\"ahler isometries (see Theorems~\ref{homogenous} and \ref{homog}).  In Chapter 3, we show that the number of invariant strata in the decomposition corresponds to the number of toric extensions of the given circle action (Proposition~\ref{prop:ToricExtensionCorrespondance}). In particular we prove that $\mathcal{J}^{S^1}_\om$ decomposes into either one or two strata, and that the later case occurs only for an exceptional family of circle actions on $\SSS$ and $\CCC$ (Corollaries~\ref{cor:lambda=1_IntersectingOnlyOneStratum},\ref{cor:IntersectingTwoStrata} and \ref{cor:IntersectingOnlyOneStratum}). In Chapter 4, using techniques similar to the ones developed in~\cite{AG} we compute the homotopy type of $\Symp^{S^1}(\SSS)$ for all Hamiltonian circle actions on $\SSS$. In Chapter 5, we prove $S^1$ equivariant analogues of some technical lemmas of~\cite{AGK} involving deformation theory. We use these results to justify the claim made in Chapter~4 that in the case the circle admits two distinct toric extensions, the stratum with positive codimension in $\mathcal{J}^{S^1}_\om$ is always of codimension two.\\

In Chapter 6, we carry out a similar analysis on the manifold $\CCC$ and obtain the homotopy type of $\Symp^{S^1}(\CCC)$ for all Hamiltonian circle actions on $\CCC$.\\


\chapter{Hamiltonian torus actions on ruled \texorpdfstring{$4$}{4}-manifolds}

\section{Preliminaries on Hamiltonian actions}
Let $G$ be a compact Lie group acting effectively and symplectically on a symplectic manifold $(M,\om)$. Every such action $\rho:G\times M\to M$ induces an injective homomorphism $G\into\Symp(M,\om)$ and, in particular, defines a subgroup $G\subset\Symp(M,\om)$. Let $\mathfrak{g}$ denote the Lie algebra of $G$ and $\mathfrak{g}^*$ be it's dual. Given $Y\in \mathfrak{g}$, we denote by $\overline{Y}$ the corresponding fundamental vector field on~$M$.  

\begin{defn}\label{def:HamiltonianActions} The action of $G$ is called Hamiltonian if there exists a moment map, that is, a smooth map $\mu:M \to \mathfrak{g}^*$ such that
\begin{enumerate}
\item $d\mu_p(X_p)(Y) = \om(X,\overline{Y})$ for all $X \in T_pM$ and $Y \in \mathfrak{g}$, and
\item the map $\mu$ is equivariant with respect to the $G$ action on $M$ and the coadjoint action on~$\mathfrak{g}^*$.
\end{enumerate}
In particular, $G\subset\Ham(M,\om)\subset\Symp(M,\om)$. 
\end{defn}

\begin{remark}\label{rmk:UniquenessMomentMap}
We note that in this definition, the moment map $\mu:M\to\mathfrak{g}^{*}$ is an auxiliary structure whose sole existence is required. There is no claim about its unicity. For instance, the moment map of a Hamiltonian torus action is only defined up to adding a constant $c\in\mathfrak{t}^*$. However, there are situations where the choice of a specific moment map is needed. For this reason, we will always distinguish between the structure $(M,\om,\rho)$ given by a Hamiltonian action alone, from the structure $(M,\om,\rho,\mu)$ in which a moment map $\mu$ is chosen.
\end{remark}

\begin{defn}\label{defn:EquivalenceActions}
We will say that two actions $\rho_{i}:G\to\Symp(M,\om)$ are symplectically equivalent if the corresponding subgroups $\rho_{i}(G)$ belong to the same conjugacy class with respect to the action of $\Symp(M,\om)$.
\end{defn}

Given a symplectic action $\rho:G\times M\to M$, let $C(G)$ be the centralizer of the corresponding subgroup $G\subset\Symp(M,\om)$, that is,
\[C(G)=\{\phi\in\Symp(M,\om)~|~\phi\circ g\circ \phi^{-1} = g,~\forall g\in G\}\]
and let $N(G)$ be its normalizer, namely,
\[N(G)=\{\phi\in\Symp(M,\om)~|~\phi\circ G\circ \phi^{-1} = G\}\]
Clearly, equivalent actions have isomorphic centralizers and normalizers. 

In this paper, we will investigate the homotopy type of the centralizers $C(T)$ of Hamiltonian torus actions on some $4$-manifolds. We start by giving a simple characterization of symplectomorphisms commuting with a Hamiltonian torus action.
\begin{lemma}\label{lemma:CharacterizationCentralizer}
Let $(M, \om)$ be a compact symplectic manifold equipped with a Hamiltonian torus action $\rho:T\times M\to M$ with moment map $\mu$. Then $\phi \in C(T)$ if, and only if, $\mu \circ \phi = \mu$ and $\phi \in \Symp(M,\omega)$.
\end{lemma}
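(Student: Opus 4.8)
The plan is to reduce both implications to a statement about fundamental vector fields and then exploit the connectedness of $T$. Write $\mathfrak t$ for the Lie algebra of $T$; for $Y\in\mathfrak t$ let $\overline Y$ be the induced fundamental vector field and $\mu^Y:=\langle\mu,Y\rangle$, so that the moment map condition of Definition~\ref{def:HamiltonianActions} reads $\iota_{\overline Y}\om=-d\mu^Y$, i.e. $\overline Y$ is the Hamiltonian vector field of $\mu^Y$. Since $T$ is a torus it is connected and $\exp$ is surjective, so every element of $T$ is $\exp(Y)$ for some $Y$, and $x\mapsto\exp(Y)\cdot x$ is precisely the time-one flow of $\overline Y$. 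Using the standard fact that the flow of $\phi_{*}V$ is $\phi\circ(\text{flow of }V)\circ\phi^{-1}$, it follows that $\phi\in C(T)$ if and only if $\phi_{*}\overline Y=\overline Y$ for every $Y\in\mathfrak t$.

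For the ``if'' direction, suppose $\phi\in\Symp(M,\om)$ and $\mu\circ\phi=\mu$. A symplectomorphism intertwines Hamiltonian vector fields, $\phi_{*}X_H=X_{H\circ\phi^{-1}}$; applying this with $H=\mu^Y$ and using that $\mu\circ\phi=\mu$ implies $\mu^Y\circ\phi^{-1}=\mu^Y$, we get $\phi_{*}\overline Y=\overline Y$ for all $Y$, hence $\phi\in C(T)$ by the previous paragraph. Note that no compactness is needed here. For the ``only if'' direction, take $\phi\in C(T)$; then $\phi\in\Symp(M,\om)$ and $\phi_{*}\overline Y=\overline Y$ for every $Y$, so
\[
d(\mu^Y\circ\phi)=\phi^{*}d\mu^Y=-\phi^{*}\iota_{\overline Y}\om=-\iota_{(\phi^{-1})_{*}\overline Y}\,\phi^{*}\om=-\iota_{\overline Y}\om=d\mu^Y .
\]
Thus $\mu^Y\circ\phi-\mu^Y$ is locally constant; since $M$ is connected it is a constant $c_Y\in\R$, and $Y\mapsto c_Y$ is linear, i.e. $\mu\circ\phi=\mu+c$ for some fixed $c\in\mathfrak t^{*}$.

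It remains to show $c=0$, and this is the only step that genuinely uses compactness of $M$ — everything else is a formal manipulation with fundamental vector fields, so this is the real obstacle. I would argue by integration: if $\dim M=2n$, then $\phi$ preserves the orientation and the volume form $\om^{n}$, so $\int_M(\mu\circ\phi)\,\om^{n}=\int_M\mu\,\om^{n}$ by the change of variables formula; since $\mu\circ\phi=\mu+c$ this forces $c\cdot\vol(M,\om)=0$ and hence $c=0$, giving $\mu\circ\phi=\mu$. (Alternatively one could invoke that a compact Hamiltonian torus action has fixed points and compare the values of $\mu$ at such a point before and after applying $\phi$, but that requires tracking how $\phi$ moves $\Fix(T)$, which the integration argument sidesteps.)
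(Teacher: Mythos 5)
Your proof is correct and follows essentially the same strategy as the paper: the $(\Leftarrow)$ direction is the paper's argument (a symplectomorphism preserving $\mu$ fixes every fundamental vector field $\overline Y$, hence commutes with the connected group $T$), and the $(\Rightarrow)$ direction likewise reduces to $\mu\circ\phi=\mu+c$ for a constant $c\in\mathfrak{t}^*$. The only difference is the last step: the paper kills $c$ by noting that the compact moment images $\mu(M)$ and $\mu(\phi(M))$ coincide, whereas you integrate $\mu\circ\phi-\mu$ against the invariant volume form $\om^{n}$ — both uses of compactness are valid and interchangeable here.
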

\begin{proof}
$(\Leftarrow)$ Suppose $\phi \in \Symp(M,\omega)$ satisfies $\mu \circ \phi = \mu$. Let $X \in\mathfrak{t}$(where $\mathfrak{t}$ denotes the Lie algebra of the torus) and let $\overline{X}$ denote the associated fundamental vector field. We then have
\[\omega(d\phi^{-1}(\overline{X}),Y) = \phi^*\omega(\overline{X}, d\phi(Y)) = \omega(\overline{X}, d\phi(Y) =  d\mu(d\phi(Y))=d\mu(Y) = \omega(\overline{X}, Y)\]
for any vector field $Y$, which implies $d\phi(\overline{X}) = \overline{X}$ for all $X \in \mathfrak{t}$. Consequently, $\phi$ commutes with the action.
\\

$(\Rightarrow)$ If the action $\rho$ has moment map $\mu$, then $\mu \circ \phi^{-1}$ is a moment map for the conjugate action $\phi^{-1} \circ \rho \circ \phi$. If $\phi^{-1} \circ \rho \circ \phi = \rho$, this implies $\mu \circ \phi^{-1} = \mu + C$ for some constant $C\in\mathfrak{t}$. Since the moment images $\mu(M)$ and $\mu\circ \phi(M)$ are compact and coincide, this constant must be $0$. 
\end{proof}

The Atiyah-Guillemin-Sternberg theorem (see~\cite{Audin}) states that the the moment map image of a torus action $\mathbb{T}^d$ is a convex polytope of $\mathfrak{t}^*$ whose vertices are images of the fixed points of the torus action. 

We call an effective Hamiltonian torus action \emph{toric} if the torus acting is half the dimension of the manifold $M$. By a theorem of Delzant, the moment map image $\Delta:=\mu(M)$ of a toric action determines the symplectic manifold $(M,\om)$, the action $\rho$, and the moment map $\mu$ up to equivariant symplectomorphisms. 
\begin{thm}(Delzant \cite{Delzant}) 
Let $(M_{i},\om_{i},\rho_{i},\mu_{i})$, $i=1,2$, be two toric manifolds of dimension $2n$ with moment maps $\mu_{i}:M_{i}\to\mathfrak{t}^{*}$. If the two moment polytopes $\mu_{i}(M_{i})$ coincide, then there exists a $\mathbb{T}^{n}$-equivariant symplectomorphism $\phi:(M_{1},\om_{1})\to (M_{2},\om_{2})$ such that $\phi^{*}\mu_{2}=\mu_{1}$. Conversely, the moment map images of two equivariantly symplectomorphic toric structures $(M_{i},\om_{i},\rho_{i},\mu_{i})$, $i=1,2$, coincide.
\end{thm}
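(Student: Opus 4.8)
The converse is immediate from the definition: if $\phi\colon(M_1,\om_1)\to(M_2,\om_2)$ is a $\mathbb{T}^{n}$-equivariant symplectomorphism with $\phi^{*}\mu_2=\mu_1$, then $\mu_1(M_1)=\mu_2(\phi(M_1))=\mu_2(M_2)$ since $\phi$ is a diffeomorphism. (If one only assumes that $\phi$ conjugates the two actions, then $\mu_2\circ\phi$ is again a moment map for $\rho_1$, hence differs from $\mu_1$ by a constant, and the two images coincide up to a translation.) So the content lies in the forward direction, and the plan is the classical one: attach to the polytope $\Delta$ a canonical toric manifold $M_\Delta$ obtained by symplectic reduction, and then show that every toric structure with moment image $\Delta$ is equivariantly symplectomorphic, compatibly with moment maps, to $M_\Delta$.

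For the construction I would first record the combinatorics forced by the hypothesis that $\Delta=\mu(M)$: it is a simple, rational, smooth (Delzant) polytope, so we may write $\Delta=\{x\in\mathbb{R}^{n}:\langle x,u_i\rangle\ge\lambda_i,\ i=1,\dots,d\}$ with $u_i\in\mathbb{Z}^{n}$ the primitive inward facet normals, the $u_i$ meeting at any vertex forming a $\mathbb{Z}$-basis of $\mathbb{Z}^{n}$. The map $\pi\colon\mathbb{R}^{d}\to\mathbb{R}^{n}$, $e_i\mapsto u_i$, is then surjective and sends $\mathbb{Z}^{d}$ onto $\mathbb{Z}^{n}$, hence descends to a surjection of tori $\mathbb{T}^{d}\to\mathbb{T}^{n}$ whose kernel $N\subset\mathbb{T}^{d}$ is a subtorus of dimension $d-n$. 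Equipping $\mathbb{C}^{d}$ with its standard $\mathbb{T}^{d}$-action and a moment map normalized by the $\lambda_i$, the Delzant condition is exactly what guarantees that the restricted moment map $\mu_N\colon\mathbb{C}^{d}\to\mathfrak{n}^{*}$ has $0$ as a regular value on which $N$ acts freely; the reduction $M_\Delta:=\mu_N^{-1}(0)/N$ is then a closed symplectic $2n$-manifold with a residual toric $\mathbb{T}^{n}=\mathbb{T}^{d}/N$ action, and a direct computation with reduced moment maps identifies its moment polytope with $\Delta$. This half is bookkeeping with lattices and the definition of the symplectic quotient.

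The substantive step is uniqueness: given any toric $(M,\om,\mathbb{T}^{n},\mu)$ with $\mu(M)=\Delta$, one must build an equivariant, moment-map-preserving symplectomorphism $M\to M_\Delta$ by gluing local models. Over the interior $\Delta^{\circ}$, action--angle coordinates together with triviality of the principal $\mathbb{T}^{n}$-bundle $\mu^{-1}(\Delta^{\circ})\to\Delta^{\circ}$ (as $\Delta^{\circ}$ is contractible) identify $\mu^{-1}(\Delta^{\circ})$, equivariantly and over $\Delta$, with $(\Delta^{\circ}\times\mathbb{T}^{n},\om_{\mathrm{std}})$, which is also what one obtains from $M_\Delta$. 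Near a point on a codimension-$k$ face, the equivariant symplectic neighborhood (Marle--Guillemin--Sternberg) normal form shows that a neighborhood of the corresponding orbit in $M$ is determined, as a Hamiltonian toric germ, by the linear data of that face, which again agrees with $M_\Delta$. Thus $M$ and $M_\Delta$ are locally isomorphic, as toric manifolds with moment map, over every point of $\Delta$. To globalize, observe that the ambiguity in each local isomorphism is a $\mathbb{T}^{n}$-equivariant symplectomorphism of $\Delta^{\circ}\times\mathbb{T}^{n}$ covering the identity on $\Delta$; such a map is fiberwise translation by a \emph{closed} $1$-form on an open subset of $\Delta$, and since $\Delta$ is contractible every such form is exact, so the \v{C}ech $1$-cocycle of discrepancies between local isomorphisms is a coboundary. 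Correcting the local maps accordingly produces the desired global $\phi$ with $\phi^{*}\mu_2=\mu_1$.

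The hard part, and where care is genuinely needed, is precisely this last gluing: putting the boundary normal forms in a shape uniform enough to be compared with $M_\Delta$, pinning down exactly which equivariant symplectomorphisms preserve the moment map (the ``fiberwise closed $1$-form'' description), and converting the contractibility of $\Delta$ into an honest vanishing-of-obstruction statement so that the local isomorphisms assemble into a single global equivariant symplectomorphism. The reduction construction and the combinatorial dictionary, by contrast, are essentially formal.
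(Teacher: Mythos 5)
The paper does not prove this statement at all: it is quoted verbatim as Delzant's classification theorem and cited to \cite{Delzant}, so there is no in-paper argument to compare against. Your outline is, in substance, Delzant's original proof (and the standard textbook treatment): construct $M_\Delta$ by reduction of $\C^d$ using the primitive facet normals, check via local normal forms that any toric $(M,\om,\mu)$ with image $\Delta$ is locally isomorphic over $\Delta$ to $M_\Delta$, and kill the \v{C}ech obstruction to gluing using contractibility of the polytope. Two points deserve more care than your sketch gives them. First, the sheaf of moment-preserving equivariant symplectomorphisms covering $\id_\Delta$ is not literally the sheaf of closed $1$-forms: an automorphism is fiberwise translation by a map $g:U\to\mathbb{T}^n$ (this description is legitimate precisely because each fiber of $\mu$ is a single orbit, the same fact the paper exploits in the proof of Proposition~\ref{prop:CentralizersToricActions}), and only a local lift of $g$ to $\mathfrak{t}\cong\R^n$ is a closed $1$-form; the automorphism sheaf is therefore an extension involving the constant sheaf $\Z^n$, and the vanishing you need uses both the Poincar\'e lemma and $H^1(\Delta;\Z^n)=H^2(\Delta;\Z^n)=0$, all of which hold since $\Delta$ is convex. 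Second, the cocycle argument must be run over a cover of all of $\Delta$, boundary faces included, so you need the Marle--Guillemin--Sternberg normal form to give isomorphisms over genuine neighborhoods of boundary points in $\Delta$ (not just near single orbits), matched against the corresponding local models of $M_\Delta$; this is exactly the ``uniform shape'' issue you flag yourself, and it is where the actual work in Delzant's paper lies. With those caveats filled in, your plan is the correct and complete route.
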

Choose an identification $\mathfrak{t}\simeq\R^{n}$ such that $\ker(\exp:\mathfrak{t}\to \mathbb{T}^{n})\simeq \Z^{n}$. The moment polytopes of toric actions on manifolds of dimension $2n$ -- called Delzant polytopes of dimension $n$ -- are completely characterized by the following three properties, see~\cite{Delzant}:  
\begin{itemize}
\item (simplicity) there are exactly $n$ edges meeting at each vertex; 
\item (rationality) the edges meeting at any vertex $p$ are of the form $ p + t u_{i} $, $ t \in [0,\ell_i] $, where $ \ell_i \in \R$ and $ u_{i} \in \mathbb{Z}^{n} $;
\item (smoothness) for each vertex $p$, the corresponding vectors $u_{1},\ldots,u_{n}$ can be chosen to be a basis of $\mathbb{Z}^{n}$ over $\Z$.
\end{itemize}
This provides a purely combinatorial description of toric structures. Finally, if we are only interested in the subgroup $\mathbb{T}^n\subset\Ham(M^{2n},\om)$ associated to a toric action, that is, if we disregard both the pa\-ra\-me\-tri\-za\-tion $\mathbb{T}^n\into\Ham(M^{2n},\om)$ and the moment map, Delzant's classification theorem yields a bijection
\begin{gather*}
\{\text{Inequivalent toric actions on~} 2n\text{-manifolds}\}\\
\updownarrow\\
\{\text{Delzant polytopes in~}\R^n \text{~up to~} \AGL(n;\Z) \text{~action}\}
\end{gather*}
As the next proposition shows, the centralizer and normalizer of symplectic toric actions are easy to describe in terms of moment maps. In particular, the homotopy type of $C(T)$ does not depend on the toric structure.
\begin{prop}\label{prop:CentralizersToricActions}
Consider an effective toric action $\rho:\mathbb{T}^{n}\to\Symp(M^{2n},\om)$
with moment map $\mu:M\to\mathfrak{t}^*$. Denote by $T$ the corresponding torus
in $\Symp(M,\om)$, by $C(T)$ its centralizer, by $N(T)$ its normalizer, and by $W(T) =N(T)/C(T)$ its Weyl group.
\begin{enumerate}
\item The centralizer $C(T)$ deformation retracts onto $\mathbb{T}^{n}$. In particular, $C(T)\subset\Ham(M,\om)$.
\item The normalizer $N(T)$ is equal to the group of all symplectomorphisms $\psi$ such that $\mu\circ\psi = \Lambda\circ\mu$ for some $\Lambda\in\AGL(n;\Z)$. In particular, the Weyl group $W(T)$ is finite.
\end{enumerate}
\end{prop}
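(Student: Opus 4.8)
The plan is to prove both statements directly from Lemma~\ref{lemma:CharacterizationCentralizer} together with the naturality of moment maps under conjugation. For part (1), I would first invoke Lemma~\ref{lemma:CharacterizationCentralizer} to identify $C(T)$ with the group $\{\phi\in\Symp(M,\om)~|~\mu\circ\phi=\mu\}$ of symplectomorphisms preserving the moment map. The key point is then to construct a deformation retraction of this group onto $\mathbb{T}^n$. The natural strategy is to use a $T$-invariant, $\mu$-compatible Riemannian metric (for instance the one coming from a $T$-invariant compatible almost complex structure) so that the gradient flow of $\mu$, or rather a suitable family of such flows, produces an equivariant Morse-theoretic decomposition of $M$; any $\phi$ preserving $\mu$ permutes the level sets $\mu^{-1}(c)$ trivially and acts on the reduced spaces, and near the unique fixed point component lying over each vertex of $\Delta$ one can linearize. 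Concretely, I would argue that $C(T)$ acts on the (connected, contractible) space of $T$-invariant compatible almost complex structures $\mathcal{J}^T_\om$, which is nonempty and contractible, and that the stabilizer of a point is the group of $T$-equivariant K\"ahler isometries; since the toric K\"ahler isometry group is compact with identity component $\mathbb{T}^n$ and, $M$ being simply connected so that $\mu$-preserving symplectomorphisms are isotopic to the identity, one deduces $C(T)\simeq \mathbb{T}^n$. The inclusion $C(T)\subset\Ham(M,\om)$ is then immediate: toric manifolds are simply connected, so $\Symp(M,\om)=\Ham(M,\om)$ on the identity component, and $C(T)$ is connected by the retraction.

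For part (2), suppose $\psi\in N(T)$, so that conjugation by $\psi$ is an automorphism of the subgroup $T\subset\Symp(M,\om)$. This automorphism is realized by an element $A\in\Aut(\mathbb{T}^n)=\Gl(n;\Z)$ once we fix the parametrization, i.e. $\psi\circ\rho(t)\circ\psi^{-1}=\rho(A^{-1}t)$ for all $t\in\mathbb{T}^n$. Since $\mu\circ\psi^{-1}$ is a moment map for the conjugated action $\psi^{-1}\circ\rho\circ\psi$, and the conjugated action is $\rho$ precomposed with $A$, uniqueness of the moment map up to an additive constant gives $\mu\circ\psi = \Lambda\circ\mu$ where $\Lambda$ is the affine map with linear part $A^t$ (or $(A^{-1})^t$, depending on conventions) and some translation part, hence $\Lambda\in\AGL(n;\Z)$. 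Conversely, if $\mu\circ\psi=\Lambda\circ\mu$ with $\Lambda\in\AGL(n;\Z)$, then a computation identical to the one in the proof of Lemma~\ref{lemma:CharacterizationCentralizer} shows $d\psi$ sends the fundamental vector field $\overline{X}$ to $\overline{AX}$ for the appropriate $A\in\Gl(n;\Z)$, so $\psi$ normalizes $T$. This establishes the claimed description of $N(T)$. Finally, for finiteness of $W(T)=N(T)/C(T)$: the map $N(T)\to\AGL(n;\Z)$, $\psi\mapsto\Lambda$, has kernel exactly $C(T)$ by part of the above (elements with $\Lambda=\id$ preserve $\mu$), so $W(T)$ embeds in $\AGL(n;\Z)$; but $\Lambda$ must permute the finitely many vertices of the Delzant polytope $\Delta=\mu(M)$ (these being the images of fixed points, which are preserved setwise) and is determined by its action on any $n+1$ affinely independent vertices, so the image of $W(T)$ in $\AGL(n;\Z)$ is finite.

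The main obstacle I anticipate is part (1), specifically making rigorous the claim that every $\mu$-preserving symplectomorphism can be deformed, within $C(T)$, to an element of $\mathbb{T}^n$. The cleanest route is the almost-complex-structures argument sketched above, which presupposes the (standard, but nontrivial) facts that $\mathcal{J}^T_\om$ is contractible and that the action of $C(T)$ on it is transitive up to homotopy with the expected stabilizers — essentially the equivariant analogue, in the toric case, of the framework the paper develops in Chapters 3–4. An alternative, more hands-on approach would build the retraction by hand using the $\mathbb{T}^n$-equivariant handle/Morse-Bott decomposition of a toric manifold and inducting over the moment polytope's face poset, retracting the symplectomorphisms of the reduced spaces (which are themselves toric of lower dimension) — this is conceptually transparent but bookkeeping-heavy. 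Either way, the remaining parts are essentially formal consequences of Lemma~\ref{lemma:CharacterizationCentralizer}, the Atiyah–Guillemin–Sternberg convexity theorem, and uniqueness of moment maps.
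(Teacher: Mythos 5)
Your part (2) is fine and is in fact more detailed than the paper, which simply cites Proposition~3.21 of~\cite{P-MaxTori} for it; the conjugation/uniqueness-of-moment-map argument and the embedding of $W(T)$ into the finite group of affine symmetries of $\Delta$ are correct. The problem is part (1), which is exactly the part the paper says needs justification. Your main route rests on the assertion that $C(T)$ acts on $\mathcal{J}^{\mathbb{T}^n}_{\om}$ ``transitively up to homotopy with the expected stabilizers,'' so that $C(T)\simeq$ (equivariant K\"ahler isometries) $\simeq\mathbb{T}^n$. That assertion is not a standard fact and is essentially equivalent to the statement you are trying to prove: literal transitivity fails badly (by Guillemin--Abreu theory, invariant K\"ahler structures with genuinely different symplectic potentials are not related by $\mu$-preserving symplectomorphisms, and $C(T)$ only preserves $\mu$), and knowing that the orbit inclusion $C(T)/\mathrm{Stab}(J)\into\mathcal{J}^{\mathbb{T}^n}_{\om}$ is a weak equivalence would already give $C(T)\simeq\mathrm{Stab}(J)$, i.e.\ the conclusion. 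The homogeneity results of Chapters~3--4 that you invoke as an analogue are proved for the much larger group $\Symp^{S^1}_h$ centralizing only a circle, and they do not transfer to the centralizer of the full torus. Your alternative Morse--Bott/face-poset route is only a sketch with no argument for the inductive step, so as it stands part (1) has a genuine gap.

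The missing idea, which the paper exploits and which makes the proof elementary, is the special structure of toric actions: every fiber of $\mu$ is a single $\mathbb{T}^n$-orbit, so any $\phi\in C(T)$ can be written $\phi(m)=g_{\phi}(\mu(m))\cdot m$ for a (unique) map $g_{\phi}:\Delta\to\mathbb{T}^n$. Evaluation at a base point $b\in\Delta$ then gives a fibration $C_{b}(T)\to C(T)\to\mathbb{T}^n$, and the fiber $C_{b}(T)$ is contracted by lifting $g_{\phi}$ to $\tilde{g}_{\phi}:\Delta\to\mathfrak{t}$ (possible since $\Delta$ is contractible, normalized by $\tilde{g}_{\phi}(b)=0$) and scaling $\tilde{g}_{\phi,s}=s\tilde{g}_{\phi}$; the resulting path of $\mu$-preserving diffeomorphisms is then corrected by an equivariant Moser isotopy to land back in $C_{b}(T)$. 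If you want to salvage your approach you would either need to prove the homotopy-homogeneity of $\mathcal{J}^{\mathbb{T}^n}_{\om}$ under $C(T)$ independently (which is the hard content), or abandon it in favor of this fiberwise-translation argument.
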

\begin{proof}
This is a slightly stronger version of Proposition 3.21 in~\cite{P-MaxTori}, and only the first statement requires some additional justification. By Lemma~\ref{lemma:CharacterizationCentralizer}, $\phi\in C(T)$ iff $\phi\in\Symp(M,\om)$ and $\mu=\mu\circ\phi$. Since the action is toric, each fiber of the moment map consists in a single orbit. It follows that there is a unique map $g_{\phi}:\mu(M)=\Delta\to\mathbb{T}^{n}$ such that $\phi$ can be written as
\[\phi(m) = g_{\phi}(\mu(m))\cdot m\]
Pick a base point $b\in\Delta$ and consider the evaluation fibration
\begin{align*}
C_{b}(T)\to C(T)&\to \mathbb{T}^{n}\\
\phantom{C_{b}(T)\to} \phi&\mapsto g_{\phi}(b) 
\end{align*} 
whose fiber is the subgroup
\[C_{b}(T)=\{\phi\in C(T)~|~\phi=\id\text{~on~}\mu^{-1}(b)\}\]
Consider $\phi\in C_{b}(T)$. Since $\Delta$ is contractible, the function $g_{\phi}$ has a unique lift $\tilde{g}_{\phi}:\Delta\to\mathfrak{t}$ such that $\tilde{g}_{\phi}(b)=0$, and $g_{\phi}=\exp\circ\tilde{g}_{\phi}$. Setting $\tilde{g}_{\phi,s}=s\tilde{g}_{\phi}$ for $s\in[0,1]$, we obtain a retraction of $C_{b}(T)$ onto $\{\id\}$ through diffeomorphisms $\phi_{s}$ leaving $\mu$ invariant and such that $\phi_{s}=\id$ on $\mu^{-1}(b)$. Applying an equivariant Moser isotopy to the family $\phi_{s}$, this retraction is seen to be homotopic to a retraction in $C_{b}(T)$. This shows that $C_{b}(T)$ is contractible, which in turns implies the result.
\end{proof}
For Hamiltonian torus actions $T^{d}\to\Ham(M^{2n},\om)$ for which the torus $T^{d}$ has dimension $d<n$, there is no simple description of the homotopy type of the symplectic centralizer $C(T)$. As we will see, this already happens in the simplest case, namely, for Hamiltonian circle actions on $4$-manifolds. 

In \cite{Karshon}, Y. Karshon established an equivariant classification of Hamiltonian circle actions on 4-dimensional symplectic manifolds, similar to Delzant classification, in which the moment map image is replaced by labelled graphs. More precisely, given any Hamiltonian $S^1$ action on a 4-manifold $(M,\om)$ with moment map $\mu$, one can construct a labelled graph as follows:
\begin{itemize}
\item Each component of the fixed point set corresponds to a unique vertex of the graph.
\item Each vertex is labeled by the value of the moment map on the corresponding
fixed point component. 
\item An extremal vertex that corresponds to a fixed symplectic
surface S is said to be "fat". Two additional labels are attached to fat vertices: the genus of that surface,
and its normalized symplectic area. 
\item Two vertices are connected by an edge if and only if the corresponding isolated
fixed points are connected by a $\Z_k$-sphere i.e by a $S^1$ invariant sphere on which  $S^1$ acts with a global stabilizer $\Z_k$, $k \geq 2$.
\item  Each edge is labelled by the isotropy weight $k \geq 2$ of the corresponding $\Z_k$ sphere.
\end{itemize}
Labeled graphs associated to effective Hamiltonian circle actions are characterized by the following properties. If we order the vertices according to their moment map labels, then
\begin{itemize}
\item there are exactly two extremal vertices;
\item fat vertices are extremal, and if the graph contains two fat vertices, then their genus label must coincide;
\item the area label of any fat vertex must be strictly positive;
\item a vertex is connected to no more than two edges, and no edge is connected to a fat vertex;
\item the moment map labels must be strictly monotone along each chain of edges;
\item if $e_{1},\ldots,e_{\ell}$ is a chain of edges, and if $k_{1},\ldots,k_{\ell}$ are the orders of their stabilizers, then $\gcd(k_{i}, k_{i+1}) = 1$ for $i = 1,\ldots\ell-1$, and $(k_{i-1} + k_{i+1})/k_{i}$ is an integer for $i= 2,\ldots,\ell-1$.
\end{itemize}
We call such graphs \emph{admissible}. Just as Delzant polytopes classify toric structures up to symplectomorphisms, admissible labelled graphs classify Hamiltonian $S^1$ actions with moment maps.

\begin{thm}(Karshon \cite{Karshon})\label{graphiso}
Each admissible labelled graph determines a Hamiltonian circle action $\rho$ with moment map $\mu$ on a symplectic $4$ manifold $(M,\om)$. The tuple $(M,\om,\rho,\mu)$ is unique up to $S^1$-equivariant symplectomorphisms preserving the moment map. Conversely, the labelled graphs of two equivariantly symplectomorphic Hamiltonian circle actions with moment maps are isomorphic. 
\end{thm}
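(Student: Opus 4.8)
The statement is Karshon's classification theorem, so our plan is to recall the structure of her proof, which splits into three parts: (i) the labelled graph is a well-defined invariant; (ii) every admissible graph is realised by some Hamiltonian circle action; (iii) the graph is a \emph{complete} invariant. Part (i) is the easy direction. If $\phi\colon(M_{1},\om_{1},\rho_{1},\mu_{1})\to(M_{2},\om_{2},\rho_{2},\mu_{2})$ is an $S^{1}$-equivariant symplectomorphism with $\mu_{2}\circ\phi=\mu_{1}$, then $\phi$ carries the fixed locus of $\rho_{1}$ onto that of $\rho_{2}$, preserving connected components and, because it intertwines the moment maps, their moment labels. Since isotropy subgroups are conjugation invariant, $\phi$ sends $\Z_{k}$-spheres to $\Z_{k}$-spheres and therefore induces an isomorphism of graphs; being a symplectomorphism, it also matches the genus and the normalised symplectic area attached to fat vertices. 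Hence equivariantly symplectomorphic actions have isomorphic graphs.

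The crux is part (iii). The first step is to analyse the symplectic reduced spaces $M_{\mathrm{red}}(t)=\mu^{-1}(t)/S^{1}$. At a regular value these are compact symplectic $2$-orbifolds, and the Duistermaat--Heckman theorem shows that $[\om_{\mathrm{red}}(t)]$ varies affinely in $t$ between consecutive critical values, the slope jumping at each critical level by an amount determined by the fixed-point data there. Passing a critical value coming from an interior isolated fixed point, the reduced space undergoes an orbifold blow-up or blow-down; each $\Z_{k}$-sphere contributes a $\Z_{k}$ orbifold point that persists along the corresponding chain of regular levels; a fat vertex produces, at the extremal level, a closed surface of the prescribed genus and area. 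All of this data is encoded in, and recoverable from, the admissible graph. One then shows that $(M,\om,\rho,\mu)$ is reconstructed from this family of reduced spaces: over an interval containing no critical value the corresponding piece of $M$ is an $S^{1}$-orbibundle over $M_{\mathrm{red}}\times I$ whose isomorphism type and invariant symplectic form are pinned down by $\om_{\mathrm{red}}$ together with the constraint that the piece fit into the global picture; near an interior fixed point, and near an extremal fixed point or fixed surface, the equivariant Darboux theorem supplies a local model depending only on the weights and areas in the graph. Given two actions with the same graph one matches their reduced spaces (same orbifold diffeomorphism type, same cohomology class of the reduced forms by Duistermaat--Heckman), matches the local models at the critical levels, and glues; the remaining discrepancy between two invariant symplectic forms lying in the same class with the same moment map is absorbed by an equivariant Moser isotopy.

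An equivalent route to (iii), and the one that most directly also yields (ii), is Karshon's "blow-down to a minimal model" argument: every Hamiltonian $S^{1}$-manifold of dimension $4$ admits a sequence of equivariant symplectic blow-downs to a minimal one — a rational ruled surface or $\CP^{2}$ carrying a standard circle action — whose graph is read off immediately, and the sequence of equivariant blow-ups needed to recover $M$ is dictated by the graph. Here the chains of $\Z_{k}$-spheres and the arithmetic conditions $\gcd(k_{i},k_{i+1})=1$, $(k_{i-1}+k_{i+1})/k_{i}\in\Z$ are precisely the Hirzebruch--Jung data governing iterated blow-ups along a chain, so admissibility of the graph is exactly what is needed to perform the blow-ups with positive sizes and compatible weights. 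Reversing this gives existence for every admissible graph, and uniqueness follows because the blow-down/blow-up sequence, hence the resulting equivariant symplectomorphism, is determined by the combinatorics.

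The main obstacle is the reconstruction and gluing step in part (iii): controlling how the reduced spaces degenerate across critical values (the orbifold blow-ups at interior fixed points and the orbifold points created by $\Z_{k}$-spheres), verifying that no invariant beyond the labelled graph is needed to glue the local normal forms, and running the equivariant Moser argument in the orbifold setting. Since this is Karshon's theorem, we do not reproduce these details and refer the reader to \cite{Karshon}.
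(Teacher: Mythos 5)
The paper offers no proof of this theorem at all---it is quoted directly from Karshon's memoir \cite{Karshon}---and your proposal, which outlines Karshon's own argument (invariance of the labelled graph under equivariant symplectomorphisms, Duistermaat--Heckman analysis of the reduced spaces with local normal forms and an equivariant Moser step, and the equivariant blow-down to minimal models with the Hirzebruch--Jung arithmetic matching the admissibility conditions) before deferring the details to that reference, is accurate in outline and amounts to essentially the same approach. The one quibble is that Karshon's minimal models are $\CP^2$ together with ruled surfaces over a surface whose genus matches the fat-vertex labels (not only rational ruled surfaces), so your phrase ``a rational ruled surface or $\CP^{2}$'' covers only the genus-zero case, which happens to be the case relevant to this paper.
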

Again, if one is only interested in conjugacy classes of circles in Hamiltonian groups, Karshon's classification implies that there is a bijection
\begin{gather*}
\{\text{Inequivalent Hamiltonian circle actions on~} 4\text{-manifolds}\}\\
\updownarrow\\
\{\text{Admissible labelled graphs up to~} \AGL(1;\Z) \text{~action on moment map labels}\}
\end{gather*}

\begin{remark}\label{rmk:NormalizationMomentMap}~
\begin{itemize}
\item Note that when $M$ is compact, it is always possible to canonically assign a moment map to an Hamiltonian circle action by imposing one of the normalizations $\min \mu = 0$ or $\int_{M}\mu = 0$. 
\item Note also that in Karshon's classification, it is not necessary to keep track of invariant spheres on which the restricted action is effective. In particular, these spheres do not appear in the labelled graphs. This point will be important in our analysis.
\end{itemize}
\end{remark}

The equivariant Darboux-Weinstein theorem together with the classification of symplectic representations imply that a Hamiltonian action by a circle on a $4$-manifold has a simple local description near any fixed point. The following proposition follows from \cite{Montaldi} Theorem~2.4, see also~\cite{Karshon} Corollary~A.7 or \cite{Lerman-Tolman} Lemma~A.1.

\begin{prop}\label{prop:DefinitionOfWeights}
Let $p$ be a fixed point of a Hamiltonian action by either a circle $S^1$  on a $4$-manifold $(M,\om)$. Then there exist complex coordinates $(z,w)$ on a neighborhood of $p$ in $M$, and unique integers $\alpha$ and $\beta$, called the isotropy weights at $p$, such that
\begin{enumerate}
\item the action is $t\cdot (z, w) = (t^{\alpha}z, t^{\beta}w)$,
\item the symplectic form is $\om = \frac{i}{2}(dz\wedge d\bar z + dw \wedge d\bar w)$,
\item if the group is a circle, the moment map is $\mu(z,w) = \mu(p) + \frac{\alpha}{2}|z|^{2} + \frac{\beta}{2}|w|^{2}$.
\end{enumerate}
\end{prop}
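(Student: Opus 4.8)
The plan is to reduce the statement to the equivariant Darboux theorem together with the linear representation theory of symplectic circle actions. First I would linearize the action at $p$: since $\rho_t(p)=p$ for all $t$, the differentials $d(\rho_t)_p$ define a symplectic representation $S^1\to\Sp(T_pM,\om_p)$. Averaging any $\om_p$-compatible inner product over $S^1$ produces an invariant $\om_p$-compatible complex structure $J_0$, so $(T_pM,\om_p,J_0)$ becomes a Hermitian vector space on which $S^1$ acts unitarily. Because $S^1$ is abelian and connected, this unitary representation splits as an $\om_p$-orthogonal direct sum of complex one-dimensional subspaces; in real dimension four this gives two weights $\alpha,\beta\in\Z$ (possibly equal), and these integers are intrinsically attached to the representation $d(\rho)_p$, hence unique. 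Choosing Hermitian-orthonormal complex coordinates $(z,w)$ adapted to this splitting — and noting that the isotypic components are mutually $\om_p$-orthogonal symplectic subspaces on which the imaginary part of the Hermitian form agrees with $\om_p$ — puts the linear model in the form $t\cdot(z,w)=(t^\alpha z,t^\beta w)$ with $\om_p=\tfrac i2(dz\wedge d\bar z+dw\wedge d\bar w)$.

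Next I would transport this linear model to a neighbourhood of $p$ in $M$. Using an $S^1$-invariant Riemannian metric (again by averaging), the exponential map provides an $S^1$-equivariant diffeomorphism $\psi$ from an invariant neighbourhood of $0$ in $T_pM$ onto an invariant neighbourhood $U$ of $p$, with $d\psi_0=\id$. Then $\om_0:=\om_p$ and $\om_1:=\psi^*\om$ are two $S^1$-invariant symplectic forms agreeing at $0$. Running the Moser trick on the path $\om_s=(1-s)\om_0+s\om_1$, where the required primitive of $\om_1-\om_0$ is chosen $S^1$-invariantly and to vanish to first order at $0$, yields an equivariant isotopy fixing $0$ whose time-one map $\chi$ satisfies $\chi^*\om_1=\om_0$. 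Composing, $(\psi\circ\chi)^{-1}$ gives complex coordinates $(z,w)$ near $p$ in which the action is exactly $t\cdot(z,w)=(t^\alpha z,t^\beta w)$ and $\om=\tfrac i2(dz\wedge d\bar z+dw\wedge d\bar w)$, which is (1) and (2).

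Finally, for (3), I would observe that $\mu_0(z,w)=\tfrac\alpha2|z|^2+\tfrac\beta2|w|^2$ is a moment map for the linear circle action: a direct computation shows $d\mu_0=\iota_{\overline X}\om$ for the generating vector field $\overline X$. Hence on the connected set $U$ both $\mu$ and $\mu_0+\mu(p)$ have the same differential, so they differ by a constant, and evaluating at $p$ (where $\mu_0=0$) forces $\mu=\mu(p)+\tfrac\alpha2|z|^2+\tfrac\beta2|w|^2$ on $U$.

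The only point requiring genuine care is the equivariant Darboux step: one must check that every choice in the Moser argument — the interpolating forms, the primitive one-form, and therefore the generated vector field and its flow — can be made $S^1$-invariant, which is precisely where compactness of $S^1$ and the averaging operator enter, and that the primitive can be taken to vanish to first order at $0$ so that the isotopy fixes $p$. Everything else is standard torus representation theory and a routine moment-map computation.
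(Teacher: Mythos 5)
Your proof is correct and is essentially the standard argument behind the cited result: the paper itself gives no proof (it only cites Karshon, Corollary A.7, and Lerman--Tolman, Lemma A.1), and those references prove it exactly as you do, by linearizing at $p$, averaging to get a unitary $S^1$-representation that splits into two weight spaces, transporting the linear model by an equivariant exponential map plus an equivariant Moser isotopy fixing $p$, and then identifying the moment map up to a constant on a connected neighbourhood. The only caveat is a harmless one of sign conventions: with the paper's convention $d\mu(\cdot)(Y)=\om(\cdot,\overline Y)$ you should double-check that the signs of $\alpha,\beta$ in items (1) and (3) come out consistently, which affects nothing in the argument.
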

Although complex coordinates are used in the definition of weights, the unordered set $\{\alpha,\beta\}$ is a symplectic invariant of the action at the fixed point $p$. This already imposes some restrictions on equivariant symplectomorphisms.

\begin{cor}\label{cor:ActionPreservesWeights}
Consider a Hamiltonian action by a circle $S^1$ on a symplectic $4$-manifold $(M,\om)$. Let $\phi$ be an equivariant symplectomorphism. Then $\phi$ acts on the fixed point set preserving their weights as unordered sets. In particular, equivalent actions have the same set of weights.\qed
\end{cor}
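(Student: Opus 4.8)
The statement to prove is Corollary~\ref{cor:ActionPreservesWeights}: an $S^1$-equivariant symplectomorphism $\phi$ permutes the fixed point set while preserving weights (as unordered sets), and equivalent actions have the same set of weights.

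\textbf{Approach.} The plan is to exploit that $\phi$, being equivariant, conjugates the circle action to itself, hence it must carry fixed points to fixed points, and then use the local normal form of Proposition~\ref{prop:DefinitionOfWeights} together with the fact that the unordered weight set $\{\alpha,\beta\}$ is a symplectic invariant at each fixed point. The key structural input is that $\phi\circ t\circ\phi^{-1}=t$ for all $t\in S^1$, which is exactly the definition of $C(S^1)$.

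\textbf{Key steps.} First I would observe that $\phi$ maps the fixed point set $\Fix(S^1)$ to itself: if $p$ is fixed by every $t\in S^1$, then $t\cdot\phi(p)=\phi(t\cdot p)=\phi(p)$, so $\phi(p)$ is fixed; applying the same argument to $\phi^{-1}$ shows $\phi$ restricts to a bijection of $\Fix(S^1)$, and since $\phi$ is a diffeomorphism it maps connected components of $\Fix(S^1)$ homeomorphically to connected components. Second, fix an isolated fixed point $p$ with weights $\{\alpha,\beta\}$ and let $q=\phi(p)$, also isolated, with weights $\{\alpha',\beta'\}$. By Proposition~\ref{prop:DefinitionOfWeights} choose equivariant Darboux coordinates $(z,w)$ near $p$ and $(z',w')$ near $q$ in which the action is linear with weights $\alpha,\beta$ and $\alpha',\beta'$ respectively. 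Then the differential $d\phi_p: T_pM\to T_qM$ is a linear symplectic isomorphism intertwining the two linearized circle actions, i.e. $d\phi_p\circ(t^\alpha,t^\beta)=(t^{\alpha'},t^{\beta'})\circ d\phi_p$ on $\C^2$. Decomposing $\C^2$ into weight spaces for the $S^1$-representation, such an intertwiner forces the two representations to be isomorphic as $S^1$-representations, which means the multisets of weights agree: $\{\alpha,\beta\}=\{\alpha',\beta'\}$ as unordered sets (with multiplicity). For the fat (non-isolated) extremal components, the corresponding fixed surface is carried to a fixed surface and the argument on the normal weight is the same one-dimensional version. Finally, for the last sentence: if $\rho_1,\rho_2$ are equivalent actions, by Definition~\ref{defn:EquivalenceActions} there is $\psi\in\Symp(M,\om)$ with $\psi\rho_1(S^1)\psi^{-1}=\rho_2(S^1)$; after possibly reparametrizing (the only automorphisms of $S^1$ are $t\mapsto t^{\pm1}$, which at worst swaps $\alpha\leftrightarrow-\alpha$, but the weight comparison is done representation-theoretically so this is harmless — in fact $\psi$ conjugates $\rho_1$ to $\rho_2$ up to this reparametrization), $\psi$ carries fixed points of $\rho_1$ to fixed points of $\rho_2$ and the same differential argument gives equality of weight sets. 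Alternatively one can invoke Theorem~\ref{graphiso}: equivalent actions have isomorphic labelled graphs, and the weights are among the labels.

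\textbf{Main obstacle.} The only genuinely delicate point is justifying that $d\phi_p$ intertwining two linear circle actions forces the weight multisets to coincide; this is a short representation-theory lemma ($S^1$-equivariant $\R$-linear isomorphisms of $\C^2$ preserve the decomposition into isotypic pieces, hence preserve weights with multiplicity), but one must be a little careful about whether one works over $\R$ or $\C$ and about the $t\mapsto t^{-1}$ ambiguity coming from the choice of complex structure in Proposition~\ref{prop:DefinitionOfWeights} — since $\phi$ need not be holomorphic, $d\phi_p$ is only $\R$-linear, and a real weight space for weight $\alpha$ is isomorphic as a real $S^1$-representation to the one for $-\alpha$, so the invariant that is actually preserved is the unordered set $\{\alpha,\beta\}$ together with the already-known sign ambiguity, which is exactly the claim as stated.
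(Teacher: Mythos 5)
Your overall strategy is the same one the paper intends (the corollary is left as an immediate consequence of the invariance statement in Proposition~\ref{prop:DefinitionOfWeights}): equivariance sends fixed components to fixed components, and the linearization $d\phi_p$ compares the isotropy representations at $p$ and $\phi(p)$. However, your final step, as written, contains a genuine gap. You correctly observe that $d\phi_p$ is only $\R$-linear and that the real $2$-dimensional $S^1$-representations of weights $\alpha$ and $-\alpha$ are isomorphic, so the purely representation-theoretic comparison only yields equality of the weights \emph{up to sign}. You then assert that this sign ambiguity ``is exactly the claim as stated,'' but it is not: the weights $\alpha,\beta$ of Proposition~\ref{prop:DefinitionOfWeights} are well-defined integers (not integers up to sign), ``unordered'' only refers to the order of the pair, and the signs carry real information (a maximum has both weights non-positive, a minimum both non-negative, etc.); indeed the later applications, e.g.\ Lemma~\ref{lemma:SymphPreservesAnIsolatedFixedPoint}, use the signed weights to distinguish fixed points. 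So the argument you give proves a strictly weaker statement, and for instance does not by itself rule out $\{1,-1\}$ at $p$ being sent to $\{1,1\}$ at $\phi(p)$, since these are isomorphic as real $S^1$-representations.

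The gap is fixable precisely because you have not yet used the symplectic condition at this point. One way: $d\phi_p$ is a linear symplectomorphism intertwining the two linearized circle actions, hence it intertwines their quadratic moment maps $\tfrac{\alpha}{2}|z|^2+\tfrac{\beta}{2}|w|^2$ and $\tfrac{\alpha'}{2}|z|^2+\tfrac{\beta'}{2}|w|^2$ up to an additive constant, and the (signed) coefficients are read off from this quadratic form on each isotypic plane; in the example above, one form is definite and the other indefinite, so no such symplectic intertwiner exists. Alternatively, push forward an invariant $\om$-compatible complex structure by $d\phi_p$: it is again invariant and $\om$-compatible, and the complex weights do not depend on the choice of such a $J$ (the space of invariant compatible $J$'s at a point is connected and the weights are discrete), so the signed weights agree. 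A global variant is to invoke Lemma~\ref{lemma:CharacterizationCentralizer}: $\mu\circ\phi=\mu$, and the signed weights are visible in the local expansion of $\mu$ at the fixed point. Finally, for the last sentence of the corollary, note that with the paper's Definition~\ref{defn:EquivalenceActions} a conjugation may be combined with the reparametrization $t\mapsto t^{-1}$, which negates all weights, so the ``same set of weights'' assertion should be read for conjugate homomorphisms (or up to this global reparametrization); your appeal to Theorem~\ref{graphiso} is a reasonable alternative route for that part.
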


As explained in \cite{Karshon} pages 6-7, the isotropy weights of a Hamiltonian circle action can be read from its labelled graph as follows:
\begin{itemize}
    \item for $k \geq 2$, a fixed point has an isotropy weight $-k$ if and only if it is the north pole of a $\Z_{k}$ -sphere, and it has an isotropy weight $k$ if and only if it is the south pole of a $\Z_{k}$ -sphere;
    \item an interior fixed point has one positive weight and one negative weight, a maximal fixed point has both weights non-positive, a minimal fixed point has both weights non-negative;
    \item a fixed point has a weight $0$ if and only if it lies on a fixed surface.
\end{itemize}

The next lemma will be important in our analysis of $S^1$  actions.
\begin{lemma}[Lemma 5.4 in~\cite{Karshon}]\label{weight}
Let $S^1$  act on $S^2$ by rotating it $k$ times while fixing the
north and south poles. Suppose that the action lifts to a complex line
bundle $E$ over $S^2$. Then $S^1$ acts linearly on the fibers over the north and south
poles; let $\alpha$ and $\beta$ be the weights for these actions. Then
$$\alpha - \beta = -ek$$
where $e$ is the self intersection of the zero section.\qed
\end{lemma}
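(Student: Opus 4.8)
The plan is to reduce the identity to the clutching description of complex line bundles over $S^2$, and then to use the lifted circle action to pin down the clutching function exactly, so that its winding number can be read off in terms of the two fibre weights.

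First I would fix the geometry. Write $N$ and $S$ for the two fixed poles, identify $S^2$ with $\CP^1$, and cover it by two closed disks $D_N\ni N$ and $D_S\ni S$ meeting in an annular neighbourhood $A$ of the equator, with holomorphic coordinates $z$ on $D_N$ centred at $N$ and $w$ on $D_S$ centred at $S$ satisfying $w=1/z$ on $A$. Since the circle rotates $S^2$ $k$ times, after rescaling the coordinates the action is $t\cdot z=t^{-k}z$ near $N$ and $t\cdot w=t^{k}w$ near $S$ (the signs chosen to match the convention that the north pole of a $\Z_k$-sphere carries weight $-k$; note $t\cdot z=t^{-k}z$ forces $t\cdot w=t^{k}w$ on $A$, so this is consistent). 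Each disk is $S^1$-equivariantly contractible onto its central fixed point, so by homotopy invariance of equivariant bundles the restriction of $E$ to $D_N$, resp.\ $D_S$, is $S^1$-equivariantly trivial, and the trivialisation may be chosen so that the action is \emph{linear} on every fibre, equal to the fibre representation over the fixed point. This gives $E|_{D_N}\cong D_N\times\C$ with $t\cdot(z,v)=(t^{-k}z,t^{\alpha}v)$ and $E|_{D_S}\cong D_S\times\C$ with $t\cdot(w,u)=(t^{k}w,t^{\beta}u)$.

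Next I would write $u=g(z)\,v$ for the clutching function $g\colon A\to\C^{*}$, recalling that the isomorphism class of $E$ — equivalently the self-intersection $e$ of the zero section, which equals $\langle c_1(E),[S^2]\rangle$ — is detected by the winding number of $g$ along the equator (with the conventions above this winding number is $-e$; this is the one point one must double-check by hand against a model such as $\CP^1\times\C$ and $T\CP^1$). Imposing that the two descriptions of the $S^1$-action agree over $A$ forces the functional equation
\[ g(t^{-k}z)=t^{\beta-\alpha}\,g(z)\qquad\text{for all }t\in S^1. \]
Expanding $g$ on the equatorial circle as a Fourier series $g(e^{i\theta})=\sum_n c_n e^{in\theta}$ and matching coefficients, the equation allows only the frequency $n$ with $-nk=\beta-\alpha$; since $g$ never vanishes it is not identically zero, so $g$ restricted to the equator is the single monomial $z^{(\alpha-\beta)/k}$ (in particular $k\mid(\alpha-\beta)$), whose winding number is $(\alpha-\beta)/k$. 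Combining this with the clutching description gives $(\alpha-\beta)/k=-e$, i.e.\ $\alpha-\beta=-ek$.

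The main obstacle — essentially the only delicate point — is sign bookkeeping: which pole carries rotation weight $+k$ versus $-k$, the choice $w=1/z$ versus $w=\bar z/|z|^2$, and the sign relating the winding number of the clutching function to $\langle c_1(E),[S^2]\rangle$. These are exactly the choices that distinguish $\alpha-\beta=-ek$ from $\alpha-\beta=+ek$, so I would fix them once and for all to agree with the conventions of \cite{Karshon}. A slicker but less elementary alternative, which merely repackages the same sign issue, is Atiyah--Bott localisation applied to the $S^1$-equivariant first Chern class of $E$: the two fixed-point contributions are $-\alpha/k$ and $\beta/k$, whose sum is the ordinary Chern number, again yielding $\alpha-\beta=-ek$.
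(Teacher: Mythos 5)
Your clutching-function argument is correct, and it is essentially the standard proof of this fact: the paper itself gives no argument but simply quotes Lemma~5.4 of~\cite{Karshon}, where the proof runs along the same lines (equivariant trivializations over the two invariant discs, with the equivariance constraint forcing the transition function on the equator to be a single monomial whose winding number is $(\alpha-\beta)/k$). Your sign conventions are the right ones: they agree with the paper's convention that the north pole of a $\Z_k$-sphere has tangent weight $-k$, and one can sanity-check the resulting formula $\alpha-\beta=-ek$ against Table~\ref{table_weights}, e.g.\ the curve in class $B-\frac{m}{2}F$ joining $Q$ and $R$ has normal weights $-b$ and $am-b$, rotation number $a$, and self-intersection $-m$, so $(am-b)-(-b)=am=-(-m)a$ as required.
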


We end this section with a simple lemma that will be useful in computing homology classes of invariant spheres directly from labelled graphs.

\begin{lemma}\label{Lemma:SymplecticArea}
Let $S^1$  act on $S^2$ by rotating it $k$ times while fixing the
north and south poles. Let $\mu(n)$ denote the value of the momentum map at the north pole and $\mu(s)$ the value of the momentum map at the south pole. Then the symplectic area of the $S^2$ is given by $\frac{\mu(n) - \mu(s)}{k}$.\qed
\end{lemma}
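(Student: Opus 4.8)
The plan is to use the characterization of the moment map near the fixed points given in Proposition~\ref{prop:DefinitionOfWeights} together with a direct integration of $\om$ over the invariant sphere. First I would set up coordinates: write $S^2$ as the union of two disks $D_n$ and $D_s$ around the north and south poles, glued along the equator. Near the north pole, since $S^1$ rotates $S^2$ exactly $k$ times while fixing $n$, Proposition~\ref{prop:DefinitionOfWeights} provides a complex coordinate $z$ in which the action is $t\cdot z = t^{\pm k} z$, the form is $\om = \tfrac{i}{2}\, dz\wedge d\bar z = r\, dr\wedge d\theta$ (in polar coordinates $z = re^{i\theta}$), and the moment map is $\mu = \mu(n) \mp \tfrac{k}{2}|z|^2$; similarly near the south pole with the opposite sign of the weight. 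The sign conventions are fixed by requiring $\mu(n)$ and $\mu(s)$ to be the extrema consistent with the rotation number $k$.

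Next I would compute $\int_{S^2}\om$ by exploiting $S^1$-invariance. Pushing $\om$ forward along $\mu$, the area of the sublevel region $\{\mu \le c\}$ in $S^2$ is, by the local normal form, an affine function of $c$ with slope $2\pi/k$ (the factor $2\pi$ coming from the angular integration $\int_0^{2\pi} d\theta$ and the $1/k$ from the substitution using $\mu = \mu(n) - \tfrac{k}{2}r^2$, so $d\mu = -k r\, dr$ and $r\,dr\,d\theta = -\tfrac1k\, d\mu\, d\theta$). Equivalently, the Duistermaat--Heckman measure $\mu_*\om$ on the interval $[\mu(s),\mu(n)]$ is the constant density $\tfrac{2\pi}{k}$ times Lebesgue measure — this is the classical Duistermaat--Heckman theorem for circle actions, but here it follows directly from the two local models since the density must be locally constant on the open interval and continuous. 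Integrating gives
\[
\int_{S^2}\om \;=\; \frac{2\pi}{k}\,\bigl(\mu(n)-\mu(s)\bigr).
\]

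Finally I would reconcile normalizations. Depending on whether ``symplectic area'' is taken as $\int_{S^2}\om$ or the normalized area $\tfrac{1}{2\pi}\int_{S^2}\om$ (the normalization implicit in Karshon's labelled graphs, cf.\ the discussion of area labels on fat vertices), the stated formula $\tfrac{\mu(n)-\mu(s)}{k}$ corresponds to the latter convention, and the computation above yields exactly this after dividing by $2\pi$. I expect the only real subtlety to be bookkeeping of these normalization and sign conventions — matching the $\tfrac{i}{2}(dz\wedge d\bar z)$ convention in Proposition~\ref{prop:DefinitionOfWeights} with the convention under which the area labels in the graph are defined — rather than anything analytically deep; the geometric content is just the piecewise-linear (in fact linear) behavior of the Duistermaat--Heckman function forced by the two local models.
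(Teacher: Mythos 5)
The paper offers no argument for this lemma at all: it is stated with a terminal \qed as the standard bookkeeping fact underlying Karshon's area labels, so there is no "paper proof" to compare against. Your proof is correct and, if anything, more careful than what the paper implicitly assumes. The route you take (local normal form at the two poles plus constancy of the pushforward density $\mu_*\om$ on the interval of regular values) is sound; the only overstatement is the claim that local constancy "follows directly from the two local models" --- the models only pin down the density near the endpoints, and the constancy in the interior is a separate (elementary, degree-zero Duistermaat--Heckman, or free-quotient) fact, which you do in any case invoke. A shorter argument, probably the one the authors had in mind, is that "rotating $k$ times" means the action is the pullback of the effective rotation under the $k$-fold covering $S^1\to S^1$, so its moment map is $k$ times the moment map of the effective action, whose moment-image length equals the (normalized) symplectic area of the sphere; this gives $\mu(n)-\mu(s)=k\cdot\mathrm{Area}$ in one line and sidesteps the integration. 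Your closing remark about the factor of $2\pi$ is exactly right: with the literal conventions of Proposition~\ref{prop:DefinitionOfWeights} one gets $\int_{S^2}\om=\tfrac{2\pi}{k}\bigl(\mu(n)-\mu(s)\bigr)$, and the formula as stated matches the normalized-area convention used for the labels in the graphs (e.g.\ the edge joining $R$ and $S$ in the graph of $S^1(1,0;m)$ has moment difference $m$, isotropy label $m$, and area $1$), so identifying that convention is a genuine clarification rather than a gap.
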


\section{Torus actions on \texorpdfstring{$\SSS$ and $\CCC$}{S2xS2 and CP2\#CP2}}
We now combine the \emph{equivariant} classification theorems of T. Delzant and Y. Karshon, together with the Lalonde-McDuff classification of symplectic forms on ruled surfaces, to describe all inequivalent Hamiltonian circle actions on $S^2 \times S^2$ and $\CCC$ and their relations to toric actions. 

\subsection{Symplectic forms on \texorpdfstring{$\SSS$ and $\CCC$}{S2xS2 and CP2\#CP2}}
We equip $\SSS$ with the product symplectic form $\om_\lambda=\lambda \sigma \oplus \sigma$, where $\sigma$ is the standard area form on $S^2$ normalized such that $\sigma(S^2)=1$. If we think of $\SSS$ as a trivial fiber bundle, $\oml$ gives area $1$ to the fibers, while the area of horizontal sections is $\lambda$. Similarly, if we view $\CCC$ as the non-trivial $S^2$ bundle over $S^2$, we define an analogous form $\oml$ which gives area $1$ to the fibers and area $\lambda-1$ to symplectic sections of self-intersection $-1$, that is, to sections homologous to the exceptional divisor. From an homological point of view, if $F$ denotes the homology class of a fiber in either $\SSS$ or $\CCC$, if $B$ denotes the class of a section of self-intersection $0$ in $\SSS$, if $E$ denotes the class of the exceptional divisor in $\CCC$, and if $L$ denotes the class of a line in $\CCC$, then $[\oml]F=1$, $[\oml]B=\lambda$, $[\oml]L=\lambda$ and $[\oml]E=\lambda-1$. Note that with our conventions on $\oml$ for the non-trivial bundle $\CCC$, $\lambda$ must be strictly greater than 1 for the symplectic curve in class $E$ to have positive area. We can now state the Lalonde-McDuff classification theorem.

\begin{thm}[Lalonde-McDuff~\cite{MR1426534}]\label{L-McD-Classification} Any symplectic form on $\SSS$ or $\CCC$ is conformally diffeomorphic to a form $\oml$ with $\lambda\geq 1$. Moreover, any two cohomologous forms are diffeomorphic.
\end{thm}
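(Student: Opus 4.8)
The statement decomposes into two essentially independent assertions. First, a \emph{cohomological normalization}: every symplectic cohomology class on $\SSS$ or $\CCC$ is carried, by rescaling and by the action of the diffeomorphism group on $H^2$, to the class $[\oml]$ for a unique $\lambda\ge 1$. Second, a \emph{uniqueness statement}: any two cohomologous symplectic forms on one of these manifolds are related by a diffeomorphism. The theorem follows by combining the two, since ``conformally diffeomorphic'' means precisely ``equal after rescaling and pulling back by a diffeomorphism''.

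For the first assertion I would argue on the intersection lattices. One has $H^2(\SSS;\Z)\cong\Z^2$ with the hyperbolic form, and $H^2(\CCC;\Z)\cong\langle 1\rangle\oplus\langle -1\rangle$; a symplectic class $\alpha$ must satisfy $\alpha^2>0$ and must pair non-trivially, with the correct sign, on the fiber class $F$ and, in the blow-up case, on the exceptional class $E$. An elementary analysis then shows that, after rescaling so that $\alpha\cdot F=1$ and possibly applying the diffeomorphism of $\SSS$ that interchanges the two $S^2$-factors (respectively, a diffeomorphism of $\CCC$ realizing the reflection $h\mapsto h,\ e\mapsto -e$), one gets $\alpha=\lambda\,PD(B)+PD(F)$ with $\lambda=\alpha\cdot B\ge 1$ on $\SSS$, and $\alpha = \lambda\,h - (\lambda-1)\,e$ with $\lambda>1$ on $\CCC$ (the strict inequality being forced by positivity of area on $E$). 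This part is routine linear algebra together with the identification of the subgroup of $O(H^2)$ realized by diffeomorphisms.

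For the second — and main — assertion, the strategy is Gromov's theory of pseudo-holomorphic spheres. Fix a symplectic form $\omega$ in the class $[\oml]$ and an $\omega$-compatible almost complex structure $J$. The fiber class $F$ has $F^2=0$ and $c_1\cdot F=2>0$, so by Gromov's existence and compactness theorems, together with positivity of intersections and automatic regularity for embedded $J$-spheres of self-intersection $0$, through every point of $M$ there passes a unique embedded $J$-holomorphic sphere in class $F$; these are pairwise disjoint and sweep out $M$, exhibiting $(M,\omega)$ as the total space of an $\omega$-compatible $S^2$-fibration over $S^2$. One then puts $\omega$ in a normal form adapted to this ruling: choosing a symplectic section and applying a fiberwise Moser argument in the base and in the fibers, $\omega$ is deformed, through cohomologous symplectic forms, to a standard K\"ahler ruled form whose fiber area and section area equal the cohomological pairings $[\omega]\cdot F=1$ and $[\omega]\cdot B=\lambda$ (respectively the analogous data on $\CCC$). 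Since those pairings are determined by $[\omega]$, two cohomologous forms are deformed to the \emph{same} standard model; a final application of Moser's stability theorem along the concatenated path of cohomologous forms then produces the desired diffeomorphism, in fact a symplectic isotopy.

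The main obstacle is the second assertion, and within it the step showing that the fiber class is $J$-holomorphically represented for \emph{every} compatible $J$, with the resulting moduli space forming a genuine smooth ruling: this is exactly where the special features of $4$-dimensional symplectic geometry (positivity of intersections, the adjunction formula, automatic transversality) are indispensable, and it is the heart of the Lalonde--McDuff argument. By contrast the cohomological normalization is elementary, though for $\CCC$ one must keep in mind that the admissible range is $\lambda>1$, reflecting that $\oml$ on the non-trivial $S^2$-bundle degenerates as $\lambda\to 1$.
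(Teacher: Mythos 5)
You should note that the paper gives no proof of this statement at all—it is quoted verbatim from Lalonde--McDuff~\cite{MR1426534}—and your sketch is exactly the argument of that reference: an elementary normalization of the symplectic class by rescaling and by diffeomorphisms acting on $H^2$, followed by the $J$-holomorphic ruling in the fiber class $F$ (using $F^2=0$, $c_1(F)=2$, positivity of intersections and automatic regularity) and a Moser/inflation argument showing that cohomologous forms are diffeomorphic. Your outline is correct and correctly isolates the hard steps (existence of the ruling for every compatible $J$, and keeping the deformation within a fixed cohomology class, which is where inflation enters), up to one harmless slip: with your normalization $\langle\alpha,F\rangle=1$ and $\lambda=\langle\alpha,B\rangle$, the class on $\SSS$ should read $\alpha=\mathrm{PD}(B)+\lambda\,\mathrm{PD}(F)$ rather than $\lambda\,\mathrm{PD}(B)+\mathrm{PD}(F)$.
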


\subsection{Toric actions on \texorpdfstring{$\SSS$ and $\CCC$}{S2xS2 and CP2\#CP2}}\label{Section:ToricActionsOnSSSandCCC} It is well known that, up to symplectic equivalence, the only toric actions on $\SSS$ and $\CCC$ are given by the standard torus actions on the Hirzebruch surfaces. We now review the main points of the argument. 
 
Recall that the $m^{\text{th}}$ Hirzebruch surface $W_m$ is the complex submanifold of $\mathbb{C}P^1 \times \mathbb{C}P^2$ defined by the equation
\[
W_m:=  \left\{ \left(\left[x_1,x_2\right],\left[y_1,y_2, y_3\right]\right) \in \mathbb{C}P^1 \times \mathbb{C}P^2 ~|~  x^m_1y_2 - x_2^my_1 = 0 \right\}
\]
The projection map $\mathbb{C}P^1 \times \mathbb{C}P^2 \longrightarrow \mathbb{C}P^1$ gives $W_m$ the structure of a $\mathbb{C}P^1$ bundle over $\mathbb{C}P^1$ which is diffeomorphic (but not complex isomorphic) to $S^2 \times S^2$ if $m$ is even and diffeomorphic to the non-trivial $S^2$ bundle over $S^2$ i.e $\CP^2 \# \overline{\CP^2}$ if $m$ is odd. Thus, for each $m$ we have an integrable complex structure $J_m$ induced on $\SSS$ or $\CCC$.  When $m=2k$ even, choose $\lambda > k$, then we can endow  $\mathbb{C}P^1 \times \mathbb{C}P^2$ with the symplectic form $(\lambda -k) \sigma_1 \oplus \sigma_2$, where $\sigma_1$ is   the standard symplectic form on  $\mathbb{C}P^1$ with area 1 and $\sigma_2$ is the standard symplectic form on $\mathbb{C}P^2$ such that $\sigma_2(L) =1$, where $L$ is the class of the line in $\CP^2$.  Restricting this symplectic form to $W_m$ makes it a symplectic manifold. Similarly when $m=2k+1$,  choose $\lambda > k+1$, then we can analogously define the form $(\lambda - (k+1)) \sigma_1 \oplus \sigma_2$ when $m$ is odd. With these choices of symplectic forms, the Lalonde-McDuff classification theorem~\ref{L-McD-Classification} implies that $W_m$ is symplectomorphic  to $(\SSS,\oml)$ if $m$ is even and to $(\CP^2 \# \overline{\CP^2},\oml)$ when $m$ is odd.\\

Given an integer $m\geq 0$, the torus $\T$ acts on $\mathbb{C}P^1 \times \mathbb{C}P^2$ by setting
\[
 \left(u,v\right) \cdot  \left(\left[x_1,x_2\right],\left[y_1,y_2, y_3\right]\right) = \left(\left[ux_1,x_2\right],\left[u^my_1,y_2,vy_3\right]\right)
\]
This action leaves $W_m$ invariant and preserves both the complex and the symplectic structures. Its restriction to $W_m$ defines a toric action that we denote $\T_m$. 

When $m$ is even, the image of the moment map is the polytope of Figure~\ref{hirz}
\begin{figure}[H]
\centering       
\scalebox{0.9}{
\begin{tikzpicture}
\node[left] at (0,2) {$Q=(0,1)$};
\node[left] at (0,0) {$P=(0,0)$};
\node[right] at (4,2) {$R= (\lambda - \frac{m}{2} ,1)$};
\node[right] at (6,0) {$S=(\lambda + \frac{m}{2} ,0)$};
\node[above] at (2,2) {$D_m=B-\frac{m}{2}F$};
\node[right] at (5.15,1) {$F$};
\node[left] at (0,1) {$F$};
\node[below] at (3,0) {$B+ \frac{m}{2}F$};
\draw (0,2) -- (4,2) ;
\draw (0,0) -- (0,2) ;
\draw (0,0) -- (6,0) ;
\draw (4,2) -- (6,0) ;
\end{tikzpicture}}
\caption{Even Hirzebruch polygon}
\label{hirz}
\end{figure}

\noindent where the labels along the edges refer to the homology classes of the  $\T$ invariant spheres in $\SSS$, and where the vertices $P$,$Q$,$R$,$S$ are the fixed points of the torus action. 
Similarly, when $m$ is odd, the moment map image is given in Figure~\ref{fig:OddHirzebruch}
\begin{figure}[H]
\centering   
\label{hirz0}
\scalebox{0.9}{
\begin{tikzpicture}
\node[left] at (0,2) {$Q=(0,1)$};
\node[left] at (0,0) {$P=(0,0)$};
\node[right] at (4,2) {$R= (\lambda - \frac{m+1}{2} ,1)$};
\node[right] at (6,0) {$S=(\lambda + \frac{m-1}{2} ,0)$};
\node[above] at (2,2) {$D_m=B-\frac{m+1}{2}F$};
\node[right] at (5.15,1) {$F$};
\node[left] at (0,1) {$F$};
\node[below] at (3,0) {$B+ \frac{m-1}{2}F$};
\draw (0,2) -- (4,2) ;
\draw (0,0) -- (0,2) ;
\draw (0,0) -- (6,0) ;
\draw (4,2) -- (6,0) ;
\end{tikzpicture}}
\caption{Odd Hirzebruch polygon}\label{fig:OddHirzebruch}
\end{figure}
\noindent where $B$ now refers to the homology class of a line $L$ in $\CP^2 \# \overline{\CP^2}$, $E$ is the class of the exceptional divisor, and $F$ is the fiber class $L - E$.\\

We define the zero-section $s_0$ to be
\begin{align*}
   s_0: \CP^1 &\to W_m \\
    [x_1;x_2]  &\mapsto \{[x_1,x_2], [0;0;1]\}
\end{align*}
and the section at infinity $s_\infty$ to be 
\begin{align*}
     s_\infty: \CP^1 &\to W_m \\
    [x_1;x_2]  &\mapsto \{[x_1,x_2], [x^m_1;x^m_2;0]\}
\end{align*}
The image of $s_0$ is an invariant and holomorphic sphere homologous to either $D_m:=B-\frac{m}{2}F$ in $\SSS$ or to $D_m:=B- \frac{m+1}{2}F$ in $\CCC$, depending on the parity of $m$. Similarly, $s_\infty$ defines an invariant and holomorphic sphere that represents either $B + \frac{m}{2}F$ in $\SSS$ or $B+\frac{m+1}{2}F$ in $\CCC$. Finally, the homology class $F$ can be represented by an invariant fibre such as $\{[1,0], [y_1,0,y_3]\}$.\\

It is well known (see~\cite{Audin}) that a Delzant polygon $\Delta\subset\R^{2}$ with $e\geq 3$ edges defines a toric $4$-manifold $M_{\Delta}$ whose second Betti number is $b_{2}(M_{\Delta})=e-2$. It follows that the moment polytope of any toric action on either $\SSS$ or $\CCC$ is a quadrilateral. It is easy to see that any Delzant quadrilateral is equivalent to one of the above Hirzebruch trapezoids up to $\AGL(2,\Z)$ action and up to rescaling. It follows from Delzant's classification that any toric action on $\SSS$ and $\CCC$ is equivalent to an action of the above form. In particular, the equivalence classes of toric actions on $\SSS$ and $\CCC$ are completely characterized by the existence of invariant spheres in specific homology classes.

\begin{lemma}\label{lemma_torus_action_-vecurve}
Up to equivalence, the toric action $\T_m$, $m\geq 1$, is characterised by the existence of an invariant, embedded, symplectic sphere $C_m$ with self intersection $-m$. The toric action $\T_{0}$ is characterized by the existence of invariant, embedded, symplectic spheres representing the classes $B$ and~$F$.\qed
\end{lemma}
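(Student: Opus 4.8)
The plan is to reduce everything to the classification just obtained in the paragraphs above — that any toric action on $\SSS$ or $\CCC$ is equivalent to one of the standard Hirzebruch actions $\T_m$, $m \geq 0$ — and then to read off which $m$ occurs from the self-intersections of the invariant spheres. First I would record the homology classes of the four invariant spheres of $\T_m$ directly from Figures~\ref{hirz} and~\ref{fig:OddHirzebruch}, and compute their self-intersections using $B^2 = F^2 = 0$, $B \cdot F = 1$ on $\SSS$ (respectively $L^2 = 1$, $E^2 = -1$, $L \cdot E = 0$, $F = L - E$ on $\CCC$, or, combinatorially, the rule $D_i^2 = -c_i$ where $v_{i-1} + v_{i+1} = c_i v_i$ for consecutive inward primitive edge normals). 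This yields self-intersections $0$, $0$, $m$ and $-m$ when $m \geq 1$, the $-m$-sphere being the zero-section image $s_0(\CP^1)$ in class $D_m$; when $m = 0$ the polytope is a rectangle and the four edge spheres lie in classes $B$, $B$, $F$, $F$. This gives the ``existence'' half of both assertions, since $\T_m$ itself exhibits the required sphere(s).

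For the converse I would establish the key auxiliary fact that \emph{every $T$-invariant embedded symplectic sphere $\Sigma$ in a toric $4$-manifold with moment polytope $\Delta$ is one of the spheres $\overline{\mu^{-1}(e)}$ lying over an edge $e$ of $\Delta$}. Indeed, the induced $T$-action on $\Sigma \cong S^2$ cannot be effective, since a $2$-torus has no effective action on $S^2$; so the kernel $K$ of $T \to \Diff(\Sigma)$ is positive-dimensional, and it is not all of $T$, for otherwise $\Sigma \subseteq \Fix(T)$, a finite set. Hence $K$ contains a subcircle $K'$ with $\Sigma \subseteq \Fix(K')$, and since $\Fix(K')$ is a symplectic submanifold whose $2$-dimensional components are precisely the invariant spheres over the edges of $\Delta$ stabilized by $K'$, the connected surface $\Sigma$ must coincide with one of them.

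The rest is bookkeeping. Given a toric action $T$ on $\SSS$ or $\CCC$ with an invariant embedded symplectic sphere $\Sigma$ of self-intersection $-m$, $m \geq 1$, pick a symplectomorphism $\psi$ conjugating $T$ to some $\T_{m'}$; then $\psi^{-1}(\Sigma)$ is a $\T_{m'}$-invariant embedded symplectic sphere of the same self-intersection $-m$ (self-intersection being a diffeomorphism invariant), hence an edge sphere of $\T_{m'}$ by the auxiliary fact, hence $-m \in \{0, 0, m', -m'\}$, which forces $m' = m$. For $\T_0$ I would additionally observe that $B$ and $F$ are the only classes $x \in H_2(\SSS)$ with $x^2 = 0$ and $c_1 \cdot x = 2$, so the pair $\{B, F\}$ is preserved by every diffeomorphism, while for $m' \geq 2$ the action $\T_{m'}$ has no invariant sphere in class $B$; thus a toric action with invariant spheres in classes $B$ and $F$ can only be $\T_0$.

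The only genuinely non-formal step is the auxiliary fact in the second paragraph — ruling out ``exotic'' invariant symplectic spheres other than the edge spheres; once it is in place, the rest is the elementary computation and case-check above. I expect its proof to rely on the local normal form for Hamiltonian torus actions near the preimage of an edge together with the standard description of the fixed loci of circle subactions of a toric action.
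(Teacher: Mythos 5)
Your proposal is correct and takes essentially the same route as the paper, which states the lemma without proof as a consequence of Delzant's reduction of any toric action on these manifolds to a Hirzebruch action $\T_{m'}$ together with the identification of the invariant spheres with the edge spheres, whose self-intersections $\{0,0,m',-m'\}$ (classes $B,B,F,F$ when $m'=0$) then pin down $m'$. Your auxiliary fact --- that every invariant embedded symplectic sphere is an edge sphere, proved via a subcircle fixing it pointwise and the stabilizer stratification of the moment polytope --- is precisely the step the paper leaves implicit, and your argument for it is sound.
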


\begin{lemma}\label{lemma_number_of_torus_actions}
Write $\lambda=\ell+\delta$ with $\ell$ an integer and $0<\delta\leq 1$. Then, up to symplectomorphisms and reparametrizations, 
\begin{itemize}
\item if $\lambda \geq 1$, there are exactly $\ell+1$ inequivalent toric actions on $(\SSS,\oml)$ given by the even Hirzebruch actions $\T_{2k}$ with $0\leq k\leq \ell$, and
\item if $\lambda >1$, there are exactly $\ell$ inequivalent toric actions on $(\CCC,\oml)$ given by the odd Hirzebruch actions $\T_{2k+1}$ with $0\leq k\leq \ell-1$.
\end{itemize}
\end{lemma}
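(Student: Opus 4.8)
The plan is to read the count off the Delzant correspondence together with the positivity constraint imposed by $\oml$ on symplectic areas. As recalled above, every toric action on $(\SSS,\oml)$ (resp.\ $(\CCC,\oml)$) is equivalent to one of the Hirzebruch actions $\T_{2k}$ (resp.\ $\T_{2k+1}$), whose moment polytope is the trapezoid of Figure~\ref{hirz} (resp.\ of Figure~\ref{fig:OddHirzebruch}) with this value of $\lambda$, which I will denote $\Delta_{2k}(\lambda)$ (resp.\ $\Delta_{2k+1}(\lambda)$); and by Delzant's theorem two of these actions are equivalent if and only if their polytopes agree up to the $\AGL(2;\Z)$-action. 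So it suffices to (i) decide, for a fixed $\lambda$, which $\Delta_{2k}(\lambda)$ (resp.\ $\Delta_{2k+1}(\lambda)$) is an honest non-degenerate Delzant quadrilateral, hence an actual toric structure on the manifold with the form $\oml$, and (ii) check that distinct $k$ yield inequivalent actions.

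For (i) in the even case, the top edge of $\Delta_{2k}(\lambda)$ runs from $Q=(0,1)$ to $R=(\lambda-k,1)$, so its lattice length --- equivalently, the $\oml$-area of the invariant section in the class $D_{2k}=B-kF$ --- equals $\lambda-k$. The trapezoid bounds a non-degenerate convex quadrilateral precisely when $\lambda-k>0$: if $\lambda-k=0$ then $R=Q$ and the polygon collapses to the triangle of $\CP^2$, while if $\lambda-k<0$ the ordered vertices $P,Q,R,S$ no longer enclose a convex region. Hence the admissible values are the non-negative integers $k$ with $k<\lambda$, which is exactly the inequality $\lambda>m/2$ used above to realize $W_{2k}$ inside $\CP^1\times\CP^2$. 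Writing $\lambda=\ell+\delta$ with $0<\delta\le1$, this set is $\{0,1,\dots,\ell\}$ --- in the sub-case $\delta<1$ because $\ell<\lambda<\ell+1$, and in the sub-case $\delta=1$ because $\lambda=\ell+1$ --- so it has $\ell+1$ elements. The odd case is identical with $D_{2k+1}=B-(k+1)F$: the top edge ends at $R=(\lambda-(k+1),1)$, positivity of its length $\lambda-(k+1)$ forces $k<\lambda-1$, and for $\lambda=\ell+\delta>1$ this set is $\{0,1,\dots,\ell-1\}$, of cardinality $\ell$.

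For (ii), inequivalence of the $\T_{2k}$ (resp.\ $\T_{2k+1}$) for distinct $k$ is immediate from Lemma~\ref{lemma_torus_action_-vecurve}: for $m\ge1$ the action $\T_m$ is characterized up to equivalence by possessing an invariant embedded symplectic sphere of self-intersection $-m$, and $\T_0$ by possessing invariant spheres in the classes $B$ and $F$; thus the integer $m$ is an invariant of the equivalence class. (Equivalently, the multiset $\{0,0,-m,m\}$ of self-intersection numbers of the toric divisors of $\Delta_m(\lambda)$ is an $\AGL(2;\Z)$-invariant that separates the trapezoids.) That each admissible $\T_{2k}$ (resp.\ $\T_{2k+1}$) really does occur on $(\SSS,\oml)$ (resp.\ $(\CCC,\oml)$) is exactly the content of the Hirzebruch construction recalled before Lemma~\ref{lemma_torus_action_-vecurve}, whose hypothesis is precisely our admissibility inequality. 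The one step that is not purely formal is the endpoint bookkeeping in (i): one must treat $\delta=1$ (i.e.\ $\lambda$ an integer) separately and observe that $k=\lambda$ (resp.\ $k=\lambda-1$) is \emph{excluded}, since there the quadrilateral degenerates to a triangle, so the count stays $\ell+1$ (resp.\ $\ell$) rather than becoming one larger; everything else is a direct application of the Delzant and Lalonde--McDuff classifications already quoted.
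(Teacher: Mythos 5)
Your proposal is correct and follows essentially the same route as the paper: reduce to the Hirzebruch actions $\T_m$ via the Delzant classification, then impose strict positivity of the $\oml$-area of the invariant sphere in class $B-kF$ (resp. $L-(k+1)F$), i.e. $\ell+\delta-k>0$ (resp. $\ell+\delta-(k+1)>0$), to get the count. Your extra bookkeeping (the $\delta=1$ endpoint, inequivalence of distinct $k$ via Lemma~\ref{lemma_torus_action_-vecurve}) is just a more explicit rendering of what the paper leaves implicit.
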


\begin{proof}
Write $m=2k$ or $m=2k+1$ with $k\geq 0$. As seen above, any toric action on $\SSS$ and $\CCC$ is $\T$-equivariantly symplectomorphic to one of the actions $\T_m$. The invariant symplectic sphere $D_k$ in class $B-kF$ on $\SSS$ or $L - (k+1)F$ in $\CCC$ must have positive area, that is, 
\begin{multline*}
0<\oml(B-kF) = \lambda - k = \ell+\delta-k,\\
\quad\text{~or~}\quad 0<\oml(L-(k+1)F) =\lambda-(k+1)F = \ell+\delta-(k+1)
\end{multline*}
The result follows.
\end{proof}

\begin{prop}\label{prop:NormalizersHirzebruch}
The Weyl group $W(\T_{m})=N(\T_{m})/C(\T_{m})$ is isomorphic to
\[W(\T_{m})\simeq
\begin{cases}
\text{the dihedral group~} D_{4}, & \text{~when~} m=0 \text{~and~} \lambda=1;\\
\text{the dihedral group~} D_{2}\simeq\Z_{2} \times \Z_2, & \text{~when~} m=0 \text{~and~} \lambda>1;\\
\text{the dihedral group~} D_{1}\simeq\Z_{2},& \text{~when~} m\geq 1.
\end{cases}\]
\end{prop}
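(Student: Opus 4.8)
The plan is to compute each Weyl group $W(\T_m)$ as the symmetry group of the associated Delzant polytope, using Proposition~\ref{prop:CentralizersToricActions}(2): an element of $N(\T_m)$ induces, via $\mu\circ\psi = \Lambda\circ\mu$, an element $\Lambda\in\AGL(2;\Z)$ that maps the moment polytope $\Delta_m$ to itself, and conversely every affine lattice symmetry of $\Delta_m$ is realized by some element of $N(\T_m)$ (lift the symmetry of the polytope to an equivariant symplectomorphism, e.g. via Delzant's theorem applied to the reparametrized action). The kernel of the map $N(\T_m)\to\Aut(\Delta_m,\AGL(2;\Z))$ is exactly $C(\T_m)$ by Lemma~\ref{lemma:CharacterizationCentralizer}, since $\Lambda = \id$ forces $\mu\circ\psi = \mu$. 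Hence $W(\T_m)$ is isomorphic to the group of affine lattice automorphisms of $\R^2$ preserving the polytope $\Delta_m$ as a set, and the whole proof reduces to a finite combinatorial inspection of the trapezoids in Figures~\ref{hirz} and \ref{fig:OddHirzebruch}.

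First I would treat $m\geq 1$. The even trapezoid with vertices $P,Q,R,S$ has two parallel horizontal edges of \emph{different} lengths (the section-at-infinity edge $B+\tfrac m2 F$ is strictly longer than the zero-section edge $D_m = B-\tfrac m2 F$), so any polytope symmetry must fix the two horizontal edges setwise individually; combined with the fact that the left edge (a fiber $F$) and the right slanted edge are non-parallel and of different "type", the only nontrivial symmetry is the reflection swapping the two fiber-direction endpoints of each horizontal edge — i.e. the reflection through the vertical axis, which corresponds to the Weyl element of the $S^1$-factor rotating the base. This gives $W(\T_m)\simeq\Z_2 = D_1$. The odd case $m=2k+1$ is identical: the trapezoid again has two unequal parallel edges, so the same single reflection generates $W(\T_m)\simeq\Z_2$. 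I would phrase this uniformly by noting that for $m\geq 1$ the sphere $C_m$ of self-intersection $-m$ from Lemma~\ref{lemma_torus_action_-vecurve} is the \emph{unique} invariant sphere of negative self-intersection, hence must be preserved, which already pins down the short edge and kills all but the one reflection.

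Next, $m=0$ with $\lambda>1$: the polytope is a rectangle $[0,\lambda]\times[0,1]$ with $\lambda>1$, whose affine lattice symmetry group is generated by the two reflections through the midlines (one horizontal, one vertical), giving the Klein four-group $\Z_2\times\Z_2 = D_2$ — here both $B$ and $F$ are represented by invariant spheres but they are not interchangeable since $\oml(B)=\lambda\ne 1=\oml(F)$. Finally, $m=0$ with $\lambda=1$: the polytope is the unit square, whose $\AGL(2;\Z)$-symmetry group is the full dihedral group of order $8$ (the four reflections and rotations permuting the two square factors, now allowed because $\oml(B)=\oml(F)=1$), so $W(\T_0)\simeq D_8$. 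The only genuine subtlety — and the step I expect to be the main obstacle — is the surjectivity claim $\Aut(\Delta_m)\subset \image(N(\T_m))$: one must check that each affine lattice symmetry of the polytope genuinely lifts to a symplectomorphism normalizing $\T_m$, not merely to a diffeomorphism; this follows from Delzant's theorem (Theorem of Delzant quoted above) applied to the toric structure obtained by precomposing $\mu$ with $\Lambda$, since $\Lambda\circ\mu$ has the same image polytope and thus yields an equivariant symplectomorphism conjugating $\T_m$ to itself, but I would make sure to record that the square/rectangle reflections that swap the two $S^2$-factors are orientation-compatible and do preserve $\oml$ in the relevant cases. Everything else is bookkeeping on edge lengths and lattice directions.
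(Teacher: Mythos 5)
Your proposal is correct and takes essentially the same route as the paper: reduce $W(\T_m)$ to the group of $\AGL(2,\Z)$ symmetries of the Hirzebruch trapezoid via Proposition~\ref{prop:CentralizersToricActions} (with the realization of each polytope symmetry by an element of $N(\T_m)$ supplied by Delzant's theorem), then inspect the square, the rectangle, and the trapezoid case by case. The only slight imprecision is describing the $m\geq 1$ symmetry as ``the reflection through the vertical axis'': a Euclidean reflection is not a symmetry of the slanted trapezoid, and the unique nontrivial element is the affine lattice involution $(x,y)\mapsto(-x-my+\lambda+k,\,y)$ swapping $P\leftrightarrow S$ and $Q\leftrightarrow R$ — which is exactly the order-two element the paper writes down explicitly, and which your characterization ``swaps the endpoints of each horizontal edge'' captures in substance.
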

\begin{proof}
This follows directly from Proposition~\ref{prop:CentralizersToricActions} and from the description of Hirzebruch trapezoids. Indeed, for $m=0$ and $\lambda=1$, the moment map polygon is a unit square whose symmetries can be realized by elements of $\AGL(2,\Z)$. The same holds for $m=0$ and $\lambda>1$, with the only difference that the moment polygone is now a rectangle with sides of lengths $1$ and $\lambda$. Finaly, for $m\geq 1$, if we write $m=2k$ or $m=2k+1$, the only non-trivial element of  $\AGL(2,\Z)$ that leaves the standard Hirzebruch trapezoid invariant is
\[\Lambda 
=\begin{pmatrix}-1 & -m\\ 0 & 1\end{pmatrix}
+\begin{pmatrix}\lambda+k,0\end{pmatrix}\]
which is an element of order 2.
\end{proof}

\subsection{Hamiltonian circle actions on \texorpdfstring{$\SSS$ and $\CCC$}{S2xS2 and CP2\#CP2}}
A list of all possible Hamiltonian circle action on $\SSS$ or $\CCC$ comes easily from an extension theorem of Y. Karshon.
\begin{thm}[Karshon~\cite{finTori}, Theorem 1]
Any symplectic $S^1$ action on $(\SSS,\oml)$ and $(\CCC,\oml)$ extends to an Hamiltonian toric action. 
\end{thm}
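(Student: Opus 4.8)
The plan is to reduce the statement to the toric case already analysed above, using Karshon's equivariant classification of Hamiltonian circle actions (Theorem~\ref{graphiso}). \emph{Reduction to the Hamiltonian case.} Since $\SSS$ and $\CCC$ are rational surfaces they are simply connected, so $H^1(M;\R)=0$. If $X$ generates a symplectic $S^1$ action, then $\iota_X\oml$ is closed, hence exact, and averaging a primitive over the circle produces an invariant function whose Hamiltonian flow is the action; thus every symplectic circle action on these manifolds is already Hamiltonian, and it suffices to show that a Hamiltonian circle action extends to a toric one.

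\emph{Constraining the graph.} By Proposition~\ref{prop:DefinitionOfWeights} and Karshon's classification, such an action is recorded by an admissible labelled graph $\Gamma$ with exactly two extremal vertices. The Lalonde--McDuff theorem (Theorem~\ref{L-McD-Classification}) identifies $M$ with $\SSS$ or $\CCC$ and pins down $[\oml]$; in particular $\chi(M)=4$ and $b_2(M)=2$. Counting fixed points via $\chi(M)=4$ bounds the number of vertices of $\Gamma$, and the admissibility conditions on a chain of edges (the $\gcd$ condition and the integrality of $(k_{i-1}+k_{i+1})/k_i$) together with the homological relations in $H_2(M)$ severely restrict the edge and weight labels. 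The outcome is a short explicit list: either (i) both extremal vertices are fat and $\Gamma$ has no interior vertex, or (ii) $\Gamma$ is a chain of isolated fixed points joined by $\Z_{k_i}$-spheres, with at most two interior vertices and a controlled set of weight labels. Establishing that this list is complete is the technical core of the proof.

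\emph{Matching with a Hirzebruch torus action.} In case (i) the action has two fixed symplectic spheres; cutting at a regular value exhibits $M$ as the fibrewise compactification of a complex line bundle over $S^2$, and comparing the Duistermaat--Heckman function and the areas of the reduced spaces with the trapezoids of Figures~\ref{hirz} and~\ref{fig:OddHirzebruch} identifies the action with the fibre-rotation subcircle of some $\T_m$. In case (ii) one reads off the homology classes and areas of the invariant $\Z_{k_i}$-spheres directly from $\Gamma$ using Lemmas~\ref{weight} and~\ref{Lemma:SymplecticArea}, and checks that this chain of spheres matches, class for class and area for area, the boundary spheres of a Hirzebruch polygon for a suitable $m$ and a suitable primitive embedding $\Z\into\Z^2$ of the circle into $\T_m$. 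In either case the given circle action and the corresponding subcircle of $\T_m$ have isomorphic labelled graphs, so the uniqueness clause of Theorem~\ref{graphiso} makes them equivariantly symplectomorphic; hence the given action is conjugate, in $\Symp(M,\oml)$, to a subcircle of $\T_m$ and therefore extends to the Hamiltonian toric action $\T_m$.

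\emph{Main obstacle and an alternative.} The delicate part is the enumeration in the second step: one must verify that the numerical invariants of $(M,\oml)$ genuinely exclude every ``exotic'' admissible graph, so that the resulting finite list is exactly the list of subcircles of the $\T_m$. An alternative route avoids this bookkeeping: using Karshon's result that a Hamiltonian circle action on a $4$-manifold admits an invariant compatible \emph{integrable} $J$, one obtains a Hirzebruch surface $(M,J)=W_m$ carrying a holomorphic $\C^{*}$-action; since $\Aut(W_m)$ is a connected linear algebraic group in which all maximal algebraic tori are conjugate, the $\C^{*}$ sits inside a maximal $(\C^{*})^2$, and averaging $\oml$ over a maximal compact torus $\T\subset(\C^{*})^2$ (and correcting by a Moser isotopy) produces a Hamiltonian $\T$-action extending the original circle. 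Either way the theorem is brought back to the toric classification recalled above.
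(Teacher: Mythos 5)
The paper does not actually prove this statement: it is quoted directly from Karshon~\cite{finTori}, so there is no internal argument to measure yours against. Judged on its own terms, your main route has a genuine gap exactly where you flag it. The reduction to the Hamiltonian case (via $H^1(M;\R)=0$ and averaging the primitive) and the final appeal to the uniqueness clause of Theorem~\ref{graphiso} are both fine, but the middle step --- showing that the only admissible labelled graphs that can arise on $(\SSS,\oml)$ or $(\CCC,\oml)$ are, up to the $\AGL(1,\Z)$ action, the graphs of the subcircles $S^1(a,b;m)\subset\T_m$ --- is the entire content of the theorem, and you assert it rather than establish it. The Euler characteristic only bounds the number of fixed components; you still have to rule out extremal fixed surfaces of positive genus (this uses, e.g., the fact that $\pi_1$ of the manifold agrees with $\pi_1$ of an extremal fixed surface, not $\chi$ alone), pin down the possible weights at the interior fixed points, and --- most importantly --- show that the moment-map and area labels forced by $[\oml]$, $c_1$ and the intersection form actually match a Hirzebruch trapezoid for some $m$ and some primitive subcircle. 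None of this bookkeeping is carried out, so as written the first argument is a plan rather than a proof.

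Your alternative route is much closer to a complete argument: with an $S^1$-invariant compatible integrable $J$, the complex surface $(M,J)$ is a Hirzebruch surface $W_m$ (a classical fact you should cite), the circle lies in a maximal compact $2$-torus $\T$ of $\Aut^{0}(W_m)$, averaging $\oml$ over $\T$ produces a $\T$-invariant, $J$-compatible form in the class $[\oml]$, and a Moser isotopy (made $S^1$-equivariant by averaging the primitive, or followed by conjugating the torus back) transports the resulting toric action to $(M,\oml)$ while keeping the original circle inside it. The catch is the input you invoke: the existence of an invariant compatible Kähler structure for four-dimensional Hamiltonian $S^1$-spaces is itself a theorem of Karshon, obtained from the same classification machinery and of essentially the same depth as the extension statement, so this route relocates the difficulty into a citation rather than eliminating it. That is a legitimate way to deduce the theorem from the literature, but it does not constitute an independent proof, and the enumeration you skipped in the first route is precisely the work that such a citation is hiding.
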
 

Consequently, every symplectic $S^1$ action on $\SSS$ and $\CP^2 \# \overline{\CP^2}$ is given by an embedding 
\begin{align*}
    S^1 &\hookrightarrow \T_m \\
    t &\mapsto (t^a, t^b)
\end{align*}
which is itself characterized by a unique triple of numbers $(a,b;m)\in \Z \times \Z \times \Z_{\geq 0}$. Since we are only interested in effective actions, this translates numerically into the condition  $\gcd(a,b) = 1$. We shall always assume this unless otherwise stated.

In order to construct the graphs of the circle action $S^1(a,b;m)$, we claim that it is enough to compute the isotropy weights at every fixed points. Indeed, given an Hirzebruch trapezoid, the pre-image of a vertex under the moment map is a toric fixed point, and the pre-image of an edge is an invariant embedded two-sphere connecting two fixed points. When we view the space as a Hamiltonian $S^1$-space, such a two-sphere is either fixed by the action, or is a $\Z_k$ sphere, or has trivial global isotropy. These three possibilities are completely determined by the weights of the $S^{1}(a,b;m)$ action at its two fixed points. We can thus construct the graphs starting from the fixed points and adding edges according to the weights. If there is a fixed surface, its area label can be read from the Hirzebruch trapezoid. Finally, the moment map labels can be added starting with the minimal vertex (characterised by having two positive weights) that we label $\mu=0$, and then using Lemma~\ref{Lemma:SymplecticArea} to compute the moment labels for the remaining interior fixed points, and for the maximal fixed point (characterised by having two negative weights).

Now, for the Hirzebruch actions $\T_{m}=(S^{1}\times S^{1})_{m}$, each $S^{1}$ factor defines two weights at each of the four fixed points $P,Q,R,S$. These weights are listed in table~\ref{table_weights}. Further, if the the $\T_m$ action has weights $\{\alpha_1,\beta_{1}\}$ and $\{\alpha_{2},\beta_2\}$ at a fixed point $x$, then the restricted $S^1(a,b;m)$ action has weights
\[\left\{a\alpha_1 + b\alpha_{2}, a\beta_{1} + b\beta_2 \right\} \]
at $x$. This gives the weights of the $S^{1}(a,b;m)$ actions at the fixed points $P,Q,R,S$.
\begin{table}[h]
\begin{center}
\begin{tabular}{ |p{2cm}||p{4cm}|p{55mm}|  }
 \hline
 Vertex & Weights for $\T_m$ action & Weights for the $S^1(a,b;m)$ action \\
 \hline
 P & $\big\{\{1,0\},\{0,1\}\big\}$   & $\{a,b\}$ \rule{0pt}{10pt}\\
 Q & $\big\{\{1,0\},\{0,-1\}\big\}$  & $\{a,-b\}$ \rule{0pt}{10pt}\\
 R & $\big\{\{-1,m\},\{0,-1\}\big\}$ & $\{-a,am-b\}$ \rule{0pt}{10pt}\\
 S & $\big\{\{-1,-m\},\{0,1\}\big\}$ & $\{-a,-am+b\}$ \rule{0pt}{10pt}\\
 \hline
\end{tabular}
\end{center}
\caption{Weights of $\T_{m}$ and $S^{1}(a,b;m)$ actions}
\label{table_weights}
\end{table}


In Figures~\ref{fig:GraphsWithFixedSurfaces} and~\ref{fig:GraphsIsolatedFixedPoints}, we present the graphs of circles actions $S^{1}(a,b;m)$ on $(\SSS,\oml)$ and $(\CCC,\oml)$. As before, we write $m=2k$ or $m=2k+1$, and in order to deal with even and odd Hirzebruch surfaces simultaneously, we introduce the symbol $\epsilon_{m} := m\mod 2$. Each label $\mu$ represents the value of the moment map normalized by setting $\min \mu=0$, while $A$ is the area label of a fixed surface. All fixed surfaces have genus 0. Note that when the isotropy label on an edge is 1, then the circle action on the corresponding invariant sphere has no isotropy, and we erase that edge from the graph. Note also that the identification of the fixed points with the vertices $P,Q,R,S$ of the Hirzebruch trapezoid is there for convenience only and is not part of the data.  

For actions with fixed surfaces, one of the weights $a$, $b$, or $am-b$ must be zero. Since $\gcd(a,b)=1$, the possible triples are $(\pm1,0;m)$, $(0,\pm1;m)$, and $(\pm1,\pm m;m)$. 
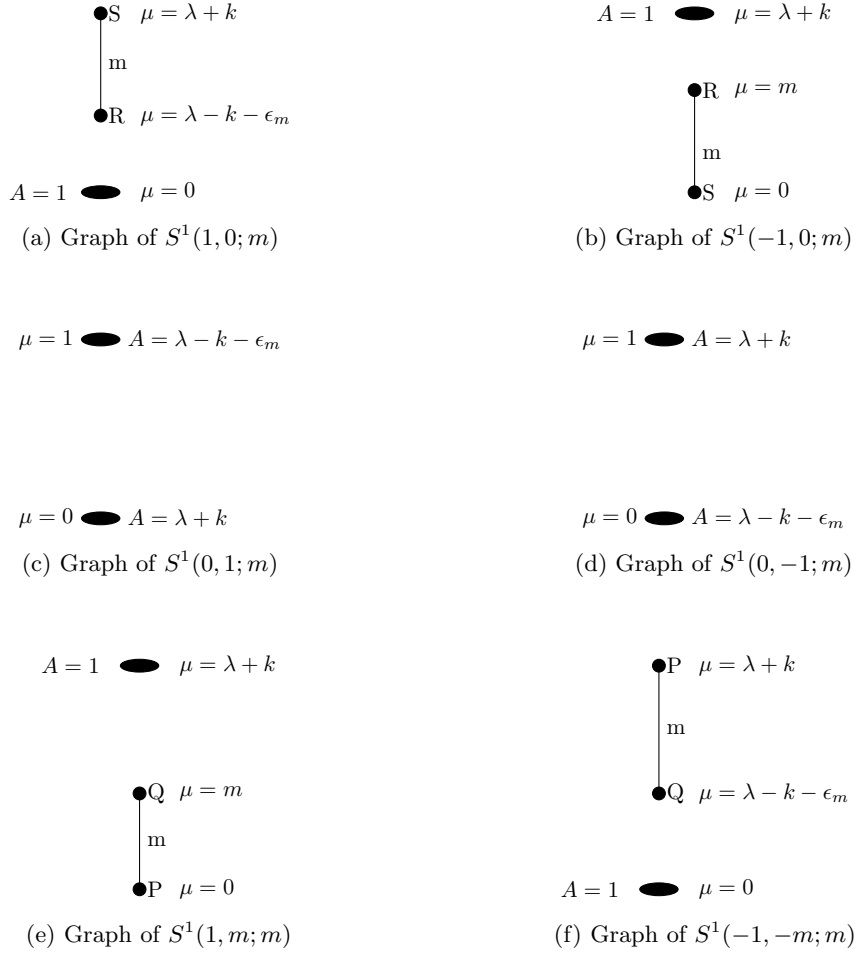
\begin{figure}[H]
\centering
\subcaptionbox{Graph of $S^{1}(1,0;m)$}
[.45\linewidth]
{\begin{tikzpicture}[scale=0.85, every node/.style={scale=0.85}]
\draw [fill] (0,0) ellipse (0.3cm and 0.1cm);
\draw [fill] (0,1.2) ellipse (0.1cm and 0.1cm);
\draw [fill] (0,2.8) ellipse (0.1cm and 0.1cm);
\draw (0,1.2) -- (0,2.8);
\node[right] at (0,2) {m};
\node[right] at (0,1.2) {R};
\node[right] at (0,2.8) {S};
\node[right] at (0.5,2.8){$\mu= \lambda +k$};
\node[right] at (0.5,0){$\mu=0$};
\node[left] at (-0.4,0) {$A= 1$};
\node[right] at (0.5,1.2) {$\mu=\lambda - k - \epsilon_{m}$};
\end{tikzpicture}}
~
\subcaptionbox{Graph of $S^{1}(-1,0;m)$}
[.45\linewidth]
{\begin{tikzpicture}[scale=0.85, every node/.style={scale=0.85}]
\draw [fill] (0,0) ellipse (0.3cm and 0.1cm);
\draw [fill] (0,-1.2) ellipse (0.1cm and 0.1cm);
\draw [fill] (0,-2.8) ellipse (0.1cm and 0.1cm);
\draw (0,-1.2) -- (0,-2.8);
\node[right] at (0,-2.2) {m};
\node[right] at (0,-2.8) {S};
\node[right] at (0,-1.2) {R};
\node[right] at (0.5,0){$\mu= \lambda +k$};
\node[right] at (0.5,-2.8){$\mu=0$};
\node[left] at (-0.5,0) {$A= 1$};
\node[right] at (0.5,-1.2) {$\mu=m$};
\end{tikzpicture}}

\subcaptionbox{Graph of $S^{1}(0,1;m)$}
[.45\linewidth]
{\begin{tikzpicture}[scale=0.85, every node/.style={scale=0.85}]
\draw [fill] (0,0) ellipse (0.3cm and 0.1cm); 
\draw [fill] (0,2.8) ellipse (0.3cm and 0.1cm); 
\node[left] at (-0.3,2.8) {$\mu = 1$};
\node[left] at (-0.3,0){$\mu=0$};
\node[right] at (0.3,2.8){$A= \lambda-k-\epsilon_{m}$};
\node[right] at (0.3,0){$A= \lambda+k$};
\node[above] at (0,2.8) {\rule{0em}{3em}}; 
\end{tikzpicture}}
~
\subcaptionbox{Graph of $S^{1}(0,-1;m)$}
[.45\linewidth]
{\begin{tikzpicture}[scale=0.85, every node/.style={scale=0.85}]
\draw [fill] (0,0) ellipse (0.3cm and 0.1cm); 
\draw [fill] (0,2.8) ellipse (0.3cm and 0.1cm); 
\node[left] at (-0.3,2.8) {$\mu = 1$};
\node[left] at (-0.3,0){$\mu=0$};
\node[right] at (0.3,2.8){$A= \lambda+k$};
\node[right] at (0.3,0){$A= \lambda-k-\epsilon_{m}$};
\end{tikzpicture}}
\subcaptionbox{Graph of $S^{1}(1,m;m)$}
[.45\linewidth]
{\begin{tikzpicture}[scale=0.85, every node/.style={scale=0.85}]
\draw [fill] (0,0) ellipse (0.3cm and 0.1cm);
\draw [fill] (0,-2) ellipse (0.1cm and 0.1cm);
\draw [fill] (0,-3.5) ellipse (0.1cm and 0.1cm);
\draw (0,-2) -- (0,-3.5);
\node[above] at (0,0) {\rule{0em}{3em}}; 
\node[right] at (0,-2.75) {m};
\node[right] at (0,-3.5) {P};
\node[right] at (0,-2) {Q};
\node[right] at (0.5,0){$\mu= \lambda +k$};
\node[right] at (0.5,-3.5){$\mu=0$};
\node[left] at (-0.5,0) {$A= 1$};
\node[right] at (0.5,-2) {$\mu=m$};
\end{tikzpicture}}
\subcaptionbox{Graph of $S^{1}(-1,-m;m)$}
[.45\linewidth]
{\begin{tikzpicture}[scale=0.85, every node/.style={scale=0.85}]
\draw [fill] (0,-3.5) ellipse (0.3cm and 0.1cm);
\draw [fill] (0,-2) ellipse (0.1cm and 0.1cm);
\draw [fill] (0,0) ellipse (0.1cm and 0.1cm);
\draw (0,0) -- (0,-2);
\node[right] at (0,-1) {m};
\node[right] at (0,0) {P};
\node[right] at (0,-2) {Q};
\node[right] at (0.5,0){$\mu= \lambda +k$};
\node[right] at (0.5,-3.5){$\mu=0$};
\node[left] at (-0.5,-3.5) {$A= 1$};
\node[right] at (0.5,-2) {$\mu=\lambda - k-\epsilon_{m}$};
\end{tikzpicture}}
\caption{Graphs of circle actions with non-isolated fixed points}\label{fig:GraphsWithFixedSurfaces} 
\end{figure}


For actions whose fixed points are isolated, none of the weights are zero. The graphs then depend on the signs of $a$, $b$, and $am-b$, as these signs determine which fixed point is minimal and which one is maximal.

\begin{figure}[H] 

\subcaptionbox*{}{} 

\setcounter{subfigure}{6} 
\subcaptionbox{When $a>0,~b>0$, and $am-b>0$\label{fig:a>0|b>0|am-b>0}}
[.53\linewidth]
{\begin{tikzpicture}[scale=0.9, every node/.style={scale=0.9}]
\draw [fill] (0,0) ellipse (0.1cm and 0.1cm);
\draw [fill] (1,-1.5) ellipse (0.1cm and 0.1cm);
\draw [fill] (0,-4) ellipse (0.1cm and 0.1cm);
\draw [fill] (-1,-2.5) ellipse (0.1cm and 0.1cm);
\draw (0,0) -- (1,-1.5);
\draw (1,-1.5) -- (-1,-2.5);
\draw (0,0) .. controls (-4,-1.5) and (-2,-3.5) .. (0,-4);
\draw (-1,-2.5) -- (0,-4);
\node[right] at (-0.35,-3.2) {$b$};
\node[right] at (- 0.4,-1.8) {$a$};
\node[left] at (-2.2,-1.5) {$a$};
\node[right] at (0.7,-0.5) {$am-b$};
\node[above] at (0,0.2) {$\mu = a(\lambda + k)$};
\node[right] at (-0.6,-2.5) {$\mu = b$};
\node[right] at (0.6,-1.9) {$\mu =b + a(\lambda - k-\epsilon_{m}) $};
\node[below] at (0, -4.2) {$\mu = 0$};
\node[left] at (-1,-2.5) {Q};
\node[left] at (0.9,-1.4) {R};
\node[right] at (0,0) {S};
\node[right] at (0,-4) {P};
\end{tikzpicture}}
\subcaptionbox{When $a>0,~b>0$ and $am-b<0$
}
[.45\linewidth]
{\begin{tikzpicture}[scale=0.9, every node/.style={scale=0.9}]
\draw [fill] (0,0) ellipse (0.1cm and 0.1cm); 
\draw [fill] (1,-2) ellipse (0.1cm and 0.1cm); 
\draw [fill] (0,-4) ellipse (0.1cm and 0.1cm); 
\draw [fill] (-1,-3) ellipse (0.1cm and 0.1cm); 
\draw (0,0) -- (1,-2);
\draw (-1,-3) -- (0,-4);
\draw (0,0) -- (-1,-3) ;
\draw (1,-2) -- (0,-4);
\node[right] at (-0.9,-3) {Q};
\node[left] at (0.9,-2) {S};
\node[right] at (0,0) {R};
\node[above] at (0,0.2) {$\mu = b + a (\lambda -k-\epsilon_{m})$};
\node[right] at (1.3,-2) {$\mu = a(\lambda + k) $};
\node[left] at (-1.3,-3) {$\mu = b$};
\node[right] at (0,-4) {P};
\node[right] at (-1,-3.7) {$b$};
\node[right] at (0.5, -3) {$a$};
\node[left] at (-0.5,-1.5) {$a$};
\node[right] at (0.7,-1) {$b-am$};
\node[below] at (0,-4.2) {$\mu = 0$};
\end{tikzpicture}}
\subcaptionbox{When $a>0,~b<0$}
[.45\linewidth]
{\begin{tikzpicture}[scale=0.9, every node/.style={scale=0.9}]
\draw [fill] (0,0) ellipse (0.1cm and 0.1cm);
\draw [fill] (1,-3) ellipse (0.1cm and 0.1cm);
\draw [fill] (0,-4) ellipse (0.1cm and 0.1cm);
\draw [fill] (-1,-2) ellipse (0.1cm and 0.1cm);
\draw (0,0) -- (1,-3);
\draw (-1,-2) -- (0,-4);
\draw (0,0) -- (-1,-2) ;
\draw (1,-3) -- (0,-4);
\node[above] at (0,1) {\rule{0em}{1em}}; 
\node[above] at (0,0.2) {$\mu = a(\lambda +k)-b$};
\node[right] at (1.4,-3) {$\mu = -b $};
\node[left] at (-1.4,-2) {$\mu = a(\lambda-k-\epsilon_{m})$};
\node[below] at (0,-4.2) {$\mu = 0$};
\node[right] at (-1,-3.2) {$a$};
\node[right] at (0.5, -3.6) {$-b$};
\node[left] at (-0.7,-1.0) {$am-b$};
\node[right] at (0.6,-1.3) {$a$};
\node[left] at (0.9,-3) {P};
\node[right] at (-1,-2) {R};
\node[right] at (0,0) {S};
\node[right] at (0,-4) {Q};
\end{tikzpicture}}
\subcaptionbox{When $a<0,~b>0$}
[.53\linewidth]
{\begin{tikzpicture}[scale=0.9, every node/.style={scale=0.9}]
\draw [fill] (0,0) ellipse (0.1cm and 0.1cm);
\draw [fill] (1,-3) ellipse (0.1cm and 0.1cm);
\draw [fill] (0,-4) ellipse (0.1cm and 0.1cm);
\draw [fill] (-1,-2) ellipse (0.1cm and 0.1cm);
\draw (0,0) -- (1,-3);
\draw (-1,-2) -- (0,-4);
\draw (0,0) -- (-1,-2) ;
\draw (1,-3) -- (0,-4);
\node[above] at (0,1) {\rule{0em}{1em}}; 
\node[above] at (0,0.2) {$\mu = b - a (\lambda +k)$};
\node[right] at (1.4,-3) {$\mu = b-am $};
\node[left] at (-1.4,-2) {$\mu = -a(\lambda+k )$};
\node[below] at (0,-4.2) {$\mu = 0$};
\node[right] at (-1.2,-3.2) {$-a$};
\node[right] at (0.5, -3.6) {$b-am$};
\node[left] at (-0.7,-1.0) {$b$};
\node[right] at (0.6,-1.3) {$-a$};
\node[left] at (0.9,-3) {R};
\node[right] at (-1,-2) {P};
\node[right] at (0,0) {Q};
\node[right] at (0,-4) {S};
\end{tikzpicture}}
\subcaptionbox{When $a<0,~b<0$, and $am-b>0$}
[.43\linewidth]
{\begin{tikzpicture}[scale=0.9, every node/.style={scale=0.9}]
\draw [fill] (0,0) ellipse (0.1cm and 0.1cm);
\draw [fill] (1,-3) ellipse (0.1cm and 0.1cm);
\draw [fill] (0,-4) ellipse (0.1cm and 0.1cm);
\draw [fill] (-1,-2) ellipse (0.1cm and 0.1cm);
\draw (0,0) -- (1,-3);
\draw (-1,-2) -- (0,-4);
\draw (0,0) -- (-1,-2) ;
\draw (1,-3) -- (0,-4);
\node[above] at (0,1) {\rule{0em}{1em}}; 
\node[above] at (0.0,0.2) {$\mu = -b - a (\lambda -k-\epsilon_{m})$};
\node[right] at (1.2,-3) {$\mu = am-b $};
\node[left] at (-1.2,-2) {$\mu = -a(\lambda-k-\epsilon_{m})$};
\node[below] at (0,-4.2) {$\mu = 0$};
\node[right] at (-1.2,-3.2) {$-a$};
\node[right] at (0.5, -3.6) {$am-b$};
\node[left] at (-0.7,-1.0) {$-b$};
\node[right] at (0.6,-1.3) {$-a$};
\node[left] at (0.9,-3) {S};
\node[right] at (-1,-2) {Q};
\node[right] at (0,0) {P};
\node[right] at (0,-4) {R};
\end{tikzpicture}}
\subcaptionbox{When $a<0,~b<0$, and $am-b<0$}
[.55\linewidth]
{\begin{tikzpicture}[scale=0.9, every node/.style={scale=0.9}]
\draw [fill] (0,0) ellipse (0.1cm and 0.1cm);
\draw [fill] (1,-1.5) ellipse (0.1cm and 0.1cm);
\draw [fill] (0,-4) ellipse (0.1cm and 0.1cm);
\draw [fill] (-1,-2.5) ellipse (0.1cm and 0.1cm);
\draw (0,0) -- (1,-1.5);
\draw (1,-1.5) -- (-1,-2.5);
\draw (0,0) .. controls (-4,-1.5) and (-2,-3.5) .. (0,-4);
\draw (-1,-2.5) -- (0,-4);
\node[right] at (-0.35,-3.2) {$b-am$};
\node[right] at (- 0.4,-1.8) {$-a$};
\node[left] at (-2.2,-1.5) {$-a$};
\node[right] at (0.7,-0.5) {$-b$};
\node[above] at (0,0.2) {$\mu = -a(\lambda + k )$};
\node[right] at (-0.6,-2.5) {$\mu = b-am$};
\node[right] at (0.7,-1.8) {$\mu = b-a(\lambda + k)$};
\node[below] at (0, -4.2) {$\mu = 0$};
\node[right] at (0,0) {P};
\node[left] at (0.9,-1.4) {Q};
\node[left] at (-1,-2.5) {R};
\node[right] at (0,-4) {S};
\end{tikzpicture}}
\caption{Graphs of circle actions whose fixed points are isolated}\label{fig:GraphsIsolatedFixedPoints}
\end{figure}

\section{Equivalent circle actions}

A single equivalence class of Hamiltonian circle actions on $\SSS$ or $\CCC$ may be represented by more than one triple $(a,b;m)$, and this happens whenever the associated labelled graphs are the same up to the action of $\AGL(1,\Z)$ on the moment map labels. 

\subsection{Equivalent circle actions in \texorpdfstring{$\T_{m}$}{Tm}}
The action $S^{1}(-a,-b;m)$ is equivalent to $S^{1}(a,b;m)$ since they only differ by a reparametrization of the circle. It follows that in 
Figure~\ref{fig:GraphsWithFixedSurfaces}, the graphs (A), (C), and (E) are $\AGL(1,\Z)$-equivalent, respectively, to the graphs (B), (D), and (F). Similarly, the graphs (G), (H), and (I) in Figure~\ref{fig:GraphsIsolatedFixedPoints} are $\AGL(1,\Z)$-equivalent, respectively, to the graphs (L), (K), and (J). Other examples of equivalent subcircles in $\T_{m}$ are obtained by letting the normalizer $N(\T_{m})$ act on the circle subgroups of $\T_{m}$. For instance, $S^{1}(a,b;m)$ is conjugated to $S^{1}(-a,b-am;m)$ by the adjoint of the element $\Lambda\in N(\T_{m})$ of Proposition~\ref{prop:NormalizersHirzebruch}.

\begin{prop}\label{prop:EquivalentSubcircles}
Up to reparametrizations, the only subcircles of $\T_{m}$ that are equivalent to $S^{1}(a,b;m)$ are the elements of the orbit of $S^{1}(a,b;m)$ under the action of the Weyl group $W(\T_{m})$. 
\end{prop}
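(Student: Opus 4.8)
The \emph{only if} direction is the content: conjugation by an element of $N(\T_m)$ carries a subcircle of $\T_m$ to another subcircle of $\T_m$ in the same $W(\T_m)$-orbit, and replacing the parameter $t$ by $t^{-1}$ leaves the subgroup unchanged; both operations visibly yield equivalent subcircles, which gives one inclusion. So assume $S^{1}(a',b';m)$ is equivalent to $S^{1}(a,b;m)$, both sitting in $\T_m$; the goal is to show that $S^{1}(a',b';m)$ lies in the $W(\T_m)$-orbit of $S^{1}(a,b;m)$.

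The plan is to reduce everything to Karshon's equivariant classification. By Theorem~\ref{graphiso}, the two subcircles are equivalent if and only if their labelled graphs agree up to the $\AGL(1,\Z)$-action on moment labels, and for a subcircle of $\T_m$ this graph is completely computable from Table~\ref{table_weights}: the fixed set is contained in $\Fix(\T_m)=\{P,Q,R,S\}$, with weight pairs $\{a,b\}$, $\{a,-b\}$, $\{-a,am-b\}$, $\{-a,b-am\}$, and Lemmas~\ref{weight} and~\ref{Lemma:SymplecticArea} then determine the edge labels, the areas of any fixed surfaces, and the moment labels — these being exactly the graphs drawn in Figures~\ref{fig:GraphsWithFixedSurfaces} and~\ref{fig:GraphsIsolatedFixedPoints}. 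Hence an equivalence between the two subcircles furnishes a bijection $\sigma$ of $\{P,Q,R,S\}$ which matches the two weight assignments and carries one set of moment labels to the other up to an affine map of $\R$. The heart of the argument is to show that such a $\sigma$ is forced to be induced by a symmetry of the Hirzebruch trapezoid, i.e. by an element of $W(\T_m)$, possibly after the global sign flip $t\mapsto t^{-1}$; reading off $(a',b')$ from the weight pairs at $\sigma(P),\sigma(Q),\dots$ then places $(a',b')$ in $W(\T_m)\cdot\{\pm(a,b)\}$, which by Proposition~\ref{prop:NormalizersHirzebruch} is $\{\pm(a,b),\pm(a,am-b)\}$ when $m\geq 1$, $\{(\pm a,\pm b)\}$ when $m=0,\lambda>1$, and $\{(\pm a,\pm b),(\pm b,\pm a)\}$ when $m=0,\lambda=1$ — exactly the triples already seen to give equivalent subcircles.

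To carry this out I would first dispose of the actions with a fixed surface (graphs (A)--(F)): these occur precisely when one of $a,b,am-b$ vanishes, so $(a,b;m)$ is one of the finitely many triples $(\pm1,0;m)$, $(0,\pm1;m)$, $(\pm1,\pm m;m)$, and the statement is a direct inspection of those six graphs. For the isolated-fixed-point cases (G)--(L) the main input is the affine rigidity of the moment labels: as functions of the parameter $\lambda$ the four moment values of $S^{1}(a,b;m)$ are in general position, so an affine transformation of $\R$ matching them to those of $S^{1}(a',b';m)$ must respect the monotone chains of the graph; together with Karshon's uniqueness and the fact (noted before the statement) that the sign patterns (G)--(L) are permuted among themselves by $W(\T_m)$ and by $t\mapsto t^{-1}$, this forces $\sigma\in W(\T_m)$ and pins down $(a',b')$.

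The step I expect to be the main obstacle is ruling out ``accidental'' graph isomorphisms — permutations $\sigma$ of the fixed points compatible with all the weight and moment data yet not induced by a trapezoid symmetry. One is tempted to argue more conceptually, noting that $\T_m$ and $\phi\T_m\phi^{-1}$ are maximal tori of $\Symp$ both containing $S^{1}(a',b';m)$ and trying to conjugate one to the other inside the centralizer $C(S^{1}(a',b';m))$; but this fails exactly for the circles contained in more than one inequivalent Hirzebruch torus, so some case-checking is unavoidable. Concretely, for generic $\lambda$ the rigidity of the moment labels excludes the accidental isomorphisms, while the borderline configurations must be verified by hand against Figures~\ref{fig:GraphsWithFixedSurfaces} and~\ref{fig:GraphsIsolatedFixedPoints}: the symmetric case $m=0,\lambda=1$ (square polygon, where $W(\T_0)=D_8$ is larger), small values of $\lambda$ relative to $k$, and the near-degenerate sign patterns where two moment labels coincide or an isotropy weight drops to $\pm1$ and an edge of the graph disappears. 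I expect everything outside this finite list of cases to be routine bookkeeping with Table~\ref{table_weights} and Lemmas~\ref{weight}--\ref{Lemma:SymplecticArea}.
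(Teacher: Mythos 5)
Your overall route coincides with the paper's: by Theorem~\ref{graphiso}, equivalence of subcircles of $\T_{m}$ is detected by their normalized labelled graphs, which are computable from Table~\ref{table_weights} together with Lemmas~\ref{weight} and~\ref{Lemma:SymplecticArea}, so the statement reduces to showing that no triple outside the orbit $W(\T_m)\cdot\{\pm(a,b)\}$ produces the same graph. Your treatment of the easy inclusion, of the fixed-surface graphs (A)--(F), and your identification of the Weyl orbits via Proposition~\ref{prop:NormalizersHirzebruch} are all fine and match the paper.

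The gap is that this reduction is where the paper's proof \emph{begins}, and you never carry out the comparison that constitutes its entire content: an exhaustive check, organized by the graph types (G)--(L) and the data of Table~\ref{table:WeightsMomentMapValuesGL}, matching the unordered weight pairs at the minimum and the maximum and then the moment label at the maximum, which eliminates every candidate $(c,d)$ except $(-a,b-am)$ and the reparametrization $(-a,-b)$. The substitute you propose --- ``affine rigidity'' of the four moment values \emph{as functions of $\lambda$}, with accidental isomorphisms confined to a finite list of borderline configurations --- is not sound as stated. The proposition concerns a fixed $\lambda$, so genericity in $\lambda$ is not available: for each competitor $(a',b')$ the coincidence of normalized moment labels is a linear condition on $\lambda$ with integer coefficients depending on $(a',b')$ (e.g.\ equations of the shape $a(\lambda+k-1)=b'(\lambda-k)$ arise when comparing types (G) and (J)), and such conditions can be satisfied at admissible values of $\lambda$. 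Hence the exceptional configurations are not a fixed finite list to be checked once; they are ruled out in the paper only by first using the weight pairs at the extrema to cut the competitors down to a handful of explicit pairs such as $(b,a)$, $(a,-b)$, $(-b,a-bm)$, and then invoking integrality and the sign inequalities defining each type to kill the residual moment-label equations. In short, the ``routine bookkeeping'' you defer is the theorem, and the organizing principle offered in its place would not dispose of exactly the cases where the moment-label equations do have solutions; what excludes them is the weight/positivity/integrality analysis the paper performs case by case.
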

\begin{proof}
We have to show that if $S^{1}(a,b;m)$ and $S^{1}(c,d;m)$ are in the same conjugacy class, then there is an element of $N(\T_{m})$ taking one circle to the other. As conjugation of actions corresponds to  uniform translation of the moment map labels, this will follow from a systematic inspection of the possible labelled graphs.

The first cases to consider are actions $S^{1}(a,b;m)$ whose fixed points are all isolated, which means that the weights $a$, $b$, and $am-b$ are all non-zero. Two equivalent graphs arising from normalized moment maps must have the same moment map values at the maximum, and their respective weights at the maximum and at the minimum must also coincide. 

\begin{table}[H]
\begin{tabular}{|l||c|c|c|}
\hline
Type of graph & Weights max & Weights min & Moment at max\\
\hline
\hline
G: $a>0$, $b>0$, $am-b>0$ & $-a$, $b-am$ & $a$, $b$ & $a(\lambda+k)$\\
\hline
H: $a>0$, $b>0$, $am-b<0$ & $-a$, $am-b$ & $a$, $b$ & $b+a(\lambda-k-\epsilon_{m})$ \\
\hline
I: $a>0$, $b<0$ & $-a$, $b-am$ & $a$, $-b$ & $a(\lambda+k)-b$\\
\hline
J: $a<0$, $b>0$ & $a$, $-b$ & $-a$, $b-am$ & $b-a(\lambda+k)$\\
\hline
K: $a<0$, $b<0$, $am-b>0$ & $a$, $b$ & $-a$, $am-b$ & $-b-a(\lambda-k-\epsilon_{m})$\\
\hline
L: $a<0$, $b<0$, $am-b<0$ & $a$, $b$ & $-a$, $b-am$ & $-a(\lambda+k)$\\
\hline
\end{tabular}
\caption{Weights and moment map values for graphs (G) -- (L)}\label{table:WeightsMomentMapValuesGL}
\end{table}
We start by assuming $a>0$, $b>0$, and $m>0$, which implies that the graph of $S^{1}(a,b;m)$ is of type (G) or (H).

Assume the graph of $S^{1}(a,b;m)$ is of type (G), that is, $am-b>0$, and suppose $S^{1}(c,d;m)$, $(c,d)\neq (a,b)$, is in the same conjugacy class.

\begin{itemize}
\item Suppose the graph of $S^{1}(c,d;m)$ is also of type (G). Looking at the weights at the minimum, we see that $(c,d)=(b,a)$ is the only non-trivial possibility. Looking at the moment map values at the maximum, we conclude that $a(\lambda+k)=b(\lambda+k)$, that is, $a=b$, contradicting the fact that $(a,b)\neq(c,d)$.
\item Suppose the graph of $S^{1}(c,d;m)$ is of type (H). Looking again at the weights at the minimum, we see that $c=b$, and $d=a$, and that $a\neq b$ as before. The sets of weights at the maximum must also be equal, that is, $\{-a,b-am\}=\{-c, cm-d\}=\{-b,bm-a\}$, which implies that $a=a-bm$, contradicting the fact that $bm\neq 0$.
\item Suppose the graph of $S^{1}(c,d;m)$ is of type (I), which implies $c>0$, $d<0$. Looking at the weights at the minimum, we must have either $(c,d)=(a,-b)$ or $(c,d)=(b,-a)$. In the former case, looking at the moment map values at the maximum, we conclude that $a(\lambda+k)=c(\lambda+k)-d=a(\lambda+k)+b$, that is, $b=0$, which is a contradiction. In the later case, the weights at the maximum become $\{-c, d-cm\} = \{-b, -a-bm\}$, and we must have $\{-b, -a-bm\}=\{-a, b-am\}$ as sets. Since $a+bm\neq a$, we must have $b=a$, which implies $a=b=1$, $c=1$, and $d=-1$. The moment map values at the maximum are then $a(\lambda+k)=(\lambda+k)$ and $c(\lambda+k)-d=(\lambda+k)+1$, which are not equal. 
\item Suppose the graph of $S^{1}(c,d;m)$ is of type (J), which implies $c<0$, $d>0$. Looking at the weights at the minimum, we must have $\{a,b\}=\{-c,d-cm\}$ as sets. So, either $(c,d)=(-a,b-am)$ or $(c,d)=(-b,a-bm)$. In the first case, $b-am<0$, which is impossible at the minimum. In the second case, looking at the moment map values at the maximum, we conclude that $a(\lambda+k)=d-c(\lambda+k)=a-bm+b(\lambda+k)=a+b(\lambda-k)$. This implies $a(\lambda+k-1)/(\lambda+k)=b$. As $a$ and $b$ are non-zero integers, this is impossible.

\item Suppose the graph of $S^{1}(c,d;m)$ is of type (K), which implies $c<0$, $d<0$, and $cm-d>0$. Comparing the weights at the minimum, we see that either $(c,d)=(-a, -am-b)$ or $(c,d)=(-b,-bm-a)$ with $a\neq b$. In the former case, comparing the weights at the maximum yields $\{-a,b-am\}=\{c,d\}=\{-a,-am-b\}$. This implies $b=0$, which is excluded. In the former case, we must have $a\neq b$ and $\{-a,b-am\}=\{c,d\}=\{-b,-bm-a\}$, which again implies $b=0$.
\item Finally, suppose the graph of $S^{1}(c,d;m)$ is of type (K), which implies $c<0$, $d<0$, and $cm-d<0$. Comparing the weights at the minimum, we see that either $(c,d)=(-a, b-am)$ or $(c,d)=(-b,a-bm)$. In the former case, we get the conjugate circle $S^{1}(-a,b-am;m)$. In the later case, the moment map values at the maximum gives $a(\lambda+k)=-c(\lambda+k)=b(\lambda+k)$, that is, $a=b$. Then $(c,d)=(-a, b-am)$ and $S^{1}(c,d;m)=S^{1}(-a,b-am;m)$ as before.
\end{itemize}
We conclude that the only action $S^{1}(c,d;m)$ conjugated to an action $S^{1}(a,b;m)$ of type (G) is $S^{1}(-a,b-am;m)$.

Assume now that the graph of $S^{1}(a,b;m)$ is of type (H), that is, $a>0$, $b>0$, and $am-b<0$. By transitivity, we already know that an action $S^{1}(c,d;m)$ in the same conjugacy class cannot be of type (G) or (L).

\begin{itemize}
\item Suppose the graph of $S^{1}(c,d;m)$ is also of type (H). Comparing the weights at the minimum, we see that we must have $(c,d)=(b,a)$. Looking at the weights at the maximum, we must have $\{-a,am-b\}=\{-c,cm-d\}$ as sets. Since $(c,d)\neq(a,b)$, the only possibility is $a=d-cm=a-bm$, which implies $b=0$.
\item Suppose the graph of $S^{1}(c,d;m)$ is of type (I). Comparing the weights at the minimum, we see that we must have either $(c,d)=(a,-b)$ or $(c,d)=(b,-a)$ with $a\neq b$. In the first case, comparing the weights at the maximum, we should have $\{-a,am-b\}=\{-c,d-cm\}=\{-a,-b-am\}$. This implies $a=0$, which is excluded. Similarly, in the second case, we get $\{-a,am-b\}=\{-c,d-cm\}=\{-b,-b-am\}$ with $a\neq b$. This again implies  $a=0$.
\item Suppose the graph of $S^{1}(c,d;m)$ is of type (J). Looking at the weights at the minimum, we must have $\{a,b\}=\{-c,d-cm\}$ as sets. So, either $(c,d)=(-a,b-am)$ or $(c,d)=(-b,a-bm)$. In the first case, we obtain the conjugate action $S^{1}(-a,b-am;m)$. In the second case, looking at the weights at the maximum, we must have $\{-a,am-b\}=\{c,-d\}=\{-b, bm-a\}$. If $a=b=1$, this gives
us the conjugate action $S^{1}(-1,1-m;m)$. If $a=a-bm$, then $b=0$, which is excluded.
\item Finally, suppose the graph of $S^{1}(c,d;m)$ is of type (K). Looking at the weights at the minimum, we must have $\{a,b\}=\{-c,cm-d\}$ as sets. So, either $(c,d)=(-a,-b-am)$ or $(c,d)=(-b,-a-bm)$. Looking at the weights at the maximum, we must also have $\{-a,am-b\}=\{c,d\}$. If $(c,d)=(-a,-b-am)$, then $am-b=-am-b$, which implies $a=0$. Instead, if $(c,d)=(-b,-a-bm)$, then either $a=b=1$, which is equivalent to the previous case, or $a=a+bm$ which forces $b=0$.
\end{itemize}
We conclude that $S^{1}(-a,b-am;m)$ is the only subcircle of $\T_{m}$ conjugated to an action $S^{1}(a,b;m)$ of type (H).

Assume now that the graph of $S^{1}(a,b;m)$ is of type (I), that is, $a>0$, and $b<0$. By transitivity, we already know that an action $S^{1}(c,d;m)$ in the same conjugacy class cannot be of type (G), (H), (J), or (L). By symmetry of Table~\ref{table:WeightsMomentMapValuesGL} under a change of sign of the pair $(a,b)$, and comparing with actions of types (H) and (J), we see that the only action $S^{1}(c,d;m)$ of type (K) conjugated to $S^{1}(a,b;m)$ is $S^{1}(-a,b-am;m)$. 

\begin{itemize}
\item Suppose the graph of $S^{1}(c,d;m)$ is also of type (I). Comparing the weights at the minimum, we see that we must have $(c,d)=(-b,-a)$. Looking at the weights at the maximum, we must also have $\{-a,b-am\}=\{-c, d-cm\}=\{b,-a+bm\}$. If $b=-a$, then $(a,b)=(1,-1)=(c,d)$. If $b=b-am$, then we must have $a=0$, which is impossible.

\end{itemize}
This shows that for an action $S^{1}(a,b;m)$ of type (I), the only other action $S^{1}(c,d;m)$ in the same conjugacy class is $S^{1}(-a,b-am;m)$.

This concludes the proof of the proposition for actions $S^{1}(a,b;m)$ whose fixed points are all isolated, in the case $m>0$. When $m=0$ and $\lambda>1$, the graphs (G) and (L) no longer exist, and the four remaining graphs (H)--(K) only differ by the action of $D_{2}$ on the pair $(a,b)$, that is, by a change of sign of $a$ or $b$. When $\lambda=1$, we can also interchange $a$ and $b$ to obtain an equivalent graph, which defines an action of $D_{4}$. Finally, the case of actions with non-isolated surfaces involves the graphs (A)--(F) and is simpler. The details are left to the reader. 
\end{proof}

\subsection{Toric extensions of circle actions}
\begin{defn}
We shall say a circle action $S^1(a,b; m)$ \emph{extends} to a toric action $\T_{n}$ if it is $S^1$-equivariantly symplectomorphic to a circle action of the form $S^1(c,d; n)$. 
\end{defn}

In this section, we determine all possible toric extensions of $S^{1}(a,b;m)$. We begin with the exceptional case $\lambda=1$.
\begin{prop}
Consider $(\SSS,\oml)$ with $\lambda=1$. Then the only Hamiltonian circle actions are of the form $S^1(a,b; 0)$. In particular, they can only extend to the torus $\T_0$. 
\end{prop}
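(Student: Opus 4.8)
The plan is to show that no admissible toric action $\T_m$ with $m \geq 1$ can exist on $(\SSS, \omega_1)$, so that every Hamiltonian circle action, being a subcircle of some toric action by Karshon's extension theorem, must sit inside $\T_0$. First I would invoke Lemma~\ref{lemma_number_of_torus_actions}: writing $\lambda = \ell + \delta$ with $\ell \in \Z$ and $0 < \delta \leq 1$, the case $\lambda = 1$ gives $\ell = 1$, $\delta = 1$ (not $\ell = 0$, since $\delta$ is taken in $(0,1]$), hence there are exactly $\ell + 1 = 2$ inequivalent toric actions, namely $\T_0$ and $\T_2$. So a priori the circle might also extend to $\T_2$, and the real content is to rule out $\T_2$ (equivalently, the even Hirzebruch surface $W_2$ with this symplectic form) — or rather, to observe that on $\SSS$ with $\lambda = 1$ the only toric action up to equivalence is $\T_0$.

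Here is where I would be more careful: rereading Lemma~\ref{lemma_number_of_torus_actions}, with $\lambda = 1$ we get $\ell = 1$ only under the convention $0 < \delta \leq 1$, giving actions $\T_{2k}$ for $0 \leq k \leq 1$, i.e. $\T_0$ and $\T_2$. But $\T_2$ requires an invariant symplectic sphere $C_2$ of self-intersection $-2$ by Lemma~\ref{lemma_torus_action_-vecurve}; such a sphere lies in a class $B - F$ (up to the appropriate identification), and $\omega_1(B - F) = \lambda - 1 = 0$, contradicting positivity of the symplectic area. Hence $\T_2$ does not actually occur for $\lambda = 1$, and $\T_0$ is the unique toric action. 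Alternatively — and this is the cleaner route I would take in the write-up — I would argue directly from the moment polytope: by Delzant, any toric action on $\SSS$ has a quadrilateral moment image which, up to $\AGL(2,\Z)$ and rescaling, is an even Hirzebruch trapezoid with parallel sides of lengths $1$ and $\lambda = 1$; a trapezoid with equal parallel sides is a parallelogram, and the Delzant (smoothness) condition forces it to be the unit square, which is precisely the polytope of $\T_0$.

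With the uniqueness of the toric action established, I would finish by applying Karshon's extension theorem (Karshon~\cite{finTori}, Theorem 1): every symplectic $S^1$ action on $(\SSS, \omega_1)$ extends to a Hamiltonian toric action, which must be (equivalent to) $\T_0$. Therefore every Hamiltonian circle action on $(\SSS, \omega_1)$ is conjugate to a subcircle $S^1(a,b;0)$ of $\T_0$, proving both assertions of the proposition.

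The main obstacle is purely bookkeeping: making sure the degenerate case $\lambda = 1$ is handled consistently with the $0 < \delta \leq 1$ convention and not accidentally double-counting $\T_2$, which has zero-area invariant $(-2)$-sphere and hence is not symplectically admissible here. Everything else — the polytope argument and the appeal to the extension theorem — is routine given the results already assembled in this chapter.
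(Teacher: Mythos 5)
Your proof is correct and, at bottom, it is the same argument as the paper's: the paper's proof is simply a citation of Lemma~\ref{lemma_number_of_torus_actions} (whose own proof is the positivity computation $0<\om_\lambda(B-kF)=\lambda-k$) combined with the standing fact, from Karshon's extension theorem, that every Hamiltonian circle action is a subcircle $S^1(a,b;m)$ of some $\T_m$. One correction to your bookkeeping, though: with the convention $0<\delta\leq 1$, the decomposition of $\lambda=1$ is $\ell=0$, $\delta=1$ (your stated choice $\ell=1$, $\delta=1$ does not even satisfy $\ell+\delta=1$, and $\ell=1$, $\delta=0$ is excluded precisely because $\delta>0$). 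So Lemma~\ref{lemma_number_of_torus_actions} already yields exactly $\ell+1=1$ toric action at $\lambda=1$, namely $\T_0$; there is no need to first admit $\T_2$ and then rule it out. Your exclusion of $\T_2$ via $\om_1(B-F)=\lambda-1=0$ is of course exactly the computation inside the lemma's proof (equivalently, Lemma~\ref{lemma_torus_action_-vecurve} would demand an invariant symplectic $(-2)$-sphere of zero area), so no damage is done, but as written your reading misstates the lemma. Your alternative polytope argument is also slightly off in the details: the even Hirzebruch trapezoid for $\T_{2k}$ has parallel horizontal sides of lengths $\lambda-k$ and $\lambda+k$ and a vertical side of length $1$, so the relevant constraint is $\lambda-k>0$ rather than ``equal parallel sides force a parallelogram''; for $\lambda=1$ this again leaves only $k=0$, i.e.\ the unit square. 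The final step, invoking Karshon's extension theorem to conclude every circle action is of the form $S^1(a,b;0)$, is exactly as in the paper.
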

\begin{proof}
This follows directly from Theorem~\ref{lemma_number_of_torus_actions}.
\end{proof}

By Lemma~\ref{lemma_torus_action_-vecurve}, a toric extension of $S^{1}(a,b;m)$ to $\T_{n}$, $n\geq 1$, implies the existence of an invariant sphere of self-intersection $-n$ and of positive symplectic area. These two numerical invariants can be read from labelled graphs using Lemma~\ref{weight} and Lemma~\ref{Lemma:SymplecticArea}. As we will see, this imposes enough conditions on the triple $(a,b;m)$ to determine all possible embeddings $S^{1}(a,b;m)\into \T_{n}$, $n\neq m$.
\begin{prop}\label{prop:AtMostTwoExtensions}
Consider a Hamiltonian circle action $S^1(a,b;m)$ with $\lambda >1$. Under the following numerical conditions on $a,b,m,\lambda$, the circle action only extends to the toric action~$\T_m$:
\begin{itemize}
    \item when $a\neq\pm 1$;
    \item when $b=0$ or $b=am$;
    \item when $a = \pm 1$ and $2 \lambda \leq |2b-am|+\epsilon_{m}$.
\end{itemize}
In all other cases, the circle action may extend to at most two inequivalent toric actions, namely,
\begin{itemize}
    \item when $a=\pm 1$, $2\lambda > |2b-am|+\epsilon_{m}$, and $b \not\in \{0,am\}$, the circle action $S^1(a,b;m)$ only extends to the toric action $\T_{m}$ and, possibly, to $\T_{|2b-am|}$.

\end{itemize}
\end{prop}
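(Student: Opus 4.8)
The plan is to extract the two relevant numerical invariants of any toric extension directly from the labelled graph of $S^1(a,b;m)$, and then to run these through the admissibility constraints of Karshon's classification together with Lemma~\ref{lemma_torus_action_-vecurve}. If $S^1(a,b;m)$ extends to $\T_n$ with $n\geq 1$, then by Lemma~\ref{lemma_torus_action_-vecurve} the graph must contain an invariant embedded symplectic sphere $C_n$ of self-intersection $-n$. Such a sphere, being invariant but not fixed, appears in the graph either as an edge (a $\Z_k$-sphere) or as an invariant sphere with trivial global isotropy connecting two fixed points. The key computational inputs are Lemma~\ref{weight}, which relates the difference of the isotropy weights $\alpha-\beta$ at the two poles of a rotated sphere to $-e k$ where $e$ is the self-intersection, and Lemma~\ref{Lemma:SymplecticArea}, which gives the area of that sphere as $(\mu(n)-\mu(s))/k$. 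So the first step is: for each of the graph types (A)--(F) and (G)--(L) in Figures~\ref{fig:GraphsWithFixedSurfaces} and~\ref{fig:GraphsIsolatedFixedPoints}, enumerate all invariant spheres, compute their self-intersections via Lemma~\ref{weight}, and record the candidate values of $n$.

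The second step handles the three cases in which no new extension is possible. When $b=0$ or $b=am$ (equivalently, a weight vanishes), the action has a fixed surface and its graph is one of (A)--(F); a direct inspection shows the only invariant sphere of negative self-intersection is the one giving $\T_m$ itself, so no other toric extension exists. When $a\neq\pm1$: here I would argue that the weights at the minimal and maximal fixed points are $\{a,\pm b\}$ and $\{-a,\mp(am-b)\}$ (or the $am-b$ variants), so that $|a|\geq 2$ forces any $\Z_k$-sphere emanating from an extremal point to have $k\geq 2$ with $k$ a multiple of $|a|$ or a combination thereof; the arithmetic then shows that the self-intersection numbers $n$ that can arise via Lemma~\ref{weight} are incompatible with the smoothness/admissibility conditions unless $n=m$. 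Finally, when $a=\pm1$ but $2\lambda\leq|2b-am|+\epsilon_m$: the candidate extension $\T_{|2b-am|}$ would require an invariant $(-|2b-am|)$-sphere of positive area; Lemma~\ref{Lemma:SymplecticArea} shows its area would be (up to sign and normalization) $\lambda - \tfrac12(|2b-am| - \epsilon_m)$ or similar, and the hypothesis forces this to be $\leq 0$, contradicting positivity, so again only $\T_m$ remains.

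The third step is the positive direction: when $a=\pm1$, $2\lambda>|2b-am|+\epsilon_m$, and $b\notin\{0,am\}$, I would show that the only self-intersection number other than $-m$ that can be realized by an invariant sphere in the relevant graphs (G)--(L) is $-|2b-am|$. Since $a=\pm1$, the weights at one extremal fixed point are $\{1,\,b\}$ (up to signs) and at the other $\{-1,\,b-am\}$; an invariant sphere running "across" the graph between suitable fixed points, with rotation number $k=1$, has self-intersection governed by the weight difference, which computes to $\pm(2b-am)$. This produces exactly the class whose toric action is $\T_{|2b-am|}$ by Lemma~\ref{lemma_torus_action_-vecurve}, and the area positivity is exactly the hypothesis $2\lambda>|2b-am|+\epsilon_m$. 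That there is \emph{at most} one such extra class (hence at most two toric actions total) follows because the graph of an isolated-fixed-point circle action has only finitely many invariant spheres and, after discarding $\Z_k$-spheres with $k\geq2$ (which cannot yield a new $-n$ curve of the required area for $a=\pm1$), only one candidate negative self-intersection value survives.

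\emph{Main obstacle.} The delicate part is the bookkeeping in Step 1--Step 3: correctly identifying, for \emph{every} graph type, which pairs of fixed points are joined by an invariant embedded sphere, and then computing its self-intersection via Lemma~\ref{weight} with the correct orientation conventions (the sign of $ek$ and the ordering of the weights at the two poles). One must be careful that a "$\Z_k$-sphere with $k=1$" is exactly an invariant sphere that Karshon's graph does \emph{not} record (Remark~\ref{rmk:NormalizationMomentMap}), so these edgeless invariant spheres are precisely the ones that can produce the new toric extension $\T_{|2b-am|}$, and they must be reconstructed from the weight data rather than read off the graph. Getting the arithmetic of $|2b-am|$ and $\epsilon_m$ to line up exactly with the area-positivity threshold is where the proof must be done with care, but it is a finite case check rather than a conceptual difficulty.
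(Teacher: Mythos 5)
Your overall strategy is the one the paper uses: by Lemma~\ref{lemma_torus_action_-vecurve} an extension to $\T_n$, $n\neq m$, forces an invariant embedded sphere of self-intersection $-n$; such a sphere is either a $\Z_k$-sphere recorded as an edge of the labelled graph or an invariant sphere with trivial isotropy that the graph does not record; the candidate self-intersections are then computed from the weight data via Lemma~\ref{weight} and the threshold $2\lambda>|2b-am|+\epsilon_m$ is exactly positivity of area via Lemma~\ref{Lemma:SymplecticArea}. For $a=\pm1$ your Step 3 reproduces the paper's computation (the unrecorded curves joining $P$ to $R$ and $Q$ to $S$ have self-intersection $\pm(2b-am)$, and after checking all pairs of fixed points no other negative value arises), so that part is essentially the paper's proof.

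The genuine gap is your treatment of $a\neq\pm1$. Admissibility of the labelled graph is automatic (the graph comes from an actual Hamiltonian action), so an argument that candidate values of $n$ are ``incompatible with the smoothness/admissibility conditions'' has nothing to bite on; moreover your reasoning only discusses $\Z_k$-spheres with $k\geq 2$, whereas the danger when $|a|\geq 2$ comes precisely from trivial-isotropy invariant spheres that the graph does not record (for instance when $|b|=1$ or $|am-b|=1$ the weight data alone does allow invariant spheres of negative self-intersection different from $-m$, so a purely graph-side enumeration does not close the case). The missing idea is the two-sided use of the edge argument: if $S^1(a,b;m)$ were equivalent to $S^1(c,d;n)$ with $n\neq m$, then not only must the curve $C_n$ have trivial isotropy (since every negative-self-intersection edge sphere is a toric boundary sphere of $W_m$, hence has square $-m$), but, applying the same reasoning to the image of the section $D_m$ inside the graph of $S^1(c,d;n)$, that curve must also have trivial isotropy; since the circle acts on $D_m$ with isotropy of order $|a|$, this forces $a=\pm1$ (and $c=\pm1$). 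Without this symmetry step the exclusion of $a\neq\pm1$ does not go through. A smaller point of the same nature: for the fixed-surface cases $b\in\{0,am\}$ (and $a=0$) the paper does not inspect spheres at all but simply observes that the moment-map and area labels of the graphs in Figure~\ref{fig:GraphsWithFixedSurfaces} determine $m$, so no $S^1(c,d;n)$ with $n\neq m$ can have the same graph; your ``direct inspection of invariant spheres of negative self-intersection'' would again have to account for spheres not visible in the graph, so the label-comparison argument is both safer and shorter there.
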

\begin{proof}
Suppose $S^{1}(a,b;m)$ is equivariantly symplectomorphic to $S^{1}(c,d;n)$ for some $n\neq m$. Note that we necessarily have $m=n\mod 2$. By assumption, the two actions share the same normalized labelled graph.

We first consider an action $S^{1}(a,b;m)$ whose fixed points are all isolated. For the moment, let's also assume that $m\neq 0$ and $n\neq 0$. We observe that if the $\T_{n}$ invariant curve $C_{n}$ of self-intersection $-n$ has non-trivial isotropy, then it must appear as some edge in the graph of $S^{1}(a,b;m)$. As the edge connecting the vertices $Q$ and $R$ is the only one that corresponds to an invariant sphere of negative self-intersection, namely $-m$, we conclude that $m=n$. This contradiction shows that $C_{n}$ must have trivial isotropy. By symmetry, the same is true of the invariant curve $C_{m}$ of self-intersection $-m$. It follows that $a=\pm 1$ and $c=\pm 1$. 

Assume $a=1$. Because there are no fixed surfaces, we know that $b\neq 0$ and $m-b\neq 0$. Figure~\ref{fig:PossibleGraphsToricExtensions} shows the three possible graphs for $S^{1}(1,b;m)$, which are then of types (G), (H), or (I). The dashed edges represent the possible locations of the $\T_{n}$ invariant curves $C_{n}$ and $C_{-n}$. We can compute the self-intersection of $C_{n}$ and $C_{-n}$ by applying Lemma~\ref{weight} to the normal bundle of these invariant spheres, and using the configurations of weights shown in Figure~\ref{fig:Self-Intersection}. The self-intersection of the curve between $P$ and $R$ is $2b-m$, while the self-intersection of the curve between $Q$ and $S$ is $m-2b$. Set $n=|2b-m|$. For the toric action $\T_{n}$ to exist, we must also have $0<2\oml (C_{n}) = 2\lambda-|2b-m|-\epsilon_{m}$. We conclude that the action $S^{1}(1,b;m)$, $m\neq 0$, may extend to $\T_{|2b-m|}$ whenever $2\lambda>|2b-m|+\epsilon_{m}$, and that this is the only other possible toric extension.

\begin{figure} 
\centering
~
\subcaptionbox*{}{} 
\setcounter{subfigure}{6} 
\subcaptionbox{When $a=1,~b>0$, and $m-b>0$\label{fig:a=1|b>0|m-b>0}}
[.43\linewidth]
{\begin{tikzpicture}[scale=0.9, every node/.style={scale=0.9}]
\draw [fill] (0,0) ellipse (0.1cm and 0.1cm);
\draw [fill] (1,-1.5) ellipse (0.1cm and 0.1cm);
\draw [fill] (0,-4) ellipse (0.1cm and 0.1cm);
\draw [fill] (-1,-2.5) ellipse (0.1cm and 0.1cm);
\draw (0,0) -- (1,-1.5);
\draw [dashed, blue] (0,0) -- (-1,-2.5);
\draw [dashed, blue] (1,-1.5) -- (0,-4);
\draw (-1,-2.5) -- (0,-4);
\node[left] at (-0.6,-3.2) {$b$};
\node[right] at (0.7,-0.5) {$m-b$};
\node[above] at (0,0.2) {$\mu = (\lambda + k)$};
\node[right] at (-0.6,-2.5) {$\mu = b$};
\node[right] at (1,-1.8) {$\mu = b + (\lambda - k-\epsilon_{m}) $};
\node[below] at (0, -4.2) {$\mu = 0$};
\node[left] at (-1,-2.5) {Q};
\node[left] at (0.9,-1.4) {R};
\node[right] at (0,0) {S};
\node[right] at (0,-4) {P};
\end{tikzpicture}}
\subcaptionbox{When $a=1,~b>0$ and $m-b<0$
}
[.52\linewidth]
{\begin{tikzpicture}[scale=0.9, every node/.style={scale=0.9}]
\draw [fill] (0,0) ellipse (0.1cm and 0.1cm); 
\draw [fill] (1,-2) ellipse (0.1cm and 0.1cm); 
\draw [fill] (0,-4) ellipse (0.1cm and 0.1cm); 
\draw [fill] (-1,-3) ellipse (0.1cm and 0.1cm); 
\draw [dashed, blue] (0,0) .. controls (-4,-1.5) and (-2,-3.5) .. (0,-4);
\draw [dashed, blue] (1,-2) -- (-1,-3);
\draw (0,0) -- (1,-2);
\draw (-1,-3) -- (0,-4);
\node[right] at (0,-4) {P};
\node[left] at (-1,-3) {Q};
\node[left] at (0.9,-2) {S};
\node[right] at (0,0) {R};
\node[above] at (0,0.2) {$\mu = b +  (\lambda -k-\epsilon_{m})$};
\node[right] at (1.2,-2) {$\mu = (\lambda + k) $};
\node[right] at (-0.7,-3) {$\mu = b$};
\node[right] at (-0.4,-3.5) {$b$};
\node[right] at (0.7,-1) {$b-m$};
\node[below] at (0,-4.2) {$\mu = 0$};
\end{tikzpicture}
}
\subcaptionbox{When $a=1,~b<0$\label{fig:a=1|b>0}}
[.5\linewidth]
{\begin{tikzpicture}[scale=0.9, every node/.style={scale=0.9}]
\draw [fill] (0,0) ellipse (0.1cm and 0.1cm);
\draw [fill] (1,-3) ellipse (0.1cm and 0.1cm);
\draw [fill] (0,-4) ellipse (0.1cm and 0.1cm);
\draw [fill] (-1,-2) ellipse (0.1cm and 0.1cm);
\draw [dashed, blue] (0,0) .. controls (-4,-1.5) and (-2,-3.5) .. (0,-4);
\draw [dashed, blue] (-1,-2) -- (1,-3);
\draw (0,0) -- (-1,-2) ;
\draw (1,-3) -- (0,-4);
\node[above] at (0,1) {\rule{0em}{1em}}; 
\node[above] at (0,0.2) {$\mu = (\lambda +k)-b$};
\node[right] at (1.4,-3) {$\mu = -b $};
\node[right] at (-0.8,-1.9) {$\mu = (\lambda-k-\epsilon_{m})$};
\node[below] at (0,-4.2) {$\mu = 0$};
\node[right] at (0.5, -3.6) {$-b$};
\node[right] at (-0.4,-1.0) {$m-b$};
\node[right] at (1,-3) {P};
\node[left] at (-1,-2) {R};
\node[right] at (0,0) {S};
\node[right] at (0,-4) {Q};
\end{tikzpicture}}
\caption{Graphs of $S^{1}(1,b;m)$ with possible locations of $C_{n}$ and $C_{-n}$}
\label{fig:PossibleGraphsToricExtensions}
\end{figure}

\begin{figure} 
\centering
   
\begin{tikzpicture}
\draw [fill] (0,0) ellipse (0.1cm and 0.1cm); 
\draw [fill] (4,0) ellipse (0.1cm and 0.1cm); 
\draw [fill] (7,0) ellipse (0.1cm and 0.1cm); 
\draw [fill] (11,0) ellipse (0.1cm and 0.1cm);
\draw [dashed, blue] (0,0) -- (4,0); 
\draw [dashed, blue] (7,0) -- (11,0);
\draw (0,-0.5) -- (0,0.5);
\draw (4,-0.5) -- (4,0.5);
\draw (7,-0.5) -- (7,0.5);
\draw (11,-0.5) -- (11,0.5);
\node[left] at (0,0) {$1\,$}; 
\node[above] at (0,0.5) {$b$}; 
\node[right] at (4,0) {$\,-1$}; 
\node[above] at (4,0.5) {$m-b$}; 
\node[right] at (0,-0.25) {$P$};
\node[left] at (4,-0.25) {$R$};
\node[above,blue] at (2,0) {$1$};
\node[left] at (7,0) {$1\,$}; 
\node[above] at (7,0.5) {$-b$}; 
\node[right] at (11,0) {$\,-1$};
\node[above] at (11,0.5) {$b-m$}; 
\node[right] at (7,-0.25) {$Q$};
\node[left] at (11,-0.25) {$S$};
\node[above,blue] at (9,0) {$1$};
\end{tikzpicture}
\caption{Configurations of weights along $C_{\pm n}$ when $a=1$}
\label{fig:Self-Intersection}
\end{figure}
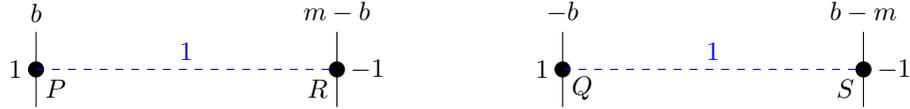

When $a=-1$, the same arguments apply to $S^{1}(-1,b;m)$ whose possible graphs are now of types (J), (K), and (L). The self-intersections of the curves $C_{\pm 1}$ are now $\pm |2b+m|$. We conclude that the action $S^{1}(-1,b;m)$ may extend to $\T_{|2b+m|}$ whenever $m\neq 0$ and $2\lambda>|2b+m|+\epsilon_{m}$.

In the special case $m=0$, any invariant sphere appearing in the graph of $S^{1}(a,b;0)$ has zero self-intersection. If $S^{1}(a,b;0)$ extends to $\T_{n}$ with $n\geq 2$, it again follows that the invariant curve $C_{n}$ of self-intersection $-n$ must have trivial isotropy. Consequently, $c=\pm 1$, and at least one of the weights $a$ or $b$ must be $\pm1$. If $a=1$, the possible graphs of $S^{1}(1,b;0)$ are the graphs (H) and (I) of Figure~\ref{fig:PossibleGraphsToricExtensions}. Then, the same argument as before shows that the self-intersection of invariant curves are $0$ and $\pm 2b$. Consequently, the action $S^{1}(1,b;0)$ may extend to $\T_{|2b|}$ provided $2\lambda>|2b|$. Similarly, when $a=-1$, the action $S^{1}(-1,b;0)$ may extend to $\T_{|2b|}$ whenever $2\lambda>|2b|$.

When $m=0$ and $b=1$, then the possible self-intersections of invariant curves are $0$ and $\pm 2a$. However, the two possible graphs are now of type (H) and (J). In both cases, the area of the tentative invariant curve $C_{\pm n}$ connecting the two interior fixed points is negative. It follows that it is impossible to have $m=0$ and $b=1$. Similarly, if $b=-1$, the possible graphs are of type (I) and (K) and, as before, the area of the tentative invariant curve $C_{\pm n}$ connecting the two interior fixed points is negative. Consequently, we cannot have $m=0$ and $b=\pm 1$. This concludes the proof of the statement for actions whose fixed points are all isolated.

\begin{figure} 
\centering
~
\subcaptionbox*{}{} 
\setcounter{subfigure}{7} 
\subcaptionbox{When $m=0$, $a>0,~b=1$}
[.45\linewidth]
{\begin{tikzpicture}[scale=0.9, every node/.style={scale=0.9}]
\draw [fill] (0,0) ellipse (0.1cm and 0.1cm); 
\draw [fill] (1,-2) ellipse (0.1cm and 0.1cm); 
\draw [fill] (0,-4) ellipse (0.1cm and 0.1cm); 
\draw [fill] (-1,-3) ellipse (0.1cm and 0.1cm); 
\draw [dashed, blue] (0,0) .. controls (-4,-1.5) and (-2,-3.5) .. (0,-4);
\draw[dashed, blue] (-1,-3) -- (1,-2);
\draw (0,0) -- (-1,-3) ;
\draw (1,-2) -- (0,-4);
\node[left] at (-1.1,-3) {Q};
\node[left] at (0.9,-1.9) {S};
\node[right] at (0,0) {R};
\node[above] at (0,0.2) {$\mu = 1 + a (\lambda -k-\epsilon_{m})$};
\node[right] at (1.2,-2) {$\mu = a(\lambda + k) $};
\node[right] at (-0.9,-3) {$\mu = 1$};
\node[right] at (0,-4) {P};
\node[right] at (0.5, -3) {$a$};
\node[left] at (-0.5,-1.5) {$a$};
\node[below] at (0,-4.2) {$\mu = 0$};
\end{tikzpicture}}
\setcounter{subfigure}{9} 
\subcaptionbox{When $m=0$, $a<0,~b=1$\label{fig:a<0|b=1}}
[.5\linewidth]
{\begin{tikzpicture}[scale=0.9, every node/.style={scale=0.9}]
\draw [fill] (0,0) ellipse (0.1cm and 0.1cm);
\draw [fill] (1,-3) ellipse (0.1cm and 0.1cm);
\draw [fill] (0,-4) ellipse (0.1cm and 0.1cm);
\draw [fill] (-1,-2) ellipse (0.1cm and 0.1cm);
\draw (0,0) -- (1,-3);
\draw (-1,-2) -- (0,-4);
\draw [dashed, blue] (0,0) .. controls (-4,-1.5) and (-2,-3.5) .. (0,-4);
\draw[dashed, blue] (-1,-2) -- (1,-3);
\node[above] at (0,1) {\rule{0em}{1em}}; 
\node[above] at (0,0.2) {$\mu = 1 - a (\lambda +k)$};
\node[right] at (1.4,-3) {$\mu = 1 $};
\node[above] at (-1,-2) {$\mu = -a(\lambda+k)$};
\node[below] at (0,-4.2) {$\mu = 0$};
\node[right] at (-1.2,-3.2) {$-a$};
\node[right] at (0.6,-1.3) {$-a$};
\node[left] at (0.9,-3) {R};
\node[left] at (-1,-2.2) {P};
\node[right] at (0,0) {Q};
\node[right] at (0,-4) {S};
\end{tikzpicture}}
\caption{Graphs of $S^{1}(a,1;0)$ with impossible locations of $C_{n}$ and $C_{-n}$}
\label{fig:ImpossibleGraphsToricExtensions}
\end{figure}
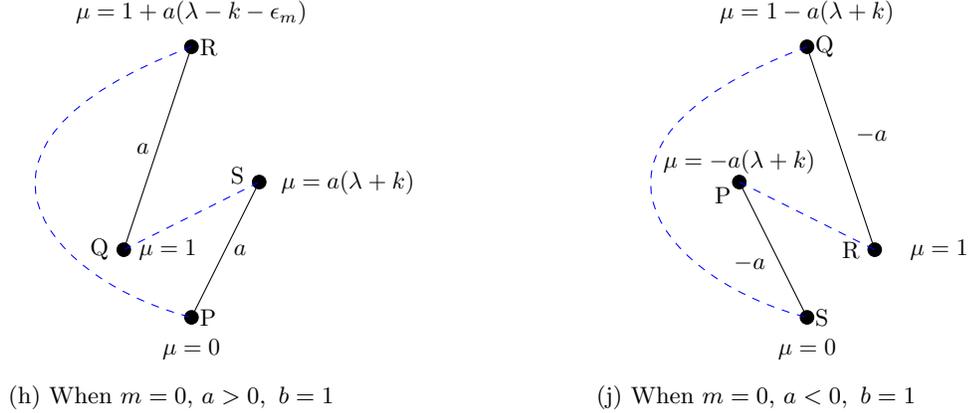

Now suppose $S^{1}(a,b;m)$ has non-isolated fixed points, that is, $a=0$, or $b=0$, or $b=am$. As the moment map values of graphs (A), (B), (E), and (F) depend only on $m$, it is impossible to get the same graphs from another action $S^{1}(c,d;n)$ with $n\neq m$. The same remark applies to the area labels of graphs (C) and (D), showing that $S^{1}(a,b;m)$ only extends to $\T_{m}$.
\end{proof}

It remains to investigate whether the action $S^{1}(\pm 1,b;m)$ extends to $\T_{|2b-m|}$ when $2\lambda > |2b-am|+\epsilon_{m}$ and $b \not\in \{0,am\}$. A straightforward but lengthy comparison of the possible graphs of $S^{1}(\pm1,b;m)$ and $S^{1}(\pm,d;|2b-am|)$ shows that this is always the case and yields the equivalences stated in the following two corollaries. The graphs giving the first equivalence are shown in Figure~\ref{fig:ExampleTwoToricExtensions}. The other cases are left to the reader.

\begin{cor} \label{cor:CircleExtensionsWith_a=1}
Consider a circle action $S^1(1,b;m)$ on $(\SSS,\oml)$  or $(\CCC,\oml)$ and suppose $2\lambda > |2b-m|+\epsilon_{m}$. Then under the following numerical conditions on $b$ and $m$, the  $S^1(1,b;m)$ action extends to the toric action $\T_{|2b-m|}$ and is equivariantly symplectomorphic to the following subcircle in $\T_{|2b-m|}$\,:
\begin{enumerate}
    \item if $b>0$ and $b>m$, then $S^1(1,b; m)$ is equivalent to $S^1(1,b; |2b-m|)$;
    \item if $b>0$, $m>b$, and $2b-m < 0$, then $S^1(1,b; m)$ is equivalent to $S^1(1,-b; |2b-m|)$;
    \item if $b>0$, $m>b$, and $2b-m > 0$, then $S^1(1,b; m)$ is equivalent to $S^1(1,b;|2b-m|)$;
    \item finally, if $b<0$, then $S^1(1,b; m)$ is equivalent to $S^1(1,-b;|2b-m|)$.\qed
\end{enumerate}
\end{cor}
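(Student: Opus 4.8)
The proof is a direct application of Karshon's equivariant classification. By Theorem~\ref{graphiso}, two Hamiltonian circle subgroups of $\Symp(M,\om)$ are equivalent in the sense of Definition~\ref{defn:EquivalenceActions} if and only if their labelled graphs agree up to the $\AGL(1,\Z)$-action on the moment map labels. So, in each of the four cases, it suffices to write down the labelled graph of $S^1(1,b;m)$ and that of the candidate subcircle of $\T_{|2b-m|}$, and to exhibit an element of $\AGL(1,\Z)$ carrying one to the other; with the normalization $\min\mu=0$ this element is either the identity or the reflection $\mu\mapsto -\mu+c$ induced by the reparametrization $t\mapsto t^{-1}$, which interchanges the minimum and the maximum.

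The plan is as follows. In all four cases $b\notin\{0,m\}$, so the fixed points of $S^1(1,b;m)$ are isolated and its labelled graph is obtained by setting $a=1$ in the appropriate subfigure among (G)--(L) of Figure~\ref{fig:GraphsIsolatedFixedPoints}, the relevant subfigure being selected by the signs of $b$ and of $m-b$; the moment labels and the edge isotropies are then the ones displayed there, an edge of isotropy $1$ being erased. Writing $n:=|2b-m|$, one has $n\equiv m\pmod 2$, hence $\epsilon_n=\epsilon_m$, and with $n=2k'+\epsilon_m$ one expresses $k'$ through $k$ and $b$ (for instance $k'=b-k-\epsilon_m$ when $2b-m>0$). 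One then writes out, in the same way, the labelled graph of the candidate circle $S^1(1,\pm b;n)$ --- with the subfigure now selected by the signs of $\pm b$ and of $n\mp b$ --- and checks that, up to the $\AGL(1,\Z)$-action, its quadruple of vertices with their moment labels and labelled edges coincides with that of $S^1(1,b;m)$. Case~(1) is the one drawn in Figure~\ref{fig:ExampleTwoToricExtensions}: there $S^1(1,b;m)$ has type (H) and $S^1(1,b;2b-m)$ has type (G), yet the two normalized graphs coincide on the nose, both having vertices with moment labels $0$, $b$, $\lambda+k$, $b+\lambda-k-\epsilon_m$, an edge of isotropy $b$ joining $0$ to $b$, and an edge of isotropy $b-m$ joining $\lambda+k$ to $b+\lambda-k-\epsilon_m$; cases~(2),~(3),~(4) are handled identically, a reflection of the moment labels being needed precisely in those of them for which the minimum of $S^1(1,b;m)$ corresponds to the maximum of the candidate circle. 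Together with Proposition~\ref{prop:AtMostTwoExtensions}, which excludes any toric extension of $S^1(1,b;m)$ other than $\T_m$ and $\T_{|2b-m|}$, this determines all toric extensions of $S^1(1,b;m)$.

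The only real difficulty is bookkeeping. One must remember that whether a fixed point is a minimum or a maximum is decided by the signs of its weights (for instance by $m-b$ versus $b-m$), not by its position in the pictures, so that the same triple $(1,b;\,\cdot\,)$ may land in different graph types on the two sides of an equivalence. One must also handle the degenerate subcases where an interior edge carries isotropy $1$ and is therefore erased --- this happens for small values of $|b-m|$, or of $b$, or of the weights of the candidate circle --- and check that such an edge disappears on both sides simultaneously. None of this is conceptually subtle, but it has to be carried out with care for each of the four cases and each of their sign subcases.
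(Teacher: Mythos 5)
Your proposal is correct and is essentially the paper's own argument: the text preceding the corollary proves it by "a straightforward but lengthy comparison of the possible graphs" of $S^1(\pm1,b;m)$ and the candidate subcircle of $\T_{|2b-m|}$, invoking Karshon's classification, with case (1) displayed in Figure~\ref{fig:ExampleTwoToricExtensions} and the remaining sign cases left to the reader, exactly as you outline (including the bookkeeping of $k'$ in terms of $k$ and $b$, and the simultaneous erasure of isotropy-$1$ edges).
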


\begin{cor}\label{cor:CircleExtensionsWith_a=-1}
Consider the $S^1$ actions $S^1(-1,b;m)$ on $(\SSS,\oml)$  or $(\CCC,\oml)$ and suppose $2\lambda > |2b+m| + \epsilon_m$. Then under the following numerical conditions on $b$ and $m$, the  $S^1(-1,b;m)$ action extends to the toric action $\T_{|2b+m|}$ and is equivariantly symplectomorphic to the following subcircle in $\T_{|2b+m|}$\,:
\begin{enumerate}
\item if $b<0$ and $m>-2b$, then $S^1(-1,b;m)$ is equivalent to\\ $S^1(-1,-b; |2b+m|)$;
\item if $b<0$, $m>-b$, and $-2b>m$, then $S^1(-1,b;m)$ is equivalent to 
$S^1(-1,b; |2b+m|)$;
\item if $b<0$ and $-b>m$, then $S^1(-1,b;m)$ is equivalent to $S^1(-1,b; |2b+m|)$;
\item if $b>0$, then $S^1(-1,b; m)$ is equivalent to $S^1(-1,-b; |2b+m|)$.\qed
\end{enumerate}
\end{cor}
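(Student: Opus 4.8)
The plan is to deduce Corollary~\ref{cor:CircleExtensionsWith_a=-1} from Corollary~\ref{cor:CircleExtensionsWith_a=1} by a reparametrization of the circle, rather than carrying out a second round of labelled-graph bookkeeping. The key observation is that precomposing the embedding $t\mapsto(t^{-1},t^{b})$ with $t\mapsto t^{-1}$ exhibits $S^{1}(-1,b;m)$ and $S^{1}(1,-b;m)$ as the very same subgroup of $\Symp(\SSS,\oml)$, hence equivalent in the sense of Definition~\ref{defn:EquivalenceActions}; the same remark applies to $S^{1}(-1,e;n)$ and $S^{1}(1,-e;n)$ for any $e,n$. Since $|2(-b)-m|=|2b+m|$, and since $m$ is even on $\SSS$ so that $\epsilon_{m}=0$, the hypothesis $2\lambda>|2b+m|$ of the present corollary is exactly the hypothesis $2\lambda>|2(-b)-m|+\epsilon_{m}$ needed to apply Corollary~\ref{cor:CircleExtensionsWith_a=1} to $S^{1}(1,-b;m)$. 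Applying that corollary and then re-expressing its conclusion via the identity of subgroups $S^{1}(1,e;|2b+m|)=S^{1}(-1,-e;|2b+m|)$ yields an equivalence of $S^{1}(-1,b;m)$ with a circle of the advertised form $S^{1}(-1,\pm b;|2b+m|)$ inside $\T_{|2b+m|}$.

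The remaining work is to verify that, under $b\mapsto-b$, the four sign regions of Corollary~\ref{cor:CircleExtensionsWith_a=-1} correspond to the four regions of Corollary~\ref{cor:CircleExtensionsWith_a=1}, and that the target circles match up. Concretely: region (1) here ($b<0$, $m>-2b$) maps to region (2) there, since $b<0$ forces $-2b>-b$ and hence $m>-2b$ already implies $m>-b$, and this gives $S^{1}(-1,b;m)\sim S^{1}(-1,-b;|2b+m|)$; region (2) here ($b<0$, $-b<m<-2b$) maps to region (3) there, giving $S^{1}(-1,b;m)\sim S^{1}(-1,b;|2b+m|)$; region (3) here ($b<0$, $-b>m$) maps to region (1) there, again giving $S^{1}(-1,b;m)\sim S^{1}(-1,b;|2b+m|)$; and region (4) here ($b>0$) maps to region (4) there, giving $S^{1}(-1,b;m)\sim S^{1}(-1,-b;|2b+m|)$. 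I expect this translation of inequalities and sign patterns to be the only place that demands care; it is routine, and markedly lighter than the full graph comparison one would otherwise have to do.

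As an alternative, following the strategy of the paragraph preceding Corollary~\ref{cor:CircleExtensionsWith_a=1} together with Figure~\ref{fig:ExampleTwoToricExtensions}, one can argue directly: by Karshon's classification (Theorem~\ref{graphiso}) it suffices to show that the normalized labelled graph of $S^{1}(-1,b;m)$ agrees with that of the candidate subcircle of $\T_{|2b+m|}$. One builds both graphs from the weights in Table~\ref{table_weights}, reads off the self-intersection $-|2b+m|$ of the relevant invariant sphere using Lemma~\ref{weight}, and computes the moment labels (with the normalization $\min\mu=0$) from Lemma~\ref{Lemma:SymplecticArea}. Proposition~\ref{prop:AtMostTwoExtensions} already isolates which graphs can possibly coincide, and Lemma~\ref{lemma_number_of_torus_actions} together with $2\lambda>|2b+m|$ guarantees that $\T_{|2b+m|}$ genuinely exists on $(\SSS,\oml)$; after that, matching the labels is precisely the computation performed in Figure~\ref{fig:ExampleTwoToricExtensions}, repeated once for each admissible sign pattern of $(b,m)$. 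In this second approach the main obstacle is simply the length of the case analysis, which the reparametrization reduction of the first two paragraphs sidesteps.
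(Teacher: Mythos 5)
Your proposal is correct, and its primary route is genuinely different from the paper's. The paper disposes of both Corollary~\ref{cor:CircleExtensionsWith_a=1} and Corollary~\ref{cor:CircleExtensionsWith_a=-1} at once by the ``straightforward but lengthy'' comparison of normalized labelled graphs (one sample case is Figure~\ref{fig:ExampleTwoToricExtensions}, the rest being left to the reader) --- i.e.\ exactly your second, alternative argument. Your first argument instead deduces the $a=-1$ statement from the already-proved $a=1$ statement by the reparametrization $t\mapsto t^{-1}$, which identifies $S^1(-1,b;m)$ and $S^1(1,-b;m)$ as the same subgroup; this is legitimate because the paper's notion of equivalence (Definition~\ref{defn:EquivalenceActions}, and the explicit remark at the start of Section 2.3 that $S^1(-a,-b;m)\sim S^1(a,b;m)$) is conjugacy of subgroups, so reparametrizations are allowed. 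Your bookkeeping is right: with $c=-b$ one has $|2c-m|=|2b+m|$, the hypothesis $2\lambda>|2b+m|$ becomes $2\lambda>|2c-m|+\epsilon_m$ since $\epsilon_m=0$ on $\SSS$, the case conditions exclude $c\in\{0,m\}$ (no case admits $b=0$ or $b=-m$), and your dictionary of regions (1)$\to$(2), (2)$\to$(3), (3)$\to$(1), (4)$\to$(4), followed by rewriting $S^1(1,e;|2b+m|)=S^1(-1,-e;|2b+m|)$, reproduces the four stated equivalences exactly. What each approach buys: the reduction halves the case analysis and makes the symmetry $b\mapsto -b$ between the two corollaries transparent, at the cost of resting entirely on the $a=1$ case; the paper's direct graph comparison is self-contained and treats both signs of $a$ uniformly, but requires repeating the weight/moment-label computation for every sign pattern.
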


\begin{figure}[H] 
\centering
\subcaptionbox*{$S^1(1,b;m)$ of type (H)\label{fig:ExtendedGraph1}}
[.38\linewidth]
{\begin{tikzpicture}[scale=0.8, every node/.style={scale=0.9}]
\draw [fill] (0,0) ellipse (0.1cm and 0.1cm); 
\draw [fill] (1,-2) ellipse (0.1cm and 0.1cm); 
\draw [fill] (0,-4) ellipse (0.1cm and 0.1cm); 
\draw [fill] (-1,-3) ellipse (0.1cm and 0.1cm); 
\draw (0,0) -- (1,-2);
\draw (-1,-3) -- (0,-4);
\draw[dashed, blue] (0,0) -- (-1,-3) ;
\draw[dashed, blue] (1,-2) -- (0,-4);
\node[right] at (-0.9,-3) {Q};
\node[left] at (0.9,-2) {S};
\node[right] at (0,0) {R};
\node[above] at (0,0.2) {$\mu = b + \lambda-k-\epsilon_{k}$};
\node[right] at (1.3,-2) {$\mu = \lambda+k$};
\node[left] at (-1.4,-3) {$\mu = b$};
\node[right] at (0,-4) {P};
\node[right] at (-1,-3.7) {$b$};
\node[right] at (0.5, -3) {$a=1$};
\node[left] at (-0.5,-1.5) {$a=1$};
\node[right] at (0.7,-1) {$b-m$};
\node[below] at (0,-4.2) {$\mu = 0$};
\end{tikzpicture}
}
~
\subcaptionbox*{$S^1(1,b;2b-m)$ of type (G)\label{sndstrata}}
[.61\linewidth]
{\begin{tikzpicture}[scale=0.8, every node/.style={scale=0.9}]
\draw [fill] (0,0) ellipse (0.1cm and 0.1cm);
\draw [fill] (1,-1.5) ellipse (0.1cm and 0.1cm);
\draw [fill] (0,-4) ellipse (0.1cm and 0.1cm);
\draw [fill] (-1,-2.5) ellipse (0.1cm and 0.1cm);
\draw (0,0) -- (1,-1.5);
\draw[dashed, blue] (1,-1.5) -- (-1,-2.5);
\draw[dashed, blue] (0,0) .. controls (-4,-1.5) and (-2,-3.5) .. (0,-4);
\draw (-1,-2.5) -- (0,-4);
\node[right] at (-0.35,-3.2) {$b$};
\node[right] at (- 1,-1.8) {$a=1$};
\node[left] at (-2.2,-1.5) {$a=1$};
\node[right] at (0.7,-0.5) {$(2b-m)-b=b-m$};
\node[above] at (0,0.2) {$\mu = \lambda+\frac{(2b-m)-\epsilon_{k}}{2}= b+\lambda - k-\epsilon_{k}$};
\node[right] at (-0.6,-2.5) {$\mu = b$};
\node[right] at (0.6,-1.8) {$\mu=b+\lambda-\frac{(2b-m)-\epsilon_{k}}{2}-\epsilon_{k}$};
\node[right] at (0.91,-2.25) {$=\lambda+k$};
\node[below] at (0, -4.2) {$\mu = 0$};
\node[left] at (-1,-2.5) {Q};
\node[left] at (0.9,-1.4) {R};
\node[right] at (0,0) {S};
\node[right] at (0,-4) {P};
\end{tikzpicture}}
\caption{The equivalence $S^{1}(1,b;m)\sim S^{1}(1,b;2b-m)$ when $b>m$}
\label{fig:ExampleTwoToricExtensions}
\end{figure}
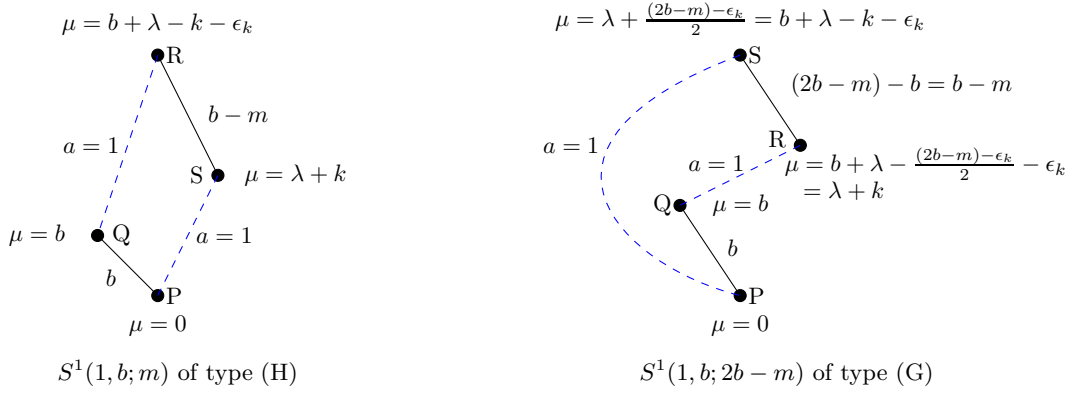


\chapter{Action of \texorpdfstring{$\Symp^{S^1}(\SSS,\oml)$}{Symp(S2xS2)} on \texorpdfstring{$\jsom$}{J\hat{}S1}}\label{ChapterActionOfSymp}
In this chapter we show that the space of $S^1$ invariant compatible almost complex structures $\jsom$ decomposes into strata that are homotopy equivalent to  homogeneous spaces under the action of the equivariant symplectomorphism group. 

\section{\texorpdfstring{$J$}{J}-Holomorphic Preliminaries}

We first recall a few facts about compatible almost complex structures and associated $J$-holomorphic curves.
\begin{defn}[Compatible almost complex structures] 
An almost complex structure $J$ on a symplectic manifold $(M,\om)$ is said to be compatible with $\om$ if $\om(u,Ju)>0$ and $\om(Ju, Jv)=\om(u,v)$ for all non-zero $u,v\in T_* M$.
\end{defn}
\begin{lemma}
The space $\jj_\om = \jj(M,\om)$ of all compatible almost complex structures on a symplectic manifold $(M,\om)$ is non-empty and contractible.
\end{lemma}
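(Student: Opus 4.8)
The plan is to realize $\jj_\om$ as a retract of a contractible space --- the space $\mathcal{M}(M)$ of all Riemannian metrics on $M$ --- and then invoke the elementary fact that a retract of a contractible space is contractible. Non-emptiness will come for free from the same construction, since $\mathcal{M}(M)$ is non-empty by a partition-of-unity argument.

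First I would isolate the pointwise linear-algebra input. Fix $p\in M$ and a positive inner product $g$ on $T_pM$. There is a unique invertible endomorphism $A$ of $T_pM$ with $\om_p(u,v)=g(Au,v)$, and symmetry of $g$ forces $A$ to be $g$-skew-adjoint, so $-A^2=A^\top A$ is $g$-symmetric and positive-definite. Writing $Q:=\sqrt{-A^2}$ for the positive-definite square root (which commutes with $A$), I set $r_p(g):=Q^{-1}A$. A short computation gives $r_p(g)^2=Q^{-2}A^2=-\mathrm{id}$, and checking $\om_p\bigl(r_p(g)u,r_p(g)v\bigr)=\om_p(u,v)$ together with $\om_p\bigl(u,r_p(g)u\bigr)=g\bigl(Q^{-1/2}Au,Q^{-1/2}Au\bigr)>0$ shows $r_p(g)$ is $\om_p$-\emph{compatible}; moreover $r_p$ is smooth in $g$ since the positive-definite square root is smooth on positive-definite symmetric operators. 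The crucial point is that if $g=g_J:=\om_p(\cdot,J\cdot)$ already comes from a compatible $J$, then the associated operator $A$ is $J$ itself (because $\om_p(Ju,Jv)=\om_p(u,v)$), so $Q=\mathrm{id}$ and $r_p(g_J)=J$. Thus $r_p$ is a smooth retraction of the open convex cone of inner products on $T_pM$ onto $\jj(T_pM,\om_p)$.

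Next I would globalize. The space $\mathcal{M}(M)$ is the space of smooth sections of a bundle with open convex-cone fibers, hence convex, hence non-empty and contractible. Applying the $r_p$ fiberwise defines a continuous map $R:\mathcal{M}(M)\to\jj_\om$ (smoothness of a section is preserved because the $r_p$ assemble into a smooth fiber-bundle morphism), and $r_p(g_J)=J$ gives $R\circ I=\mathrm{id}_{\jj_\om}$ for the obvious inclusion $I:\jj_\om\to\mathcal{M}(M)$, $I(J)=\om(\cdot,J\cdot)$. Hence $\jj_\om$ is a retract of $\mathcal{M}(M)$: composing a contraction of $\mathcal{M}(M)$ with $R$ and restricting to $I(\jj_\om)$ yields a contraction of $\jj_\om$, while $R$ applied to any metric exhibits $\jj_\om\neq\varnothing$.

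I expect the only genuine subtlety to be the pointwise step --- specifically, verifying that the polar-decomposition output $Q^{-1}A$ is truly $\om$-compatible (the positivity of $\om(\cdot,J\cdot)$, not merely $\om$-tameness) and that it depends smoothly on $g$; the smoothness of the positive-definite square root is what makes the latter work. The passage from pointwise retractions to a retraction of section spaces, and the ``retract of contractible is contractible'' conclusion, are then routine.
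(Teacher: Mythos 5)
Your argument is correct: it is the standard polar-decomposition proof (the fiberwise retraction $g\mapsto Q^{-1}A$ from the contractible cone of Riemannian metrics onto $\jj_\om$, as in McDuff--Salamon), and all the key verifications — skew-adjointness of $A$, positivity of $-A^2$, compatibility of $Q^{-1}A$, and $r_p(g_J)=J$ — are handled properly. The paper itself states this lemma without proof as a standard fact, so there is nothing to contrast with; your write-up supplies exactly the argument being implicitly invoked.
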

\begin{defn}{\textit{$J$-holomorphic spheres}:}
Let $(M,\om)$ be a symplectic manifold endowed with a compatible almost complex structure $J$. A rational $J$-holomorphic map, also called a parametrized $J$-holomorphic sphere, is a $C^\infty$ map
\[
u: ({S}^2, j) \lto (M,\om,J)
\]
satisfying the Cauchy-Riemann equation
\[
\delbar_J(u)=\frac{1}{2}(du\circ j - J\circ du) = 0
\]
where $j$ is the usual complex structure on the sphere. The image of a $J$-holomorphic rational map is called a rational $J$-holomorphic curve or simply a $J$-curve.
\end{defn}
\begin{defn}[Multi-covered and simple maps]
We say that a $J$-ho\-lo\-mor\-phic map $u:\mathbb{C}P^1 \longrightarrow (M,J)$ is multi-covered if $u = {u}^\prime \circ f$, where $f:\mathbb{C}P^1 \to \mathbb{C}P^1$ is a holomorphic map of degree greater than one and where $u':\mathbb{C}P^1 \to (M,J)$ is a $J$-holomorphic map. We call a $J$-holomorphic map simple if it is not multi-covered.
\end{defn}
\begin{remark}
We usually assume that a $J$-holomorphic map is somewhere injective, meaning that $\exists z\in {S}^2$ such that $du_z \neq 0$ and $u^{-1}u(z) = z$. In particular, somewhere injective maps do not factor through multiple covers $h:S^2\to S^2$. 
\end{remark}
\begin{defn}[Moduli spaces of $J$-holomorphic maps or curves]
Let $(M,\om)$ be a symplectic manifold and let $J \in \jj_\om$. Given $A \in H_2(M, \mathbb{Z})$ we denote by $\widetilde{\Mm}(A,J)$ the space of all $J$-holomorphic, somewhere injective maps representing the homology class $A$. The Mobius group $G =\PSL(2,\mathbb{C})$ acts freely on this space by reparametrization and the quotient space $\Mm(A,J):=\widetilde{\Mm}(A,J)/G$ is called the moduli space of (unparametrised) $J$-curves in class $A$.
\end{defn}
In dimension 4, the geometric properties of $J$-holomorphic curves are, to a large extend, controlled by homological data. As a result, many properties of complex algebraic curves in complex algebraic surfaces extend to $J$-holomorphic curves in $4$-dimensional symplectic manifolds. Below we list the key properties of $J$-holomorphic curves we will be relying on.
\begin{thm}[Positivity]\label{thm_Positivity}Let $(M,\om)$ be a 4-dimensional symplectic manifold. If a homology class A $\in H_2(M,\mathbb{Z})$ is represented by a nonconstant J-curve for some $J \in \mathcal{J_\om}$ then $\om(A) > 0$.
\end{thm}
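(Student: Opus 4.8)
The plan is to exploit the fundamental relation between the $\om$-area and the energy of a pseudo-holomorphic map: when $J$ is compatible with $\om$, the form $u^*\om$ is pointwise nonnegative on the domain of any $J$-holomorphic map, and strictly positive wherever the differential is nonzero.

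Concretely, I would first fix a nonconstant $J$-holomorphic parametrization $u:(S^2,j)\to (M,\om,J)$ whose image is the given nonconstant $J$-curve and which represents $A$, so that by definition $\om(A)=\int_{S^2}u^*\om$. Then, at an arbitrary point $z\in S^2$ and a nonzero $\xi\in T_zS^2$, the Cauchy--Riemann equation gives $du_z(j\xi)=J\,du_z(\xi)$, whence
\[
(u^*\om)_z(\xi,j\xi)=\om\big(du_z(\xi),\,J\,du_z(\xi)\big)=\bigl|du_z(\xi)\bigr|^2_{g_J}\;\ge\;0,
\]
where $g_J(\cdot,\cdot)=\om(\cdot,J\cdot)$ is the Riemannian metric furnished by compatibility of $J$ with $\om$. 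This shows $u^*\om$ is a nonnegative $2$-form on $S^2$ that vanishes at $z$ exactly when $du_z=0$.

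Integrating over $S^2$ then yields $\om(A)=\int_{S^2}u^*\om\ge 0$, with equality if and only if $du\equiv 0$, i.e. $u$ is constant. Since $u$ is nonconstant by hypothesis, the open set $\{du\neq 0\}$ is nonempty, so the integral is strictly positive and $\om(A)>0$.

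I do not expect a genuine obstacle here; the only point that needs care is the pointwise bookkeeping identifying $(u^*\om)_z(\xi,j\xi)$ with $|du_z(\xi)|^2_{g_J}$, which uses only that $J$ tames $\om$ (compatibility being more than enough). If one wishes to phrase the statement for an arbitrary $J$-curve rather than a $J$-holomorphic sphere, the same computation goes through verbatim on the domain of any nonconstant $J$-holomorphic parametrization, so no separate argument is required.
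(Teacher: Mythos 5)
Your argument is correct and is the standard one: compatibility (indeed, just taming) gives $(u^*\om)_z(\xi,j\xi)=\om(du_z\xi,J\,du_z\xi)=|du_z\xi|^2_{g_J}\ge 0$, so $\om(A)=\int_{S^2}u^*\om>0$ once $du$ is nonzero on a nonempty open set, which holds for any nonconstant smooth map. The paper states this theorem without proof, as a standard fact from the $J$-holomorphic curve literature, so there is nothing to compare against; your sketch supplies exactly the expected argument, with the only point of care (the pointwise identification of $u^*\om$ with the energy density) handled correctly.
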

\begin{thm}[Fredholm property and automatic regularity.]\label{thm_Regularity}Let $(M,\om)$ be a 4-dimensional symplectic manifold.  
Then the universal moduli space
\[
\widetilde{\Mm}(A,\jj_{\om}) := \{(u,J) \in C^l(S^2,M)\times\jj_{\om}~|~ u\in\widetilde{\Mm}(A,J)\}
\]
with $C^l$-topology ($l \geq 2$) is a smooth Banach manifold and the projection map
\[
\pi_A: \widetilde{\Mm}(A,\jj_{\om}) \longrightarrow \jj_{\om}
\]
is a Fredholm map of index $2(c_1(A) + 2)$ where $c_1 \in H^2(M,\mathbb{Z})$ is the first chern class of $(TM, J)$ (note that the Chern class is independent of choice of $J \in \jj_{\om}$).  An almost complex structure is said to be regular for the class $A$ if it is a regular value for the projection $\pi_A$.  If this is the case then the moduli spaces $\widetilde{\Mm}(A,J)$ and $\Mm(A,J)$ are smooth manifolds of dimensions $2(c_1(A)+2)$ and $2(c_1(A)-1)$ respectively.  The set of regular values $\jj \in \jj_\om$ is a subset of second category and is denoted by $\jj_{\om}^{\textrm{reg}}(A)$. If $J \in  J_\om$ is integrable and $S$ is an embedded $J$-holomorphic sphere with self-intersection number $[S]\cdot [S] \geq -1$, then $J$ is regular for the class $[S]$. In dimension $4$, the same conclusion holds without the integrability assumption.
\end{thm}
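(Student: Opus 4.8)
This statement is a compilation of standard facts about pseudo-holomorphic curves in dimension four, and the plan is to reduce each clause to the corresponding result in the foundational theory (as in McDuff--Salamon's monograph and, for automatic transversality, in Hofer--Lizan--Sikorav), sketching the shape of the arguments rather than reproducing them. For the Banach manifold structure, work with $C^{l}$-maps (equivalently with $W^{k,p}$-maps for $kp>2$): let $\mathcal{B}$ be the Banach manifold of somewhere injective maps $u:\CP^1\to M$ representing $A$, and form the Banach space bundle $\mathcal{E}\to\mathcal{B}\times\jj_\om$ whose fibre over $(u,J)$ is the space of sections of $\Lambda^{0,1}\otimes_\C u^{\ast}TM$. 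Then $(u,J)\mapsto\delbar_J u$ is a smooth section of $\mathcal{E}$ whose zero set is $\widetilde{\Mm}(A,\jj_\om)$. The decisive point is transversality: at a zero $(u,J)$ the vertical differential has the form $(\xi,Y)\mapsto D_u\xi+(\text{a term of the form }Y(u)\circ du)$, and because $u$ is somewhere injective one may take the variation $Y$ of $J$ supported near an injective immersed point $z_0\in\CP^1$ and prescribed arbitrarily there; together with the fact that $D_u$ has closed range, this forces any cokernel element $\eta$ to vanish near $z_0$, and then to vanish identically by unique continuation for the formal adjoint equation $D_u^{\ast}\eta=0$. Hence the section is transverse to the zero section and $\widetilde{\Mm}(A,\jj_\om)$ is a smooth Banach manifold.

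Next, the differential of $\pi_A$ at $(u,J)$ is the restriction of the projection $\mathcal{B}\times\jj_\om\to\jj_\om$ to $T\widetilde{\Mm}(A,\jj_\om)=\ker(\text{vertical differential})$; by the standard linear-algebra lemma for surjective operators of the form $(\xi,Y)\mapsto D\xi+LY$ with $D$ Fredholm, this restriction is Fredholm with index equal to $\operatorname{ind}D_u$. By Riemann--Roch, $D_u$ is a real Cauchy--Riemann operator on $u^{\ast}TM\to\CP^1$ of index $n\chi(S^2)+2c_1(A)=2c_1(A)+2n$, which equals $2(c_1(A)+2)$ since $n=\dim_\C M=2$. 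The Sard--Smale theorem applied to $\pi_A$ then shows $\jj_\om^{\mathrm{reg}}(A)$ is of second category, and for $J\in\jj_\om^{\mathrm{reg}}(A)$ the fibre $\widetilde{\Mm}(A,J)=\pi_A^{-1}(J)$ is a manifold of dimension $2(c_1(A)+2)$. Finally, since the $\PSL(2,\C)$-action on somewhere injective maps is free and proper, the quotient $\Mm(A,J)$ is a manifold of dimension $2(c_1(A)+2)-6=2(c_1(A)-1)$.

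For the automatic regularity statement, suppose first $J$ is integrable and $S=\operatorname{im}(u)$ is an embedded $J$-holomorphic sphere. Then $D_u$ is the Dolbeault operator of the holomorphic bundle $u^{\ast}TM$, which fits in the exact sequence $0\to T\CP^1\to u^{\ast}TM\to N_S\to 0$ with $N_S\cong\mathcal{O}_{\CP^1}(k)$, $k=[S]\cdot[S]$. The long exact cohomology sequence gives $H^1(\CP^1,T\CP^1)\to H^1(\CP^1,u^{\ast}TM)\to H^1(\CP^1,N_S)\to 0$, and both ends vanish as soon as $k\geq-1$, since $H^1(\CP^1,\mathcal{O}(2))=0$ and $H^1(\CP^1,\mathcal{O}(k))=0$ for $k\geq-1$; hence $\operatorname{coker}D_u=H^1(\CP^1,u^{\ast}TM)=0$ and $J$ is regular for $[S]$. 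Dropping integrability but staying in dimension four, one invokes the Hofer--Lizan--Sikorav automatic transversality criterion: $D_u$ splits into a tangential part, already surjective, and a normal part, a real-linear Cauchy--Riemann operator on a complex line bundle of degree $k=[S]\cdot[S]$ over $\CP^1$; by the Carleman similarity principle any element of its cokernel has only isolated zeros of a fixed sign whose algebraic count is controlled by $k$, which forces the cokernel to vanish once $k\geq-1$. Therefore $D_u$ is onto and $J$ is regular for $[S]$.

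The genuinely substantial step, were this written in full, is the transversality of the universal section, i.e.\ the surjectivity of the linearisation in the combined $(u,J)$ directions: this is exactly where somewhere-injectivity is indispensable, both to localise the $J$-perturbation near a regular point of $u$ and to run the unique-continuation argument eliminating cokernel elements. The zero-counting argument behind the Hofer--Lizan--Sikorav criterion is the other analytically delicate ingredient; everything else — the Fredholm index, the Sard--Smale application, and the quotient dimension — is essentially formal once these two facts are in hand.
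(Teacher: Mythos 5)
Your outline is correct, but note that the paper itself gives no proof of this theorem: it is stated as background in the preliminaries and is implicitly quoted from the standard references (the Fredholm/Sard--Smale package from McDuff--Salamon's book, to whose Chapter 9 the surrounding text refers, and the automatic regularity criterion of Hofer--Lizan--Sikorav). Your sketch reproduces exactly those canonical arguments: transversality of the universal $\delbar$-section via somewhere-injectivity plus unique continuation, the index $2c_1(A)+2n=2(c_1(A)+2)$ from Riemann--Roch with $n=2$, Sard--Smale for genericity, the free and proper $\PSL(2,\C)$-action giving $\dim\Mm(A,J)=2(c_1(A)-1)$, the integrable case via $H^1(\CP^1,\mathcal{O}(2))=H^1(\CP^1,\mathcal{O}(k))=0$ for $k\geq -1$ in the normal-bundle sequence, and the four-dimensional non-integrable case via the tangential/normal splitting and the zero-counting (similarity-principle) argument. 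The only points I would tighten in a full write-up are routine: the regularity bookkeeping when mixing $C^l$ structures with $W^{k,p}$ maps (or the Taubes/Floer $C^\epsilon$ trick if one wants smooth $J$), the requirement in Sard--Smale that $l$ be large relative to the index, and the remark that the tangential/normal decomposition of $D_u$ is only upper-triangular rather than a direct sum, which the Hofer--Lizan--Sikorav argument accommodates. None of these affects the validity of your approach.
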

\begin{defn}[Cusp Curves] \label{defn_cusp}
Let $(M,\om)$ be a symplectic manifold. Let $J \in J_\om$. A $J$-holomorphic cusp curve $C$ is a connected finite union of $J$-holomorphic curves 
\[C = C_1 \cup C_2 \ldots \cup C_k\] where $C_i = u_i(\mathbb{C}P^1)$ and $u_i: \mathbb{C}P^1 \to (M,J)$ is a (possibly multi-covered) $J$-holomorphic map.
\end{defn}
\begin{thm}[Gromov's compactness theorem]\label{thm_Compactness} Let $(M,\om)$ be a compact symplectic manifold. Let $J_n \in \jj_{\om}$ be a sequence converging to $J$ in the $C^\infty$ topology and let $S_i$ be $J_i$-holomorphic spheres of bounded symplectic area $\om(S_i)$. Then there is a subsequence of the $S_i$ which converges weakly to a $J$-holomorphic curve or cusp-curve $S$. In particular if all the $S_i$'s belong to the class $A$, then $S$ also belongs the class $A$, and any cusp curve defines a homological decomposition of $A=\sum_i A_i$ such that $\om(A_i)>0$.
\end{thm}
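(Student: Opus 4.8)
\noindent\emph{Proof proposal.} The plan is to run the standard bubbling-off argument, adapted to a sequence of almost complex structures converging in $C^\infty$. Since each $S_i$ is $J_i$-holomorphic, its symplectic area equals its energy, $\om(S_i) = E(u_i) = \tfrac12\int_{S^2}|du_i|_{J_i}^2$, so the hypothesis of bounded area gives a uniform energy bound $E(u_i)\le C$. After fixing parametrizations $u_i\colon S^2\to M$, I would distinguish two regimes according to whether the gradients $\|du_i\|_{L^\infty}$ stay uniformly bounded.

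In the ``no-bubbling'' regime $\sup_i\|du_i\|_{L^\infty}<\infty$, elliptic bootstrapping for the Cauchy--Riemann operators $\delbar_{J_i}$, using the $C^\infty$-convergence $J_i\to J$, upgrades the $C^0$ bound to uniform $C^\infty$ bounds; the Arzel\`a--Ascoli theorem then yields a subsequence converging in $C^\infty$ to a $J$-holomorphic sphere $S$. Strong convergence forces $[S]=A$ when all $[S_i]=A$, and $\om(A)>0$ follows from Theorem~\ref{thm_Positivity}.

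When the gradients blow up, I would carry out the rescaling procedure: at each point of energy concentration, rescale coordinates so as to normalize the gradient and extract, after passing to a subsequence and applying the removable-singularity theorem, a nonconstant $J$-holomorphic sphere (a ``bubble''). The essential quantitative input is the $\epsilon$-regularity (mean value) inequality, which produces a quantum $\hbar>0$ below which no nonconstant $J$-holomorphic sphere can exist; together with the energy bound this caps the number of bubbles, and ``soft rescaling'' on the residual energy organizes them into a finite tree. A neck analysis, showing that the maps restricted to the thin annuli interpolating between the principal component and the bubbles carry area tending to $0$ and hence contribute no homology in the limit, then shows that the principal component (possibly constant) together with the bubbles assemble into a connected $J$-holomorphic cusp curve $S=C_1\cup\cdots\cup C_k$ with $\sum_i[C_i]=A$ counted with multiplicity. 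Since each $C_i$ is a nonconstant $J$-holomorphic curve, $\om([C_i])>0$ by Theorem~\ref{thm_Positivity}, which gives the asserted decomposition $A=\sum_i A_i$ with $\om(A_i)>0$.

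I expect the neck analysis to be the main obstacle: one must prove that no energy, equivalently no symplectic area, escapes into the annular necks, so that the weak limit is genuinely connected and homologous to $A$ rather than to a proper subclass. This is exactly where the monotonicity lemma and the isoperimetric inequality for $J$-holomorphic curves, with constants uniform in $i$ because $J_i\to J$ in $C^\infty$, are indispensable. Making all the elliptic estimates uniform in the varying $J_i$ is a secondary but routine point, given the $C^\infty$ convergence.
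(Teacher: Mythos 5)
Your outline is correct: it is the standard bubbling-off proof of Gromov compactness (energy--area identity for compatible $J_i$, $\epsilon$-regularity and rescaling, removable singularity, soft rescaling and no energy loss in the necks), which is exactly the argument in the references the paper relies on (e.g.\ Chapter 4 of McDuff--Salamon); the paper itself states Theorem~\ref{thm_Compactness} as a classical background result and gives no proof. The only caveat is that your write-up is a sketch -- the $\epsilon$-regularity, removable-singularity and neck-analysis steps are invoked rather than proved -- but as a proof strategy it matches the standard one and contains no gap in the plan.
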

\begin{thm}[Positivity of intersections]\label{thm_PositivityIntersections} Let $J \in \joml$ and $A$, $B$ be two distinct $J$-holomorphic curves in a $4$-dimensional manifold. Then they intersect at only finitely many points and each point contributes positively to the intersection multiplicity $[A]\cdot[B]$. Moreover,  $[A]\cdot[B]=1$ iff the curves intersect transversally at exactly one point, while $[A]\cdot[B]= 0$ iff the curves are disjoint.
\end{thm}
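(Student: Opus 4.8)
The plan is to reduce the statement to a purely local computation at each intersection point and then add up the contributions. Write $A=u(\CP^{1})$ and $B=v(\CP^{1})$ for somewhere injective $J$-holomorphic maps. I first show that $A\cap B$ is finite: if it had an accumulation point $p$, choose holomorphic coordinates on the domains near the preimages of $p$; since $u$ and $v$ solve the $J$-holomorphic equation, Carleman's similarity principle puts each local branch of $A$ and of $B$ through $p$ into a holomorphic normal form $z\mapsto(z^{k},z^{k}\sigma(z))$ with $\sigma(0)\ne 0$ in suitable coordinates on $M$. Two such branches whose images meet along a set accumulating at $p$ must then share a branch, so $u$ agrees with a reparametrization of $v$ on an open set, and unique continuation for $J$-holomorphic maps forces $A=B$, contradicting distinctness. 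Since $\CP^{1}$ is compact, $A\cap B$ is therefore finite.

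Next I fix $p\in A\cap B$. By elementary intersection theory for $2$-cycles in a $4$-manifold one has $[A]\cdot[B]=\sum_{p\in A\cap B}\delta_{p}(A,B)$, where the \emph{local intersection multiplicity} $\delta_{p}(A,B)\in\Z$ is the local degree at $p$ of the difference of small perturbations of $A$ and $B$; the heart of the theorem is the inequality $\delta_{p}\ge 1$. For this I would invoke the local structure of $J$-holomorphic curves in dimension four (Micallef--White; McDuff): near $p$ there is a $C^{1}$ coordinate chart identifying $(M,J)$ with an open subset of $\C^{2}$ in which the finitely many branches of $A$ and $B$ become honest holomorphic curve germs $\{f=0\}$ and $\{g=0\}$. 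Positivity then reduces to the classical algebraic fact that two distinct holomorphic germs meet with multiplicity $\dim_{\C}\mathcal{O}_{\C^{2},p}/(f,g)\ge 1$; since $\delta_{p}$ is a topological invariant it is unchanged by this normalization, so $\delta_{p}(A,B)\ge 1$.

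To pin down the equality case, note that if $A$ and $B$ are both smooth at $p$ with distinct tangent lines $T_{p}A\ne T_{p}B$, the holomorphic model is $\{z=0\}\cap\{w=0\}$ and $\delta_{p}=1$, which is exactly the transverse case. Conversely, if the intersection at $p$ is not transverse, then either $A$ or $B$ is singular at $p$ (a node contributes two branches, a cusp a branch of multiplicity $\ge 2$), or both are smooth but tangent, in which case the similarity principle applied to the difference of the two graph parametrizations yields the model $w=0$ versus $w=z^{k}u(z)$ with $k\ge 2$; in every case $\delta_{p}\ge 2$. Hence $\delta_{p}(A,B)=1$ if and only if $p$ is a transverse intersection point of $A$ and $B$.

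Putting the three steps together, $[A]\cdot[B]=\sum_{p\in A\cap B}\delta_{p}(A,B)\ge\#(A\cap B)\ge 0$. Therefore $[A]\cdot[B]=0$ if and only if $A\cap B=\emptyset$, i.e.\ the curves are disjoint, and $[A]\cdot[B]=1$ if and only if $A\cap B$ is a single point $p$ at which $\delta_{p}(A,B)=1$, i.e.\ a single transverse intersection. I expect the local positivity step to be the main obstacle: showing that $\delta_{p}$ is a well-defined positive integer, and that $\delta_{p}=1$ implies transversality, genuinely requires the analytic input --- the similarity principle together with the Micallef--White local representation --- and cannot be extracted from naive smooth surface topology, in which transverse intersections may carry either sign. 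In an exposition of this scope one would simply cite McDuff's theorem in the four-dimensional case; the sketch above indicates why it holds.
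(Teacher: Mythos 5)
The paper does not prove this statement at all: it is quoted as standard background on $J$-holomorphic curves in dimension four, with the reader referred to Chapter 9 of \cite{McD}, so there is no internal argument to compare yours against. Judged on its own, your sketch follows the standard route (Gromov, McDuff, Micallef--White): finiteness via unique continuation, local positivity via reduction to holomorphic germs, equality analysis at a single transverse point, and summation of local indices. That structure is correct, and your own closing remark --- that in a paper of this scope one simply cites the result --- is exactly what the authors do.

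One technical caution on the step you yourself flag as the crux. The Micallef--White theorem provides a $C^1$ normalization of a \emph{single} $J$-holomorphic germ to the form $z\mapsto(z^{k},z^{k}\sigma(z))$; it does not literally give one chart in which the branches of \emph{both} $A$ and $B$ become honest holomorphic germs simultaneously, so the reduction of $\delta_{p}\ge 1$ to $\dim_{\C}\mathcal{O}_{\C^{2},p}/(f,g)\ge 1$ is not available in the form you state it. The published proofs (McDuff's 1991 local-behaviour paper, and Appendix~E of \cite{McD}) instead extract the leading asymptotics $u(z)=az^{k}+O(|z|^{k+\epsilon})$ of each branch and compute the local intersection index as a linking number or degree of a difference map, which is where positivity and the characterization ``$\delta_{p}=1$ iff both branches are immersed and transverse'' actually come from; the same mechanism handles your tangential case $w=0$ versus $w=z^{k}u(z)$, $k\ge 2$. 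With that substitution (or simply with the citation you propose), your outline is a faithful account of why the theorem holds, and the global bookkeeping $[A]\cdot[B]=\sum_{p}\delta_{p}\ge\#(A\cap B)$ together with the two equality cases is correct as written.
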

As a corollary of Positivity of intersections we have the following result under the presence of a group action. %
\begin{cor}\label{cor_pos}
Let $(M,\om)$ be a symplectic 4-manifold and let $G$ be a compact Lie group acting symplectically on $M$. Suppose that $G$ acts trivially on homology. Let $\J^{G}$ denote the space of $\om$ tame (or compatible) invariant almost complex structures and let $C$ be a $J$ holomorphic curve for some $J \in \J^G$. 
Then,
\begin{enumerate}
    \item if C has  negative self intersection, then $g \cdot C = C$ for all $g \in G$.
    \item If C has zero self intersection, then $g \cdot C = C$ or $g\cdot C \cap C = \emptyset$ for all $g \in G$.
\end{enumerate} 
\end{cor}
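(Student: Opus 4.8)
The plan is to use positivity of intersections (Theorem~\ref{thm_PositivityIntersections}) applied to the curve $C$ and its translates $g\cdot C$, exploiting the hypothesis that $G$ acts trivially on $H_2(M;\Z)$ so that $[g\cdot C]=[C]$ for every $g\in G$. First I would fix $g\in G$ and observe that $g\cdot C$ is again a $J'$-holomorphic curve, where $J'=g_*J$; but since we only need intersection-theoretic statements and $g$ is a diffeomorphism, the key point is purely homological: $[g\cdot C]\cdot[C]=[C]\cdot[C]$.

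For part (1), suppose $[C]\cdot[C]<0$. If $g\cdot C\neq C$, then $g\cdot C$ and $C$ are two distinct $J$-holomorphic curves (after noting $g\cdot C$ is $g_*J$-holomorphic, but one may instead average or simply use that both are irreducible pseudoholomorphic curves for suitable almost complex structures — the cleanest route is to work with $C$ as a set and use the intersection positivity for the pair, which only requires each to be pseudoholomorphic for \emph{some} tamed $J$). By Theorem~\ref{thm_PositivityIntersections}, every intersection point contributes positively, so $[g\cdot C]\cdot[C]\geq 0$. But $[g\cdot C]\cdot[C]=[C]\cdot[C]<0$, a contradiction. Hence $g\cdot C=C$ for all $g\in G$.

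For part (2), suppose $[C]\cdot[C]=0$. Fix $g\in G$ with $g\cdot C\neq C$. Then $g\cdot C$ and $C$ are distinct $J$-holomorphic curves with $[g\cdot C]\cdot[C]=0$, and Theorem~\ref{thm_PositivityIntersections} (the case $[A]\cdot[B]=0$) forces the two curves to be disjoint, i.e. $g\cdot C\cap C=\emptyset$. Thus for each individual $g$ we get the dichotomy $g\cdot C=C$ or $g\cdot C\cap C=\emptyset$.

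The main subtlety — and the step I would be most careful about — is that $g\cdot C$ need not be $J$-holomorphic for the \emph{same} $J$ unless $g\in C(J)$, whereas positivity of intersections as stated compares two $J$-holomorphic curves for a common $J$. The resolution is that intersection positivity in dimension $4$ holds for any pair of distinct somewhere-injective curves that are each holomorphic for a (possibly different) almost complex structure tamed by $\om$, provided intersections are isolated; since $g$ preserves $\om$, $g\cdot C$ is holomorphic for the $\om$-tamed structure $g_*J$, and the local intersection index of $C$ (a $J$-curve) with $g\cdot C$ (a $g_*J$-curve) is still strictly positive at each isolated intersection point by the standard local model (two distinct germs of pseudoholomorphic discs, possibly for different almost complex structures, meet with positive multiplicity). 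I would cite the appropriate form of this statement (e.g. McDuff's intersection results, or the version in~\cite{MR1775741}) rather than reprove it. Once this is in place, the homological contradiction in (1) and the disjointness conclusion in (2) are immediate.
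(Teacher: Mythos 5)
Your overall strategy---combining the trivial action on homology with positivity of intersections---is the right one, but the way you resolve what you call the main subtlety introduces a genuine gap. The claim that two distinct germs of pseudoholomorphic discs for \emph{different} $\om$-tame almost complex structures always intersect with positive multiplicity is false. This fails already at the linear level: in $(\R^4,\om_0)$ consider $P_1=\{z_2=0\}$ and $P_2=\{z_2=\epsilon\bar z_1\}$ with $0<\epsilon<1$. Both planes are $\om_0$-symplectic, and with their taming orientations each is $J_i$-holomorphic for a suitable $\om_0$-tame (even compatible) $J_i$; yet a direct computation of the orientation of $P_1\oplus P_2$ against the symplectic orientation of $\R^4$ shows their intersection index at the origin is $-1$. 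So local positivity genuinely requires a common almost complex structure, and your argument as written does not establish either (1) or (2); there is no general ``different $J$'s'' version of Theorem~\ref{thm_PositivityIntersections} to cite.

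The fix is that you do in fact have a common $J$: the superscript in $\J^{G}$ follows the paper's notational convention and denotes the $G$-\emph{invariant} (equivariant) tame or compatible almost complex structures, and the corollary is only ever applied to invariant $J$. Hence $g_{*}J=J$, so $g\cdot C$ is a $J$-holomorphic curve for the same $J$, representing $[C]$ because $G$ acts trivially on homology. Theorem~\ref{thm_PositivityIntersections} then applies verbatim: if $g\cdot C\neq C$, positivity gives $[C]\cdot[C]=[g\cdot C]\cdot[C]\geq 0$, which proves (1) by contraposition, and in the case $[C]\cdot[C]=0$ it forces $g\cdot C\cap C=\emptyset$, which proves (2). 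With that one correction your argument reduces to the intended immediate proof; without it, the equivariance of $J$ is exactly the hypothesis you cannot discard.
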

\begin{thm}[Adjunction formula]\label{thm_Adjunction} Let $u: ({S}^2, j) \lto (M^4,J)$ be a somewhere injective $J$-holomorphic map representing the homology class $A$ in a $4$-dim\-en\-sio\-nal manifold. Define the virtual genus of $A$ as
\[
g_v(A) = 1+\frac{1}{2}([A]\cdot[A]- c_1(A))
\]
where $c_1(A) = \langle c_1(TM,J), A \rangle$. Then $g_v(A)\geq 0$ with equality if, and only if, the map $u$ is an embedding.
\end{thm}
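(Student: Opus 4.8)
The plan is to express the virtual genus in terms of a nonnegative count of the singular points of $u$ and then reduce the identity to a Chern-number computation for an immersion. First I would show that $u$ has only finitely many \emph{singular points} --- critical points $z$ with $du_z=0$ and self-intersection points $u(z)=u(z')$ with $z\neq z'$. Writing $\delbar_J u=0$ in a local chart and applying the similarity principle, $du$ vanishes to finite order at each of its zeros, so critical points are isolated and hence finite in number on the compact surface $S^2$. Likewise, since $u$ is somewhere injective, a unique continuation argument rules out accumulation of self-intersection points: at such an accumulation point two local branches would agree to infinite order, hence coincide, contradicting somewhere injectivity.

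Next I would model $u$ near each singular point. The essential input is the local structure theorem of Micallef and White (refining earlier work of Gromov and McDuff on $J$-curves in almost complex $4$-manifolds): after a $C^{1}$ change of coordinates near the point and a holomorphic reparametrization of the domain, $u$ coincides with an honest holomorphic map. This lets me attach to each singular point $z$ a positive integer $\delta_z\geq 1$ --- at a self-intersection, the local intersection multiplicity of the two branches, which is positive by Theorem~\ref{thm_PositivityIntersections}; at a critical point, the $\delta$-invariant of the associated plane-curve singularity, which is of cusp type (a somewhere injective holomorphic branch with vanishing derivative cannot have smooth image) and hence contributes $\delta_z\geq 1$. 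Set $\delta(u)=\sum_z\delta_z\geq 0$; by construction $\delta(u)=0$ if and only if $u$ is an embedding.

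Using these local models I would then perturb $u$, supported in small neighbourhoods of the singular points, to a map $\tilde u\colon S^2\to M$ that is homotopic to $u$, is $J'$-holomorphic for an almost complex structure $J'$ agreeing with $J$ away from the singular set, and is an immersion with exactly $\delta(u)$ transverse double points (all of the same sign) and no other multiple points. Since the space of almost complex structures is connected, $c_1(TM,J')=c_1(TM,J)$, so $c_1(A)$ is unchanged. As $\tilde u$ is a $J'$-holomorphic immersion, $d\tilde u$ realizes $TS^2$ as a complex subbundle of $\tilde u^{*}TM$, giving $\tilde u^{*}TM\cong TS^2\oplus N$ with $N$ the complex normal line bundle; hence $c_1(A)=\chi(S^2)+\langle e(N),[S^2]\rangle=2+\langle e(N),[S^2]\rangle$. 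Viewing $\tilde u$ merely as a smooth immersion of surfaces in a $4$-manifold, one also has $[A]\cdot[A]=\langle e(N),[S^2]\rangle+2\delta(u)$. Subtracting the two identities gives $[A]\cdot[A]-c_1(A)=2\delta(u)-2$, i.e. $g_v(A)=\delta(u)$. Therefore $g_v(A)\geq 0$, with equality if and only if $\delta(u)=0$, that is, if and only if $u$ is an embedding.

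The main obstacle is the local analysis at the singular points: proving the singular set is finite and, above all, establishing the $C^{1}$ local normal form identifying $J$-holomorphic branches with genuine holomorphic curves. This is precisely the Micallef--White theorem, and it is what makes the integers $\delta_z$ well defined and positive and the immersed perturbation in the last step possible; granting it, the remaining steps are formal and the genus-$0$ case is the specialization $\chi(S^2)=2$ above.
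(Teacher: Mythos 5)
Your outline is the standard proof of the adjunction formula (due to McDuff, resting on the Micallef--White local structure theorem), and it is essentially correct: the singular set is finite, each singular point contributes a positive local invariant $\delta_z$, and the perturbation to an immersion with $\delta(u)$ transverse double points reduces the identity to $[A]\cdot[A]=\langle e(N),[S^2]\rangle+2\delta(u)$ and $c_1(A)=2+\langle e(N),[S^2]\rangle$. The paper itself gives no proof --- it states this as a background result from $J$-holomorphic curve theory --- so there is nothing to compare beyond noting that your argument is the one found in the standard references, with the genuinely hard content concentrated in the Micallef--White normal form and the resolution of singularities into double points, exactly as you acknowledge.
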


\section{\texorpdfstring{$J$}{J}-holomorphic spheres in \texorpdfstring{$\SSS$ and $\CCC$}{S2xS2 and CP2\#CP2}}

In this section, will show how the existence of certain $J$-holomorphic spheres in ruled $4$-manifolds induces a natural partition of the space $\jj_\om$. We shall state all the relevant results, and refer the reader to Chapter 9 of \cite{McD} for more details.
\begin{cor}\label{FSimple}
Let $\lambda = l + \delta$ where $l \in \mathbb{N}$ and $0<\delta\leq 1$.
Then we have 
\begin{enumerate}
    \item Any $J$-holomorphic representative of the class $F$ is a simple curve.
    \item The only $J$-holomorphic decomposition of the class $B$ are of the form $B = (B-kF) + kF$, where $0\leq k\leq\ell$. In this decomposition, the $J$-holomorphic representative of the class $(B-kF)$ is an embedded sphere, while the class $kF$ may be represented by a collection of (possibly multiply covered) spheres representing multiples of the class $F$.\qed
\end{enumerate}
\end{cor}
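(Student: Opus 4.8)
The plan is to reduce both parts to the intersection form of $\SSS$ together with the adjunction formula (Theorem~\ref{thm_Adjunction}). Recall that in $H_{2}(\SSS;\Z)=\Z B\oplus\Z F$ one has $B\cdot B=F\cdot F=0$, $B\cdot F=1$ and $c_{1}(aB+bF)=2a+2b$, so that for every class $A=aB+bF$ the virtual genus is
\[
g_{v}(aB+bF)=1+\tfrac12\bigl(2ab-2a-2b\bigr)=(1-a)(1-b).
\]
Part (1) is immediate: $F$ is a primitive class, so if a $J$-holomorphic sphere $u$ representing $F$ factored as $u=u'\circ f$ with $\deg f=d\ge 2$, then $F=u_{*}[S^{2}]=d\,(u')_{*}[S^{2}]$ would be $d$ times an integral class, which is impossible.

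For part (2), I would take any decomposition $B=\sum_{i}A_{i}$ into classes of (possibly multiply covered) $J$-holomorphic spheres; by Theorem~\ref{thm_Positivity} each satisfies $\om(A_{i})>0$. The key step is to pass to the underlying simple curves: write $A_{i}=d_{i}A_{i}'$ with $d_{i}\ge 1$ and $A_{i}'=a_{i}'B+b_{i}'F$ somewhere injective, so that adjunction forces $g_{v}(A_{i}')=(1-a_{i}')(1-b_{i}')\ge 0$. First I would rule out $a_{i}'<0$: then $\om(A_{i}')=a_{i}'\lambda+b_{i}'>0$ gives $b_{i}'>|a_{i}'|\lambda\ge\lambda\ge 1$, hence $b_{i}'\ge 2$, whence $(1-a_{i}')(1-b_{i}')<0$, a contradiction. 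Thus every $a_{i}'\ge 0$, and since $\sum_{i}d_{i}a_{i}'=1$ is a sum of non-negative integers, there is a unique index $i_{0}$ with $d_{i_{0}}=a_{i_{0}}'=1$ and $a_{i}'=0$ for $i\ne i_{0}$. For $i\ne i_{0}$ we then have $A_{i}'=b_{i}'F$ with $\om(A_{i}')=b_{i}'>0$ and $g_{v}(b_{i}'F)=1-b_{i}'\ge 0$, which forces $b_{i}'=1$, i.e. $A_{i}=d_{i}F$. Comparing the $F$-coefficients in $B=\sum_{i}A_{i}$ yields $b_{i_{0}}'=-\sum_{i\ne i_{0}}d_{i}=:-k\le 0$, so $A_{i_{0}}=B-kF$; positivity of area, $\om(B-kF)=\lambda-k>0$, gives $k\le\ell$. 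Finally $g_{v}(B-kF)=0$ together with the fact that $u_{i_{0}}$ is somewhere injective shows, via Theorem~\ref{thm_Adjunction}, that its image is an embedded sphere, while $kF=\sum_{i\ne i_{0}}d_{i}F$ is carried by the remaining, possibly multiply covered, spheres in multiples of $F$. This is exactly the asserted normal form $B=(B-kF)+kF$ with $0\le k\le\ell$.

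The only point that needs care is the passage from the possibly multiply covered components $A_{i}$ to the underlying simple classes $A_{i}'$ before applying the inequality $g_{v}\ge 0$ and the embeddedness criterion $g_{v}=0$ of the adjunction formula; after that, the argument is pure bookkeeping with the linear relations $\sum_{i}A_{i}=B$ and the positivity of symplectic area, so I do not expect a genuine obstacle. Alternatively one could simply invoke the analogous discussion for ruled surfaces in Chapter~9 of~\cite{McD}.
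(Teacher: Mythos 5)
Your proof is correct, and it follows exactly the standard argument (primitivity of $F$ for simplicity, then positivity of symplectic area plus the adjunction inequality $g_v(aB+bF)=(1-a)(1-b)\geq 0$ applied to the underlying simple classes) that the paper does not write out but delegates to Chapter~9 of~\cite{McD}. The one step you flag as needing care — passing to the simple underlying curves $A_i'$ before invoking adjunction — is handled correctly, so there is no gap.
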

\begin{prop}\label{prop_FIndecomposable}
The moduli space $\widetilde{\Mm}(F,J) \neq \emptyset$ for all $J \in  J_\om$. In particular, for every compatible almost complex structure $J\in\J_{\om_\lambda}$, and for any given point $p\in S^2\times S^2$, there is a unique embedded $J$-holomorphic sphere representing the class $F$ passing through~$p$.\qed
\end{prop}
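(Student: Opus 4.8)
The plan is to establish non-emptiness of $\widetilde{\Mm}(F,J)$ for every $J\in\J_{\om_\lambda}$ by the standard "openness plus closedness" argument along a path in $\J_{\om_\lambda}$, and then to upgrade non-emptiness to the existence and uniqueness of a representative through an arbitrary point. First I would fix the standard split (or Hirzebruch) integrable structure $J_0$ for which the fiber class $F$ is obviously represented by an embedded holomorphic sphere (the fiber of the ruling), and pick a smooth path $J_t$, $t\in[0,1]$, in the contractible space $\J_{\om_\lambda}$ with $J_1=J$. Since $c_1(F)=2$ for a fiber in a ruled surface, Theorem~\ref{thm_Regularity} gives that the Fredholm index of $\pi_F$ is $2(c_1(F)+2)=8$, and—crucially—that an embedded $J$-holomorphic sphere of self-intersection $F\cdot F=0\ge -1$ is automatically regular. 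Hence every $J_t$ for which a representative exists is a regular value, so the set of such $t$ is open.

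The heart of the argument is closedness, which rests on Gromov compactness (Theorem~\ref{thm_Compactness}) combined with the indecomposability of $F$. Given a sequence $t_n\to t_\infty$ with $J_{t_n}$-holomorphic spheres $S_n$ in class $F$, the areas $\om_\lambda(F)=1$ are uniformly bounded, so a subsequence converges to a $J_{t_\infty}$-holomorphic cusp curve whose components carry a homological decomposition $F=\sum_i A_i$ with each $\om_\lambda(A_i)>0$, i.e.\ $[\om_\lambda]A_i>0$ for all $i$. I would then invoke Corollary~\ref{FSimple}(1): the class $F$ admits no non-trivial $J$-holomorphic decomposition (any representative of $F$ is simple), so the limit cannot be a genuine cusp curve and must be an honest, somewhere-injective $J_{t_\infty}$-holomorphic sphere in class $F$; by the adjunction formula (Theorem~\ref{thm_Adjunction}), since $g_v(F)=1+\tfrac12(0-2)=0$, it is in fact embedded. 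This shows the set of "good" $t$ is closed, hence all of $[0,1]$, so $\widetilde{\Mm}(F,J)\ne\emptyset$.

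For the second statement, fix $p\in S^2\times S^2$ and consider the evaluation map on the moduli space of pointed curves; equivalently, note that the subset $\J_{\om_\lambda}^{\mathrm{reg}}(F)=\J_{\om_\lambda}$ (by automatic regularity) makes $\Mm(F,J)$ a smooth manifold of dimension $2(c_1(F)-1)=2$, matching the dimension of the space of fibers of a ruling. Existence of a representative through $p$ follows because the evaluation map $\widetilde{\Mm}(F,J)\times_G S^2\to M$ has degree one (it does so for $J_0$, where it is the bundle projection, and degree is a homotopy invariant along $J_t$ by the same compactness control, using that no bubbling off occurs in class $F$). Uniqueness follows from positivity of intersections (Theorem~\ref{thm_PositivityIntersections}): two distinct embedded $J$-holomorphic spheres in class $F$ would have $F\cdot F=0$, forcing them to be disjoint, so at most one passes through $p$. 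I expect the main obstacle to be the bookkeeping in the closedness step—specifically, making fully rigorous that Gromov compactness plus Corollary~\ref{FSimple} rules out all bubbling, including multiply-covered components—but since $\om_\lambda(F)=1$ is the minimal positive area among classes pairing positively with a fiber, no proper sub-bubble can form, so this is a routine verification rather than a genuine difficulty.
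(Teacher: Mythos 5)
Your overall architecture (continuation along a path in $\J_{\om_\lambda}$ using automatic regularity for openness, Gromov compactness for closedness, a degree-one evaluation argument for existence through every point, and positivity of intersections for uniqueness) is exactly the classical argument; the paper itself gives no proof of this proposition and simply defers to Chapter 9 of \cite{McD}, where this is the route taken. However, there is a genuine gap at the single step that carries all the weight: ruling out cusp-curve limits. Corollary~\ref{FSimple}(1) says only that a single $J$-holomorphic sphere \emph{in the class $F$} is simple; it says nothing about a limiting cusp curve whose components represent classes $A_i$ with $\sum_i A_i = F$, so it cannot be invoked to conclude "the limit cannot be a genuine cusp curve." Your backup justification is also false as stated: $\om_\lambda(F)=1$ is \emph{not} minimal among positive areas of classes pairing positively with $F$ once $\lambda>1$ (e.g.\ $\om_\lambda(B-\ell F)=\delta\leq 1$ with $(B-\ell F)\cdot F=1$), and area considerations alone do not exclude decompositions such as $F=(B-F)+(2F-B)$, both summands having positive $\om_\lambda$-area when $1<\lambda<2$.

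The correct repair is the standard one from \cite{McD}: writing each component class as $A_i=a_iB+b_iF$, a nontrivial decomposition with $\sum a_i=0$ forces either all $a_i=0$ (impossible, since then the $b_i$ are positive integers summing to $1$ with at least two terms) or some component with $a_i<0$; for the underlying simple curve in class $a'B+b'F$ with $a'\leq -1$, the adjunction inequality $g_v=(1-a')(1-b')\geq 0$ forces $b'\leq 1$ while positivity of area forces $b'>-a'\lambda\geq\lambda\geq 1$, a contradiction. With this lemma in place your closedness step, the compactness control needed for the degree argument, and hence the whole proof go through; without it, the key step is unjustified. The second half of your argument (automatic regularity since $c_1(F)=2$ and $F\cdot F=0$, embeddedness via $g_v(F)=0$, degree-one evaluation for existence through $p$, and disjointness of distinct $F$-curves from $F\cdot F=0$ for uniqueness) is correct and matches the cited source.
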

We also have an analogous result for $(\CCC,\oml)$.
\begin{thm}
Given any point $p \in \CCC$, and any $J \in \jsom$, there is a $J$-holomorphic curve in the class $F:=L-E$ passing through $p$. \qed
\end{thm}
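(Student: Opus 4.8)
The plan is to deduce this from the corresponding statement for \emph{all} $\oml$-compatible almost complex structures, which is the exact analogue for $\CCC$ of Proposition~\ref{prop_FIndecomposable} and is proved in the same way, ultimately following Chapter~9 of~\cite{McD} (see also the framework of~\cite{MR1775741}). Since $\jsom\subset\joml$, it is enough to show that for every $J\in\joml$ and every $p\in\CCC$ there is a $J$-holomorphic sphere in class $F=L-E$ through $p$. The first step is to record the homological data: $F\cdot F=0$; the first Chern class of $\CCC$ is Poincar\'e dual to $3L-E$, so $c_1(F)=(3L-E)\cdot(L-E)=2$; and $\oml(F)=1$. By the adjunction formula (Theorem~\ref{thm_Adjunction}) every somewhere-injective $J$-holomorphic sphere in class $F$ is embedded; since $F\cdot F=0\geq -1$, automatic regularity (Theorem~\ref{thm_Regularity}) makes any such sphere regular for \emph{every} $J$, so near any of its points $\widetilde{\Mm}(F,J)$ is a smooth manifold of dimension $2(c_1(F)+2)=8$ and $\Mm(F,J)$ has dimension $2$. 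Finally, for the integrable complex structure $J_1$ on $(\CCC,\oml)$ coming from the first Hirzebruch surface $W_1$, the fibres of the ruling $W_1\to\CP^1$ are embedded $J_1$-spheres in class $F$, one through each point, so $F$ is represented for $J_1$.

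The heart of the argument is to prove that $F$ is $J$-\emph{indecomposable} for every $J\in\joml$, i.e.\ that it is never the class of a cusp curve. Suppose $\bigcup_i m_iC_i$ were such a cusp curve in class $F$, with the $C_i$ distinct simple $J$-holomorphic spheres of multiplicities $m_i\geq1$; then $n\geq 2$ (a single component of multiplicity $\geq2$ is impossible because $F=L-E$ is primitive, and a simple sphere in class $kF$ with $k\geq2$ has virtual genus $g_v(kF)=1-k<0$, contradicting Theorem~\ref{thm_Adjunction}). Writing $[C_i]=d_iL-e_iE$, one has $\oml([C_i])=(d_i-e_i)\lambda+e_i>0$ by Theorem~\ref{thm_Positivity}, $g_v([C_i])\geq0$ by Theorem~\ref{thm_Adjunction}, and $[C_i]\cdot[C_j]\geq0$ for $i\neq j$ with $\sum_{i<j}[C_i]\cdot[C_j]\geq n-1$ by connectedness of the cusp curve (Theorem~\ref{thm_PositivityIntersections}); expanding $0=F\cdot F=\sum_i m_i^2[C_i]^2+2\sum_{i<j}m_im_j[C_i]\cdot[C_j]$ forces $\sum_i m_i^2[C_i]^2\leq-2$, while $\sum_i m_id_i=\sum_i m_ie_i=1$. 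Together with positivity of area these constraints leave only a short explicit list of candidate component classes (depending on the value of $\lambda$, and involving $F$, the exceptional class $E$, and $L$), and a direct inspection shows that no proper combination of them sums to $F$ with all areas positive. This is the $\CCC$-analogue of Corollary~\ref{FSimple}(1); it is routine bookkeeping, but it is the step where genuine work is required, and I expect it to be the main obstacle.

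Granting indecomposability, the theorem follows by a continuity argument. Fix $p\in\CCC$ and $J\in\joml$, and choose a path $(J^s)_{s\in[0,1]}$ in $\joml$ with $J^0$ equal to the Hirzebruch structure $J_1$ above and $J^1=J$. Let $I\subset[0,1]$ be the set of $s$ for which some $J^s$-holomorphic sphere in class $F$ passes through $p$. Then $0\in I$, since the ruling fibre of $W_1$ through $p$ works. The set $I$ is open: any such $F$-sphere is regular by Theorem~\ref{thm_Regularity} and, having trivial normal bundle ($F\cdot F=0$), the $F$-spheres foliate $\CCC$, so a $J^{s_0}$-holomorphic $F$-sphere through $p$ persists, still through $p$, for $J^s$ near $J^{s_0}$. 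The set $I$ is closed: if $s_k\to s_\infty$ with $s_k\in I$, then by Theorem~\ref{thm_Compactness} the corresponding $F$-spheres through $p$ converge to a $J^{s_\infty}$-holomorphic curve or cusp curve in class $F$ through $p$, which by indecomposability must be an honest embedded $F$-sphere, so $s_\infty\in I$. Hence $I=[0,1]$ and, in particular, $1\in I$: a $J$-holomorphic sphere in class $F$ passes through $p$. Since $\jsom\subset\joml$, this proves the theorem; positivity of intersections and $F\cdot F=0$ give, in addition, uniqueness of this sphere and the fact that the $F$-spheres foliate $\CCC$, but only existence is needed here.
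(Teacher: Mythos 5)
Your overall strategy is correct, but there is nothing in the paper to compare it against line by line: the theorem is stated with a \qed and no argument, the reader being referred (as for Corollary~\ref{FSimple} and Proposition~\ref{prop_FIndecomposable} on $\SSS$) to the standard structure theory of rational ruled surfaces in Chapter~9 of \cite{McD}. What you have written is a reconstruction of that cited argument -- homological bookkeeping for $F=L-E$, indecomposability of $F$, and a continuity argument starting from the ruling of the Hirzebruch surface $W_1$, using Theorems~\ref{thm_Adjunction}, \ref{thm_Regularity}, \ref{thm_PositivityIntersections} and \ref{thm_Compactness} -- and the reduction from $\jsom$ to all of $\joml$ is exactly right, since equivariance plays no role in existence.

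Two steps deserve more care than your sketch gives them. First, in the indecomposability step the ``direct inspection'' must in particular dispose of the classes $-aL+(a+1)E$, $a\geq 1$: these have virtual genus zero and positive area whenever $\lambda>a+1$, so they pass both your area and adjunction tests. They are excluded by the constraints you already listed, but one has to combine them: any two such classes, and any one of them together with $E$, intersect negatively, so by Theorem~\ref{thm_PositivityIntersections} at most one can occur as a simple component; if it occurs, every other component $d_jL-e_jE$ must meet it non-negatively, forcing $e_j\geq \tfrac{a}{a+1}d_j>0$, while adjunction forces $e_j\leq d_j$ (with equality only for $F$ itself), so $\sum_j m_j(e_j-d_j)\leq 0$, contradicting the fact that the homological totals require this sum to equal the positive multiplicity of the negative component; the case with no negative component reduces to a single class $L$ or $L-E$ plus copies of $E$, which cannot sum to $L-E$. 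Second, your openness argument is circular as phrased: you invoke the foliation by $F$-spheres for the nearby structures $J^s$, which is part of what is being proved. The standard fix is either to argue locally (by automatic regularity and $F\cdot F=0$, the $J^{s_0}$-holomorphic $F$-curves near the given one sweep out a neighbourhood of it, and this local family, together with its sweep, deforms to $J^s$ for $s$ near $s_0$, so some nearby curve still hits $p$), or to drop the point constraint from the continuity argument and instead prove that $F$ is represented for every $J$, then use compactness (from indecomposability), pairwise disjointness ($F^2=0$) and the degree-one evaluation map on the resulting compact two-dimensional family to conclude there is a curve through every point. With these two repairs your proof is complete and coincides with the argument in \cite{McD} that the paper relies on.
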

The following Theorem of M. Abreu and D. McDuff~\cite{MR1775741} tells us about the decomposition of the space of compatible almost complex structures on $(\SSS,\oml)$ into finitely many strata. 
\begin{thm} \label{strata}
Let $\jj_{\om_\lambda}$ denote the space of all compatible almost complex structures (not necessarily invariant) for the form $\om_\lambda$, on $\SSS$ then the space $\jj_{\om_\lambda}$ admits a finite decomposition into disjoint Fréchet manifolds of finite codimensions
\[
\jj_{\om_\lambda} = U_0 \sqcup U_2 \sqcup U_4 \ldots \sqcup U_{2n}
\]
where $2n= \lceil 2\lambda \rceil -1$ and $\lceil \lambda \rceil$ is the unique integer $l$ such that $l < \lambda \leq l+1$ and where
\begin{multline*}
U_{2i} := \big\{ J \in \jj_{\om_\lambda}~|~ D_{2i}=B-iF \in H_2(S^2 \times S^2,\Z)\text{~is represented by}\\ \text{a $J$-holomorphic sphere}\big\}
\end{multline*}
\end{thm}
A completely analogous description holds true for $\CCC$.
\begin{thm}\label{strata-CCC}
Let $\jj_{\om_\lambda}$ denote the space of all compatible almost complex structures (not necessarily invariant) for the form $\om_\lambda$, then the space $\jj_{\om_\lambda}$ admits a finite decomposition into disjoint Fréchet manifolds of finite codimensions
\[
\jj_{\om_\lambda} = U_1 \sqcup U_3 \sqcup U_5\ldots \sqcup U_{2n-1}
\]
where $2n  = \lceil 2\lambda \rceil -1$, and $\lceil \lambda \rceil$ is the unique integer $l$ such that $l < \lambda \leq l+1$ and where
\begin{multline*} 
U_{2i-1} := \big\{ J \in \jj_{\om_\lambda}~|~ D_{2i-1}=B-iF \in H_2(\CCC,\Z)\text{~is represented by}\\ \text{a $J$-holomorphic sphere}\big\}
\end{multline*}
\end{thm}
\begin{remark}
We label the strata in $\SSS$ and $\CCC$ by the homological self-intersection of the classes $D_s$.
\end{remark}
\begin{remark}\label{remark-starta-toric}
We note that for both $\SSS$ and $\CCC$,  there is a canonical integrable almost complex structure $J_s$ in the strata $U_s$ coming from realizing $\SSS$ and $\CCC$ as the $s^{\text{th}}$- Hirzebruch surface $W_s$ of section 1.2. Recall also that associated to each $J_s$ we have a unique $J_s$-holomorphic Hamiltonian toric action $\mathbb{T}_s$. Thus the set of possible equivalence classes of toric actions on $\SSS$ and $\CCC$ are in one-to-one correspondence with the strata in the decomposition of $\jj_{\om_\lambda}$. This fact will be crucial in our later analysis of centralizer subgroups. 
\end{remark}

\section{Intersection of \texorpdfstring{$\jsom$}{J\hat{}S1} with the strata}\label{section:intersection}
In this section we determine the intersection of the stratification of $\jj$ with the subspace of $S^1(a,b;m)$-invariant almost complex structures.\\

In what follows, we will use the following simple observation several times. Let $(M,\om)$ be a simply connected symplectic $4$-manifold. There is a left-exact sequence
\begin{equation}
1 \to \Symp_h(M,\om) \to \Symp(M,\om) \to \Aut_{c_1,\om}\left(H_2(M,\Z)\right)\label{Sequence:ActionOnHomology}
\end{equation}
where $\Symp_h(M,\om)$ is the subgroup of symplectomorphisms acting trivially on homology, and where $\Aut_{c_1,\oml}\left(H_2(M,\Z)\right)$ is the group of automorphisms of $H_2(M,\Z)$ that preserve the intersection product and the Poincaré duals of the cohomology classes $c_1(TM)$ and $[\oml]$. This later group is the identity for $(\CCC,\oml)$ with $\lambda > 1$ and for $(\SSS,\oml)$ with $\lambda > 1$. In the case of $(\SSS,\oml)$ with $\lambda = 1$, the group $\Aut_{c_1,\oml}\left(H_2(M,\Z)\right)$ is equal to $\Z_2$ and is generated by the symplectomorphism that swaps the two $S^2$ factors. Consequently, for any symplectically ruled rational surface, the above sequence is also right-exact and splits.

\begin{lemma}\label{lemma:ActionOnHomology} The following equalities hold among symplectomorphism groups:
\begin{itemize}
\item $\Symp(\SSS,\oml) = \Symp_h(\SSS,\oml)\ltimes\Z_2$ when $\lambda=1$,
\item $\Symp(\SSS,\oml) = \Symp_h(\SSS,\oml)$ when $\lambda>1$, and
\item $\Symp_h(\CCC,\oml) = \Symp(\CCC,\oml)$ for all $\lambda\geq 1$.
\end{itemize}\qed
\end{lemma}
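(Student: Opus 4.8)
The plan is to derive all three equalities from the left-exact sequence~\eqref{Sequence:ActionOnHomology} by computing the finite group $\Aut_{c_1,\oml}(H_2(M,\Z))$ explicitly in each case, verifying that the action homomorphism $\Symp(M,\om)\to\Aut_{c_1,\oml}(H_2(M,\Z))$ is onto, and exhibiting a splitting in the one case where the target is nontrivial. First note that $\Symp_h(M,\om)$ is by definition the kernel of the homomorphism $\Symp(M,\om)\to\Aut(H_2(M,\Z))$, and that a symplectomorphism automatically preserves the intersection pairing, the Poincaré dual of $c_1(TM)$ (a diffeomorphism invariant), and the class $[\om]$; hence the image of this homomorphism lies in $\Aut_{c_1,\oml}(H_2(M,\Z))$ and the sequence~\eqref{Sequence:ActionOnHomology} is exact at $\Symp_h(M,\om)$ and at $\Symp(M,\om)$. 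What remains is (i) to identify $\Aut_{c_1,\oml}(H_2(M,\Z))$, (ii) to prove surjectivity, and (iii) to produce a group-theoretic section when $M=\SSS$ and $\lambda=1$.

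\emph{The case $\SSS$.} In the basis $\{B,F\}$ the intersection form is the hyperbolic form $\bigl(\begin{smallmatrix}0&1\\1&0\end{smallmatrix}\bigr)$, while $PD\,c_1(T(\SSS))=2B+2F$ and $PD[\oml]=B+\lambda F$. Solving $\Phi^{T}\bigl(\begin{smallmatrix}0&1\\1&0\end{smallmatrix}\bigr)\Phi=\bigl(\begin{smallmatrix}0&1\\1&0\end{smallmatrix}\bigr)$ over $\Z$ shows that the isometry group of this lattice is $\{\pm\id,\pm\tau\}$, where $\tau$ is the involution exchanging $B$ and $F$. The constraint $\Phi(2B+2F)=2B+2F$ eliminates $-\id$ and $-\tau$, leaving $\{\id,\tau\}$; the constraint $\Phi(B+\lambda F)=B+\lambda F$ retains $\tau$ precisely when $\lambda=1$. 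Thus $\Aut_{c_1,\oml}(H_2(\SSS,\Z))$ is trivial for $\lambda>1$, whence $\Symp(\SSS,\oml)=\Symp_h(\SSS,\oml)$; and for $\lambda=1$ it equals $\langle\tau\rangle\cong\Z_2$. In the latter case the involution $\iota(x,y)=(y,x)$ of $S^2\times S^2$ is a symplectomorphism of $(\SSS,\oml)$ that induces $\tau$ on $H_2$, so the homomorphism is onto and, being of order two, $\iota$ defines a section $\Z_2\to\Symp(\SSS,\oml)$; therefore $\Symp(\SSS,\oml)=\Symp_h(\SSS,\oml)\ltimes\Z_2$ when $\lambda=1$.

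\emph{The case $\CCC$.} In the basis $\{L,E\}$ the intersection form is $\mathrm{diag}(1,-1)$, $PD\,c_1(T\CCC)=3L-E$ and $PD[\oml]=\lambda L-(\lambda-1)E$. The only integral vectors of square $+1$ (resp. $-1$) are $\pm L$ (resp. $\pm E$), so any lattice isometry has the form $L\mapsto\epsilon_1 L$, $E\mapsto\epsilon_2 E$ with $\epsilon_i\in\{\pm1\}$; imposing $\Phi(3L-E)=3L-E$ forces $\epsilon_1=\epsilon_2=1$. Hence $\Aut_{c_1}(H_2(\CCC,\Z))$ — and a fortiori $\Aut_{c_1,\oml}(H_2(\CCC,\Z))$ — is already trivial for every $\lambda\geq1$, surjectivity onto the trivial group is automatic, and the sequence gives $\Symp_h(\CCC,\oml)=\Symp(\CCC,\oml)$.

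\emph{On the difficulty.} There is no serious obstacle here: everything reduces to elementary linear algebra over a rank-two lattice, and the only non-formal input is surjectivity of the action homomorphism, which is needed only for $\SSS$ at $\lambda=1$ and is settled by the explicit swap involution $\iota$.
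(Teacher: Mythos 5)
Your proposal is correct and follows essentially the same route as the paper, which derives the lemma from the left-exact sequence~\eqref{Sequence:ActionOnHomology} together with the computation of $\Aut_{c_1,\oml}(H_2(M,\Z))$ (trivial except for $\SSS$ with $\lambda=1$, where it is $\Z_2$ realized by the factor-swap symplectomorphism, giving the splitting). You merely make explicit the rank-two lattice calculations that the paper asserts without detail; the only quibble is that $c_1(TM)$ is a symplectic (not a diffeomorphism) invariant, which is the property actually needed and which your argument uses correctly.
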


\begin{prop}\label{prop:ToricExtensionCorrespondance}
Fix a circle action $S^1(a,b;m)$. Then the space of invariant almost complex structures $\jsom$ intersects the stratum $U_n$ iff the circle action $S^1(a,b; m)$ extends to the toric action $\T_n$. 
\end{prop}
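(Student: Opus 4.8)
The proof has two directions. The easier one is ($\Leftarrow$): if $S^1(a,b;m)$ extends to $\T_n$, then by Remark~\ref{remark-starta-toric} the Hirzebruch surface $W_n$ carries the integrable complex structure $J_n$ which is compatible with $\oml$ and invariant under the toric action $\T_n$, hence invariant under the subcircle. Thus $J_n\in\jsom$. But $J_n$ admits the embedded holomorphic sphere $s_0$ in class $D_n=B-iF$ (the section of self-intersection $-n$), so $J_n\in U_n$ by definition of the strata in Theorem~\ref{strata}/\ref{strata-CCC}. Hence $\jsom\cap U_n\neq\emptyset$. The only mild subtlety here is matching the labelling conventions: $U_n$ is indexed by the self-intersection of the relevant negative curve, and $\T_n$ is precisely the toric action characterised by an invariant embedded symplectic sphere of self-intersection $-n$ (Lemma~\ref{lemma_torus_action_-vecurve}), so the indices agree.

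For the harder direction ($\Rightarrow$), suppose $J\in\jsom\cap U_n$, so $J$ is $S^1$-invariant and there is a (necessarily embedded, by the adjunction formula, since $D_n\cdot D_n\geq -1$ and $c_1(D_n)$ makes $g_v=0$) $J$-holomorphic sphere $C$ in class $D_n$. The goal is to produce a toric action $\T_n$ extending $S^1(a,b;m)$. First I would invoke Corollary~\ref{cor_pos}: since the circle acts trivially on homology (the manifolds are rational ruled, and by Lemma~\ref{lemma:ActionOnHomology} the relevant automorphism group is trivial except for $\lambda=1$ on $\SSS$, where the swap does not fix $B-iF$ unless $i=0$), and $C$ has negative self-intersection when $n\geq 1$, $C$ must be $S^1$-invariant. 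When $n=0$, one argues with $B$ and $F$ separately: by Proposition~\ref{prop_FIndecomposable} every $J$ has an invariant fibration by $F$-curves, and for $J\in U_0$ there is also a $B$-curve; by Corollary~\ref{cor_pos}(2) these zero-self-intersection curves are permuted or fixed, and a standard argument shows one can select invariant representatives. Now the key step: an invariant embedded symplectic sphere $C$ of self-intersection $-n$, together with the invariant fibre $F$, specifies (after choosing an invariant compatible $J$ adapted to $C$, which one can take to be integrable near $C$, or simply appeal to McDuff's structure results) a ruled structure on which the circle acts fibrewise and preserves a section; combining the fibre rotation with the fibrewise circle action generates a $\T^2$-action. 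Concretely, $C$ and the generic fibre give the two families of invariant spheres whose moment polytope is the $n$-th Hirzebruch trapezoid, so by Delzant's theorem this is the toric action $\T_n$, and by construction $S^1(a,b;m)$ sits inside it.

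**The main obstacle.**
The delicate point is the last step: upgrading "there exists an invariant negative sphere $C$ and an invariant fibration by $F$-curves" to "there exists a toric $\T^2$-action $\T_n$ containing the given circle." One must produce a second commuting circle action (fibre rotation) that is genuinely compatible with $\oml$ and commutes with $S^1(a,b;m)$, not merely a topological $S^1\times S^1$ action. I expect the cleanest route is to use the symplectic slice theorem / Karshon's structure theory: the invariant sphere $C$ realises a specific vertex-and-edge configuration in the Karshon graph of $S^1(a,b;m)$, and having such an invariant negative curve of area $\oml(D_n)>0$ (which holds iff we are in stratum $U_n$, matching the positivity constraint $2\lambda>|2b-am|+\epsilon_m$ of Proposition~\ref{prop:AtMostTwoExtensions}) forces the graph to coincide with the one obtained by embedding $S^1$ into $\T_n$. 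Thus this direction is really the converse bookkeeping of the graph computations already carried out in Proposition~\ref{prop:AtMostTwoExtensions} and Corollaries~\ref{cor:CircleExtensionsWith_a=1}--\ref{cor:CircleExtensionsWith_a=-1}: the presence of the $D_n$-curve is exactly the geometric shadow of the combinatorial extension condition, and one reads off the toric extension from Karshon's classification (Theorem~\ref{graphiso}). I would also need to handle the exceptional $\lambda=1$ case on $\SSS$ separately, but there only $\T_0$ occurs and the statement is essentially vacuous beyond $n=0$.
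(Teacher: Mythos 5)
Your proposal is correct and follows essentially the same route as the paper: the forward direction uses the standard integrable structure $J_n$ on the Hirzebruch surface, and the converse extracts an invariant curve in class $D_n$ (or invariant $B$- and $F$-curves when $n=0$) via positivity of intersections and then reduces to the weight/graph bookkeeping of Proposition~\ref{prop:AtMostTwoExtensions}, Corollaries~\ref{cor:CircleExtensionsWith_a=1}--\ref{cor:CircleExtensionsWith_a=-1} and Karshon's classification, with $\lambda=1$ treated separately. The one point to tighten is in ($\Leftarrow$): since $S^1(a,b;m)$ is only \emph{conjugate} to a subcircle of $\T_n$ by an equivariant symplectomorphism $\phi$, the element of $\jsom$ you produce is $\phi^*J_n$ rather than $J_n$ itself, and you must note that $\phi$ acts trivially on $H_2$ (Lemma~\ref{lemma:ActionOnHomology}, for $\lambda>1$) so that $\phi^*J_n$ still lies in $U_n$ --- exactly the step the paper makes explicit.
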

\begin{proof}
If $\lambda = 1$, the result is immediate as the only toric action is $\T_0$. 

Let $\lambda > 1$ and suppose the action $S^1(a,b;m)$ extends to $\T_n$. This means that there exists a circle action $S^1(c,d; n) \subset \T_n$ which is equivariantly symplectomorphic to the action $S^1(a,b;m)$ via some  symplectomorphism $\phi$. Let $J_{n}$ denote the standard complex structure in the stratum $U_n$. By Lemma \ref{lemma:ActionOnHomology}, when $\lambda > 1$, the group $\Symp_h(\SSS,\oml)$ is equal to $\Symp(\SSS,\oml)$ so that $\phi$ preserves homology. Consequently, $\phi^*J_{n}$  belongs to the stratum $U_n$ and is invariant with respect to the $S^1(a,b;m)$ action, showing that the space $\jsom$ intersects the stratum $U_n$.

Conversely, suppose that $J \in \jsom \cap U_n$. If $n\geq 1$, then there exists an invariant, embedded, symplectic sphere $C_{n}$ of self-intersection $-n$. Arguing as in Proposition~\ref{prop:AtMostTwoExtensions} and Corollaries~\ref{cor:CircleExtensionsWith_a=1} and~\ref{cor:CircleExtensionsWith_a=-1}, we conclude that $S^1(a,b;m)$ must extend to the torus $\T_n$. If $n=0$, there there exist invariant spheres representing the homology classes $B$ and $F$ passing to a common fixed point. Again, this implies that $S^1(a,b;m)$ extends to the torus $\T_0$. Alternatively, we can adapt the proofs of Lemma~\ref{lemma:EvaluationFibrationConfigurations} in the present document and of Proposition~4.7 in~\cite{Liat} to show that if an invariant $J$ is in the stratum $U_n$ the circle action $S^1(a,b; m)$ extends to the toric action $\T_n$.
\end{proof}

The following corollaries immediately follow from Proposition~\ref{prop:AtMostTwoExtensions} and Corollaries~\ref{cor:CircleExtensionsWith_a=1} and~\ref{cor:CircleExtensionsWith_a=-1}.

\begin{cor}\label{cor:lambda=1_IntersectingOnlyOneStratum}
Suppose $\lambda=1$. Then the space $\J_{\om_1}$ of compatible, almost-complex structures on $(\SSS,\om_1)$ is made of only one stratum. In particular, any Hamiltonian circle action extends to the toric action $\T_0$ and the contractible space of invariant almost-complex structures consists of only one stratum $\jsom=\jsom\cap U_0$. \qed
\end{cor}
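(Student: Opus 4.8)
The plan is to dispatch the three assertions in turn, the first two being bookkeeping from results already in hand and the third a soft averaging argument. For the statement that $\J_{\om_1}$ is a single stratum: by Theorem~\ref{strata} we have $\J_{\om_1}=U_0\sqcup U_2\sqcup\cdots$, and a point of $U_{2i}$ is an almost complex structure admitting a $J$-holomorphic sphere in the class $D_{2i}=B-iF$. By Positivity of area (Theorem~\ref{thm_Positivity}), such a sphere forces $\om_1(B-iF)=\lambda-i=1-i>0$, hence $i=0$; thus $U_{2i}=\emptyset$ for all $i\geq 1$ and $\J_{\om_1}=U_0$. Equivalently, by Remark~\ref{remark-starta-toric} the strata are in bijection with the equivalence classes of toric actions, and Lemma~\ref{lemma_number_of_torus_actions} shows there is exactly one, namely $\T_0$, when $\lambda=1$.

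Next, that every Hamiltonian circle action extends to $\T_0$ follows from the fact, already observed for $\lambda=1$, that every Hamiltonian $S^1$ action on $(\SSS,\om_1)$ has the form $S^1(a,b;0)$ and is therefore literally a subcircle of $\T_0$; alternatively, combine Proposition~\ref{prop:ToricExtensionCorrespondance} with the previous paragraph, noting that $\jsom$ is nonempty (averaging, see below) and contained in $\J_{\om_1}=U_0$, whence $\jsom\cap U_0\neq\emptyset$. For the contractibility of $\jsom$ I would run Gromov's polar-decomposition construction equivariantly. The space $\Rr^{S^1}$ of $S^1$-invariant Riemannian metrics on $\SSS$ is nonempty (average any metric over the circle) and convex inside the affine space of symmetric $2$-tensors, hence contractible. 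Given $g\in\Rr^{S^1}$, let $A$ be the $g$-skew bundle endomorphism determined by $\om_1=g(A\,\cdot\,,\cdot\,)$, and set $J_g:=A\,(-A^{2})^{-1/2}$ (the almost complex structure extracted from the polar decomposition of $A$, using that $-A^{2}$ is positive-definite symmetric). Then $J_g$ is $\om_1$-compatible, depends continuously on $g$, and is $S^1$-invariant because $\om_1$ and $g$ are. The assignment $g\mapsto J_g$ is a retraction of $\Rr^{S^1}$ onto $\jsom$, with section the inclusion $J\mapsto g_J:=\om_1(\cdot\,,J\,\cdot\,)$, for which one checks $J_{g_J}=J$. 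A retract of a contractible space is contractible, so $\jsom$ is contractible.

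No step is genuinely hard; the content is an assembly of established facts together with a standard averaging trick. The one point deserving a careful sentence is the $S^1$-equivariance of the construction $g\mapsto J_g$, i.e. that averaging the metric and extracting the associated compatible structure commute with pullback by the circle action — but this is formal, since $g\mapsto A$, $A\mapsto(-A^{2})^{1/2}$ and inversion are all built canonically out of $\om_1$ and $g$. It is worth emphasizing that this last argument does not use $\lambda=1$: what is special about $\lambda=1$ is merely that $\jsom$ then coincides with the single stratum $\jsom\cap U_0$, so no stratification phenomenon occurs.
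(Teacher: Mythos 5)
Your proof is correct and takes essentially the same route the paper treats as immediate: positivity of symplectic area (equivalently the index bound in Theorem~\ref{strata}) kills every stratum but $U_0$ when $\lambda=1$, Karshon's extension theorem together with Lemma~\ref{lemma_number_of_torus_actions} places every Hamiltonian circle inside $\T_0$, and the standard equivariant metric/polar-decomposition argument gives contractibility of $\jsom$, which for $\lambda=1$ coincides with $\jsom\cap U_0$. Your closing remark that the contractibility argument is independent of $\lambda$ is exactly the point the paper is making, so nothing further is needed.
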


\begin{cor}\label{cor:IntersectingOnlyOneStratum}
Suppose $\lambda >1$. Consider an $S^1(a,b;m)$ action $(\SSS,\oml)$ or $(\CCC,\oml)$ depending on whether $m$ is even or odd. Under the following numerical conditions on $a,b,m,\lambda$, the space $\jsom$ intersects only the stratum $U_{m}$:
\begin{itemize}
    \item when $a\neq\pm 1$;
    \item when $b=0$ or  $b=am$;
    \item when $a = \pm 1$ and $2 \lambda \leq |2b-m|+\epsilon_{m}$.\qed
\end{itemize}
\end{cor}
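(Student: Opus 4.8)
The plan is to obtain this corollary as an immediate consequence of the dictionary between invariant strata and toric extensions established earlier. First I would check that $\jsom$ always meets $U_m$: by definition $S^1(a,b;m)$ is a subcircle of the Hirzebruch torus $\T_m$, so the canonical integrable complex structure $J_m$ of Remark~\ref{remark-starta-toric} is $\T_m$-invariant, hence $S^1(a,b;m)$-invariant, and it lies in $U_m$ by construction. Thus $\jsom\cap U_m\neq\emptyset$, and it only remains to rule out intersections with the other strata.

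Next I would invoke Proposition~\ref{prop:ToricExtensionCorrespondance}, which gives $\jsom\cap U_n\neq\emptyset$ if and only if the circle action $S^1(a,b;m)$ extends to the toric action $\T_n$. Under each of the three numerical hypotheses in the statement --- $a\neq\pm1$; $b\in\{0,am\}$; or $a=\pm1$ with $2\lambda\leq|2b-m|+\epsilon_{m}$ (recalling that $a=\pm1$ makes $|2b-am|$ equal to $|2b-m|$ or $|2b+m|$) --- Proposition~\ref{prop:AtMostTwoExtensions} asserts that the only toric action to which the circle extends is $\T_m$ itself. Combining the two statements yields $\jsom\cap U_n=\emptyset$ for every $n\neq m$, which is precisely the claim; together with the previous paragraph this shows $\jsom$ meets $U_m$ and no other stratum.

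Since the substantive work has already been carried out --- the case analysis over the possible labelled graphs of $S^1(a,b;m)$, and the computation via Lemma~\ref{weight} of the self-intersections of the candidate invariant $(-n)$-spheres, all inside the proof of Proposition~\ref{prop:AtMostTwoExtensions}, together with the stratum/extension equivalence of Proposition~\ref{prop:ToricExtensionCorrespondance} --- there is essentially no remaining obstacle. The only point requiring a little care is bookkeeping: one must verify that the three bullet hypotheses of the corollary are verbatim the three hypotheses under which Proposition~\ref{prop:AtMostTwoExtensions} forbids every toric extension other than $\T_m$, and that the standing assumption $\lambda>1$ is exactly what puts us in the regime where Proposition~\ref{prop:ToricExtensionCorrespondance} applies in the stated form (the $\lambda=1$ case being disposed of separately in Corollary~\ref{cor:lambda=1_IntersectingOnlyOneStratum}), while for $\CCC$ one has $m$ odd so $\epsilon_m=1$ throughout.
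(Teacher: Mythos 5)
Your proposal is correct and follows exactly the route the paper intends: the paper derives this corollary immediately from Proposition~\ref{prop:ToricExtensionCorrespondance} (stratum intersected $\Leftrightarrow$ toric extension exists) together with Proposition~\ref{prop:AtMostTwoExtensions}, which is precisely your argument, and your remark that $|2b-m|$ should be read as $|2b-am|$ (i.e.\ $|2b+m|$ when $a=-1$) correctly resolves the only notational wrinkle.
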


\begin{cor} \label{cor:IntersectingTwoStrata}
Consider a circle action of type $S^1(\pm 1,b,m)$ on $(\SSS,\oml)$ or $(\CCC,\oml)$.
\begin{enumerate}
    \item If $a=1$, $b \neq\{0,m\}$, $\lambda >1$, and $2\lambda > |2b-m|+\epsilon_{m}$, then the space of $S^1(1,b;m)$-equivariant almost complex structures $\jsom$ intersects the two strata $U_m$ and $U_{|2b-m|}$.
    \item If $a=-1$, $b \neq\{0,-m\}$, $\lambda >1$, and $2\lambda > |2b+m|+\epsilon_{m}$, then the space of $S^1(-1,b;m)$-equivariant almost complex structures $\jsom$ intersects the two strata $U_m$ and $U_{|2b+m|}$.\qed
\end{enumerate} 
\end{cor}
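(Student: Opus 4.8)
The plan is to deduce the statement directly from the correspondence between strata and toric extensions established in Proposition~\ref{prop:ToricExtensionCorrespondance}, combined with the enumeration of toric extensions carried out in Proposition~\ref{prop:AtMostTwoExtensions} and in Corollaries~\ref{cor:CircleExtensionsWith_a=1} and~\ref{cor:CircleExtensionsWith_a=-1}. By Proposition~\ref{prop:ToricExtensionCorrespondance}, for $\lambda>1$ the space $\jsom$ meets the stratum $U_n$ if and only if the circle action $S^1(a,b;m)$ extends to the toric action $\T_n$. Hence it suffices to show that, under the stated numerical hypotheses, $S^1(\pm 1,b;m)$ extends to exactly two inequivalent toric actions, namely $\T_m$ and $\T_{|2b-am|}$ (i.e.\ $\T_{|2b-m|}$ when $a=1$ and $\T_{|2b+m|}$ when $a=-1$).

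First I would note that the extension to $\T_m$ is automatic: by construction $S^1(a,b;m)$ is the subcircle $t\mapsto(t^a,t^b)$ of $\T_m$, so $\jsom$ always meets $U_m$. Next, the hypotheses $a=\pm 1$, $b\notin\{0,am\}$, and $2\lambda>|2b-am|+\epsilon_m$ are exactly the conditions under which Corollary~\ref{cor:CircleExtensionsWith_a=1} (for $a=1$) or Corollary~\ref{cor:CircleExtensionsWith_a=-1} (for $a=-1$) produces an explicit $S^1$-equivariant symplectomorphism identifying $S^1(\pm1,b;m)$ with a subcircle of $\T_{|2b-am|}$; therefore $\jsom$ also meets $U_{|2b-am|}$. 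Finally, Proposition~\ref{prop:AtMostTwoExtensions} asserts that, under these same hypotheses, $S^1(\pm1,b;m)$ extends to \emph{at most} two toric actions, so $\jsom$ can meet no stratum other than $U_m$ and $U_{|2b-am|}$.

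It remains to check that the two strata are genuinely distinct, i.e.\ that $m\neq|2b-am|$. If $a=1$ and $|2b-m|=m$, then $2b-m=\pm m$, forcing $b=m$ or $b=0$, both excluded; the case $a=-1$ is identical after replacing $m$ by $-m$. Since inequivalent toric actions correspond to distinct strata in the decomposition of $\jj_{\om_\lambda}$ (Remark~\ref{remark-starta-toric}), the strata $U_m$ and $U_{|2b-am|}$ are distinct, and consequently $\jsom$ intersects exactly two strata.

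The only non-formal ingredient is the passage from ``at most two extensions, one of which is only \emph{possible}'' in Proposition~\ref{prop:AtMostTwoExtensions} to ``the second extension genuinely occurs'', which is precisely the content of Corollaries~\ref{cor:CircleExtensionsWith_a=1} and~\ref{cor:CircleExtensionsWith_a=-1} and rests on the lengthy case-by-case comparison of admissible labelled graphs already carried out there. Granting those corollaries, the present statement is a routine assembly: apply Proposition~\ref{prop:ToricExtensionCorrespondance} to translate toric extensions into strata intersections, use Proposition~\ref{prop:AtMostTwoExtensions} for the upper bound, and use the two corollaries for the lower bound. I expect no further obstacle beyond keeping track of the parity conventions ($\epsilon_m$, even vs.\ odd Hirzebruch surfaces) so that the case $a=1$ on $(\SSS,\oml)$ and the case $a=-1$ on $(\CCC,\oml)$ are both covered, but this is bookkeeping rather than a genuine difficulty.
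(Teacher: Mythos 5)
Your proposal is correct and follows essentially the same route as the paper: the paper derives this corollary immediately from Proposition~\ref{prop:ToricExtensionCorrespondance} (strata correspond to toric extensions) together with Proposition~\ref{prop:AtMostTwoExtensions} and Corollaries~\ref{cor:CircleExtensionsWith_a=1} and~\ref{cor:CircleExtensionsWith_a=-1}. Your additional check that $m\neq|2b-am|$ under the hypotheses $b\notin\{0,am\}$ is a sensible (if routine) piece of bookkeeping that the paper leaves implicit.
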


\section{Symplectic actions of compact abelian groups on~\texorpdfstring{$\R^4$}{R4}}

In order to study the action of the the equivariant symplectomorphism group $\Symp_h^{S^1}(\SSS,\oml)$ on each invariant stratum $\jsom \cap U_k$, we will need to understand the equivariant topology of linearised symplectic  actions. The main result is Theorem~\ref{thm:EqGr} which is an equivariant version of Gromov's Theorem on the contractibility of the group of compactly supported symplectomorphisms of star-shaped domains of $\R^{4}$. This relies on Lemma~\ref{lemma:remove} and Proposition~\ref{prop:linearize} that were proven by W. Chen in the unpublished manuscript \cite{ChenUnpub}. For completeness, we shall reproduce their proof here.

Let $G$ be a compact group acting effectively and symplectically  on $ \C^2 = \R^4$ with the symplectic form $\om_0:= dx_1 \wedge dy_1 +  dx_2 \wedge dy_2$. We say it acts linearly if it acts as a subgroup of $\U(2) \subset \Sp(4)$.

\begin{lemma}[Chen]\label{lemma:remove}
Let $G$ be a compact group acting linearly on $(\R^4, \om_0)$. Suppose $V$ is a $G$ invariant, compact, and star-shaped neighbourhood of $\{0\}$. Let $f: \R^4 \setminus V \to \R^4$ be a $G$-equivariant symplectic embedding which is the identity near infinity. Then, for every $G$-invariant neighbourhood $W$ of $V$, there exists a $G$-equivariant symplectomorphism $g:\R^4 \to \R^4$ such that $g|_{\R^4\setminus W} = f$.  
\end{lemma}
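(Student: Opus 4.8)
The plan is to reduce the statement to a standard extension result about symplectic embeddings that are the identity near infinity, using the Alexander-type trick provided by the star-shapedness of $V$, and then carry out the entire construction $G$-equivariantly by averaging over the compact group. First I would set up notation: let $\phi_t:\R^4\to\R^4$, $t\in(0,1]$, be the radial contraction $\phi_t(x)=tx$, which is conformally symplectic, $\phi_t^*\om_0 = t^2\om_0$, and which is $G$-equivariant since $G$ acts linearly. Because $V$ is $G$-invariant, compact and star-shaped with respect to $0$, the sets $V_t:=\phi_t(V)$ form a decreasing family of $G$-invariant star-shaped neighbourhoods shrinking to $\{0\}$, and for any $G$-invariant neighbourhood $W$ of $V$ we can choose $t_0$ small so that $\phi_{t_0}^{-1}(W)\supset V$, i.e. we have enough room to ``fold'' the embedding $f$ into the collar $W\setminus V$.

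The key steps, in order, are: (1) Rescale. Conjugating $f$ by $\phi_t$ produces a $G$-equivariant symplectic embedding $f_t:=\phi_t^{-1}\circ f\circ\phi_t$ of $\R^4\setminus V_t$ into $\R^4$, still equal to the identity near infinity; the parameter $t$ lets us make the ``hole'' $V_t$ as small as we like while keeping equivariance and exactness of the form. (2) Cutoff/interpolation. On the region between $V$ and $W$, interpolate between $f$ and $\mathrm{id}$ using a $G$-invariant cutoff function $\rho:\R^4\to[0,1]$ that is $0$ on $V$ and $1$ outside $W$; such a $\rho$ exists by averaging an arbitrary cutoff over $G$ with Haar measure. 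Naively interpolating $\psi:=\exp(\rho\cdot h)$, where $h$ is a ($G$-invariant) generating Hamiltonian or a primitive of $f^*(\text{something})-(\text{something})$, destroys the symplectic condition, so one instead works at the level of the graphs/primitives: write $f = $ time-one flow of a $G$-invariant isotopy supported in $\R^4\setminus V$ (possible since $f$ is the identity near infinity and $\R^4\setminus V$ is connected with vanishing relevant cohomology because $V$ is star-shaped, hence $\R^4\setminus V$ deformation retracts onto a large sphere), truncate the Hamiltonian with $\rho$, and correct the resulting near-symplectic map by a $G$-equivariant Moser argument. (3) Moser. The error form $g_0^*\om_0-\om_0$ is exact, supported in the collar $W\setminus V$; averaging its primitive over $G$ keeps it $G$-invariant, and the Moser flow then produces the desired $G$-equivariant correction $g$ with $g|_{\R^4\setminus W}=f$ and $g|$ near $0$ whatever we like (in particular a genuine symplectomorphism of all of $\R^4$, since inside $V$ we are free to take $g=\mathrm{id}$ after the fold).

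The main obstacle I anticipate is step (2): making the ``fold $f$ into a thin collar'' argument genuinely equivariant and symplectic simultaneously. The non-equivariant version is the classical fact that a symplectic embedding of the complement of a star-shaped set, equal to the identity near infinity, extends over the set (this is essentially Gromov's / the standard Alexander trick for symplectomorphisms), and the subtlety is always that one cannot cut off a symplectomorphism directly — one must cut off a generating function or Hamiltonian and re-Moser. Carrying this through $G$-equivariantly requires that every auxiliary choice (the isotopy generating $f$, the cutoff $\rho$, the Moser primitive) be made $G$-invariantly, which is legitimate because $G$ is compact and acts linearly, so Haar averaging preserves all the relevant structures (symplectic form, the class of star-shaped sets, exactness of closed forms on $\R^4\setminus V$). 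A secondary technical point is ensuring $\R^4\setminus V$ has the right topology for the isotopy to exist: since $V$ is compact and star-shaped, $\R^4\setminus V$ is connected and its only nontrivial reduced cohomology is in degree $3$, so $H^1(\R^4\setminus V;\R)=H^2(\R^4\setminus V;\R)=0$, which is exactly what is needed to write $f$ as a Hamiltonian isotopy and to run Moser. Once these equivariant refinements are in place the proof is routine, and I expect Chen's original argument in~\cite{ChenUnpub} proceeds along exactly these lines.
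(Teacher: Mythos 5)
Your outline follows essentially the same route as the paper's (Chen's) proof: the equivariant Alexander rescaling, exactness of the generating $1$-form on the complement of a compact star-shaped set, an invariant cutoff of the Hamiltonian, and the flow of the truncated Hamiltonian. However, two points in your steps (2)--(3) are misdescribed. First, the isotopy from $f$ to the identity is not a consequence of the cohomology of $\R^4\setminus V$; it \emph{is} the rescaled family itself: since $f=\id$ near infinity there is $T>0$ with $f(Tx)=Tx$ for all $x\notin V$, so $f_t(x)=f(tx)/t$, $t\in[1,T]$, is an equivariant symplectic isotopy (equivariance uses linearity of the $G$-action) with $f_1=f$ and $f_T=\id$. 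The vanishing of $H^1$ of the complement is used only to write the generating vector field as $X_t$ with $i_{X_t}\om_0=dH_t$ for some Hamiltonian $H_t$, which can then be averaged over $G$ and normalized to vanish near infinity. (Also, your conjugation is written in the wrong direction: $\phi_t^{-1}\circ f\circ\phi_t$ with $t<1$ is defined on the complement of $t^{-1}V$, an \emph{enlarging} hole; you want the other conjugation, or $t\geq 1$ as in the paper.)

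Second, no Moser correction is needed: once you multiply $H_t$ by an invariant bump function and take the flow of $\rho_t H_t$, you get genuine equivariant symplectomorphisms -- there is no ``near-symplectic'' error form to fix, so your step (3) is superfluous. The bookkeeping you actually skipped is that the cutoff must be time-dependent and adapted to the moving sets: one needs $\rho_t\equiv 0$ near $V_t:=f_t(V)$ and $\rho_t\equiv 1$ on $\R^4\setminus W_t$ with $W_t:=f_t(W)$ (these sets are $G$-invariant because $f_t$ is equivariant), so that trajectories of the truncated flow starting in $\R^4\setminus W$ follow the original isotopy and the time-one map agrees with $f$ there. A single fixed cutoff equal to $1$ outside $W$ does not obviously yield $g|_{\R^4\setminus W}=f$, since the isotopy carries $\R^4\setminus W$ onto $\R^4\setminus W_t$, not onto itself; this is exactly where the freedom to choose an arbitrary invariant neighbourhood $W$ of $V$ enters the statement.
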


\begin{proof}
As $0 \in int(V)$ and as $f$ is the identity near infinity, there exist $T>0$ such that $f(Tx) = Tx$ for all $x \in \R^4 \setminus~ V$. Define $f_t(x) = \frac{f(tx)}{t}$ for $1 \leq t \leq T$. As the $G$ action is linear, we observe that $f_t$ is equivariant for all $t \in [1,T]$. By construction, $f_1 = f$, $f_T = id$  and $f_t^*\om_0 = \om_0$ for all $t$. Thus there are compact sets $V_t = f_t(V)$ and open neighbourhoods $W_t= f_t(W)$ of $V_t$ such that the restriction  $f_t: \R^4 \setminus V \to \R^4 \setminus V_t$ and $f_t: \R^4 \setminus W \to \R^4 \setminus W_t$  are diffeomorphic. As $G$ acts linearly, each of the sets $W_t$ and $V_t$ are $G$-invariant. 

Define $X_t$ as the vector field that satisfies $\frac{d}{dt} f_t = X_t \circ f_t$ and consider the one form $\alpha_t = i_{X_t}\om_0$. As $f_t$ is $G$ equivariant and symplectic, both $X_t$ and $\alpha_t$ are $G$-invariant. Let $H_t: \R^4 \setminus V_t \to \R$ be a one parameter family of Hamiltonians that are $G$-invariant and that satisfy $\alpha_t= dH_t$. Since $f_t$ is the identity near infinity, this implies that $H_t$ is constant near infinity and we can take this constant to be 0. 

Finally, we can take a family of $G$-invariant bump functions $\rho_t: \R^4 \to [0,1]$ such that $\rho_t \equiv 0$ in a neighbourhood of $V_t$ and $\rho_t \equiv 1$ on $\R^4 \setminus W_t$. Then the Hamiltonian $\rho_t H_t: \R^4 \to \R$ is defined on the whole $\R^4$ and is also $G$-invariant. The Hamiltonian isotopy $g_t$ generated by $\rho_t H_t$ is $G$ equivariant for all $1\leq t\leq T$ and satisfies the properties $g_T = \id$ and $g_1|_{\R^4 \setminus W} = f$. Consequently, $g_1$ is the symplectomorphism $g$ we were looking for.
\end{proof}

Any unitary representation of a compact abelian group $G$ on $\C^2$ induces a splitting into eigenspaces $\C^2 = \C_1 \bigoplus \C_2$. This simple fact turns out to be essential in our treatment of the equivariant Gromov's Theorem. Consequently, from now on, we assume that the group $G$ is abelian.

\begin{prop}[Chen]\label{prop:linearize}
Let $V\subset\R^{4}$ be a compact star-shaped neighbourhood of $\{0\}$ and let $\om$ be a symplectic form on $\R^{4}$ such that $\om = \om_0$ outside some smaller open neighbourhood of the origin $U\subset V$. Let $G$ be a compact abelian group acting on $(V,\om)$ via symplectomorphisms that are linear near the boundary of V. Then the G action is conjugate to a linear symplectic action of $G$ on $(V,\om_0)$ by a diffeomorphism $\Phi$ which is the identity near the boundary and which satisfies $\Phi^*\om = \om_0$.
\end{prop}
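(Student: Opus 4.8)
The plan is to construct $\Phi$ by first putting the action into linear normal form near its fixed point via an equivariant Darboux argument, and then spreading this local model over all of $V$ — keeping the identity near $\partial V$ — by the rescaling technique already used in the proof of Lemma~\ref{lemma:remove}. Since $G$ is abelian, the linear model near $\partial V$ diagonalises, giving a weight splitting $\C^{2}=\C_{1}\oplus\C_{2}$ and hence a cofinal family of $G$-invariant, compact, star-shaped polydisks $D(r_{1})\times D(r_{2})$; these are the neighbourhoods of $0$ to which Lemma~\ref{lemma:remove} gets applied. For the setup, I would first average an auxiliary metric over $G$ to obtain a $G$-invariant metric on $V$. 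As $V$ is star-shaped, hence contractible, the compact group $G$ has a fixed point in $V$; the linear action agreeing with the given one near $\partial V$ preserves $V$ and fixes $0$, and in the situations this proposition is meant for (an equivariant slice theorem) the global action fixes $0$ too, so I assume $0$ is fixed. The linear part near $\partial V$ lies in $\Sp(4)$; averaging it into $\U(2)$ and diagonalising, the boundary model is $\rho(g)(z_{1},z_{2})=(\chi_{1}(g)z_{1},\chi_{2}(g)z_{2})$.

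Next, near the origin I would run an equivariant Darboux (Weinstein) argument: the equivariant exponential map of the invariant metric identifies, $G$-equivariantly, a neighbourhood of $0$ in $V$ with a ball in $T_{0}V=\R^{4}$ carrying the action to the isotropy representation $\rho_{0}=d\psi_{0}$, and then — after an equivariant linear adjustment making $\om$ and $\om_{0}$ agree at $0$, and a $G$-invariant Moser isotopy, which is legitimate because all the relevant data is $G$-invariant — one gets a $G$-equivariant symplectomorphism from a neighbourhood of $0$ in $(V,\om)$ onto a neighbourhood of $0$ in $(\R^{4},\om_{0})$ intertwining the action with $\rho_{0}$. A standard argument identifies $\rho_{0}$, as a symplectic $G$-representation, with the boundary model $\rho$; composing with the linear symplectomorphism realising this isomorphism yields a $G$-equivariant symplectomorphism $\phi\colon(P_{0},\om)\to(\phi(P_{0}),\om_{0})$ on an invariant polydisk $P_{0}\ni0$ intertwining the action with $\rho$, and I may enlarge $P_{0}$ so that $U\subset P_{0}$, hence $\om=\om_{0}$ on $V\setminus P_{0}$.

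Finally, the globalisation. The map $\phi^{-1}\colon\phi(P_{0})\to P_{0}$ and the identity on a collar of $\partial V$ are both $G$-equivariant symplectic embeddings (linear action upstairs, the given action downstairs; and $\om=\om_{0}$, action $=\rho$ on the collar), so they fit together to an equivariant symplectic embedding defined off the annular region between $\phi(P_{0})$ and the collar, equal to the identity near $\partial V$. Exhausting the interior of $V$ by an increasing family of $G$-invariant polydisks, I would extend this embedding outward one thin shell at a time: rescale each shell's embedding so that its correction relative to the previous stage is supported near the next polydisk boundary, realise that correction as the time-one flow of a $G$-invariant, compactly supported Hamiltonian, and cut off — exactly the argument in the proof of Lemma~\ref{lemma:remove}. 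In the limit this produces a $G$-equivariant symplectomorphism $\Phi\colon(V,\om_{0})\to(V,\om)$ that is the identity near $\partial V$, so that $\Phi^{*}\om=\om_{0}$ and $\Phi^{-1}\circ\psi_{g}\circ\Phi=\rho(g)$ for all $g$, which is the assertion.

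The main obstacle is this last step: one is extending an equivariant symplectic embedding across an annular — not star-shaped — region, so Lemma~\ref{lemma:remove} cannot be quoted verbatim and its proof must be re-run shell by shell. The point that makes it work is that each shell is sandwiched between two $G$-invariant star-shaped polydisks, which is exactly where the abelian hypothesis is indispensable, together with the fact that the rescaling isotopies and cutoff Hamiltonians can be kept $G$-invariant at every stage, so the extension remains simultaneously symplectic and equal to the identity near $\partial V$ along the whole exhaustion. A secondary point requiring care is the identification $\rho_{0}\cong\rho$ of the isotropy representation at $0$ with the boundary model, which is what lets $\Phi$ be taken to be the identity near $\partial V$ rather than merely a conjugation into some linear action.
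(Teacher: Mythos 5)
There is a genuine gap, and it sits exactly where the real content of the proposition lies: the globalisation across the annulus. In Lemma~\ref{lemma:remove} the map $f$ is \emph{already defined} on all of $\R^4\setminus V$ and equals the identity near infinity; the entire mechanism is the radial family $f_t(x)=f(tx)/t$, which exists because $f$ is globally defined on the complement of a star-shaped set where the form is standard, and whose generating vector field is symplectic, hence Hamiltonian there. In your shell-by-shell scheme there is no globally defined map to rescale: you are trying to \emph{extend} an equivariant symplectic embedding, given only near $P_0$ and near the collar of $\partial V$, across the region in between, while intertwining the (generally nonlinear) action $\rho$ with the linear model. Nothing guarantees such an extension exists --- even non-equivariantly, extending a symplectomorphism given near the two boundary components of an annular region in $\R^4$ is a rigid problem of the same nature as Gromov's theorem on $\Symp_c(\R^4)$, not a soft cut-off problem --- and the sentence ``realise that correction as the time-one flow of a $G$-invariant, compactly supported Hamiltonian'' presupposes the extension you are trying to construct. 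This is precisely why the paper's proof takes a different route: it extends both actions and both forms to the compactification $S^2\times S^2$, picks a $G$-invariant compatible $J$ that is standard near the wedge at infinity, and uses the two transverse foliations by $J$-holomorphic spheres in the classes $B$ and $F$ to define a \emph{global} diffeomorphism $\Psi_J(z,w)=u_w\cap v_z$ which automatically intertwines $\rho_{\lin}$ with $\rho$ and is the identity near infinity; an equivariant Moser isotopy makes it symplectic, and only then is Lemma~\ref{lemma:remove} applied to $\tilde\Psi_J^{-1}$ restricted to $\C^2\setminus V_1$ --- legitimately, because at that stage one has a map defined on the whole complement of a star-shaped set. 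The pseudo-holomorphic input is the missing ingredient your argument cannot replace.

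A secondary problem is your local model. The hypotheses do not say that the given action fixes the origin; a fixed point exists for a compact abelian group acting on a disk, but it need not be $0$, and, more importantly, the assertion that the isotropy representation at an interior fixed point is isomorphic to the boundary model $\rho$ is not ``standard'': an a priori arbitrary extension of the boundary action can have several interior fixed components whose weights differ from the boundary weights, so locating a fixed point with the correct representation would itself require an argument (in effect, part of the theorem you are proving). The paper's construction never needs this, since $\Psi_J$ intertwines the two actions globally by construction.
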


\begin{proof}
Identify $\R^4$ with $\C^2$. The linear action near $\partial V$ extends to a unitary action on $\R^4$. As $G$ is abelian this linear action splits into two eigenspaces  $\C_{1} \oplus \C_{2}$. We can then compactify $\C^{2}$ along these eigenspaces to obtain $\CP^{1}\times \CP^{1}$ (diffeomorphic to $S^2 \times S^2$) equipped with two symplectic forms $\tilde\om$ and $\tilde\om_{0}$ induced from $\om$ and $\om_{0}$. The compactified space inherits two actions of $G$, namely, $\rho: G \hookrightarrow \Symp(\SSS,\tilde\om)$ coming from extending the $G$ action on $\R^{4}$ and $\rho_{\lin}:G \hookrightarrow \Symp(\SSS,\tilde\om)$ which extends the linear action near $\partial V$.  
\\

By construction, there exists a star-shaped subset $V_1 \subset V$ such that both the action $\rho$ and $\rho_{\lin}$ agree on $\C^2 \setminus V_1$. Note that the point at infinity $p:=(\infty,\infty)$ is a fixed point for both the action $\rho$ and $\rho_{lin}$. Choose a $\tilde\om$ compatible $G$-invariant almost complex structure $J$ on $\SSS$ which is standard in some neighborhood $X_\epsilon:= (S^2 \times D_\epsilon) \cup (S^2 \times D_\epsilon)$ of the wedge $(S^2\times\{\infty\})\vee (\{\infty\}\times S^2)$ (where $D_\epsilon$ is a small disk of radius $\epsilon$ at $\infty$). As $(S^2\times\{\infty\})$ and $(\{\infty\}\times S^2)$ are $J$-holomorphic spheres representing the classes $B$ and $F$ respectively, we conclude that $J$ belongs to the stratum $U_{0}$ so that there exists foliations $\mathcal{B}_J$ and $\mathcal{F}_J$ by embedded $J$-holomorphic spheres in the classes $B$ and $F$. Given any $q=(z,w) \in S^2 \times S^2$, let $u_w$ denote the unique curve in the class B  passing through $(0,w)$ and, similarly, let $v_z$ be the curve in class F  passing through $(z,0)$. We can define a self-map of $S^2 \times S^2$ by setting
\begin{align*}
\Psi_J: S^2 \times S^2 &\longrightarrow S^2 \times S^2  \\
(z,w) &\longmapsto u_w \cap v_z
\end{align*}
As explain in~\cite{McD} Chapter 9, this map $\Psi_J$ is a diffeomorphism. Moreover, $\Psi_J$ is equivariant with respect to the linear action $\rho_{\lin}$ on the domain  and the action $\rho$ on the codomain. Furthermore, as $J$ is the standard complex structure in a neighbourhood $X_\epsilon$ of the wedge, $\Psi_J$ is the identity near the base point $p$. We now modify $\Psi_J$ in order to make it the identity in the neighbourhood $X_\epsilon$. As $J$ is the standard complex structure on $X_\epsilon$ we can write
\[
\Psi_J (z,w)= 
\begin{cases} 
(z,\phi_2(z,w)) & \text{~if~} z \in    D_\epsilon \subset \SSS\\
(\phi_1(z,w),w) & \text{~if~} w \in   D_\epsilon \subset \SSS
\end{cases}
\]
where $\phi_1(z,0)=z$ and $\phi_2(0,w)=w$ for all $z,w \in S^2$. Choose a $G$ equivariant (for the $G$ action on $\{\infty\}\times S^2)$) smooth map $\beta_1:S^2 \longrightarrow S^2$ such that $\beta_1(z) = z$ for all $z \in D_\epsilon$ and $\beta_1 =\infty$ in a neighbourhood $D_\delta$ contained in $D_\epsilon$ and such that $\det(d\beta_{1}(z)) \geq 0 ~\forall ~z \in S^2$.
Similarly define a $G$ equivariant map (for the $G$ action on $S^2\times\{\infty\}$) $\beta_2: S^2 \to S^2$ satisfying analogous conditions as $\beta_1$. Then we define a modification of $\Psi_{J}$ by setting
\[
\Psi^{\prime}_J(z,w) = 
\begin{cases}
\Psi_J   &\text{~if~}  z \in  (S^2 \times S^2) \setminus X_\epsilon \\
(z,(\phi_2(\beta_1(z),w)) &\text{~if~} z \in D_\epsilon \\
(\phi_1(z,\beta_2(w)),w) &\text{~if~}  w \in D_\epsilon
\end{cases}
\]
This modification $\Psi^{\prime}_J$ is the identity in a smaller neighbourhood $X_\delta:=(S^2 \times D_\delta) \cup (S^2 \times D_\delta)\subset X_{\epsilon} $.\\

The submanifolds $\{z\} \times S^2$ and $S^2 \times \{w\}$ for all $z,w \in S^2$ are symplectic for the form ${\Psi'_{\!J}}^* \tilde\om $ and hence $\tilde\om_0 \wedge {\Psi'_{\!J}}^* \tilde\om>0$. Thus the path $\om_t:= t\tilde\om +(1-t){\Psi'_{\!J}}^* \tilde\om$ is a path of non-degenerate symplectic forms for all $t \in [0,1]$. We can then apply an equivariant Moser isotopy to get an equivariant diffeomorphism $\alpha$ of $S^2 \times S^2$ such that $\alpha^* {\Psi'_{\!J}}^* \tilde\om = \tilde\om_0$. Further, as ${\Psi'_{\!J}}^* \tilde\om= \tilde \om_0$ on $X_\delta$, the restriction of $\alpha$ to $X_\delta$ is the identity. We then set $\tilde \Psi_J:= \Psi'_{\!J} \circ \alpha$.
\\

The restriction of $\tilde\Psi_J: \C^2 \to \C^2$ gives us a map which is $G$-equivariant with respect to the action $\rho_{\lin}$ on the domain and $\rho$ on the codomain. As noted before there exists a star-shaped subset $V_1 \subset V$ such that both the action $\rho$ and $\rho_{\lin}$ agree on $\C^2 \setminus V_1$. We can choose $V_1$ such that $0 \in int(V_1)$. We now apply Theorem \ref{lemma:remove} to the map $f:={\tilde\Psi_J}^{-1}|_{\C^2 \setminus V_1}$ and we choose W in Theorem \ref{lemma:remove} to be a $G$-invariant open subset of $V$ which contains $V_1$. Let $g:\C^2 \to \C^2$ be an equivariant symplectomorphism as in Theorem \ref{lemma:remove} such that $g|_{\R^4\setminus W} = f$, then the map $\Phi:= \left(\tilde\Psi_J \circ g\right)\Big|_V: V \to V$ is identity near the boundary and satisfies $\Phi^*\om = \om_0$ and is
$G$-equivariant where the action of G on the domain of $\Phi$ is the linear action $\rho_{\lin}$ while on the range of $\Phi$ it is the $G$ action $\rho$ on V that we started out with. 
Thus $\Phi$ is the required equivariant symplectomorphism that linearizes the given $G$ action and takes the form $\om$ to $\om_0$.
\end{proof}

Consider a polydisk $D^2 \times D^2$ whose product structure is compatible with the eigenspace decomposition $\R^{4}=\C^{2}=\C_{1}\oplus \C_{2}$. Consider the symplectic form $\om$ such that $\om = \om_0$ outside of some smaller polydisk of the form  $D_r \times D_r \subset D^2 \times D^2$  for some radius $r$. 

\begin{thm}[Equivariant Gromov's Theorem for Polydisks]\label{thm:EquivariantGromovForPolydisks}
Let $G$ be an abelian group. Let $\om$ be a symplectic form on $D^2 \times D^2$ which is equal to $\om_0$ near the boundary. Let $G$ act symplectically on $(D^2 \times D^2 ,\om)$ and suppose the action is linear near the boundary. Then the group $\Symp_c^{G}(D^2\times D^2, \om)$ of equivariant symplectomorphisms that are equal to the identity near the boundary of $D^2 \times D^2$ is non-empty and contractible.
\end{thm}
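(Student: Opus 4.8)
The plan is to reduce the polydisk statement to the already-established equivariant linearization result (Proposition~\ref{prop:linearize}) together with the classical (non-equivariant) Gromov theorem for polydisks, which is available from~\cite{Gr} and~\cite{McD}. The key observation is that $\Symp_c^{G}(D^2\times D^2,\om)$ is, in fact, the \emph{homotopy fiber} of a fibration whose total space is contractible and whose base is also contractible, so the result will follow once the right fibration is set up.

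First I would use Proposition~\ref{prop:linearize} to replace $(D^2\times D^2,\om)$, with its $G$-action that is linear near the boundary, by the standard polydisk $(D^2\times D^2,\om_0)$ carrying a \emph{linear} $G$-action: the diffeomorphism $\Phi$ produced there is the identity near the boundary and pulls $\om$ back to $\om_0$, so conjugation by $\Phi$ gives an isomorphism of topological groups $\Symp_c^{G}(D^2\times D^2,\om)\cong\Symp_c^{G}(D^2\times D^2,\om_0)$ where on the right $G$ acts linearly, i.e. through $\U(1)\times\U(1)$ acting diagonally on $\C_1\oplus\C_2$. Thus it suffices to prove the theorem when $\om=\om_0$ and the action is linear; in particular it is automatically linear everywhere, not just near the boundary. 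Non-emptiness is then immediate since $\id\in\Symp_c^{G}(D^2\times D^2,\om_0)$.

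Second, for the contractibility of $\Symp_c^{G}(D^2\times D^2,\om_0)$ with a diagonal linear torus action, I would exploit the product structure compatible with the eigenspace splitting. The linear $G$-action preserves each factor $D^2$ (rotating it), so one natural approach is an equivariant analogue of the Alexander-type trick: since $V=D^2\times D^2$ is $G$-invariant and star-shaped and every $\phi\in\Symp_c^{G}$ is the identity near the boundary, the rescaling $\phi_t(x):=\tfrac1t\phi(tx)$ for $t\in[0,1]$ (with $\phi_0:=\id$) is a well-defined contraction \emph{through equivariant diffeomorphisms}, because the $G$-action is linear and hence commutes with scalar dilations. This homotopy is not through symplectomorphisms, but one can then correct it by an equivariant Moser argument exactly as in the proof of Proposition~\ref{prop:CentralizersToricActions}: the forms $\phi_t^*\om_0$ all agree with $\om_0$ near $\partial V$ and are cohomologous (the cohomology is trivial), so an equivariant Moser isotopy supported away from the boundary replaces $\phi_t$ by a path $\psi_t$ inside $\Symp_c^{G}(D^2\times D^2,\om_0)$ with $\psi_1=\phi$ and $\psi_0=\id$, depending continuously on $\phi$. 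This gives the required deformation retraction onto $\{\id\}$.

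The main obstacle I anticipate is making the Moser correction genuinely continuous in the parameter $\phi$ and uniform near the boundary: the Moser vector field is obtained by solving $d\alpha_t = \dot\om_t$ with $\alpha_t$ vanishing near $\partial V$, and one must choose the primitive $G$-equivariantly and continuously in $\phi$, which requires an averaging argument over the compact group $G$ together with a fixed choice of homotopy operator (Poincaré lemma with parameters) on the star-shaped domain. A cleaner alternative, which I would fall back on if the direct contraction is delicate, is to set up the evaluation-type fibration $\Symp_c^{G}(D^2\times D^2,\om_0)\to \mathcal{E}$ onto a space of ``equivariant symplectic embeddings of a sub-polydisk'' and identify both the total space and the base with contractible spaces using the non-equivariant Gromov theorem applied on the two eigenspace slices plus Lemma~\ref{lemma:remove}; but the Alexander-plus-Moser route above is the most economical and is the one I would write up first.
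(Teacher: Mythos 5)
Your first step (conjugating by the $\Phi$ of Proposition~\ref{prop:linearize} to reduce to a linear action on $(D^2\times D^2,\om_0)$, which also gives non-emptiness) matches the paper. The gap is in your second step: the rescaling $\phi_t(x)=\tfrac1t\phi(tx)$, $t\in[0,1]$, is not a well-defined contraction of $\Symp_c^{G}(D^2\times D^2,\om_0)$, and no Moser correction can rescue it. Concretely: (i) for $t<1$ the map $\phi_t$ need not send the polydisk to itself --- if $\phi$ moves a point $p$ near the origin to a point $q$ of definite norm (which a compactly supported Hamiltonian flow can do), then $\phi_t(p/t)=q/t$ leaves the domain as $t\to0$; (ii) $\phi_t$ is no longer the identity near the boundary for $t<1$, so the path exits the group you are trying to contract; (iii) the limit as $t\to0$ is not $\id$ but $d\phi_0$, the linearization at the origin (and this only makes sense when $\phi(0)=0$, which equivariance forces only if $0$ is the sole $G$-fixed point, e.g.\ not when $G$ is trivial or acts by rotation in a single factor). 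Note that the rescalings actually used in the paper (Lemma~\ref{lemma:remove}, Theorem~\ref{thm:EqGr}) run over $t\ge1$, which \emph{compresses} supports into a small sub-polydisk; they are used only to reduce the star-shaped case to the polydisk case, not to produce the contraction itself.

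The deeper reason your route cannot work is that for $G$ trivial the statement is exactly Gromov's theorem that $\Symp_c(D^2\times D^2,\om_0)$ is contractible, which is not accessible by Alexander-trick/Moser arguments (the smooth Alexander trick already fails for $\Diff_c(D^4)$); some pseudo-holomorphic input is unavoidable. This is precisely what the paper's proof supplies: it shows that $\Symp_{c,\lin}^{G}(D^2\times D^2,\om)$ is homotopy equivalent to the non-empty contractible space $\J^{G}_{\om}$ of invariant compatible almost complex structures that are standard near the boundary, by compactifying the eigenspace splitting to $S^2\times S^2$ and using, for each invariant $J$, the $J$-holomorphic foliations in the classes $B$ and $F$ to build the map $\tilde\Psi_J$ of Proposition~\ref{prop:linearize}; the assignment $J\mapsto\tilde\Psi_J$ and the inverse $\phi\mapsto\phi_*J'$ give the equivalence. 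Your fallback sketch (``non-equivariant Gromov on the two eigenspace slices plus Lemma~\ref{lemma:remove}'') does not fill this gap either: the non-equivariant theorem on slices says nothing about equivariant maps of the four-dimensional polydisk, and Lemma~\ref{lemma:remove} only extends equivariant symplectomorphisms defined outside an invariant star-shaped set. To repair the proof you should replace your contraction by the comparison with $\J^{G}_{\om}$ (or an equivalent $J$-holomorphic argument).
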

\begin{proof}
As the $G$ action outside of $D_r \times D_r \subset \R^4$  is linear, we can extend this $G$ action to the whole of $\R^4$. Then we can compactify each eigenspace of $\C$ to an $S^2$ and hence this $G$ action extends to a symplectic action $S^2 \times S^2$ with respect to the form $\tilde\om$ induced by $\om$. 
\\

By Proposition~\ref{prop:linearize} we can conjugate our $G$ action by a symplectomorphism which is identity near the boundary to get a linear $G$ action on the whole of $V$. As any two conjugate topological subgroups are  homeomorphic, we shall just study the homotopy type of the compactly supported equivariant symplectomorphism group $\Symp_{c,\lin}^G(D^2 \times D^2,\om)$ for the linear $G$ action on $V$.
\\

Let $\J^G_{\om}$  be the non-empty and contractible space of all equivariant almost complex structures on $D^2 \times D^2$ that are compatible with $\om$ and are the standard split almost complex structure $J_0$ outside of $D_r \times D_r$. As they are the standard almost complex structures outside of a neighbourhood these almost complex structure extend to $\SSS$ and are compatible with $\tilde\om_0$. Further once we pick a base point $p=(\infty,\infty)\in S^2\times S^2$ and identify $D^2\times D^2$ with the complement of a standard neighborhood $X_\epsilon:= (S^2 \times D_\epsilon) \cup (S^2 \times D_\epsilon)$  of the wedge $(S^2\times\{\infty\})\vee (\{\infty\}\times S^2)\subset S^2\times S^2$ (note that the wedge point $(\infty,\infty)$ is a fixed point for the extended action of G on $\SSS$),  then any element $J\in  \J^G_{\sigma}$ extends to a equivariant almost complex structure of $\SSS$ which is the standard product complex structure on $\SSS$ on a neighbourhood $X_\epsilon$ of the wedge $(S^2\times\{y\})\vee (\{x\}\times S^2)\subset S^2\times S^2$. Conversely any such equivariant almost complex structure compatible with $\tilde\om$ that is  standard in some neighbourhood of the wedge $(S^2\times\{\infty\})\vee (\{\infty\}\times S^2)\subset S^2\times S^2$ gives us an element of $\J^G_{\om}$ .
\\

In order to show that $\Symp_{c,\lin}^G(D^2 \times D^2,\om)$ is contractible, we shall prove that it is homotopy equivalent to the contractible space $\J^G_{\om}$. 
\\

Define the map $\tilde \Psi_J$ as in the proof of Proposition~\ref{prop:linearize}. Thus we have a map 
\begin{align*}
\tau: \J^G_\om  &\longrightarrow \Symp_{c,\lin}^G(D^2 \times D^2,\om) \\ 
J &\longmapsto \tilde\Psi_J 
\end{align*}

To prove that $\tau$ is a
homotopy equivalence we construct a homotopy inverse as follows.  Fix a $J' \in \J^G_\om$, then the homotopy inverse $\beta$ is defined as,
\begin{align*}
\beta: \Symp_{c,\lin}^G(D^2 \times D^2,\om)  &\longrightarrow \J^G_\om \\ 
\phi &\lmapsto \phi_{*}J'
\end{align*}
By construction we see that $\tau(\beta(\phi)) = \id$ and the other direction is homotopic to the identity as $J^G_\om$ is contractible.
\end{proof}

We shall repeatedly use  the following theorem in our analysis of the homotopy type of the equivariant symplectomorphism groups of $\SSS$.

\begin{thm}(Equivariant Gromov's Theorem)\label{thm:EqGr}
Let $(V,\om)$ be an  compact star-shaped symplectic domain of $\R^4$ such that $0 \in int(V)$ and let $\om$ be such that $\om = \om_0$ near the boundary of $V$. Let $G$ be a compact abelian group that acts symplectically and linearly near the boundary and that sends the boundary to itself, then the space of equivariant symplectomorphisms that act as identity near the boundary (denoted by $\Symp_c^G(V,\om)$) is non-empty and contractible.
\end{thm}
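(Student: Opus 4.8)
\emph{Non-emptiness} is immediate, since $\mathrm{id}_{V}\in\Symp_{c}^{G}(V,\om)$. To prove contractibility, the plan is to reduce the statement, in two steps, to the polydisk case of Theorem~\ref{thm:EquivariantGromovForPolydisks}: first linearize the action by means of Proposition~\ref{prop:linearize}, and then compare $\Symp_{c}^{G}(V,\om)$ with the equivariant symplectomorphism group of a small $G$-invariant polydisk contained in $V$ through a rescaling argument.

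\textbf{Step 1 (linearization).} I would first extend $\om$ to $\R^{4}$ by declaring it equal to $\om_{0}$ outside $V$; this is smooth because $\om=\om_{0}$ near $\partial V$, and then $\om=\om_{0}$ on all of $\R^{4}\setminus\mathrm{int}(V)$. Proposition~\ref{prop:linearize}, applied with $U=\mathrm{int}(V)$, yields a diffeomorphism $\Phi$ of $V$, equal to the identity near $\partial V$, with $\Phi^{*}\om=\om_{0}$, that conjugates the given $G$-action to a \emph{linear} symplectic $G$-action on $(V,\om_{0})$. Conjugation by $\Phi$ is then a homeomorphism between $\Symp_{c}^{G}(V,\om)$ and $\Symp_{c}^{G}(V,\om_{0})$ for the linear action, so from now on we may assume that $G$ acts linearly on $(V,\om_{0})$; in particular this action extends to a linear action on $\R^{4}$ and commutes with the radial dilations $\mu_{t}(x)=tx$.

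\textbf{Step 2 (comparison with a polydisk and rescaling).} Since $G$ is abelian, $\C^{2}$ splits into eigenspaces $\C_{1}\oplus\C_{2}$; I would fix $\epsilon>0$ small enough that the compatible $G$-invariant polydisk $P_{0}=D^{2}_{\epsilon}\times D^{2}_{\epsilon}$ lies in $\mathrm{int}(V)$, and let $\iota\colon\Symp_{c}^{G}(P_{0},\om_{0})\hookrightarrow\Symp_{c}^{G}(V,\om_{0})$ be extension by the identity. By Theorem~\ref{thm:EquivariantGromovForPolydisks}, $\Symp_{c}^{G}(P_{0},\om_{0})$ is contractible, so it suffices to show $\iota$ is a weak homotopy equivalence. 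The dilation $\mu_{t}$ is $G$-equivariant and satisfies $\mu_{t}^{*}\om_{0}=t^{2}\om_{0}$, hence $\phi\mapsto\phi^{(t)}:=\mu_{t}\circ\phi\circ\mu_{t}^{-1}$ preserves both equivariance and the condition $\phi^{*}\om_{0}=\om_{0}$; since $V$ is star-shaped about $0$ one has $t\cdot\mathrm{int}(V)\subset\mathrm{int}(V)$ for $t\le 1$ (because $tB_{\delta}(y)=B_{t\delta}(ty)\subset tV\subset V$), so this operation maps $\Symp_{c}^{G}(V,\om_{0})$ to itself, and if $\phi=\mathrm{id}$ off a compact set $K\subset\mathrm{int}(V)$ then $\phi^{(t)}=\mathrm{id}$ off $tK$. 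Now given $f\colon S^{k}\to\Symp_{c}^{G}(V,\om_{0})$, its image is compact; as $\Symp_{c}^{G}(V,\om_{0})$ carries the colimit topology of the subgroups of elements supported in a fixed compact subset of $\mathrm{int}(V)$, the supports of all the $f(x)$ lie in one such compact set $K$, which is bounded, say $K\subset B_{R}$. Taking $t=\min(1,\epsilon/R)$, the path $u\mapsto f^{(u)}$, $u\in[t,1]$, is a homotopy inside $\Symp_{c}^{G}(V,\om_{0})$ from $f$ to a map $f^{(t)}\colon S^{k}\to\Symp_{c}^{G}(P_{0},\om_{0})$ (because $tK\subset B_{\epsilon}\subset P_{0}$ while $uK\subset u\cdot\mathrm{int}(V)\subset\mathrm{int}(V)$ throughout). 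This shows $\iota_{*}$ is onto on all homotopy groups. Injectivity is proved the same way, rescaling a null-homotopy $D^{k+1}\to\Symp_{c}^{G}(V,\om_{0})$ of $\iota\circ g$, and using that, as $P_{0}$ is itself star-shaped, $\phi\mapsto\phi^{(u)}$ with $u\le1$ preserves $\Symp_{c}^{G}(P_{0},\om_{0})$ so that $g\simeq g^{(t)}$ inside $\Symp_{c}^{G}(P_{0},\om_{0})$. Hence $\iota$ is a weak homotopy equivalence, $\Symp_{c}^{G}(V,\om)$ is weakly contractible, and since these groups have the homotopy type of CW complexes, it is contractible.

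\textbf{The main obstacle.} The delicate point is that one cannot contract $\Symp_{c}^{G}(V,\om_{0})$ directly by letting $t\to0$ in $\phi\mapsto\phi^{(t)}$: although the supports shrink to $\{0\}$, the higher derivatives of $\phi^{(t)}$ grow like $t^{-k}$, so $\phi^{(t)}\not\to\mathrm{id}$ in the $C^{\infty}$ topology and $t\mapsto\phi^{(t)}$ is not a continuous contraction. Working one compact family at a time — where only a single positive value of $t$ is ever needed — circumvents this, at the cost of producing only a weak equivalence; the argument also relies essentially on the action being linear, so that $\mu_{t}$ is equivariant, which is exactly what Proposition~\ref{prop:linearize} secures.
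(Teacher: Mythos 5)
Your proof is correct and follows the same overall strategy as the paper: linearize the action with Proposition~\ref{prop:linearize}, then reduce to the polydisk case of Theorem~\ref{thm:EquivariantGromovForPolydisks} by conjugating with dilations (your $\phi\mapsto\mu_t\circ\phi\circ\mu_t^{-1}$ is the paper's $F_t(\phi)(x)=\phi(tx)/t$ up to replacing $t$ by $1/t$). The implementation differs in one respect: you argue one compact family at a time, conclude only that the inclusion of the polydisk subgroup is a weak homotopy equivalence, and at the end invoke the (standard but unproven here) fact that these groups have CW homotopy type. To get uniform supports you appeal to a colimit topology on $\Symp_c^G(V,\om_0)$; this is neither established in the paper (all spaces carry the $C^\infty$ topology) nor needed, since every element of the group is supported in the compact set $V\subset B_R$, so a single scaling factor $t\leq \epsilon/R$ works for the entire group simultaneously, not just for one compact family. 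This uniformity is exactly what the paper exploits: the maps $F_t$, $1\leq t\leq T$ with $\tfrac{1}{T}V\subset D_r\times D_r$, form a continuous family of self-maps of the whole group ending inside the polydisk subgroup, and concatenating this with a null-homotopy of that (contractible) subgroup's inclusion yields an explicit contraction of $\Symp_c^G(V,\om_0)$, so no weak-to-genuine upgrade is required. Your diagnosis of the obstacle (one cannot let $t\to 0$ in the $C^\infty$ topology) is accurate and is circumvented by both arguments in the same way; replacing your family-by-family rescaling with the uniform one would both simplify your proof and strengthen its conclusion to an honest contraction.
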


\begin{proof}
By Proposition~\ref{prop:linearize} we can conjugate our $G$ action by a symplectomorphism which is identity near the boundary to get a linear $G$ action on the whole of $V$ and such that it takes the form $\om$ to $\om_0$. As the homotopy type of the two conjugate equivariant symplectomorphism group is the same (they are in fact homeomorphic), we shall just study the homotopy type of the compactly supported equivariant symplectomorphism group  for the linear $G$ action on $(V,\om_0)$. We denote this group by $\Symp_{c,\lin}^G(V,\om_0)$.
\\

Choose real numbers $r>0$ and $T>1$ , $D_r \times D_r$ is a polydisk of radius $r$, such that $\frac{1}{T}V \subset D_r \times D_r \subset int(V)$,
and consider the family of maps $F_t : \Symp_{c,\lin}^G(V,\om_0) \to \Symp_{c,\lin}^G(V,\om_0)$ for $1 \leq t \leq T$ 
defined by $F_t(\phi)(x) = \frac{\phi(tx)}{t} $ for all $x \in V$.
\\

Then we have that $F_t$ is $G$ equivariant for all $1 \leq t \leq T$, $F_1(\phi) = \phi$ for all $\phi \in \Symp_{c,\lin}^G(V,\om_0)$,  $F_t(id) = id$ for all t, and $F_T \left(\Symp_{c,\lin}^G(V,\om_0)\right) \subset \Symp_{c,\lin}^G(D_r \times D_r,\om)$.
\\

The proof of Theorem \ref{thm:EquivariantGromovForPolydisks} tells us that the inclusion $i:\Symp_{c,\lin}^G(D_r \times D_r,\om) \hookrightarrow \Symp_{c,\lin}^G(V,\om_0)$ is contractible. Hence we can fix a contraction $\alpha_t$ for $T \leq t \leq T+1$ such that $\alpha_T = i$ and $\alpha_{T+1}(\phi) = id$ for all $\phi \in \Symp_{c,\lin}^G(D_r \times D_r,\om)$. Then the concatenation 
$$\Tilde{F_t}:=\begin{cases}
F_t ~~1 \leq t\leq T\\
F_T \circ \alpha_t ~~ T\leq t \leq T+1
\end{cases}$$ 
gives  us a retraction of  $\Symp_{c,\lin}^G(V,\om_0)$  to $id$ and hence $\Symp_{c}^G(V,\om)$ is contractible.
\end{proof}

\begin{remark}
To our knowledge, it is not known whether an equivariant version of Gromov's Theorem holds true for actions of arbitrary compact groups. If true, the proof of such statement would probably require new ideas that do not rely on the compactification of a polydisc to $\SSS$.
\end{remark}

\section{Homotopical description of the strata \texorpdfstring{$\jsom \cap U_k$}{J\hat{}S1\_k}} \label{Section:homotopical descriptionofjsom}
We now consider the action of the group of equivariant symplectomorphisms $\Symp_{h}^{S^1}(\SSS,\oml)$ on the contractible space $\jsom$ of invariant, compatible, almost-complex structures, and we investigate the orbit-type stratification of this action up to homotopy (See Lemma~\ref{contractibilityofinvariantacs} for proof of contractibility of $\jsom$). The similar analysis for circle actions on the nontrivial bundle $\CCC$ will be addressed in Section~\ref{Chapter-CCC}.

\subsection{Notation}\label{not}

We shall use the following notation in the rest of the document. Let $M$ denote the manifold $\SSS$. 
Consider a Hamiltonian $S^1$ action on $(M,\oml)$ and let $p_0$ be a fixed point for the group action. Given a $S^1$ invariant symplectic section $C$, and a $\omega_\lambda$-orthogonal invariant sphere $\overline{F}$ in the homology class $F$ that intersects $C$ at a point $p_0$, we define the following spaces: 

\begin{itemize}
\item $N(C)$:= The symplectic normal bundle to a symplectic submanifold $C$. 
\item $\Symp^{S^1}_h(M,\oml)$ := The group of $S^1$ equivariant symplectomorphisms on $(M,\oml)$ that acts trivially on homology.
\item $\Stab^{S^1}(C)$ := The group of all  $\phi \in \Symp^{S^1}_h(M,\oml)$ such that $\phi(C) = C$, that is, such that $\phi$ \emph{stabilises} $C$ but does not necessarily act as the identity on $C$.
\item $\Fix^{S^1}(C)$ := The group of all  $\phi \in \Symp^{S^1}_h(M,\oml)$ such that $\phi|_C = id$, that is, such that \emph{fixes $C$ pointwise}. 
\item $\Fix^{S^1}(N(C))$:= The group of all $\phi \in \Symp^{S^1}_h(M,\oml)$ such that $\phi|_C = \id$ and  $d\phi|_{N(C)}: N(C) \to N(C)$ is the identity on $N(C)$.
\item $\Aut^{S^1}(N(C))$:= The space of $S^1$-equivariant fiberwise symplectic automorphisms of the symplectic normal bundle to the sphere $C$.
\item $\Aut^{S^1}(N(C \vee \overline{F}))$ := The space of $S^1$-equivariant symplectic automorphisms of the the symplectic normal bundle to the wedge of spheres $C \vee \overline{F}$ which are equal to the identity in a neighbourhood of the wedge point. 
\item $\mathcal{S}^{S^1}_{K}$ := The space of unparametrized $S^1$-invariant symplectic embedded spheres in the homology class $K$. 
\item $\mathcal{S}^{S^1}_{K,p_0}$:= The space of unparametrized $S^1$-invariant symplectic embedded spheres in the homology class $K$ passing through $p_0$.
\item $\J_{\oml}^{S^1}(C)$ := The space of $S^1$-equivariant $\oml$ compatible almost complex structures s.t the curve $C$ is holomorphic.
\item $\Symp^{S^1}(C)$:= The space of all $S^1$-equivariant symplectomorphisms of the curve C.
\item $\Fix^{S^1}(N(C \vee {\overline{F}}))$ := The space of all $S^1$-equivariant symplectomorphisms that are the identity in the neighbourhood of $C \vee {\overline{F}}$.
\item $\Symp^{S^1}({\overline{F}}, N(p_0))$ := equivariant symplectomorphism of the sphere $\overline{F}$ that are the identity in an open set of $\overline{F}$ around $p_0$. 
\item $\overline{\mathcal{S}^{S^1}_{F,p_0}}$:= The space of unparametrized $S^1$-invariant symplectic spheres in the homology class $F$ that are equal to a fixed curve ${\overline{F}}$ in a neighbourhood of $p_0$.
\item $\Symp^{S^1}_{p_0,h}(M,\oml)$:=  The group  of all  $\phi \in \Symp^{S^1}_{h}(M,\oml)$ fixing $p_0$.
\item $\Stab^{S^1}_{p_0}(C)$:=  The group of all  $\phi \in \Stab^{S^1}(C)$ such that $\phi(p_0) = p_0$.  
\end{itemize}

All the above spaces are equipped with the $C^\infty$ topology.

\subsection{Case 1: \texorpdfstring{$\Symp^{S^1}_h(\SSS,\oml)$}{Symp(S2xS2)} action on \texorpdfstring{$\jsom \cap U_{2s}$}{J\hat{}S1\_2k} with \texorpdfstring{$s \neq 0$}{s>0}}\label{section:ActionOnU_2k}

Let $\lambda > 1$ and consider a $S^1$-action $S^1(a,b;m)$ on $(\SSS,\oml)$ with $m=2k$. Let  ${\mathcal{S}^{S^1}_{D_{2s}}}$ denote the space of all $S^1$ invariant symplectic embedded spheres in the class $D_{2s}=B-sF$. We shall assume that ${\mathcal{S}^{S^1}_{D_{2s}}}$ is non-empty which, by Theorems~\ref{cor:IntersectingOnlyOneStratum} and~\ref{cor:IntersectingTwoStrata}, means that $2s=m$ or $2s=|2b\pm m|$ depending on $a$.\\

The homotopy type of $\Symp^{S^1}_h(\SSS,\oml)$ is related to the strata $\jsom \cap U_{2s}$ through the following sequence of fibrations and weak homotopy equivalences:

\begin{equation}\label{MainFibrations}
\begin{gathered}
\Stab^{S^1}(\overline{D}) \longrightarrow \Symp^{S^1}_{h}(\SSS,\oml) \longtwoheadrightarrow {\mathcal{S}^{S^1}_{D_{2s}}} \mathbin{\textcolor{blue}{\xrightarrow{\text{~~~$\simeq$~~~}}}} \jsom \cap U_{2s}\rule{0em}{2em}\\
\Fix^{S^1}(\overline{D}) \longrightarrow \Stab^{S^1}(\overline{D}) \longtwoheadrightarrow  \Symp^{S^1}(\overline{D}) \mathbin{\textcolor{blue}{\xrightarrow{\text{~~~$\simeq$~~~}}}} S^1 ~\text{or}~ \SO(3)\rule{0em}{2em}\\
\Fix^{S^1} (N(\overline{D})) \longrightarrow \Fix^{S^1}(\overline{D}) \longtwoheadrightarrow  \Aut^{S^1}(N(\overline{D})) \mathbin{\textcolor{blue}{\xrightarrow{\text{~~~$\simeq$~~~}}}} S^1\rule{0em}{2em}\\
\Stab^{S^1}(\overline{F}) \cap \Fix^{S^1}(N(\overline{D})) \longrightarrow \Fix^{S^1}(N(\overline{D})) \longtwoheadrightarrow  \overline{\mathcal{S}^{S^1}_{F,p_0}} \mathbin{\textcolor{blue}{\xrightarrow{\text{~~~$\simeq$~~~}}}} \mathcal{J}^{S^1}(\overline{D})\simeq \{*\}\rule{0em}{2em}\\
\Fix^{S^1}(\overline{F})  \longrightarrow \Stab^{S^1}(\overline{F}) \cap \Fix^{S^1}(N(\overline{D})) \longtwoheadrightarrow  \Symp^{S^1}(\overline{F}, N(p_0))  \mathbin{\textcolor{blue}{\xrightarrow{\text{~~~$\simeq$~~~}}}} \left\{*\right\}\rule{0em}{2em}\\
\left\{*\right\} \mathbin{\textcolor{blue}{\xleftarrow{\text{~~~$\simeq$~~~}}}} \Fix^{S^1}(N(\overline{D} \vee \overline{F})) \longrightarrow \Fix^{S^1}(\overline{F}) \longtwoheadrightarrow  \Aut^{S^1}(N(\overline{D} \vee \overline{F})) \mathbin{\textcolor{blue}{\xrightarrow{\text{~~~$\simeq$~~~}}}} \left\{*\right\}\rule{0em}{2em}
\end{gathered}
\end{equation}
Here $\overline{F}$ and $\overline{D}$ are the $\omega_\lambda$-orthogonally intersecting invariant curves in the $2s^{\,\text{th}}$-Hirzebruch surface $W_{2s}$. We denote by $\overline{F}$ the unique curve in class $F$ intersecting the given curve $\overline{D} \in {\mathcal{S}^{S^1}_{D_{2s}}}$ $\omega_\lambda$-orthogonally at $p_0$. In the second fibration, the group $\Symp^{S^1}(\overline{D})$ is homotopy equivalent to $\SO(3)$ when the $S^1$ action fixes the curve $\overline{D}$ pointwise. Otherwise, it is homotopy equivalent to~$S^1$.

Assuming  the homotopy equivalence in the first fibration, we immediately get
\begin{thm}
Consider the $S^1(a,b;m)$ action on $(\SSS,\oml)$ with $\lambda >1$. If $ \jsom \cap U_{2s}$ is non-empty, then $\Symp^{S^1}_h(\SSS,\oml)/\Stab^{S^1}(\overline{D})  \simeq \jsom \cap U_{2s}$.
\end{thm}
Furthermore, tracking down the various homotopy equivalences in the other fibrations, we will prove that the equivariant stabilizer of the curve $\overline{D}$, namely  $\Stab^{S^1}(\overline{D})$, is homotopy equivalent to the equivariant stabilizer of the corresponding complex structure under the natural action of $\Symp^{S^1}_h(\SSS,\oml)$. More precisely, 
\begin{itemize}
    \item $\Stab^{S^1}(\overline{D}) \simeq \T_{2s}$ when $(a,b) \neq (0,\pm1)$;
    \item  $\Stab^{S^1}(\overline{D}) \simeq SO(3) \times S^1$ when $(a,b) = (0,\pm1)$.
\end{itemize}
In the following two sections, we explain why the above maps~\eqref{MainFibrations} are fibrations, and we establish the claimed homotopy equivalences. 

\subsubsection{The first fibration $\Symp^{S^1}_{h}(\SSS,\oml) \to {\mathcal{S}^{S^1}_{D_{2s}}}$}

We first make the following observation on the action of $\Symp^{S^1}_h(\SSS,\oml)$ on the fixed point set of the $S^1$ action.

\begin{lemma}[Lemma 2.5 in \cite{Liat}]{\label{LR}}
Let $(M,J)$ be an almost complex manifold such that $J$ is invariant under the action of a compact Lie group $G$ on $M$. Let $S$ be an embedded sphere in $M$. Assume that $S$ is a connected component of the fixed point set of a non-trivial subgroup $H\subseteq G$. Then $S$ is a $J$-holomorphic sphere.
\end{lemma}
\begin{proof}
The surface $S$ is $J$-holomorphic if for all $x\in S$, $T_xS$ is $J$-invariant. As $S$ is a connected component of the fixed point set of a non-trivial subgroup $H$, all tangent vectors $v \in T_xS$ are characterised by the property that $dh \cdot v = v$ for all $h \in H$. Thus in order to show that that  $Jv \in T_xS$, it suffices to prove that $dh \cdot Jv = Jv$ for all $h \in H$. But this immediately follows from the invariance of $J$ since
\[dh \cdot Jv = J(dh \cdot v) = Jv.\]
\end{proof}
\begin{cor}
All the invariant spheres that are visible in the graph of a Hamiltonian circle action (which, by definition, have non-trivial stabilizers isomorphic to either $S^1$ or $\Z_k$ for some $k\geq 2$) are $J$-holomorphic for all invariant $J\in\jsom$.
\end{cor}

\begin{cor}\label{Cor:SinglecurveInclassD2s}
Consider a circle action $S^1(a,b;m)$ with $a\neq \pm1$ that leaves invariant a curve of negative self-intersection $-2s$. Then $2s=m$, and the space $\mathcal{S}^{S^1}_{D_{2s}}$ of all $S^1$-invariant symplectic embedded spheres representing the class $D_{2s}=B-sF$ contains a single curve $\overline{D}_{2s}$. Consequently when $a\neq\pm 1$, the action map \[\Symp^{S^1}_h(\SSS,\oml) \twoheadrightarrow {\mathcal{S}^{S^1}_{D_{2s}}}
\]
is a fibration in a trivial way, and we have weak equivalences \[\{\overline{D}_{2s}\}=\mathcal{S}^{S^1}_{D_{2s}} \simeq \jsom \cap U_{2s}=\jsom \simeq\{*\}.\]
\end{cor}
\begin{proof}
Let $\overline{D}_m$ denote the unique $S^1$ invariant curve in class $D_{m}$ through the fixed points $Q$ and $R$ depicted in Figure~\ref{hirz}. Looking at Table~\ref{table_weights}, we see that the only circle actions for which this curve is a free invariant sphere are the ones for which $a = \pm1$. Thus for all other values $a\neq\pm1$, the curve $\overline{D}_m$ is an invariant sphere whose isotropy group is non-trivial. By Lemma~\ref{LR},  $\overline{D}_m$ is holomorphic for all $J\in\jsom$ (which shows, again, that $\jsom=\jsom\cap U_m$ when $a\neq\pm1$). Conversely, by positivity of intersection (Theorem~\ref{thm_PositivityIntersections}), given any $J\in\jsom$, there is exactly one curve in class $D_m$ which is $J$-holomorphic. Since any invariant curve is $J$-holomorphic for some $J\in\jsom$, this implies that ${\mathcal{S}^{S^1}_{D_{2s}}}=\{\overline{D}_m\}$.
\end{proof}

The rest of this subsection is devoted to proving that the action map
\[\Symp^{S^1}_h(\SSS,\oml) \twoheadrightarrow {\mathcal{S}^{S^1}_{D_{2s}}}\] 
is a fibration for $S^1(a,b;m)$ actions with $a = \pm1$. 

\begin{lemma}\label{Lemma:SympActionOnFixedPointGenericStrata}
Consider a $S^1(\pm1,b;m)$ action that leaves invariant a symplectic embedded sphere $C$ in a class $D_{2s}=B-sF$, $s\neq0$. Then there exists a fixed point $p_0\in C$ such that
\begin{enumerate}
\item all invariant curves in class $B-sF$ pass through $p_0$,
\item $\phi(p_0)=p_0$ for all $\phi\in\Symp^{S^1}_h(\SSS,\oml)$,
\item the weights $(w_1,w_2)$ of the circle action at $p_0$ are distinct, that is, $w_1\neq w_2$.
\end{enumerate}
\end{lemma}

\begin{proof} 
 
Without loss of generality, we can assume $a=1$.\\

If $b=0$ or $b=m$, Corollary~\ref{cor:IntersectingOnlyOneStratum} shows that the circle action only extends to $\T_m$ so that $m=2s\neq0$. Consequently, the curve $C$ represents the class $D_m$ and connects the fixed points $Q$ and $R$ of Figure~\ref{hirz}. Conversely, looking at Table~\ref{table_weights} and using Lemma~\ref{weight}, we see that any other invariant curve in class $D_m$ passes through these two fixed points. If $b=0$, then the weights at $R$ are $(-1,m)$ and are distinct from all the weights at the other fixed points. By Corollary~\ref{cor:ActionPreservesWeights}, $R$ is invariant under the action of $\Symp^{S^1}_h(\SSS,\oml)$. We thus set $p_0=R$. Similarly, if $b=m$, we set $p_0=Q$.\\

Now suppose that $a=1$ and $b\not\in\{0,m\}$. Then all the fixed points are isolated. By Corollary~\ref{cor:IntersectingTwoStrata}, the $S^1(1,b;m)$ action may extend to the tori $\T_m$ and $\T_{|2b-m|}$. The class $D_{2s}$ may be either $B-\frac{m}{2}$ with $m\neq0$, or $B-\frac{|2b-m|}{2}F$. In both cases, the invariant sphere with negative self intersection $-2s$ passes through the fixed points $Q$ and $R$ of the corresponding moment polytope $\Delta_{2s}$ (see Figure~\ref{hirz}). As before, any other invariant curve in class $D_{2s}$ passes through these two fixed points. Consequently, the action of $\Symp^{S^1}_h(\SSS,\oml)$ can only swap $Q$ and $R$. Since the weights at $Q$ and $R$ are $(1,-b)$ and $(-1,m-b)$, this is possible only if $b=1$ and $m=2$. In all other cases, one of the points $Q$ or $R$ satisfies the conditions of the theorem. In the special case $a=1$, $b=1$, and $m=2$, the points $Q$ and $R$ are distinguished by their moment map values. As the action of $\Symp^{S^1}_h(\SSS,\oml)$ preserves the moment map, we conclude that $Q$ or $R$ satisfies the conditions of the theorem.
\end{proof}

\begin{remark}
A similar statement holds for circle actions on the non-trivial bundle $\CCC$.
\end{remark}

Let $p$ be the fixed point satisfying the three properties stated in Lemma~\ref{Lemma:SympActionOnFixedPointGenericStrata}. Because the two weights at $p$ are distinct, if an invariant curve in the class of the fiber $F$ intersects an invariant curve in class $D_{2s}$ at $p$, then the two curves intersect $\oml$-orthogonally. Let $\mathcal{C}(D_{2s}\vee F,p)^{S^1}$ be the space of invariant configurations made of curves in classes $D_{2s}$ and $F$ intersecting orthogonally at $p$. We now have all the ingredients to show that ${\mathcal{S}^{S^1}_{D_{2s}}}$ is a homogeneous space under the natural action of $\Symp^{S^1}_h(\SSS,\oml)$.
\begin{lemma}\label{lemma:EvaluationFibrationConfigurations} 
Fix an action of the form $S^1(\pm1,b;m)$. Choose a point $p$ satisfying the condition of Lemma~\ref{Lemma:SympActionOnFixedPointGenericStrata}. Then, the evaluation map at a standard configuration $\overline{D}_{2s}\vee \overline{F}$ through $p$
\[\Symp^{S^1}_h(\SSS,\oml) \twoheadrightarrow \mathcal{C}(D_{2s}\vee F,p)^{S^1}\]
is a Serre fibration.
\end{lemma}
\begin{proof}
We first show that the action is transitive. Given an invariant configuration $C\vee A \in \mathcal{C}(D_{2s}\vee F,p)^{S^1}$, the equivariant symplectic neighbourhood theorem implies that we can find an invariant neighbourhood V of $C \vee A$, an invariant neighbourhood $V'$ of $\overline{D}_{2s} \cup \overline{F}$, and an equivariant symplectomorphism $\alpha:V \to V'$. We claim that $\alpha$ can be extended to an ambient diffeomorphism $\beta$ of $\SSS$. Assume this for the moment. By construction, the pullback form $\om_\beta:=\beta^*\om_\lambda$ is invariant under the the conjugate action $\beta^{-1}\rho\beta$.

We observe that the complement of the standard configuration $\overline{D}_{2v} \vee\overline{F}$ in $W_{2s}$ is symplectomorphic to $\C^2$ with the symplectic form $\om_f:=\frac{1}{2\pi}\del\delbar f$ where $f=\log\left(\left(1+||w||^2\right)^{\lambda}\left(1+||w||^{4k}+||z||^2\right)\right)$, see~\cite[Lemma~3.5]{abreu}. Under this identification, the standard $\T_{2s}$ action on $W_{2s}$ become linear on $\C^2$. It follows that near infinity, the form $\om_\beta$ is equal to $\om_f$ and that the action $\beta^{-1}\rho\beta$ is linear. By Proposition~\ref{prop:linearize} we get that there is an equivariant symplectomorphism $\gamma$ that is equal to the identity near infinity, and that identifies $(\C^2,\om_f, \rho)$ with $(\C^2,\beta^*\om_\lambda,\beta^{-1}\rho\beta)$. By construction, the equivariant symplectomorphism $\phi := \gamma \circ \beta$ takes the configuration $C\vee A$ to $\overline{D}_{2s}\vee\overline{F}$. 

It remains to see that the local diffeomorphism $\alpha$ can be extended to an ambient diffeomorphism $\beta$ of $\SSS$. By the Isotopy Extension Theorem (see~\cite[Theorem~1.4, p.180]{Hi}), it suffices to show that any two configurations of embedded spheres in classes $\overline{D}$ and $F$ intersecting transversely and positively are isotopic. In turns, this follows from the fact that any two $F$-foliations corresponding to two almost-complex structures $J$ and $J'$ are diffeotopic, and that any two sections of the product $\SSS$ are diffeotopic through sections iff they are homotopic. This shows that $\Symp^{S^1}_h(\SSS,\oml)$ acts transitively on $\mathcal{C}(D_{2s}\vee F,p)^{S^1}$.

To prove the homotopy lifting property, consider any family of maps $\gamma: D^n \times [0,1] \rightarrow \mathcal{C}(D_{2s}\vee F,p)^{S^1}$ from a $n$ dimensional disk $D^n$ to $\mathcal{C}(D_{2s}\vee F,p)^{S^1}$, and choose a lift $\overline{\gamma_0}:D^n \rightarrow \Symp^{S^1}_h(\SSS,\oml)$ of $\gamma_0$. Since the complement of a configuration is contractible, the equivariant version of Banyaga's Extension Theorem for families implies that there exists a lift $\overline{\gamma}: D^n \times [0,1] \rightarrow \Symp^{S^1}_h(\SSS,\oml) $ extending $\overline{\gamma_0}$. 
\[
\begin{tikzcd}
D^n \times \{0\} \arrow[d,hookrightarrow] \arrow[r,"\overline{\gamma_0}"]    &\Symp^{S^1}_h(\SSS,\oml) \arrow[d,"\theta"] \\
    D^n \times [0,1]\arrow[r,"\gamma"] \arrow[ur,dashrightarrow,"\exists ~ \overline{\gamma}"] &\mathcal{C}(D_{2s}\vee F,p)^{S^1}
\end{tikzcd}
\]
Alternatively, one can apply the equivariant Gromov-Auroux Lemma~\ref{Au} to show the existence of the lift  $\overline{\gamma}$. In both cases, this concludes the proof.
\end{proof}

\begin{cor}\label{trans} 
Fix the action $S^1(\pm 1,b;m)$ for which $\mathcal{S}^{S^1}_{D_{2s}}$ is nonempty. Then $\Symp^{S^1}_h(\SSS,\oml)$ acts transitively on ${\mathcal{S}^{S^1}_{D_{2s}}}$.\qed
\end{cor}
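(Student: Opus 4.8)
The plan is to obtain the corollary as a direct consequence of Lemma~\ref{lemma:EvaluationFibrationConfigurations}. That lemma shows that $\Symp^{S^1}_h(\SSS,\oml)$ acts transitively on the space $\mathcal{C}(D_{2s}\vee F,p)^{S^1}$ of invariant configurations of $\oml$-orthogonally intersecting curves in classes $D_{2s}$ and $F$ through the distinguished fixed point $p$; so it suffices to check that every $C\in\mathcal{S}^{S^1}_{D_{2s}}$ occurs as the $D_{2s}$-component of such a configuration. Indeed, once this is known, transitivity on configurations produces $\phi\in\Symp^{S^1}_h(\SSS,\oml)$ carrying the standard configuration $\overline{D}_{2s}\vee\overline{F}$ onto $C\vee\overline{F}'$, and in particular $\phi(\overline{D}_{2s})=C$.

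So, first I would fix $C\in\mathcal{S}^{S^1}_{D_{2s}}$ and pick an $S^1$-invariant $\oml$-compatible almost complex structure $J$ for which $C$ is holomorphic; such $J$ exist because the compatible structures making a fixed symplectic submanifold holomorphic form a non-empty set, and averaging over the compact group $S^1$ yields an invariant one (equivalently, $\J^{S^1}_{\oml}(C)\neq\emptyset$). Being $S^1$-invariant with $[C]^2=-2s<0$, the curve $C$ passes through the fixed point $p$ singled out in the paragraph preceding Lemma~\ref{lemma:EvaluationFibrationConfigurations}, at which the isotropy weights are distinct. Let $\overline{F}'$ be the unique embedded $J$-holomorphic sphere in class $F$ through $p$ given by Proposition~\ref{prop_FIndecomposable}. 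Since $p$ is a fixed point of the ambient $S^1$-action and $p\in\overline{F}'$, every translate $g\cdot\overline{F}'$ also contains $p$, so it is not disjoint from $\overline{F}'$; as $F\cdot F=0$ and $S^1$ acts trivially on homology, Corollary~\ref{cor_pos} then forces $g\cdot\overline{F}'=\overline{F}'$ for all $g\in S^1$, i.e.\ $\overline{F}'$ is $S^1$-invariant. Finally, as observed before Lemma~\ref{lemma:EvaluationFibrationConfigurations}, because the weights at $p$ are distinct the invariant curves $C$ and $\overline{F}'$ meet $\oml$-orthogonally there; hence $C\vee\overline{F}'\in\mathcal{C}(D_{2s}\vee F,p)^{S^1}$.

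Applying the transitivity half of Lemma~\ref{lemma:EvaluationFibrationConfigurations} to the configurations $\overline{D}_{2s}\vee\overline{F}$ and $C\vee\overline{F}'$ then gives $\phi\in\Symp^{S^1}_h(\SSS,\oml)$ with $\phi(\overline{D}_{2s})=C$, and since $C$ was arbitrary the action on $\mathcal{S}^{S^1}_{D_{2s}}$ is transitive. The one point that requires a moment's thought is the claim that every invariant $D_{2s}$-sphere passes through the one designated fixed point $p$ rather than through some other ambient fixed point; this follows from Corollary~\ref{cor:ActionPreservesWeights} together with Lemma~\ref{weight} applied with $e=-2s$, which between them pin down the two poles of any invariant sphere in class $D_{2s}$ among the fixed points of the ambient action. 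Everything else is immediate, so there is no genuine obstacle here: the substantive work has already been carried out in Lemma~\ref{lemma:EvaluationFibrationConfigurations}.
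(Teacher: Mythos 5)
Your proof is correct and takes essentially the paper's route: the corollary is deduced from the transitivity on invariant configurations proved in Lemma~\ref{lemma:EvaluationFibrationConfigurations}, and you merely make explicit the bridging step (completing an arbitrary invariant $D_{2s}$-sphere to an invariant $\oml$-orthogonal configuration through $p$ by choosing an invariant $J$ making it holomorphic and taking the invariant $J$-holomorphic fiber through $p$), which the paper leaves implicit and uses again in the proof of Lemma~\ref{first}. The only small imprecision is that pinning the poles of $C$ to the designated fixed point $p$ really rests on the weight and moment-map data of the labelled graphs combined with Lemma~\ref{weight} and Lemma~\ref{Lemma:SymplecticArea}, rather than on Corollary~\ref{cor:ActionPreservesWeights}; this is a citation quibble, not a gap.
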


\begin{lemma}\label{first}
Fix an action of the form $S^1(\pm1,b;m)$. Let $\overline{D}$ be an invariant symplectic sphere in the class $B - sF$ for which $\mathcal{S}^{S^1}_{D_{2s}}$ is nonempty. Then the evaluation map
\begin{align*}
    \theta: \Symp^{S^1}_h(\SSS,\oml) &\twoheadrightarrow {\mathcal{S}^{S^1}_{D_{2s}}} \\
    \phi &\mapsto \phi(\overline{D})
\end{align*}
is a Serre fibration with fibre over $\overline{D}$ given by
\[\Stab(\overline{D}):= \left\{ \phi \in \Symp^{S^1}(\SSS,\oml)~|~ \phi(\overline{D}) = \overline{D}\right\}\]
\end{lemma}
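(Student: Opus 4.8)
The plan is to check the three standard ingredients of a Serre fibration: surjectivity, identification of the fibre, and the homotopy lifting property. Since $\lambda>1$, Lemma~\ref{lemma:ActionOnHomology} gives $\Symp^{S^1}(\SSS,\oml)=\Symp^{S^1}_h(\SSS,\oml)$, so the fibre $\theta^{-1}(\overline D)$ is exactly the group $\Stab(\overline D)$ as stated, and surjectivity of $\theta$ is precisely the transitivity assertion of Corollary~\ref{trans}. Thus the only real content is the homotopy lifting property, and I expect that to be the heart of the argument.

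For the homotopy lifting property I would argue directly. Let $\gamma\colon D^n\times[0,1]\to\mathcal S^{S^1}_{D_{2s}}$ be a family of invariant symplectic spheres in class $D_{2s}$, and let $\overline{\gamma_0}\colon D^n\to\Symp^{S^1}_h(\SSS,\oml)$ be a lift of $\gamma_0$, i.e. $\overline{\gamma_0}(x)(\overline D)=\gamma(x,0)$. For each $x\in D^n$ the path $t\mapsto \overline{\gamma_0}(x)^{-1}(\gamma(x,t))$ is a smooth isotopy of invariant symplectic $D_{2s}$-spheres starting at $\overline D$, depending continuously on $x$. Applying the equivariant Gromov--Auroux Lemma~\ref{Au} with parameter space $D^n$ produces a family $\psi^x_t\in\Symp^{S^1}_h(\SSS,\oml)$, continuous in $(x,t)$ with $\psi^x_0=\id$, such that $\psi^x_t(\overline D)=\overline{\gamma_0}(x)^{-1}(\gamma(x,t))$; each $\psi^x_t$ is $S^1$-equivariant by construction and isotopic to the identity, hence lies in $\Symp^{S^1}_h$. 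Then $\overline\gamma(x,t):=\overline{\gamma_0}(x)\circ\psi^x_t$ is the desired lift, since $\overline\gamma(x,0)=\overline{\gamma_0}(x)$ and $\theta(\overline\gamma(x,t))=\overline{\gamma_0}(x)(\psi^x_t(\overline D))=\gamma(x,t)$.

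The step requiring genuine care is the equivariant, \emph{parametrised} Gromov--Auroux extension: one must realise a smooth family, over $D^n\times[0,1]$, of $S^1$-invariant symplectic spheres isotopic to $\overline D$ by a continuous family of equivariant symplectomorphisms. Here one invokes the equivariant symplectic neighbourhood theorem to identify tubular neighbourhoods of the moving sphere, differentiates the isotopy into a time-dependent vector field along it, extends that vector field equivariantly (averaging over $S^1$) and integrates; continuity in the $D^n$-parameters is automatic since the construction is a natural Moser-type procedure. Note that unlike Lemma~\ref{lemma:EvaluationFibrationConfigurations} this does not pass through the complement of a configuration, which is important because the complement of the single curve $\overline D$ is a disk bundle over $S^2$ rather than a contractible domain. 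Alternatively, one can bypass Lemma~\ref{Au} by factoring $\theta$ through the evaluation map of Lemma~\ref{lemma:EvaluationFibrationConfigurations}: because $[D_{2s}]\cdot[F]=1$ and, at the fixed point $p$, the tangent lines to an invariant $D_{2s}$-curve and an invariant $F$-curve are the two distinct, $\om$-orthogonal weight spaces of Proposition~\ref{prop:DefinitionOfWeights}, the space $\mathcal C(D_{2s}\vee F,p)^{S^1}$ is the product $\mathcal S^{S^1}_{D_{2s}}\times\mathcal S^{S^1}_{F,p}$, and $\theta$ becomes the composite of the Serre fibration $\Symp^{S^1}_h(\SSS,\oml)\twoheadrightarrow\mathcal C(D_{2s}\vee F,p)^{S^1}$ with the projection onto the first factor; a composite of Serre fibrations is a Serre fibration. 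Either route finishes the proof.
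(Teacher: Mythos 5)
Your proposal follows essentially the same route as the paper: the paper establishes transitivity via Corollary~\ref{trans} and gets the homotopy lifting property from the equivariant Gromov--Auroux Lemma~\ref{Au} exactly as in Lemma~\ref{lemma:EvaluationFibrationConfigurations}, and it records the same alternative of factoring $\theta$ through the evaluation map onto $\mathcal{C}(D_{2s}\vee F,p)^{S^1}$. The only caveat concerns your aside: identifying $\mathcal{C}(D_{2s}\vee F,p)^{S^1}$ with the full product $\mathcal{S}^{S^1}_{D_{2s}}\times \mathcal{S}^{S^1}_{F,p}$ quietly assumes that any invariant $D_{2s}$-sphere and any invariant $F$-sphere through $p$ meet \emph{only} at $p$ (orthogonality at $p$ is automatic from the weights, but absence of further intersections along free orbits is not), whereas the paper instead argues that the restriction map $\mathcal{C}(D_{2s}\vee F,p)^{S^1}\to\mathcal{S}^{S^1}_{D_{2s}}$ is a fibration; since your main argument rests on Lemma~\ref{Au}, this does not affect the proof.
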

\begin{proof}
The evaluation map is transitive by Corollary~\ref{trans}. The homotopy lifting property follows from Lemma~\ref{Au} as in the proof of Lemma~\ref{lemma:EvaluationFibrationConfigurations}. Alternatively, one can also note that the action map factors through the restriction map 
\[\mathcal{C}(D_{2s}\vee F,p)^{S^1}\to\mathcal{S}^{S^1}_{D_{2s}}\]
which is itself a fibration. To see this, note that the restriction maps fits into a commuting diagram
\[
\begin{tikzcd}
\jsom\arrow[r,"f_1"]\arrow[rd,"f_2"] & \mathcal{C}(D_{2s}\vee F,p)^{S^1} \arrow[d]\\
& \mathcal{S}^{S^1}_{D_{2s}}
\end{tikzcd}
\]
where the maps $f_1$ and $f_2$ are fibrations. Observe that the map $f_1$ is well defined because the weights at the chosen fixed point $p$ are not equal. Hence, for any choice of an invariant almost complex structure $J \in \jsom$, the unique invariant $J$-holomorphic curve $C$ in class $D_{2s}$ intersects the unique invariant $J$-holomorphic fiber through $\oml$-orthogonally at~$p$. 
\end{proof}

\begin{remark}
As both $\Symp^{S^1}_h(\SSS,\oml)$ and $\mathcal{S}^{S^1}_{D_{2s}}$ can be shown to be homotopically equivalent to CW-complexes (see \cite{McD} Remark~9.5.5), we see from Theorem~1 in~\cite{Serrefib} (with proof corrected in \cite{error}), that a Serre fibration in which the total space and base space are both CW complexes is necessarily a Hurewicz fibration. Thus the map $\theta: \Symp^{S^1}_h(\SSS,\oml) \twoheadrightarrow {\mathcal{S}^{S^1}_{D_{2s}}}$ is in fact a Hurewicz fibration and hence the fibre over any arbitrary $D \in \mathcal{S}^{S^1}_{D_{2s}}$ is homotopy equivalent to $\Stab(\overline{D})$. 
\end{remark}

\begin{lemma}\label{first2}
Fix an action of the form $S^1(\pm1,b;m)$. The natural map $\alpha: \jsom \cap U_{2s} \to {\mathcal{S}^{S^1}_{D_{2s}}}$ defined by sending an almost complex structure $J \in \jsom \cap U_{2s}$ to the unique $J$-holomorphic curve in class $D_{2s}$ is a weak homotopy equivalence. 
\end{lemma}
\begin{proof}
To show that $\alpha$ is a weak homotopy equivalence, we first show that the map is Serre fibration.  To do so, consider an arbitrary element ${D} \in {\mathcal{S}^{S^1}_{D_{2s}}}$. As in the proof of Lemma \ref{first}, it suffices to show that given a family of map $\gamma_t$ from a n-dimensional disk $D^n$, such that $\gamma_0(0) = {D}$, and a lift $\gamma_0^\prime:D^n \rightarrow \jsom \cap U_{2s}$ lifting $\gamma_0$, then there exists a lift $\gamma_t^\prime$ to $ \jsom \cap U_{2s}$.
\[
\begin{tikzcd}
    D^n \times \{0\} \arrow[d,hookrightarrow] \arrow[r,"\gamma_0^\prime"] &\jsom \cap U_{2s} \arrow[d,"\alpha"] \\
    D^n \times [0,1] \arrow[r,"\gamma"] \arrow[ur,dashrightarrow,"\exists ~ \gamma^\prime"] &\mathcal{S}^{S^1}_{D_{2s}}
\end{tikzcd}
\]
As in the proof of Lemma \ref{first}, we have that there exists a lift $\overline{\gamma}: D^n \times [0,1] \to \Symp^{S^1}_h(\SSS,\oml)$ of $\gamma$. Pick an element $ J \in \alpha^{-1}({D})$ and define $\gamma^\prime(s) := \overline{\gamma}^*J$. This defines a lift $\gamma^\prime$ of $\gamma$. Hence $\alpha$ is a fibration. The fact that the fibres of $\alpha$ are contractible follows from Lemma~\ref{Lemma:Contractibilityofacs}. Thus we get the required result that $\jsom \cap U_{2s} \simeq {\mathcal{S}^{S^1}_{D_{2s}}}$.
\end{proof}

\subsubsection{The other fibrations }

The arguments showing that the other maps in~\eqref{MainFibrations} are fibrations are essentially local and apply uniformly to all Hamiltonian circle actions $S^1(a,b;m)$.

\begin{lemma}{\label{stab}}
Consider an action of the form $S^1(a,b;m)$. The restriction map $\Stab^{S^1}(\overline{D}) \twoheadrightarrow \Symp^{S^1}(\overline{D})$ is a fibration.  
\end{lemma}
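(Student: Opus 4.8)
The restriction map is well defined because any $\phi\in\Stab^{S^1}(\overline{D})$ restricts to a symplectomorphism of $(\overline{D},\oml|_{\overline{D}})$ that commutes with the restricted circle action. To prove it is a fibration the plan is to follow the two-step scheme of Lemmas~\ref{lemma:EvaluationFibrationConfigurations} and~\ref{first}: first check surjectivity, then verify the homotopy lifting property for maps out of disks.

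For surjectivity, I would fix $\psi\in\Symp^{S^1}(\overline{D})$ and build a global equivariant symplectomorphism stabilising $\overline{D}$ and restricting to $\psi$. By the equivariant symplectic neighbourhood theorem a tubular neighbourhood of $\overline{D}$ is equivariantly symplectomorphic to a neighbourhood of the zero section in the symplectic normal bundle $N(\overline{D})$, which is an $S^1$-equivariant Hermitian line bundle. Choosing an invariant connection, $\psi$ lifts to a fibrewise complex-linear, hence symplectic, bundle automorphism covering $\psi$; fibrewise complex-linear maps commute with the fibrewise circle action, so this lift is $S^1$-equivariant and defines an equivariant symplectomorphism $\widetilde{\psi}$ of a neighbourhood of $\overline{D}$ with $\widetilde{\psi}|_{\overline{D}}=\psi$. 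Since $\Symp^{S^1}(\overline{D})$ is path-connected (as noted above, it is homotopy equivalent to $S^1$ or to $\SO(3)$), I would join $\psi$ to the identity by a path $\psi_t$ in $\Symp^{S^1}(\overline{D})$, lift it to a path of local equivariant symplectomorphisms $\widetilde{\psi}_t$, observe that the generating time-dependent equivariant vector field is Hamiltonian near $\overline{D}$ because the first de Rham cohomology of a neighbourhood of a symplectic sphere vanishes, cut off the corresponding invariant Hamiltonian to all of $\SSS$, and integrate to obtain a global equivariant Hamiltonian isotopy whose time-one map $\phi$ agrees with $\widetilde{\psi}$ near $\overline{D}$. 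Then $\phi\in\Stab^{S^1}(\overline{D})$, it lies in the identity component so acts trivially on homology, and $\phi|_{\overline{D}}=\psi$.

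For the homotopy lifting property, I would take a family $\gamma:D^n\times[0,1]\to\Symp^{S^1}(\overline{D})$ together with a lift $\overline{\gamma}_0:D^n\to\Stab^{S^1}(\overline{D})$ of $\gamma_0$, replace $\gamma_t(x)$ by $\gamma_t(x)\circ\gamma_0(x)^{-1}$ so that $\gamma_0$ becomes the constant identity family, reduce to lifting such a family starting at the constant identity, and then post-compose with $\overline{\gamma}_0$. This is the surjectivity construction carried out in families: a parametrised version of the equivariant Weinstein theorem lifts $\gamma$ to a family of equivariant symplectomorphisms near $\overline{D}$, the associated family of invariant Hamiltonians defined near $\overline{D}$ is cut off globally, and integration produces the desired lift $\overline{\gamma}$. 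Alternatively, once transitivity is in hand, one may invoke the equivariant Gromov--Auroux Lemma~\ref{Au} directly, exactly as in the proof of Lemma~\ref{lemma:EvaluationFibrationConfigurations}. The main obstacle I expect is purely technical: making the neighbourhood identifications, the Hamiltonian primitives, and the cut-off functions depend continuously on the disk parameter while remaining $S^1$-invariant. This is routine given the equivariant symplectic neighbourhood theorem and the equivariant Banyaga-type extension results already used in this section, but it is where the bookkeeping lies.
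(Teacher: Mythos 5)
Your core mechanism --- extending an equivariant Hamiltonian isotopy of $\overline{D}$ to an invariant, cut-off Hamiltonian isotopy of the ambient manifold supported near $\overline{D}$ by pulling Hamiltonians back through the normal-bundle projection --- is exactly the paper's, and your transitivity argument goes through (although the preliminary lift of $\psi$ to a bundle automorphism is dispensable, and the phrase ``fibrewise complex-linear maps commute with the fibrewise circle action'' conflates the Hermitian $\U(1)$ on the fibres with the actual $S^1(a,b;m)$-action on $N(\overline{D})$; equivariance of such a lift is not automatic and would have to be arranged separately, e.g.\ via Lemma~\ref{EqSymN}).

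The genuine gap is in your homotopy lifting step. You propose to differentiate the reduced test family $t\mapsto\gamma_t(x)\circ\gamma_0(x)^{-1}$ in $t$ to obtain time-dependent Hamiltonians, but for a Serre fibration the family $\gamma$ is merely continuous in $(x,t)$ in the $C^\infty$ topology, so this generating vector field need not exist; and your fallback, the equivariant Gromov--Auroux Lemma~\ref{Au}, does not apply here, since its base is a family of invariant symplectic \emph{submanifolds}, whereas your base $\Symp^{S^1}(\overline{D})$ is a mapping group (which is precisely why that lemma works for Lemma~\ref{lemma:EvaluationFibrationConfigurations} but not in this situation). The paper sidesteps the issue by never differentiating the test family: using local contractibility of $\Symp^{S^1}(\overline{D})$ it fixes once and for all a retraction $\beta_t$ of a neighbourhood of the identity, applies the pullback-and-cutoff construction to the smooth isotopies $\beta_t(\psi)$, and thus produces a \emph{continuous local cross section} of the restriction map near the identity; Palais' theorem (Theorem~\ref{palais}), together with equivariance and transitivity, then gives the fibration. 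Your construction already contains all the ingredients for this section --- you only need to make the choice of path from $\psi$ to the identity depend continuously on $\psi$ through such a fixed contraction, instead of lifting each test family by hand.
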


\begin{proof}
To show that the restriction map is a fibration we use Theorem \ref{palais}, in which we set $X= \Symp^{S^1}(\overline{D})$, $G=\Stab^{S^1}(\overline{D})$ and the action  is given by 
\begin{align*}
    G \times X &\to X \\
    (\phi, \psi) &\to \phi|_{\overline{D}} \circ \psi
\end{align*}
Hence in order to show that the restriction map $r :\Stab^{S^1}(\overline{D}) \to \Symp^{S^1}(\overline{D})$ is a fibration, we only need to show that the action described above  admits local cross sections. Suppose we only  show that a neighbourhood of identity admits local cross sections and that  $\Stab^{S^1}(\overline{D})$ acts transitively on $\Symp^{S^1}(\overline{D})$ this would suffice to show that $r$ is a fibration as by Theorem \ref{palais}, its a local fibration near the identity and the map $r$ is equivariant with respect to the action of $\Stab^{S^1}(\overline{D})$, thus completing the proof.
\\

Consider the identity $\id \in \Symp^{S^1}(\overline{D})$. Let $\alpha:N(\overline{D}) \to U$ be an equivariant diffeomorphism between the symplectic normal bundle $N(\overline{D})$ and a neighbourhood $U$ of $\overline{D}$. As $\Symp^{S^1}(\overline{D})$ is locally contractible (this can be seen for example by noticing that the proof of Prop 3.3.14 in \cite{MS} can be made equivariant) we can find a neighbourhood V of $\id$, and a fixed retraction $\beta_t$ of the neighbourhood $V$ onto the identity. Hence given any $\psi \in \Symp^{S^1}(\overline{D})$, we get a one parameter family $\beta_t(\psi)$ of symplectomorphisms.  As $\pi_1(\overline{D}) = 0$, $\beta_t(\psi)$ is Hamiltonian and is generated by  a function $H_t$. Let $\pi:N(\overline{D}) \to \overline{D}$ be the projection of the normal bundle. Define $\Tilde{H_t}:= \alpha \circ \pi^*H_t$. Thus $\Tilde{H_t}$  defines an invariant function on a U. Fix an invariant bump function $\rho$ with support in $U$ and is 1 in a small neighbourhood around $\overline{D}$, then $\rho \Tilde{H_t}$ is an invariant function and the corresponding symplectomorphism it generates $\Tilde{\psi}$ belong to $\Stab^{S^1}(\overline{D})$ and extends $\psi$.  Note that if we fix the neighbourhood $U$, the bump function and the retraction of the neighbourhood in $\Symp^{S^1}(\overline{D})$ then this  procedure gives us a lift of $\psi$ near id in $\Symp^{S^1}(\overline{D}) $ to $\Stab^{S^1}(\overline{D})$. By Theorem~\ref{palais} this shows $r$ is fibration. 
\end{proof}

\begin{lemma}
The group $\Symp^{S^1}(\overline{D})$ retracts onto $\SO(3)$ for the circle action $S^1(0,\pm 1,m)$, while $\Symp^{S^1}(\overline{D})$ retracts onto $S^1$ for all other circle actions.
\end{lemma}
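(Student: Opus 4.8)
Recall that $\Symp^{S^1}(\overline{D})$ is the group (with $C^\infty$ topology) of area-preserving diffeomorphisms of the symplectic $2$-sphere $\overline{D}$ that commute with the restriction $\rho|_{\overline{D}}$ of the circle action. The plan is to split according to whether $\rho|_{\overline{D}}$ is trivial. First I would note that the fixed-point set of $\rho|_{\overline{D}}$ is a closed submanifold of $\overline{D}\cong S^2$; since a submanifold of a connected surface is either discrete or the whole surface, either $\rho|_{\overline{D}}$ is trivial, or it has isolated fixed points, in which case a Hamiltonian circle action on $S^2$ has exactly two fixed points and is, after an $S^1$-equivariant symplectomorphism, the rotation $z\mapsto e^{iw\theta}z$ about a north and a south pole for some integer weight $w\neq 0$. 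Using Karshon's classification (Theorem~\ref{graphiso}) and the weight list of Table~\ref{table_weights}, I would then check that the trivial case occurs precisely for the actions $S^1(0,\pm 1;m)$: when $\mathcal{S}^{S^1}_{D_{2s}}\neq\varnothing$ the curve $\overline{D}$ is either the toric zero-section of $W_m$ (so $2s=m$) or the $(-2s)$-curve of $W_{2s}$, and in both models the isotropy weight of $\rho|_{\overline{D}}$ at a fixed point of $\overline{D}$ equals $\pm a$ (resp.\ $\pm 1$), hence vanishes iff $a=0$, i.e.\ iff the action is $S^1(0,\pm 1;m)$ with $\gcd(a,b)=1$ forcing $b=\pm 1$; the two-strata models are handled by Proposition~\ref{prop:AtMostTwoExtensions} and Corollaries~\ref{cor:CircleExtensionsWith_a=1}, \ref{cor:CircleExtensionsWith_a=-1}, which identify the restricted action there and show its first weight is $\pm1\neq 0$.

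If $\rho|_{\overline{D}}$ is trivial, then $\Symp^{S^1}(\overline{D})=\Symp(\overline{D})$, which is weakly homotopy equivalent to $\SO(3)$ by the classical theorem of Smale together with Moser's argument identifying the area-preserving group with $\mathrm{Diff}^{+}(S^2)$ up to homotopy. This settles the claim for the actions $S^1(0,\pm 1;m)$.

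If $\rho|_{\overline{D}}$ is the rotation of weight $w\neq 0$, I would introduce action-angle (Delzant) coordinates $(h,\theta)$ on $\overline{D}\setminus\{N,S\}\cong(c_1,c_2)\times S^1$, in which the area form is $dh\wedge d\theta$ and the action is $\theta\mapsto\theta+w\,t$. Any $\phi\in\Symp^{S^1}(\overline{D})$ preserves the two-point set $\{N,S\}$ and, commuting with every rotation, carries each level circle $\{h\}\times S^1$ to a level circle by a rotation; hence $\phi(h,\theta)=(g(h),\theta+f(h))$ off the poles, with $g$ a self-diffeomorphism of $(c_1,c_2)$. Pulling back $dh\wedge d\theta$ gives $g'\equiv 1$, so $g=\id$ (this also rules out $\phi$ interchanging $N$ and $S$, which would force $g'<0$), and a computation in equivariant Darboux charts at $N$ and $S$ shows that conversely every smooth $f\colon[c_1,c_2]\to\R/2\pi\Z$ arises, with no further identification. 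Therefore $\Symp^{S^1}(\overline{D})\cong\Maps([c_1,c_2],S^1)$ as topological groups; since $[c_1,c_2]$ is contractible, evaluation at an endpoint is a homotopy equivalence $\Maps([c_1,c_2],S^1)\simeq S^1$, which proves the claim for all other circle actions.

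The analytic step — identifying $\Symp^{S^1}(\overline{D})$ with $\Maps(I,S^1)$ — is routine once the coordinates are in place, and the $\SO(3)$ case is classical. The point that requires care, and which I expect to be the main bookkeeping obstacle, is pinning down exactly for which triples $(a,b;m)$ the curve $\overline{D}$ is pointwise fixed: in the two-strata regime $\overline{D}$ may represent $B-sF$ with $2s\neq m$ inside a different Hirzebruch surface, so one must verify that the restricted action there is trivial if and only if the original action is equivalent to some $S^1(0,\pm 1;\cdot)$, a finite comparison of labelled graphs already made possible by the extension results of the previous chapter.
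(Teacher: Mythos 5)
Your proposal is correct and follows essentially the same route as the paper: when the restricted action is trivial (exactly the $S^1(0,\pm 1;m)$ cases) one quotes Smale to get $\SO(3)$, and otherwise one identifies an equivariant symplectomorphism of $\overline{D}$ with a map from the moment interval to $S^1$, giving the homotopy type $S^1$. The only cosmetic differences are that you carry out the moment-interval identification explicitly in action–angle coordinates (and absorb non-effective restrictions via the weight $w$ rather than quotienting by the $\Z_k$ stabilizer, as the paper does), and that you spell out the weight bookkeeping showing $\overline{D}$ is pointwise fixed only for $a=0$, which the paper asserts without detail.
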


\begin{proof}
Consider the circle action induced on $\overline{D}$. The action $S^1(0,\pm 1,m)$ fixes $\overline{D}$ pointwise. Hence $\Symp^{S^1}(\overline{D}) = \Symp(\overline{D})$. By Smale's theorem we know that $\Symp(\overline{D})$ is homotopy equivalent to $\SO(3)$. 
\\

The restriction for all other actions not equal to $S^1(0,\pm 1,m)$, do not pointwise fix the curve $\overline{D}$. For all other actions, we have the following two subcases. Assume that the action is effective. Let $\mu: \overline{D} \to \R$ be it's moment map. Then as explained in the proof of Proposition \ref{prop:CentralizersToricActions} we have that  $\Symp^{S^1}(\overline{D}) \simeq C^\infty(\mu(\overline{D}),S^1)$, where $C^\infty(\mu(\overline{D}),S^1)$ denotes the space of smooth maps from the image of the moment map to $S^1$. As the image of the moment map is an interval, and as the space of smooth maps from an interval to $S^1$ is homotopy equivalent to $S^1$, we have the required result that $ \Symp^{S^1}(\overline{D}) \simeq S^1$.
\\

Finally if the induced symplectic $S^1$ action on $\overline{D}$ is not effective and has $\Z_k$ stabilizer, the action of $S^1/\Z_k \cong S^1$, is effective and the space of symplectomorphisms equivariant with respect to this quotient  effective action is the same as space of symplectomorphisms equivariant with respect to the non-effective $S^1$ action. Thus the homotopy type of $\Symp^{S^1}(\overline{D}) \simeq S^1$.
\end{proof}

\begin{lemma}\label{gauge}
The map
\begin{align*} 
   \alpha: \Fix^{S^1}(\overline{D}) &\twoheadrightarrow \Aut^{S^1}(N(\overline{D})) \\
     \phi &\mapsto d\phi|_{N(\overline{D})}
\end{align*}
 is a Serre fibration with fibre homotopic to $\Fix^{S^1}(N(\overline{D}))$. The group $\Aut^{S^1}(N(\overline{D}))$  is homotopy equivalent to $S^1$. 
\end{lemma}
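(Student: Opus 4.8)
The plan is to treat $\alpha$ as the orbit map of a group action and apply the Palais-type criterion (Theorem~\ref{palais}), exactly as in the proof of Lemma~\ref{stab}. Observe first that $\alpha$ is a homomorphism of topological groups whose kernel is, by the definition recalled in Section~\ref{not}, precisely $\Fix^{S^1}(N(\overline{D}))$; hence the fibre of $\alpha$ over $\id$ is literally $\Fix^{S^1}(N(\overline{D}))$, and the fibre over any other point is a coset of it, so all fibres are homotopy equivalent once $\alpha$ is known to be a fibration. Let $\Fix^{S^1}(\overline{D})$ act on $\Gauge1^{S^1}(N(\overline{D}))$ by $\phi\cdot\Psi = (d\phi|_{N(\overline{D})})\circ\Psi$; then $\alpha$ is the orbit map of $\id$, so by equivariance it suffices to check that the action is transitive and admits a continuous local section over a neighbourhood of $\id$.

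For both points I would build, for each $\Psi$, an explicit preimage $\sigma(\Psi)\in\Fix^{S^1}(\overline{D})$. Fix an $S^1$-invariant $\oml$-compatible almost complex structure for which $\overline{D}$ is holomorphic, and use the equivariant symplectic neighbourhood theorem to identify an invariant tubular neighbourhood $U$ of $\overline{D}$ with an invariant neighbourhood of the zero section of the Hermitian line bundle $N(\overline{D})$. Since $\Sp(2,\R)$ deformation retracts onto $\U(1)$, performing this retraction fibrewise with respect to the invariant Hermitian structure deformation retracts $\Gauge1^{S^1}(N(\overline{D}))$ equivariantly onto its subgroup of unitary gauge transformations; so we may assume $\Psi$ is fibrewise multiplication by an $S^1$-invariant map $g\colon\overline{D}\to\U(1)$. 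Because $\overline{D}\cong S^2$ has $H^1(\overline{D};\Z)=0$, we can write $g=e^{if}$ with $f$ real, and $f$ can be chosen $S^1$-invariant — the orbit space of $\overline{D}$ is an interval (a rotation) or a point (the $S^1(0,\pm1;m)$ case) — and continuously in $g$. On $U$ the fibrewise rotation by $g$ is the time-$1$ flow of the invariant Hamiltonian $v\mapsto\tfrac12\,f(\pi(v))\,\|v\|^2$; multiplying this Hamiltonian by a fixed $S^1$-invariant cutoff equal to $1$ near $\overline{D}$ and supported in $U$ gives an invariant Hamiltonian on $\SSS$ whose time-$1$ flow $\sigma(\Psi)$ is an $S^1$-equivariant symplectomorphism that fixes $\overline{D}$ pointwise and satisfies $(d\sigma(\Psi))|_{N(\overline{D})}=\Psi$. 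This proves transitivity, and near $\id$ the construction is continuous in $\Psi$, giving the local section; by Theorem~\ref{palais}, $\alpha$ is a Serre fibration with fibre $\Fix^{S^1}(N(\overline{D}))$.

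It remains to compute the homotopy type of the base. The equivariant retraction above identifies $\Gauge1^{S^1}(N(\overline{D}))$ up to homotopy with the group $\Maps^{S^1}(\overline{D},\U(1))$ of $S^1$-invariant maps, the $S^1$-action on the fibre $\U(1)$ of the automorphism bundle being by conjugation, hence trivial. When the action on $\overline{D}$ is trivial this is $\Maps(S^2,\U(1))$, which is homotopy equivalent to $\U(1)$ since $\pi_1(S^2)=\pi_2(S^1)=0$ makes evaluation at a point a weak equivalence; when the action on $\overline{D}\cong S^2$ is a rotation (possibly after dividing by a finite stabiliser), its orbit space is an interval, and invariant maps to $\U(1)$ correspond to maps of that interval, which again retract onto the constants. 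In all cases $\Gauge1^{S^1}(N(\overline{D}))\simeq S^1$.

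The main obstacle I anticipate is the equivariant extension in the second step: promoting a gauge automorphism to the normal $1$-jet along $\overline{D}$ of an ambient equivariant symplectomorphism, continuously in the automorphism. Using the explicit Hamiltonian that generates fibrewise rotations — rather than an equivariant Moser correction of a cut-off extension — keeps everything equivariant and visibly continuous, but one still has to verify that the invariant logarithm $f$ of $g$ can be chosen with enough regularity and continuously near $\id$, and that multiplying by the cutoff does not perturb the $1$-jet of $\sigma(\Psi)$ along $\overline{D}$; both are routine once the cutoff is fixed in advance inside the equivariant tubular neighbourhood. A secondary technical point is making the $\Sp(2,\R)\simeq\U(1)$ deformation retraction on gauge groups genuinely $S^1$-equivariant, which is handled by performing the fibrewise polar decomposition with respect to the chosen invariant Hermitian metric.
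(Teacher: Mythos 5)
Your overall strategy is the same as the paper's: apply Palais's criterion (Theorem~\ref{palais}) to the action $(\phi,\Psi)\mapsto d\phi|_{N(\overline{D})}\circ\Psi$, produce a local section by extending a gauge transformation to an ambient equivariant symplectomorphism supported in an invariant tubular neighbourhood of $\overline{D}$, and compute the base by reducing to invariant maps into $\U(1)$. The paper invokes the equivariant symplectic neighbourhood theorem (Lemma~\ref{EqSymN}) for the section, Lemma~\ref{germ} for the fibre, and Appendix~A (Lemma~\ref{Gauge(D)}) for the base; your literal identification of the fibre with the kernel (which is exactly $\Fix^{S^1}(N(\overline{D}))$ as defined in Section~\ref{not}) and your computation of the base, including the case where the circle fixes $\overline{D}$ pointwise, are fine.

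There is, however, a genuine gap in the step that is supposed to give transitivity and the local section. You deformation retract $\Gauge1^{S^1}(N(\overline{D}))$ onto its unitary subgroup and then say ``we may assume $\Psi$ is fibrewise multiplication by an invariant $g:\overline{D}\to\U(1)$''. That reduction is legitimate for determining the homotopy type of the base, but not for producing a preimage of $\Psi$ under $\alpha$: to apply Theorem~\ref{palais} you need a continuous section of $\alpha$ over an actual neighbourhood of $\id$ in the full symplectic gauge group, and such a neighbourhood contains non-unitary elements; knowing that $\Psi$ is homotopic to a unitary gauge transformation says nothing about whether $\Psi$ itself lies in the image of $\alpha$, nor does it define a section at $\Psi$. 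Your explicit Hamiltonian $\tfrac{1}{2} f(\pi(v))\|v\|^{2}$ only realizes fibrewise rotations. The repair stays entirely within your method: for $\Psi$ close to $\id$ write $\Psi=\exp(A)$ fibrewise, where $A$ is an invariant section of $\mathfrak{sp}(N(\overline{D}))$ obtained from the fibrewise logarithm (well defined and continuous near $\id$), and take the time-one flow of the cut-off invariant Hamiltonian $v\mapsto\tfrac{1}{2}\,\oml(A_{\pi(v)}v,v)$; this fixes $\overline{D}$ pointwise and has normal derivative $\exp(A)=\Psi$. Alternatively, invoke Lemma~\ref{EqSymN} as the paper does. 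Note also that full transitivity is not needed: a local section at $\id$ translates to local sections at every point by equivariance, Palais then gives local triviality, so the image of the homomorphism $\alpha$ is open and closed in $\Gauge1^{S^1}(N(\overline{D}))$, and connectedness of the latter (it is homotopy equivalent to $S^{1}$) yields surjectivity.
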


\begin{proof}
The fact that $\Aut^{S^1}(N(\overline{D})) \simeq S^1$ is explained in Appendix~\ref{AppendixAutomorphisms}. Thus we only need to prove that the restriction of the derivative is a  fibration and that the fibre is homotopy equivalent to $\Fix^{S^1}(N(\overline{D}))$.
\\

Consider the action 
\begin{align*}
     \Fix^{S^1}(\overline{D}) \times \Aut^{S^1}(N(\overline{D})) &\to\Aut^{S^1}(N(\overline{D})) \\
     (\phi, \psi) &\to  d\phi|_{N(\overline{D})} \circ \psi 
\end{align*}
Again, by Theorem \ref{palais}, it suffices to show that the above map admits local sections, and such sections exist by Lemma~\ref{EqSymN}. The fibre is made of all equivariant symplectomorphisms that act as identity on the normal bundle of $\overline{D}$. By Proposition~\ref{prop:HomotopyEquivalenceSympN-Near}, the space is homotopy equivalent to  $\Fix^{S^1}(N(\overline{D}))$.
\end{proof}

Let $\overline{\mathcal{S}^{S^1}_{F,p_0}}$ be the space of unparametrized $S^1$-invariant symplectic spheres in the homology class $F$ that are equal to a fixed invariant curve ${\overline{F}}$ in a neighbourhood of $p_0$.

\begin{lemma}
The map \begin{align*}
    \Fix^{S^1}(N(\overline{D})) &\to \overline{\mathcal{S}^{S^1}_{F,p_0}} \\
    \phi \mapsto \phi(\overline{F})
    \end{align*} 
is a fibration and $\overline{\mathcal{S}^{S^1}_{F,p_0}} \simeq \jsom(\overline{D}) \simeq \{*\}$
\end{lemma}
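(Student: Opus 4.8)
The plan is to run the same machine as in Lemmas~\ref{lemma:EvaluationFibrationConfigurations},~\ref{first} and~\ref{first2}: first recognise the map as the orbit map of an action of $\Fix^{S^1}(N(\overline D))$ and show it is a fibration, then identify the base, up to weak equivalence, with a contractible space of invariant almost complex structures. For the fibration, note that the map is the orbit map for $\Fix^{S^1}(N(\overline D))\times\overline{\mathcal{S}^{S^1}_{F,p_0}}\to\overline{\mathcal{S}^{S^1}_{F,p_0}}$, $(\phi,A)\mapsto\phi(A)$, so by Palais' criterion (Theorem~\ref{palais}) it suffices to establish transitivity and local sections. For transitivity fix $A\in\overline{\mathcal{S}^{S^1}_{F,p_0}}$. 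Since $F\cdot D_{2s}=1$, the sphere $A$ meets $\overline D$ at a single point; as $A$ agrees with $\overline F$ near $p_0$ and $\overline F$ passes through $p_0\in\overline D$, that point is $p_0$, so $A$ and $\overline F$ are disjoint from $\overline D$ outside $p_0$ and coincide near it. The equivariant symplectic neighbourhood theorem then gives an equivariant symplectomorphism between tubular neighbourhoods of $\overline F$ and of $A$ that is the identity near $p_0$; arguing as in Lemma~\ref{lemma:EvaluationFibrationConfigurations} (the complement of a neighbourhood of $\overline D$ in $W_{2s}$ is, after identification with a subset of $\C^2$, a region on which the toric action is linear, where Proposition~\ref{prop:linearize} applies, and any two invariant symplectic $F$-spheres coinciding near $p_0$ are equivariantly diffeotopic rel a neighbourhood of $\overline D$), this local map extends to an ambient equivariant diffeomorphism equal to the identity on a neighbourhood of $\overline D$, which an equivariant Moser isotopy supported away from $\overline D$ turns into the desired $\phi\in\Fix^{S^1}(N(\overline D))$ with $\phi(\overline F)=A$ (using Lemma~\ref{germ} to pass freely between $\Fix^{S^1}(N(\overline D))$ and the subgroup of maps that are the identity on a whole neighbourhood of $\overline D$). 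Local sections are produced the same way from the equivariant neighbourhood theorem; alternatively the homotopy lifting property follows directly from the equivariant Gromov--Auroux Lemma~\ref{Au} as in Lemmas~\ref{first} and~\ref{first2}.

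Next I would record that the space $\jsom(\overline D)$ of invariant, $\oml$-compatible almost complex structures for which $\overline D$ is holomorphic is nonempty (an invariant Kähler structure on the Hirzebruch model $W_{2s}$ provides one) and contractible: the canonical fibrewise retraction of the space of $\oml$-compatible almost complex structures onto a chosen $J_0\in\jsom(\overline D)$ is natural, hence $S^1$-equivariant, and preserves the closed, pointwise condition that $\overline D$ remain holomorphic.

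To identify the base, let $\widehat{\mathcal J}\subset\jsom(\overline D)$ be the subspace of $J$ that agree with the fixed product structure on a fixed neighbourhood of $p_0$; the inclusion $\widehat{\mathcal J}\hookrightarrow\jsom(\overline D)$ is a homotopy equivalence, both spaces being contractible by the previous paragraph. Since the weights at $p_0$ are distinct, for $J\in\widehat{\mathcal J}$ the unique embedded $J$-holomorphic $F$-sphere through $p_0$ (Proposition~\ref{prop_FIndecomposable}) is invariant (Corollary~\ref{cor_pos}, as $F^2=0$ and the fibre through the fixed point $p_0$ is preserved) and coincides with $\overline F$ near $p_0$, so $J\mapsto$ (this sphere) defines a map $\widehat{\mathcal J}\to\overline{\mathcal{S}^{S^1}_{F,p_0}}$. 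This map is a Serre fibration by the familiar argument (lift a family of curves using the transitivity established above, then pull back a chosen $J$ over the basepoint), and its fibre over $\overline F$ is the nonempty, contractible space of invariant compatible $J$, standard near $p_0$, for which both $\overline D$ and $\overline F$ are holomorphic. A Serre fibration with contractible total space and contractible fibre has weakly contractible base, so $\overline{\mathcal{S}^{S^1}_{F,p_0}}\simeq\{*\}\simeq\jsom(\overline D)$.

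The main obstacle is the equivariant transitivity step: arranging that the ambient diffeomorphism realizing the prescribed fibre is \emph{exactly} the identity near $\overline D$ while remaining $S^1$-equivariant, and that the Moser correction can be taken equivariant and supported away from $\overline D$. This is where the equivariant neighbourhood theorem, Proposition~\ref{prop:linearize}, and the equivariant isotopy/Banyaga extension machinery do the real work; the contractibility statements and the fibration arguments are then essentially formal, mirroring the earlier lemmas in this section.
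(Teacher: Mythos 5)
Your treatment of the fibration half is fine and is essentially the paper's: transitivity via the equivariant symplectic neighbourhood theorem, Proposition~\ref{prop:linearize} and the germ Lemma~\ref{germ}, with the lifting property coming from Lemma~\ref{Au}/Palais, exactly as in the proof of Corollary~\ref{trans}; the only point the paper adds is the observation that the symplectomorphism produced there already lies in $\Fix^{S^1}(N(\overline{D}))$.

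The second half has a genuine gap. You define a map $\widehat{\mathcal J}\to\overline{\mathcal{S}^{S^1}_{F,p_0}}$ by sending a $J$ that is standard near $p_0$ to the unique invariant $J$-holomorphic sphere $C_J$ in class $F$ through $p_0$, and you assert that $C_J$ ``coincides with $\overline F$ near $p_0$''. Being standard near $p_0$ does not force this. What it forces is only that the germ of $C_J$ at $p_0$ is an $S^1$-invariant holomorphic germ transverse to $\overline D$ (transversality from $F\cdot D_{2s}=1$ and positivity of intersections), hence \emph{tangent} to $\overline F$ at $p_0$, i.e.\ $\om_\lambda$-orthogonal to $\overline D$ there. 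In local invariant coordinates $(z,w)$ with weights $(\alpha,\beta)$, $\alpha\neq\beta$, the invariant holomorphic germs transverse to the $z$-axis are $\{z=0\}$ \emph{and}, whenever $\alpha=k\beta$ for some integer $k\geq 2$, the osculating curves $z=c\,w^{k}$; such resonant weights do occur at the chosen $p_0$ for admissible triples (e.g.\ weights $\{-1,-2\}$ at $R$ for $S^1(1,m+2;m)$ with $2\lambda>m+4$), and one can build an invariant symplectic $F$-sphere equal to such a germ near $p_0$ together with a $J\in\widehat{\mathcal J}$ making it and $\overline D$ holomorphic. For that $J$ your map does not land in $\overline{\mathcal{S}^{S^1}_{F,p_0}}$, so the map is not well defined, and the subsequent fibration argument collapses. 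This discrepancy between ``$\om_\lambda$-orthogonal at $p_0$'' and ``equal to $\overline F$ near $p_0$'' is precisely what the paper's proof is designed to handle: it first compares $\jsom(\overline D)$ with the space $\mathcal{S}^\perp_{F,p_0}$ of invariant $F$-spheres meeting $\overline D$ $\om_\lambda$-orthogonally at $p_0$ (via the correspondence space $T$ and two fibrations with contractible fibres), and then invokes the equivariant Gompf isotopy, Theorem~\ref{gmpf}, to show $\overline{\mathcal{S}^{S^1}_{F,p_0}}\simeq\mathcal{S}^\perp_{F,p_0}$. Your proposal never uses Theorem~\ref{gmpf}, and without it (or some substitute normalizing the curve near $p_0$) the identification of the base with a point is not established. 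A smaller point: the contractibility of $\widehat{\mathcal J}$ and of your fibre (structures standard near $p_0$ with both $\overline D$ and $\overline F$ holomorphic) is asserted rather than proved; that part is repairable, but the well-definedness issue above is not repairable within your scheme without reintroducing the Gompf-type isotopy.
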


\begin{proof}
We first note that if $F' \in \overline{\mathcal{S}^{S^1}_{F,p_0}}$ then the map $\phi$ constructed in the proof of Corollary~\ref{trans} belongs to $\Fix^{S^1}(N(\overline{D}))$. Hence, the group $\Fix^{S^1}(N(\overline{D}))$ acts transitively on $\overline{\mathcal{S}^{S^1}_{F,p_0}}$, and the action map induces a fibration.\\

The fact that $\jsom(\overline{D}) \simeq \{*\}$ is given in Appendix~\ref{ElementaryEquivariantTopology}, lemma~\ref{Lemma:Contractibilityofacs}.\\
 
To show that $\overline{\mathcal{S}^{S^1}_{F,p_0}} \simeq \jsom(\overline{D}) \simeq \{*\}$, let $\mathcal{S}^\perp_{F,p_0}$ denote the space of all $S^1$ invariant symplectically embedded spheres S in class F such that $S \cap \overline{D} =p_0$ and $S$ and $\overline{D}$ intersect $\oml$-orthogonally at $p_0$. By Lemma~\ref{transverse} we see that for every $S \in  \mathcal{S}^\perp_{F,p_0}$ there exists a $J \in \jsom(\overline{D})$ such that the configuration $S \vee \overline{D}$ is $J$-holomorphic.  We now have the following fibration
\begin{equation*}
\jsom(\overline{D}) \longtwoheadrightarrow  \mathcal{S}^\perp_{F,p_0}
\end{equation*}
where the map $\gamma: \jsom(\overline{D}) \to  \mathcal{S}^\perp_{F,p_0}$ sends $J \in \jsom(\overline{D})$ to the corresponding curve in class $F$ passing through $p_0$. Note that the above map is well defined because the fixed $p_0$ was chosen such that the weights at $p_0$ were distinct. Hence any $S^1$ invariant $F$ curve passing through $p_0$ must be $\oml$-orthogonal to $\overline{D}$. Now we show that $\gamma$ is a homotopy equivalence. To do that we consider the following commutative diagram
\[ 
\begin{tikzcd}
 &T \arrow[d,"\pi_2"] \arrow[r,"\pi_1"] & \mathcal{S}^\perp_{F,p_0} \\
 &\jsom(\overline{D}) \arrow[ur, "\gamma"]
\end{tikzcd} 
\]
where $T:= \left\{ (A,J) \in  \mathcal{S}^\perp_{F,p_0} \times \jsom(\overline{D})~|~  A ~~\text{is J-holomorphic} \right\}$. Both the maps $\pi_1$ and $\pi_2$ are fibrations (this can be argued as in Lemma \ref{first}) with contractible fibres. As the diagram commutes, the map $\gamma$ must be a homotopy equivalence. The proof then follows by showing that $\overline{\mathcal{S}^{S^1}_{F,p_0}} \simeq \mathcal{S}^\perp_{F,p_0}$, which is a consequence of Theorem~\ref{gmpf}.
\end{proof}

\begin{lemma}
\begin{align*}
\Stab^{S^1}(\overline{F}) \cap \Fix^{S^1}(N(\overline{D})) &\to \Symp^{S^1}(\overline{F}, N(p_0)) \\
\phi &\mapsto  \phi|_{\overline{F}}
\end{align*}
is a fibration and $\Symp^{S^1}(\overline{F}, N(p_0)) \simeq \{*\}$
\end{lemma}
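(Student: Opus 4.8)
The plan is to prove the two assertions in turn: first that $\Symp^{S^1}(\overline{F}, N(p_0))$ is contractible, by reducing to an explicit shearing deformation on the $2$-sphere $\overline{F}$, and then that the restriction map is a fibration, by Palais's criterion, using the contracting isotopies produced in the first step.

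\emph{Contractibility of $\Symp^{S^1}(\overline{F}, N(p_0))$.} If the induced $S^1$-action on $\overline{F}\cong S^2$ is non-effective with global stabiliser $\Z_j$, it factors through an effective action of $S^1/\Z_j\cong S^1$ without changing the notion of equivariance, so one may assume the induced action is effective; it is then either trivial or a rotation. If it is trivial (a case included for completeness; in the present setting the action on $\overline{F}$ is always a rotation), then $\Symp^{S^1}(\overline{F}, N(p_0)) = \Symp(\overline{F}, N(p_0)) \cong \Symp_c(D^2)\cong\Symp_c(\R^2)$, which is contractible by Gromov's theorem. If it is a rotation, let $\mu_{\overline{F}}\colon\overline{F}\to[c_0,c_1]$ be its moment map with $\mu_{\overline{F}}(p_0)=c_0$. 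Arguing as in Lemma~\ref{lemma:CharacterizationCentralizer}, any $\phi\in\Symp^{S^1}(\overline{F}, N(p_0))$ satisfies $\mu_{\overline{F}}\circ\phi=\mu_{\overline{F}}$, so $\phi$ preserves each level set of $\mu_{\overline{F}}$; since every interior level set is a single orbit circle, there is a unique smooth $g_\phi\colon[c_0,c_1]\to S^1$ with $\phi(m)=g_\phi(\mu_{\overline{F}}(m))\cdot m$, and $g_\phi\equiv 1$ near $c_0$ because $\phi=\id$ near $p_0$. As in the proof of Proposition~\ref{prop:CentralizersToricActions}, $g_\phi$ lifts to a unique $\tilde g_\phi\colon[c_0,c_1]\to\R$ with $\tilde g_\phi\equiv 0$ near $c_0$ and $\exp\circ\tilde g_\phi=g_\phi$; setting $\tilde g_{\phi,s}:=s\,\tilde g_\phi$ and letting $\phi_s$ be the corresponding self-map of $\overline{F}$ gives a path from $\id$ (at $s=0$) to $\phi$ (at $s=1$). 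In dimension two such shears are automatically area-preserving — in action-angle coordinates $\phi_s$ pulls $d\theta\wedge dt$ back to itself — so no Moser correction is needed; each $\phi_s$ is $S^1$-equivariant, equals the identity near $p_0$ (where $\tilde g_{\phi,s}$ is locally zero), and $(\phi,s)\mapsto\phi_s$ depends continuously on $\phi$. Hence this is a deformation retraction of $\Symp^{S^1}(\overline{F}, N(p_0))$ onto $\{\id\}$.

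\emph{The restriction map is a fibration.} Following the pattern of Lemmas~\ref{stab} and~\ref{gauge}, I would apply Theorem~\ref{palais} to the action of $\Stab^{S^1}(\overline{F}) \cap \Fix^{S^1}(N(\overline{D}))$ on $\Symp^{S^1}(\overline{F}, N(p_0))$ by $(\phi,\psi)\mapsto(\phi|_{\overline{F}})\circ\psi$, so that it is enough to produce a local section of the restriction map near the identity. Given $\psi$ near $\id$, write it via the contraction above as the time-one map of an $S^1$-invariant Hamiltonian isotopy $\{\psi_t\}$ of $\overline{F}$ with $\psi_0=\id$, each term of which is the identity near $p_0$, and let $H_t$ be the invariant generating Hamiltonian normalised to vanish near $p_0$. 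Using the equivariant symplectic neighbourhood theorem, identify a tubular neighbourhood $U$ of $\overline{F}$ with a neighbourhood of the zero section of the symplectic normal bundle $N(\overline{F})$, equivariantly, and pull $H_t$ back along the projection. Since $\overline{F}\cap\overline{D}=\{p_0\}$ and $H_t$ vanishes near $p_0$, one may shrink $U$ so that $U\cap\overline{D}$ lies in the region where the pulled-back Hamiltonian is zero; multiplying by an $S^1$-invariant bump function supported in $U$ and equal to $1$ near $\overline{F}$ then yields an invariant Hamiltonian on $\SSS$ whose time-$t$ flow $\Psi_t$ fixes a neighbourhood of $\overline{D}$ pointwise, stabilises $\overline{F}$, and restricts to $\psi_t$ on $\overline{F}$. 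Thus $\Psi_1\in\Stab^{S^1}(\overline{F}) \cap \Fix^{S^1}(N(\overline{D}))$ and $\Psi_1|_{\overline{F}}=\psi$; performing this continuously in $\psi$ gives the section (in fact a global one), and the fibre over the identity is $\Fix^{S^1}(\overline{F})\cap\Fix^{S^1}(N(\overline{D}))$.

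I expect the main obstacle to be the extension step above: one must cut the correcting Hamiltonian off so that it vanishes on a full neighbourhood of $\overline{D}$ — so that $\Psi_1$ lies in $\Fix^{S^1}(N(\overline{D}))$ and not merely in $\Stab^{S^1}(\overline{D})$ — which forces one to coordinate the equivariant tubular-neighbourhood identification, the support of the bump function, and the locus where $H_t$ vanishes; this succeeds precisely because $\overline{D}$ and $\overline{F}$ meet only at $p_0$ and the isotopy is the identity there. A minor secondary point is verifying that the shears $\phi_s$ remain smooth at the pole of $\overline{F}$ opposite $p_0$, which follows from the local normal form of a rotation near a fixed point.
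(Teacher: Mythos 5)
Your proposal is correct and follows essentially the same route as the paper: the fibration is obtained via Palais's criterion by extending Hamiltonian isotopies of $\overline{F}$ through an equivariant tubular neighbourhood with an invariant cut-off (the "mutatis mutandis Lemma~\ref{stab}" argument, with exactly the extra care near $\overline{D}$ that the paper leaves implicit), and the contractibility is proved by identifying $\Symp^{S^1}(\overline{F}, N(p_0))$ with maps from the moment interval to $S^1$ that are the identity near the endpoint corresponding to $p_0$, then contracting by lifting to $\R$ and scaling. Your version simply spells out details the paper compresses (the shear formula, the automatic symplecticity in dimension two, the trivial-action case), so no substantive divergence or gap.
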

 
\begin{proof}
The fact that this is a fibration follows from applying the proof of Lemma \ref{stab} mutatis mutandis. The proof that $\Symp^{S^1}(\overline{F}, N(p_0)) \simeq \{*\}$, is similar to Lemma \ref{stab}.  $\Symp^{S^1}(\overline{F}, N(p_0))$ is homotopy equivalent to maps from the interval $[0,1]$ to $S^1$ that is identity near 0. The space of such maps is contractible thus giving the result. 
\end{proof}

\begin{lemma}\label{ngauge}
\begin{align*}
    \Fix^{S^1}(\overline{F}) &\to \Aut^{S^1}(N(\overline{D} \vee \overline{F}))\\
    \phi &\mapsto d\phi|_{N(\overline{D} \vee \overline{F})}
\end{align*}
is a fibration and $\Aut^{S^1}(N(\overline{D} \vee \overline{F}))\simeq \{*\}$ and the fibre $\Fix^{S^1}(N(\overline{D} \vee \overline{F})) \simeq \{*\}$
\end{lemma}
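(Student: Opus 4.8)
The statement bundles together three claims: that the map $\phi\mapsto d\phi|_{N(\overline{D}\vee\overline{F})}$ is a Serre fibration onto $\Gauge1^{S^1}(N(\overline{D}\vee\overline{F}))$ whose fibre is weakly equivalent to $\Fix^{S^1}(N(\overline{D}\vee\overline{F}))$, that the base is weakly contractible, and that this fibre is weakly contractible. The plan is to settle these three points separately, each time by transcribing an argument already in place: Lemma~\ref{gauge} and Theorem~\ref{palais} for the fibration, the computation behind Appendix~A for the base, and the Equivariant Gromov Theorem~\ref{thm:EqGr} for the fibre.

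For the fibration, I would repeat the proof of Lemma~\ref{gauge} with the line $\overline{D}$ replaced by the crossing divisor $\overline{D}\vee\overline{F}$. Consider the action of $\Fix^{S^1}(\overline{F})$ on $\Gauge1^{S^1}(N(\overline{D}\vee\overline{F}))$ given by $(\phi,\psi)\mapsto d\phi|_{N(\overline{D}\vee\overline{F})}\circ\psi$; by Theorem~\ref{palais} it suffices to produce transitivity together with a local section near the identity, and both come from the equivariant symplectic neighbourhood theorem for a crossing divisor (the analogue of Lemma~\ref{EqSymN}): an equivariant symplectic gauge automorphism of $N(\overline{D}\vee\overline{F})$ which is the identity near the wedge point $p_0$ extends to an equivariant symplectomorphism of an invariant neighbourhood of $\overline{D}\vee\overline{F}$ fixing the divisor pointwise, which an invariant cut-off function promotes to a global element of $\Fix^{S^1}(\overline{F})$. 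The a priori fibre, namely the equivariant symplectomorphisms whose $1$-jet along $\overline{D}\vee\overline{F}$ is trivial, is then identified up to weak equivalence with $\Fix^{S^1}(N(\overline{D}\vee\overline{F}))$ by the germ-straightening Moser argument of Lemma~\ref{germ}.

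For the base I would show $\Gauge1^{S^1}(N(\overline{D}\vee\overline{F}))\simeq\{*\}$, refining the computation of Appendix~A that gives $\Gauge1^{S^1}(N(\overline{D}))\simeq S^1$. With its crossing point removed the divisor is a disjoint union of two discs, over each of which the symplectic normal bundle is equivariantly trivial, so such a gauge automorphism amounts to a pair of equivariant maps, one from each disc into $\Sp(2)\simeq S^1$, equal to the identity near the puncture. Since $S^1$ is abelian, equivariance forces each of these maps to be constant along the $S^1$-orbits, hence to descend to the orbit space of the disc, which is an interval; together with triviality near the puncture this exhibits it as a based path in $S^1$, and the based path space of $S^1$ is contractible. (When the induced circle action on the relevant sphere is trivial one gets instead a based map $S^2\to S^1$, still weakly contractible since $\pi_k(S^1)=0$ for $k\geq 2$.) Being a product of two contractible factors, $\Gauge1^{S^1}(N(\overline{D}\vee\overline{F}))$ is therefore weakly contractible.

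For the fibre, $\Fix^{S^1}(N(\overline{D}\vee\overline{F}))$ consists of equivariant symplectomorphisms that are the identity near $\overline{D}\vee\overline{F}$. By the identification of the complement of the standard configuration $\overline{D}\cup\overline{F}$ in the Hirzebruch surface $W_{2s}$ used in the proof of Lemma~\ref{lemma:EvaluationFibrationConfigurations} (going back to \cite{abreu}), this complement is a domain on which the $S^1(a,b;m)$-action is linear near infinity; choosing a large invariant star-shaped $V$ carrying this action and containing the supports of all such symplectomorphisms identifies $\Fix^{S^1}(N(\overline{D}\vee\overline{F}))$ with $\Symp_c^{S^1}(V,\om_f)$, which is weakly contractible by the Equivariant Gromov Theorem~\ref{thm:EqGr} (after straightening $\om_f$ to the standard form near $\partial V$ by Proposition~\ref{prop:linearize}). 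Every step except the base computation is a routine transcription of earlier arguments, so the main obstacle is the base: one must check that forcing triviality near $p_0$ destroys exactly the circle of constant gauge automorphisms that survives in $\Gauge1^{S^1}(N(\overline{D}))$, and that the $S^1$-equivariance introduces no further homotopy. This is the content to be carried out in Appendix~A.
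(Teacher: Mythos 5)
Your proposal follows the paper's own route essentially verbatim: the fibration is obtained by rerunning the argument of Lemma~\ref{gauge} (Palais' Theorem~\ref{palais}, local sections from the equivariant neighbourhood theorem, and Lemma~\ref{germ} for the fibre identification), the contractibility of $\Gauge1^{S^1}(N(\overline{D}\vee\overline{F}))$ is exactly the content of Lemma~\ref{Gauge(N(D))} in Appendix~A (your reduction to invariant maps into $S^1$ that are the identity near the crossing, hence based paths, is the same computation), and the contractibility of $\Fix^{S^1}(N(\overline{D}\vee\overline{F}))$ is the Equivariant Gromov Theorem~\ref{thm:EqGr}, with your extra detail on the Abreu identification and Proposition~\ref{prop:linearize} only making explicit what the paper leaves implicit. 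The argument is correct and matches the paper's proof.
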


\begin{proof}
The proof that this is a fibration is similar to the proof of Lemma \ref{gauge}. The fact that $\Aut^{S^1}(N(\overline{D} \vee \overline{F}))\simeq \{*\}$ follows from by Lemma \ref{Gauge(N(D))}. The fact that $\Fix^{S^1}(N(\overline{D} \vee \overline{F})) \simeq \{*\}$ follows from Theorem \ref{thm:EqGr}.
\end{proof}

Putting all the fibrations together gives the following theorem.
\begin{thm}\label{homogenous}
Consider the $S^1(a,b;m)$ action on $(\SSS,\oml)$ with $\lambda >1$. If $ \jsom \cap U_{2s}$ is non-empty, then we have the following homotopy equivalences:
\begin{enumerate}
\item when $(a,b) \neq (0,\pm1)$, we have $\Symp^{S^1}_h(\SSS,\oml)/\T_{2s} \simeq \jsom \cap U_{2s}$;
\item when $(a,b) = (0,\pm1)$, we have $\Symp^{S^1}_h(\SSS,\oml)/(SO(3) \times S^1) \simeq \jsom \cap U_{2s}$.
\end{enumerate} 
\end{thm}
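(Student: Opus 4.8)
The plan is to assemble the six displayed fibrations into two halves. The first half identifies the orbit space: combining Corollary~\ref{trans}, Lemma~\ref{first} and Lemma~\ref{first2} produces a fibration $\Stab^{S^1}(\overline{D}) \to \Symp^{S^1}_h(\SSS,\oml) \twoheadrightarrow \mathcal{S}^{S^1}_{D_{2s}}$ together with a weak equivalence $\mathcal{S}^{S^1}_{D_{2s}}\simeq \jsom\cap U_{2s}$, so that $\Symp^{S^1}_h(\SSS,\oml)/\Stab^{S^1}(\overline{D})\simeq \jsom\cap U_{2s}$. Everything then reduces to computing the homotopy type of the equivariant stabilizer $\Stab^{S^1}(\overline{D})$ and recognising it, up to homotopy and as a topological group, as $\T_{2s}$ when $(a,b)\neq(0,\pm1)$ and as $\SO(3)\times S^1$ when $(a,b)=(0,\pm1)$.

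For the second half I would run the remaining four fibrations from the bottom up. Lemma~\ref{ngauge} shows that both the fibre $\Fix^{S^1}(N(\overline{D}\vee\overline{F}))$ and the base $\Gauge1^{S^1}(N(\overline{D}\vee\overline{F}))$ of the last fibration are contractible, hence $\Fix^{S^1}(\overline{F})$ is contractible; feeding this into the fibration over the contractible base $\Symp^{S^1}(\overline{F},N(p_0))$ gives $\Stab^{S^1}(\overline{F})\cap\Fix^{S^1}(N(\overline{D}))\simeq\{*\}$; feeding that into the fibration over $\overline{\mathcal{S}^{S^1}_{F,p_0}}\simeq\{*\}$ gives $\Fix^{S^1}(N(\overline{D}))\simeq\{*\}$; and then Lemma~\ref{gauge} yields $\Fix^{S^1}(\overline{D})\simeq\Gauge1^{S^1}(N(\overline{D}))\simeq S^1$. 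It remains to analyse the fibration $\Fix^{S^1}(\overline{D})\to\Stab^{S^1}(\overline{D})\twoheadrightarrow\Symp^{S^1}(\overline{D})$, whose base is $\simeq S^1$ if the circle action restricted to $\overline{D}$ is nontrivial (i.e. $(a,b)\neq(0,\pm1)$) and $\simeq\SO(3)$ if it is trivial (i.e. $(a,b)=(0,\pm1)$).

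To upgrade this last fibration from an assertion about homotopy groups to an identification of the \emph{group} $\Stab^{S^1}(\overline{D})$, I would use the Hirzebruch model $W_{2s}$: since $S^1(a,b;m)\subset\T_{2s}$ and $\T_{2s}$ preserves the invariant section $\overline{D}=D_{2s}$, there is an inclusion $\T_{2s}\into\Stab^{S^1}(\overline{D})$. Write $\T_{2s}=S^1_{\mathrm{b}}\times S^1_0$, where $S^1_0$ is the subcircle fixing $\overline{D}$ pointwise and $S^1_{\mathrm{b}}$ rotates $\overline{D}$. Then $S^1_0$ maps into the fibre $\Fix^{S^1}(\overline{D})$ while $S^1_{\mathrm{b}}$ maps onto a generator of the base $\Symp^{S^1}(\overline{D})\simeq S^1$; a short computation shows that $S^1_0$ acts on the fibres of the normal bundle of $\overline{D}$ (which has degree $-2s$) with weight $\pm1$, so the induced map $S^1_0\to\Gauge1^{S^1}(N(\overline{D}))\simeq S^1\simeq\Fix^{S^1}(\overline{D})$ is a homotopy equivalence. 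By the five lemma applied to the two fibrations, $\T_{2s}\into\Stab^{S^1}(\overline{D})$ is a weak equivalence, which gives (1). For (2), when $(a,b)=(0,\pm1)$ the circle fixes $\overline{D}$ pointwise, so $\Symp^{S^1}(\overline{D})=\Symp(\overline{D})\simeq\SO(3)$; here one replaces $S^1_{\mathrm{b}}$ by the lift to $W_{2s}$ of the $\PU(2)$-action on the base $\CP^1$ of the Hirzebruch fibration (which preserves $\overline{D}$ and commutes with the fibrewise circle $S^1(0,\pm1;m)=S^1_0$), giving an inclusion $\SO(3)\times S^1\into\Stab^{S^1}(\overline{D})$, and the same five-lemma argument shows it is a weak equivalence. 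Combining with the first half proves both claimed homotopy equivalences.

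The step I expect to be the main obstacle is this final identification. The tower of fibrations by itself only pins down the homotopy groups of $\Stab^{S^1}(\overline{D})$, so one genuinely needs the geometric input — the torus (and, in the degenerate case, the $\PU(2)$) action on $W_{2s}$ together with the isotropy-weight computation read off from Table~\ref{table_weights} — both to produce the correct compact subgroup and to check that the sections of the various fibrations can be chosen compatibly, so that the resulting map is simultaneously a weak equivalence on fibres and on bases throughout the tower.
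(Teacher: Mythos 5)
Your proposal is correct and follows essentially the same route as the paper: identify $\jsom\cap U_{2s}$ with the orbit $\Symp^{S^1}_h(\SSS,\oml)/\Stab^{S^1}(\overline{D})$ via the action fibration and Lemma~\ref{first2}, collapse the tower of fibrations from the bottom up to get $\Fix^{S^1}(\overline{D})\simeq S^1$, and then compare the fibration $\Fix^{S^1}(\overline{D})\to\Stab^{S^1}(\overline{D})\to\Symp^{S^1}(\overline{D})$ with the split subgroup $S^1\to\T_{2s}\to S^1$ (resp.\ $S^1\to S^1\times\SO(3)\to\SO(3)$) and apply the five lemma. The only difference is that you spell out the weight-$\pm1$ computation showing the pointwise-fixing subcircle hits a generator of $\Gauge1^{S^1}(N(\overline{D}))\simeq S^1$, a detail the paper subsumes under ``the leftmost and rightmost arrows are homotopy equivalences.''
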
 
\begin{proof}

When $(a,b;m) \neq (0,\pm 1;0)$ we have a commutative diagram of fibrations
\[
\begin{tikzcd}
&\Fix^{S^1}(\overline{D}) \arrow[r] &\Stab^{S^1}(\overline{D}) \arrow[r,twoheadrightarrow] &\Symp^{S^1}(\overline{D}) \\
&S^1 \arrow[u,hookrightarrow] \arrow[r] &\T_{2s} \arrow[u,hookrightarrow] \arrow[r] &S^1 \arrow[u,hookrightarrow]
\end{tikzcd}
\]
while in the case $(a,b) = (0,\pm1)$, we have the diagram
\[
\begin{tikzcd}
&\Fix^{S^1}(\overline{D}) \arrow[r] &\Stab^{S^1}(\overline{D}) \arrow[r,twoheadrightarrow] &\Symp^{S^1}(\overline{D}) \\
&S^1 \arrow[u,hookrightarrow] \arrow[r] & S^1 \times SO(3) \arrow[u,hookrightarrow] \arrow[r] &SO(3) \arrow[u,hookrightarrow]
\end{tikzcd}
\]
From the discussion above, in both the diagrams the leftmost and the rightmost arrows are homotopy equivalences.  As the diagram commutes, the 5 lemma implies that the middle inclusion $\T \hookrightarrow \Stab^{S^1}(\overline{D})$  or $\left(S^1 \times SO(3)\right) \hookrightarrow \Stab^{S^1}(\overline{D})$ are also homotopy equivalences. This gives us the required result.
\end{proof}

\begin{remark}\label{ActionOnJsom}
Let $J_{2s}$ be the standard complex structure on $W_{2s}$. We note that for the circle action $S^1(0,\pm1,;m)$ the  stabiliser of $J_{2s}$ under the natural action of $\Symp^{S^1}_h(\SSS,\oml)$ on $\jsom \cap U_{2s}$ is the group of K\"ahler isometries $S^1\times\SO(3)$. For all other circle actions $S^1(a,b;m)$ with $(a,b)\neq(0,\pm1)$, the stabiliser of $J_{2s}$ is the maximal torus $\T_{2s}\subset S^1 \times \SO(3)$.
\end{remark}

\subsection{Case 2: \texorpdfstring{$\Symp^{S^1}_h(\SSS,\oml)$}{Symp(S2xS2)} action on \texorpdfstring{$\jsom \cap U_0$}{J\hat{}S1\_0}}\label{IntwithU0}
In order to describe the action of $\Symp^{S^1}_h(\SSS,\oml)$ on the open stratum $\jsom \cap U_0$, we need to  modify slightly the setting introduced in the previous section. The main difference comes from the fact that for an almost-complex structure $J\in\jsom\cap U_0$, there is no invariant curve with negative self-intersection  representing a class $B-kF$, $k\geq 1$. Instead, each such $J$ determines a regular 2-dimensional foliation of $J$-holomorphic curves in the class $B$. Consequently, there is no natural map between the stratum $\jsom \cap U_0$ and the space $\mathcal{S}^{S^1}_{B}$ of invariant curves in the class $B$. However, once we choose a fixed point $p_0$, given any $J \in \jsom \cap U_0$, there is a unique invariant $J$-holomorphic curve in the class $B$ passing through $p_0$. This defines a map $\jsom \cap U_0 \to \mathcal{S}^{S^1}_{B,p_0}$ that can be used to prove that the space $\jsom \cap U_0$ is homotopy equivalent to an orbit of $\Symp^{S^1}_h(\SSS,\oml)$. To do so, because the fixed point $p_0$ is not unique, we must also investigate how the group $\Symp^{S^1}_h(\SSS,\oml)$ acts on the fixed point set of the circle action. This is done in Lemma~\ref{lemma:SymphPreservesAnIsolatedFixedPoint}. Before we proceed to prove this lemma we first describe the action of $\Symp^{S^1}_h(\SSS,\oml)$ on $\jsom \cap U_0$. Note that by Theorem~\ref{cor:IntersectingOnlyOneStratum}, Corollary~\ref{cor:CircleExtensionsWith_a=1} and Corollary~\ref{cor:CircleExtensionsWith_a=-1}, the space $\jsom \cap U_0$ is non-empty only for the following circle actions:
\begin{itemize}
    \item $S^1(a,b;0)$, or
    \item  $S^1(1,b;m)$ with $|2b-m|=0$ and $2\lambda > |2b-m|$, or
    \item $S^1(-1,b;m)$ with $|2b+m|=0$ and $2\lambda > |2b+m|$.
\end{itemize}
Secondly, we observe that all these actions have at least one isolated fixed point \emph{except} the actions of the forms
\begin{itemize}
    \item $S^1(\pm 1,0;0)$ and
    \item $S^1(0,\pm 1;0)$
\end{itemize}
\subsubsection{Actions with an isolated fixed point} We now consider circle actions $S^1(a,b;m)$ with an isolated fixed point $p_0$. We can choose $p_0$ to correspond to the vertex $R$ in the Hirzerbruch surface $W_m$ shown in Figure~\ref{hirz}. Given $J\in\jsom\cap U_0$, there is a unique $J$-holomorphic curve $B_{p_0,J}$ in class $B$ that passes through $p_0$. Because $p_0$ is fixed, $J$ is invariant, and $B\cdot B=0$, positivity of intersection implies that $B_{p_0,J}$ is $S^1$-invariant. We thus get a well-defined map 
\[\jsom \cap U_0 \to \mathcal{S}^{S^1}_{B,p_0}\]
where $\mathcal{S}^{S^1}_{B,p_0}$ denotes the space of invariant, embedded, symplectic spheres representing the class $B$ and containing the point $p_0$. 
\begin{lemma}\label{projection_surjective}
Consider any $S^1(a,b; m)$ action on $(\SSS,\oml)$. Let $p_0$ and $p_1$ be two fixed points such that there exists an invariant fibre $\{*\} \times S^2$ passing through them . Then there exists no $S^1$ invariant curve in the class $B-kF$ for $k \geq 0$ passing through $p_0$ and $p_1$.
\end{lemma}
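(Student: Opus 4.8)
The plan is to argue by contradiction. Suppose $C$ is an $S^1$-invariant symplectic sphere in class $B-kF$, $k\ge 0$, passing through $p_0$ and $p_1$, and let $\overline F$ be the invariant fibre $\{*\}\times S^2$ through $p_0$ and $p_1$. The contradiction I aim for is with the homological identity $[C]\cdot[\overline F]=(B-kF)\cdot F=1$, obtained by making both $C$ and $\overline F$ holomorphic for a common invariant $J$ and invoking positivity of intersections.

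First I would pin down $C\cap\overline F$. This set is closed and $S^1$-invariant; if it contained a non-fixed point $q$, it would contain the whole orbit circle $\mathcal{O}_q\subset\overline F$, and $C$ and $\overline F$ would merely cross along $\mathcal{O}_q$ (both are symplectic and each tangent plane contains the fundamental vector field $\overline Y(q)=X_\mu(q)$, but nothing forces the two tangent planes to agree beyond that line). An arbitrarily small $S^1$-equivariant perturbation of $C$, supported away from $p_0$ and $p_1$ and keeping $C$ symplectic, invariant, in class $B-kF$ and through $p_0,p_1$, pushes $C$ off $\overline F$ along every such circle. After this perturbation $C\cap\overline F$ is finite and invariant, hence contained in $\Fix(S^1)$; since the only fixed points lying on the sphere $\overline F$ are its two poles, $C\cap\overline F=\{p_0,p_1\}$. (If $\overline F$ is pointwise fixed — which by Karshon's classification happens only for the actions $S^1(\pm1,0;0)$ — then $\overline F$ is automatically $J$-holomorphic for every invariant $J$, and one runs the positivity argument below directly, with no perturbation needed.)

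Next I would construct an $S^1$-invariant $\om_\lambda$-compatible $J$ making both $C$ and $\overline F$ holomorphic. Near each $p_i$, Proposition~\ref{prop:DefinitionOfWeights} provides linearising coordinates, and the invariant symplectic $2$-planes $T_{p_i}C$, $T_{p_i}\overline F$ are then weight eigenspaces of the linearised action — and $\om_\lambda$-orthogonal when distinct — so one can choose a local invariant compatible $J$ making both curves holomorphic near $p_0$ and near $p_1$. Away from $p_0,p_1$ the curves $C$ and $\overline F$ are disjoint, so one patches, via an invariant partition of unity and convexity of the space of compatible structures, local invariant structures making $C$, respectively $\overline F$, holomorphic. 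Now $C$ and $\overline F$ are distinct $J$-holomorphic curves (their homology classes differ), so positivity of intersections (Theorem~\ref{thm_PositivityIntersections}) forces $C\cap\overline F$ to be a single point, contradicting $p_0\ne p_1$.

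The delicate step is the first one: two $S^1$-invariant symplectic surfaces through the same pair of fixed points need not meet transversally and may a priori intersect along entire orbit circles, so an equivariant genericity input is required before positivity of intersections becomes available; once the intersection locus is controlled, the remaining steps are routine. A more hands-on alternative is to use the weight formula of Lemma~\ref{weight}: applied to the normal bundles of $\overline F$ (self-intersection $0$) and of $C$ (self-intersection $-2k$), together with the fact that the unordered weight pairs at $p_0$ and $p_1$ are common to $C$ and $\overline F$, it forces $k=0$, after which the remaining case of an invariant section in class $B$ through $p_0$ and $p_1$ is again excluded by the same positivity argument.
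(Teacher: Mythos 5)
Your strategy --- make the given fibre $\overline{F}$ and the hypothetical invariant curve $C$ in class $B-kF$ simultaneously holomorphic for a single invariant $J$, then contradict $[C]\cdot[F]=1$ by positivity of intersections --- is genuinely different from the paper's, but it has a real gap at its central step, and the gap sits exactly where the lemma has its content. Everything depends on producing one invariant compatible $J$ for which both \emph{given} surfaces are holomorphic, and the two ingredients you invoke for this are asserted rather than proved. The equivariant perturbation ``pushing $C$ off $\overline{F}$ along every intersection orbit'' is an equivariant genericity claim (equivariant transversality is not generic in general), and it must preserve symplecticity, invariance, the class, and the two point constraints; nothing in your sketch or in the paper supplies this. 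More seriously, at $p_0$ and $p_1$ you assume $T_{p_i}C\neq T_{p_i}\overline{F}$ (``$\om_\lambda$-orthogonal when distinct''), but tangency is not only unexcluded --- it is typically \emph{forced}. If such a $C$ existed and the circle acted nontrivially on it, its tangent weights at its two fixed points $p_0,p_1$ would be opposite, while the section-direction weight $a$ is the same at both points (Table~\ref{table_weights}); so unless the weights satisfy special relations such as $a=0$ or $a=\pm b$, the tangent plane $T_{p_i}C$ would have to be the fibre eigenplane, i.e.\ $C$ would be tangent to $\overline{F}$ at both points. In the tangential case one cannot in general make two distinct symplectic surfaces simultaneously $J$-holomorphic, so the positivity contradiction is never reached precisely in the configurations you most need to rule out; and the only tool of this type in the paper, Theorem~\ref{transverse}, covers a single $\om$-orthogonal intersection at a fixed point and does not apply here.

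Your fallback via Lemma~\ref{weight} is closer to an argument that works, but as written it is only a sketch: comparing normal weights of $C$ at $p_0,p_1$ with the rotation number gives an identity of the shape $2kb=0$ only after the tangent/normal splitting of the weights along $C$ is pinned down, it yields $k=0$ only modulo the same special weight coincidences, and you then send the surviving case back to the unproved positivity step. For comparison, the paper's proof avoids almost complex structures entirely: since $(B-kF)\cdot F=1$, the projection $\pi_1|_C$ to the base is surjective, and because $p_0,p_1$ lie over the same base point, $C$ must contain a third fixed point; a circle acting nontrivially on a sphere has exactly two fixed points, so $C$ would be pointwise fixed, which is impossible because a fixed surface lies in a single level of the moment map while the three fixed points on $C$ cannot share a moment map value. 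If you want to salvage your route, the honest fix is either to prove an equivariant simultaneous-holomorphicity statement allowing tangential intersections at fixed points (nontrivial), or to switch to an argument, like the paper's, that exploits the circle action on $C$ itself rather than an auxiliary $J$.
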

\begin{proof}
Suppose not, let $\overline{D_{2s}}$ be a $S^1$ invariant curve in the class $B-kF$ with $k \geq 0$ passing through $p_0$ and $p_1$. Then the projection onto the first factor
$$ \pi_1 : \overline{D_{2s}} \rightarrow S^2 \times \{0\} \subset \SSS$$
is surjective. Hence the curve $\overline{D_{2s}}$ passes through a third  fixed point $p_2$. As the symplectic $S^1$ action on $\overline{D_{2s}}$ has three fixed points, it has to fix $\overline{D_{2s}}$ pointwise. This is a contradiction as all fixed surfaces for $S^1$ actions must be either a maximum or minimum for the moment map, but the fixed points $p_2$, $p_1$ and $p_0$ cannot have the same moment map value.
\end{proof}

\begin{lemma}\label{lemma:SymphPreservesAnIsolatedFixedPoint}
Let $S^1(a,b;m)$ be a circle action for which the space $\jsom \cap U_0$ is non-empty. Assume there is an isolated fixed point $p_0$ corresponding to the vertex $R$ in Figure~\ref{hirz}.  Then any equivariant symplectomorphism that preserves homology $\phi\in \Symp_h^{S^1} (\SSS,\oml)$ fixes~$p_0$.
\end{lemma}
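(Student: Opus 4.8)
The strategy is to pin down $p_0$ as the unique fixed point with a prescribed combination of local and global invariants, all of which are preserved by any $\phi\in\Symp_h^{S^1}(\SSS,\oml)$. First I would recall from Corollary~\ref{cor:ActionPreservesWeights} that $\phi$ permutes the fixed point set preserving the unordered weight sets, and from the standing assumption $\lambda>1$ (so that $\jsom\cap U_0$ is a proper stratum) that $\phi$ also preserves homology classes of invariant curves. The vertex $R$ in Figure~\ref{hirz} is an interior fixed point of the $S^1(a,b;m)$ action (one positive and one negative weight), and it is joined to the other interior vertex $Q$ by the invariant sphere of self-intersection $-m$; simultaneously, from the graphs in Figures~\ref{fig:GraphsWithFixedSurfaces}–\ref{fig:GraphsIsolatedFixedPoints} together with the hypothesis that $\jsom\cap U_0\neq\emptyset$ (which forces $a=\pm1$ or $m=0$ and the extension to $\T_0$), $R$ is distinguished from the remaining fixed points by the pair of weights at $R$ versus those at $P,Q,S$. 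The bulk of the argument is a case check: go through the list of circle actions for which $\jsom\cap U_0\neq\emptyset$ (the three families enumerated just before Lemma~\ref{lemma:SymphPreservesAnIsolatedFixedPoint}, all assumed to have an isolated fixed point), and in each case verify using Table~\ref{table_weights} that no other fixed point shares the weight set of $R$; hence $\phi(R)=R$.

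Where the weights alone do not separate $R$ from the other fixed points, I would bring in a global invariant: the existence, through $R$, of an invariant $B$-curve that does \emph{not} pass through the vertex on the opposite fibre. Concretely, fix $J\in\jsom\cap U_0$ and let $B_{R,J}$ be the unique invariant $J$-holomorphic sphere in class $B$ through $R$ (invariance by positivity of intersections, since $B\cdot B=0$ and $R$ is $S^1$-fixed). By Lemma~\ref{projection_surjective}, $B_{R,J}$ does not pass through the fixed point $P$ that lies on the same invariant fibre $\{*\}\times S^2$ as $R$. Now $\phi(B_{R,J})$ is an invariant symplectic sphere in class $B$ (homology is preserved) through $\phi(R)$; pulling back $J$ by $\phi$ and using uniqueness of the invariant $B$-curve through a fixed point, $\phi(R)$ must again be an interior fixed point with the same weight data as $R$, and with the same "fibre–partner excluded" property. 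Combining the weight constraints with this and with the adjacency relations in the labelled graph (which vertices are connected by $\Z_k$-spheres, preserved because $\phi$ preserves isotropy weights on edges) leaves $R$ as the only possibility.

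The main obstacle I anticipate is the low-$m$ and low-$|b|$ cases — in particular $m=0$, where all invariant spheres in the graph have zero self-intersection and so the "negative self-intersection edge" trick is unavailable, and where the weight sets of $R$ and another interior vertex may coincide up to sign. There the decisive input will have to be the moment-map labels together with the normalization $\min\mu=0$: an equivariant $\phi$ intertwines the $S^1$-action with itself, hence preserves the moment map up to an additive constant, and the normalization then forces it to preserve $\mu$ exactly, so $\phi$ preserves the moment-map value at each fixed point. Since $R$ is characterized among the fixed points by its moment-map label (as read off from the graphs, using that $\lambda>1$ and that $R$ is the interior vertex on the "short" fibre), this closes the remaining cases. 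I would organize the write-up as: (i) reduce to showing $\phi$ permutes fixed points preserving weights, homology classes of invariant curves, edge isotropy, and moment-map values; (ii) in each admissible graph, read off that $R$ is the unique fixed point with its full invariant package; (iii) conclude $\phi(R)=R$.
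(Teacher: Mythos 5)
Your overall scheme — use Corollary~\ref{cor:ActionPreservesWeights} and Lemma~\ref{lemma:CharacterizationCentralizer} to see that $\phi$ permutes the fixed points preserving both the unordered weight sets and the moment map values, then check case by case that the vertex $R$ is singled out by this package, falling back on invariant $B$-curves and Lemma~\ref{projection_surjective} when it is not — is the same as the paper's. However, there is a genuine gap: you declare $\lambda>1$ a ``standing assumption'' and, in the residual cases, make $\lambda>1$ the decisive input. The lemma is not restricted to $\lambda>1$: when $\lambda=1$ the stratum $U_0$ is all of $\jsom$, the hypothesis $\jsom\cap U_0\neq\emptyset$ is automatic, and the lemma is needed for the actions $S^1(\pm1,\pm1;0)$ at $\lambda=1$ (these feed Theorem~\ref{Thm_isolatedfixedpt} and the $\lambda=1$ row of Theorem~\ref{table}). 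That is precisely the case your plan cannot close: for $\lambda=1$ and $a=b=\pm1$ the classes $B$ and $F$ have equal area, and two of the fixed points share both their unordered weight sets and their moment map labels, so neither the local weights nor the moment values distinguish the relevant vertex — the very tools you reserve for this situation.

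The paper closes exactly this case by contradiction: since $\phi$ preserves $\mu$ it fixes the maximum and the minimum, so if $\phi$ moved $p_0$ it would carry an invariant symplectic sphere in class $B$ through $p_0$ and a fixed extremal point to an invariant $B$-sphere passing through two fixed points that lie on a common invariant fibre, contradicting Lemma~\ref{projection_surjective}. You do sketch this mechanism in your second paragraph, but you deploy it only under $\lambda>1$ and with a labeling slip: in Figure~\ref{hirz} the vertex sharing an invariant fibre with $R$ is $S$ (the edge $R$--$S$ is in class $F$), while $P$ and $Q$ lie on the other fibre, so the ``fibre-partner excluded'' point must be $S$, not $P$. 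With the $\lambda=1$, $a=b=\pm1$ case treated by the curve argument (and the fibre partner corrected), your write-up would coincide with the paper's proof.
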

\begin{proof}
\emph{Case 1: $\lambda >1$:}
By Lemma~\ref{lemma:CharacterizationCentralizer} and Corollary~\ref{cor:ActionPreservesWeights} any such $\phi$ must preserve the moment values and the weights of the fixed points (up to change of order of the tuples). These weights are given in Table~\ref{table_weights} and  the moment map values are given in the graphs~\ref{fig:GraphsWithFixedSurfaces} and \ref{fig:GraphsIsolatedFixedPoints}. The fact that $\jsom \cap U_0$ is non-empty for the circle action implies  that either $m=0$, $|2b-m|=0$, or $|2b+m|=0$. It is now easy to see that under any of these three numerical conditions, the weights and moment map values at $R$ differ from the weights at all other fixed points. The result follows.
\\

\emph{Case 2: $\lambda = 1$:}
If the actions are \emph{not} of  the form $S^1(1,1; 0)$ or $S^1(-1,-1;0)$ with  $\lambda = 1$, then an argument similar to Case 1 holds. 
The only case left are the actions of the form $S^1(1,1; 0)$ or $S^1(-1,-1;0)$ with  $\lambda = 1$. In this case, the  homology classes $F$, $B$ have the same area and the fixed points $R$ and $Q$ have the same weights (up to change of order of tuples) and the same moment map values. We again argue by contradiction in this case. Let $\overline{B}$ denote a fixed curve in class $B$ passing through $R$ and $P$. Suppose $\phi \in \Symp_h^{S^1} (\SSS,\oml)$ doesn't fix the point ~$p_0 = R$. Then $\phi$ has to take the point $R$ to the point $Q$. Further by Lemma~\ref{lemma:CharacterizationCentralizer}, $\phi$ fixes the maximum and minimum and hence $\phi(P) = P$. As $\phi$ preserves homology, the curve $\phi(\overline{B})$ has homology class $B$ and has as to pass through $Q$ and $P$ which contradicts  Lemma~\ref{projection_surjective}. 
\end{proof}

Let $J_0\in U_0$ be the complex structure of the Hirzebruch surface $W_0$ and let $B_{p_0}$ be the unique $J_0$-holomorphic curve containing $p_0$ and representing the homology class $B$.
\begin{cor}
Let $S^1(a,b;m)$ be a circle action with an isolated fixed point and for which the structure $J_0\in U_0$ is invariant. Then the group $\Symp^{S^1}_h(\SSS,\oml)$ acts transitively on the space $\mathcal{S}^{S^1}_{B,p_0}$, and the action map 
\begin{align*}
\Symp^{S^1}_{h}(\SSS,\oml) &\longtwoheadrightarrow \mathcal{S}^{S^1}_{B,p_0}\\
\phi & \mapsto \phi(B_{p_0})
\end{align*}
is a Serre fibration.
\end{cor}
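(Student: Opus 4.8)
\emph{The plan} is to transcribe, essentially verbatim, the arguments of Lemma~\ref{lemma:EvaluationFibrationConfigurations} and Lemma~\ref{first} to the open stratum, making the two adjustments forced by $B\cdot B=0$. Since there is no canonical invariant $B$-curve, everything must be done relative to the chosen fixed point $p_0$; and for the assignment $\phi\mapsto\phi(B_{p_0})$ to take values in $\mathcal{S}^{S^1}_{B,p_0}$ at all, I would first invoke Lemma~\ref{lemma:SymphPreservesAnIsolatedFixedPoint}, which gives $\phi(p_0)=p_0$ for every $\phi\in\Symp^{S^1}_h(\SSS,\oml)$, so that $\phi(B_{p_0})$ is indeed an invariant symplectic sphere in class $B$ (using $\phi_*=\id$ on $H_2$) through $p_0$. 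Next I would fix an invariant fibre $\overline F$ through $p_0$ and the space $\mathcal{C}(B\vee F,p_0)^{S^1}$ of invariant configurations at $p_0$ made of a $B$-sphere and an $F$-sphere meeting transversally and positively at $p_0$, and factor the action map as
\[\Symp^{S^1}_h(\SSS,\oml)\longtwoheadrightarrow\mathcal{C}(B\vee F,p_0)^{S^1}\longrightarrow\mathcal{S}^{S^1}_{B,p_0},\]
exactly as in the commuting triangle of Lemma~\ref{first}. It then suffices to show the first map is a surjective Serre fibration and the second is a Serre fibration.

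\emph{For transitivity on configurations} I would repeat the argument of Lemma~\ref{lemma:EvaluationFibrationConfigurations}. Given two invariant configurations $C\vee A$ and $C'\vee A'$ at $p_0$, pick an invariant compatible $J$ making $C\vee A$ holomorphic, arranged (after a small equivariant perturbation of $J$ if needed) so that the two curves meet $\oml$-orthogonally at $p_0$; the equivariant symplectic neighbourhood theorem for the crossing divisor then produces an equivariant symplectomorphism between invariant neighbourhoods of $C\vee A$ and of the standard configuration $\overline B\vee\overline F$ in $W_0$. By the Isotopy Extension Theorem this extends to an ambient diffeomorphism $\beta$ of $\SSS$, using that any two transverse positive configurations of a $B$-sphere and an $F$-sphere are ambiently isotopic --- which follows from the diffeotopy uniqueness of the $F$-foliation (Proposition~\ref{prop_FIndecomposable}) together with the fact that two homologous sections of $\SSS\to S^2$ are diffeotopic through sections. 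As in Lemma~\ref{lemma:EvaluationFibrationConfigurations}, the complement of $\overline B\vee\overline F$ in $W_0$ is symplectomorphic to $(\C^2,\om_f)$ carrying a linear $\T_0$-action, so $\beta^*\oml=\om_f$ near infinity and the conjugated action is linear there; Proposition~\ref{prop:linearize} then yields an equivariant symplectomorphism $\gamma$, equal to the identity near infinity, with $\gamma^*(\beta^*\oml)=\om_f$, and $\phi:=\gamma\circ\beta$ is the desired element of $\Symp^{S^1}_h(\SSS,\oml)$ (it fixes the classes $B$ and $F$, hence acts trivially on homology). Transitivity on $\mathcal{S}^{S^1}_{B,p_0}$ is then immediate, since any invariant $B$-sphere through $p_0$ completes to such a configuration by adjoining the invariant $J$-holomorphic $F$-curve through $p_0$ for any invariant $J$ making that sphere holomorphic.

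\emph{For the homotopy lifting property} I would follow Lemma~\ref{first}: given $\gamma:D^n\times[0,1]\to\mathcal{S}^{S^1}_{B,p_0}$ together with a lift $\overline{\gamma_0}$ of $\gamma_0$ to $\Symp^{S^1}_h(\SSS,\oml)$, first lift $\gamma$ to a family of configurations through the fibration $\mathcal{C}(B\vee F,p_0)^{S^1}\to\mathcal{S}^{S^1}_{B,p_0}$ --- which is a fibration by the same $\jsom$-triangle argument used in Lemma~\ref{first} --- and then lift that family to $\Symp^{S^1}_h(\SSS,\oml)$ either by the equivariant Gromov--Auroux Lemma~\ref{Au} or by the equivariant version of Banyaga's extension theorem for families, which applies because the complement of a $B\vee F$ configuration is contractible.

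\emph{The main obstacle} is a new wrinkle inside the transitivity step. At the vertex $R$ the two isotropy weights of $S^1(a,b;m)$ need not be distinct --- for instance for $S^1(1,1;0)$ all four fixed points have weights $\{\pm 1,\pm 1\}$ --- so, unlike in Lemma~\ref{lemma:EvaluationFibrationConfigurations}, the invariant $F$-curve through $p_0$ is not automatically $\oml$-orthogonal to the $B$-curve it meets. One must therefore either run the equivariant symplectic neighbourhood theorem for crossing divisors with merely transverse positive intersections, or first perturb the chosen invariant $J$ within $\jsom$ so that the $J$-holomorphic $B$- and $F$-curves through $p_0$ do meet $\oml$-orthogonally. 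This is routine but is the one point requiring genuine care; the remainder of the proof is a faithful transcription of the $U_{2s}$ argument.
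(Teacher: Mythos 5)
Your skeleton is the same as the paper's: well-definedness via Lemma~\ref{lemma:SymphPreservesAnIsolatedFixedPoint}, transitivity via an invariant configuration plus the equivariant neighbourhood theorem, linearization at infinity via Proposition~\ref{prop:linearize}, and the homotopy lifting property via Lemma~\ref{Au}, exactly as in Corollary~\ref{trans} and Lemma~\ref{first}. Where you genuinely diverge is in how you treat the one real wrinkle, which you correctly identified: the weights at $p_0=R$ may coincide, so the invariant $J$-holomorphic $B$- and $F$-curves through $p_0$ need not meet $\oml$-orthogonally there. The paper's resolution is cheaper and avoids the issue entirely: every curve in $\mathcal{S}^{S^1}_{B,p_0}$ also passes through the second fixed point $p_1=Q$ of Figure~\ref{hirz} (the $S^1$-action on such a sphere is nontrivial since $p_0$ is isolated, and Lemma~\ref{projection_surjective} rules out the fibre-partner $S$), and for the actions in question the weights are distinct at at least one of $p_0$, $p_1$; one then attaches the orthogonal fibre and runs the arguments of Corollary~\ref{trans} and Lemma~\ref{first} verbatim at that point, with no Gompf-type standardization needed.

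Your alternative fix is workable but is the weak spot as written. The option of perturbing $J$ does not obviously repair the triangle argument of Lemma~\ref{first}: the map $\jsom\cap U_0\to\mathcal{C}(B\vee F,p_0)^{S^1}$ into \emph{orthogonal} configurations is simply not defined for every invariant $J$ when the weights at $p_0$ coincide, and a pointwise perturbation of $J$ cannot be chosen coherently in families without further argument. The other option, working with merely transverse positive crossings, does go through, but only after standardizing the crossing with the equivariant Gompf argument (Theorem~\ref{gmpf}), including checking that an invariant local model orthogonal to the fibre exists at an equal-weight fixed point (it does, since the linearized action there is by a complex scalar); this is more machinery than the paper's observation about $p_1$, which is the cleaner route.
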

\begin{proof}

Since any element of $\Symp^{S^1}_h(\SSS,\oml)$ fixes $p_0$, it follows that this group acts on $\mathcal{S}^{S^1}_{B,p_0}$. Let $p_1$ be the other fixed point corresponding to the point $Q$ in Figure~\ref{hirz}. All curves in  $\mathcal{S}^{S^1}_{B,p_0}$ pass through $p_1$ and $p_0$. Since the weights at one of $p_0$ or $p_1$ are always distinct, we can use the fixed point with distinct weights and proceed as in the proofs of Corollary ~\ref{trans} and Lemma~\ref{first} to show that the action defines a fibration.
\end{proof}
As before, we can now show that the stratum $\jsom \cap U_{0}$ is homotopy equivalent to a space of invariant curves.
\begin{lemma}
The natural map $\alpha: \jsom \cap U_{0} \to {\mathcal{S}^{S^1}_{B,p_0}}$ defined by sending an almost complex structure $J \in \jsom \cap U_{0}$ to the unique $J$-holomorphic curve in class $B$ passing through $p_0$ is a weak homotopic equivalence. 
\end{lemma}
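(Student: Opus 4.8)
The plan is to run the same fibration argument as in Lemma~\ref{first2}: I will show that $\alpha$ is a Serre fibration whose fibre is contractible, which immediately yields the weak homotopy equivalence $\jsom\cap U_0 \simeq \mathcal{S}^{S^1}_{B,p_0}$. First I would check that $\alpha$ is well defined. For $J\in\jsom\cap U_0$ the class $B=D_0$ is represented by an embedded $J$-holomorphic sphere and, since $J\in U_0$, the $J$-curves in class $B$ form a regular foliation of $\SSS$; in particular there is exactly one such curve $B_{p_0,J}$ through the fixed point $p_0$. Because $p_0$ is fixed by the circle, $J$ is invariant, and $B\cdot B=0$, Corollary~\ref{cor_pos} forces $g\cdot B_{p_0,J}=B_{p_0,J}$ for every $g\in S^1$ — the alternative $g\cdot B_{p_0,J}\cap B_{p_0,J}=\emptyset$ is impossible since both spheres contain $p_0$ — so $\alpha(J):=B_{p_0,J}$ genuinely lies in $\mathcal{S}^{S^1}_{B,p_0}$.

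Next I would prove that $\alpha$ is a Serre fibration, arguing exactly as in the proofs of Corollary~\ref{trans} and Lemma~\ref{first}. Given a family $\gamma:D^n\times[0,1]\to\mathcal{S}^{S^1}_{B,p_0}$ and a lift $g_0:D^n\to\jsom\cap U_0$ of $\gamma|_{D^n\times\{0\}}$, I would invoke the transitivity of the $\Symp^{S^1}_h(\SSS,\oml)$-action on $\mathcal{S}^{S^1}_{B,p_0}$ established in the preceding corollary: first pick a lift $\overline{\gamma_0}:D^n\to\Symp^{S^1}_h(\SSS,\oml)$ of $\gamma|_{D^n\times\{0\}}$ along the orbit fibration $\theta:\phi\mapsto\phi(B_{p_0})$, then extend it — using the homotopy lifting property of $\theta$, equivalently the equivariant Gromov--Auroux Lemma~\ref{Au} — to a lift $\overline{\gamma}:D^n\times[0,1]\to\Symp^{S^1}_h(\SSS,\oml)$ of $\gamma$. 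Writing $K(s):=(\overline{\gamma_0}(s)^{-1})_{*}g_0(s)$, one sees that $K(s)$ is an invariant almost complex structure for which $B_{p_0}$ is holomorphic, so that $\gamma'(s,t):=\overline{\gamma}(s,t)_{*}K(s)$ is a lift of $\gamma$ restricting to $g_0$ at $t=0$.

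It then remains to identify the fibre. The fibre $\alpha^{-1}(B_{p_0})$ is precisely the space $\jsom(B_{p_0})$ of $S^1$-invariant $\oml$-compatible almost complex structures making $B_{p_0}$ holomorphic; note this set automatically lies in $U_0$ since $B_{p_0}$ is an embedded sphere in class $B=D_0$. As in the proof above that $\jsom(\overline D)\simeq\{*\}$, this space is contractible — one passes to the homeomorphic and convex space of invariant compatible Riemannian metrics for which $B_{p_0}$ is totally geodesic. A Serre fibration with contractible fibres is a weak homotopy equivalence, which is the claim.

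The step I expect to require the most care is not conceptual but bookkeeping: arranging the equivariant push-forwards in the lift $\gamma'$ so that it restricts to the given $g_0$ at $t=0$ while still covering $\gamma$. All the genuinely substantive inputs — transitivity of the equivariant symplectomorphism group on $\mathcal{S}^{S^1}_{B,p_0}$, the equivariant isotopy-extension (Gromov--Auroux) lemma, and the contractibility of $\jsom(B_{p_0})$ — are already available, so the remainder of the argument is routine.
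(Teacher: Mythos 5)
Your proposal is correct and takes essentially the same route as the paper, which simply declares the argument identical to the earlier lemma for the strata $U_{2s}$: well-definedness via positivity of intersections, the Serre-fibration property obtained by lifting the family of curves to a family in $\Symp^{S^1}_h(\SSS,\oml)$ through the orbit fibration (equivariant Gromov--Auroux), and contractibility of the fibre $\jsom(B_{p_0})$; your bookkeeping with $K(s)=(\overline{\gamma_0}(s)^{-1})_{*}g_0(s)$ even makes explicit the matching of the lift with the given initial condition, which the paper leaves implicit. One small caveat: justify the contractibility of the fibre as the paper does, via the standard (equivariantly averaged) correspondence with compatible metrics and the polar-decomposition retraction, rather than via metrics making $B_{p_0}$ totally geodesic --- a $J$-holomorphic sphere need not be totally geodesic for the associated metric, and the set of metrics with that property is not the convex set the standard argument uses.
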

\begin{proof}
The argument is identical to the proof of Lemma~\ref{first2}
\end{proof}

From now on, we can determine the homotopy type of $\jsom \cap U_{0}$ by going through a similar sequence of fibrations and homotopy equivalences as in Section~\ref{section:ActionOnU_2k}, namely,
\[\Stab^{S^1}(B_{p_0}) \to \Symp^{S^1}_{h}(\SSS,\oml) \longtwoheadrightarrow \mathcal{S}^{S^1}_{B,p_0} \mathbin{\textcolor{blue}{\xrightarrow{\text{~~~$\simeq$~~~}}}}  \jsom \cap U_{0}\rule{0em}{2em}\]
    
\[\Fix^{S^1}(B_{p_0}) \to \Stab^{S^1}_{p_0}(B_{p_0}) \longtwoheadrightarrow  \Symp^{S^1}(B_{p_0}) \mathbin{\textcolor{blue}{\xrightarrow{\text{~~~$\simeq$~~~}}}} S^1\rule{0em}{2em}\]

\[\Fix^{S^1} (N(B_{p_0})) \to \Fix^{S^1}(B_{p_0}) \longtwoheadrightarrow  \Aut^{S^1}(N(B_{p_0})) \mathbin{\textcolor{blue}{\xrightarrow{\text{~~~$\simeq$~~~}}}} S^1\rule{0em}{2em} \]

\[\Stab^{S^1}(\overline{F}) \cap \Fix^{S^1}(N(B_{p_0})) \to \Fix^{S^1}(N(B_{p_0})) \longtwoheadrightarrow  \overline{\mathcal{S}^{S^1}_{F,p_0}} \mathbin{\textcolor{blue}{\xrightarrow{\text{~~~$\simeq$~~~}}}} \mathcal{J}^{S^1}(B_{p_0})\rule{0em}{2em}\]
   
\[\Fix^{S^1}(\overline{F}) \to \Stab^{S^1}(\overline{F}) \cap \Fix^{S^1}(N(B_{p_0})) \longtwoheadrightarrow  \Symp^{S^1}(\overline{F}, N(p_0))  \mathbin{\textcolor{blue}{\xrightarrow{\text{~~~$\simeq$~~~}}}} \left\{*\right\}\rule{0em}{2em}\]

\[\left\{*\right\} \mathbin{\textcolor{blue}{\xleftarrow{\text{~~~$\simeq$~~~}}}} \Fix^{S^1}(N(B_{p_0} \vee \overline{F})) \to \Fix^{S^1}(\overline{F}) \longtwoheadrightarrow  \Aut^{S^1}(N(B_{p_0} \vee \overline{F}))  \mathbin{\textcolor{blue}{\xrightarrow{\text{~~~$\simeq$~~~}}}} \left\{*\right\}\rule{0em}{2em}\]
where $\overline{\mathcal{S}^{S^1}_{F,p_0}}$ denotes the space of all symplectically embedded curve in the class $F$ that pass through $p_0$ and agree with a standard curve $F_{p_0}$ in a neighbourhood of $p_0$. The proofs that these maps are fibrations, and the proofs of the homotopy equivalences are exactly the same as before.
Consequently, we obtain the following homotopical description of $\jsom \cap U_{0}$.
\begin{thm}\label{Thm_isolatedfixedpt}
Consider one of the following circle actions on $(\SSS,\oml)$
\begin{itemize}
    \item $S^1(a,b;0)$ with $(a,b)\neq (\pm1,0)$ and $(a,b)\neq(0,\pm1)$, or
    \item  $S^1(1,b;m)$ with $|2b-m|=0$ and $2\lambda > |2b-m|$, or
    \item $S^1(-1,b;m)$ with $|2b+m|=0$ and $2\lambda > |2b+m|$.
\end{itemize}
Then the stratum $\jsom \cap U_0$ is non-empty and 
\[\Symp^{S^1}_{h,p_0}(\SSS,\oml)/ \T_0 \simeq \jsom \cap U_{0}\]
\end{thm}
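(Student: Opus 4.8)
The plan is to deduce Theorem~\ref{Thm_isolatedfixedpt} from the chain of fibrations displayed immediately above its statement, following the blueprint of the proof of Theorem~\ref{homogenous}.

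\emph{Step 1: non-emptiness and reduction to the stabilizer.} First I would check that $\jsom\cap U_0\neq\emptyset$ for exactly the three families listed. For $S^1(a,b;0)$ this is immediate, the action already living inside $\T_0$; for $S^1(1,b;m)$ with $|2b-m|=0$ and $2\lambda>|2b-m|$, and for $S^1(-1,b;m)$ with $|2b+m|=0$ and $2\lambda>|2b+m|$, it follows from Corollaries~\ref{cor:CircleExtensionsWith_a=1} and~\ref{cor:CircleExtensionsWith_a=-1} together with Proposition~\ref{prop:ToricExtensionCorrespondance}. Each of these actions has an isolated fixed point $p_0$, which we take to be the vertex $R$ of Figure~\ref{hirz}. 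By Lemma~\ref{lemma:SymphPreservesAnIsolatedFixedPoint}, $\Symp^{S^1}_h(\SSS,\oml)=\Symp^{S^1}_{h,p_0}(\SSS,\oml)$; in particular this group acts on $\mathcal{S}^{S^1}_{B,p_0}$ and $\Stab^{S^1}(B_{p_0})=\Stab^{S^1}_{p_0}(B_{p_0})$. The corollary and lemma preceding the statement then furnish a Serre fibration
\[
\Stab^{S^1}(B_{p_0})\longrightarrow\Symp^{S^1}_{h,p_0}(\SSS,\oml)\longrightarrow\mathcal{S}^{S^1}_{B,p_0}
\]
together with a weak homotopy equivalence $\alpha\colon\jsom\cap U_0\xrightarrow{\ \simeq\ }\mathcal{S}^{S^1}_{B,p_0}$. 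Hence it is enough to prove $\Stab^{S^1}(B_{p_0})\simeq\T_0$: granting this, the base of the fibration becomes $\Symp^{S^1}_{h,p_0}(\SSS,\oml)/\T_0$, which is therefore weakly equivalent to $\jsom\cap U_0$.

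\emph{Step 2: computing $\Stab^{S^1}(B_{p_0})$.} Here I would run the remaining five fibrations of the displayed chain from the bottom up, the proofs being verbatim those of Section~\ref{section:ActionOnU_2k}. The bottom fibration has fiber and base both contractible, by Theorem~\ref{thm:EqGr} and Lemma~\ref{ngauge}, so $\Fix^{S^1}(\overline F)\simeq\{*\}$; the next two fibrations have contractible base, giving in turn $\Stab^{S^1}(\overline F)\cap\Fix^{S^1}(N(B_{p_0}))\simeq\{*\}$ and then $\Fix^{S^1}(N(B_{p_0}))\simeq\{*\}$; the fibration $\Fix^{S^1}(N(B_{p_0}))\to\Fix^{S^1}(B_{p_0})\to\Gauge1^{S^1}(N(B_{p_0}))$ of Lemma~\ref{gauge} then gives $\Fix^{S^1}(B_{p_0})\simeq S^1$. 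Feeding this into the fibration $\Fix^{S^1}(B_{p_0})\to\Stab^{S^1}_{p_0}(B_{p_0})\to\Symp^{S^1}(B_{p_0})$, whose base is homotopy equivalent to $S^1$ — here one uses that $(a,b)\neq(0,\pm1)$, so the induced circle action on $B_{p_0}$ is not pointwise trivial — and comparing with the standard inclusion $\T_0\hookrightarrow\Stab^{S^1}(B_{p_0})$ exactly as in the five-lemma argument of Theorem~\ref{homogenous}, one concludes that $\T_0\hookrightarrow\Stab^{S^1}(B_{p_0})$ is a weak homotopy equivalence.

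\emph{Where the difficulty lies.} The only genuinely new ingredient, compared with the case $s\neq0$ of Section~\ref{section:ActionOnU_2k}, sits at the top of the argument: on $U_0$ there is no invariant curve of negative self-intersection to serve as the target of a natural map from $\jsom$, so one must instead fix $p_0$ and work with $\mathcal{S}^{S^1}_{B,p_0}$, and for this space to carry a transitive $\Symp^{S^1}_h$-action one needs the whole equivariant symplectomorphism group to preserve $p_0$. This is precisely Lemma~\ref{lemma:SymphPreservesAnIsolatedFixedPoint}, whose proof is the delicate step: it rests on reading off the isotropy weights and moment-map values at each fixed point and checking that the vertex $R$ is distinguished among them, with the pair of actions $S^1(1,1;0)$ and $S^1(-1,-1;0)$ at $\lambda=1$ requiring the separate contradiction argument via Lemma~\ref{projection_surjective}. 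Everything downstream is then a mechanical rerun of the previous section; the only point to keep track of is that the hypotheses $(a,b)\neq(0,\pm1)$ and $(a,b)\neq(\pm1,0)$ remain in force, since these are exactly what guarantee $\Symp^{S^1}(B_{p_0})\simeq S^1$ rather than $\SO(3)$, and hence make the stabilizer the $2$-torus $\T_0$ rather than a larger group.
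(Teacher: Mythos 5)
Your proposal is correct and is essentially the paper's own argument: non-emptiness of $\jsom\cap U_0$, the reduction to the pointed space $\mathcal{S}^{S^1}_{B,p_0}$ via Lemma~\ref{lemma:SymphPreservesAnIsolatedFixedPoint} (with Lemma~\ref{projection_surjective} handling $S^1(\pm1,\pm1;0)$ at $\lambda=1$), the weak equivalence $\jsom\cap U_0\simeq\mathcal{S}^{S^1}_{B,p_0}$, and then the bottom-up run through the same chain of fibrations with the five-lemma comparison against $\T_0$, exactly as in Section~\ref{section:ActionOnU_2k} and Theorem~\ref{homogenous}. One slip in your closing commentary only: $S^1(\pm1,0;0)$ is excluded not because $\Symp^{S^1}(B_{p_0})$ would be $\SO(3)$ there (its pointwise-fixed curves lie in class $F$, not $B$), but because that action, like $S^1(0,\pm1;0)$, has no isolated fixed point at all, so the choice of $p_0$ and Lemma~\ref{lemma:SymphPreservesAnIsolatedFixedPoint} are unavailable; those cases are treated separately in Theorem~\ref{Thm_fixedsurfaces}.
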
\qed


\subsubsection{Actions without isolated fixed points} 

We now turn our attention to the action of the group of equivariant symplectomorphisms $\Symp^{S^1}_h(\SSS,\oml)$ on the stratum $\jsom \cap U_{0}$ when the circle action is either
\begin{enumerate}
    \item $S^1(\pm 1,0;0)$ or
    \item $S^1(0,\pm 1;0)$.
\end{enumerate}

These actions has no isolated fixed points and the associated graphs are of the form
\begin{figure}[H]
    \centering


\subcaptionbox{Subcase 1: $S^1(\pm 1,0;0)$  }
[.45\linewidth]
{\begin{tikzpicture}
[scale=0.85, every node/.style={scale=0.85}]
\draw [fill] (0,0) ellipse (0.3cm and 0.1cm); 
\draw [fill] (0,2.8) ellipse (0.3cm and 0.1cm); 
\node[above] at (0,2.9) {$F_{max}$}; 
\node[below] at (0,-0.1) {$F_{min}$}; 
\node[left] at (-0.3,2.8) {$\mu = \lambda$};
\node[left] at (-0.3,0){$\mu=0$};
\node[right] at (0.3,2.8){$A= 1$};
\node[right] at (0.3,0){$A= 1$};
\end{tikzpicture}
}
\quad
\subcaptionbox{Subcase 2: $S^1(0,\pm 1;0)$}
[.45\linewidth]
{\begin{tikzpicture}
[scale=0.85, every node/.style={scale=0.85}]
\draw [fill] (0,0) ellipse (0.3cm and 0.1cm); 
\draw [fill] (0,2.8) ellipse (0.3cm and 0.1cm); 
\node[above] at (0,2.9) {$B_{max}$}; 
\node[below] at (0,-0.1) {$B_{min}$}; 
\node[left] at (-0.3,2.8) {$\mu = 1$};
\node[left] at (-0.3,0){$\mu=0$};
\node[right] at (0.3,2.8){$A= \lambda$};
\node[right] at (0.3,0){$A= \lambda$};
\end{tikzpicture}
}
\end{figure}\label{subcase2:Fixedsurface}
\noindent where $\mu$ denotes the value of the moment map and $A$ denotes the area of the fixed surface. We notice that there are pointwise fixed curves in the class $F$ for the circle action $S^1(\pm 1,0;0)$ and pointwise fixed curves in class $B$ for the action $S^1(0,\pm 1;0)$. We denote the fixed surface which is a minimum for the moment map as $F_{min}$, $B_{min}$ respectively and the maximum by $F_{max}$, $B_{max}$.
\\

Consider the action $S^1(0,\pm 1;0)$. By Lemma~\ref{lemma:CharacterizationCentralizer} we note that any symplectomorphism $\phi \in \Symp^{S^1}_h(\SSS,\oml)$ must send $B_{max}$ to itself. Then, given $p_0 \in B_{max}$ corresponding to the fixed point $R$ in Figure~\ref{hirz}, we define the following sequence of fibrations and homotopy equivalences:
\[\Fix^{S^1}(B_{max}) \longrightarrow \Symp^{S^1}_h(\SSS,\oml)  \longtwoheadrightarrow \Symp(B_{max}) \mathbin{\textcolor{blue}{\xrightarrow{\text{~~~$\simeq$~~~}}}} SO(3)\rule{0em}{2em}\]
\[\Stab^{S^1}(F_{p_0}) \longrightarrow \Fix^{S^1}(B_{max}) \longtwoheadrightarrow \overline{\mathcal{S}^{S^1}_{F,p_0}} \mathbin{\textcolor{blue}{\xrightarrow{\text{~~~$\simeq$~~~}}}} \jsom \simeq \{*\}\rule{0em}{2em}\]
\[\Fix^{S^1} (F_{p_0}) \longrightarrow \Stab^{S^1}(F_{p_0}) \longtwoheadrightarrow \Symp^{S^1}(F_{p_0}) \mathbin{\textcolor{blue}{\xrightarrow{\text{~~~$\simeq$~~~}}}} S^1\rule{0em}{2em}\]
\[\left\{*\right\}\mathbin{\textcolor{blue}{\xleftarrow{\text{~~~$\simeq$~~~}}}} \Fix^{S^1}(N(B_{max} \vee F_{p_0})) \longrightarrow  \Fix^{S^1}(F_{p_0})\longtwoheadrightarrow \Aut^{S^1}(N(B_{max} \vee \overline{F}))
\mathbin{\textcolor{blue}{\xrightarrow{\text{~~~$\simeq$~~~}}}}  \left\{*\right\}\rule{0em}{2em}\]

For the other circle action $S^1(\pm 1,0;0)$, we obtain a similar sequence of fibrations and homotopy equivalences in which $B_{\max}$ is replaced by the curve $F_{\max}$. As before, putting all the homotopy equivalences together, we obtain the following theorem:
\begin{thm}\label{Thm_fixedsurfaces}
Consider the following two circle actions on $(\SSS,\oml)$  
\begin{itemize}
\item $S^1(\pm 1,0;0)$ or
\item  $S^1(0,\pm 1;0)$
\end{itemize}
Then there is a homotopy equivalence
\[\Symp^{S^1}_h(\SSS,\oml)/(S^1 \times \SO(3)) \simeq \jsom \cap U_0\]
\end{thm}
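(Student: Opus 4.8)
\emph{Plan of proof.} The strategy is to verify that each map in the tower of fibrations displayed above is a Serre fibration, to identify the homotopy type of every base and fibre in it, and then to assemble this data through the associated long exact sequences together with a five-lemma comparison, exactly as in the proof of Theorem~\ref{homogenous}. We carry this out for the action $S^1(0,\pm1;0)$; the action $S^1(\pm1,0;0)$ is treated identically after exchanging the roles of the classes $B$ and $F$, so that $B_{\max}$ is replaced by $F_{\max}$ and the distinguished invariant fibre $F_{p_0}$ is replaced by an invariant curve in class $B$ through $p_0$.

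\emph{The tower.} Since $B_{\max}$ is the fixed surface lying over the maximum of the normalised moment map, Lemma~\ref{lemma:CharacterizationCentralizer} forces every $\phi\in\Symp^{S^1}_h(\SSS,\oml)$ to preserve $\mu$, hence to send $B_{\max}$ to itself; as $S^1$ fixes $B_{\max}$ pointwise, $\Symp^{S^1}(B_{\max})=\Symp(B_{\max})\simeq\SO(3)$ by Smale's theorem, and restriction to $B_{\max}$ is surjective and admits local sections near the identity (an equivariant symplectic neighbourhood theorem together with an $S^1$-invariant cut-off of a pulled-back Hamiltonian, as in the proof of Lemma~\ref{stab}), so it is a Serre fibration with fibre $\Fix^{S^1}(B_{\max})$. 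Fixing $p_0\in B_{\max}$, the isotropy weights of $S^1(0,\pm1;0)$ at $p_0$ are $\{0,\mp1\}$, so $S^1$ rotates the invariant fibre $F_{p_0}$ through $p_0$ with two fixed poles, and the last three fibrations of the tower are established by the arguments of Section~\ref{section:ActionOnU_2k}: the evaluation $\Fix^{S^1}(B_{\max})\to\overline{\mathcal{S}^{S^1}_{F,p_0}}$, $\phi\mapsto\phi(F_{p_0})$, is a fibration whose base $\overline{\mathcal{S}^{S^1}_{F,p_0}}\simeq\jsom$ is contractible, with fibre $\Stab^{S^1}(F_{p_0})$ (inside $\Fix^{S^1}(B_{\max})$); restriction of this fibre to $F_{p_0}$ is a fibration onto $\Symp^{S^1}(F_{p_0})\simeq C^\infty(\mu(F_{p_0}),S^1)\simeq S^1$ (its elements fix $p_0$, hence both poles of $F_{p_0}$, and so lie in the identity component) with fibre $\Fix^{S^1}(F_{p_0})$; and $\Fix^{S^1}(F_{p_0})\to\Gauge1^{S^1}(N(B_{\max}\vee F_{p_0}))\simeq\{*\}$ is a fibration with fibre $\Fix^{S^1}(N(B_{\max}\vee F_{p_0}))$, which is contractible by the equivariant Gromov Theorem~\ref{thm:EqGr} since the complement of $B_{\max}\cup F_{p_0}$ in $\SSS$ is an $S^1$-equivariantly star-shaped domain of $\R^4$ (the relevant contractibility statements are exactly those appearing in Lemma~\ref{ngauge} and the lemmas preceding it). Chasing these three sequences gives $\Fix^{S^1}(F_{p_0})\simeq\{*\}$, then $\Stab^{S^1}(F_{p_0})\simeq S^1$, and finally $\Fix^{S^1}(B_{\max})\simeq S^1$.

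\emph{Assembling.} Feeding $\Fix^{S^1}(B_{\max})\simeq S^1$ into the first fibration produces a fibration $S^1\to\Symp^{S^1}_h(\SSS,\oml)\to\SO(3)$. Let $K=S^1\times\SO(3)\subset\Symp^{S^1}_h(\SSS,\oml)$ be the group of K\"ahler isometries of $(\SSS,\oml,J_0)$ commuting with $S^1(0,\pm1;0)$: here the $\SO(3)$ factor acts on $B_{\max}$ as the full group of area-preserving rotations, while the $S^1$ factor is the acting circle itself, which rotates $F_{p_0}$ and fixes $B_{\max}$ pointwise. Consequently the inclusion $K\hookrightarrow\Symp^{S^1}_h(\SSS,\oml)$ covers the identity of $\Symp(B_{\max})\simeq\SO(3)$ and restricts on fibres to $S^1\hookrightarrow\Fix^{S^1}(B_{\max})$, which is a homotopy equivalence because the circle of rigid rotations of $F_{p_0}$ generates $\pi_1\Fix^{S^1}(B_{\max})\cong\Z$. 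By the five-lemma $K\hookrightarrow\Symp^{S^1}_h(\SSS,\oml)$ is a homotopy equivalence; since $\jsom\cap U_0=\jsom$ is contractible (by Corollary~\ref{cor:IntersectingOnlyOneStratum} the space $\jsom$ meets no stratum other than $U_0$, and it is contractible by the usual argument carried out $S^1$-equivariantly), we obtain $\Symp^{S^1}_h(\SSS,\oml)/(S^1\times\SO(3))\simeq\{*\}\simeq\jsom\cap U_0$, as required.

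\emph{Main obstacle.} The delicate point is the very first fibration: one must extend an arbitrary area-preserving diffeomorphism of the fixed surface $B_{\max}$ to a genuine element of $\Symp^{S^1}_h(\SSS,\oml)$ and produce local sections, so that restriction to $B_{\max}$ really is a Serre fibration. The other point needing care is the verification in the last paragraph that the fibre inclusion $S^1\hookrightarrow\Fix^{S^1}(B_{\max})$ is a homotopy equivalence — and not merely a degree-$k$ self-map of a circle — which is what licenses the five-lemma comparison.
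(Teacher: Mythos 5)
Your proposal is correct and takes essentially the same route as the paper: it uses the identical tower of fibrations (restriction to $B_{\max}$ onto $\Symp(B_{\max})\simeq\SO(3)$, evaluation on $F_{p_0}$ onto $\overline{\mathcal{S}^{S^1}_{F,p_0}}\simeq\{*\}$, restriction to $F_{p_0}$ onto $\Symp^{S^1}(F_{p_0})\simeq S^1$, and the gauge/equivariant-Gromov step), assembled by the same five-lemma comparison with the K\"ahler isometry group $S^1\times\SO(3)$ used in Theorem~\ref{homogenous}. The paper leaves that final assembly implicit ("putting all the homotopy equivalences together"), and your write-up merely spells it out, including the check that the rigid rotations of $F_{p_0}$ generate $\pi_1\Fix^{S^1}(B_{\max})$.
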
\qed

For convenience, we collect together the two main results of this section in the theorem below. 
\begin{thm}{\label{homog}}
Consider the action $S^1(a,b;m)$ on $(\SSS,\oml)$ such that  one of the following  hold:
\begin{itemize}
    \item $S^1(a,b;0)$ with $(a,b)\neq (\pm1,0)$ and $(a,b)\neq(0,\pm1)$, or
    \item  $S^1(1,b;m)$ with $|2b-m|=0$ and $2\lambda > |2b-m|$, or
    \item $S^1(-1,b;m)$ with $|2b+m|=0$ and $2\lambda > |2b+m|$.
\end{itemize}
Then the stratum $\jsom \cap U_0$ is non-empty and 
\[\Symp^{S^1}_{h,p_0}(\SSS,\oml)/ \T_0 \simeq \jsom \cap U_{0}\]
If instead the $S^1(a,b;m)$ action satisfies 
\begin{itemize}
\item $(a,b;m) = (\pm 1,0;0)$ or
\item  $(a,b;m) = (0,\pm 1;0)$
\end{itemize} then we have that \[\Symp^{S^1}_h(\SSS,\oml)/(S^1 \times \SO(3)) \simeq \jsom \cap U_0\]
and $\jsom$ intersects only the strata $U_0$.
\end{thm}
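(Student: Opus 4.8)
The statement simply repackages the two homotopy equivalences established in this subsection, together with one extra assertion about which strata $\jsom$ meets, so the plan is to invoke Theorem~\ref{Thm_isolatedfixedpt} and Theorem~\ref{Thm_fixedsurfaces} and then add a short argument from the toric-extension correspondence. The numerical hypotheses listed in Theorem~\ref{homog} split into two disjoint groups according to whether the circle action has an isolated fixed point or only a pointwise-fixed surface, and these two groups are precisely the hypotheses of the two cited theorems.

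\emph{Actions with an isolated fixed point.} For the actions $S^1(a,b;0)$ with $(a,b)\neq(\pm1,0)$ and $(a,b)\neq(0,\pm1)$, and for $S^1(\pm1,b;m)$ with $|2b\mp m|=0$ and $2\lambda>|2b\mp m|$, I would quote Theorem~\ref{Thm_isolatedfixedpt} verbatim: its hypotheses coincide with the first list in Theorem~\ref{homog}, and it delivers both the non-emptiness of $\jsom\cap U_0$ and the homotopy equivalence $\Symp^{S^1}_{h,p_0}(\SSS,\oml)/\T_0\simeq\jsom\cap U_0$. The non-emptiness itself comes from Corollary~\ref{cor:IntersectingOnlyOneStratum} together with Corollaries~\ref{cor:CircleExtensionsWith_a=1} and~\ref{cor:CircleExtensionsWith_a=-1}, which guarantee that each of these actions extends to $\T_0$, hence, by Proposition~\ref{prop:ToricExtensionCorrespondance}, that $\jsom$ meets $U_0$.

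\emph{Actions with a fixed surface.} For the two families $(a,b;m)=(\pm1,0;0)$ and $(a,b;m)=(0,\pm1;0)$, I would quote Theorem~\ref{Thm_fixedsurfaces}, which gives $\Symp^{S^1}_h(\SSS,\oml)/(S^1\times\SO(3))\simeq\jsom\cap U_0$. For the supplementary claim that $\jsom$ then meets only $U_0$, I would use Proposition~\ref{prop:ToricExtensionCorrespondance}: $\jsom$ intersects $U_n$ if and only if the action extends to $\T_n$. Since these actions satisfy $b=0$ (resp. $a=0\neq\pm1$), Proposition~\ref{prop:AtMostTwoExtensions} — equivalently Corollary~\ref{cor:IntersectingOnlyOneStratum} — shows that $\T_0$ is their only toric extension; therefore $\jsom\subset U_0$, and being non-empty it equals $\jsom\cap U_0$.

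I do not expect a genuine obstacle: all of the analytic content — the towers of fibrations and the homotopy equivalences of their base spaces and fibres — was carried out in proving Theorems~\ref{Thm_isolatedfixedpt} and~\ref{Thm_fixedsurfaces}. The only point needing care is the bookkeeping, namely checking that the two lists of numerical conditions in Theorem~\ref{homog} are exhaustive for the cases claimed and that they match, respectively, the isolated-fixed-point and fixed-surface hypotheses of the two cited theorems, so that $\jsom\cap U_0$ is non-empty in every case treated.
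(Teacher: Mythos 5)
Your proposal matches the paper's treatment: Theorem~\ref{homog} is stated there simply as a collection of Theorems~\ref{Thm_isolatedfixedpt} and~\ref{Thm_fixedsurfaces}, with the ``only the stratum $U_0$'' claim for $(\pm1,0;0)$ and $(0,\pm1;0)$ following from the toric-extension correspondence (Proposition~\ref{prop:ToricExtensionCorrespondance} together with Corollaries~\ref{cor:lambda=1_IntersectingOnlyOneStratum} and~\ref{cor:IntersectingOnlyOneStratum}), exactly as you argue. No gaps; your bookkeeping of which numerical conditions fall under which cited theorem is correct.
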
\qed


\chapter{The homotopy type of the symplectic centralisers of \texorpdfstring{$S^1(a,b;m)$}{S1(a,b;m)}}\label{Chapter:homotopy type of the symplectic centralisers}
Given any Hamiltonian circle action on $(\SSS,\oml)$, the two Theorems~\ref{cor:IntersectingOnlyOneStratum} and~\ref{cor:IntersectingTwoStrata} give us a complete understanding of which strata the space $\jsom$ intersects. Together with Theorems~\ref{homogenous}, and~\ref{homog} describing the strata as homogeneous spaces, this allows us to compute the homotopy type of the group of equivariant symplectomorphisms.

\section{When \texorpdfstring{$\jsom$}{J\hat S1} is homotopy equivalent to a single symplectic orbit}
The homotopy type of $ $ can be easily determined whenever the circle action $S^1(a,b;m)$ has a single invariant stratum. This occurs precisely when one of the conditions on $a,b,m,\lambda$ listed in Table~\ref{table:CentralizersCircleSSS1Stratum} are met.
\begin{thm}\label{table}
Consider the  circle action $S^1(a,b;m)$ on $(\SSS,\oml)$ for which there is a single invariant stratum. The homotopy type of the symplectic centralizer $\Symp^{S^1}(S^2 \times S^2,\oml)$ is given in Table~\ref{table:CentralizersCircleSSS1Stratum}.
\end{thm}
{\centering
\begin{table}[h!]
\begin{tabular}{|p{40mm}|p{48mm}|p{26mm}|}
\hline
 $S^1$ action $(a,b;m)$ & Conditions on $\lambda$ &$\Symp^{S^1}(S^2 \times S^2)$\\
\hline
\hline
{$(0,\pm 1;m)$ ~~ $m\neq 0$}  &$2\lambda > m$ & $\simeq S^1 \times SO(3)$ \\
\hline
$(0,\pm 1;0)$ or $(\pm 1,0;0)$ &$\lambda \geq 1$ &$\simeq S^1 \times SO(3)$ \\

\hline
$(\pm 1,\pm1,0)$&$\lambda = 1$ & $\simeq \T \times \mathbb{Z}_2$ \\
\hline
 $(\pm 1,0;m) ~ m\neq0$ &$2\lambda > m$ &$\simeq \T$\\
 \hline
$(\pm 1,\pm m;m)$ $m \neq 0$ & $2\lambda > m$ & $\simeq \T$\\
\hline
$( 1,b;m)$ $b \neq \{ m,0\}$ &$2\lambda > m$ and $|2b-m| \geq2 \lambda \geq 1$ &$\simeq \T$ \\
\hline
$(-1,b;m), b \neq \{ -m,0\}$ & $2\lambda > m$ and $|2b+m| \geq2 \lambda \geq 1$ &$\simeq \T$ \\
\hline
All other values of\newline $(a,b;m)$ except $(\pm 1,b;m)$ &$ 2\lambda > m$   &$\simeq \T$ \\
\hline
\end{tabular}\caption{Centralizers of circle actions on $(\SSS,\oml)$ with a single invariant stratum}
\label{table:CentralizersCircleSSS1Stratum}
\end{table}
}

\begin{proof}
By Theorem~\ref{cor:IntersectingOnlyOneStratum}, in each of the above $S^1(a,b;m)$ actions, the space of $S^1$ invariant compatible almost complex structures $\jsom$ only intersects the stratum $U_m$. Consequently,
\[\Symp_h^{S^1}(\SSS,\oml)/\Stab(J_m) \simeq \jsom \cap U_m = \jsom \simeq \{*\}\]
where $\Stab(J_m)$ denotes the stabiliser of the standard complex structure $J_m \in U_m$. Thus, for all the actions in the table, we have that $\Symp_h^{S^1}(\SSS,\oml) \simeq \Stab(J_m)$. For the $S^1$ action given by the triples $(0,\pm 1,m)$, $(\pm1,0,0)$ or the circle action $S^1(0,\pm1,0)$ when $\lambda =1$, Theorems~\ref{homogenous} and~\ref{homog} imply that $\Stab(J_m) \simeq S^1 \times \SO(3)$. For all other $S^1$ actions in the table, the stabilizers are homotopy equivalent to $\T$. 
\\

We now show how to recover the homotopy type of the full group $\Symp^{S^1}(\SSS,\oml) $ from the homotopy type of the subgroup $\Symp_h^{S^1}(\SSS,\oml)$. When $\lambda > 1$, this is immediate as we have the equality $\Symp^{S^1}(\SSS,\oml) = \Symp_h^{S^1}(\SSS,\oml)$ which follows from  Lemma~\ref{lemma:ActionOnHomology}.
\\

When $\lambda = 1$ and $a\neq b$, 
there exists standard $S^1(a,b;m)$ invariant curves in classes $B$ and $F$ such that the isotropy weight of the action on the curve in class $B$ is $a$ and the isotropy weight of the $S^1$ action on the curve in the class $F$ is $b$. Hence, as $\phi$ is an equivariant symplectomorphism, Lemma~\ref{lemma:ActionOnHomology} implies that must have $\phi_*[F] = [F]$ and $\phi_*[B] = [B]$. Consequently, $\Symp_h^{S^1}(\SSS,\oml) = \Symp^{S^1}(\SSS,\oml)$.
\\

In the special case when $\lambda =1$ and $a=b = \pm 1$, then we have an equivariant version of the exact sequence~\ref{Sequence:ActionOnHomology}
\begin{equation*}
   1\to \Symp_h^{S^1}(\SSS,\oml)\to  \Symp^{S^1}(\SSS,\oml)\to \Aut_{c_1,\oml}(H^2(\SSS))\to 1
\end{equation*}

where $\Aut_{c_1,\oml}(H^2(\SSS)) \cong \Z_2$. The map \begin{align*}
    \phi: \SSS \to \SSS \\
    (z,w) \mapsto (w,z)
\end{align*} 
is a $S^1$ equivariant symplectomorphism (for the action $S^1(1,1,0)$ or $ (-1,-1,0)$) and gives a section from $\Z_2 \cong \Aut_{c_1,\oml}(H^2(\SSS))$ to $\Symp^{S^1}(\SSS,\oml)$. Thus we have $\Symp^{S^1}(\SSS,\oml) \cong  \Symp_h^{S^1}(\SSS,\oml) \rtimes \Z_2$. As the semidirect product of two topological groups is homotopy equivalent(as a topological space) to the direct product of the groups,  we have that  $\Symp^{S^1}(\SSS,\oml) \cong  \Symp_h^{S^1}(\SSS,\oml) \rtimes \Z_2 \simeq  \Symp_h^{S^1}(\SSS,\oml) \times \Z_2 \simeq \T \times \Z_2$. This completes the proof.
\end{proof}

\section[The two orbits case]{When \texorpdfstring{$\jsom$}{J\hat{}S1} is homotopy equivalent to the union of two orbits}\label{section:TwoOrbits}

Theorem~\ref{table} gives the homotopy type of the group of equivariant symplectomorphisms for all circle actions on $\SSS$ apart from the following two families of actions:
\begin{itemize}
\item (i) $a=1$, $b \neq \{0, m\}$,  and $2\lambda > |2b-m|$; or
\item (ii) $a=-1$, $b \neq \{0, -m\}$, and $2 \lambda > |2b+m|$.
\end{itemize}
For convenience, we will write $m'$ for either $|2b-m|$ or $|2b+m|$ depending on which of the above families we consider. Up to swapping $m$ and $m'$, we will also assume $m'>m$. The goal of this section is to show that the symplectic stabilizer of any of these circle actions is homotopy equivalent, \emph{in the category of topological group}, to the pushout of the two tori $\T_m$ and $\T_{m'}$ along the common $S^1$.\\ 

Before delving into technicalities, it may be useful to outline the argument, which is an adaptation of the Anjos-Granja idea used in~\cite{AG} to compute the homotopy type of the full group of symplectomorphisms of $\SSS$ for $1<\lambda\leq 2$. The first step is to show that the two inclusions \[\T_{m} \into \Symp_h^{S^1}(\SSS,\oml)\quad \text{~and~}\quad \T_{m'} \into \Symp_h^{S^1}(\SSS,\oml)\]
induce injective maps in homology. By the Leray-Hirsch theorem, it follows that the cohomology modules of the total space of the fibrations
\[\T_m\to\Symp^{S^1}_h(\SSS,\oml)\to \Symp^{S^1}_h(\SSS,\oml)/\T_m \simeq \jsom\cap U_m\]
\[\T_{m'}\to\Symp^{S^1}_h(\SSS,\oml)\to\Symp^{S^1}_h(\SSS,\oml)/\T_{m'} \simeq \jsom\cap U_{m'}\]
split (with coefficients in an arbitrary field $k$). Using the fact that the contractible space of invariant compatible almost-complex structures decomposes as the disjoint union
\[\jsom=(\jsom\cap U_m)\sqcup (\jsom\cap U_{m'})\]
the rank of $H^i(\Symp^{S^1}_h(\SSS,\oml);k)$ can be computed inductively from Alex\-an\-der-Eells duality. We then compute the cohomology algebra and the Pontryagin algebra of the pushout
\[P = \pushout(\T_m\from S^1\to \T_{m'})\]
in the category of topological groups. We then show that the natural map
\[\Upsilon:P\to \Symp^{S^1}_h(\SSS,\oml)\]
is a homotopy equivalence in the category of topological groups. We further prove that $P$ is weakly homotopy equivalent, as a topological space, to the product $\Omega S^{3}\times S^{1}\times S^{1}\times S^{1}$.

\subsection{Homological injectivity}
We first show that the two inclusions $\T_{m} \into \Symp_h^{S^1}(\SSS,\oml)$ and $\T_{m'} \into \Symp_h^{S^1}(\SSS,\oml)$ induce injective maps in homology. As the argument does not depend on $m$, we shall only provide the details for the inclusion $\T_{m} \into \Symp_h^{S^1}(\SSS,\oml)$.\\

Fix a symplectomorphism $\phi_m: W_m\to (\SSS,\oml)$ compatible with the fibration structures. Let $\{*\}$ be the $S^1(1,b,m)$ fixed point $\left([0,1][0,0,1]\right)$ in $W_m$, and let $\mathcal{E}(W_m, *)$ denote the space of orientation preserving, pointed, homotopy self-equivalences of $(W_m,*)$. Similarly, define $\mathcal{E}(S^2, *)$ to be the space of all orientation preserving homotopy self-equivalences of the sphere preserving a base point $\{*\}$.
\\

We now observe that for the above two families of circle actions (i) and (ii), the same argument as in Lemma~\ref{lemma:SymphPreservesAnIsolatedFixedPoint} shows that any $\phi\in\Symp_h^{S^1}(\SSS,\oml)= \Symp_h^{S^1}(W_{m})$ fixes the base point $\{*\}$.\\

Now, recall that the zero section $s_0$ of $W_m$ is given by
\begin{align*}
    s_{0}: S^2 &\to W_m \\
    \left[z_{0}, z_{1}\right] &\mapsto\left(\left[z_{0}, z_{1}\right],[0,0,1]\right)
\end{align*} 
and the projection to the first factor is
\begin{align*}
    \pi_1: W_m &\to S^2 \\
    \left(\left[z_{0}, z_{1}\right],\left[w_{0}, w_{1}, w_{2}\right]\right) &\mapsto\left[z_{0}, z_{1}\right]
\end{align*}
We define a continuous map $h_1:\Symp_h^{S^1} (\SSS,\oml) \to \mathcal{E}\left(S^{2}, *\right)$ by setting
\begin{equation*}
  \begin{aligned}
h_1:\Symp_h^{S^1} (\SSS,\oml) &\to \mathcal{E}\left(S^{2}, *\right) \\
\psi &\mapsto \psi_1:= \pi_{1} \circ \psi \circ s_{0}
\end{aligned}
\end{equation*}
Similarly, using the inclusion of $S^{2}$ as the fiber 
\begin{align*}
    f: S^2 &\to W_m \\
    \left[z_{0}, z_{1}\right] &\mapsto\left([0,1],\left[ 0,z_{0}, z_{1}\right]\right)
\end{align*}
and the projection to the second factor $\pi_{2}: \SSS \to S^{2}$, we can define a map 
\begin{align*}
   h_2: \Symp_h^{S^1} (\SSS,\oml) &\to \mathcal{E}\left(S^{2}, *\right) \\
 \psi &\mapsto \psi_2:= \pi_{2} \circ \psi \circ f
\end{align*}

We thus get a continuous map 
\begin{align*}
    h: \Symp_h^{S^1} (\SSS,\oml) &\to \mathcal{E}(S^2, *) \times \mathcal{E}(S^2,*) \\
    \psi &\mapsto \left(h_1(\psi),h_2(\psi)\right) 
\end{align*}
\begin{lemma} \label{inj}
The inclusion $i_m:\T_m \hookrightarrow \Symp_h^{S^1} (\SSS,\oml) $ induces a map which is injective in homology with coefficients in any field $k$.
\end{lemma}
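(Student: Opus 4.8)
The plan is to factor the inclusion $i_m\colon \T_m\hookrightarrow \Symp_h^{S^1}(\SSS,\oml)$ through the auxiliary space $\mathcal{E}(S^2,*)\times\mathcal{E}(S^2,*)$ using the map $h$ defined above, and then to show that the composite $h\circ i_m$ is already injective on homology with field coefficients. Since $\mathcal{E}(S^2,*)$ is well understood --- it is homotopy equivalent to $\Omega S^2$ (the pointed orientation-preserving self-equivalences of $S^2$ form a monoid whose identity component is $\Omega^2_0 S^2$-like, but more precisely $\mathcal{E}(S^2,*)\simeq \Omega S^2$ via the degree-one component and evaluation) --- its homology is known, and $H_*(\mathcal{E}(S^2,*)\times\mathcal{E}(S^2,*);k)$ is computed by the Künneth formula. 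So the heart of the argument is purely homotopy-theoretic once we exhibit the factorization.

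First I would pin down the restriction of $h\circ i_m$ to the torus $\T_m$. The torus $\T_m$ acts on $W_m\subset \CP^1\times\CP^2$ by $(u,v)\cdot([x_1,x_2],[y_1,y_2,y_3]) = ([ux_1,x_2],[u^m y_1,y_2,vy_3])$. Composing with the zero section $s_0[z_0,z_1]=([z_0,z_1],[0,0,1])$ and then projecting by $\pi_1$, one sees $h_1$ restricted to $\T_m$ records only the $u$-rotation on the base $\CP^1$, i.e. it is the standard inclusion $S^1\hookrightarrow \mathcal{E}(S^2,*)$ of the first circle factor (a generator of $\pi_1(\mathcal{E}(S^2,*))=\pi_1(\Omega S^2)=\pi_2(S^2)=\Z$). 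Composing with the fiber inclusion $f[z_0,z_1]=([0,1],[0,z_0,z_1])$ and projecting by $\pi_2$, one sees $h_2$ restricted to $\T_m$ records the $v$-rotation on the fiber $\CP^1$, i.e. the standard inclusion of the second circle factor into $\mathcal{E}(S^2,*)$. Thus $h\circ i_m\colon \T_m = S^1\times S^1 \to \mathcal{E}(S^2,*)\times\mathcal{E}(S^2,*)$ is, up to homotopy, the product of the two standard circle inclusions (each classifying a generator of $\pi_1$).

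Next I would verify that this product inclusion is injective on $H_*(-;k)$. Each factor $S^1\hookrightarrow \mathcal{E}(S^2,*)\simeq \Omega S^2$ sends the fundamental class $[S^1]\in H_1(S^1)$ to a generator of $H_1(\Omega S^2;\Z)=\Z$ (indeed $\Omega S^2\simeq S^1\times \Omega S^3$ rationally, and $H_1(\Omega S^2;k)=k$ detected by the circle). Since $H_*(S^1;k)$ is an exterior algebra on a degree-one class and the map is an iso in degree $0$ and a monomorphism in degree $1$, it is injective on all of $H_*(S^1;k)$; by the Künneth theorem the product map $H_*(\T_m;k)=H_*(S^1;k)\otimes H_*(S^1;k)\to H_*(\mathcal{E}(S^2,*);k)^{\otimes 2}$ is injective as well. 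Therefore $h_*\circ (i_m)_*$ is injective, which forces $(i_m)_*$ to be injective on $H_*(\T_m;k)$.

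The main obstacle I anticipate is the bookkeeping in the second paragraph: one must make sure the map $h$ really is well-defined and continuous (this uses that every $\phi\in\Symp_h^{S^1}$ fixes the base point $*$, which is the point already invoked via Lemma~\ref{lemma:SymphPreservesAnIsolatedFixedPoint}), and one must carefully identify $\pi_1\circ(\text{torus element})\circ s_0$ and $\pi_2\circ(\text{torus element})\circ f$ with the correct generators of $\pi_1(\mathcal{E}(S^2,*))$ rather than, say, zero or a multiple --- in particular checking that the zero section is the torus-fixed section so that the base rotation survives, and that the chosen fiber is an invariant fiber meeting the appropriate fixed point so that the fiber rotation survives. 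Once these identifications are in hand, the homological injectivity statement follows formally from the Künneth formula and the known homology of $\mathcal{E}(S^2,*)$, with no further hard input. (The analogous argument for $\T_{m'}$, using the fibration structure on $W_{m'}$, is identical and is why the paper says it suffices to treat $\T_m$.)
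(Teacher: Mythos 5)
Your overall route coincides with the paper's: you factor $i_m$ through the map $h\colon\Symp_h^{S^1}(\SSS,\oml)\to\mathcal{E}(S^2,*)\times\mathcal{E}(S^2,*)$, and your identification of $h\circ i_m$ on $\T_m$ as the product of the two rotation circles (the $u$-rotation seen through $\pi_1\circ\psi\circ s_0$, the $v$-rotation seen through $\pi_2\circ\psi\circ f$) is correct and is exactly the paper's computation that $\alpha^1_*[0,1]=0$, $\alpha^2_*[0,1]\neq 0$, etc. Your handling of degrees $\geq 2$ by writing the map as a product and invoking the K\"unneth formula (a tensor product of injective $k$-linear maps is injective) is fine, and is in fact a bit cleaner than the paper's dualization and cup-product argument for $H_2$.

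The genuine gap is the assertion that each rotation circle $S^1\to\mathcal{E}(S^2,*)$ hits a \emph{generator} of $H_1(\mathcal{E}(S^2,*);\Z)$, equivalently is nonzero with $\Z_p$-coefficients for every prime $p$; this is stated but not proved, and the justification offered, $\mathcal{E}(S^2,*)\simeq\Omega S^2$, is false. Pointed orientation-preserving homotopy self-equivalences of $S^2$ are precisely the pointed degree-one self-maps, i.e.\ the degree-one component of $\Omega^2S^2$, which via the Hopf fibration is homotopy equivalent to a component of $\Omega^2S^3$, not to $\Omega S^2\simeq S^1\times\Omega S^3$ (the two spaces share $H_1\cong\Z$ but differ already on $\pi_2$); in any case neither identification by itself locates the rotation loop inside $\pi_1(\mathcal{E}(S^2,*))\cong\pi_3(S^2)\cong\Z$. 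If that loop were a proper multiple of the generator, your argument would still work over $\Q$ but would fail over some $\Z_p$, so the lemma as stated would not follow. This is exactly the step the paper devotes its effort to: using Hansen's theorem that $\mathcal{E}(S^2)\simeq\SO(3)\times\widetilde{\Omega^2}$ with $\pi_1(\widetilde{\Omega^2})=0$, and comparing the evaluation fibrations $\SO(2)\to\SO(3)\to S^2$ and $\mathcal{E}(S^2,*)\to\mathcal{E}(S^2)\to S^2$, the five lemma shows that $\pi_1(\SO(2))\to\pi_1(\mathcal{E}(S^2,*))$ is an isomorphism, so the $\SO(2)$-rotation about the base point is a generator. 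Supply this input (or an equivalent identification of the rotation family with the Hopf generator of $\pi_3(S^2)$), checking that the loops produced by $h_1\circ i_m$ and $h_2\circ i_m$ are exactly such rotations, and your proof closes.
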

\begin{proof}
As $\T$ is connected, $i_m: H_0(\T_m; k) \to H_0(\Symp_h^{S^1}(\SSS,\oml); k)$ is injective. To show that the inclusion map induces an injection at the $H_1$ level, we consider the composition $\alpha: \T_m \to \mathcal{E}(S^2,*) \times \mathcal{E}(S^2,*)$ given by
\[
\begin{tikzcd}
    &\T_m \arrow[r,hookrightarrow]  &\Symp_h^{S^1} (\SSS,\oml)  \arrow[r,rightarrow,"h"]  &\mathcal{E}(S^2, *) \times \mathcal{E}(S^2, *) 
\end{tikzcd}
\]
and show that $\alpha$ induces a map which is injective in homology. 

We claim that $H_1(\mathcal{E}(S^2,*);\Z)\simeq \Z$. Indeed, the standard action of $\SO(3)$ on $S^2$ gives rise to a diagram of fibrations
\[
\begin{tikzcd} 
        & \mathcal{E}(S^2,*)\arrow[r] &\mathcal{E}(S^2) \arrow[r,twoheadrightarrow,"\ev"] &S^2 \\
     &S^1= \SO(2) \arrow[u,hookrightarrow] \arrow[r] &\SO(3) \arrow[u,hookrightarrow] \arrow[r,"\ev",twoheadrightarrow] &S^2 \arrow[u,equal]
\end{tikzcd}
\]
where the maps $\ev$ are evaluations at the base point $\{*\}$. This induces a long exact ladder of homotopy groups
\[
\begin{tikzcd}[cramped]
     & \arrow[r] &\cancelto{\Z}{\pi_2(S^2)} \arrow[r] &\pi_1(\mathcal{E}(S^2,*)) \arrow[r] &\pi_1(SO(3)) \times \cancelto{0}\pi_1(\Omega) \arrow[r] &\cancelto{0}{\pi_1(S^2)} \\
     & \arrow[r] &\Z \arrow[u,equal] \arrow[r]&\cancelto{\Z}{\pi_1(S^1)} \arrow[r] \arrow[u,"\beta"] &{\pi_1(SO(3))}\arrow[u] \arrow[r] &\cancelto{0}{\pi_1(S^2)} \arrow[u,equal]
\end{tikzcd}
\]
where we have used the fact, proven by Hansen in~\cite{Hans}, that $\mathcal{E}(S^2) \simeq SO(3) \times \widetilde{\Omega^2}$, where $\widetilde{\Omega^2}$ denotes the universal covering space for the connected component of the double loop space of $S^2$ containing the constant based map, and where the $\SO(3)$ component is just the inclusion. Consequently, $\pi_1(\widetilde{\Omega^2})= 0 $ and the map $\pi_1(SO(3)) \to \pi_1(SO(3)) \times \pi_1(\widetilde{\Omega^2})$ is an isomorphism. From the commutativity of the middle square, it follows that $\beta:\pi_1(S^1) \to \pi_1(\mathcal{E}(S^2, *))$ is also an isomorphism. As the spaces we consider are topological groups, $\pi_1$ is abelian and hence $\pi_1 = H_1$, proving the claim.\\

Now, the classes $a$, $b$, of the subcircles $(0,1)$ and $(1,0)$ form a basis of $H_1(\T_m; k)$. We claim that $\alpha_*[0,1]$ and $\alpha_*[1,0]$ generate a subgroup of rank 2. To see this, let write $\alpha_*^1$ and $\alpha_*^2$ for the components of $\alpha_*$. Then, $\alpha^1_*(0,1) = 0$ as the circle $(0,1)$ fixes the zero section $\left([x_1,x_2],[0,0,1]\right) \subset W_m$ pointwise, while $\alpha^2_*[0,1] \neq 0$ by the reasoning in the previous paragraph. Similarly, $\alpha^1_*[1,0] \neq 0$ and $\alpha^2_*[1,0]= 0$, proving our claim. We conclude that $\alpha$ is injective on $H_1(\T_m; k)$. 
\\

Finally, to show that $i_*$ is injective on $H_2(\T_m;k)$, we will prove the dual statement, namely, that the map $i^*:H^2(\Symp^{S^1}_h(\SSS,\oml);k) \to H^2(\T_m;k)$ is surjective. A generator of $H^2(\T_m;k) \cong k$ is given by $a \cup b$. Because $i_*$ is injective at the $H_1$ level, $i^*: H^1(\Symp^{S^1}_h(\SSS,\oml);k) \to H^1(\T;k)$ is surjective, hence there exists elements $a^\prime$, $b^\prime \in H^1(\Symp^{S^1}_h(\SSS,\oml);k)$ such that $i^*(a^\prime) = a$ and $i^*(b^\prime) = b$. Since $i^*(a^\prime) \cup i^*(b^\prime) = a \cup b$, it follows that $i^*:H^2(\Symp^{S^1}_h(\SSS,\oml);k) \to H^2(\T_m;k)$ is surjective.
\end{proof}

\subsection[Cohomology module of the centralizer]{Cohomology module of the centralizer of $S^1(\pm1,b;m)$}\label{subsection:CohomologyModule}
We are now ready to compute the cohomology module of the centralizer of $S^1(\pm1,b;m)$ with coefficients in in a field $k$. By duality, this is equivalent to determining the homology module.

Recall that the contractible space of invariant compatible almost-complex structures $\jsom$ decomposes as the disjoint union
\[\jsom=(\jsom\cap U_m)\sqcup (\jsom\cap U_{m'})=:U_{m}^{S^1} \sqcup U_{m'}^{S^1}\]
where, for convenience, we set $U_m^{S^1}=\jsom\cap U_m$ and $U_{m'}^{S^1}=\jsom\cap U_{m'}$. We will show in Chapter~\ref{Chapter-codimension} the following two important facts:
\begin{itemize}
\item the strata $U_{m}^{S^1}$ and $U_{m'}^{S^1}$ are submanifolds of $\jsom$ (see Corollary~\ref{cor:StrataAreSubmanifolds}), and 
\item the stratum $U_{m}^{S^1}$ is open in $\jsom$, while $U_{m'}^{S^1}$ is of codimension $2$ (see Theorem~\ref{codimension_calc}). 
\end{itemize}
In particular, it follows that $U_{m}^{S^1}=\jsom-U_{m'}^{S^1}$ is connected. As explained in Appendix~\ref{Appendix-Alexander-Eells}, Proposition~\ref{prop:AlexanderEellsGeometric}, the Alexander-Eells duality induces an isomorphism of homology groups
\begin{equation}\label{eq:AlexanderEellsIsomorphismHomology}
\lambda_{*}:H_{p}(U_{m'}^{S^1};k)\to H_{p+1}(U_{m}^{S^1};k)
\end{equation}

Now recall that we also have fibrations
\begin{align}\label{eq:TheTwoMainFibrations}
\T_m\to\Symp^{S^1}_h(\SSS,\oml)\xrightarrow{p_m} \Symp^{S^1}_h(\SSS,\oml)/\T_m \simeq U_{m}^{S^1}\\
\T_{m'}\to\Symp^{S^1}_h(\SSS,\oml)\xrightarrow{p_{m'}}\Symp^{S^1}_h(\SSS,\oml)/\T_{m'} \simeq U_{m'}^{S^1}\notag
\end{align}
From the first fibration, the connectedness of the open stratum $U_{m}^{S^1}$ implies that the group $\Symp^{S^1}_h(\SSS,\oml)$ is connected. In turns, the second fibration implies that the codimension 2 stratum $U_{m'}^{S^1}$ is also connected.
Because the two inclusions \[\T_{m} \into \Symp_h^{S^1}(\SSS,\oml)\quad \text{~and~}\quad \T_{m'} \into \Symp_h^{S^1}(\SSS,\oml)\]
induce surjective maps in cohomology, the Leray-Hirsch theorem implies that the cohomology module of $\Symp^{S^1}_h(\SSS,\oml)$ splits as
\begin{align}\label{eq:SplittingCohomology}
H^*(\Symp^{S^1}_h(\SSS,\oml),k) \cong H^*(U_{m}^{S^1};k) \otimes H^*(\T_m;k)\\
H^*(\Symp^{S^1}_h(\SSS,\oml),k) \cong H^*(U_{m'}^{S^1};k) \otimes H^*(\T_{m'};k)\notag
\end{align}
By duality, we have corresponding splittings in homology, namely,
\begin{align}\label{eq:SplittingHomology}
H_*(\Symp^{S^1}_h(\SSS,\oml),k) \cong H_*(U_{m}^{S^1};k) \otimes H_*(\T_m;k)\\
H_*(\Symp^{S^1}_h(\SSS,\oml),k) \cong H_*(U_{m'}^{S^1};k) \otimes H_*(\T_{m'};k)\notag
\end{align}
It follows that 
\[H_{p}(U_{m};k)\simeq H_{p}(U_{m'};k)\text{~for all~}p\geq 0\]
Together with the Alexander-Eells isomorphism~(\ref{eq:AlexanderEellsIsomorphismHomology}) and the connectedness of $U_{m'}$, this implies that
\[H_{p}(U_{m};k)\simeq k\text{~for all~}p\geq 0\]
Using the splitting~\ref{eq:SplittingHomology} and dualizing, we can finally compute the cohomology module of $\Symp^{S^1}_h(\SSS,\oml)$.

\begin{thm}{\label{cohom}} Consider any of the following circle actions:
\begin{itemize}
\item (i) $a=1$, $b \neq \{0, m\}$,  and $2\lambda > |2b-m|$; or
\item (ii) $a=-1$, $b \neq \{0, -m\}$, and $2 \lambda > |2b+m|$.
\end{itemize} Then, the cohomology groups of the symplectic centralizer are
$$H^p\left(\Symp^{S^1}(\SSS,\oml); k\right) \simeq \begin{cases}
k^4 ~~p \geq 2\\
k^3 ~~p =1 \\
k ~~ p=0\\
\end{cases}$$
for any field $k$. In particular, the topological group $\Symp^{S^1}(\SSS,\oml)$ is of finite type.
\end{thm}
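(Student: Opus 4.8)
The plan is to follow the Anjos--Granja strategy outlined in the introduction to Section~\ref{section:TwoOrbits}, using the three structural inputs already established: the decomposition $\jsom = U_m^{S^1}\sqcup U_{m'}^{S^1}$ with $\jsom$ contractible, the two fibrations~\eqref{eq:TheTwoMainFibrations}, and the homological injectivity of the inclusions $\T_m,\T_{m'}\hookrightarrow\Symp_h^{S^1}(\SSS,\oml)$ from Lemma~\ref{inj}. First I would invoke Lemma~\ref{inj} together with the Leray--Hirsch theorem applied to each of the two fibrations in~\eqref{eq:TheTwoMainFibrations}; since $H^*(\T_m;k)$ and $H^*(\T_{m'};k)$ are exterior algebras on two degree-one generators whose images in $H^*(\Symp_h^{S^1})$ generate a free submodule, Leray--Hirsch gives the module splittings~\eqref{eq:SplittingCohomology}, and dualizing (everything in sight has finite type, which one checks inductively along the way) gives~\eqref{eq:SplittingHomology}. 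Comparing the two homology splittings immediately yields $H_p(U_m^{S^1};k)\cong H_p(U_{m'}^{S^1};k)$ for all $p$.

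Next I would feed in the geometric facts proven in Chapter~\ref{Chapter-codimension}: that $U_m^{S^1}$ and $U_{m'}^{S^1}$ are submanifolds of $\jsom$, that $U_m^{S^1}$ is open and $U_{m'}^{S^1}$ has codimension $2$, so in particular $U_m^{S^1}=\jsom\setminus U_{m'}^{S^1}$ is connected, and from the fibrations the total space $\Symp_h^{S^1}(\SSS,\oml)$ and then $U_{m'}^{S^1}$ are connected as well. Then the Alexander--Eells duality isomorphism~\eqref{eq:AlexanderEellsIsomorphismHomology}, $\lambda_*:H_p(U_{m'}^{S^1};k)\xrightarrow{\sim}H_{p+1}(U_m^{S^1};k)$ (justified in Appendix~\ref{Appendix-Alexander-Eells} using contractibility of $\jsom$ and the codimension-$2$ condition), combines with the equality $H_p(U_m^{S^1})\cong H_p(U_{m'}^{S^1})$ and the base case $H_0(U_m^{S^1};k)\cong k$ (connectedness) to give, by induction on $p$, that $H_p(U_m^{S^1};k)\cong k$ for every $p\geq 0$; equivalently $U_m^{S^1}$ has the $k$-homology of $\Omega S^3$, i.e.\ one copy of $k$ in each degree. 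Plugging this back into the homology splitting~\eqref{eq:SplittingHomology} with $H_*(\T_m;k)$ the exterior algebra on two generators gives $\dim_k H_p(\Symp_h^{S^1};k) = 1+2+1 = 4$ for $p\geq 2$, $=3$ for $p=1$, and $=1$ for $p=0$; one then dualizes to cohomology, and finally uses Lemma~\ref{lemma:ActionOnHomology} (for $\SSS$ with $\lambda>1$ we have $\Symp^{S^1}=\Symp_h^{S^1}$, and for the two families (i)--(ii) one always has $\lambda>1$ since $2\lambda>|2b\mp m|\geq 1$ forces $\lambda>1/2$, but more to the point these actions require $m\neq m'$ hence $\lambda>1$) to identify $\Symp^{S^1}(\SSS,\oml)$ with $\Symp_h^{S^1}(\SSS,\oml)$. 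The ``finite type'' assertion is then the observation that each $H^p$ is a finite-dimensional $k$-vector space, which is exactly what the computation shows.

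The main obstacle, or at least the step requiring the most care, is the double use of the Leray--Hirsch splitting \emph{before} one knows finite type: Leray--Hirsch in the form that dualizes cleanly needs the cohomology of the fiber (which is fine, finite-dimensional) and the module structure over $H^*(\text{base})$, and one must argue that the spectral sequences collapse and that the resulting modules are of finite type in each degree so that passing to homology is unambiguous. Concretely I would set this up as a simultaneous induction on $p$: assume finite type and the splittings in degrees $<p$, use Alexander--Eells to pin down $H_p(U_m^{S^1})$ and $H_p(U_{m'}^{S^1})$, then conclude the splitting in degree $p$. A secondary point to be careful about is that Alexander--Eells duality~\eqref{eq:AlexanderEellsIsomorphismHomology} genuinely requires the codimension to be exactly $2$ (a codimension-one stratum would disconnect, a higher codimension would shift degrees differently), so the computation is logically downstream of Theorem~\ref{codimension_calc}; I would flag that dependency explicitly rather than hide it. Everything else—connectedness bookkeeping, the exterior-algebra bookkeeping for the tori, and the final identification $\Symp^{S^1}=\Symp_h^{S^1}$—is routine given the cited results.
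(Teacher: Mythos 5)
Your proposal is correct and follows essentially the same route as the paper: homological injectivity of the torus inclusions (Lemma~\ref{inj}) plus Leray--Hirsch applied to the two fibrations~\eqref{eq:TheTwoMainFibrations}, combined with the codimension-two Alexander--Eells isomorphism~\eqref{eq:AlexanderEellsIsomorphismHomology} and the connectedness bookkeeping, to get $H_p(U_m^{S^1};k)\cong k$ in every degree and hence the stated ranks, with $\Symp^{S^1}=\Symp_h^{S^1}$ for $\lambda>1$. Your extra care about finite type before dualizing, and your explicit flagging of the dependence on Theorem~\ref{codimension_calc}, are refinements of, not departures from, the paper's argument.
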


\subsection{The homotopy pushout \texorpdfstring{$T_m\from S^1(\pm1,b;m)\to T_{m'}$}{the two inclusions}}\label{section:full_homotopy}
As explained in Corollary~\ref{cor:CircleExtensionsWith_a=1} and Corollary~\ref{cor:CircleExtensionsWith_a=-1}, the circle actions $S^1(\pm1,b;m)$ we are considering in this section extend to exactly two toric actions $\T_m$ and $\T_{m'}$. Geometrically, this means that the two tori $\T_m$ and $\T_{m'}$ intersect in $\Symp^{S^1}_h(\SSS,\oml)$ along the circle $S^1(\pm1,b;m)$ and, in particular, that we have two inclusions of Lie groups
\[
\begin{tikzcd}[sep=small]
S^{1} \arrow{r}{(1,b')} \arrow[swap]{d}{(1,b)} & T^{2}_{m'} \\
T^{2}_{m}  & 
\end{tikzcd}
\]
In this section we consider the homotopy pushout of these two inclusions, namely,
\[P:=\pushout(T_m\from S^1\to T_{m'})\]
This pushout is to be understood in the category of topological groups. As we will show later, the topological group $P$ turns out to be a model for the homotopy type of the centralizer $\Symp^{S^1}_h(\SSS,\oml)$.

\subsubsection*{The Pontryagin algebra of the pushout}
In what follows, all k algebras are graded, and the commutator of two elements is given by
\[[a,b] = ab - (-1)^{|a|\cdot|b|}ba\]
For any field $k$, and for any abelian group $A$, the Pontryagin algebra $H_{*}(A;k)$ is isomorphic to the cohomology algebra $H^{*}(A;k)$. It follows that $H_{*}(S^{1})$ is isomorphic to $\Lambda(t)$, where $t$ is of degree $1$. Similarly, the Pontryagin algebra $H_{*}(T^{2};k)$ is isomorphic to the to an exterior algebra $\Lambda(z_{1},z_{2})$ generated by two elements of degree one. The pushout diagram of topological groups 
\[
\begin{tikzcd}
S^{1} \arrow{r}{(1,b')} \arrow[swap]{d}{(1,b)} & T^{2}_{m'} \arrow{d}{} \\
T^{2}_{m} \arrow{r}{} & P
\end{tikzcd}
\]
is homologically free (see Definition~3.1 in~\cite{AG}). As before, $P$ denotes the pushout in the category of topological groups. By Theorem~3.8 of~\cite{AG}, the Pontryagin algebra of $P$ is the pushout of $k$ algebras
\[
\begin{tikzcd}
H_{*}(S^{1};k) \arrow{r}{H(1,b')} \arrow[swap]{d}{H(1,b)} & H_{*}(T^{2}_{m'};k) \arrow{d}{} \\
H_{*}(T^{2}_{m};k) \arrow{r}{} & H_{*}(P;k)
\end{tikzcd}
\]
which is isomorphic to
\[
\begin{tikzcd}
\Lambda(t) \arrow{r}{(1,b')} \arrow[swap]{d}{(1,b)} & \Lambda(y_{1},y_{2}) \arrow{d}{} \\
\Lambda(x_{1},x_{2}) \arrow{r}{} & P^{alg}_{*}
\end{tikzcd}
\]
where $P^{alg}_{*}\simeq H_{*}(P;k)$. By the description of the pushout of $k$ algebras as amalgamated products (see \cite{AG} for more details), the $k$ algebra $P^{alg}_{*}$ can be identified with equivalence classes of finite linear combinations of words in the letters $\{x_{1},x_{2},y_{1},y_{2}\}$ under the relations $x_{i}x_{i}=0$, $y_{i}y_{i}=0$, $[x_{1},x_{2}]=0$, $[y_{1},y_{2}]=0$, and $x_{1}+bx_{2}=y_{1}+b'y_{2}$.  From the last equality, we can write $y_{1}=(x_{1}+bx_{2})-b'y_{2}$, which means that we can choose, as generators, the elements \[\{t=x_{1}+bx_{2}, ~x_{2}, ~y_{2}\}\]
with the relations $t^{2}=x_{2}^{2}=y_{2}^{2}=0$, $[t,x_{2}]=[t,y_{2}]=0$. The remaining commutator $w=[x_{2},y_{2}]$ is nonzero and commutes with $t$, $x_{2}$ and $y_{2}$. It follows that any word in $t,x_{2},y_{2}$ is equivalent to a linear combination of words of the form
\[w^{\alpha}x_{2}^{\beta}y_{2}^{\gamma}t^{\delta}\]
with $\alpha\in\N\cup\{0\}$, and $\beta,\gamma,\delta\in\{0,1\}$. Hence, there is an isomorphism of graded algebras
\[P^{alg}_{*}\cong \frac{F(x_{2},y_{2})}{\langle x_{2}^{2},y_{2}^{2}\rangle}\otimes \Lambda(t)\]
where $F(x_{2},y_{2})$ denotes the free graded algebra over $k$ generated by the elements $x_{2}$ and $y_{2}$, and where $x_{2},y_{2},t$ are of degree one. In particular,
\[P^{alg}_{n}\simeq
\begin{cases}
k & n=0\\
k^{3} & n=1\\
k^{4} & n\geq 2
\end{cases}\]
and the words $w^{\alpha}x_{2}^{\beta}y_{2}^{\gamma}t^{\delta}$ form an additive basis of the homology module $P^{alg}_{*}$. By duality, the cohomology modules $P^{alg,*}$ are $P^{alg,0} \simeq k$, $P^{alg,1}\simeq k^3$, and $P^{alg,n}\simeq k^4$ for all $n\geq 2$.\\

The algebra structure of $P^{alg,*}$ can be determined using the Hopf-Borel theorem. Let $\hat t$, $\hat x_{2}$, and $\hat y_{2}$ be the duals of the generators of degree $1$, and let $\hat w$ be the dual of the generator $w=[x_2,y_2]$ of degree~$2$.
\begin{thm}[Hopf-Borel, see~\cite{McCleary} Theorem 6.36]
Let k be a field of characteristic p where p may be zero or a prime. A connected Hopf algebra $H$ over $k$ is said to be monogenic if $H$ is generated as an algebra by 1 and one homogeneous element $x$ of degree strictly greater than 0. If $H$ is a monogenic Hopf algebra, then
\begin{enumerate}
    \item if $p \neq 2$ and degree $x$ is odd, then $H \cong \Lambda(x)$,
    \item if $p \neq 2$ and degree $x$ is even, then $H \cong k[x] /\left\langle x^{s}\right\rangle$ where $s$ is a power of p or is infinite i.e $H \cong k[x]$,
    \item if $p=2$, then $H \cong k[x] /\left\langle x^{s}\right\rangle$ where $s$ is a power of 2 or is infinite.
\end{enumerate}
Moreover, any associative, graded commutative Hopf algebra of finite type over $k$ is a tensor product of monogenic Hopf algebras.
\end{thm}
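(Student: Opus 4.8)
The plan is to prove the classification directly from the definitions, along the lines of Borel's original argument. First I would extract the purely algebraic consequences of the hypotheses. Since $H$ is generated as an algebra by the single homogeneous element $x$ of degree $n>0$, the surjection $k[x]\twoheadrightarrow H$ exhibits $H$ as a quotient of the polynomial ring; as every homogeneous ideal of $k[x]$ (graded by $\deg x = n$) has the form $\langle x^{s}\rangle$, this already forces $H\cong k[x]$ or $H\cong k[x]/\langle x^{s}\rangle$ for a unique minimal $s\geq 2$, so that $1,x,\dots,x^{s-1}$ (respectively all the $x^{m}$) form a $k$-basis. Moreover $H_{i}=0$ for $0<i<n$, so $x$ sits in the lowest positive degree; combined with connectedness and the counit axiom this forces $x$ to be primitive, $\Delta(x)=x\otimes 1+1\otimes x$. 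The theorem thus reduces to deciding which exponents $s$ are compatible with the existence of such a coproduct.

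Next I would separate the two regimes dictated by graded-commutativity. If $p\neq 2$ and $n$ is odd, graded-commutativity gives $x^{2}=(-1)^{n^{2}}x^{2}=-x^{2}$, hence $2x^{2}=0$ and $x^{2}=0$; therefore $s=2$ and $H\cong\Lambda(x)$, which is indeed a Hopf algebra with the primitive coproduct, giving case (1). In the complementary regime, $n$ even or $p=2$, the factors $x\otimes 1$ and $1\otimes x$ commute in $H\otimes H$ without a Koszul sign, so the binomial theorem yields $\Delta(x^{m})=\sum_{i=0}^{m}\binom{m}{i}\,x^{i}\otimes x^{m-i}$. If $x$ is not nilpotent we land in the $H\cong k[x]$ (``$s$ infinite'') alternative. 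Otherwise, taking $s$ minimal with $x^{s}=0$, the identity $0=\Delta(x^{s})=\sum_{i=1}^{s-1}\binom{s}{i}\,x^{i}\otimes x^{s-i}$ together with linear independence of $x^{1},\dots,x^{s-1}$ in $H$ forces $\binom{s}{i}\equiv 0$ in $k$ for every $0<i<s$.

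The arithmetic core is then the elementary statement that, in characteristic $p$, one has $\binom{s}{i}\equiv 0\pmod{p}$ for all $0<i<s$ precisely when $s$ is a power of $p$ — and that in characteristic $0$ this is impossible for $s\geq 2$, returning us to $H\cong k[x]$. I would deduce this from Lucas' theorem: writing $s=\sum_{j}c_{j}p^{j}$ in base $p$, the number of $i$ with $0\leq i\leq s$ and $\binom{s}{i}\not\equiv 0$ equals $\prod_{j}(c_{j}+1)$, and demanding this to equal $2$ (only $i=0$ and $i=s$) is equivalent to $s$ having a single nonzero base-$p$ digit equal to $1$, i.e. $s=p^{a}$. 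Feeding this back completes case (2) ($p\neq2$, $n$ even) and case (3) ($p=2$).

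The step I expect to be the main obstacle is not any single computation but the coalgebraic bookkeeping: establishing primitivity of $x$ cleanly from connectedness and the counit, and then keeping the Koszul signs straight in the expansion of $\Delta(x^{m})$ so that the binomial identity is invoked only in the cases where $x\otimes 1$ and $1\otimes x$ genuinely commute. Once those points are pinned down, what remains is the binomial-coefficient lemma and a brief case check, and the three conclusions of the statement drop out.
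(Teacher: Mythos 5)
Your proof is correct, and there is nothing in the paper to compare it against: the paper states the Hopf--Borel theorem as a citation (McCleary, Theorem~6.36) and gives no proof, so your argument stands on its own. What you wrote is essentially the standard textbook proof — reduce to $H\cong k[x]/\langle x^{s}\rangle$ via the homogeneous-ideal structure of $k[x]$, deduce primitivity of $x$ from connectedness, the counit axiom and the vanishing of $H_{i}$ for $0<i<\deg x$, kill $x^{2}$ by the Koszul sign when $p\neq 2$ and $\deg x$ is odd, and in the even/$p=2$ case expand $\Delta(x^{s})$ binomially and use linear independence of the $x^{i}\otimes x^{s-i}$ to force $\binom{s}{i}\equiv 0$ for $0<i<s$, whence $s$ is a power of $p$ by Lucas (and no finite $s\geq 2$ exists in characteristic $0$). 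Two small points worth making explicit: the theorem as stated in the paper omits the hypothesis that $H$ is graded-commutative, which your case (1) genuinely uses (without it, $k[x]$ on an odd primitive generator is a counterexample); this hypothesis is part of McCleary's formulation and holds in the paper's application, where $H=P^{alg,*}$ is the cohomology of an $H$-space. And one should tacitly assume $x\neq 0$ in $H$ (equivalently $s\geq 2$), since otherwise $H=k$ and case (1) reads incorrectly; this is the usual degenerate convention and does not affect the argument.
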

From the discussion above, the Hopf algebra $P^{alg,*}$ satisfies the conditions of the Hopf-Borel theorem. For a field $k$ of characteristic $p$ different from $2$, including $p=0$, $P^{alg,*}$ contains a subalgebra of the form 
\[A^*=\Lambda(\hat t,\hat x_2,\hat y_2)\otimes k[\hat w]/\langle \hat w^s\rangle\]
where $s$ is a power of $p$ or is infinite. Suppose $s=p^n\geq 3$ is finite. Then, the rank of $A^i$ would coincide with the rank of $P^{alg,i}$ up to degree $i=2s-1$, and we would have $A^i=0$ for $i\geq 2s$. Therefore, we would need $4$ more generators of degree $2s$ to account for the rank of $P^{alg,2s}$, and their pairwise products would imply that $\rk P^{alg,4s}>4$. This contradiction shows that $s$ must be infinite and that the rank of $A^i$ equals the rank of $P^{alg,i}$ for all $i\geq 0$. Consequently, for a field $k$ of characteristic $p\neq 2$, the $k$-algebra $P^{alg,*}$ is isomorphic to
\[P^{alg,*}\cong \Lambda(\hat t,\hat x_{2},\hat y_{2})\otimes S(\hat w)\]
In characteristic $p=2$, $P^{alg,*}$ is the tensor product of truncated polynomial algebras $k[z_i]/z^{s_i}_{i}$ where $s_i$ is a power of $2$. It contains a subalgebra of the form
\[A^*=k[\hat t,\hat x_2,\hat y_2]/ \langle \hat t^2,\hat x_2^2,\hat y_2^2 \rangle \otimes k[\hat w]/\langle \hat w^s\rangle\]
Again, assuming $s$ is finite forces the existence of $4$ new generators in degree $2s$ whose products would yield too many generators in degree $4s$. Therefore, in characteristic $p=2$, the cohomology algebra of $P$ is isomorphic to
\[P^{alg,*}\cong k[\hat t,\hat x_{2},\hat y_{2}]/ \langle \hat t^2,\hat x_2^2,\hat y_2^2 \rangle \otimes k[\hat w]\]
In characteristic zero, the computation of the cohomology ring yields the minimal model of $H^{*}(P)\otimes\Q$. As $P$ is a H-space, it is a nilpotent space (see Exercise~1.13 in \cite{dgcRationalHomotopy}), so that the main theorem of dgc rational homotopy theory applies (see \cite{dgcRationalHomotopy}, Theorem~2.50) namely, the dimension $\pi_{p}(P)\otimes\Q$ for $p\geq 2$ is equal to the number of generators of degree $p$ in the minimal model. For $p=1$, as $P$ is a topological group, the dimension of $\pi_1(P) \otimes \Q$ is the same as the rank of $H_1(P,\Q)$. Consequently,
\[\pi_{p}(P)\otimes\Q\simeq
\begin{cases}
\Q & p=0\\
\Q^{3} & p=1\\
\Q & p= 2\\
0 & p\geq 3
\end{cases}\]

\subsubsection*{The homotopy type of $P$}
We want to better understand the homotopy type of the space $P$. To this end, consider the embeddings
\begin{align}
f_{m}:T^{2}_{m}&\to S^{1}\times S^{1}\times S^{1}\\
(x_{1},x_{2})&\mapsto (x_{1},x_{2},b'x_{1})\notag\\
&\notag\\
f_{m'}:T^{2}_{m'}&\to S^{1}\times S^{1}\times S^{1}\\
(y_{1},y_{2})&\mapsto (y_{1},by_{1},y_{2})\notag
\end{align}
The universal property of pushouts implies that there is a unique map $f_{P}:P\to S^{1}\times S^{1}\times S^{1}$ making the following diagram commutative
\[
\begin{tikzcd}
BS^{1} \arrow{r}{B(1,b')} \arrow[swap]{d}{B(1,b)} & BT^{2}_{m'} \arrow{d}{} \arrow[bend left=10]{ddr}{Bf_{m'}} & \\
BT^{2}_{m} \arrow{r}{}\arrow[bend right=10,swap]{drr}{Bf_{m}} & BP \arrow[dotted]{rd}{Bf_{P}} & \\
 & & BS^{1}\times BS^{1}\times BS^{1}
\end{tikzcd}
\]
By Theorem~3.9 of~\cite{AG}, the homotopy fiber of $Bf_{P}$ is the pushout of the homotopy fibers of the other maps in the diagram. To determine this fiber, we first replace the maps in the diagram of groups by homotopy equivalent fibrations
\[
\begin{tikzcd}[column sep=huge]
\Z\ar[swap]{d}{(1,1,a_{1})} & \Z\times\Z \ar{d}{(1,a_{1},a_{2})}\ar[swap]{l}{a_{2}} \ar{r}{a_{1}}& \Z \ar{d}{(1,1,a_{1})}\\
T^{2}_{m}\times\R \ar[swap]{d}{(a_{1},a_{2},b'a_{1}e({a_{3}}))} & S^{1}\times\R\times\R \ar[swap]{l}{(a_{1},ba_{1}e({a_{2}}),a_{3})} \ar{r}{(a_{1},b'a_{1}e({a_{3}}),a_{2})} \ar{d}{(a_{1},ba_{1}e({a_{2}}),b'a_{1} e({a_{3}}))}& T^{2}_{m'}\times\R\ar{d}{(a_{1},ba_{1}e({a_{3}}),a_{2})}\\
S^{1}\times S^{1}\times S^{1}\ar{r}{=} & S^{1}\times S^{1}\times S^{1} &S^{1}\times S^{1}\times S^{1}\ar[swap]{l}{=}
\end{tikzcd}
\]
where $a_{i}$ denote the $i^{\text{th}}$ coordinate function and $e(a_j) = e^{2\pi i a_j}$. Applying the classifying space functor, this gives
\[
\begin{tikzcd}[column sep=normal]
S^{1}\ar[swap]{d}{} & S^{1}\times S^{1}\ar{d}{}\ar[swap]{l}{\text{pr}_{2}} \ar{r}{\text{pr}_{1}}& S^{1} \ar{d}{}\\
BT^{2}_{m} \ar[swap]{d}{} & BS^{1} \ar[swap]{l}{} \ar{r}{} \ar{d}{}& BT^{2}_{m'}\ar{d}{}\\
BS^{1}\times BS^{1}\times BS^{1}\ar{r}{=} & BS^{1}\times BS^{1}\times BS^{1} &BS^{1}\times BS^{1}\times BS^{1}\ar[swap]{l}{=}
\end{tikzcd}
\]
which shows that the homotopy fiber of the canonical map $BP\to BS^{1}\times BS^{1}\times BS^{1}$ is homotopy equivalent to
\[\hocolim\{S^{1}\xleftarrow{\text{pr}_{2}}S^{1}\times S^{1}\xrightarrow{\text{pr}_{1}}S^{1}\}\simeq S^{1}*S^{1}\simeq S^{3}\]
Consequently, $BP$ is the total space of a fibration
\[S^{3}\to BP\to BS^{1}\times BS^{1}\times BS^{1}\]
that, after looping, becomes
\[
\begin{tikzcd}[column sep=normal]
 & T^{2}_{m'}\ar[swap]{d}{j_{m'}} \ar{rd}{f_{m'}=(a_{1},ba_{1},a_{2})}& \\
\Omega S^{3}\ar{r} & P\ar{r}{f_{P}} & S^{1}\times S^{1}\times S^{1}\\
 & T^{2}_{m}\ar{u}{j_{m}} \ar[swap]{ru}{f_{m}=(a_{1},a_{2},b'a_{1})}& 
\end{tikzcd}
\]
The map $f_{P}$ admits a section given by 
\[s(a_{1},a_{2},a_{3})= j_{m'}(a_1, {b'}^{-1}a_3)j_{m}(1,{b}^{-1}a_1^{-1}a_{2})\]

It follows that, as a space, $P$ is weakly homotopically equivalent to the product
\[P\simeq \Omega S^{3}\times S^{1}\times S^{1}\times S^{1}\]
which is consistent with the algebraic computations of the previous section.

\subsection{Homotopy type of centralizers for the circle actions \texorpdfstring{$S^1(\pm1,b;m)$}{S1(1,b;m)}}

We are now able to determine the homotopy type of the group $\Symp_h^{S^1}(\SSS,\oml)$ for the circle actions
\begin{itemize}
\item $S^1(1,b,m)$ when $2\lambda > |2b-m|$, and
\item $S^1(-1,b,m)$ when $2\lambda > |2b+m|$.
\end{itemize}
Since the arguments are identical in the two cases, we will only discuss the first one. Again, in order to keep the notation simple, we write $\T_m$ and $\T_{m'}$ for the two tori the circle extends to, assuming $m'>m$, and we write $(1,b):S^1\to\T_m$ and  $(1,b'):S^1\to\T_{m'}$ for the two inclusions.\\
 
From the universal property of pushouts, there is a canonical map 
\[\Upsilon:P^{alg}_{*}\to H_{*}(\Symp_h^{S^1}(\SSS,\oml);k)\]
making the following diagram commutative
\[
\begin{tikzcd}
\Lambda(t) \arrow{r}{(1,b')} \arrow[swap]{d}{(1,b)} & \Lambda(y_{1},y_{2}) \arrow{d}{} \arrow[bend left=10]{ddr}{i_{m'}} & \\
\Lambda(x_{1},x_{2}) \arrow{r}{}\arrow[bend right=10,swap]{drr}{i_{m}} & P^{alg}_{*} \arrow[dotted]{rd}{\Upsilon} & \\
 & & H_{*}(\Symp_h^{S^1}(\SSS,\oml);k)
\end{tikzcd}
\]

\begin{prop}
For every field $k$, the map $\Upsilon:P^{alg}_{*}\to H_{*}(\Symp_h^{S^1}(\SSS,\oml);k)$ is an isomorphism of $k$-algebras.
\end{prop}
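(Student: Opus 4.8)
The plan is to show that $\Upsilon$ is injective and surjective by a combination of the homological injectivity results already established and the cohomology module calculation of Theorem~\ref{cohom}. First I would observe that by construction $\Upsilon$ restricts on the two exterior subalgebras $\Lambda(x_1,x_2)$ and $\Lambda(y_1,y_2)$ to the maps induced by the inclusions $i_m:\T_m\into\Symp^{S^1}_h(\SSS,\oml)$ and $i_{m'}:\T_{m'}\into\Symp^{S^1}_h(\SSS,\oml)$, which by Lemma~\ref{inj} are injective in homology with coefficients in any field $k$. The generators $x_2$ and $y_2$ of $P^{alg}_*$ are the images of the degree-one generators of $H_*(\T_m;k)$ and $H_*(\T_{m'};k)$ dual to the subcircle classes that are \emph{not} the common $S^1$; in particular $\Upsilon(x_2)$ and $\Upsilon(y_2)$ are nonzero, and $\Upsilon(t)$ is the image of the common $S^1$ class, which is also nonzero since it maps nontrivially under $i_m$.

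The heart of the argument is to show $\Upsilon$ is injective on the whole Pontryagin algebra, and for this the key point is that the commutator $w=[x_2,y_2]\in P^{alg}_2$ maps to a nonzero class in $H_*(\Symp^{S^1}_h(\SSS,\oml);k)$. Here I would use the splitting~\eqref{eq:SplittingHomology}: since $i_m$ is injective in homology, the Leray--Hirsch theorem gives $H_*(\Symp^{S^1}_h(\SSS,\oml);k)\cong H_*(U_m^{S^1};k)\otimes H_*(\T_m;k)$, and we computed $H_p(U_m^{S^1};k)\simeq k$ for all $p\geq 0$. Thus $\rk H_p(\Symp^{S^1}_h(\SSS,\oml);k)$ equals $1,3,4,4,\dots$ for $p=0,1,2,3,\dots$, matching exactly the ranks of $P^{alg}_p$ recorded above. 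It therefore suffices to prove that $\Upsilon$ is surjective, since a surjection between graded $k$-vector spaces of equal finite rank in each degree is automatically an isomorphism. Surjectivity in degrees $0$ and $1$ is clear because $H_0$ is generated by the unit and, by Lemma~\ref{inj}, $H_1(\Symp^{S^1}_h(\SSS,\oml);k)$ is spanned by the images of $H_1(\T_m;k)$ and $H_1(\T_{m'};k)$, hence by $\Upsilon(t),\Upsilon(x_2),\Upsilon(y_2)$. For the higher degrees I would argue that the subalgebra generated by $\Upsilon(t),\Upsilon(x_2),\Upsilon(y_2)$ together with $\Upsilon(w)$ already realizes the full rank in each degree, provided $\Upsilon(w)\neq 0$ and the words $\Upsilon(w^\alpha x_2^\beta y_2^\gamma t^\delta)$ remain linearly independent.

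The main obstacle is establishing this nonvanishing and independence, i.e. that no unexpected relations are introduced by $\Upsilon$. The clean way to do this is to dualize: by Theorem~\ref{cohom}, $H^*(\Symp^{S^1}_h(\SSS,\oml);k)$ has the same Poincaré series as $P^{alg,*}$, and the Leray--Hirsch splitting shows $H^*(\Symp^{S^1}_h(\SSS,\oml);k)$ is a free module over $H^*(\T_m;k)=\Lambda(\hat x_1,\hat x_2)$ on generators pulled back from $H^*(U_m^{S^1};k)$. One then identifies the pulled-back generators with powers of a degree-two class $\hat w$ and checks, using the Hopf--Borel structure of $H^*(\Symp^{S^1}_h(\SSS,\oml);k)$ as a Hopf algebra (exactly as in the analysis of $P^{alg,*}$ above) together with naturality of $\Upsilon^*$ with respect to the two torus inclusions, that $\Upsilon^*$ sends a set of algebra generators of $H^*(\Symp^{S^1}_h(\SSS,\oml);k)$ bijectively to the algebra generators $\hat t,\hat x_2,\hat y_2,\hat w$ of $P^{alg,*}$. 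Since both algebras are then generated by corresponding elements satisfying the same relations (forced by the rank count in each degree, which rules out any truncation), $\Upsilon^*$ is an isomorphism of $k$-algebras, and hence so is $\Upsilon$. I expect the delicate bookkeeping to be in matching the degree-two generator $\hat w$ with the commutator dual and verifying it is not a product of degree-one classes, which follows because $H^2$ has rank $4$ while the degree-one part has rank only $3$, leaving room for exactly one new generator.
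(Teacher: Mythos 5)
Your reduction to surjectivity is the same first move as the paper's (the graded ranks of $P^{alg}_{*}$ and of $H_{*}(\Symp_h^{S^1}(\SSS,\oml);k)$ agree by the Leray--Hirsch splittings and Alexander--Eells, so a surjection is automatically an isomorphism), and you correctly isolate the crux: one must show that $\Upsilon(w)=[\Upsilon(x_{2}),\Upsilon(y_{2})]$ is nonzero and that the images of the words $w^{\alpha}x_{2}^{\beta}y_{2}^{\gamma}t^{\delta}$ stay independent. But the mechanism you propose to settle this --- dualizing, invoking Hopf--Borel for $H^{*}(\Symp_h^{S^1}(\SSS,\oml);k)$, and using naturality of $\Upsilon^{*}$ with respect to the two torus inclusions --- does not close the gap. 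Hopf--Borel plus the Poincar\'e series only determines the \emph{abstract} algebra structure of the target, and naturality with respect to $i_{m}$, $i_{m'}$ only controls restrictions to the tori, i.e.\ the degree-one behaviour. All of this data is consistent with the scenario in which the classes $i_{m*}(x_{2})$ and $i_{m'*}(y_{2})$ commute in $H_{*}(\Symp_h^{S^1}(\SSS,\oml);k)$; then the image of $\Upsilon$ would be an exterior algebra on three degree-one classes and $\Upsilon$ would fail to be surjective from degree $2$ on, even though the target still has an indecomposable degree-two generator (your rank count ``$4>3$'' only shows such a generator exists in the target, not that it lies in, or pairs nontrivially with, the image of $\Upsilon$). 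Even your degree-one step is not free: Lemma~\ref{inj} gives injectivity of each torus inclusion separately, not that their images together span the rank-three space $H_{1}$.

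What is missing is genuinely geometric input from the stratification, and this is exactly what the paper's proof supplies. Using the two fibrations $p_{m},p_{m'}$ and the \emph{geometric} form of the Alexander--Eells isomorphism (Proposition~\ref{prop:AlexanderEellsGeometric}), which identifies $\lambda_{*}:H_{p}(U_{m'}^{S^1})\to H_{p+1}(U_{m}^{S^1})$ with Pontryagin multiplication by the link class $p_{m}(y_{2})$ --- i.e.\ the link of the codimension-two stratum is an orbit of the subcircle $y_{2}\subset\T_{m'}$ --- the paper runs a minimal-degree argument: if some class $z$ of minimal degree were not in the subring $R$ generated by $t,x_{2},y_{2}$, its Leray--Hirsch coefficients over $U_{m}^{S^1}$ would, via $\lambda_{*}^{-1}$, produce a class on $U_{m'}^{S^1}$ coming from an element of lower degree not in $R$, a contradiction. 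This is the step that forces $\Upsilon(w)\neq 0$ and the independence of all the words $w^{\alpha}x_{2}^{\beta}y_{2}^{\gamma}t^{\delta}$; without it (or some equivalent geometric identification of the generators of $H_{*}(U_{m}^{S^1})$ as Pontryagin products involving $p_{m}(y_{2})$), the purely formal dualization argument you sketch cannot conclude.
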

\begin{proof}
By definition, the map $\Upsilon$ is an homomorphism of $k$-algebras. Since $P^{alg}_{i}\cong H_{i}(\Symp_h^{S^1}(\SSS,\oml);k)$ for each $i$, it is sufficient to show that $\Upsilon$ is surjective.\\

Let $R$ be the image of $\Upsilon$. Since the maps $i_{m}$ and $i_{m'}$ are injective, $R$ is the subring generated by the classes $t,x_{2},y_{2}$ viewed as elements in $H_{*}(\Symp_h^{S^1}(\SSS,\oml);k)$. Consider the two fibrations induced by the action maps
\begin{align*}
\T_m\to\Symp^{S^1}_h(\SSS,\oml)\xrightarrow{p_m} \Symp^{S^1}_h(\SSS,\oml)/\T_m \simeq U_{m}^{S^1}\\
\T_{m'}\to\Symp^{S^1}_h(\SSS,\oml)\xrightarrow{p_{m'}}\Symp^{S^1}_h(\SSS,\oml)/\T_{m'} \simeq U_{m'}^{S^1}
\end{align*}
Observe that $p_{m}(t)=0$, $p_{m}(x_{2})=0$, $p_{m'}(t)=0$, and $p_{m'}(y_{2})=0$. Now suppose there is an element $z\in H_{*}(\Symp_h^{S^1}(\SSS,\oml);k)$, not in $R$, and of minimal degree $d$. Since
\begin{multline}
H_{d}(\Symp_h^{S^1}(\SSS,\oml);k)\cong\\
H_{d}(U_{m}^{S^1};k)\otimes H_{0}(T^{2}_{m};k)
~\oplus~ H_{d-1}(U_{m}^{S^1};k)\otimes H_{1}(T^{2}_{m};k)
~\oplus~ H_{d-2}(U_{m}^{S^1};k)\otimes H_{2}(T^{2}_{m};k)
\end{multline}
we would have a decomposition
\[z=c_{1}\otimes\mathbf{1}~\oplus~ c_{t}\otimes t~\oplus~ c_{x_{2}}\otimes x_{2}+c_{T}\otimes [T^{2}_{m}]\]
with at least one coefficient $c_{j}$ which is not a polynomial in the classes $p_{m}(w)$ and $p_{m}(y_{2})$. Let $c_{\ell}$ be such coefficient of minimal degree $d-2\leq\ell\leq d$. The inverse of the Alexander-Eells isomorphism of Proposition~\ref{prop:AlexanderEellsGeometric}
\[\lambda_{*}^{-1}:H_{p+1}(U_{m}^{S^1})\to H_{p}(U_{m'}^{S^1})\]
would map $c_{\ell}$ to a class $c_{\ell-1}'\in H_{\ell-1}(U_{m'}^{S^1};k)$. This class could not be a polynomial in $p_{m'}(w)$ and $p_{m'}(x_{2})$ since, otherwise,  
\[c_{\ell} = \lambda_{*}(c_{\ell-1}')=p_{m}\big([y_{2}\otimes c_{\ell-1}']\big)\]
would be a polynomial in the classes $p_{m}(w)$ and $p_{m}(y_{2})$. 
In turn, this class $c_{\ell-1}'$ would have to be the image of some element in $H_{\ell-1}(\Symp_h^{S^1}(\SSS,\oml);k)$ not in $R$, contradicting the minimality of~$z$.
\end{proof}

\begin{cor}
The map $\Upsilon:P^{alg}_{*}\to H_{*}(\Symp_h^{S^1}(\SSS,\oml);\Z)$ is an isomorphism of Pontryagin algebras over the ring of integers.
\end{cor}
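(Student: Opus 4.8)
\emph{Strategy.} Essentially all the substance is already contained in the preceding proposition; the corollary follows from it by a base‑change argument. Reinterpret the notation so that $P^{alg}_{*}$ now denotes the pushout of \emph{graded $\Z$-algebras}
\[
\Lambda(x_{1},x_{2})\xleftarrow{\,(1,b)\,}\Lambda(t)\xrightarrow{\,(1,b')\,}\Lambda(y_{1},y_{2}),
\]
computed exactly as in the field case. The reduction of words carried out there is purely combinatorial and is valid over $\Z$, so $P^{alg}_{*}\cong F(x_{2},y_{2})/\langle x_{2}^{2},y_{2}^{2}\rangle\otimes\Lambda(t)$ is \emph{free} as a graded $\Z$-module, with additive basis the words $w^{\alpha}x_{2}^{\beta}y_{2}^{\gamma}t^{\delta}$ and ranks $1,3,4,4,\dots$. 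Applying the universal property of this pushout to the ring homomorphisms $i_{m}\colon H_{*}(\T_{m};\Z)\to H_{*}(\Symp_{h}^{S^{1}}(\SSS,\oml);\Z)$ and $i_{m'}\colon H_{*}(\T_{m'};\Z)\to H_{*}(\Symp_{h}^{S^{1}}(\SSS,\oml);\Z)$ produces a homomorphism of graded $\Z$-algebras $\Upsilon\colon P^{alg}_{*}\to H_{*}(\Symp_{h}^{S^{1}}(\SSS,\oml);\Z)$. Since $P^{alg}_{*}$ is $\Z$-free, tensoring its presentation with any field $k$ yields the field‑coefficient pushout of the previous subsection, so $\Upsilon\otimes k$ reduces to the map $\Upsilon^{k}$ treated there. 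As $\Upsilon$ is already a ring homomorphism, it suffices to prove it is a degreewise bijection of abelian groups.

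\emph{Finiteness.} By Theorem~\ref{cohom} the group $\Symp^{S^{1}}(\SSS,\oml)$ is of finite type, so each $H_{n}(\Symp_{h}^{S^{1}}(\SSS,\oml);\Z)$ is a finitely generated abelian group, and $P^{alg}_{n}$ is finitely generated (indeed free) by the computation above. If one prefers to avoid citing ``finite type'' as a black box, finiteness also follows from the Serre spectral sequence of $\T_{m}\to\Symp_{h}^{S^{1}}(\SSS,\oml)\to U_{m}^{S^{1}}$, together with the fact that $H_{*}(U_{m}^{S^{1}};\Z)\cong\Z$ in every degree; the latter is deduced from $H_{*}(U_{m}^{S^{1}};k)\cong k$ for every field $k$ via the universal coefficient theorem (a finitely generated abelian group whose reductions mod every prime, and whose rationalization, are one‑dimensional must be $\Z$).

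\emph{Base change.} By the previous proposition $\Upsilon\otimes k=\Upsilon^{k}$ is an isomorphism for $k=\Q$ and for $k=\mathbb{F}_{p}$, every prime $p$. Because $P^{alg}_{*}$ is torsion‑free, $\ker\Upsilon$ is torsion‑free; but $\Upsilon\otimes\Q$ is injective, so $\ker\Upsilon\otimes\Q=0$ and hence $\ker\Upsilon=0$. For the cokernel $C_{n}=\coker\bigl(\Upsilon\colon P^{alg}_{n}\to H_{n}(\Symp_{h}^{S^{1}}(\SSS,\oml);\Z)\bigr)$: it is finitely generated; surjectivity of $\Upsilon\otimes\Q$ forces $C_{n}$ to be torsion, and surjectivity of $\Upsilon\otimes\mathbb{F}_{p}$ forces $C_{n}\otimes\mathbb{F}_{p}=0$ for every $p$. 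A finitely generated torsion abelian group $C$ with $C\otimes\mathbb{F}_{p}=0$ for all $p$ is trivial, so $C_{n}=0$. Hence $\Upsilon$ is an isomorphism in every degree, and, being a ring homomorphism, an isomorphism of Pontryagin algebras over $\Z$.

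\emph{On the difficulty.} There is no genuine obstacle here: the proof uses only the field‑coefficient proposition, the explicit $\Z$-freeness of $P^{alg}_{*}$, and the finiteness afforded by Theorem~\ref{cohom}; the remainder is routine integral homological algebra. The single point that deserves a sentence of care is the passage from $\Upsilon^{k}$ for all $k$ back to $\Upsilon$ over $\Z$, which works precisely because the source is a free $\Z$-module of finite rank in each degree.
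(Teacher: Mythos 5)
Your argument is correct in its main line, but it is a genuinely different route from the paper's. The paper disposes of the corollary in one line by viewing $\Upsilon$ as (induced by) the map of spaces $P\to\Symp_h^{S^1}(\SSS,\oml)$ and citing the standard criterion (Hatcher, Cor.~3A.7(b)): a map inducing isomorphisms on homology with $\Q$ and $\Z_p$ coefficients for all $p$ induces an isomorphism on integral homology. That criterion needs no finiteness hypotheses, because its proof works with the mapping cone and uses both the tensor and the $\mathrm{Tor}$ terms of universal coefficients. You instead work degreewise and purely algebraically: you observe that the $\Z$-algebra pushout $P^{alg}_*$ is degreewise free of finite rank, define $\Upsilon$ by the universal property, and run a base-change argument on kernel and cokernel. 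This buys an elementary, self-contained argument and makes explicit why the source being $\Z$-free is the key point; the price is that your cokernel step genuinely requires finite generation of $H_*(\Symp_h^{S^1};\Z)$ (a divisible torsion group such as $\Q/\Z$ kills both $\otimes\Q$ and $\otimes\mathbb{F}_p$), whereas the paper's route does not. Two points to tighten: first, $\Upsilon\otimes k$ and $\Upsilon^k$ do not literally have the same target; one should say that the UCT map $j\colon H_*(\Symp_h^{S^1};\Z)\otimes k\to H_*(\Symp_h^{S^1};k)$ is an injective ring map with $\Upsilon^k=j\circ(\Upsilon\otimes k)$, so bijectivity of $\Upsilon^k$ forces $j$ to be onto and $\Upsilon\otimes k$ to be an isomorphism — the conclusion you want, but via this factorization rather than an identification. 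Second, your parenthetical alternative derivation of finiteness is circular: field-coefficient computations alone cannot establish finite generation over $\Z$ (a rank-one torsion-free group such as the rationals with squarefree denominators has the same $\Q$- and $\mathbb{F}_p$-homological footprint as $\Z$), so you must either cite the finite-type assertion of Theorem~\ref{cohom} as you do in the main text, or abandon the degreewise argument in favour of the mapping-cone criterion the paper uses.
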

\begin{proof}
This follows from the well known fact that a map induces isomorphisms on homology with $\Z$ coefficients iff it induces isomorphisms on homology with $\Q$ and $\Z_{p}$ coefficients for all primes $p$, see~\cite{Ha}, Corollary~3A.7~(b).
\end{proof}

\begin{thm}\label{full_homo}
The map $\Upsilon:P\to\Symp_h^{S^1}(\SSS,\oml)$ is an homotopy equivalence.
\end{thm}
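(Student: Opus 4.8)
The plan is to upgrade the algebraic isomorphism $\Upsilon\colon P^{alg}_*\to H_*(\Symp_h^{S^1}(\SSS,\oml);\Z)$ of the preceding corollary to a genuine homotopy equivalence of topological groups. First I would recall that $P$ is by construction a topological group (it is the pushout of $\T_m\from S^1\to\T_{m'}$ in the category of topological groups) and that the canonical maps $\T_m\to P$ and $\T_{m'}\to P$ together with the inclusions $\T_m,\T_{m'}\into\Symp_h^{S^1}(\SSS,\oml)$ are compatible over the circle $S^1(\pm1,b;m)$; hence, by the universal property of the pushout, there is a canonical homomorphism of topological groups $\Upsilon\colon P\to\Symp_h^{S^1}(\SSS,\oml)$ inducing the algebra map of the same name on homology. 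Both $P$ and $\Symp_h^{S^1}(\SSS,\oml)$ are path-connected (the latter by the argument in Section~\ref{subsection:CohomologyModule} using connectedness of the open stratum $U_m^{S^1}$) and are of the homotopy type of CW complexes, so by Whitehead's theorem it suffices to show $\Upsilon$ is a weak homotopy equivalence.

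Since both spaces are H-spaces (indeed topological groups), they are simple, and therefore it suffices to show that $\Upsilon$ induces an isomorphism on homology with $\Z$-coefficients: a homology isomorphism between simple (in particular nilpotent) spaces is a weak homotopy equivalence. This is exactly the content of the corollary just proved, $\Upsilon_*\colon P^{alg}_*\xrightarrow{\ \cong\ }H_*(\Symp_h^{S^1}(\SSS,\oml);\Z)$. So the proof is essentially: $\Upsilon$ is a map of nilpotent (CW) spaces inducing an iso on integral homology, hence a homotopy equivalence. One should also remark that, since $\Upsilon$ is a continuous homomorphism of topological groups that is a weak homotopy equivalence, it is automatically a homotopy equivalence in the category of topological groups — this is what lets us conclude that $\Symp_h^{S^1}(\SSS,\oml)$ has the homotopy type of the pushout $P$ as a group, not merely as a space, and combined with the previous subsection gives the space-level identification $\Symp_h^{S^1}(\SSS,\oml)\simeq\Omega S^3\times S^1\times S^1\times S^1$.

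The one genuine subtlety — and the step I expect to require the most care — is the appeal to the Dror--Whitehead-type theorem that a homology equivalence of nilpotent spaces is a weak equivalence: one must be sure the hypotheses really hold here. For $P$, nilpotence follows because it is an H-space (cf.\ Exercise~1.13 in~\cite{dgcRationalHomotopy}); for $\Symp_h^{S^1}(\SSS,\oml)$ it is a topological group, hence simple, hence nilpotent; and the map $\Upsilon$ being a homomorphism is automatically compatible with the action of $\pi_1$ on higher homotopy. A cleaner alternative, which avoids invoking the nilpotent category altogether, is to argue fibrewise: $\Upsilon$ fits into a map of fibrations
\[
\begin{tikzcd}
\T_m \arrow{r}\arrow{d}{=} & P \arrow{r}{}\arrow{d}{\Upsilon} & P/\T_m \arrow{d}{\overline{\Upsilon}}\\
\T_m \arrow{r} & \Symp_h^{S^1}(\SSS,\oml) \arrow{r}{p_m} & U_m^{S^1}
\end{tikzcd}
\]
and one checks that $\overline{\Upsilon}$ induces an isomorphism on homology (both base spaces having $H_*\cong k$ in every degree, by the computation of Section~\ref{subsection:CohomologyModule} and the analogous computation for the pushout) and then that, since the fibre map is the identity and the fibrations are simple, $\Upsilon$ is a weak equivalence by the comparison theorem for the Serre spectral sequence or by a direct five-lemma argument on homotopy groups once $\pi_1$ is handled. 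Either route finishes the proof; I would present the nilpotent-space argument as the main line and mention the fibrewise argument as a remark.
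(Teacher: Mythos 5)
Your main line of argument is exactly the paper's proof: the corollary gives an integral homology isomorphism, both $P$ and $\Symp_h^{S^1}(\SSS,\oml)$ are topological groups (hence nilpotent), so the Dror--Whitehead theorem yields a weak equivalence, which is a genuine homotopy equivalence because both spaces have the homotopy type of CW complexes. The fibrewise comparison you sketch as a remark is a reasonable alternative but is not needed; the proposal is correct and essentially identical to the paper's argument.
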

\begin{proof}
The map $\Upsilon$ is a homology equivalence on integral homology. Because $P$ and $\Symp_h^{S^1}(\SSS,\oml)$ are topological groups, it follows that it is a weak equivalence, see~\cite{Dror-WhiteheadTheorem}, Example~4.2. Because both spaces are homotopy equivalent to CW-complexes, this weak equivalence is a homotopy equivalence (See~\cite{Ha}, Proposition~4.74).
\end{proof}

\section{Centralizers of Hamiltonian \texorpdfstring{$S^1$}{circle} actions on \texorpdfstring{$\SSS$}{the product}}

We summarise all the results we have obtained in this chapter in the following theorem. 
\begin{thm} \label{circle}
Consider any Hamiltonian circle action $S^1(a,b;m)$ on $(S^2 \times S^2, \oml)$. The homotopy type of the symplectic centralizer $\Symp^{S^1}(S^2 \times S^2,\oml)$ is given in table~\ref{table:CentralizersAllCiclesActionsOnSSS}.
\end{thm}

{\centering
\begin{table}[h!]
\begin{tabular}{|p{37mm}|p{47mm}|p{32mm}|}
\hline
Values of $(a,b ;m)$,\newline $m$ even& Conditions on $\lambda$ &Homotopy type of\newline $\Symp^{S^1}(S^2 \times S^2,\oml)$\\
\hline
\hline
{$(0,\pm 1;m)$, $m\neq 0$}  &$2\lambda > m$ & $S^1 \times SO(3)$ \\
\hline
$(0,\pm 1;0)$ or $(\pm 1,0;0)$ &$\lambda \geq 1$  &$S^1 \times SO(3)$ \\\cline{2-3}
\hline
$(\pm 1,\pm1;0)$&$\lambda = 1$ & $\T \times \mathbb{Z}_2$ \\
\hline
$(\pm 1,0;m)$, $m\neq0$ &$2\lambda >m$ &$\T$\\
\hline
$(\pm1,\pm m;m)$, $m \neq 0$ &$2\lambda>m$  & $\T$\\
\hline
\multirow{2}{10em}{$(1,b;m)$, $b \neq \{ m,0\}$} 
&$2\lambda>m$ and $|2b-m| \geq2 \lambda \geq 1$ &$\T$ \\\cline{2-3}
&$2\lambda > m$ and $2 \lambda >|2b-m| \geq 0$  &$\Omega S^3 \times S^1 \times S^1 \times S^1$ \\
\hline
\multirow{2}{10em}{$(-1,b;m)$, $b\neq\{-m,0\}$} 
&$2\lambda > m$ and $|2b+m| \geq2 \lambda \geq 1$ &$\T$ \\\cline{2-3}
&$2\lambda > m$ and $2 \lambda >|2b+m| \geq 0$  &$\Omega S^3 \times S^1 \times S^1 \times S^1$ \\
\hline
All other values $(a,b;m)$ &$2\lambda > m$  &$\T$ \\
\hline
\end{tabular}
\caption{Centralizers of circle actions on $(\SSS,\oml)$}
\label{table:CentralizersAllCiclesActionsOnSSS}
\end{table}
}
\begin{remark}
Recall that in the above table for the  $S^1$ action $S^1(a,b;m)$ for $m \neq 0$ on $(\SSS,\oml)$, to be Hamiltonian we need the condition $\lambda > \frac{m}{2}$.
\end{remark}

\chapter{Partition of the space of invariant almost-complex structures}\label{Chapter-codimension}
In the previous section, we calculated the homotopy type of the group of $S^1(\pm1,b;m)$ equivariant symplectomorphisms assuming that the codimension of the the invariant strata $\jsom \cap U_{m^\prime}$ in $\jsom$ was 2. In this section, we use deformation theory to show that the invariant strata $\jsom \cap U_{m^\prime}$ is a submanifold of $\jsom$ and to characterize its normal bundle. We then show that its codimension is indeed $2$. \\

We adapt the techniques of~\cite{AGK} to the equivariant setting. Consider a K\"ahler 4-manifold $(M,\om,J)$ equipped with an $S^1$ action on $(M,\om,J)$ for which $\om$ and $J$ are invariant. The holomorphic $S^1$ action on the base manifold $M$ induces a natural action on the various tensor spaces such as $T^{1,0}M$ or $\Omega^{0,k}_{J}(M, TM)$. As usual, we use an $S^1$ exponent like in $(T^{1,0}M)^{S^1}$ or $\Omega^{0,k}_{J}(M, TM)^{S^1}$ to denote the $S^1$ invariant elements of tensor spaces.

\section{Space of invariant complex structures}
Let $\J_l$ be the space of almost complex structures of regularity $C^l$ on $M$, endowed with $C^l$ topology. Being a space of sections, $\J_l$ is a smooth Banach manifold. An explicit atlas can be constructed using the Cayley transform, see for instance~\cite{Smolentsev}. Given $J\in\J_l$, let $\Omega^{0,1}_{J,l}(M,TM) \subset \End_l(T M)$ be the space of  endomorphisms of the tangent bundle of regularity $C^l$ that anticommute with $J$, that is,
$$
\Omega^{0,1}_{J,l}(M,TM)=\left\{A  \in \End_l(T M) \mid A J+J A=0\right\} 
$$
The map $\phi_{J}: \Omega^{0,1}_{J,l}(M,TM) \rightarrow \J_l$ given by
$$
\phi_{J}(A)= J e^{A}
$$
is a local diffeomorphism sending $C^k$ endomorphisms ($k \geq l$) to $C^k$ almost complex structures. If $J$ is $S^1$ invariant, then $\phi$ gives a bijection between invariant endomorphisms near $0$ in $\Omega^{0,1}_{J,l}(M,TM)$ and invariant almost complex structures in a neighborhood of $J$. This shows that the space $\J^{S^1}_l$ of invariant almost complex structures is a Banach submanifold of $\J_l$ whose tangent space $T_J\J^{S^1}_l$ at $J$ is naturally identified with the linear subspace $\Omega^{0,1}_{J,l}(M,TM)^{S^1}$ of $S^1$ invariant endomorphisms of the tangent bundle of regularity $C^l$ that anticommute with $J$.\\

Let $I^{S^1}_l$ denote the space of invariant and integrable almost complex structures of $M$ with regularity $C^l$. We now show  that $I^{S^1}_l$is a Banach submanifold of $\J^{S^1}_l$. To this end, let  $N_J(X,Y) = [X,Y] + J\left([JX,Y] + [X,JY]\right) - [JX,JY]$ denote the Nijenhuis tensor with respect to $J$. By the Newlander-Nirenberg theorem, we know that $J \in \J^{S^1}_l$ is integrable iff $N(J)=0$.
\\

Consider the vector bundle $\Omega^{0,2}_{l-1}(M,TM)^{S^1}$ over $\mathcal{J}^{S^1}_l$ whose fibre over $J$ is the space $\Omega^{0,2}_{J,l-1}(M,TM)^{S^1}$ of $S^1$-invariant $(0,2)$ forms of regularity $C^{l-1}$ with values in the holomorphic tangent bundle $TM$.  
The Nijenhuis tensor can be interpreted as a section $N:\mathcal{J}_l\to\Omega^{0,2}_{l-1}(M,TM)$. This section is equivariant since, for all $g \in S^1$,
\begin{align*}
\begin{split}
g \cdot N_J(X,Y) &= g \cdot [X,Y] + g \cdot J\left([JX,Y] + [X,JY]\right) - g \cdot [JX,JY] \\ 
&= g_* [g_*^{-1}X,g_*^{-1} Y]  + g_*J\left([Jg_*^{-1}X,g_*^{-1}Y] \right.\\
&\phantom{+++++++++}+\left.[g_*^{-1}X,Jg_*^{-1}Y]\right) - g_* \cdot [Jg_*^{-1}X,Jg_*^{-1}Y]  \\ 
&= [X,Y] + J\left([JX,Y] + [X,JY]\right) - [JX,JY] 
\end{split}
\end{align*}
where the last equality follows from the facts that $g_*[X,Y] = [g_*X,g_*Y]$ and that $J$ is invariant. In particular, $N$ takes invariant tensors to invariant tensors, that is,
\begin{align*}
    N: \J^{S^1}_l &\to \Omega^{0,2}_{l-1}(M,TM)^{S^1} \\
      J &\mapsto N_J
\end{align*}
To show that $I^{S^1}_l$ is a Banach submanifold, it suffices to show that Nijenhuis tensor intersects the 0-section of the bundle transversally. This is equivalent to showing that, for an integrable $J$, the projection of the derivative to the vertical tangent bundle is surjective. We denote this projection of the derivative of $N$ to the vertical tangent bundle as $\nabla N$. A priori, $\nabla N$ depends on a choice of connection on $\Omega^{0,2}_{l-1}(M,TM)^{S^1}$. However, as we are on the zero section the projection is independent of the connection chosen. As shown in Appendix~\ref{AppendixAutomorphisms} of \cite{AGK}, given an arbitrary almost complex structure $J$, we can extend the usual $\delbar_J$ operator to an operator $\overline{\partial}_J:\Omega^{0,1}_{J,l}(M, TM) \to \Omega^{0,2}_{J,l-1}(M, TM)$ so that $\nabla N_J:=\nabla N(J)$ is given by the following composition. 
\[
\begin{tikzcd}
     &\Omega^{0,1}_{J,l}(M, TM)^{S^1} \arrow[r,"dN_J"] \arrow[rr,bend right=15,"\nabla N_J"]&\left(\Omega^{2}_{l-1}(M, TM \otimes \C)\right)^{S^1} \arrow[r,"\pi"] &\Omega^{0,2}_{J,l-1}(M,TM)^{S^1}
\end{tikzcd} 
\]
\noindent where $\pi$ is the canonical projection of 
\[
\Omega^{2}_{J,l-1}(M, TM \otimes \C)^{S^1} 
\cong \Omega^{2,0}_{J,l-1}(M,TM)^{S^1} \oplus \Omega^{1,1}_{J,l-1}(M,TM)^{S^1} \oplus \Omega^{0,2}_{J,l-1}(M,TM)^{S^1}
\]
onto the last summand. 

\begin{thm}[\cite{AGK}, Corollary A.9]
$\nabla N(J) = -2 J \overline \partial_J$. \qed
\end{thm}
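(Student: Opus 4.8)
The statement to prove is the formula $\nabla N(J) = -2J\bar\partial_J$ for an integrable $S^1$-invariant almost complex structure $J$ on a Kähler $4$-manifold, where $\nabla N$ denotes the vertical part of the linearization of the Nijenhuis tensor. This is cited as Corollary A.9 of \cite{AGK}, so the task is really to reproduce that computation, checking that it survives the restriction to $S^1$-invariant tensors.

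The plan is to work purely locally and pointwise, since $\nabla N(J)$ is a zeroth-order-in-$J$ differential operator acting on $A \in \Omega^{0,1}_{J,l}(M,TM)^{S^1}$, and the $S^1$-equivariance of the whole construction (already established in the excerpt for the section $N$ itself) means that if the non-equivariant identity holds, its restriction to invariant tensors holds automatically — the operators $\bar\partial_J$, $dN_J$, and the projection $\pi$ all commute with the $S^1$-action, so no new analysis is needed beyond the non-equivariant case. Concretely, I would first differentiate the Nijenhuis tensor: writing $J_t = Je^{tA}$ with $A$ anticommuting with $J$, so $\dot J_0 = JA$, one computes $\frac{d}{dt}\big|_{0} N_{J_t}(X,Y)$ using the explicit formula $N_J(X,Y) = [X,Y] + J([JX,Y]+[X,JY]) - [JX,JY]$. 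This yields a sum of terms, some involving $JA$ undifferentiated (contributing to the $(2,0)$ and $(1,1)$ parts, which get killed by $\pi$) and some involving Lie brackets of $JA$ with vector fields, i.e.\ derivatives of $A$. The key point is to recognize that after projecting onto $\Omega^{0,2}_{J}$, and using integrability of $J$ (so that $N_J = 0$ and the $(0,1)$-tangent distribution is involutive), the surviving terms assemble exactly into $-2J$ applied to the Dolbeault-type operator $\bar\partial_J A$, where $\bar\partial_J$ is the extension of the usual $\bar\partial$ to $TM$-valued forms described in Appendix A of \cite{AGK}.

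The cleanest route is to pass to local holomorphic coordinates $(z^1,z^2)$ adapted to $J$ (these exist by Newlander--Nirenberg since $J$ is integrable), express $A$ in the frame $dz^i \otimes \partial_{\bar z^j}$ and $d\bar z^i \otimes \partial_{\bar z^j}$ — but since $A$ anticommutes with $J$ it is purely of type $d\bar z \otimes \partial_z$ plus $dz \otimes \partial_{\bar z}$ — and then compute both sides of the claimed identity in coordinates. The right-hand side $\bar\partial_J A$ is then literally the antiholomorphic exterior derivative of the coefficient functions, and the $-2J$ prefactor accounts for the normalization $\bar\partial_J = \frac12(d + \ldots)$-type conventions and the fact that $J$ acts as $\pm i$ on the $(1,0)$/$(0,1)$ parts. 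One then checks the left-hand side: the terms in $\dot N$ that are not killed by $\pi$ reduce, after using $\partial_{\bar z^j}$-derivatives only, to the same expression up to the factor $-2J$.

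The main obstacle — really the only nontrivial point — is bookkeeping: tracking which of the roughly six terms produced by differentiating the three-term Nijenhuis formula land in $\Omega^{2,0} \oplus \Omega^{1,1}$ (and are therefore annihilated by $\pi$) versus $\Omega^{0,2}$, and getting the combinatorial factor $-2$ and the placement of $J$ exactly right under the sign and conjugation conventions for $\bar\partial_J$ fixed in Appendix A of \cite{AGK}. Since this is precisely the content of \cite{AGK}, Appendix A, I would present the argument by recalling their setup, reproducing the pointwise linearization in adapted coordinates, and invoking $S^1$-equivariance of every map in the composition $\pi \circ dN_J$ to conclude that the identity restricts to the invariant subspaces without change. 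I expect the proof to be short, essentially a citation-plus-verification that equivariance is preserved.
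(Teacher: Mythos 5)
Your proposal is essentially the paper's own treatment: the paper states the identity with a bare citation to \cite{AGK}, Corollary A.9, and relies (as you do) on the fact that $dN_J$, $\pi$, and $\overline\partial_J$ all commute with the $S^1$-action, so the non-equivariant identity restricts to the invariant subspaces without further argument. Your coordinate sketch of the linearization is a faithful outline of the computation in \cite{AGK}, Appendix A, and adds nothing that conflicts with the paper's approach.
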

We are lead to show that $\overline \partial_J: \Omega^{0,1}_{J,l}(M,TM)^{S^1} \to \Omega^{0,2}_{J,l-1}(M,TM)^{S^1}$ is surjective. This is trivially true  whenever the manifold  as $M$ is 4 dimensional and $H_J^{0,2}(M,TM)^{S^1} = 0$. 

\begin{lemma}{\label{Lemma:averaging_surj}}
Consider a complex manifold $(M,J)$ with a holomorphic $S^1$ action. Then the averaging map 
\begin{align*}
    \rho: H_J^{0,2}(M,TM) &\rightarrow H_J^{0,2}(M,TM)^{S^1} \\
             [\beta] &\mapsto \left[\int_{S^1}g^*\beta ~dg\right]
\end{align*}
is surjective.
\end{lemma}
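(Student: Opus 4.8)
The statement to prove is Lemma~\ref{Lemma:averaging_surj}: that for a complex manifold $(M,J)$ with a holomorphic $S^1$ action, the averaging map $\rho: H_J^{0,2}(M,TM)\to H_J^{0,2}(M,TM)^{S^1}$ sending $[\beta]$ to $[\int_{S^1}g^*\beta\,dg]$ is surjective. The plan is to exhibit a one-sided inverse to $\rho$, namely the map induced on cohomology by the inclusion of invariant forms into all forms. Concretely, if $\iota_*: H_J^{0,2}(M,TM)^{S^1}\to H_J^{0,2}(M,TM)$ denotes the natural map coming from viewing an invariant Dolbeault-cohomology class as an ordinary one, I would show that $\rho\circ\iota_* = \mathrm{id}$ on $H_J^{0,2}(M,TM)^{S^1}$, which immediately gives surjectivity of $\rho$.

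\textbf{Key steps, in order.} First, I would check that the averaging operator $A(\beta):=\int_{S^1} g^*\beta\,dg$ is well defined on $\Omega^{0,2}_J(M,TM)$: since the action is holomorphic, each $g^*$ preserves the bidegree decomposition and commutes with $\delbar_J$ (this is essentially the computation already done in the excerpt for the Nijenhuis tensor, applied to $\delbar_J$), so $A$ maps $\delbar_J$-closed forms to $\delbar_J$-closed forms and $\delbar_J$-exact forms to $\delbar_J$-exact forms; hence $A$ descends to a map on $H_J^{0,2}(M,TM)$ with image in the invariant subspace, and this descended map is exactly $\rho$. Second, I would observe that $A$ restricted to already-invariant forms is the identity: if $g^*\beta=\beta$ for all $g$, then $\int_{S^1}g^*\beta\,dg = \int_{S^1}\beta\,dg = \beta$ since the Haar measure on $S^1$ has total mass $1$. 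Third — and this is the only mildly delicate point — I would need to make precise the map $\iota_*$ and the identification of $H_J^{0,2}(M,TM)^{S^1}$ as the target. The cleanest route is: the invariant Dolbeault complex $(\Omega^{0,\bullet}_J(M,TM)^{S^1},\delbar_J)$ sits inside $(\Omega^{0,\bullet}_J(M,TM),\delbar_J)$; call $H_J^{0,2}(M,TM)^{S^1}$ its cohomology in degree $2$; the inclusion of complexes induces $\iota_*$, and by the two previous steps $\rho\circ\iota_* = \mathrm{id}$. If instead one wants $H_J^{0,2}(M,TM)^{S^1}$ to mean the subspace of invariant classes inside $H_J^{0,2}(M,TM)$ (i.e.\ the image of $\iota_*$), the same computation shows $\rho$ is a projection onto it, which is again surjectivity onto the stated target.

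\textbf{Main obstacle.} There is no serious analytic obstacle — averaging over a \emph{compact} group is the standard trick, and compactness of $S^1$ guarantees the integral converges and commutes with the differential. The one thing that requires care, and which I would state carefully rather than gloss over, is the compatibility of $\delbar_J$ with $g^*$: because the $S^1$ action is \emph{holomorphic} ($g^*J=J$), pullback by $g$ is a chain map for $\delbar_J$ and preserves the Hölder/$C^l$ regularity used elsewhere in the chapter, so the averaged form has the same regularity and the averaging is continuous. Granting that, the argument is:
\[
\rho\circ\iota_*([\beta]) = \Big[\int_{S^1} g^*\beta\,dg\Big] = [\beta] \quad\text{for }[\beta]\in H_J^{0,2}(M,TM)^{S^1},
\]
whence $\rho$ is surjective. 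I would also remark that the identical argument shows averaging is surjective on $H^{0,k}_J(M,TM)$ for every $k$, which is what is actually needed to conclude $\overline\partial_J$ is surjective on invariant $(0,1)$-forms in dimension $4$ (combining with $H^{0,2}_J(M,TM)^{S^1}=0$ when the unaveraged group already vanishes).
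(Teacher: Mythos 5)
Your proposal is correct and follows essentially the same route as the paper: the paper's proof consists precisely of the two observations that $\overline\partial_J$ commutes with the averaging operator (so the map is well defined) and that averaging is the identity on invariant forms (so it is surjective). Your additional care about whether $H_J^{0,2}(M,TM)^{S^1}$ means the cohomology of the invariant subcomplex or the invariant classes is a reasonable clarification but does not change the argument.
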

\begin{proof}
The fact that the above map is well defined follows by noting that the $\overline \partial_J$ operator commutes with the averaging operator. The surjectivity follows as the averaging operator is the identity on invariant forms.
\end{proof}

Note that the above theorem works for any compact group. By the discussion in the previous paragraph and Lemma~\ref{Lemma:averaging_surj} we can conclude that $\overline \partial_J: \Omega^{0,1}_{J,l}(M,TM)^{S^1} \to \Omega^{0,2}_{J,l-1}(M,TM)^{S^1}$ is surjective for any holomorphic $S^1$ action on a complex 4-manifold satisfying $H_J^{0,2}(M,TM)=0$.

\begin{thm}
Let $(M,J)$ be a $4$-manifold endowed with an integrable complex structure $J$, and with a holomorphic $S^1$ action. Suppose $H_J^{0,2}(M,TM) = 0$. Then the space $I^{S^1}_l$ of invariant complex structures is a Banach submanifold of $\J^{S^1}_l$ in a neighbourhood of $J$ with tangent space at $J$ identified with $ \ker \overline \partial_J:\Omega^{0,1}_{J,l}(M,TM)^{S^1} \to \Omega^{0,2}_{J,l-1}(M,TM)^{S^1} $. Equivalently,
\[
T_J I_l \cong \left(\im  \overline \partial_J: \Omega^{0,0}_{J,l}(M,TM)^{S^1} \to \Omega^{0,1}_{J,l}(M,TM)^{S^1}\right) \oplus H^{0,1}_J(M,TM)^{S^1}
\]
\end{thm}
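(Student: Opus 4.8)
The plan is to verify the transversality criterion set up in the preceding paragraph, namely that the section $N:\J^{S^1}_l\to\Omega^{0,2}_{l-1}(M,TM)^{S^1}$ meets the zero section transversally at an integrable invariant $J$. First I would recall that, by the computation $\nabla N(J)=-2J\,\overline\partial_J$ quoted from \cite{AGK}, transversality is equivalent to the surjectivity of
\[
\overline\partial_J:\Omega^{0,1}_{J,l}(M,TM)^{S^1}\longrightarrow\Omega^{0,2}_{J,l-1}(M,TM)^{S^1}.
\]
In dimension $4$ the Dolbeault complex $\Omega^{0,\bullet}_J(M,TM)$ has top degree $2$, so the cokernel of this $\overline\partial_J$ on all tensors is $H^{0,2}_J(M,TM)$, which vanishes by hypothesis; the averaging argument of Lemma~\ref{Lemma:averaging_surj} then forces the invariant $\overline\partial_J$ to be surjective as well, since the averaging projection $H^{0,2}_J(M,TM)\to H^{0,2}_J(M,TM)^{S^1}$ is onto and $0$ is the only possible target. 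Hence the equivariant Nijenhuis section is transverse to the zero section, and the implicit function theorem in Banach manifolds gives that $I^{S^1}_l=N^{-1}(0)$ is a Banach submanifold of $\J^{S^1}_l$ near $J$.

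Next I would identify the tangent space. By construction of the local chart $\phi_J(A)=Je^A$, we have $T_J\J^{S^1}_l\cong\Omega^{0,1}_{J,l}(M,TM)^{S^1}$, and under transversality $T_JI^{S^1}_l$ is the kernel of the vertical derivative of $N$, i.e.
\[
T_JI^{S^1}_l\cong\ker\Bigl(\overline\partial_J:\Omega^{0,1}_{J,l}(M,TM)^{S^1}\to\Omega^{0,2}_{J,l-1}(M,TM)^{S^1}\Bigr).
\]
To turn this into the stated direct sum description, I would invoke elliptic Hodge theory for the $\overline\partial_J$-complex twisted by $TM$ — valid because $J$ is integrable — in its $S^1$-equivariant form: averaging the Hodge decomposition over the compact group $S^1$ (the Laplacian commutes with $g^*$ since $g^*J=J$, $g^*\om=\om$) yields
\[
\Omega^{0,1}_{J,l}(M,TM)^{S^1}\cong \operatorname{im}\bigl(\overline\partial_J|_{\Omega^{0,0}}\bigr)^{S^1}\oplus\mathcal H^{0,1}(M,TM)^{S^1}\oplus \bigl(\overline\partial_J^{*}\Omega^{0,2}\bigr)^{S^1},
\]
where $\mathcal H^{0,1}$ denotes harmonic representatives. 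The kernel of $\overline\partial_J$ is exactly the first two summands, and harmonic forms represent $H^{0,1}_J(M,TM)^{S^1}$, giving
\[
T_JI^{S^1}_l\cong\Bigl(\operatorname{im}\overline\partial_J:\bigl(\Omega^{0,0}_{J,l}(M,TM)\bigr)^{S^1}\to\Omega^{0,1}_{J,l}(M,TM)^{S^1}\Bigr)\oplus H^{0,1}_J(M,TM)^{S^1}.
\]

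I expect the main obstacle to be bookkeeping rather than conceptual: making sure the averaging operator interacts correctly with the regularity indices $l$ and $l-1$, that the Hodge-theoretic splitting descends to the Banach-space completions used here (one works with the $C^l$ Hölder or Sobolev models, where the twisted $\overline\partial_J$-Laplacian is still elliptic and its harmonic space finite-dimensional), and that the equivariant refinement of elliptic regularity is genuinely available — all of which is handled by the $S^1$-equivariant versions of the standard arguments in \cite{AGK}, Appendix~A. A minor point to check is that the projection $\pi$ onto the $\Omega^{0,2}$-summand used in defining $\nabla N$ is itself $S^1$-equivariant, which is immediate since the $(p,q)$-decomposition is determined by $J$ and $J$ is invariant.
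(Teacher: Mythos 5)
Your argument is correct and is essentially the paper's own: transversality of the equivariant Nijenhuis section via $\nabla N(J)=-2J\overline\partial_J$, surjectivity of the invariant $\overline\partial_J$ deduced from $H^{0,2}_J(M,TM)=0$ together with the averaging Lemma~\ref{Lemma:averaging_surj}, and the tangent space identified as the kernel of the linearized operator on $\Omega^{0,1}_{J,l}(M,TM)^{S^1}$. The identification of that kernel with $\im\,\overline\partial_J\oplus H^{0,1}_J(M,TM)^{S^1}$ by averaging the Hodge-theoretic splitting is likewise how the paper (following \cite{AGK}) reads the equivalent formulation, so no further comparison is needed.
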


Let us now assume that $M$ is symplectic. Let $\J^{S^1}_{\om,l}$ denote the space of all $S^1$ equivariant \emph{compatible} almost complex structures of regularity $C^l$ endowed with the $C^l$-topology. Our next goal is to show that under some cohomological restrictions, the space of equivariant integrable \emph{compatible} almost complex structures of regularity $C^l$ denoted by $I^{S^1}_{\om,l}$ is a Banach submanifold of $\J^{S^1}_{\om,l}$. We first note that given $J \in \J^{S^1}_{\om,l}$, the equivariant metric 
$h_J (\cdot, \cdot) := \om(\cdot, J\cdot) - i \om(\cdot, \cdot)$
 induced by the pair $(\om, J)$ identifies 
$T_J \J^{S^1}_{l} = \Omega^{0,1}_l(M,TM)^{S^1}$ 
with the space  $\left(T^{0,2}\right)^{S^1}:= \left(\Omega^{0,2}(M)\right)^{S^1} \otimes \left(\Omega^{0,2}(M)\right)^{S^1}$ of complex equivariant $(0,2)$-tensors via the map
\begin{align*}
\theta:\left(T^{0,2}\right)^{S^1} &\to \Omega^{0,1}_l(M,TM)^{S^1} \\
A &\mapsto \theta(A):= h_J(A \cdot, \cdot)
\end{align*}

Let us denote by $S\Omega^{0,1}_{J,l}(M,TM)^{S^1}$ the tangent space of $T_J \J^{S^1}_{\om,l} \subset T_J \J^{S^1}_{l}$ of all equivariant compatible almost complex structures. More explicitly, the tangent space consists of elements $A \in \Omega^{0,1}_{J,l}(M,TM)^{S^1} $ such that $\om(A\cdot, \cdot) = - \om(\cdot, A\cdot)$. Under the above identification, we can check that $S\Omega^{0,1}_{J,l}(M,TM)^{S^1}$ gets mapped to the space of symmetric $S^1$ invariant $(0,2)$-tensors which we denote by $\left(S^{0,2}\right)^{S^1}$. 

Further, the quotient $T_{J} \mathcal{J}_l^{S^1} / T_{J} \jsoml$ may be identified with the space of invariant $(0,2)$ forms on $M$ since
\begin{multline*}
T_{J} \mathcal{J}_l^{S^1} / T_{J} \jsoml = \Omega^{0,1}_{J,l}(M,TM)^{S^1} / S \Omega^{0,1}_{J,l}(M,TM)^{S^1}\\
\cong T_{J}^{0,2}(M) / \left(S^{0,2}\right)^{S^1} = \Omega_{J}^{0,2}(M)^{S^1}.
\end{multline*}
As before, the Nijenhuis tensor defines a map 
\[N:\J^{S^1}_{\om,l} \to \Omega^{0,2}_{l-1}(M,TM)^{S^1}\]
whose kernel is precisely the subspace $I^{S^1}_{\om,l}$. We want to show that the derivative $\nabla N$ is surjective at all $J\in I^{S^1}_{\om,l}$. As we know that $\nabla N(J) = -2 J \overline \partial_J $, we would need to show that $\overline\partial_J:S\Omega^{0,1}_{J,l}(M,TM)^{S^1} \to  \Omega^{0,2}_{J,l-1}(M,TM)^{S^1}$ is surjective. As $M$ is a 4-manifold, all forms in $\Omega^{0,2}_{l-1}(M,TM)^{S^1}$ are closed, hence to show that the restriction of $\overline\partial_J$ to $S\Omega^{0,1}_{J,l}(M,TM)^{S^1}$ is surjective, it would suffice to show that the vector space $SH_{J}^{0,2}(TM)^{S^1}$ defined below is 0.
\begin{align*}
SH_{J}^{0,2}(TM)^{S^1}
&:= \frac{\ker \overline\partial:\Omega^{0,2}_{J,l-1}(M,TM)^{S^1} \to \Omega^{0,3}_{J,l-2}(M,TM)^{S^1}}{\im \overline\partial:S\Omega^{0,1}_{J,l}(M,TM)^{S^1} \to  \Omega^{0,2}_{J,l-1}(M,TM)^{S^1}}\\
&=\frac{\Omega^{0,2}_{J,l-1}(M,TM)^{S^1}}{\im \overline\partial:S\Omega^{0,1}_{J,l}(M,TM)^{S^1} \to  \Omega^{0,2}_{J,l-1}(M,TM)^{S^1}}
\end{align*}
As the above condition is not easy to check directly, we consider the following commutative diagram 
\[
\begin{tikzcd}[cramped]
     &0 \arrow[d] \arrow[r] &S\Omega^{0,1}_{J,l}(M,TM)^{S^1} \arrow[r, "\overline\partial"] \arrow[d] &\Omega^{0,2}_{J,l-1}(M,TM)^{S^1} \arrow[d] \arrow[r] &0 \\
     &\Omega^{0,0}_{J,l+1}(M,TM)^{S^1} \arrow[r,"\overline\partial"] \arrow[d,"\alpha"] &\Omega^{0,1}_{J,l}(M,TM)^{S^1} \arrow[r,"\overline\partial"] \arrow[d] &\Omega^{0,2}_{J,l-1}(M,TM)^{S^1} \arrow[d] \arrow[r] &0 \\
     &\Omega^{0,1}_{J,l+1}(M)^{S^1} \arrow[r, "\overline\partial"] &\Omega^{0,2}_{J,l}(M)^{S^1} \arrow[r, "\overline\partial"] &0 \arrow[r] &0
\end{tikzcd}  
\]
where the map $S\Omega^{0,1}_{J,l}(M,TM)^{S^1} \to \Omega^{0,1}_{J,l}(M,TM)^{S^1}$ is just the inclusion and the where the map $\Omega^{0,1}_{J,l}(TM)^{S^1} \to \Omega^{0,2}_{J,l}(M)^{S^1}$ is the quotient \[\Omega^{0,1}_{J,l}(TM)^{S^1} \to \Omega^{0,1}_{J,l}(TM)^{S^1}/S\Omega^{0,1}_{J,l}(M,TM)^{S^1}\]
followed by identifying $\Omega^{0,1}_{J,l}(TM)^{S^1}/S\Omega^{0,1}_{J,l}(M,TM)^{S^1}$ with
$\Omega^{0,2}_{J,l-1}(M,TM)^{S^1}$ (see \cite{AGK} p.548 for more details about this identification). The map $\alpha$ is defined~as
\begin{align*}
    \alpha: \Omega^{0,0}_{J,l+1}(M,TM)^{S^1} &\to \Omega^{0,1}_{J,l+1}(M)^{S^1} \\
    X &\mapsto \alpha(X)(Y):= \om(X,JY)-i\om(X,Y)
\end{align*} 
where $J \in I^{S^1}_{\om,l}$ and $X,Y \in \Omega^{0,0}_{J,l+1}(M,TM)^{S^1}$. We refer the reader to  Appendix~B in~\cite{AGK} for  the proof of commutativity of the diagram in the non-equivariant case, and we note that it still holds in the equivariant setting due to the fact that $\overline\partial_J$ is equivariant. The above diagram gives rise to a long exact sequence is cohomology

\begin{equation}\label{les}
\begin{aligned} 0 \longrightarrow H_{J}^{0}(T M)^{S^1} & \longrightarrow cl\Omega_{J}^{0,1}(M)^{S^l} \stackrel{\delta}{\longrightarrow} cl S\Omega_{J}^{0,1}(M,TM)^{S^1} \stackrel{q}\longrightarrow H_{J}^{0,1}(T M)^{S^1} \longrightarrow \\ & \longrightarrow H_{J}^{0,2}(M)^{S^1} \longrightarrow SH_{J}^{0,2}(TM)^{S^1} \longrightarrow H_{J}^{0,2}(T M)^{S^1} \longrightarrow 0 
\end{aligned}
\end{equation}

\noindent where $cl\Omega_{J}^{0,1}(M)^{S^l}$ denotes the kernel of $\overline\partial_J$ in $\Omega_{J}^{0,1}(M)^{S^l}$ and where, similarly, $cl S\Omega_{J}^{0,1}(M,TM)^{S^1}$ is the kernel of $\overline\partial_J: S\Omega^{0,1}_{J,l}(M,TM)^{S^1} \to  \Omega^{0,2}_{J,l-1}(M,TM)^{S^1}$. Thus if we had a 4-manifold $M$ with an $S^1$ invariant compatible integrable almost complex structure $J$ such that  $H_J^{0,2}(M) =0$ and $H_J^{0,2}(TM)=0$, noting that $\overline{\partial}_J$ takes $S^1$ invariant elements to $S^1$ invariant elements we can conclude that $H_J^{0,2}(M)^{S^1} =0$ and $H_J^{0,2}(M,TM)^{S^1}=0$. Further, it follows from equation \ref{les} for such a manifold $(M,\om,J)$ as above, that $SH_{J}^{0,2}(TM)^{S^1}=0$ and hence $I^{S^1}_{\om,l}$ would indeed be a manifold in a neighbourhood of such a $J$. Thus $H_J^{0,2}(M) =0$ and $H^{0,2}_J(TM)=0$ gives us a simpler condition for when $I^{S^1}_{\om,l}$ would indeed be a manifold in a neighbourhood of $J$ as required.
\\

Additionally, as  the averaging operator commutes with the $\overline{\partial}_J$ operator, the assumption $H_J^{0,2}(M)=0$ implies that $H_J^{0,2}(M)^{S^1} =0$. This tells us that the map $q:cl S\Omega_{J}^{0,1}(M,TM)^{S^1} \to H_{J}^{0,1}(T M)^{S^1} $ is surjective and hence by the first isomorphism theorem we have $\frac{cl S\Omega_{J}^{0,1}(M,TM)^{S^1}}{ker ~q:cl S\Omega_{J}^{0,1}(M,TM)^{S^1} \to H_{J}^{0,1}(T M)^{S^1}}$ is isomorphic to $H_{J}^{0,1}(T M)^{S^1}$.
Then the above long exact sequence \ref{les} gives us 
\[\frac{cl S\Omega_{J}^{0,1}(M,TM)^{S^1}}{ker~ q} \cong \frac{cl S\Omega_{J}^{0,1}(M,TM)^{S^1}}{\im \delta} \cong H_{J}^{0,1}(M,TM)^{S^1}.\]
Putting all this together we obtain the following local description of $I^{S^1}_{\om,l}$.
\begin{thm}\label{integrable}
Let $(M,\om,J)$ be a  K\"ahler 4-manifold with a K\"ahler $S^1$ action. Suppose that $H_J^{0,2}(M)=0$ and $H_J^{0,2}(TM)=0$. Then  $I^{S^1}_{\om,l}$ is a Banach submanifold of $\J^{S^1}_{\om,l}$ in a neighbourhood of $J$ with tangent space at $J \in I^{S^1}_{\om,l}$ identified with
\[
T_J I^{S^1}_{\om,l} = cl S\Omega_{J}^{0,1}(M,TM)^{S^1} = \ker \overline\partial_J: S\Omega^{0,1}_{J,l}(M,TM)^{S^1} \longrightarrow  \Omega^{0,2}_{J,l-1}(M,TM)^{S^1}.
\]
Equivalently,
\[T_J I^{S^1}_{\om,l} \cong \im \delta \bigoplus H_{J}^{0,1}(T M)^{S^1}.\]
\end{thm}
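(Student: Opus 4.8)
## Proof Proposal

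\textbf{Overall strategy.} The statement is a standard transversality-plus-diagram-chase argument, adapted from~\cite{AGK} to the equivariant setting, and most of the pieces have already been assembled in the preceding paragraphs. The plan is to show that the Nijenhuis section $N:\J^{S^1}_{\om,l}\to\Omega^{0,2}_{l-1}(M,TM)^{S^1}$ meets the zero section transversally at $J$, which by the implicit function theorem for Banach manifolds gives that $I^{S^1}_{\om,l}=N^{-1}(0)$ is a Banach submanifold near $J$ with tangent space $\ker\nabla N(J)$. So the proof breaks into three steps: (1) identify $\nabla N(J)$ with $-2J\overline\partial_J$ restricted to $S\Omega^{0,1}_{J,l}(M,TM)^{S^1}$, which is exactly Corollary A.9 of~\cite{AGK} quoted above together with the identification of $T_J\jsoml$; (2) prove surjectivity of $\overline\partial_J:S\Omega^{0,1}_{J,l}(M,TM)^{S^1}\to\Omega^{0,2}_{J,l-1}(M,TM)^{S^1}$ under the hypotheses $H^{0,2}_J(M)=0$ and $H^{0,2}_J(TM)=0$; (3) extract the two descriptions of $T_JI^{S^1}_{\om,l}$ from the long exact sequence~\eqref{les}.

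\textbf{Step 1 (the linearization).} Since $I^{S^1}_{\om,l}$ is cut out as the zero locus of the equivariant Nijenhuis section inside the Banach manifold $\J^{S^1}_{\om,l}$, I would first invoke the fact, established just above, that $\nabla N(J)=-2J\overline\partial_J$ as an operator $\Omega^{0,1}_{J,l}(M,TM)^{S^1}\to\Omega^{0,2}_{J,l-1}(M,TM)^{S^1}$, and that the tangent space $T_J\jsoml$ is the subspace $S\Omega^{0,1}_{J,l}(M,TM)^{S^1}$ of skew-symmetric (i.e. $\om$-compatible-direction) endomorphisms. Composing with $-2J$, an isomorphism of the target, the transversality of $N$ to the zero section is equivalent to surjectivity of the restriction $\overline\partial_J|_{S\Omega^{0,1}_{J,l}(M,TM)^{S^1}}$. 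The kernel of this restriction is then, by definition, $T_JI^{S^1}_{\om,l}=cl\, S\Omega^{0,1}_J(M,TM)^{S^1}$.

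\textbf{Step 2 (surjectivity — the main obstacle).} This is the crux. Because $\dim_\R M=4$, every element of $\Omega^{0,2}_{J,l-1}(M,TM)^{S^1}$ is $\overline\partial_J$-closed, so surjectivity of $\overline\partial_J$ on $S\Omega^{0,1}_J(M,TM)^{S^1}$ is equivalent to vanishing of the quotient $SH^{0,2}_J(TM)^{S^1}$. I would feed this into the six-term exact sequence~\eqref{les}, whose last three terms are $H^{0,2}_J(M)^{S^1}\to SH^{0,2}_J(TM)^{S^1}\to H^{0,2}_J(TM)^{S^1}\to 0$. The hypotheses $H^{0,2}_J(M)=0$ and $H^{0,2}_J(TM)=0$, combined with the fact (Lemma~\ref{Lemma:averaging_surj} and the remark that the averaging operator commutes with $\overline\partial_J$) that the invariant cohomology is a retract of the full cohomology, force $H^{0,2}_J(M)^{S^1}=0$ and $H^{0,2}_J(TM)^{S^1}=0$; exactness then pins down $SH^{0,2}_J(TM)^{S^1}=0$. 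The delicate point — where I expect to spend the most care — is justifying that the commutative diagram of the three short exact rows, and hence the long exact sequence~\eqref{les}, is valid in the equivariant $C^l$ category: one must check that the connecting maps $\delta$ and $\alpha$ preserve $S^1$-invariance (which follows because $\overline\partial_J$, the metric $h_J$, and $\om$ are all invariant when $J$ is Kähler and the action is Kähler) and that the relevant operators have closed range so that the cohomology groups behave correctly — this is handled exactly as in Appendix B of~\cite{AGK}, with each arrow replaced by its restriction to invariant tensors, and I would simply remark that the diagram chase goes through verbatim because invariance is preserved at every stage.

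\textbf{Step 3 (the two tangent-space descriptions).} Once surjectivity is known, $T_JI^{S^1}_{\om,l}=\ker\overline\partial_J:S\Omega^{0,1}_{J,l}(M,TM)^{S^1}\to\Omega^{0,2}_{J,l-1}(M,TM)^{S^1}=cl\,S\Omega^{0,1}_J(M,TM)^{S^1}$, which is the first displayed formula. For the second, I would read off from~\eqref{les} that, since $H^{0,2}_J(M)^{S^1}=0$, the map $q:cl\,S\Omega^{0,1}_J(M,TM)^{S^1}\to H^{0,1}_J(TM)^{S^1}$ is surjective with kernel $\im\delta$, giving a short exact sequence $0\to\im\delta\to cl\,S\Omega^{0,1}_J(M,TM)^{S^1}\to H^{0,1}_J(TM)^{S^1}\to 0$. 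Because these are vector spaces over a field this sequence splits, yielding $T_JI^{S^1}_{\om,l}\cong\im\delta\oplus H^{0,1}_J(TM)^{S^1}$, which is the second displayed formula. This completes the proof.
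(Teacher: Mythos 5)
Your proposal is correct and follows essentially the same route as the paper: transversality of the equivariant Nijenhuis section via $\nabla N(J)=-2J\overline\partial_J$, surjectivity of $\overline\partial_J$ on $S\Omega^{0,1}_{J,l}(M,TM)^{S^1}$ deduced from $SH^{0,2}_J(TM)^{S^1}=0$ using the long exact sequence together with averaging to pass from $H^{0,2}_J(M)=H^{0,2}_J(TM)=0$ to their invariant counterparts, and then reading off $T_JI^{S^1}_{\om,l}\cong\im\delta\oplus H^{0,1}_J(TM)^{S^1}$ from the same sequence. The paper assembles exactly these ingredients in the discussion preceding the theorem, so no further comparison is needed.
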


\begin{prop}
The conditions $H_J^{0,2}(M) =0$ and $H^{0,2}_J(M,TM)=0$ are satisfied for all the Hirzebruch surfaces.
\end{prop}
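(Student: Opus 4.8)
The statement to be proved is that $H_J^{0,2}(M)=0$ and $H_J^{0,2}(M,TM)=0$ for all Hirzebruch surfaces $W_m$ (equipped with their standard integrable complex structures $J_m$). The plan is to reduce both vanishings to standard facts about sheaf cohomology on these rational surfaces, identifying the Dolbeault groups with the relevant coherent cohomology groups via the Dolbeault isomorphism. For the first vanishing, $H_J^{0,2}(M) \cong H^2(W_m,\mathcal{O}_{W_m})$, and since $W_m$ is a smooth projective rational surface, $H^i(W_m,\mathcal{O}_{W_m})=0$ for all $i>0$ (rationality, or equivalently $p_g = q = 0$, which follows from the fact that $W_m$ is a $\mathbb{C}P^1$-bundle over $\mathbb{C}P^1$ and the Leray spectral sequence, since $R^i\pi_*\mathcal{O}=0$ for $i>0$ and $H^i(\mathbb{C}P^1,\mathcal{O})=0$ for $i>0$).

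For the second vanishing, I would identify $H_J^{0,2}(M,TM)\cong H^2(W_m,T_{W_m})$, where $T_{W_m}$ is the holomorphic tangent sheaf. Here I would use the relative tangent sequence for the $\mathbb{P}^1$-bundle $\pi: W_m \to \mathbb{C}P^1$:
\[
0 \to T_{W_m/\mathbb{C}P^1} \to T_{W_m} \to \pi^* T_{\mathbb{C}P^1} \to 0.
\]
Taking the long exact sequence in cohomology, $H^2(W_m, T_{W_m})$ is sandwiched between $H^2(W_m, T_{W_m/\mathbb{C}P^1})$ and $H^2(W_m, \pi^* T_{\mathbb{C}P^1})$. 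For each of these I would push down via $\pi$ and use the Leray spectral sequence: since the fibers are curves, $R^i\pi_* = 0$ for $i\geq 2$, and $\mathbb{C}P^1$ has cohomological dimension $1$, so $H^2$ of anything pushed forward from $W_m$ reduces to $H^1(\mathbb{C}P^1, R^1\pi_*(-))$. One then computes $R^1\pi_*(T_{W_m/\mathbb{C}P^1})$ and $R^1\pi_*(\pi^*T_{\mathbb{C}P^1}) = T_{\mathbb{C}P^1}\otimes R^1\pi_*\mathcal{O}_{W_m} = 0$ (the last by the bundle structure), and checks that $H^1(\mathbb{C}P^1, R^1\pi_* T_{W_m/\mathbb{C}P^1})$ vanishes — the relative tangent bundle restricted to a fiber is $\mathcal{O}_{\mathbb{P}^1}(2)$, which has no higher cohomology, so $R^1\pi_*T_{W_m/\mathbb{C}P^1}=0$ as well. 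This forces $H^2(W_m,T_{W_m})=0$.

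The main obstacle — really a bookkeeping point rather than a genuine difficulty — is making sure the Dolbeault groups appearing in the paper's deformation-theoretic setup ($\Omega^{0,2}_{J,l-1}(M)$ and $\Omega^{0,2}_{J,l-1}(M,TM)$ with Sobolev/$C^l$ regularity) are correctly identified with the finite-dimensional coherent cohomology groups $H^2(W_m,\mathcal{O})$ and $H^2(W_m,T_{W_m})$; this is the standard elliptic-regularity comparison (the $\bar\partial$-complex with $C^l$ coefficients computes the same cohomology as the smooth or $L^2$ complexes). Once that identification is in place, both vanishings are immediate consequences of the rationality of $W_m$ and the explicit ruled structure. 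Alternatively, and perhaps most cleanly, one can invoke the well-known fact that Hirzebruch surfaces are \emph{rigid} complex manifolds in the sense that $H^1(W_m,T_{W_m})$ may be nonzero (it detects the jump between different $m$) but $H^2(W_m,T_{W_m})=0$ always — this is classical and appears, e.g., in the deformation theory of ruled surfaces; I would cite this and give the tangent-sequence argument above as the proof.
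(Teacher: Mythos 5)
Your argument is correct, but it takes a different route from the paper, which disposes of both vanishings by citation rather than computation: for $H^{0,2}_J(M,TM)=0$ the paper simply points to Kodaira's explicit computation for Hirzebruch surfaces (Example 6.2(b) in \cite{Ko}), and for $H^{0,2}_J(M)=0$ it observes that the geometric genus $p_g=\dim H^{0,2}$ is a birational invariant, that all Hirzebruch surfaces are rational, and that $p_g=0$ is computed in \cite{Ko}. Your proof instead identifies the Dolbeault groups with $H^2(W_m,\mathcal{O})$ and $H^2(W_m,T_{W_m})$ and runs the Leray spectral sequence of the ruling $\pi\colon W_m\to\CP^1$ together with the relative tangent sequence $0\to T_{W_m/\CP^1}\to T_{W_m}\to\pi^*T_{\CP^1}\to 0$: since the base and fibers are curves, $H^2$ of any coherent sheaf reduces to $H^1(\CP^1,R^1\pi_*(-))$, and $R^1\pi_*$ of $\mathcal{O}$, of $T_{W_m/\CP^1}$ (fiberwise $\mathcal{O}_{\CP^1}(2)$), and of $\pi^*T_{\CP^1}$ all vanish. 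This is a complete and self-contained argument; what it buys is independence from the specific computations in \cite{Ko} and a proof that visibly works for any $\CP^1$-bundle over a curve of genus $0$ (and, for the $H^{0,2}(M)$ part, for any ruled surface over a rational base), at the cost of being longer than the paper's two-line citation. Two small remarks: your point about comparing the $C^l$ Dolbeault complex with the smooth one is indeed just standard elliptic regularity and is treated as implicit in the paper; and your use of the word ``rigid'' is nonstandard (rigidity usually means $H^1(W_m,T_{W_m})=0$, which fails for $m\geq 2$ and is in fact the source of the paper's codimension count via $H^{0,1}_{J_m}(M,TM)\cong\C^{m-1}$), but since you state explicitly what you mean, this does not affect correctness.
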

\begin{proof}
To prove $H^{0,2}_J(M,TM)=0$ for all Hirzebruch surfaces see the computation in Example 6.2b) p.312 in \cite{Ko}. To prove, $H_J^{0,2}(M) =0$ we note that the the rank of $H_J^{0,2}(M) =0$ (usually called the geometric genus $p_g$) is a birational invariant. As all Hirzebruch surfaces are birationally equivalent, the result follows from the computation on p.220 in \cite{Ko}. 
\end{proof}
Finally, we would like to show that the strata $U_{s,l} \cap \J^{S^1}_{\om,l}$ is a Banach submanifold of $\J^{S^1}_{\om,l}$. The most direct way to prove this would be to consider the universal moduli space $\mathcal{M}(D_s,\joml)$ of curves in the class $D_s$ (where $D_s$ is defined to be the class $B -\frac{s}{2}F$ if $M =\SSS$ or the class $B- \frac{s+1}{2}F$ if $M= \CCC$) and try to prove that the inclusion of $\jsoml$ is transverse to the projection of $\mathcal{M}(D_s,\joml)$ to the space of all compatible almost complex structures of regularity $C^l$:
\[
\begin{tikzcd}[row sep=small]
&\mathcal{M}(D_s,\joml)\arrow[d,"\pi"]\\
\jsoml \arrow[r,"i"] &\joml
\end{tikzcd} 
\] 
However, this approach is flawed as the two maps are never transversal. An alternative method is to try to define an equivariant universal moduli space $\mathcal{M}^{S^1}(D_s,\jsoml)$ and argue that the image under the projection to $\jsoml$ is a Banach submanifold of $\jsoml$. This is the approach we implement in the following section.

\section{Construction of Equivariant moduli spaces}

In this section we construct moduli spaces of $S^1$ invariant $J$-holomorphic maps into $\SSS$ or $\CCC$. Recall that $J_m$ is the standard complex structure on the $m^\text{th}$ Hizerbruch surface $W_m$, where $m=2k$ or $m=2k+1$. Let $D_s$ denote the homology class $B-\frac{s}{2}F$ in $\SSS$ and let it denote the class $B -\frac{s+1}{2}F$ in $\CCC$. As seen in Chapter~2, there is a $\T_m$ invariant, $J_m$-holomorphic curve $\overline{D}$ in $W_m$ in the homology class $D_s$. Consider the $S^1(a,b;m)$ action on $(\SSS,\oml)$ or $(\CCC,\oml)$. From the graph for the circle action $S^1(a,b;m)$ we see that $S^1$ acts on $\overline{D}$ in a non-effective manner with global stabilizer~$\Z_{a}$. 

\begin{lemma}\label{WellDefinedModuli}
Let $S^1(a,b;m)$ be a Hamiltonian circle action on $(\SSS,\oml)$ or $(\CCC,\oml)$. Let $S$ be any $S^1(a,b;m)$-invariant symplectic embedded sphere in the homology class $D_s$ with $s>0$. Then the $S^1$ action on $S$ has global stabilizer isomorphic to~$\Z_a$. 
\end{lemma}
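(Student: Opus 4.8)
The plan is to reduce the statement to the case of the standard Hirzebruch model and then invoke the rigidity of invariant negative curves. First I would recall that, since $s>0$, the class $D_s=B-\tfrac{s}{2}F$ (or $B-\tfrac{s+1}{2}F$ in $\CCC$) has negative self-intersection $-s$. By Corollary~\ref{cor_pos}(1), applied to the compact group $S^1$ acting trivially on homology, any $J$-holomorphic representative of a class of negative self-intersection is automatically $S^1$-invariant; in particular the embedded symplectic sphere $S$ is, after a choice of compatible invariant $J$ making $S$ holomorphic, the unique invariant $J$-holomorphic curve in its class. So the question is really about the weight of the induced circle action on \emph{any} such invariant curve, and by uniqueness all invariant curves in class $D_s$ carry the same (unordered) pair of isotropy data at their two fixed points.

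Next I would identify those fixed points with vertices in the relevant Hirzebruch trapezoid. By Proposition~\ref{prop:ToricExtensionCorrespondance} and Corollaries~\ref{cor:IntersectingOnlyOneStratum}--\ref{cor:IntersectingTwoStrata}, the existence of an invariant curve in class $D_s$ forces $S^1(a,b;m)$ to extend to the toric action $\T_s$, so we may assume $S$ is the standard invariant curve $\overline{D}=s_0(\CP^1)\subset W_s$ and the $S^1$ action is $S^1(c,d;s)$ for a triple with $c=\pm 1$ obtained from $(a,b;m)$ via the explicit equivalences of Corollary~\ref{cor:CircleExtensionsWith_a=1} or~\ref{cor:CircleExtensionsWith_a=-1}. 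From Table~\ref{table_weights}, the $\T_s$-weights along the edge corresponding to $\overline{D}$ at the vertices $Q$ and $R$ are $\{1,0\}$ and $\{-1,0\}$, so the restricted $S^1(c,d;s)$ action has weights $\{c,0\}$ and $\{-c,0\}$ there: the circle rotates $\overline{D}$ and acts on it with global stabilizer $\Z_{|c|}$. Since the equivalences in the two corollaries are all of the form $S^1(1,b;m)\sim S^1(1,\pm b;|2b-m|)$ and $S^1(-1,b;m)\sim S^1(-1,\pm b;|2b+m|)$, the first entry of the triple — hence the value $|c|$ — equals $|a|$; when $a\neq\pm1$ no invariant curve in class $D_s$ with $s\neq m$ exists, and for $s=m$ the global stabilizer is $\Z_a$ directly from Table~\ref{table_weights}. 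This gives the claimed isomorphism $\Z_a$ in all cases.

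The main obstacle I anticipate is \emph{not} the weight computation itself but making rigorous the step ``every invariant embedded symplectic $S$ in class $D_s$ is equivalent, as an $S^1$-space, to the standard $\overline{D}$.'' One has an invariant $J$ for which $S$ is holomorphic, but a priori $J$ need not be integrable nor lie in the same equivariant-symplectomorphism orbit as the Hirzebruch complex structure $J_s$; what one really needs is that the induced $S^1$ action on $S$ (a symplectic $S^2$ with an invariant subsphere picture) is determined up to conjugacy by the ambient equivariant data. The cleanest way around this is to observe that the pair $(M,S)$ together with the $S^1$ action is classified, via Karshon's Theorem~\ref{graphiso} and Lemma~\ref{lemma_torus_action_-vecurve}, by the labelled graph, and that the edge of self-intersection $-s$ in that graph is labelled by precisely the isotropy order of the $\Z$-sphere, which by Corollary~\ref{cor:ActionPreservesWeights} and the weight bookkeeping above is $|a|$ (one checks $\gcd(a,b)=1$ forces $|c|=|a|$ rather than a proper divisor). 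I would therefore structure the written proof as: (1) negativity $\Rightarrow$ invariance and uniqueness via Corollary~\ref{cor_pos} and positivity of intersections; (2) extension to $\T_s$ and reduction to the standard model via Proposition~\ref{prop:ToricExtensionCorrespondance} and Corollaries~\ref{cor:CircleExtensionsWith_a=1}--\ref{cor:CircleExtensionsWith_a=-1}; (3) read off the stabilizer from Table~\ref{table_weights}, tracking that the first coordinate of the triple is preserved under all the listed equivalences.
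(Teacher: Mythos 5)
Your proposal lands on the same core fact the paper uses — the stabilizer of the restricted action is read off from the isotropy weights at the two fixed points through which the curve passes — but your route is considerably longer than the paper's. The paper's proof is a one-liner: any $S^1$-invariant sphere in class $D_s$ has its poles among the fixed points of the ambient action and in fact passes through the same pair of fixed points as the standard curve $\overline{D}$; since the tangent weight at a pole is $\pm k$ (where $\Z_k$ is the global stabilizer) and must be one of the ambient weights there, Lemma~\ref{weight} (together with $[S]\cdot[S]=-s$ and the area/moment-map relation of Lemma~\ref{Lemma:SymplecticArea}) forces $k=|a|$, i.e.\ the same stabilizer as $\overline{D}$. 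You instead detour through Proposition~\ref{prop:ToricExtensionCorrespondance}, the explicit equivalences of Corollaries~\ref{cor:CircleExtensionsWith_a=1}--\ref{cor:CircleExtensionsWith_a=-1}, and Karshon's Theorem~\ref{graphiso}; this is legitimate (those results precede the lemma, so there is no circularity), but it buys nothing that Table~\ref{table_weights} plus Lemma~\ref{weight} do not already give, and it creates the reduction step ("we may assume $S=\overline{D}$") that you yourself identify as the weak point — a step which is in fact never needed.

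Two soft spots to repair. First, the sentence "by uniqueness all invariant curves in class $D_s$ carry the same isotropy data" is a non sequitur: uniqueness holds for a fixed invariant $J$, while different invariant curves $S$ come with different $J$'s; luckily this sentence is not load-bearing. Second, and more seriously, your Karshon-graph fix only treats the case where the stabilizer of $S$ is $\Z_k$ with $k\geq 2$: by Remark~\ref{rmk:NormalizationMomentMap}, invariant spheres with trivial global isotropy do not appear in the labelled graph at all, and the graph classifies the tuple $(M,\om,\rho,\mu)$, not the pair $(M,S)$. So when $|a|\geq 2$ you must still exclude that $S$ has stabilizer $\Z_k$ with $k<|a|$ (in particular $k=1$); the parenthetical about $\gcd(a,b)=1$ concerns the triple $(c,d;s)$ and does not address this. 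The correct check is the direct one the paper gestures at: at each pole of $S$ the tangent weight $\pm k$ must occur among the ambient weights of Table~\ref{table_weights} (Corollary~\ref{cor:ActionPreservesWeights}), and combining Lemma~\ref{weight} (normal weights at the poles differ by $sk$), Lemma~\ref{Lemma:SymplecticArea}, and $[S]=D_s$ eliminates every possibility except tangent weight $\pm a$ — precisely the bookkeeping already carried out in the proof of Proposition~\ref{prop:AtMostTwoExtensions}. With that check spelled out your argument closes; as written, step (3) silently assumes either $k\geq 2$ or that $S$ is the standard curve.
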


\begin{proof}
Using Lemma~\ref{weight} and Table~\ref{table_weights} we can show that any other $S^1$ invariant curve passes through the same set of fixed points as $\overline{D}$ and, using Lemma~\ref{weight} again, that the global stabilizer is the same. 
\end{proof}

Now, pick any $S^1(a,b;m)$ invariant, embedded, symplectic sphere $C$ in class $D_s$. The curve $C$ is $J$-holomorphic for some $J\in\jsoml$. Then any $J$-holomorphic parametrization $u: (S^2,j_0) \to (M,J)$ defines a holomorphic $S^1$ action on $S^2$ whose stabilizer is isomorphic to $\Z_a$. Since all circle subgroups of the automorphism group $\PSL(2,\C)$ are conjugate, there is a $\Lambda\in\PSL(2,\C)$ such that $u\circ\Lambda$ brings the restricted $S^1$ action on $C$ to the standard action  $S^1(a)$ on $S^2$ with weights $\{a, -a\}$ at the poles. Consequently, $C$ is the image of some equivariant $J$-holomorphic map $\big(S^2, j_0\big)\to \big(\SSS, J\big)$ that intertwines the actions $S^1(a)$ and $S^1(a,b;m)$.\\

We can now define the universal moduli space of all such equivariant maps by setting 
\[
\begin{aligned}
\mathcal{M}^{S^1}(D_s,\, \jsoml): = \{(&u,J) ~|~ u:S^2 \to M\text{~is  equivariant, somewhere injective,} \\
&\text{$J$-holomorphic and represents the class~}D_s, ~J\in\jsoml\}
\end{aligned}
\]

\begin{remark}
As we are only interested in the case when $s>0$, the curves in class $D_s$ have negative self intersection and the adjunction formula implies that all somewhere injective curves in class $D_s$,  $s>0$, are embedded. 
\end{remark}
As in the non-equivariant case we now wish to prove that this moduli space is a smooth Banach manifold. To prove this we recall the non-equivariant set up as in Chapter 3 in \cite{McD} and reformulate it in the equivariant setting.\\

Define the space $B^{S^1}_{q,p}$ by setting
\begin{equation}\label{spaceofmapsModuliSpace}
    B^{S^1}_{q,p}:= \{ u \in \left(W^{q,p}(S^2,M)\right)^{S^1} ~|~ [u]=D_s\}
\end{equation}
where $W^{q,p}(S^2,M))^{S^1}$ denotes the space of equivariant maps of Sobolev regularity $W^{q,p}$ from $S^2$ to $M$. Consider the vector bundle $\mathscr{E}^{S^1}_{q-1,p} \xrightarrow{\pi}{} B^{S^1}_{q,p}\times \J^{S^1}_{\oml,l}$ whose fibre over $(u,J)$ is the space of $S^1$ invariant sections of $\Omega^{0,1}_J(S^2, u^*TM)$ of Sobolev  regularity $W^{q-1,p}$, that is,
\[ \pi^{-1}(u,J) = \Gamma(S^2, \Omega^{0,1}_J(S^2, u^*TM)^{S^1}).\]
We would like to show that the  section $\mathscr{F}^{S^1}(u,J):= (u,\overline \partial_J u) :B_{q,p}^{S^1} \times \J^{S^1}_{\oml,l} \to \mathscr{E}^{S^1}_{q-1,p}$  (where $\overline \partial_J u = \frac{1}{2}(du + J \circ du \circ j_{S^2})$) is transversal to the zero section. Note that   $\left({\mathscr{F}^{S^1}}\right)^{-1}(0) = \mathcal{M}^{S^1}(D_s , \jsoml)$, thus giving it a smooth structure as in the non-equivariant case (See \cite{McD} Lemma 3.2.1).\\

In order to show transversality, we consider the projection  of the derivative map $d\mathscr{F}^{S^1}$ to the vertical tangent bundle and show that this map is surjective at $(u,J)$ when $u$ is a simple equivariant curve. We shall denote this projection by $D\mathscr{F}^{S^1}$. More explicitly, we need to show that the map 
\[
\begin{aligned}
D\mathscr{F}_{u,J}^{S^1}: \mathscr{W}^{q,p}(S^2, u^*TM)^{S^1} \times &S\Omega^{0,1}_{J,l}(M,TM)^{S^1} 
\to &\mathscr{W}^{q,p}(S^2, \Omega^{0,1}_J(S^2, u^*TM))^{S^1}
\end{aligned}
\]
is surjective where $S\Omega^{0,1}_{J,l}(M,TM)^{S^1}$ is the tangent space of $\jsoml$ at $J$ and where $\mathscr{W}^{q,p}(S^2, \Omega^{0,1}_J(S^2, u^*TM))^{S^1}$ denotes the space of equivariant maps  from $S^2$ to $\Omega^{0,1}_J(S^2, u^*TM)$ which are in the Sobolev class $\mathscr{W}^{q,p}$. By Lemma 3.2.1 in \cite{McD}, we know that in the non-equivariant situation, the linearized derivative  
\begin{equation}\label{non-equivariant section}
D\mathscr{F}_{u,J}: \mathscr{W}^{q,p}(S^2, u^*TM) \times S\Omega^{0,1}_{J,l}(M,TM)^{S^1} 
\to \mathscr{W}^{q,p}\left(S^2, \Omega^{0,1}_J\left(S^2, u^*TM\right)\right)
\end{equation}
is surjective. As $J \in \jsom$, the $\delbar_J$ operator commutes with the averaging operator with respect to the $S^1$ action.  Noting that the $D\mathscr{F}_{u,J}$ is equivariant with respect to the $S^1$ actions defined on the domain and codomain, we can  average the above non-equivariant derivative $D\mathscr{F}_{u,J}$ by the $S^1$ action to show that $D\mathscr{F}_{u,J}^{S^1}$ is surjective as well.

\begin{thm}\label{Banach_S^1}
 $\mathcal{M}^{S^1}(D_s , \jsoml)$ is a smooth Banach manifold. \qed
\end{thm}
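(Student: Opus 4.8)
The statement to be proven is that the equivariant universal moduli space $\mathcal{M}^{S^1}(D_s,\jsoml)$ is a smooth Banach manifold. The plan is to exhibit $\mathcal{M}^{S^1}(D_s,\jsoml)$ as the zero set of the section $\mathscr{F}^{S^1}(u,J) = (u, \overline\partial_J u)$ of the bundle $\mathscr{E}^{S^1}_{q-1,p} \to B^{S^1}_{q,p}\times\J^{S^1}_l$ and to invoke the implicit function theorem for Banach manifolds. As already noted in the excerpt, the essential input is that the vertical projection $D\mathscr{F}^{S^1}_{u,J}$ of the linearization is surjective at every $(u,J)$ with $u$ somewhere injective, which is obtained by averaging the known surjectivity of the non-equivariant operator $D\mathscr{F}_{u,J}$ (Lemma~3.2.1 in~\cite{McD}) over $S^1$, using that $\delbar_J$ commutes with the averaging operator when $J\in\jsoml$.

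First I would make precise the Banach manifold structures on the ambient spaces: the space $B^{S^1}_{q,p}$ of equivariant $W^{q,p}$-maps $S^2\to M$ in class $D_s$ is a Banach manifold modelled on $S^1$-invariant $W^{q,p}$-sections of $u^*TM$ (the exponential chart at $u$ is equivariant because the metric on $M$ can be chosen $S^1$-invariant, so it restricts to invariant sections), and $\mathscr{E}^{S^1}_{q-1,p}$ is a Banach vector bundle over $B^{S^1}_{q,p}\times\J^{S^1}_l$ with the fibres of invariant $(0,1)$-forms described in the excerpt. I would then record that $\mathscr{F}^{S^1}$ is a smooth section (the same elliptic-regularity and Sobolev-multiplication arguments as in the non-equivariant case apply verbatim, and they are $S^1$-equivariant). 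Next I would carry out the surjectivity argument in detail: let $(u,J)\in(\mathscr{F}^{S^1})^{-1}(0)$ with $u$ somewhere injective; by the remark in the excerpt such a $u$ is in fact embedded since $D_s$ has negative self-intersection. Given $\eta\in \mathscr{W}^{q-1,p}(S^2,\Omega^{0,1}_J(S^2,u^*TM))^{S^1}$, surjectivity of $D\mathscr{F}_{u,J}$ in the non-equivariant setting produces $(\xi,Y)$ with $D\mathscr{F}_{u,J}(\xi,Y)=\eta$; applying $\frac{1}{2\pi}\int_{S^1}g^*(\cdot)\,dg$ and using that all the operators involved ($D_u\overline\partial_J$, the zeroth-order term coming from varying $J$) intertwine the $S^1$-action, one gets an invariant pair $(\bar\xi,\bar Y)$ with $D\mathscr{F}^{S^1}_{u,J}(\bar\xi,\bar Y)=\eta$, since $\eta$ was already invariant. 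I would also note that $D\mathscr{F}^{S^1}_{u,J}$ has closed complemented image: its restriction to the first factor is the Fredholm operator $D_u\overline\partial_J$ on invariant sections (elliptic, hence Fredholm between the invariant Sobolev spaces), so its image is closed of finite codimension, and surjectivity of the full map then gives a splitting of the kernel. This is exactly what the implicit function theorem needs to conclude that $(\mathscr{F}^{S^1})^{-1}(0)=\mathcal{M}^{S^1}(D_s,\jsoml)$ is a smooth Banach submanifold.

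Finally I would remark, as in the non-equivariant treatment, that although the above is carried out at finite regularity $C^l$ (or $W^{q,p}$), a standard Taubes-type argument or the elliptic bootstrapping of \cite{McD} Chapter~3 shows the $C^\infty$ moduli space is a smooth Banach manifold modelled on the relevant Fréchet completions, or one simply works at fixed large $l$ throughout; I would state whichever convention matches the rest of the paper and refer to the earlier sections for the choice of regularity.

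\textbf{Main obstacle.} The only genuinely non-formal point is the surjectivity of $D\mathscr{F}^{S^1}_{u,J}$, and the real content there is making sure that the averaging operation lands in the correct domain — i.e.\ that the component $Y$ of the preimage, which lives in $C^l(M,\End(TM,J,\om))$, averages to an \emph{invariant} compatible-type endomorphism, and that averaging does not destroy the identity $D\mathscr{F}_{u,J}(\xi,Y)=\eta$. Both hold precisely because $J\in\jsoml$ is invariant, so $g^*$ preserves $\End(TM,J,\om)$ and commutes with $\delbar_J$; I expect writing this intertwining carefully (and checking the somewhere-injective hypothesis is preserved, which it trivially is since the $S^1$-action on the domain sphere is fixed) to be the bulk of the work, everything else being a transcription of \cite{McD}.
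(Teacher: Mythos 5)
Your proposal is correct and follows essentially the same route as the paper: both realize $\mathcal{M}^{S^1}(D_s,\jsoml)$ as the zero set of the section $\mathscr{F}^{S^1}(u,J)=(u,\overline\partial_J u)$ and establish transversality by averaging the non-equivariant surjectivity of $D\mathscr{F}_{u,J}$ (Lemma~3.2.1 of~\cite{McD}) over $S^1$, using that $\overline\partial_J$ commutes with the averaging operator when $J$ is invariant. Your additional remarks on equivariant charts, the Fredholm/splitting condition, and regularity conventions flesh out details the paper leaves implicit, but they do not change the argument.
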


We now consider the projection map 
\[
\begin{tikzcd}
     &\mathcal{M}^{S^1}(D_s , \jsoml) \arrow[d,"\pi"] \\
     &\jsoml
\end{tikzcd}
\]
To conclude that the image of $\pi$ is a submanifold of $\jsoml$ we need the following theorem whose proof can be found in  \cite{mardsen}.

\begin{thm}\label{mrdsn}(Theorem 3.5.18 in \cite{mardsen})
Let $f:M \to N$ be a smooth map between Banach manifolds such that 
\begin{enumerate}
    \item $\ker Tf$ is a sub-bundle of $TM$
    \item For each $m \in M$, $f_*(T_m M)$ is closed and splits in $T_{f(m)}N$ 
    \item f is open or closed onto it's image
\end{enumerate}    
Then $f(M)$ is a smooth Banach submanifold of N.
\end{thm}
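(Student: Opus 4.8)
\emph{Proof proposal.} The plan is to reduce the assertion to a purely local statement via the rank theorem for maps between Banach manifolds, and then to upgrade the local submanifold charts so obtained to a global submanifold structure on $f(M)$, the passage from local to global being precisely what hypothesis~(3) is designed to make possible.

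First I would establish the local normal form. Fix $m \in M$ and write $n = f(m)$. Hypothesis~(1) says that $\ker Tf$ is a smooth subbundle of $TM$, so near $m$ we may split $TM \cong \ker Tf \oplus E$ for a complementary subbundle $E$; in particular $K := \ker T_m f$ is closed and complemented in $T_m M$. Hypothesis~(2) gives a closed splitting $T_n N \cong R \oplus C$ with $R := f_*(T_m M)$, and $T_m f$ restricts to a topological linear isomorphism $E_m \xrightarrow{\sim} R$. These are exactly the standing hypotheses of the Banach rank theorem (see~\cite{mardsen}), which therefore produces charts $\varphi \colon U \to U_1 \times U_2 \subseteq E_m \oplus K$ about $m$ and $\psi \colon V \to V_1 \times V_2 \subseteq R \oplus C$ about $n$, with $f(U) \subseteq V$, such that
\[
\psi \circ f \circ \varphi^{-1}(x,y) = (x,0) .
\]
Consequently $f(U) = \psi^{-1}(V_1 \times \{0\})$ is the zero locus of the submersion $\mathrm{pr}_C \circ \psi \colon V \to C$, hence a split smooth Banach submanifold of the open set $V$, and $\psi$ is an adapted submanifold chart for it.

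Next I would globalise. The content that remains is that, near $n$, the whole image $f(M)$ coincides with $f(U)$ for a suitable choice of $U$; without such a statement $f(M)$ could fail to be a submanifold (the usual figure-eight immersion shows hypothesis~(3) is necessary). If $f$ is open onto its image, then $f(U)$ is open in $f(M)$, so $f(U) = f(M) \cap W$ for some open $W \subseteq N$, and shrinking $V$ to $V \cap W$ yields a genuine submanifold chart of $N$ adapted to $f(M)$ around $n$. If $f$ is closed onto its image, one argues dually: after shrinking to $U' \ni m$ with $\overline{U'} \subseteq U$, the set $f(M \setminus U')$ is closed in $f(M)$, and using the local normal form (which makes $f$ a submersion of $U$ onto $f(U)$, so that the $f$-fibres meeting $f(U')$ are controlled) one checks that $f(M)$ meets a small enough neighbourhood of $n$ only inside $f(U)$; the conclusion follows as before. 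Since every point of $f(M)$ thus has a neighbourhood in $N$ in which $f(M)$ is cut out as a split smooth Banach submanifold, and since all these models are submanifold charts inside the \emph{fixed} ambient manifold $N$, their transition maps are automatically smooth and the pieces assemble into the desired global submanifold structure.

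The main obstacle is the closed-map alternative in the globalisation step: the rank theorem is entirely local and by itself only shows that each $f(U)$ is a submanifold, whereas one needs $f(M)$ itself to \emph{locally equal} such an $f(U)$ near every image point. Making the argument that $f(M \setminus U')$ stays away from $n$ airtight --- i.e. ruling out that far-away portions of $M$ contribute image points arbitrarily close to $n$ --- is the delicate point, and is where hypothesis~(3) is genuinely used. (In the intended application to $\pi \colon \mathcal{M}^{S^1}(D_s,\jsoml) \to \jsoml$, hypothesis~(1) holds because $\ker T\pi$ is the subbundle of infinitesimal reparametrisations of the domain sphere, hypothesis~(2) holds by the Fredholm theory of the linearised $\overline{\partial}_J$ operator together with the cohomological vanishing established above, and hypothesis~(3) --- that $\pi$ is closed onto its image --- follows from Gromov compactness applied to the class $D_s$.)
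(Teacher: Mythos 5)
A preliminary remark on the comparison you were asked about: the paper does not prove Theorem~\ref{mrdsn} at all --- it is quoted from \cite{mardsen} and used as a black box, and the only alternative of hypothesis (3) that the paper ever invokes (through Lemma~\ref{subimmersion}) is the one where the map is \emph{open} onto its image. So your proposal has to be judged on its own merits rather than against an argument in the text.

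For the open alternative your two-step scheme (local normal form, then globalisation) is the standard proof and is correct, with one caveat about where hypothesis (1) actually enters. The pointwise data at the single point $m$ --- split kernel and closed split image of $T_mf$ --- is not ``exactly the standing hypotheses'' of a rank theorem: by itself it only yields the local representation $(x,y)\mapsto(x,\eta(x,y))$ with $D\eta(0,0)=0$, and $f(x,y)=(x,y^2)$ shows this cannot be improved to $(x,0)$ (there $\ker Tf$ jumps and is not a subbundle). What forces $\eta$ to be independent of $y$ is the subbundle hypothesis \emph{near} $m$: in the representation charts $\ker Tf=\{0\}\oplus\ker D_y\eta(x,y)\subseteq\{0\}\oplus K$, and a subbundle trivialisation exhibits each such fibre as the image of $\{0\}\oplus K$ under an isomorphism at norm-distance less than $1$ from the identity, which forces $\ker D_y\eta(x,y)=K$, i.e.\ $D_y\eta\equiv 0$; one then straightens the target by $(u,w)\mapsto(u,w-\eta(u))$. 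With that step made explicit, your open case is complete, and it is precisely the case the paper needs, since Lemma~\ref{subimmersion} gets openness of $\pi$ onto its image from the fact that the image is the quotient of the moduli space by the free $\C^*$-action.

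The genuine gap is the closed alternative, and it is not repairable along the lines you sketch. Your key claim --- that after shrinking to $U'$ the image $f(M)$ meets a small neighbourhood of $n$ only inside $f(U)$ --- presupposes $f^{-1}(n)\subseteq U'$; in general the fibre over $n$ is spread over all of $M$, and closedness only confines $f(M)$ near $n$ to the union of the slices $f(U_{m'})$ over all $m'\in f^{-1}(n)$, which need not agree. Indeed, read literally, the closed case is false: the nodal cubic $f:\R\to\R^2$, $f(t)=(t^2-1,\,t^3-t)$, is an immersion (so (1) and (2) hold) and is proper, hence closed onto its image, yet its image has a transverse self-crossing at the origin and is not a submanifold there; the two-axes map $\R\sqcup\R\to\R^2$ does the same. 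So the closed alternative needs an additional hypothesis (injectivity, or connectedness of the fibres), or should simply not be relied upon --- which is what the paper does. Relatedly, your closing parenthetical proposes to verify (3) in the application by showing $\pi$ is \emph{closed} onto its image via Gromov compactness; besides leaning on the defective alternative, this is delicate in itself, because the noncompact reparametrisation group $\C^*$ allows closed families of parametrised curves to have non-closed $\pi$-image. The openness route of Lemma~\ref{subimmersion} is the right one.
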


A map that satisfies the above three conditions is called a sub-immersion.\\

We shall show below that the map $\pi: \mathcal{M}^{S^1}(D_s , \jsoml) \to \jsoml$ is a sub-immersion. But before that we need the following lemma.

\begin{lemma}\label{Lemma:Equivariant_Fredholm}Let $G$ be a compact group acting on vector spaces $V$ and $W$ and let $V^G$ and $W^G$ be the fixed point sets of $V$ and $W$ respectively under the action of $G$.
Let  $L: V \rightarrow W$ be a $G$-equivariant linear map with finite dimensional kernel and cokernel.  Let $L^G: V^G \rightarrow W^G$ be the restriction of $L$ to $V^G$. Then $L^G: V^G \rightarrow W^G$  has finite dimensional kernel and cokernel.
\end{lemma}
\begin{proof}
Firstly  the kernel of $L^G: V^G \rightarrow W^G$ is a subspace of $\Ker L$. Hence $\Ker L^G$ is finite dimensional. 
\\

As $\Coker L$ is assumed finite dimensional, it suffices to show that the natural map 
\begin{align*}
    P: \Coker L^G &\rightarrow \Coker L \\
        [w] &\mapsto [w]
\end{align*} 
is injective in order to show that $\Coker L^G$ is also finite dimensional.  Consider $[w] \in \Coker L^G$ such that $P(w)=0$. Then $w + l = l'$ for some $w \in W^G$, $l \in \image L^G$ and $l' \in \image L$. Averaging both sides with respect to the $G$ action and using the fact that $w$ and $l$ are both invariant under the $G$ action, we get that $w$ in fact belongs to $\image L^G$.  Thus $[w] = 0 \in \Coker L^G$ showing that $\Ker P = 0$.

\end{proof}
\begin{lemma}\label{subimmersion}
The projection map $\pi: \mathcal{M}^{S^1}(D_s , \jsoml) \to \jsoml$ is a sub-immersion.
\end{lemma}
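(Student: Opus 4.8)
The strategy is to verify the three conditions of Theorem~\ref{mrdsn} for the projection $\pi:\mathcal{M}^{S^1}(D_s,\jsoml)\to\jsoml$, working fiberwise over each pair $(u,J)\in\mathcal{M}^{S^1}(D_s,\jsoml)$ and reducing everything to the linearized Cauchy--Riemann operator. First I would recall that the tangent space $T_{(u,J)}\mathcal{M}^{S^1}(D_s,\jsoml)$ fits into the exact sequence coming from $D\mathscr{F}^{S^1}_{u,J}$: it is the kernel of the surjection $D\mathscr{F}^{S^1}_{u,J}$, so it consists of pairs $(\xi,Y)$ with $\xi\in\mathscr{W}^{q,p}(S^2,u^*TM)^{S^1}$, $Y\in C^l(M,\End(TM,J,\om))^{S^1}$ satisfying $D_u\xi + \tfrac12 Y(u)\,du\circ j = 0$, where $D_u$ is the ($S^1$-invariant) linearized $\delbar_J$ operator at $u$. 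The derivative $d\pi_{(u,J)}$ is simply $(\xi,Y)\mapsto Y$.

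\textbf{Checking the three conditions.} For condition (1), $\ker T\pi$ at $(u,J)$ is the set of $(\xi,0)$ with $D_u\xi=0$, i.e. $\ker D_u\cap\mathscr{W}^{q,p}(S^2,u^*TM)^{S^1}$; since in dimension $4$ the operator $D_u$ is automatically surjective for the embedded curve $\overline D$ in class $D_s$ (Theorem~\ref{thm_Regularity}, automatic regularity applied $S^1$-equivariantly, as already exploited above via averaging), this kernel has locally constant finite dimension and the usual argument shows $\ker T\pi$ is a smooth subbundle of $T\mathcal{M}^{S^1}(D_s,\jsoml)$. For condition (2), I would show $d\pi_{(u,J)}$ has closed, complemented image: the image is exactly those $Y$ for which $-\tfrac12 Y(u)\,du\circ j\in\im D_u$, which (again by surjectivity of $D_u$) is \emph{all} of $C^l(M,\End(TM,J,\om))^{S^1}$ up to the finite-dimensional obstruction carried by $\coker D_u$; here automatic regularity gives $\coker D_u=0$, so $d\pi_{(u,J)}$ is in fact surjective onto $T_J\jsoml=S\Omega^{0,1}_{J,l}(M,TM)^{S^1}$, which is trivially closed and split. (Even without vanishing cokernel, the image would be the preimage of a finite-dimensional subspace under a bounded operator, hence closed and complemented, which is all condition (2) requires.) For condition (3), I would argue that $\pi$ is open onto its image: the fact that $D_u$ is onto means that near any $(u_0,J_0)$ one can, for every nearby invariant $J$, solve $\delbar_J u=0$ with $u$ close to $u_0$ and $S^1$-invariant --- this is precisely the statement that perturbing $J$ within $\jsoml$ keeps the $S^1$-invariant $J$-holomorphic representative of $D_s$ existing and varying continuously (by Gromov compactness, Theorem~\ref{thm_Compactness}, together with Corollary~\ref{cor_pos} which forces the limit curve, being of negative self-intersection, to stay $S^1$-invariant and irreducible in class $D_s$). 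Hence the image of a neighborhood of $(u_0,J_0)$ contains a neighborhood of $J_0$ in the image, giving openness onto the image.

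\textbf{The main obstacle.} The delicate point is condition (3), openness onto the image, equivalently the assertion that the subset of $\jsoml$ admitting an invariant embedded $J$-holomorphic sphere in class $D_s$ is relatively open in its own closure in a way compatible with the Banach manifold structure. The analytic input is straightforward (implicit function theorem from surjectivity of $D_u$ gives local existence), but one must rule out the invariant curve degenerating or ceasing to be somewhere injective under small perturbations; this is where Gromov compactness combined with the positivity-of-intersections corollary Corollary~\ref{cor_pos}(1) does the real work, since any bubbled limit would decompose $D_s=\sum A_i$ with each $\om(A_i)>0$, and the negativity $D_s\cdot D_s=-s<0$ together with Corollary~\ref{FSimple}-type constraints on decompositions of $B-\tfrac s2 F$ forces the limit to be a single embedded invariant sphere again. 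Once this is in place, Theorem~\ref{mrdsn} applies verbatim and $\pi$ is a sub-immersion. I would also remark that the $S^1$-invariance introduces no new difficulty at any stage because the averaging operator commutes with $\delbar_J$ (used repeatedly above), so every non-equivariant transversality or regularity statement descends to the invariant setting by averaging.
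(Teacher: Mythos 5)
There is a genuine gap, and it sits at the heart of your argument: you invoke automatic regularity (Theorem~\ref{thm_Regularity}) to conclude that the linearized operator $D_u$ is surjective for the embedded invariant curve in class $D_s$, and you then use this surjectivity three times (constancy of $\ker T\pi$, surjectivity of $d\pi_{(u,J)}$ onto $T_J\jsoml$, and the implicit-function-theorem existence of invariant $J$-holomorphic representatives for \emph{all} nearby invariant $J$). But Theorem~\ref{thm_Regularity} only gives automatic regularity in dimension $4$ for embedded spheres of self-intersection $\geq -1$, whereas $D_s\cdot D_s=-s$ and the relevant strata in this paper have $s\geq 2$. In that range $\coker D_u\cong H^{0,1}_{j_{S^2}}(S^2,u^*TM)$ is nonzero (the normal bundle is $\mathcal{O}(-s)$), so $d\pi$ is \emph{not} surjective, and your condition-(3) argument in fact proves something false: if one could solve $\delbar_J u=0$ for every invariant $J$ near $J_0$, the stratum $U_{s}\cap\jsoml$ would be open in $\jsoml$, contradicting the codimension-two computation of Theorem~\ref{codimension_calc} (and Lemma~\ref{trans_strata}, which identifies the normal fibre of the stratum precisely with the nonvanishing space $H^{0,1}_{J_s}(M,TM)^{S^1}$). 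Your parenthetical hedge for condition (2) --- that the image of $d\pi$ is the preimage of a closed finite-codimension subspace under a bounded map, hence closed and split --- is sound and is essentially what the paper does: the image is exactly the kernel of $\alpha\mapsto[\alpha\circ du\circ j_{S^2}]\in H^{0,1}_{j_{S^2}}(S^2,u^*TM)^{S^1}$, a bounded map to a finite-dimensional space. So condition (2) survives, but (1) and (3) as you argue them do not.

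The paper avoids surjectivity altogether. For condition (1) it identifies $\ker d\pi$ with the tangent spaces to the orbits of the reparametrization group (the $\C^*$ of equivariant M\"obius transformations, acting freely on $\mathcal{M}^{S^1}(D_s,\jsoml)$), which is manifestly a subbundle of constant rank; your description $\ker D_u\cap\mathscr{W}^{q,p}(S^2,u^*TM)^{S^1}$ is the same space, but constancy of its dimension should be justified by this group-orbit picture (or a Hofer--Lizan--Sikorav-type vanishing for the normal operator), not by regularity of $J$. For condition (3) the paper simply factors $\pi$ as the quotient map by this free $\C^*$-action (an open map) followed by a homeomorphism onto the image, so openness onto the image needs no perturbation-existence statement --- which is fortunate, since none is available. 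If you repair your write-up by replacing the automatic-regularity claims with these two observations and keeping your hedged version of condition (2), the appeal to Theorem~\ref{mrdsn} goes through; the Gromov-compactness discussion you place under condition (3) is not needed for openness, though a remark of that kind is what justifies continuity of the inverse of the induced bijection onto the image.
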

\begin{proof}

Note that the $\ker  d\pi$ is of constant rank and is the tangent space to the reparametrization group $\C^*$ is which freely on $\mathcal{M}^{S^1}(D_s , \jsoml)$. Hence $\ker d\pi$ is a sub-bundle of $T\mathcal{M}^{S^1}(D_s , \jsoml)$.
\\

Now we show that the image of $d\pi$ is closed  $T_J\jsoml$. Note first that $T_J\jsoml = S\Omega_J^{0,1}(M,TM)^{S^1}$. Hence, $\pi_* T_{(u,J)} \mathcal{M}^{S^1}\left(D_s , \jsoml\right)$ can be described as
\begin{equation}\label{proj_tangent}
 \pi_*T_{(u,J)} \mathcal{M}^{S^1}\left(D_s , \jsoml\right)=
 \left\{ \alpha \in S\Omega_J^{0,1}(M,TM)^{S^1}~| ~ \alpha \circ du \circ j_{S^2} \in \image D^{S^1}_{u}
\right\}
\end{equation}

This follows from the fact that  $$
D\mathscr{F}_{u,J}^{S^1}(\xi, Y)=D^{S^1}_{u} \xi+\frac{1}{2} Y(u) \circ d u \circ j_{0}
$$
for $\xi \in W^{q, p}\left(S^2,u^{*} T M\right)^{S^1}$ and $Y \in S\Omega^{0,1}_{J,l}(M,TM)^{S^1}$, where $D^{S^1}_{u}$ is a 0th-order perturbation (coming from the possible non-integrability of $J$ ) of the usual Dolbeault $\bar{\partial}$-operator.
\\

Note that the differential of the non-equivariant map in equation \ref{non-equivariant section} is also given $D\mathscr{F}_{u,J}(\xi, Y)=D_{u} \xi+\frac{1}{2} Y(u) \circ d u \circ j_{0}$ and $D_{u}$ is Fredholm. Applying Lemma \ref{Lemma:Equivariant_Fredholm} to $D^{S^1}_{u}$ and $D_{u}$ we get that $D^{S^1}_{u}$ has finite dimensional kernel and cokernel.
\\

Let $\gamma_n \in \pi_* T_{(u,J)} \mathcal{M}^{S^1}\left(D_s , \jsoml\right)$ i.e $\gamma_n \in \Omega^{0,1}(M,TM)^{S^1}= T\jsoml $  and satisfies  $\gamma_n \circ du \circ J_{S^2} \in \image D^{S^1}_{u}$.  Further assume the sequence $\gamma_n$ converges to $\gamma$ in $\Omega^{0,1}(M,TM)^{S^1}$. As  $D^{S^1}_{u}$ has finite cokernel it implies that $\image D^{S^1}_{u}$  is closed we have that  $\gamma \circ du \circ j_{S^2} \in \image D^{S^1}_{u}$  thus showing image of $d\pi$ is closed  $T\jsoml$.
\\

Next to that the image of $d\pi$ splits in $T\jsoml$, we proceed as follows. First we argue that the map 
\[\begin{aligned}
D\mathscr{F}_{u,J}^{S^1}: \mathscr{W}^{q,p}(S^2, u^*TM)^{S^1} \times &S\Omega^{0,1}_{J,l}(M,TM)^{S^1} 
\to &\mathscr{W}^{q,p}(S^2, \Omega^{0,1}_J(S^2, u^*TM))^{S^1}
\end{aligned}
\]
has finite codimension. This follows from applying Lemma\ref{Lemma:Equivariant_Fredholm} to the maps $D\mathscr{F}_{u,J}^{S^1}$ and $D\mathscr{F}_{u,J}$. Note that $D\mathscr{F}_{u,J}$ has finite dimensional kernel and cokernel as $D\mathscr{F}_{u,J}$ is Fredholm. 
Next we show that the codimension of the image of $d\pi$ in $T\jsoml$ is finite and hence it follows that the image of $d\pi$ splits.

We have a natural isomorphism
$$
S\Omega^{0,1}_{J,l}(M,TM)^{S^1} / \operatorname{Im}(d \pi(u, J)) \longrightarrow W^{l, p}\left(\wedge^{0,1} T^{*} S^{2} \otimes_{J} u^{*} T M\right)^{S^1} / \operatorname{Im}\left(D_{u}\right)
$$
given by
$$
Y \mapsto \frac{1}{2} Y(u) \circ d u \circ j_{0}
$$

The image of $d \pi_{(u, J)}$ consists of all $Y$ such that $Y(u) \circ d u \circ j_{S^2} \in \operatorname{Im}\left(D_{u}\right)$. This is a closed subspace of $T_{J} \jsoml$ and, since $D\mathscr{F}^{S^1}(u, J)$ is onto, it has the same finite codimension as the image of $D_{u}$. Hence the differential $d\pi$ has a finite dimensional cokernel in $T_J\jsoml$ and thus splits.\\

Finally to show that $\pi$ is open onto it's image, we consider the factorization 
\[\begin{tikzcd}
      &\mathcal{M}^{S^1}(D_s , \jsoml) \arrow[dr,"\pi"] \arrow[d, "q"] \\
      &\mathcal{M}^{S^1}(D_s , \jsoml)/\C^* \arrow[r,"h"]  & \text{im} ~\pi
\end{tikzcd}
\]
The map $h:\mathcal{M}^{S^1}(D_s , \jsoml)/\C^* \to \im \pi$ is continuous and, by positivity of intersections, is bijective. By Gromov's compactness theorem, its inverse is continuous, so that $h$ is a homeomorphism. Since the map $q$ is the quotient map of the $\C^*$ reparametrization action, $q$ is open. Consequently, $\pi = h \circ q$ is open onto its image and, therefore, $\pi$ is a sub-immersion. 
\end{proof}

\begin{cor}\label{cor:StrataAreSubmanifolds}
$U_{s,l} \cap \jsoml$ is a Banach submanifold of $\jsoml$.
\end{cor}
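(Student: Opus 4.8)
\textbf{Proof proposal for Corollary~\ref{cor:StrataAreSubmanifolds}.}

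The plan is to combine Lemma~\ref{subimmersion} with Theorem~\ref{mrdsn} and then identify the image of the projection $\pi$ with the stratum. First I would recall that the image $\pi\big(\mathcal{M}^{S^1}(D_s,\jsoml)\big)$ is, by definition, exactly the set of those $J\in\jsoml$ for which the homology class $D_s$ is represented by a somewhere-injective (hence, by the remark preceding, embedded) $S^1$-invariant $J$-holomorphic sphere. By Proposition~\ref{prop:ToricExtensionCorrespondance} and the positivity-of-intersections argument of Corollary~\ref{cor_pos}, an invariant $J$ admits an embedded $J$-holomorphic representative of $D_s$ precisely when $J\in U_{s}$, since a $J$-curve in class $D_s$ with $D_s\cdot D_s<0$ is automatically unique and invariant, while for $s=0$ the representative of $B$ through a fixed point is invariant by the same positivity argument. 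Hence $\pi\big(\mathcal{M}^{S^1}(D_s,\jsoml)\big)=U_{s,l}\cap\jsoml$ as sets.

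Next I would invoke Lemma~\ref{subimmersion}, which asserts that $\pi$ is a sub-immersion, i.e.\ that it satisfies conditions (1)--(3) of Theorem~\ref{mrdsn}: $\ker d\pi$ is the subbundle tangent to the free $\C^{*}$-reparametrization action, the image of $d\pi$ is closed with finite (hence splitting) codimension in $T_{J}\jsoml=S\Omega^{0,1}_{J,l}(M,TM)^{S^1}$, described explicitly by~(\ref{proj_tangent}), and $\pi$ is open onto its image via the factorization through $\mathcal{M}^{S^1}(D_s,\jsoml)/\C^{*}$. Applying Theorem~\ref{mrdsn} then yields that $\pi\big(\mathcal{M}^{S^1}(D_s,\jsoml)\big)$ is a Banach submanifold of $\jsoml$, and combining with the set-theoretic identification of the previous paragraph gives that $U_{s,l}\cap\jsoml$ is a Banach submanifold, as claimed.

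The main obstacle is not in the formal deduction, which is essentially bookkeeping once Lemma~\ref{subimmersion} is in hand, but in being careful that the three hypotheses of Theorem~\ref{mrdsn} are genuinely verified in the equivariant category: in particular that the finite-dimensionality of $H^{0,1}_{j_{S^2}}(S^2,u^{*}TM)^{S^1}$ (used to get a splitting) and the openness of the quotient map survive the passage to $S^1$-invariant objects. These points are addressed in the proof of Lemma~\ref{subimmersion} by averaging arguments and by the fact that $\mathcal{M}^{S^1}(D_s,\jsoml)$ is a smooth Banach manifold with a free proper $\C^{*}$-action; here I would simply cite those facts. One should also note that the statement is local near each $J\in U_{s,l}\cap\jsoml$, which is all that Theorem~\ref{mrdsn} provides and all that is needed for the subsequent codimension computation.
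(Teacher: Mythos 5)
Your proposal is correct and follows essentially the same route as the paper, which deduces the corollary from Lemma~\ref{subimmersion} together with the observation that the image of $\pi$ is $U_{s,l}\cap\jsoml$. The only detail worth adding to your set-theoretic identification is that an invariant curve in class $D_s$ admits an \emph{equivariant} parametrization by the fixed standard $S^1$-action on the domain sphere (this is Lemma~\ref{WellDefinedModuli} on the global stabilizer), which is what guarantees it actually arises from a point of $\mathcal{M}^{S^1}(D_s,\jsoml)$.
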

\begin{proof}
This follows from Lemma~\ref{subimmersion} and from observing that the image of $\pi$ is $U_{s,l} \cap \jsoml$.
\end{proof}

We will now describe the normal bundle of $U_{s,l} \cap \jsoml$ in $\jsoml$ (when $s > 0$) in terms of infinitesimal deformations of complex structures. To this end, we first find a cohomological condition ensuring that the inclusion of complex integrable structures into $\jsoml$ is transverse to the stratum $U_{s,l} \cap \jsoml$, in other words, that we have transverse maps
\[
\begin{tikzcd}
      &\mathcal{M}^{S^1}(D_s , \jsoml) \arrow[d,"\pi"] \\
     I^{S^1}_{\om,l} \arrow[r,"i"]  &\jsoml 
\end{tikzcd}
\\
\]
\begin{lemma} \label{compliment}
Let $(M,\oml,J_s)$  denote any of the Hirzebruch surfaces $(\SSS,\oml,J_s)$ or $(\CCC,\oml,J_s)$, and let $(u,J_s)\in \mathcal{M}^{S^1}(D_s , \jsoml)$. Then the induced map $u^*:H_{J_s}^{0,1}\left(M,TM\right)^{S^1} \rightarrow H^{0,1}_{j_{S^2}}\left(S^2, u^*TM\right)^{S^1}$ is an isomorphism. 
\end{lemma}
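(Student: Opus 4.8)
The statement asserts that for a holomorphic curve $u\colon(S^2,j_{S^2})\to (M,J_s)$ representing the negative self-intersection class $D_s$ inside a Hirzebruch surface, the pullback map on first cohomology of the tangent sheaves, restricted to $S^1$-invariant parts, is an isomorphism. The plan is to first establish the non-equivariant statement that $u^*\colon H^{0,1}_{J_s}(M,TM)\to H^{0,1}_{j_{S^2}}(S^2,u^*TM)$ is an isomorphism, and then to deduce the invariant version by averaging, using that $u$ is equivariant so that $u^*$ intertwines the two $S^1$-actions and hence restricts to the invariant subspaces, while the averaging projections are surjective (as in Lemma~\ref{Lemma:averaging_surj}).

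For the non-equivariant part, I would argue as follows. Since $s>0$, the curve $C=u(S^2)$ is an embedded holomorphic sphere of self-intersection $-s$, so we have the normal bundle exact sequence of sheaves on $C\cong\mathbb{C}P^1$,
\[
0\to TC\to TM|_C\to N_C\to 0,
\]
with $TC=\mathcal O(2)$ and $N_C=\mathcal O(-s)$. Identifying $H^{0,1}_{j_{S^2}}(S^2,u^*TM)$ with $H^1(\mathbb{C}P^1,TM|_C)$ via Dolbeault, the long exact sequence together with $H^1(\mathbb{C}P^1,\mathcal O(2))=0$ gives $H^1(\mathbb{C}P^1,TM|_C)\cong H^1(\mathbb{C}P^1,\mathcal O(-s))\cong k^{s-1}$. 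On the other side, $H^{0,1}_{J_s}(M,TM)\cong H^1(M,\Theta_M)$ is the space of infinitesimal deformations of the complex structure of the Hirzebruch surface $W_s$, which is classically $(s-1)$-dimensional (see the computation in~\cite{Ko}, Example~6.2b referenced in the previous proposition). So both spaces have the same finite dimension $s-1$, and it suffices to show $u^*$ is injective, or equivalently surjective. I would prove surjectivity using the restriction exact sequence: the ideal sheaf sequence $0\to\Theta_M(-C)\to\Theta_M\to\Theta_M|_C\to 0$ shows that $H^1(M,\Theta_M)\to H^1(C,\Theta_M|_C)$ is surjective provided $H^2(M,\Theta_M(-C))=0$, which holds by Serre duality and a dimension count on the surface $W_s$ (here $\Theta_M(-C)\otimes K_M$ has no sections). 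Combining, $u^*$ is a surjection between spaces of equal finite dimension, hence an isomorphism.

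For the equivariant upgrade, I would note that the holomorphic $S^1$-action on $(M,J_s)$ and the chosen $S^1$-action on the domain $S^2$ make $u$ equivariant, so $u^*$ commutes with the $S^1$-actions on $H^{0,1}(M,TM)$ and $H^{0,1}(S^2,u^*TM)$; therefore $u^*$ carries invariants to invariants, giving the map $u^*\colon H^{0,1}_{J_s}(M,TM)^{S^1}\to H^{0,1}_{j_{S^2}}(S^2,u^*TM)^{S^1}$. Injectivity on the invariant part is immediate from injectivity of the full $u^*$. For surjectivity, given an invariant class $[\beta]\in H^{0,1}_{j_{S^2}}(S^2,u^*TM)^{S^1}$, lift it to some $[\alpha]\in H^{0,1}_{J_s}(M,TM)$ with $u^*[\alpha]=[\beta]$, then average: $[\tilde\alpha]:=\int_{S^1}g^*[\alpha]\,dg$ is invariant, and since $u$ is equivariant and averaging commutes with $\bar\partial$ and with $u^*$, one gets $u^*[\tilde\alpha]=\int_{S^1}g^*(u^*[\alpha])\,dg=\int_{S^1}g^*[\beta]\,dg=[\beta]$, the last equality because $[\beta]$ is already invariant. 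This gives surjectivity, hence the claimed isomorphism.

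\textbf{Main obstacle.} The routine bundle-splitting computations are standard; the part that needs the most care is pinning down $\dim H^1(M,\Theta_M)$ and the vanishing $H^2(M,\Theta_M(-C))=0$ for each Hirzebruch surface $W_s$ uniformly in $s$, and making sure the Dolbeault identifications are compatible with the ones implicit in the definitions of $\jsoml$ and the moduli space $\mathcal M^{S^1}(D_s,\jsoml)$ used earlier (the identification conventions from~\cite{AGK} p.~548). An alternative, perhaps cleaner, route that avoids the surface-level $H^2$ vanishing is to invoke the already-established fact that $I^{S^1}_{\om,l}$ is a Banach submanifold with tangent space $\im\delta\oplus H^{0,1}_{J_s}(TM)^{S^1}$ (Theorem~\ref{integrable}) together with the sub-immersion structure of $\pi$ (Lemma~\ref{subimmersion}), and to deduce the isomorphism from a dimension/transversality bookkeeping; I would keep this as a fallback in case the direct sheaf argument runs into convention mismatches.
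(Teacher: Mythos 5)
Your proof is correct, but it takes a more self-contained route than the paper. The paper's own proof is two lines: it quotes Proposition~3.4 of \cite{AGK} for the non-equivariant statement that $u^*:H_{J_s}^{0,1}(M,TM)\to H^{0,1}_{j_{S^2}}(S^2,u^*TM)$ is an isomorphism, and then observes that, $u$ being equivariant, the isomorphism restricts to the $S^1$-invariant subspaces. You instead re-prove the cited non-equivariant input directly: Dolbeault-identifying both sides with sheaf cohomology, using the normal bundle sequence $0\to\mathcal{O}(2)\to TM|_C\to\mathcal{O}(-s)\to 0$ to get $\dim H^1(C,TM|_C)=s-1$, matching this with $\dim H^1(M,\Theta_M)=s-1$ for the Hirzebruch surface, and deducing the isomorphism from surjectivity of the restriction map via $H^2(M,\Theta_M(-C))=0$. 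This buys a proof independent of \cite{AGK} (and makes the mechanism behind the cited result visible), at the cost of the bookkeeping you flag yourself. Your equivariant upgrade by lifting and averaging is valid but more than is needed: since $u^*$ is an equivariant linear isomorphism, its inverse is also equivariant, so it restricts to an isomorphism of invariants without any averaging — this is exactly the one-line observation the paper makes.

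One small repair: in your Serre duality step the dual of $H^2(M,\Theta_M(-C))$ is $H^0(M,\Omega^1_M(C)\otimes K_M)$, not $H^0(M,\Theta_M(-C)\otimes K_M)$ as written. The conclusion is unaffected: with $C$ the section of self-intersection $-s$ and $K_M=-2C-(s+2)F$, one has $\mathcal{O}(C)\otimes K_M=\mathcal{O}(-C-(s+2)F)$, so $\Omega^1_M(C)\otimes K_M$ is a subsheaf of $\Omega^1_M$ twisted down by an effective divisor and has no sections because $h^{1,0}(M)=0$ (and, since $\Theta_M\otimes K_M\cong\Omega^1_M$ on a surface, the sheaf you named has no sections either, for the same reason). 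With that correction your vanishing, and hence the surjectivity and the dimension count, go through for every Hirzebruch surface, including the trivial case $s=1$ where both spaces vanish.
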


\begin{proof}
From  Proposition 3.4 in \cite{AGK} we know that $u^*:H_{J_s}^{0,1}(M,TM) \rightarrow H_{j_{S^2}}^{0,1}(S^2, u^*TM) $ is an isomorphism. As $u$ is equivariant this indeed gives us that \[u^*:H_{J_s}^{0,1}\left(M,TM\right)^{S^1} \longrightarrow H_{j_{S^2}}^{0,1}\left(S^2, u^*TM\right)^{S^1}\]
is also an isomorphism.
\end{proof}

\begin{lemma}\label{trans_strata}
Let $(M,\oml)$ denote either $(\SSS,\oml)$ or $(\CCC,\oml)$. Further let $i: I^{S^1}_{\om,l} \hookrightarrow  \jsoml$ denote the inclusion and  $\pi: \mathcal{M}^{S^1}(D_s , \jsoml) \to \jsoml$ denote the projection. Then,
\begin{itemize}
    \item $i \pitchfork \pi$,
    \item the infinitesimal complement (i.e the fibre of the normal bundle)  of $U_{s,l} \cap \jsoml$ at $J_s \in I^{S^1}_{\om,l}$ can be identified with $H^{0,1}_{J_s}(M,TM)^{S^1}$.
\end{itemize}
\end{lemma}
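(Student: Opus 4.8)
The plan is to verify the two transversality conditions in Lemma~\ref{trans_strata} using the cohomological description of tangent spaces already established, together with the characterization~(\ref{proj_tangent}) of $\pi_*T_{(u,J)}\mathcal{M}^{S^1}(D_s,\jsoml)$ inside $S\Omega^{0,1}_{J}(M,TM)^{S^1}$. First I would recall from Theorem~\ref{integrable} that at the Hirzebruch structure $J_s$ (which satisfies $H^{0,2}_{J_s}(M)=0$ and $H^{0,2}_{J_s}(M,TM)=0$ by the Proposition immediately preceding this lemma, so Theorem~\ref{integrable} applies) we have
\[T_{J_s}I^{S^1}_{\om,l} = \im\delta \oplus H^{0,1}_{J_s}(M,TM)^{S^1} \subset cl\, S\Omega^{0,1}_{J_s}(M,TM)^{S^1} = T_{J_s}\jsoml,\]
where $cl$ denotes the kernel of $\overline\partial_{J_s}$. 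The key point is that $T_{J_s}I^{S^1}_{\om,l}$ consists entirely of $\overline\partial_{J_s}$-\emph{closed} forms.

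For the transversality statement $i\pitchfork\pi$, I must show that at any $(u,J_s)\in\mathcal{M}^{S^1}(D_s,\jsoml)$ with $i(J_s)=\pi(u,J_s)=J_s$,
\[T_{J_s}I^{S^1}_{\om,l} + \pi_*T_{(u,J_s)}\mathcal{M}^{S^1}(D_s,\jsoml) = T_{J_s}\jsoml = S\Omega^{0,1}_{J_s}(M,TM)^{S^1}.\]
The plan here is: given an arbitrary $\alpha\in S\Omega^{0,1}_{J_s}(M,TM)^{S^1}$, its restriction $\alpha\circ du\circ j_{S^2}$ defines a class in $H^{0,1}_{j_{S^2}}(S^2,u^*TM)^{S^1}$. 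By Lemma~\ref{compliment}, the pullback $u^*:H^{0,1}_{J_s}(M,TM)^{S^1}\to H^{0,1}_{j_{S^2}}(S^2,u^*TM)^{S^1}$ is an isomorphism, so there is a closed form $\beta\in H^{0,1}_{J_s}(M,TM)^{S^1}$ — a representative living in $T_{J_s}I^{S^1}_{\om,l}$ via the $\oplus$ decomposition above — whose pullback is cohomologous to $[\alpha\circ du\circ j_{S^2}]$. Then $\alpha-\beta$ pulls back to a $\overline\partial$-exact class, hence lies in $\pi_*T_{(u,J_s)}\mathcal{M}^{S^1}(D_s,\jsoml)$ by~(\ref{proj_tangent}). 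Writing $\alpha = \beta + (\alpha-\beta)$ gives the required splitting of the sum. One technical wrinkle to handle carefully: $\beta$ a priori represents a class in $\Omega^{0,1}(M,TM)^{S^1}$ but need not be \emph{symmetric}, i.e.\ need not lie in $S\Omega^{0,1}_{J_s}(M,TM)^{S^1}=T_{J_s}\jsoml$; one must either quote that the harmonic representative of a class in $H^{0,1}_{J_s}(M,TM)^{S^1}$ can be taken compatible (which is exactly the content of how $H^{0,1}_{J_s}(M,TM)^{S^1}$ embeds into $T_{J_s}I^{S^1}_{\om,l}$ in Theorem~\ref{integrable}), or argue modulo the image of $d\pi$ which absorbs the correction. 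I expect this reconciliation between the abstract Dolbeault cohomology and the symmetric/compatible tensors to be the main obstacle, but it is precisely the issue already resolved in the $\SSS$/$\CCC$ setting by the long exact sequence~(\ref{les}) and the identification $cl\, S\Omega^{0,1}_{J_s}(M,TM)^{S^1}/\im\delta\cong H^{0,1}_{J_s}(M,TM)^{S^1}$, so the argument is to invoke that identification rather than redo it.

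For the second bullet, the infinitesimal complement of $U_{s,l}\cap\jsoml$ at $J_s$ is by definition $T_{J_s}\jsoml\,/\,\pi_*T_{(u,J_s)}\mathcal{M}^{S^1}(D_s,\jsoml)$. By~(\ref{proj_tangent}) the map $L:\alpha\mapsto[\alpha\circ du\circ j_{S^2}]$ from $S\Omega^{0,1}_{J_s}(M,TM)^{S^1}$ to $H^{0,1}_{j_{S^2}}(S^2,u^*TM)^{S^1}$ has kernel exactly $\pi_*T_{(u,J_s)}\mathcal{M}^{S^1}$, so it suffices to show $L$ is surjective and then identify the target with $H^{0,1}_{J_s}(M,TM)^{S^1}$ via the isomorphism $u^*$ of Lemma~\ref{compliment}. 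Surjectivity of $L$ follows because $L$ factors through $u^*$: any class in $H^{0,1}_{j_{S^2}}(S^2,u^*TM)^{S^1}$ is $u^*[\beta]$ for some $[\beta]\in H^{0,1}_{J_s}(M,TM)^{S^1}$, and a compatible representative $\beta\in S\Omega^{0,1}_{J_s}(M,TM)^{S^1}$ (again from Theorem~\ref{integrable}) satisfies $L(\beta)=u^*[\beta]$. Hence $L$ induces an isomorphism of the infinitesimal complement with $H^{0,1}_{j_{S^2}}(S^2,u^*TM)^{S^1}\cong H^{0,1}_{J_s}(M,TM)^{S^1}$, which is the claim. The whole argument is the equivariant transcription of Proposition~3.4 and its surrounding discussion in~\cite{AGK}, with every cohomology group replaced by its $S^1$-invariant part, legitimate because $\overline\partial_{J_s}$ and the averaging operator commute.
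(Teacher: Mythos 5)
Your proposal is correct and follows essentially the same route as the paper's proof: both reduce $i\pitchfork\pi$ to the surjectivity of the quotient map $cl\, S\Omega^{0,1}_{J_s}(M,TM)^{S^1}\to H^{0,1}_{J_s}(M,TM)^{S^1}$ (coming from the long exact sequence~(\ref{les}) and $H^{0,2}_{J_s}(M)^{S^1}=0$, i.e.\ the fact that every invariant Dolbeault class has a symmetric $\overline\partial$-closed representative in $T_{J_s}I^{S^1}_{\om,l}$) combined with the isomorphism $u^*$ of Lemma~\ref{compliment}, and both identify the normal fibre as the cokernel of the map $L(\alpha)=[\alpha\circ du\circ j_{S^2}]$, whose kernel is $\pi_*T_{(u,J_s)}\mathcal{M}^{S^1}(D_s,\jsoml)$ by~(\ref{proj_tangent}). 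The only blemish is the slip in your opening display: $cl\, S\Omega^{0,1}_{J_s}(M,TM)^{S^1}$ \emph{equals} $T_{J_s}I^{S^1}_{\om,l}$ by Theorem~\ref{integrable}, whereas $T_{J_s}\jsoml$ is the full space $S\Omega^{0,1}_{J_s}(M,TM)^{S^1}$ of symmetric invariant forms — which is indeed how you use these spaces in the rest of your argument.
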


\begin{proof}Recall by Theorem~\ref{integrable}, that the tangent space of $I^{S^1}_{\om,l}$ was given by 
\begin{align*}
T_{J_s} I^{S^1}_{\om,l} &= cl S\Omega_{J_s}^{0,1}(M,TM)^{S^1}\\
&= \ker\Big( \overline\partial_{J_s}: S\Omega^{0,1}_{{J_s},l}\left(M,TM\right)^{S^1} \to \Omega^{0,2}_{{J_s},l-1}\left(M,TM\right)^{S^1}\Big)
\end{align*}
Let $\gamma \in T_{J_s} \jsoml = \left(S\Omega^{0,1}_{J_s}(M,TM)\right)^{S^1}$ and define
\[
\left[\gamma \circ du \circ j_{S^2}\right] := \eta \in H_{j_{S^2}}^{0,1}(S^2, u^*TM)^{S^1}.
\]
To show that $i \pitchfork \pi$, we need to produce $\beta \in T_{J_s}I^{S^1}_{\om,l} = cl S\Omega_{J_s}^{0,1}(M,TM)^{S^1}$ such that $[\left(\gamma - \beta\right) \circ du \circ j_{S^2}] = 0$. To do so, we consider the following commutative diagram.
\[ 
\stackinset{l}{13ex}{b}{6ex}{%
\scalebox{.8}
{%
\begin{tikzcd}[row sep=9ex, column sep = 13ex, ampersand replacement=\&]
[\alpha] 
    \arrow[mapsto]{r}
    \arrow[mapsto]{d}
\& \left[\alpha \circ du \right]
    \arrow[mapsto]{d}  \\
\left[\alpha \circ J_m\right]
    \arrow[mapsto]{r} 
\& \begin{array}{c}[\alpha \circ du \circ j_{S^2}] =\\ \left[\alpha \circ J_s \circ du\right]\end{array}
\end{tikzcd}
} 
}{%
\begin{tikzcd}[row sep = 20ex, column sep = 20ex, ampersand replacement=\&]
H_{J_s}^{0,1}(M,TM)^{S^1} 
    \arrow{r}{u^*} 
    \arrow[swap]{d}{J_s^*}
\& H_{j_{S^2}}^{0,1}(S^2, u^*TM)^{S^1}
    \arrow{d}{j_{S^2}^*}  \\
H_{J_s}^{0,1}(M,TM)^{S^1} 
    \arrow[swap]{r}{u^*} 
\& H_{j_{S^2}}^{0,1}(S^2, u^*TM)^{S^1}
\end{tikzcd}
}
\]
where all the maps $u^*$,$J^*$ and $j_{S^2}^*$ are isomorphisms. Further we have the equality $\left[\alpha \circ du \circ j_{S^2}\right] = \left[\alpha \circ J \circ du\right]$ as $u$ is $j_{S^2}$-$J_s$ holomorphic. As we know that $H_{J_s}^{0,2}(M)^{S^1} =0$, we see  from the long exact sequence equation~\eqref{les} that the map  $cl S\Omega_{J_s}^{0,1}(M,TM)^{S^1} \to H_{J_s}^{0,1}(M,TM)^{S^1}$  
is surjective. As both $u^*$ and $J^*$ are isomorphisms, there exists $\beta \in cl S\Omega_{J_s}^{0,1}(M,TM)^{S^1} = T_{J_s} I_{\oml,l}^{S^1}$ such that $\left[\beta \circ J \circ du\right]  =  \left[\beta \circ du \circ j_{S^2}\right]= \eta:= \left[\gamma \circ du \circ j_{S^2}\right] $. Hence we indeed have $\left[\left(\gamma -\beta\right) \circ du \circ j_{S^2} \right] = 0 $ as required.
\\

We now show that the fibre of the normal bundle of $U_{s,l} \cap \jsoml$ at $J_s \in I^{S^1}_{\om,l}$ can be identified with $H^{0,1}_{J_s}(M,TM)^{S^1}$. When $J$ is an integrable invariant almost complex structure,  equation~\eqref{proj_tangent} implies that
\begin{multline*}
\pi_*T_{(u,J)} \mathcal{M}^{S^1}\left(D_s , \jsoml\right)\\
=\left\{ \alpha \in S\Omega_J^{0,1}(M,TM)^{S^1}~| ~ \alpha \circ du \circ j_{S^2} =0 \in H_{J_s}^{0,1}(M,TM)^{S^1}
\right\}.
\end{multline*}
Consider the map 
\begin{align*}
    L: S\Omega_{J_s}^{0,1}(M,TM)^{S^1} &\rightarrow H_{J_s}^{0,1}\left(S^2, u^*TM\right)^{S^1} \\
    \alpha &\mapsto [\alpha \circ du \circ j_{S^2}]
\end{align*}

The kernel of L is precisely $\pi_*T_{(u,J)} \mathcal{M}^{S^1}\left(D_s , \jsoml\right)$. As the quotient map $ cl S\Omega_{J_s}^{0,1}(M,TM)^{S^1} \to H_{J_s}^{0,1}(M,TM)^{S^1}$ is surjective and the maps $u^*$ and $j_{S^2}$ are isomorphisms, we have that the map $L$ is surjective. As the kernel of L  is the image of $d\pi$, the cokernel can be identified with $H_{j_{S^2}}^{0,1}\left(S^2, u^*TM\right)^{S^1} \cong H_{J_s}^{0,1}(M,TM)^{S^1}$. Hence the the fibre of the normal bundle of $U_{s,l} \cap \jsoml$ at $J_s \in I^{S^1}_{\om,l}$ can be identified with $H^{0,1}_{J_s}(M,TM)^{S^1}$.
\end{proof}

\section{Isotropy representations}
As shown in the previous section, the codimension of $U_{s,l} \cap \jsoml$ inside $\jsoml$ is equal to the dimension of $H_{J_s}^{0,1}(M,TM)^{S^1}$which can be calculated from Lemma~\ref{trans_strata}. In this section we only perform the calculation for $\SSS$ and we address the case of $\CCC$ in Section~\ref{Isom_CCC}.
\\

\subsection{Even Hirzebruch surfaces and their isometry groups}
Let $2k=m$. The following  Theorem  in \cite{AGK}, tells us how isometry group $K(2k)\simeq S^{1}\times \SO(3)$ acts on the space $H_{J_m}^{0,1}(M,TM)$ of infinitesimal deformations of the complex structure $J_m$. 
\begin{thm}[Theorem 4.2 in \cite{AGK}]\label{Thm_action on deformations}
The isometry group $S^1 \times SO(3)$ acts on $H^{0,1}(S^2 \times S^2,T^{1.0}_{J_m}(S^2 \times S^2))$ via the representation $\Det\otimes\Sym^{m-2}(\C^2)$.
\end{thm}
Here $\Det$ is the standard action of $S^{1}=U(1)$ on $\C^{2}$, and where $\Sym(\C^{2})$ is the representation $\mathscr{W}_{k-1}$ of $\SO(3)$ induced by the $(2k-2)$-fold symmetric product of the standard representation of $\SU(2)$ on $\C^{2}$. We use this fact to calculate the dimension of the $S^1$ invariant subspace of $H_{J_m}^{0,1}(M,TM)$ and thus obtain the codimension $U_{m,l} \cap \jsoml$ inside $\jsoml$. \\

Following \cite{AGK}, we construct the Hirzebruch surface $\mathbb{F}_{2k}$ by K\"ahler reduction of $\C^{4}$ under the action of the torus $T^{2}_{2k}$ defined by
\[(s,t)\cdot z = (s^{2k}tz_{1},tz_{2},sz_{3},sz_{4})\]
The moment map is $\phi(z)=(2k|z_{1}|^{2}+|z_{3}|^{2}+|z_{4}|^{2}, |z_{1}|^{2}+|z_{2}|^{2})$ and the reduced manifold at level $(\lambda+k,1)$ is symplectomorphic to $(S^{2}\times S^{2},\om_{\lambda})$ and biholomorphic to the Hirzebruch surface $\mathbb{F}_{2k}$. In this model, the projection to the base is given by $[(z_{1},\ldots,z_{4})]\mapsto [z_{3}:z_{4}]$, the zero section is $[w_{0}:w_{1}]\mapsto [(w_{0}^{2k},0,w_{0},w_{1})]$, and a fiber is $[w_{0}:w_{1}]\mapsto [(w_{0}w_{1}^{2k},w_{0}w_{1},0,w_{1})]$. The torus $T^{2}(2k)=T^{4}/T^{2}_{2k}$ acts on $\mathbb{F}_{2k}$. This torus is generated by the elements $[(1,e^{it},1,1)]$ and $[(1,1,e^{is},1)]$, and its moment map is $[(z_{1},z_{2},z_{3},z_{4})]\mapsto(|z_{2}|^{2},|z_{3}|^{2})$. In this basis (which is different from the standard basis used in Section~\ref{Section:ToricActionsOnSSSandCCC}), the moment polytope $\Delta(2k)$ is the convex hull of the vertices $(0,0)$, $(1,0)$, $(1,\lambda+k)$, and $(0,\lambda-k)$, as depicted in~Figure~\ref{fig:PolytopesDifferentBases}~(b).
\begin{figure}[H]
\centering
\subcaptionbox{Standard basis}
[.3\linewidth]
{\begin{tikzpicture}
\draw (0,1) -- (2,1) ;
\draw (0,1) -- (0,0) ;
\draw (0,0) -- (3,0) ;
\draw (2,1) -- (3,0) ;
\end{tikzpicture}}
~
\subcaptionbox{Second basis}
[.3\linewidth]
{\begin{tikzpicture}
\draw (0,0) -- (1,0) ;
\draw (0,0) -- (0,2) ;
\draw (1,0) -- (1,3) ;
\draw (0,2) -- (1,3) ;
\node[right]{};
\end{tikzpicture}}
~
\subcaptionbox{Balanced basis}
[.3\linewidth]
{\begin{tikzpicture}
\draw (0,0) -- (1,-1) ;
\draw (0,0) -- (0,1) ;
\draw (0,1) -- (1,2) ;
\draw (1,2) -- (1,-1) ;
\end{tikzpicture}}
\caption{$\Delta(2k)$ in different bases}\label{fig:PolytopesDifferentBases}
\end{figure}
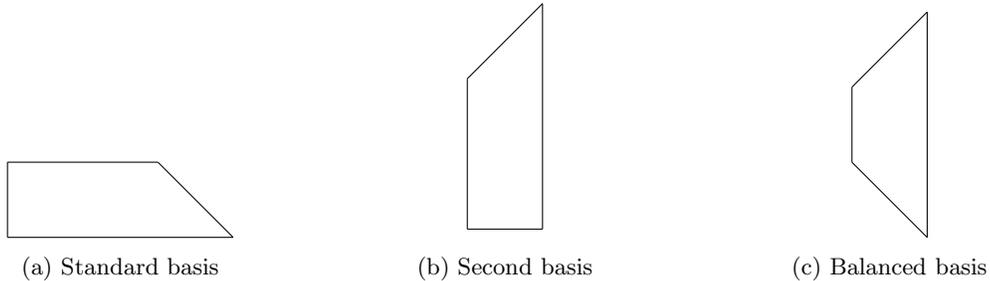

The isometry group of $\mathbb{F}_{2k}$ is 
\[K(2k) = Z_{\U(4)}(T^{2}_{2k})/T^{2}_{2k}=(T^{2}\times \U(2))/T^{2}_{2k}\simeq S^{1}\times\PU(2)\simeq S^{1}\times\SO(3)\]
where the middle isomorphism is given by
\[[(s,t), A]\mapsto (s^{-1}t\det A^{k}, [A])\]
Under this isomorphism, an element $[(1,a,b,1)]$ of the torus $T(2k)$ is taken to
\[\left(ab^{k},\begin{bmatrix}b&0\\0&1\end{bmatrix}\right)=\left(b^{k}a,\begin{bmatrix}b^{1/2}&0\\0&b^{-1/2}\end{bmatrix}\right)\]
Consequently, at the Lie algebra level of the maximal tori, the map identifying the maximal torus of $K(2k)$ whose lie algebra is denoted by $\lie{t}^{2}(2k)$ with the maximal torus $S^1 \times SO(2) \subset S^1 \times SO(3)$ whose lie algebra is denoted by $\lie{t}^{2}$ (where $SO(2)$ is identified with the rotations around the z-axis)  is given by
\[\begin{pmatrix}
1&k\\
0&1
\end{pmatrix}\]
The moment polytope associated to the maximal torus $T^{2}\subset K(2k)$ is thus the balanced polytope obtained from $\Delta(2k)$ by applying the inverse transpose $\begin{pmatrix}1&0\\-k&1\end{pmatrix}$
as shown in Figure~\ref{fig:PolytopesDifferentBases}~(c).

\subsection{Even isotropy representations and codimension calculation}
\label{Even isotropy representations}

Let $J_m$ be the standard $S^1$ invariant integrable almost complex structure in the strata $U_m$, coming from the Hirzebruch surface $W_m$. The action of the isometry group $K(2k)\simeq S^{1}\times \SO(3)$ on the space $H_{J_m}^{0,1}(\SSS,T(\SSS)) \cong \C^{m-1}$ (see \cite{Ko} Example 6.2(b)(4), p.309 for more details about how the isomorphism is obtained) of infinitesimal deformations is isomorphic to $\Det\otimes \Sym^{2k-2}$ , where  $\Det$ is the standard action of $S^{1}=U(1)$ on $\C^{2}$, and where $\Sym(\C^{2})$ is the representation $\mathscr{W}_{k-1}$ of $\SO(3)$ induced by the $(2k-2)$-fold symmetric product of the standard representation  of $\SU(2)$ on $\C^{2}$ (see Theorem 4.2 in\cite{AGK}). We shall denote this $(2k-2)$-fold symmetric product of the standard representation  of $\SU(2)$ on $\C^{2}$ as $\mathscr{V}_{2k-2}$. See \cite{B-tD} for more details about the representation theory of $\SO(3)$ and $\SU(2)$  \\

Let $R(t)$ denote the following  circle of $\SO(3)=\PU(2)=\U(2)/\Delta(S^{1})$
\[R(t)=\begin{pmatrix}1 & 0 & 0\\ 0 & \cos(t) & -\sin(t)\\ 0 & \sin(t) & \cos(t)\end{pmatrix}, ~t\in[0,2\pi)\]
where $\Delta(S^{1})$ denotes the subgroup of matrices of the form $\begin{pmatrix}e^{it} & 0\\ 0 & e^{it}\end{pmatrix}$.
\\

The circle $R(t)$  lifts to 
\[e(t/2):=\begin{pmatrix}e^{it/2} & 0\\ 0 & e^{-it/2}\end{pmatrix}\in\SU(2)\]

As explained above, the action of $K(2k)$ on  $H^{0,1}_{J_m}(S^2 \times S^2,T^{1.0}_{J_m}(S^2 \times S^2)) \cong \C^{m-1}$ is isomorphic to $\Det\otimes\Sym^{2k-2}$. Hence, to calculate the codimension we only need to calculate the dimension of the invariant subspace of $H^{0,1}(S^2 \times S^2,T^{1.0}_{J_m}(S^2 \times S^2)) \cong \C^{m-1}$ under the action of the subcircle $S^1(1,b;2k)\subset K(2k)$. To do so we note that a basis of $\Sym^{2k-2}$ is given by the homogeneous polynomials $P_{j}=z_{1}^{2k-2-j}z_{2}^{j}$ for $j\in\{0,\ldots,2k-2\}$. The action of $R(t)$ on $P_{j}$ is
\[R(t)\cdot P_{j}=e(t/2)\cdot P_{j}=e^{i\big(2k-2-2j\big)t/2}P_{j}=e^{it(k-1-j)}P_{j}\]
so that the action of $(e^{is},R(t))\subset S^{1}\times\SO(3)$ on $P_{j}$ is
\[\left(e^{is},R(t)\right)\cdot P_{j}=e^{i\big(s+t(k-1-j)\big)}P_{j}\]
Each $P_{j}$ generates an eigenspace for the action of the maximal torus $T(2k)$. In particular, the circle $S^{1}(a,b;2k)$ acts trivially on $P_{j}$ if, and only if, 
\[a+b(k-1-j)=(a,b)\cdot(1,k-1-j)=0\]
for $j\in\{0,2k-2\}$. Equivalently, we must have
\[a+bj=(a,b)\cdot(1,j)=0\]
for $j\in\{1-k,\ldots,k-1\}$. Hence, the dimension of the invariant subspace is given by the number of $j \in \{1-k,\ldots,k-1\}$ such that $a+bj=0$.
\\

Note that the above codimension calculation was performed with respect to the basis of the maximal torus in $K(2n)$. To express the codimension in terms of the standard basis, use the matrix $\begin{pmatrix}\frac{m}{2}& -1\\1&0\end{pmatrix}$ which takes the vector $\begin{pmatrix}1\\b\end{pmatrix}$ in the basis for the standard moment polytope of Figure~\ref{fig:PolytopesDifferentBases}~(a) to the vector $\begin{pmatrix}\frac{m}{2}-b\\1\end{pmatrix} $ in the basis for the balanced polytope of Figure~\ref{fig:PolytopesDifferentBases}~(c). In this basis, the codimension of $S^1(1,b;m)$ is given by the number of $j \in \{1-\frac{m}{2}, \cdots, \frac{m}{2}-1\}$ such that $(\frac{m}{2} - b) + j = 0$. Relabelling $j^\prime$ as $\frac{m}{2}+j$, we have that the codimension is given by 
the number of $j^\prime \in \{1, \cdots , m-1\}$ such that $j^\prime=b$.
 
\begin{thm}\label{codimension_calc}
Given the circle action $S^1(1,b,m)$ with $2\lambda > |2b-m|$ and $b \neq \{0, m\}$, the complex codimension of of the stratum $\jsom \cap U_m$ in $\jsom$ in given by the number of $j \in \{1, \cdots , m-1\}$ such that $j=b$. Similarly, for the action $S^1(-1,b,m)$ with $2\lambda > |2b+m|$ and $b \neq \{0, -m\}$, the complex codimension of of the stratum $\jsom \cap U_m$ in $\jsom$ in given by the number of $j \in \{1, \cdots , m-1\}$ such that $j=-b$.
\end{thm}
\begin{cor}
For the circle actions \begin{itemize}
  \item (i) $a=1$, $b \neq \{0, m\}$,  and $2\lambda > |2b-m|$; or
\item (ii) $a=-1$, $b \neq \{0, -m\}$, and $2 \lambda > |2b+m|$.
\end{itemize}The complex codimension of the stratum $\jsom \cap U_m$ in $\jsom$ is either 0 or 1.
\end{cor}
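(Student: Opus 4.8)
The plan is to read off the corollary directly from Theorem~\ref{codimension_calc}. By that theorem, for the action $S^1(1,b;m)$ with $2\lambda > |2b-m|$ and $b \notin \{0,m\}$, the complex codimension of $\jsom \cap U_m$ in $\jsom$ equals the number of integers $j$ in the range $\{1,\ldots,m-1\}$ with $j = b$; and similarly for $S^1(-1,b;m)$ the codimension is the number of $j \in \{1,\ldots,m-1\}$ with $j = -b$. So the whole content of the corollary is the elementary observation that the cardinality of the set $\{j \in \{1,\ldots,m-1\} : j = b\}$ (respectively $j=-b$) is at most $1$: a fixed integer $b$ (or $-b$) either lies in the discrete interval $\{1,\ldots,m-1\}$, in which case it is matched by exactly one value of $j$, or it does not, in which case the count is $0$.

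Concretely, first I would invoke Theorem~\ref{codimension_calc} to replace the statement about codimension with the statement about the counting sets. Then, in case (i), I would note that if $1 \le b \le m-1$ the codimension is $1$, and otherwise (i.e.\ $b \le 0$ or $b \ge m$; recall $b \notin\{0,m\}$) the codimension is $0$. In case (ii) the same dichotomy applies with $b$ replaced by $-b$: the codimension is $1$ precisely when $1 \le -b \le m-1$, i.e.\ $-(m-1)\le b \le -1$, and is $0$ otherwise. In either case the codimension is an element of $\{0,1\}$, which is exactly the assertion.

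There is essentially no obstacle here — this corollary is a one-line consequence of the preceding theorem, and the only thing to be careful about is making sure the excluded values $b \in \{0,m\}$ (resp.\ $b \in \{0,-m\}$) are consistent with the boundary of the range $\{1,\ldots,m-1\}$, which they are, since $0$ and $\pm m$ are precisely the endpoints just outside that range. I would also remark, for the reader's orientation, why this matters: combined with the codimension-$2$ assertion of Theorem~\ref{codimension_calc} as used in Section~\ref{subsection:CohomologyModule} (where the complex codimension $1$ corresponds to real codimension $2$), this shows that whenever $\jsom$ meets two strata, the smaller stratum $\jsom \cap U_{m'}$ is of real codimension exactly $2$, which is what the homological computations of Chapter~\ref{ChapterActionOfSymp} require.

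\begin{proof}
By Theorem~\ref{codimension_calc}, in case~(i) the complex codimension of $\jsom\cap U_m$ in $\jsom$ is the number of integers $j\in\{1,\ldots,m-1\}$ with $j=b$, and in case~(ii) it is the number of $j\in\{1,\ldots,m-1\}$ with $j=-b$. In either case this is the cardinality of a set of the form $\{j\in\{1,\ldots,m-1\} : j = c\}$ for a fixed integer $c$ (with $c=b$ or $c=-b$). Such a set is empty when $c\notin\{1,\ldots,m-1\}$ and is a singleton when $c\in\{1,\ldots,m-1\}$. Hence the codimension is either $0$ or $1$.
\end{proof}
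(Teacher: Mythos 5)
Your proposal is correct and matches the paper's own argument: the paper proves this corollary simply by citing Theorem~\ref{codimension_calc} and the preceding calculation, exactly as you do, since a fixed integer $b$ (or $-b$) can coincide with at most one $j\in\{1,\ldots,m-1\}$. Your additional remarks about the boundary values and the link to the real-codimension-$2$ statement are consistent with how the result is used later, but they are not needed for the proof itself.
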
 
\begin{proof}
Follows from the calculation and discussion above.
\end{proof}

\begin{remark}
In the beginning of this section, we worked in the $C^l$ category to show that the space $\jsoml \cap U_{2k,l}$ is a Banach submanifold. But in order to investigate the topology of the group $\Symp^{S^1}(\SSS,\oml)$ with $C^\infty$-topology, we require that the space $\jsom \cap U_{2k}$ with the $C^\infty$ topology is a Fr\'echet manifold and that the codimension of $\jsom \cap U_{2k}$ in $\jsom$ is given by the same formula as in Theorem \ref{codimension_calc}. As this discrepancy exists in the literature even in the non-equivariant case, and as a resolution of this issue is well beyond the scope of this document we do not attempt to resolve this here. 
\end{remark}

\chapter{Circle actions on \texorpdfstring{$\CCC$}{the non-trivial bundle}}\label{Chapter-CCC}

In this last section, we complete the determination of the homotopy type of the centralizer groups of $S^1$ actions on $\CCC$. This involves repeating the analysis done in sections~\ref{Section:homotopical descriptionofjsom} through~\ref{Even isotropy representations}, replacing $\SSS$ by $\CCC$ and circle actions on even Hirzebruch surfaces by actions on odd Hirzebruch surfaces. As the arguments are analogous in both cases, we shall only point out the key differences.

\section{Homotopy type of \texorpdfstring{$\Symp^{S^1}(\CCC,\oml)$}{Symp(CP2\#CP2}}
Recall that  the  odd Hirzebruch surface $W_m$, $m=2k+1$, is defined as a complex submanifold of $ \mathbb{C}P^1 \times \mathbb{C}P^2$ defined by setting
\[
W_m:=  \left\{ \left(\left[x_1,x_2\right],\left[y_1,y_2, y_3\right]\right) \in \mathbb{C}P^1 \times \mathbb{C}P^2 ~|~  {x^m}_1y_2 - x_2{y^m}_1 = 0 \right\}
\]
This manifold is diffeomorphic to $\CP^2 \# \overline{\CP^2}$ and is invariant under the toric action defined by 
\[
 \left(u,v\right) \cdot  \left(\left[x_1,x_2\right],\left[y_1,y_2, y_3\right]\right) = \left(\left[ux_1,x_2\right],\left[u^my_1,y_2,vy_3\right]\right)
\]
and whose moment map image is
\[
\scalebox{0.9}{
\begin{tikzpicture}
\node[left] at (0,2) {$Q=(0,1)$};
\node[left] at (0,0) {$P=(0,0)$};
\node[right] at (4,2) {$R= (\lambda - \frac{m+1}{2} ,1)$};
\node[right] at (6,0) {$S=(\lambda + \frac{m-1}{2} ,0)$};
\node[above] at (2,2) {$B-\frac{m+1}{2}F$};
\node[right] at (5,1) {$F$};
\node[left] at (0,1) {$F$};
\node[below] at (3,0) {$B+ \frac{m-1}{2}F$};
\draw (0,2) -- (4,2) ;
\draw (0,0) -- (0,2) ;
\draw (0,0) -- (6,0) ;
\draw (4,2) -- (6,0) ;
\end{tikzpicture}\label{oddHirz}}
\]
The homology class $B=[L]$ is the class of a line $L$ in $\CCC$, $E$ is the class of the exceptional divisor, and $F$ refers to the class $B - E$. There is a canonical form which we also call $\oml$ on $\CP^2 \# \overline{\CP^2}$, which has weight $\lambda$ on B and 1 on $F$. Note that with our convention, $\lambda$ must be strictly greater than 1 for the curve in class $E$ to have positive symplectic area.
As before all symplectic $S^1$ action on $\CP^2 \# \overline{\CP^2}$ extend to toric actions. The graphs for the different circles are given in Figures \ref{fig:GraphsWithFixedSurfaces} and \ref{fig:GraphsIsolatedFixedPoints}. As explained in Theorem~\ref{strata-CCC}, we have a stratification of the space of almost complex structures i.e the space $\jj_{\om_\lambda}$  of all compatible almost complex structures for the form $\om_\lambda$, decomposes into disjoint Fréchet manifolds of finite codimensions
\[
\jj_{\om_\lambda} = U_1 \sqcup U_3 \sqcup U_5\ldots \sqcup U_{2n-1}
\]
where 
\begin{multline*}
U_{2i-1} := \big\{ J \in \jj_{\om_\lambda}~|~ D_{2i-1}:= B-iF \in H_2(S^2 \times S^2,\Z)\\
\text{~is represented by a $J$-holomorphic sphere}\big\}.
\end{multline*}

We shall now use this stratification to construct fibrations for the action of equivariant symplectomorphism group on each equivariant stratum. We follow the same notation as in~\ref{not}. The proofs that the following maps are in fact fibrations is exactly the same as the proof given in Section~\ref{section:ActionOnU_2k} and we do not reproduce them.
\[\Stab^{S^1}(\overline{D}) \longrightarrow \Symp^{S^1}_{h}(\SSS,\oml) \longtwoheadrightarrow {\mathcal{S}^{S^1}_{D_{s}}} \mathbin{\textcolor{blue}{\xrightarrow{\text{~~~$\simeq$~~~}}}} \J^{S^1} \cap U_{2s+1}\rule{0em}{2em}\]

\[\Fix^{S^1}(\overline{D}) \longrightarrow \Stab^{S^1}(\overline{D}) \longtwoheadrightarrow  \Symp^{S^1}(\overline{D}) \mathbin{\textcolor{blue}{\xrightarrow{\text{~~~$\simeq$~~~}}}} S^1 ~\text{or}~ SO(3)\rule{0em}{2em}\]
   
\[\Fix^{S^1} (N(\overline{D})) \longrightarrow \Fix^{S^1}(\overline{D}) \longtwoheadrightarrow  \Aut^{S^1}(N(\overline{D})) \mathbin{\textcolor{blue}{\xrightarrow{\text{~~~$\simeq$~~~}}}} S^1 \rule{0em}{2em}\]
   
\[\Stab^{S^1}(\overline{F}) \cap  \Fix^{S^1}(N(\overline{D})) \longrightarrow \Fix^{S^1}(N(\overline{D})) \longtwoheadrightarrow  \overline{\mathcal{S}^{S^1}_{F,p_0}} \mathbin{\textcolor{blue}{\xrightarrow{\text{~~~$\simeq$~~~}}}} \mathcal{J}^{S^1}(\overline{D})\simeq \{*\}\rule{0em}{2em} \]
    
\[\Fix^{S^1}(\overline{F}) \longrightarrow \Stab^{S^1}(\overline{F}) \cap  \Fix^{S^1}(N(\overline{D})) \longtwoheadrightarrow  \Symp^{S^1}(\overline{F}, N(p_0))  \mathbin{\textcolor{blue}{\xrightarrow{\text{~~~$\simeq$~~~}}}} \left\{*\right\}\rule{0em}{2em}\]
    
\[\left\{*\right\} \mathbin{\textcolor{blue}{\xleftarrow{\text{~~~$\simeq$~~~}}}} \Fix^{S^1}(N(\overline{D} \vee \overline{F})) \longrightarrow \Fix^{S^1}(\overline{F}) \longtwoheadrightarrow  \Aut^{S^1}(N(\overline{D} \vee \overline{F})) \mathbin{\textcolor{blue}{\xrightarrow{\text{~~~$\simeq$~~~}}}} \left\{*\right\}\rule{0em}{2em}\]

When the $S^1(a,b;m) \subset \T_m$ action is of the form $(a,b)= (0,\pm1)$, the second fibration fits into the commutative diagram
\[
\begin{tikzcd}
     &\Fix^{S^1}(\overline{D}) \arrow[r] &\Stab^{S^1}(\overline{D}) \arrow[r,twoheadrightarrow] &\Symp^{S^1}(\overline{D}) \\
     &S^1 \arrow[u,hookrightarrow] \arrow[r] &U(2) \arrow[u,hookrightarrow] \arrow[r] &SO(3) \arrow[u,hookrightarrow]
\end{tikzcd}
\]
For all other actions with $(a,b) \neq (0,\pm1)$ we instead have 
\[
\begin{tikzcd}
     &\Fix^{S^1}(\overline{D}) \arrow[r] &\Stab^{S^1}(\overline{D}) \arrow[r,twoheadrightarrow] &\Symp^{S^1}(\overline{D}) \\
     &S^1 \arrow[u,hookrightarrow] \arrow[r] &\T_{2s+1} \arrow[u,hookrightarrow] \arrow[r] &S^1 \arrow[u,hookrightarrow]
\end{tikzcd}
\]
In both diagrams, the leftmost and the rightmost arrows are homotopy equivalences. As the diagrams above commute, the 5 lemma implies that the middle inclusions $\T \hookrightarrow \Stab^{S^1}(\overline{D})$  or $U(2) \hookrightarrow \Stab^{S^1}(\overline{D})$ are also weak homotopy equivalences.

\begin{thm}\label{homogenous_CCC}
Consider the $S^1(a,b;m)$ action on $(\CCC,\oml)$ with $\lambda >1$. If $ \jsom \cap U_{2s+1}$ is non-empty we have the following homotopy equivalences:
\begin{enumerate}
    \item if $(a,b) \neq (0,\pm1)$, then $\Symp^{S^1}_h(\CCC,\oml)/\T_{2s+1} \simeq \jsom \cap U_{2s+1}$;
    \item if $(a,b) = (0,\pm1)$, then $\Symp^{S^1}_h(\CCC,\oml)/U(2) \simeq \jsom \cap U_{2s+1}$.
\end{enumerate} \qed
\end{thm}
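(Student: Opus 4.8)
The plan is to reduce Theorem~\ref{homogenous_CCC} to the chain of fibrations displayed just above its statement, exactly as was done in Section~\ref{section:ActionOnU_2k} for $\SSS$. First I would verify that each map in the displayed sequence of fibrations for $\CCC$ is indeed a Serre fibration: the action maps $\Symp^{S^1}_h(\CCC,\oml) \twoheadrightarrow \mathcal{S}^{S^1}_{D_s}$, $\Stab^{S^1}(\overline D)\twoheadrightarrow \Symp^{S^1}(\overline D)$, $\Fix^{S^1}(\overline D)\twoheadrightarrow \Gauge1^{S^1}(N(\overline D))$, and the remaining restriction and derivative maps. As explicitly noted in the excerpt, the proofs that these are fibrations are verbatim the same as the ones for the even case (Lemmas~\ref{first}, \ref{stab}, \ref{gauge}, \ref{ngauge} and the intermediate lemmas), since those arguments only used: positivity of intersections to force invariance of the relevant curves, the equivariant symplectic neighbourhood theorem, the equivariant Banyaga extension theorem, the equivariant Gromov--Auroux Lemma~\ref{Au}, and Palais' criterion~\ref{palais}, none of which is sensitive to the parity of the bundle. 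The one place where parity enters is the identification of $\Symp^{S^1}(\overline D)$: the curve $\overline D$ in class $D_{2s+1}$ still has two fixed points, at least one with distinct weights (readable from Table~\ref{table_weights}), so the same trichotomy applies --- $\Symp^{S^1}(\overline D)\simeq \SO(3)$ when $S^1$ fixes $\overline D$ pointwise (which happens precisely for $(a,b)=(0,\pm1)$), and $\simeq S^1$ otherwise.

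Second, I would assemble the homotopy equivalences. Tracking the weak equivalences in the last four fibrations gives $\Fix^{S^1}(N(\overline D\vee\overline F))\simeq\{*\}$, then $\Fix^{S^1}(\overline F)\simeq\{*\}$, then $\Stab^{S^1}(\overline F)\cap\Fix^{S^1}(N(\overline D))\simeq\{*\}$, then $\Fix^{S^1}(N(\overline D))\simeq\overline{\mathcal{S}^{S^1}_{F,p_0}}\simeq\{*\}$, then (via the gauge fibration) $\Fix^{S^1}(\overline D)\simeq\Gauge1^{S^1}(N(\overline D))\simeq S^1$. The key new computation for $\CCC$ is that $\Stab^{S^1}(\overline D)$ is the homotopy type indicated. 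Here I would use the two commuting ladders of fibrations displayed above: in the case $(a,b)=(0,\pm1)$, the relevant Kähler isometry group is $U(2)$ rather than $S^1\times\SO(3)$ (because $\CCC = W_1$ blown-up / the odd Hirzebruch surface has isometry group $U(2)$, whose maximal torus is $\T_{2s+1}$), so the ladder has $S^1\hookrightarrow U(2)\twoheadrightarrow \SO(3)$ fitting over $\Fix^{S^1}(\overline D)\to\Stab^{S^1}(\overline D)\to\Symp^{S^1}(\overline D)$; in all other cases the ladder is $S^1\hookrightarrow\T_{2s+1}\twoheadrightarrow S^1$. Since in each ladder the outer vertical maps are weak equivalences (the left one by the gauge computation, the right one by the Smale/moment-map computation of $\Symp^{S^1}(\overline D)$), the five lemma forces the middle vertical map to be a weak equivalence, giving $\Stab^{S^1}(\overline D)\simeq U(2)$ or $\T_{2s+1}$ respectively.

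Finally, feeding $\Stab^{S^1}(\overline D)\simeq\T_{2s+1}$ (resp. $U(2)$) into the top fibration
\[\Stab^{S^1}(\overline D)\to\Symp^{S^1}_h(\CCC,\oml)\twoheadrightarrow\mathcal{S}^{S^1}_{D_s}\simeq\jsom\cap U_{2s+1}\]
and recalling (from the analogue of Lemma~\ref{first2}, whose proof carries over) that $\mathcal{S}^{S^1}_{D_s}\simeq\jsom\cap U_{2s+1}$, yields $\Symp^{S^1}_h(\CCC,\oml)/\T_{2s+1}\simeq\jsom\cap U_{2s+1}$ (resp. $\Symp^{S^1}_h(\CCC,\oml)/U(2)\simeq\jsom\cap U_{2s+1}$), which is the claim. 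I expect the only genuinely non-routine point to be pinning down that the equivariant Kähler isometry group of the odd Hirzebruch surface $W_{2s+1}$ stabilizing $J_{2s+1}$ under the $(0,\pm1)$-action is $U(2)$ rather than $S^1\times\SO(3)$, and that its maximal torus is exactly $\T_{2s+1}$; this requires a short computation with the isometry group $Z_{\U(3)}(\T_{2s+1})/\T_{2s+1}$ analogous to the even-case computation $K(2k)=(T^2\times\U(2))/T^2_{2k}\simeq S^1\times\SO(3)$ done in Section~7.1, but with the odd Hirzebruch polytope. Everything else is a transcription of Section~\ref{section:ActionOnU_2k}.
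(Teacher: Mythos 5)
Your proposal is correct and follows essentially the same route as the paper: the paper likewise transcribes the fibration chain of Section~\ref{section:ActionOnU_2k} verbatim to the odd case and concludes via the two commuting ladders $S^1\hookrightarrow U(2)\twoheadrightarrow \SO(3)$ (for $(a,b)=(0,\pm1)$) and $S^1\hookrightarrow\T_{2s+1}\twoheadrightarrow S^1$ (otherwise) together with the five lemma. The only cosmetic difference is in how the odd isometry group is pinned down: the paper computes it in Section~\ref{Isom_CCC} as $K(2k+1)=Z_{\U(4)}(T^2_{2k+1})/T^2_{2k+1}\simeq\U(2)$ via the K\"ahler reduction model of $\mathbb{F}_{2k+1}$, rather than your suggested centralizer computation inside $\U(3)$, but the conclusion and its use are the same.
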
 

As for circle actions on $\SSS$, the homotopy type of the symplectic centralizer $\Symp^{S^1}_h(\CCC,\oml)$ depends on whether the space of invariant almost complex structures intersects either one or two strata. Given the circle action $S^1(a,b;m)$ on $(\CCC,\oml)$, Corollaries~\ref{cor:IntersectingOnlyOneStratum} and \ref{cor:IntersectingTwoStrata} imply that
\begin{enumerate}
    \item if $a=1$, $b \neq\{0,m\}$ and $2\lambda > |2b-m|+1$, then the space of $S^1(1,b;m)$-equivariant almost complex structures $\jsom$ intersects the two strata $U_m$ and $U_{|2b-m|}$.
    \item If $a=-1$,$b \neq\{0,-m\}$ and $2\lambda > |2b+m|+1$, then the space of $S^1(-1,b;m)$-equivariant almost complex structures $\jsom$ intersects the two strata $U_m$ and $U_{|2b+m|}$.
    \item for all other cases $\jsom$ intersects only the stratum $U_m$.
\end{enumerate}  

As before, the homotopy type of the equivariant symplectomorphism group can be easily described whenever $\jsom$ intersects only one stratum. These actions are listed in Table~\ref{table:CentralizersCircleActionsOnCCCSingleStratum}.

\begin{thm}\label{CCC1strata}
Consider a circle action $S^1(a,b;m)$ on $(\CCC,\oml)$ for which there is a single invariant stratum. The homotopy type of the symplectic centralizer $\Symp^{S^1}(\CCC,\oml)$ is given in Table~\ref{table:CentralizersCircleActionsOnCCCSingleStratum}.\qed
\end{thm}

{\centering
\begin{table}[h!]
\begin{tabular}{|p{39mm}|p{47.35mm}|p{29.95mm}|}
\hline
Values of $(a,b ;m)$, $m$ odd & Conditions on $\lambda>1$ &$\!\Symp^{S^1}\!(\CCC)\!$\\
\hline
\hline
{$(0,\pm 1;m)$}  &$2\lambda > m+1$ & $\simeq\U(2)$ \\
\hline
$(\pm 1,0;m)$ &$ 2\lambda >m+1$ &$\simeq\T$\\
\hline
$(\pm 1,\pm m;m)$ &$ 2\lambda > m+1$ & $\simeq\T$\\
\hline
{$(1,b;m), b \neq \{m,0\}$} 
& $2\lambda > m+1$ and $|2b-m|+1 \geq2 \lambda$ &$\simeq\T$ \\
\hline
{$(-1,b;m), b \neq \{ -m,0\}$} 
&$|2b+m|+1 \geq2 \lambda$ &$\simeq\T$ \\
\hline
All other values of\newline $(a,b;m)$ except $(\pm 1,b;m)$&$ 2\lambda > m+1$  &$\simeq\T$ \\
\hline
\end{tabular}
\caption{Centralizers of circle actions on $(\CCC,\oml)$ with a single invariant stratum}
\label{table:CentralizersCircleActionsOnCCCSingleStratum}
\end{table}
}

Theorem~\ref{CCC1strata} gives the homotopy type of the group of equivariant symplectomorphisms for all circle actions apart from the following two exceptional families for which $\jsom$ intersects exactly two strata:
\begin{itemize}
\item (i) $a=1$, $b \neq \{0, m\}$,  and $2\lambda > |2b-m|+1$; or
\item (ii) $a=-1$, $b \neq \{0, -m\}$, and $2 \lambda > |2b+m|+1$.
\end{itemize}
For these two families of actions, we now proceed as in Chapter~4 and show that the centralizers are homotopy equivalent to homotopy pushouts of tori.\\

Firstly, we show that the map $\T_m \hookrightarrow \Symp^{S^1}_h(\CCC,\oml)$ induces a map that is injective in homology. Fix a curve $\overline{F}$ in the homology class $F$ and passing through the fixed points $Q$ and $P$ in figure~\ref{oddHirz}. Let $\mathcal{S}^{S^1}_{F,Q}$ denote the space of $S^1$-invariant curves in the class $F$ passing through $Q$ (defined in figure~\ref{oddHirz}) and let $\Symp^{S^1}_h(\CCC,\overline{F},\oml)$ denote the space of $S^1$-equivariant symplectomorphisms of $(\CCC,\oml)$ that pointwise fix the curve $\overline{F}$.
\\

Without loss of generality, assume that $\jsom \cap U_{|2b-m|}$ is the strata of positive codimension in $\jsom$. As $\jsom$ is contractible, $\jsom \cap U_m = \jsom \setminus \left(\jsom \cap U_{|2b-m|}\right)$, and the real codimension of $\jsom \cap U_{|2b-m|}$ in $\jsom$ is 2 (See Corollary~\ref{odd_codim}), it follows that $\jsom \cap U_m \simeq \Symp^{S^1}_h(\CCC,\oml)/T_m$ is connected. As a consequence we have that $\Symp^{S^1}_h(\CCC,\oml)$ is connected. As the fixed points for the $S^1(\pm 1,b,m)$ actions are isolated and as $\Symp^{S^1}_h(\CCC,\oml)$ is connected, any element $\phi \in \Symp^{S^1}_h(\CCC,\oml)$ takes a fixed point for the $S^1$ action to itself. Thus the action of $\Symp^{S^1}_h(\CCC,\oml)$ on $\mathcal{S}^{S^1}_{F,Q}$ is well defined.

\begin{lemma}\label{odd_inj}
The inclusion
\[i: \Symp^{S^1}_h(\CCC,\overline{F},\oml) \hookrightarrow \Symp^{S^1}_h(\CCC,\oml)\]
is a homotopy equivalence. 
\end{lemma}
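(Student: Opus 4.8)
The plan is to exhibit $\Symp^{S^1}_h(\CCC,\overline{F},\oml)$ as the fiber of a fibration whose base is contractible. First I would observe that, exactly as in the proof of Lemma~\ref{lemma:EvaluationFibrationConfigurations} and Corollary~\ref{trans}, the evaluation map
\[
\Symp^{S^1}_h(\CCC,\oml)\longtwoheadrightarrow \mathcal{S}^{S^1}_{F,Q},\qquad \phi\mapsto \phi(\overline{F})
\]
is a Serre fibration: transitivity follows from the equivariant symplectic neighbourhood theorem together with Proposition~\ref{prop:linearize} (using that any invariant $F$-curve has trivial isotropy, so a neighbourhood of it is standard), the isotopy extension theorem, and the equivariant Banyaga extension theorem for families to get the homotopy lifting property. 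Since every $\phi\in\Symp^{S^1}_h(\CCC,\oml)$ fixes the isolated fixed point $Q$, the base $\mathcal{S}^{S^1}_{F,Q}$ is well defined, and the fiber over $\overline{F}$ is the group $\Stab^{S^1}(\overline{F})$ of equivariant symplectomorphisms preserving $\overline{F}$ setwise.

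Next I would show $\mathcal{S}^{S^1}_{F,Q}\simeq \{*\}$ by the same argument already used for $\overline{\mathcal{S}^{S^1}_{F,p_0}}\simeq\jsom(\overline{D})\simeq\{*\}$: form the correspondence space $T=\{(A,J)\in \mathcal{S}^{S^1}_{F,Q}\times\jsom \mid A \text{ is }J\text{-holomorphic}\}$, note that both projections $T\to \mathcal{S}^{S^1}_{F,Q}$ and $T\to\jsom$ are fibrations with contractible fibers (the fiber over $A$ is the convex space of invariant compatible $J$ making $A$ holomorphic; the fiber over $J$ is a point, since each invariant $J$ determines a unique invariant $F$-curve through $Q$), and conclude that $\mathcal{S}^{S^1}_{F,Q}$ is weakly contractible because $\jsom$ is. Then from the long exact sequence of the fibration $\Stab^{S^1}(\overline{F})\to \Symp^{S^1}_h(\CCC,\oml)\to \mathcal{S}^{S^1}_{F,Q}$ the inclusion $\Stab^{S^1}(\overline{F})\hookrightarrow\Symp^{S^1}_h(\CCC,\oml)$ is a weak homotopy equivalence.

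It then remains to pass from $\Stab^{S^1}(\overline{F})$ to $\Symp^{S^1}_h(\CCC,\overline{F},\oml)=\Fix^{S^1}(\overline{F})$, i.e.\ from stabilising $\overline{F}$ to fixing it pointwise. For this I would use the restriction fibration
\[
\Fix^{S^1}(\overline{F})\longrightarrow \Stab^{S^1}(\overline{F})\longtwoheadrightarrow \Symp^{S^1}(\overline{F}),
\]
a fibration by the same Palais-type local-section argument as in Lemma~\ref{stab}, and then identify the base: the circle $S^1(\pm1,b;m)$ acts on $\overline{F}$ with a nontrivial effective quotient action (weight $b\neq 0$), so by the moment-map argument in the proof of Proposition~\ref{prop:CentralizersToricActions}, $\Symp^{S^1}(\overline{F})\simeq C^\infty(\mu(\overline{F}),S^1)\simeq S^1$. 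This only shows $\Stab^{S^1}(\overline{F})$ fibers over $S^1$ with fiber $\Fix^{S^1}(\overline{F})$, which is not immediately a homotopy equivalence; so the cleaner route — and the one I would actually take — is to instead run the analogue of the five-fibration tower from Section~\ref{section:ActionOnU_2k} directly, or, more simply, to fix an invariant curve $\overline{D}$ through the point of $\overline{F}$ where the weights are distinct and use that the composite $\Fix^{S^1}(\overline{F})\hookrightarrow\Stab^{S^1}(\overline{F})\hookrightarrow\Symp^{S^1}_h(\CCC,\oml)$ and the evaluation at the full configuration $\overline{D}\vee\overline{F}$ fit into fibrations with contractible bases and fibers (Lemmas~\ref{gauge}, \ref{ngauge}, Theorem~\ref{thm:EqGr}), forcing $\Fix^{S^1}(\overline{F})\to\Symp^{S^1}_h(\CCC,\oml)$ to be a weak equivalence; since both are homotopy equivalent to CW complexes this upgrades to a genuine homotopy equivalence. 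The main obstacle is organising this last step so that the passage from ``setwise'' to ``pointwise'' stabiliser is handled cleanly — concretely, verifying that $\Symp^{S^1}(\overline{F},N(Q))\simeq\{*\}$ and the relevant gauge groups are contractible in the odd case exactly as in the even case — but all the needed ingredients are already in place in Sections~\ref{section:ActionOnU_2k}–\ref{IntwithU0}, so this is bookkeeping rather than a new difficulty.
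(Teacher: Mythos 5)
The first two thirds of your argument is exactly the paper's proof. You exhibit the evaluation map $\Symp^{S^1}_h(\CCC,\oml)\twoheadrightarrow \mathcal{S}^{S^1}_{F,Q}$ as a fibration and show the base is contractible; the paper does the same, proving transitivity slightly differently (connectedness of $\jsom$, the invariant $J$-holomorphic $F$-curve through $Q$ supplied for each invariant $J$, and the equivariant Auroux isotopy Lemma~\ref{Au}) and getting contractibility of $\mathcal{S}^{S^1}_{F,Q}$ from the fibration $\jsom(\overline{F})\to\jsom\to\mathcal{S}^{S^1}_{F,Q}$, which is the same argument as your correspondence space $T$. At that point you have shown that the inclusion of the \emph{setwise} stabilizer $\Stab^{S^1}(\overline{F})$ into $\Symp^{S^1}_h(\CCC,\oml)$ is a weak equivalence, and that is in fact the content of the lemma: despite the word ``pointwise'' in the paper's wording, the group $\Symp^{S^1}_h(\CCC,\overline{F},\oml)$ is the fiber of the evaluation map, i.e.\ the symplectomorphisms preserving $\overline{F}$ as a set; this is also forced by the way the lemma is used afterwards, since $\T_m$ must sit inside it and the restriction map $r$ to $\mathcal{E}(S^2,*)$ must be non-constant on it.

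The genuine error is your last step. The claim that $\Fix^{S^1}(\overline{F})\into\Symp^{S^1}_h(\CCC,\oml)$ is a weak equivalence is false, and no tower of fibrations can establish it. Granting your steps 1--3, $\Stab^{S^1}(\overline{F})\simeq\Symp^{S^1}_h(\CCC,\oml)$, and in the restriction fibration $\Fix^{S^1}(\overline{F})\to\Stab^{S^1}(\overline{F})\to\Symp^{S^1}(\overline{F})\simeq S^1$ the map $\pi_1(\Stab^{S^1}(\overline{F}))\to\pi_1(S^1)$ is surjective, because the circle $S^1(0,1;m)\subset\T_m$ rotates $\overline{F}$ once and this loop generates $\pi_1(\Symp^{S^1}(\overline{F}))$. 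The exact sequence (with $\pi_2(S^1)=0$) then identifies $\pi_1(\Fix^{S^1}(\overline{F}))$ with the kernel of this surjection, which has strictly smaller rank than $\pi_1(\Symp^{S^1}_h(\CCC,\oml))$: rank $2$ versus $3$ in the two-strata case where the lemma is applied. Equivalently, the class of $S^1(0,1;m)$ is nonzero in $H_1(\Symp^{S^1}_h(\CCC,\oml))$ by the homological injectivity proved right after this lemma, yet it cannot be carried by the pointwise fixer. So the passage from ``setwise'' to ``pointwise'' should simply be dropped: read the lemma as a statement about $\Stab^{S^1}(\overline{F})$ (which is what the paper proves and uses), and your proof is complete after step~3.
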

\begin{proof}
Consider the fibration
\begin{equation*}
    \Symp^{S^1}_h(\CCC,\overline{F},\oml) \hookrightarrow \Symp^{S^1}_h(\CCC,\oml) \longtwoheadrightarrow \mathcal{S}^{S^1}_{F,Q}
\end{equation*}
To show that the action $\Symp^{S^1}_h(\CCC,\oml)$ on $\mathcal{S}^{S^1}_{F,Q}$ is transitive we note that given $F^\prime \in \mathcal{S}^{S^1}_{F,Q}$, there exists a $J^\prime \in \jsom$ such that $F^\prime$ is $J^\prime$-holomorphic. As $\jsom$ is connected, consider a path $J_t$ such that $J_0=J^\prime$ and $J_1=J_m$ where $J_m$ is the standard complex structure on the $m^\text{th}$-Hirzebruch surface for which the curve $\overline{F}$ is holomorphic.
\\

By Theorem~\ref{prop_FIndecomposable}, for every $J_t$ we have a family of curves $F_t$ (with $F_0= F^\prime$ and $F_1 =\overline{F}$) in class $F$ passing through $Q$  and this curve is $S^1$ invariant as $J_t$ are $S^1$ invariant. By Lemma~\ref{Au} we have a one parameter family of Hamiltonian symplectomorphisms $\phi_t \in \Symp^{S^1}_h(\CCC,\oml)$  such that $\phi_t(F_0) = F_t$ for all $t$.

Thus it suffices to show that $\mathcal{S}^{S^1}_{F,Q}$ is contractible to complete the proof. To do this note that,
\begin{equation*}
  \jsom(\overline{F})  \to \jsom \to \mathcal{S}^{S^1}_{F,Q}
\end{equation*}
where $\jsom(\overline{F})$ denotes the space of $S^1$ invariant almost complex structures for which the curve $\overline{F}$ is $J$-holomorphic. As both $\jsom(\overline{F})$ and $\jsom$ are contractible, $\mathcal{S}^{S^1}_{F,Q}$ is contractible as well completing the proof.
\end{proof}

Define the following projections just as in the exposition above Theorem~\ref{inj}. In our case we take the point $\{*\}$ to be the point $Q$
in Figure~\ref{oddHirz}.

\begin{align*}
    s_{0}: S^2 &\to W_m \\
    \left[z_{0}, z_{1}\right] &\mapsto\left(\left[z_{0}, z_{1}\right],[0,0,1]\right)
\end{align*} 
and the projection to the first factor of $\CP^1 \times \CP^2$ is
\begin{align*}
    \pi_1: W_m &\to S^2 \\
    \left(\left[z_{0}, z_{1}\right],\left[w_{0}, w_{1}, w_{2}\right]\right) &\mapsto\left[z_{0}, z_{1}\right]
\end{align*}
We define a continuous map $h_1:\Symp_h^{S^1} (\CCC,\oml) \to \mathcal{E}\left(S^{2}, *\right)$ by setting
\begin{equation*}
  \begin{aligned}
h_1:\Symp_h^{S^1} (\CCC,\oml) &\to \mathcal{E}\left(S^{2}, *\right) \\
\psi &\mapsto \psi_1:= \pi_{1} \circ \psi \circ s_{0}
\end{aligned}
\end{equation*}
Further define the restriction map $r: \Symp_h^{S^1} (\CCC,\overline{F},\oml) \to \mathcal{E}\left(S^{2}, *\right)$ by just restricting $\phi \in \Symp_h^{S^1} (\CCC,\overline{F},\oml)$ to the fibre $\overline{F}$. We obtain a well defined map 
\begin{align*}
    h: \Symp_h^{S^1} (\CCC,\overline{F},\oml) &\to \mathcal{E}\left(S^{2}, *\right) \times \mathcal{E}\left(S^{2}, *\right) \\
    \phi &\mapsto (h_1(\phi), r(\phi))
\end{align*}

\begin{thm}
The inclusion map $i:\T_m \hookrightarrow \Symp^{S^1}_h(\CCC,\oml)$ induces a map that is injective in homology.
\end{thm}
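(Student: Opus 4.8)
The strategy mirrors the proof of Lemma~\ref{inj} in the even Hirzebruch case, replacing the factor $h_2$ (which used the inclusion of a fiber) by the restriction map $r$ to the fixed fiber $\overline{F}$. As in the $\SSS$ case it suffices to show injectivity in degrees $0$, $1$ and $2$, since $H_*(\T_m;k)$ is an exterior algebra on two generators of degree one. Degree $0$ is immediate from connectedness of $\T_m$. For the degree-one statement, I would consider the composite
\[
\begin{tikzcd}
\T_m \arrow[r,hookrightarrow] & \Symp_h^{S^1}(\CCC,\overline{F},\oml) \arrow[r,"h"] & \mathcal{E}(S^2,*)\times\mathcal{E}(S^2,*)
\end{tikzcd}
\]
which makes sense because the torus $\T_m$ fixes the curve $\overline{F}$ pointwise (this follows from Table~\ref{table_weights}: the standard fiber through $Q$ and $P$ carries the weight-$b$ circle, so it is fixed by the subcircle $(1,0)$, and more precisely the full $\T_m$ acts on it through a single $S^1$ quotient that fixes the two poles --- I would argue $\T_m\subset\Symp^{S^1}_h(\CCC,\overline{F},\oml)$ only after checking that the chosen model is compatible). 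Combining with Lemma~\ref{odd_inj}, the inclusion $\Symp^{S^1}_h(\CCC,\overline{F},\oml)\hookrightarrow\Symp^{S^1}_h(\CCC,\oml)$ is a homotopy equivalence, so it is enough to show $h\circ(\text{inclusion of }\T_m)$ is injective on $H_1$.

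\textbf{Key steps.} First, recall from the computation inside Lemma~\ref{inj} that $H_1(\mathcal{E}(S^2,*);\Z)\cong\Z$, using Hansen's splitting $\mathcal{E}(S^2)\simeq\SO(3)\times\widetilde{\Omega^2}$ and the long exact ladder of the evaluation fibrations; this argument is insensitive to which rational ruled surface we sit inside, so it carries over verbatim. Second, let $a=[(0,1)]$ and $b=[(1,0)]$ be the standard basis of $H_1(\T_m;k)$ and write $\alpha_*=(\alpha^1_*,\alpha^2_*)$ for the two components of the composite. The subcircle $(0,1)$ fixes the zero section $s_0(S^2)=\{([z_0,z_1],[0,0,1])\}$ pointwise, so $\alpha^1_*(0,1)=0$; but $(0,1)$ acts nontrivially by rotation on the fiber $\overline{F}$, so by the degree argument $r_*(0,1)$ generates $H_1(\mathcal{E}(S^2,*))$, hence $\alpha^2_*(0,1)\neq 0$. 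Conversely the subcircle $(1,0)$ fixes $\overline{F}$ pointwise so $\alpha^2_*(1,0)=0$, while it acts nontrivially on the zero section (it rotates the base $\CP^1$), so $\alpha^1_*(1,0)\neq 0$. Hence $\alpha_*(a)$ and $\alpha_*(b)$ span a rank-two subgroup and the map is injective on $H_1(\T_m;k)$. Third, for $H_2$ one dualizes exactly as in Lemma~\ref{inj}: the generator of $H^2(\T_m;k)\cong k$ is $a\cup b$, surjectivity of $i^*$ on $H^1$ lets one lift $a,b$ to classes $a',b'\in H^1(\Symp^{S^1}_h(\CCC,\oml);k)$, and $i^*(a'\cup b')=a\cup b$ gives surjectivity of $i^*$ on $H^2$, equivalently injectivity of $i_*$ on $H_2$.

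\textbf{Main obstacle.} The routine part is the homological bookkeeping; the genuine subtlety is verifying that $\T_m$ really does lie in $\Symp^{S^1}_h(\CCC,\overline{F},\oml)$ --- i.e.\ that one can arrange the model so that the chosen representative $\overline{F}$ of the fiber class through $Q$ and $P$ is pointwise fixed by the weight-$0$ directions of $\T_m$ --- and, dually, that the zero section $s_0(S^2)$ is the appropriate invariant curve on which the complementary circle acts with the weights claimed. This is where the odd case could differ from the even case: in $\CCC$ the relevant curve in the top stratum has negative self-intersection, so one must be slightly careful that the analogues of $h_1$ and $r$ are defined on all of $\Symp_h^{S^1}$ and land in $\mathcal{E}(S^2,*)$ (continuity and base-point preservation), which in turn relies on the fact proved via Lemma~\ref{lemma:SymphPreservesAnIsolatedFixedPoint} and the connectedness argument above that every $\phi\in\Symp^{S^1}_h(\CCC,\oml)$ fixes the isolated fixed points $Q$ and $P$. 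Once these compatibility checks are in place, the proof is a direct transcription of the $\SSS$ argument.
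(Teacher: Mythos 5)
Your proposal is correct and follows essentially the same route as the paper: reduce via Lemma~\ref{odd_inj} to the subgroup attached to $\overline{F}$, compose with $h=(h_1,r)$, and then run the $H_1(\mathcal{E}(S^2,*))\cong\Z$ computation and the degree-two dualization exactly as in Lemma~\ref{inj}. The only caveat (present in the paper's own wording as well) is that $\T_m$ does not fix $\overline{F}$ pointwise --- only the subcircle $(1,0)$ does, while $(0,1)$ rotates the fiber --- so the relevant subgroup must be taken to be the setwise stabilizer of $\overline{F}$; since your key steps already use the correct weights on $\overline{F}$ and on the zero section, nothing essential changes.
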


\begin{proof} By Lemma~\ref{odd_inj}, it suffices to prove that the inclusion $i:\T_m \hookrightarrow \Symp^{S^1}_h(\CCC,\overline{F},\oml) $ induces a map that is injective in homology. Composing with $h$ we have an inclusion of $h \circ i:\T_m \hookrightarrow \mathcal{E}\left(S^{2}, *\right) \times \mathcal{E}\left(S^{2}, *\right)$ and it suffices to show that this map induces a map that is injective in homology. The proof of this claim in analogous to the proof of Theorem~\ref{inj}.
\end{proof}
\begin{remark}
As the argument above doesn't depend on $m$, the same proof shows that for the family of circle actions given by $S^1(1,b,m)$ with $2\lambda > |2b-m|+1$, the inclusion $\T_{|2b-m|}$ into $\Symp^{S^1}_h(\CCC,\oml)$ is injective in homology. Similarly, the $S^1(-1,b;m)$ actions with $2\lambda > |2b+m|+1$, the inclusion $\T_{|2b+m|}$ and $T_m$ into $\Symp^{S^1}_h(\CCC,\oml)$ also induces a map that is injective in homology.\\
\end{remark}

From our discussion above we know that 
\begin{align*}
U^{S^1}_m:=\jsom \cap U_m&\simeq \Symp^{S^1}_h(\CCC,\oml)/\T_m\\
U^{S^1}_{|2b-m|}:=\jsom \cap U_{|2b-m|}&\simeq \Symp^{S^1}_h(\CCC,\oml)/\T_{|2b-m|}.
\end{align*}
and the Leray-Hirsch theorem implies that, over any field $k$, we have isomorphisms of $k$-modules
\begin{align*}
H^*(\Symp^{S^1}_h(\CCC,\oml)) &\cong H^*(U^{S^1}_{m}) \bigotimes H^*(\T) \\
H^*(\Symp^{S^1}_h(\CCC,\oml)) &\cong H^*(U^{S^1}_{|2b-m|}) \bigotimes H^*(\T)
\end{align*}
The total space $\jsom = U^{S^1}_{m}\sqcup U^{S^1}_{|2b-m|}$ being contractible, there is an Alexander-Eells  isomorphism $\lambda_{*}:H_{p}(U_{m'}^{S^1};k)\to H_{p+d}(U_{m}^{S^1};k)$ 
where $d$ is the codimension of the stratum $U^{S^1}_{|2b-m|}$ in $\jsom$. As we will show in Corollary~\ref{odd_codim} below, this codimension is again two. Proceeding as in Section~\ref{subsection:CohomologyModule}, we can then compute the ranks of the homology groups of the space of equivariant symplectomorphisms.

\begin{thm}\label{thm:CohomologyModuleCCC} Consider the following circle actions on  $\CCC$:
\begin{itemize}
\item (i) $a=1$, $b \neq \{0, m\}$,  and $2\lambda > |2b-m|+1$; or
\item (ii) $a=-1$, $b \neq \{0, -m\}$, and $2 \lambda > |2b+m|+1$.
\end{itemize} 
For any field $k$,  
\[
H^p\left(\Symp^{S^1}(\CCC,\oml), k\right) = 
\begin{cases}
k &p=0\\
k^3 &p =1 \\
k^4 &p \geq 2
\end{cases}
\]
\end{thm}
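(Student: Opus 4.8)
The plan is to mirror, verbatim in spirit, the argument of Chapter~4 that computed the cohomology modules of $\Symp^{S^1}(\SSS,\oml)$ in the two-strata case, now applied to $\CCC$. Everything needed is already in place: by Theorem~\ref{homogenous_CCC} we have the two fibrations
\[
\T_m\to\Symp^{S^1}_h(\CCC,\oml)\to\jsom\cap U_m,\qquad
\T_{m'}\to\Symp^{S^1}_h(\CCC,\oml)\to\jsom\cap U_{m'},
\]
where $m'=|2b\mp m|$ is the other toric extension given by Corollaries~\ref{cor:IntersectingTwoStrata}, \ref{cor:CircleExtensionsWith_a=1} and~\ref{cor:CircleExtensionsWith_a=-1}; and we have just shown that both inclusions $\T_m,\T_{m'}\hookrightarrow\Symp^{S^1}_h(\CCC,\oml)$ induce injections on homology with coefficients in any field $k$ (hence surjections on cohomology). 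First I would record that, since $\CCC$ satisfies $\Aut_{c_1,\oml}(H_2)=1$ for all $\lambda\geq 1$ (Lemma~\ref{lemma:ActionOnHomology}), one has $\Symp^{S^1}(\CCC,\oml)=\Symp^{S^1}_h(\CCC,\oml)$, so there is no splitting step as there was in the $\lambda=1$, $a=b$ case on $\SSS$; this is one of the ``key differences'' the chapter flags.

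Next I would invoke the codimension computation of Section~\ref{Isom_CCC}: the stratum $\jsom\cap U_{m'}$ is a Banach submanifold (Corollary~\ref{cor:StrataAreSubmanifolds}, adapted to the odd case) of real codimension $2$ in the contractible space $\jsom$ (Corollary~\ref{odd_codim}), while $\jsom\cap U_m=\jsom\setminus(\jsom\cap U_{m'})$ is open and therefore connected. From the first fibration, connectedness of $\jsom\cap U_m$ forces $\Symp^{S^1}_h(\CCC,\oml)$ to be connected, and then the second fibration forces $\jsom\cap U_{m'}$ to be connected as well. Applying Leray--Hirsch to both fibrations gives, with coefficients in an arbitrary field $k$,
\[
H^*(\Symp^{S^1}_h(\CCC,\oml);k)\cong H^*(\jsom\cap U_m;k)\otimes H^*(\T_m;k)\cong H^*(\jsom\cap U_{m'};k)\otimes H^*(\T_{m'};k),
\]
and since $H^*(\T_m;k)\cong H^*(\T_{m'};k)$ as graded vector spaces, dualizing yields $H_p(\jsom\cap U_m;k)\cong H_p(\jsom\cap U_{m'};k)$ for all $p$.

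Then I would feed in Alexander--Eells duality (Proposition~\ref{prop:AlexanderEellsGeometric} / Appendix~\ref{Appendix-Alexander-Eells}): since $\jsom\cap U_{m'}$ is a closed submanifold of codimension $2$ in the contractible $\jsom$ with open complement $\jsom\cap U_m$, there is an isomorphism $\lambda_*\colon H_p(\jsom\cap U_{m'};k)\xrightarrow{\ \sim\ }H_{p+1}(\jsom\cap U_m;k)$. Combined with the equality of the two Betti numbers just obtained and with connectedness ($H_0(\jsom\cap U_{m'};k)\cong k$), an induction on $p$ gives $H_p(\jsom\cap U_m;k)\cong k$ for every $p\geq 0$. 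Plugging this back into the Leray--Hirsch splitting and dualizing gives $H^0=k$, $H^1=k^3$, and $H^p=k^4$ for $p\geq 2$, which is the claim. The main obstacle is not in this homological bookkeeping, which is a line-by-line transcription of Chapter~4, but in making sure the two geometric inputs genuinely hold in the odd case: that the equivariant moduli space construction and sub-immersion argument of Chapter~5 go through for $D_{2s+1}=B-iF$ on $\CCC$ (so that $\jsom\cap U_{m'}$ really is a submanifold), and that the isotropy-representation/codimension count of Section~\ref{Isom_CCC} really delivers codimension exactly $2$ for these $S^1(\pm1,b;m)$ actions. Granting Corollary~\ref{cor:StrataAreSubmanifolds} and Corollary~\ref{odd_codim} as stated, the rest is automatic.
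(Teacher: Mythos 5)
Your proposal is correct and follows essentially the same route as the paper: injectivity of both torus inclusions in homology, Leray--Hirsch applied to the two fibrations over $\jsom\cap U_m$ and $\jsom\cap U_{m'}$, the codimension-two statement of Corollary~\ref{odd_codim}, and Alexander--Eells duality with the connectedness bookkeeping, exactly as in the $\SSS$ case of Section~\ref{subsection:CohomologyModule}. Your explicit remark that $\Symp^{S^1}(\CCC,\oml)=\Symp^{S^1}_h(\CCC,\oml)$ via Lemma~\ref{lemma:ActionOnHomology} is also how the paper handles the passage from the homologically trivial subgroup to the full centralizer.
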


Finally, the analysis of the homotopy pushout $(T_m\from S^1\to T_{m'})$ leading to Theorem~\ref{full_homo} holds verbatim for the $S^1(a,b;m)$ actions on $\CCC$ satisfying the conditions of Theorem~\ref{thm:CohomologyModuleCCC} and shows that $\Symp^{S^1}(\CCC,\oml)\simeq \Omega S^3 \times S^1 \times S^1 \times S^1$.\\

\begin{thm}\label{full_homo_CCC}
Consider a Hamiltonian circle action on $(\CCC, \oml)$. The symplectic centralizer $\Symp^{S^1}(\CCC,\oml)$ is given in Table~\ref{table:CentralizersAllCircleAvtionsOnCCC}.
\qed
\end{thm}
{\centering
\begin{table}[H]
\begin{tabular}{|p{39mm}|p{48.75mm}|p{28.5mm}|}
 \hline
 Values of $(a,b ;m)$, $m$ odd & Conditions on $\lambda >1$ &\!$\Symp^{S^1}\!\!(\CCC)$\!\!\\
\hline
\hline
{$(0,\pm 1;m)$}  &$2\lambda > m+1$ & $\simeq \U(2)$ \\
\hline
 $(\pm 1,0;m)$ &$2\lambda > m+1$ &$\simeq \T$\\
 \hline
$(\pm 1,\pm m;m)$ &$2\lambda > m+1$ & $\simeq \T$\\
\hline
\multirow{2}{10em}{$(1,b;m)$, $b \neq \{m,0\}$} 
&$2\lambda > m+1$ and $|2b-m|+1 \geq2 \lambda$ &$\simeq \T$ \\\cline{2-3}
&$2\lambda > m+1$ and $2 \lambda >|2b-m|+1 $  &$\simeq \Omega S^3 \times \mathbb{T}^3$ \\
 \hline
 \multirow{2}{10em}{$(-1,b;m)$,\newline $b \neq \{ -m,0\}$} 
&$2\lambda > m+1$ and $|2b+m|+1 \geq2 \lambda $ &$\simeq \T$ \\\cline{2-3}
&$2\lambda > m+1$ and $2 \lambda >|2b+m|+1$ &\!$\simeq \Omega S^3 \times \mathbb{T}^3$\\
\hline
All other values $(a,b;m)$ &$ 2\lambda > m+1$ &$\simeq \T$ \\
\hline
\end{tabular}
\caption{Centralizers of circle actions on $(\CCC,\oml)$}
\label{table:CentralizersAllCircleAvtionsOnCCC}
\end{table}
}

\begin{remark}
Recall that in the above table for the  $S^1$ action $S^1(a,b;m)$ for $m \neq 0$ on $(\CCC,\oml)$, to be Hamiltonian we need the condition $\lambda > \frac{m+1}{2}$.
\end{remark}

\section{Isometry groups of odd Hirzebruch surfaces}\label{Isom_CCC}

In this section we calculate the codimension of the smaller strata in $\jsom$. Let $M$ denote the manifold $\CCC$. By Theorem~\ref{trans_strata}, we know that the normal bundle to the strata $U_{2s+1} \cap \jsom$ can be identified with $H^{0,1}_{J_{2s+1}}(M,T^{1,0}_{J_{2s+1}}M)^{S^1}$. Thus to calculate the codimension of $U_{2s+1} \cap \jsom$ in $\jsom$, we need to understand how $S^1$ acts on $H^{0,1}_{J_{2s+1}}(M,T^{1,0}_{J_{2s+1}}M)$ and calculate the dimension of the invariant subspace. We use the convention $m = 2k+1$.
\\

The Hirzebruch surface $\mathbb{F}_{2k+1}$ is obtained by K\"ahler reduction of $\C^{4}$ under the action of the torus $T^{2}_{2k+1}$ defined by
\[(s,t)\cdot z = (s^{2k+1}tz_{1},tz_{2},sz_{3},sz_{4})\]
The moment map is $\phi(z)=((2k+1)|z_{1}|^{2}+|z_{3}|^{2}+|z_{4}|^{2}, |z_{1}|^{2}+|z_{2}|^{2})$ and the reduced manifold at level $(\lambda +k,1)$ is symplectomorphic to $(\CP^2 \# \overline{\C}P^2,\om_{\lambda})$ and biholomorphic to the Hirzebruch surface $\mathbb{F}_{2k+1}$. In this model, the projection to the base is given by $[(z_{1},\ldots,z_{4})]\mapsto [z_{3}:z_{4}]$, the zero section is $[w_{0}:w_{1}]\mapsto [(w_{0}^{2k+1},0,w_{0},w_{1})]$, and a fiber is $[w_{0}:w_{1}]\mapsto [(w_{0}w_{1}^{2k+1},w_{0}w_{1},0,w_{1})]$. The torus $T^{2}(2k+1)=T^{4}/T^{2}_{2k+1}$ acts on $\mathbb{F}_{2k+1}$. This torus is generated by the elements $[(1,e^{it},1,1)]$ and $[(1,1,e^{is},1)]$, and its moment map is $[(z_{1},z_{2},z_{3},z_{4})]\mapsto(|z_{2}|^{2},|z_{3}|^{2})$. The moment polytope $\Delta(2k+1)$ is the convex hull of the vertices $(0,0)$, $(1,0)$, $(1,\lambda+k)$, and~$(0,\lambda-k -1)$.
\\

The isometry group of $\mathbb{F}_{2k+1}$ is 
\[K(2k+1) = Z_{\U(4)}(T^{2}_{2k+1})/T^{2}_{2k+1}=(T^{2}\times \U(2))/T^{2}_{2k+1}\simeq \U(2)\]
where the last isomorphism is given by
\[[(s,t), A]\mapsto (s^{-1}t\det A^{k}) A\]
Under this isomorphism, an element $[(1,a,b,1)]$ of the torus $T(2k+1)$ is taken to
\[ab^{k}\begin{bmatrix}b&0\\0&1\end{bmatrix}\]
Consequently, at the Lie algebra level of the maximal tori, the map $\lie{t}^{2}(2k+1)\to \lie{t}^{2}$ is given by
\[\begin{pmatrix}
1&k+1\\
1&k
\end{pmatrix}\]
The moment polytope associated to the maximal torus $T^{2}\subset K(2k+1)$ is thus  the balanced polytope obtained from $\Delta(2k+1)$ by applying the inverse transpose $\begin{pmatrix}-k&1\\k+1&-1\end{pmatrix}$.

\subsection{Odd isotropy representations and codimension calculation}\phantom{Th}
The action of the isometry group $K(2k+1)\simeq \U(2)$ on the space $H_{J}^{0,1}(M,TM)$ of infinitesimal deformations is isomorphic to $\Det^{1-k}\otimes \Sym^{2k-1}$, where $\Det$ is the determinant representation of $\U(2)$ on $\C$, and where $\Sym^{k}(\C^{2})$ is the $k$-fold symmetric product of the standard representation of $\U(2)$ on $\C^{2}$. Using the double covering $S^{1}\times\SU(2)\to\U(2)$, we see that irreducible representations of $\U(2)$ correspond to irreducible representations of $S^{1}\times\SU(2)$ for which $(-1,-\id)$ acts trivially. If $A_{n}$ denotes the representation $t\cdot z=t^{n}z$ of $S^{1}$ on $\C$, and if $V_{k}$ is the $k$-fold symmetric product of the defining representation of $\SU(2)$ on $\C^{2}$, then the irreducible representations of $\U(2)$ are $A_{n}\otimes V_{k}$ with $n+k$ even. In this notation, we have the identifications $\Det = A_{2}$, while $\Sym=A_{1}\otimes V_{1}$. Consequently, $\Det^{1-k}\otimes \Sym^{2k-1} = A_{1}\otimes V_{2k-1}$.

With respect to the double covering $S^{1}\times\SU(2)\to\U(2)$, the maximal torus $T^{2}\subset\U(2)$ of diagonal matrices $D_{s,t}:=\begin{pmatrix}e^{is} & 0\\ 0 & e^{it}\end{pmatrix}$ lifts to
\[\left(|D_{s,t}|^{1/2},\frac{D}{|D|^{1/2}}\right)
=\left(e^{i(s+t)/2},\begin{pmatrix}e^{i(s-t)/2} & 0\\ 0 & e^{i(t-s)/2}\end{pmatrix}\right)\]
As explained above, the induced action of the K\"ahler isometry group $K(2k+1)$ on $H^{0,1}(\CCC,T^{1.0}_{J_m}(\CCC)) \cong \C^{m-1}$ is isomorphic to  $\Det^{1-k}\otimes\Sym^{2k-1}$. Hence to calculate the the codimension we only need to calculate the dimension of the invariant subspace of the vector space $ H^{0,1}(\CCC,T^{1.0}_{J_m}(\CCC)) \cong \C^{m-1}$ under the $S^1(1,b;m)$ action. To do so we note that a basis of $\Sym^{2k-1}$ is given by the homogeneous polynomials $P_{j}=z_{1}^{2k-1-j}z_{2}^{j}$ for $j\in\{0,\ldots,2k-1\}$. The action of $D_{s,t}$ on $P_{j}$ is
\[D_{s,t}\cdot P_{j}=e^{i\big((s+t)(1-k)+s(2k-1-j)+tj\big)}P_{j}\]
so that each $P_{j}$ generates an eigenspace for the action of the maximal torus $T(2k+1)$ generated by $D_{s,t}$. In particular, the circle $S^{1}(a,b;2k+1)$ acts trivially on $P_{j}$ if, and only if, 
\[(a-b)(k-j)+b=(a,b)\cdot(k-j,j-k+1)=0\]
Thus the codimension (in the balanced basis of the maximal torus of K(2k+1) is given by the number of $j \in \{0,\ldots,2k-1\}$ such that \begin{equation}\label{formula_CCC}
    (a-b)(k-j)+b=(a,b)\cdot(k-j,j-k+1)=0.
\end{equation}
Note that just as in the $\SSS$ case, the above codimension calculation was with respect to the basis of the maximal torus in $K(2k+1)$. Hence to calculate the codimension for the $S^1(1,b,m) \subset \T_m$ as in our case, we need to transform the basis by multiplication by the matrix $\begin{pmatrix}\frac{m-1}{2}+1& -1\\\frac{m-1}{2}&-1\end{pmatrix}$. Thus it takes the vector $\begin{pmatrix}1\\b\end{pmatrix}$ in the basis for the standard moment polytope 

to the vector $\begin{pmatrix}\frac{m+1}{2}-b\\\frac{m-1}{2}-b\end{pmatrix}$ in the basis for the balanced polytope. Hence $a$ and $b$ in equation~\ref{formula_CCC} above need to be replaced by $\frac{m+1}{2}-b$ and $\frac{m-1}{2}-b$ respectively to get the correct codimension for the $S^1(1,b,m)$ action. 

\begin{thm}\label{codimension_calc2}
Given the circle action $S^1(1,b,m)$ on $(\CCC,\oml)$ with $2\lambda > |2b-m|+1$ and $b \neq \{0, m\}$, the complex codimension of of the strata $\jsom \cap U_m$ in $\jsom$ in given by the number of $j \in \{1, \cdots , m-1\}$ such that $j=b$.
\\

Similarly for the action $S^1(-1,b,m)$ with $2\lambda > |2b+m|+1$ and $b \neq \{0, -m\}$, the complex codimension of of the strata $\jsom \cap U_m$ in $\jsom$ in given by the number of $j \in \{1, \cdots , m-1\}$ such that $j=-b$.
\qed
\end{thm}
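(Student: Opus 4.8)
The plan is to transport verbatim the argument of Theorem~\ref{codimension_calc}, replacing the even Hirzebruch surface by the odd one $\CCC\cong W_m$, $m=2k+1$. First I would invoke Lemma~\ref{trans_strata}, which identifies the fibre of the normal bundle of the stratum $U_m\cap\jsom$ at the standard integrable complex structure $J_m$ with the invariant cohomology group $H^{0,1}_{J_m}(\CCC,T^{1,0}_{J_m}\CCC)^{S^1}$; this is licensed by the earlier Proposition showing $H^{0,2}_{J_m}(\CCC)=0$ and $H^{0,2}_{J_m}(\CCC,T\CCC)=0$ for every Hirzebruch surface, so all the deformation-theoretic results of the chapter apply to $W_m$. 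Hence the complex codimension in question is simply $\dim_{\C}H^{0,1}_{J_m}(\CCC,T^{1,0}_{J_m}\CCC)^{S^1}$, and the whole proof is reduced to a weight count.

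Next I would use the description of the isotropy representation recalled just above: the isometry group $K(2k+1)\simeq\U(2)$ acts on $H^{0,1}_{J_m}(\CCC,T^{1,0}_{J_m}\CCC)\cong\C^{m-1}$ as $\Det^{1-k}\otimes\Sym^{2k-1}=A_1\otimes V_{2k-1}$. Since the circle $S^1(1,b;m)$ lies inside the maximal torus $T^2\subset\U(2)$, its invariant subspace is the span of those weight lines on which the circle acts trivially. With the monomial basis $P_j=z_1^{2k-1-j}z_2^j$, $j\in\{0,\dots,2k-1\}$, of $\Sym^{2k-1}$ and the lift of the diagonal torus of $\U(2)$ to $S^1\times\SU(2)$, one obtains that $P_j$ is fixed exactly when $(a-b)(k-j)+b=0$ in the balanced coordinates $(a,b)$ of the maximal torus of $K(2k+1)$ --- this is equation~(\ref{formula_CCC}).

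Then I would perform the change of coordinates from the standard Hirzebruch basis to the balanced one: the change-of-basis matrix $\begin{pmatrix}\frac{m-1}{2}+1 & -1\\ \frac{m-1}{2} & -1\end{pmatrix}$ carries the weight vector $(1,b)^{\top}$ of $S^1(1,b;m)$ to $\bigl(\tfrac{m+1}{2}-b,\ \tfrac{m-1}{2}-b\bigr)^{\top}$. Substituting $a=\tfrac{m+1}{2}-b$ and $b\mapsto\tfrac{m-1}{2}-b$ into the fixed-point condition, the difference of the two balanced coordinates equals $1$, so the condition collapses to $(k-j)+\bigl(\tfrac{m-1}{2}-b\bigr)=0$, that is, $j=m-1-b$ using $\tfrac{m-1}{2}=k$ and $m=2k+1$. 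Reindexing by $j'=m-1-j$, which runs over $\{1,\dots,m-1\}$, this becomes $j'=b$; hence the codimension is the number of $j'\in\{1,\dots,m-1\}$ with $j'=b$, which is the claimed formula. For $S^1(-1,b;m)$ the argument is identical: the same matrix sends $(-1,b)^{\top}$ to $\bigl(-\tfrac{m+1}{2}-b,\ -\tfrac{m-1}{2}-b\bigr)^{\top}$, the coordinate difference is now $-1$, and the analogous reindexing yields the condition $j'=-b$.

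I expect the only genuinely delicate point to be the bookkeeping between the two coordinate systems --- the ``standard'' basis adapted to the Hirzebruch polytope and the ``balanced'' basis adapted to the maximal torus of $\U(2)$ --- together with the reindexing $j\mapsto m-1-j$; once the change-of-basis matrix above is pinned down, the computation is purely mechanical and the sign patterns for $a=1$ and $a=-1$ mirror exactly the $\SSS$ case. As flagged in the remark preceding the statement, the passage from the $C^l$ Banach category to the $C^\infty$ Fréchet category is a gap present already in the non-equivariant literature and is not addressed here.
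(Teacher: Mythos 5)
Your proposal is correct and follows essentially the same route as the paper: identify the normal fibre with $H^{0,1}_{J_m}(\CCC,T^{1,0}_{J_m}\CCC)^{S^1}$ via Lemma~\ref{trans_strata}, use the $\U(2)$ isotropy representation $\Det^{1-k}\otimes\Sym^{2k-1}=A_1\otimes V_{2k-1}$, count trivial weight lines via the monomials $P_j$, and convert from the balanced torus coordinates to the Hirzebruch ones with the matrix $\begin{pmatrix}\frac{m+1}{2} & -1\\ \frac{m-1}{2} & -1\end{pmatrix}$. Your explicit reindexing $j'=m-1-j$ and the worked-out $a=-1$ case merely spell out steps the paper leaves implicit; the bookkeeping is consistent with the paper's conclusion.
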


\begin{cor}\label{odd_codim}
For the circle actions \begin{itemize}
  \item (i) $a=1$, $b \neq \{0, m\}$,  and $2\lambda > |2b-m|+1$; or
\item (ii) $a=-1$, $b \neq \{0, -m\}$, and $2 \lambda > |2b+m|+1$.
\end{itemize}The complex codimension of the stratum $\jsom \cap U_m$ in $\jsom$ is either 0 or 1.
\qed
\end{cor}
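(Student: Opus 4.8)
The plan is to read off the statement directly from Theorem~\ref{codimension_calc2}. Recall the two inputs that feed into that theorem: first, by Lemma~\ref{trans_strata} the fibre of the normal bundle of the stratum $\jsom\cap U_{m}$ at the integrable structure $J_{m}$ is identified with the invariant Dolbeault group $H^{0,1}_{J_{m}}(\CCC,T^{1,0}_{J_{m}}\CCC)^{S^{1}}$; second, the isotropy computation of Section~\ref{Isom_CCC} realises the $K(2k+1)\simeq\U(2)$ action on $H^{0,1}_{J_{m}}(\CCC,T^{1,0}_{J_{m}}\CCC)\cong\C^{m-1}$ as $\Det^{1-k}\otimes\Sym^{2k-1}=A_{1}\otimes V_{2k-1}$, whose weight vectors $P_{j}$ are indexed by $j\in\{0,\dots,2k-1\}$. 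Translating the vanishing-weight condition for the subcircle $S^{1}(\pm1,b;m)$ through the change of basis between the balanced polytope of $K(2k+1)$ and the standard Hirzebruch polytope, Theorem~\ref{codimension_calc2} identifies the complex codimension of $\jsom\cap U_{m}$ with the cardinality of $\{\,j\in\{1,\dots,m-1\}\mid j=b\,\}$ in case (i), and of $\{\,j\in\{1,\dots,m-1\}\mid j=-b\,\}$ in case (ii).

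The remaining step is a one-line combinatorial observation: for a fixed integer value, the subset of the finite interval $\{1,\dots,m-1\}$ consisting of the integers equal to that value is either empty or a single point. Hence the codimension in question, being equal to this cardinality, takes only the values $0$ and $1$, which is precisely the assertion. I would phrase the written proof as a two-sentence deduction of exactly this form, mirroring the corresponding corollary in the $\SSS$ case.

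I do not expect any genuine obstacle here: all the analytic content (transversality of the inclusion of integrable structures, the identification of the normal fibre) and all the representation-theoretic content (the decomposition of $H^{0,1}$ and the weight bookkeeping) have already been carried out in Theorems~\ref{trans_strata} and~\ref{codimension_calc2}. The only point worth a remark is that both alternatives genuinely occur: if $b\in\{1,\dots,m-1\}$ the codimension is $1$ --- this is the two-strata regime of Corollary~\ref{cor:IntersectingTwoStrata}, with $\jsom\cap U_{m}$ of real codimension $2$ --- while for $b$ outside that range (still compatible with the hypotheses $b\notin\{0,\pm m\}$ and $2\lambda>|2b\mp m|+1$) the codimension is $0$, so $\jsom\cap U_{m}$ is the open stratum and the codimension-$2$ stratum is instead $\jsom\cap U_{|2b\mp m|}$. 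This is exactly the dichotomy that Chapter~\ref{Chapter-CCC} relies on when computing the homotopy type of $\Symp^{S^{1}}(\CCC,\oml)$.
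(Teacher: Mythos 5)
Your proposal is correct and follows essentially the same route as the paper, which deduces Corollary~\ref{odd_codim} directly from Theorem~\ref{codimension_calc2} (itself resting on Lemma~\ref{trans_strata} and the $\U(2)$ isotropy computation of Section~\ref{Isom_CCC}): the codimension equals the cardinality of $\{\,j\in\{1,\dots,m-1\}\mid j=\pm b\,\}$, which is manifestly $0$ or $1$. Your closing remark correctly identifies when each alternative occurs and matches how Chapter~\ref{Chapter-CCC} uses the result, so nothing further is needed.
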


\appendix

\chapter[Equivariant Differential Topology]{Equivariant versions of classical results from Differential Topology}\label{ElementaryEquivariantTopology}

In this appendix we present equivariant versions of standard results in differential and symplectic topology. We either provide proofs or give precise references. For a classical exposition of smooth equivariant geometry, one may consult Bredon~\cite{Bredon}. For a discussion of normal forms theorems along symplectic submanifolds that is relevant to our purpose, we recommand Guadagni~\cite{Guadagni}.\\

Throughout this section, we consider a symplectic manifold $(M,\om)$ on which a compact Lie group $G$, possibly finite, is acting symplectically, leaving invariant a submanifold $S$. 

\subsection*{Invariant almost complex structures}
\begin{lemma}\label{contractibilityofinvariantacs}
Let $G$ be a compact group acting symplectically on $(M,\omega)$. Then the space $\mathcal{J}^G_\om$ of  $G$-invariant compatible almost complex structures on $(M,\om)$ is contractible.
\end{lemma}

\begin{proof}
Let $\Met^G$ denote the space of $G$-invariant metrics on $M$. As $\Met^G$ is an affine space, it is contractible. We construct a homotopy equivalence $\alpha:\Met^G\to \mathcal{J}^G_\om$ using the polar decomposition. Choose an $G$-invariant metric $g$ on $M$. Then there exists a fiberwise automorphism $A$ of the tangent bundle such that $g(u,v) = \om(u,Av)$ for all $u,v \in TM$. The group $G$ acts on bundle automorphisms by conjugation, and both $A$ and its adjoint $A^\dagger$ are invariant. By construction, the symmetric operator $A^\dagger A$ is positive definite. Define $P=(A^\dagger A)^{1/2}$ as the unique positive definite square root of $A^\dagger A$. Define $\alpha(g) = AP^{-1}$. One can check that $\alpha(g)^2 = - \id$ and that $\alpha(g)$ is compatible with $\om$. In order to show that $\alpha(g)$ is $G$ invariant, we observe that for all $h\in G$, $(h\cdot P)^2=A^\dagger A$, that is, the action of $G$ permutes the square roots of $A^\dagger A$. Since $h \cdot P$ is also positive definite, we conclude that $h\cdot P=P$ for all $h\in G$. It follows that $\alpha(g)$ is a $G$-invariant compatible almost complex structure. 
\\

We now claim that the map $ \beta: \mathcal{J}^G_\om\to \Met^G$ that sends $J$ to the compatible metric $\beta(J) := \om(\cdot,J\cdot)$ is a homotopy inverse of the map $\alpha:\Met^G\to \mathcal{J}^G_\om$ constructed above. Indeed, the composition $\alpha \circ \beta: \mathcal{J}^G_\om \rightarrow \mathcal{J}^G_\om$ is equal to the identity, while the map $\beta \circ \alpha: \Met^G \rightarrow \Met^G $ is homotopy equivalent to the identity as $\Met^G$ is contractible.
\end{proof}

\begin{lemma}\label{Lemma:Contractibilityofacs}
Let $G$ be a compact group acting symplectically on $(M,\omega)$. Let $S$ be a $G$ invariant symplectic submanifold. Then the space $\mathcal{J}^G_\om(S)$ of $G$-invariant compatible almost complex structures on $(M,\om)$ for which $S$ is $J$-holomorphic is contractible.
\end{lemma}
\begin{proof}
Let $\mathfrak{J}^G_{\om|_S}(S)$ denote the space of compatible almost complex structures on $S$ compatible with respect to the restricted symplectic form $\om|_S$ on $S$.  Then the restriction map $r: \mathcal{J}^G_\om(S)\to\mathfrak{J}^G_{\om|_S}(S)$ is a fibration. Fix a $J_0 \in \mathfrak{J}^G_{\om|_S}(S)$. Let $g_0$ denote the compatible metric on $S$ given by $\om|_S$ and $J_0$. The fibre of $r: \mathcal{J}^G_\om(S)\to\mathfrak{J}^G_{\om|_S}(S)$ at $J_0$ is given by the set of $J\in\mathcal{J}^G_\om(S)$ such that $J|_S = J_0$. This space of almost complex structures is homeomorphic to the space of $\om$ compatible metrics on $(M,\om)$ such that the restriction of the metric to $S$ is $g_0$. The space of such metrics is an affine space and hence contractible. By Lemma~\ref{contractibilityofinvariantacs} $\mathfrak{J}^G_{\om|_S}(S)$ is also contractible this completes the proof.
\end{proof}

\subsection*{Invariant neighborhoods and equivariant isotopies}

\begin{thm}[G-equivariant tubular neighborhood]\label{thm:EquivariantNhoods} Let $G$ be a compact Lie group acting smoothly on $M$ and let $S\subset M$ be a closed invariant submanifold.
\begin{enumerate}
\item Any two (open or closed) invariant tubular neighborhoods of $S$ are equivariantly isotopic.
\item If the invariant submanifold $S$ is compact, then any two closed invariant tubular neighborhoods of $S$ are ambient isotopic through an equivariant isotopy that is the identity outside a compact neighborhood of $S$.
\end{enumerate}
\end{thm}
\begin{proof}
The first part is Theorem~VI.2.6 in~Bredon~\cite{Bredon}. As explained in~\cite{Bredon} Remark p. 312, the second part follows by combining Part (1) with the results of Section~VI.3 in~\cite{Bredon}. 
\end{proof}

Various $G$-equivariant neighborhood and isotopy theorems in the symplectic category are based on the following two lemmata.

\begin{lemma}[Relative Poincaré lemma with parameters]\label{lemma:RelativePoincare}
Let $M$ be a smooth manifold and let $\theta_t$ be a $1$-parameter family of $p$-forms that are identically zero in a neighborhood of a closed set $F$, and whose relative cohomology classes $[\theta_t]$ vanish in $H^p(M,F)$. Then there exists a $1$-parameter family of $(p-1)$-forms $\beta_t$, vanishing in a neighborhood of $F$, such that $d\beta_t = \theta_t$. Moreover, if the forms $\theta_t$ are invariant under the action of a compact group $G$, and if the set $F$ is invariant, then the primitives $\beta_t$ can be chosen to be $G$-invariant.
\end{lemma}
\begin{proof}
The non-equivariant case is Lemma II.2.2 in~\cite{Banyaga78}. The equivariant version follows from a simple averaging argument.
\end{proof}

\begin{lemma}[Equivariant Moser's argument]\label{lemma:EquivariantMoser}
Let $\omega_t$, $t\in[0,1]$, be a smooth family of cohomologous symplectic forms on $M$. Suppose a compact group $G$ acts on $M$ and that the action is hamiltonian with respect to each form $\om_t$. Assume that there exists a smooth family of $G$-invariant $1$-forms $\{\sigma_t\}_{t\in [0,1]}$ such that $\dd{t}\omega_t=d\sigma_t$. Then there exists a $G$-equivariant diffeotopy $\psi_t:M\rightarrow M$ such that $\psi_t^*(\omega_t)=\omega_0$, $\psi_0=\id$, and $\psi_t=\id$ on the zero set $Z=\{m\in M~|~\sigma_t(m)=0~\forall t\in[0,1]\}$.
\end{lemma}

\begin{proof}
Since all the forms involved are invariant, the standard proof of Moser's argument gives an equivariant diffeotopy, see Section 3.2 in~\cite{MS}.
\end{proof}

\begin{lemma}[Equivariant symplectic neighborhoods theorem]\label{EqSymN}
Let $G$ be a compact group, and let $(M_{i},\om_{i})$, $i=0,1$, be two symplectic $G$-manifolds. Let $S_{i}\subset M_{i}$ be two invariant symplectic submanifolds with invariant symplectic normal bundles $N_{i}$. Suppose that there is an equivariant isomorphism $A:N_{0}\to N_{1}$ covering an equivariant symplectomorphism $\phi:S_{0}\to S_{1}$. Then $\phi$ extends to a equivariant symplectomorphism of neighborhoods $\Phi:U_{0}\to U_{1}$ whose derivative along $S_{0}$ is equal to $A$.
\end{lemma}
\begin{proof}
This is Theorem~3.30 in~\cite{MS}. It can be made equivariant using Lemma~\ref{lemma:RelativePoincare} and Lemma~\ref{lemma:EquivariantMoser}.
\end{proof}

Given a principal $G$-bundle $P$ over a symplectic base $(S,\om_S)$, a principal connection $1$-form $\alpha$ on $P$, and a Hamiltonian action of $G$ on a symplectic manifold $(F,\om_F)$, the minimal coupling construction of Sternberg and Weinstein defines a symplectic form $\om_\alpha$ on the associated bundle $P\times_G F$. We recall this construction in the special case of normal bundles of symplectic submanifolds, in a form we borrow from Guadagni~\cite{Guadagni}. The general case can be found in Weinstein~\cite{Weinstein78}.

\begin{prop}[Minimal coupling construction on normal bundles]\label{prop:CouplingConstruction}
Let $(S,\om_S)$ be a symplectic submanifold of codimension $2k$ in $(M,\om)$. Let $N$ be its symplectic normal bundle, and let $P$ be the associated principal $U(k)$ bundle. Let $\alpha$ be a connection form on $P$, and let $F_\alpha$ be its curvature $2$-form. Let $\mu$ be the moment map of the standard $\U(k)$ action on $\C^k$. The $2$-form
\[\omega_S+d\langle F_\alpha, \mu\rangle +\omega_{\C^k}\]
on $P\times\C^k$ descends to a symplectic form $\om_\alpha$ on the normal bundle $N=P\times_{U(k)}\C^k$. Moreover, the form $\om_\alpha$ extends the fiberwise symplectic form on $N$ and is equal to $\om_M$ along the zero section.
\end{prop}
\begin{proof}
This is Lemma 2 and Corollary 2.2 in~\cite{Guadagni}.
\end{proof}
Together with the Equivariant symplectic neighborhoods Theorem~\ref{EqSymN}, the the minimal coupling construction provides a normal form theorem.

\begin{thm}[Equivariant symplectic tubular neighborhood]\label{thm:EquivariantSymplecticTubularNhood} Let $G$ be a compact group acting symplectically on $(M,\om)$. Let $S$ be an invariant symplectic submanifold. Consider the linearized action $G$ on the normal bundle $N$ endowed with the minimal coupling form $\om_\alpha$. Then there is a $G$-equivariant symplectomorphism $\phi:U\rightarrow M$, defined in a neighborhood $U\subset N$ of the zero section, whose derivative $d\phi$ along $S$ is the identity.
\end{thm}
\begin{proof}
Choose a $G$-invariant and compatible metric on $M$. Then the exponential map $\exp: N\to M$ is equivariant and $\exp^*(\omega)=\omega$ along $S$. Choose any $G$-invariant connection $\alpha$ on $N$ and consider the induced symplectic form $\omega_\alpha$. Since $\exp^*(\omega)=\omega_\alpha$ along $S$, Lemma~\ref{EqSymN} implies the existence of a $G$-equivariant embedding $\phi:U\to N$ defined in a neighborhood of the zero section of $N$ and  such that $\phi^*(\exp^*(\omega))=\omega_\alpha$. By construction, the composition $\exp\circ\phi: U\to M$ is the desired symplectomorphism.
\end{proof}

Although it is always possible to extend a smooth isotopy defined in a neighborhood of a closed set $F\subset M$, there are obstructions to extend symplectic isotopies. 

\begin{thm}[Banyaga's extension theorem]\label{thm:BanyagaExtension}
Let $(M,\om)$ be a symplectic manifold, let $F\subset M$ be a closed subset, and let $h_t:M\to M$ be a smooth isotopy with compact support which is symplectic in a neighborhood of $F$. Suppose that the relative cohomology classes $[h_t^*\om-\om]$ are zero in $H^2(M,F)$. Then there exists a symplectic isotopy $g_t:M\to M$ with compact support which coincides with $h_t$ in a neighborhood of $F$. Moreover, if a compact group $G$ is acting symplectically on $M$, if $F$ is invariant, and if the isotopy $h_t$ is equivariant, then $g_t$ is also equivariant.
\end{thm}
\begin{proof} We prove the equivariant statement. By assumption, the closed forms $\dot{\omega}_{t}:=\frac{\del}{\del t}h_{t}^{*}\omega$ satisfy the conditions of the relative Poincaré lemma~\ref{lemma:RelativePoincare} so that we can find a $1$-parameter family of invariant $1$-forms $\sigma_t$ such that $d\sigma_t=\dot{\omega}_{t}$ and which restricts to zero on $F$. Using Moser's argument~\ref{lemma:EquivariantMoser}, there is an equivariant isotopy $g_t$ with compact support, which is the identity on $F$, and such that $g_t^*\om_t=\om_0$. The composition $h_t\circ g_t$ is then an equivariant symplectic isotopy that coincide with $h_t$ on $F$.
\end{proof}

In the case one is only interested in realising a displacement of unparametrised symplectic submanifolds by an ambient symplectic isotopy, then there are no obstructions.

\begin{lemma}[Displacement of unparametrised symplectic submanifolds]\label{Au} Let $G$ be a compact group acting symplectically on a compact manifold $(M,\om)$. Let $W_t$, $t \in [0,1]^k$, be a smooth $k$-parameter family of symplectic submanifolds which are invariant under the $G$ action. Then there exists a  $k$-parameter family of equivariant Hamiltonian symplectomorphisms $\phi_{t}: M \rightarrow M$ such that $\phi_t(\tilde{W}_0) = \tilde{W}_t$. Here $\tilde{W}_t$ is the image of the symplectic submanifold $W_t$. 
\end{lemma}
\begin{proof}
As the proof in the non-equivariant case only depends on the relative Poincaré lemma and on the symplectic neighborhoods theorem, it can be made equivariant. For more details, see Proposition~4 in~\cite{Auroux}.
\end{proof}

Let $\Symp^G(M)$ be the group of symplectomorphisms that commute with the action of $G$. Let $S$ be a compact, invariant, symplectic submanifold. Define the two subgroups
\[\Symp^G_{\id,N}(M,S)=\{\phi\in\Symp^G(M)~|~\phi|_{S}=\id,~d\phi|_{T_{S}M}=\id\}\]
\[\Symp^G_{\id,\near}(M,S)=\{\phi\in\Symp^G(M)~|~\phi=\id \text{~near~} S\}\]

\begin{prop}\label{prop:HomotopyEquivalenceSympN-Near}
Let $G$ be a compact group acting symplectically on a symplectic manifold $(M,\om)$ with finitely many orbit types. Let $S$ be a compact, invariant, symplectic submanifold. The inclusion $\Symp^G_{\id,\near}(M,S)\into\Symp^G_{\id,N}(M,S)$ is a homotopy equivalence.
\end{prop}
\begin{proof}
Since both spaces are homotopy equivalent to CW-complexes, it is enough to show that the inclusion $\Symp^G_{\id,\near}(M,S)\into\Symp^G_{\id,N}(M,S)$ is a weak homotopy equivalence.\\

We first prove a smooth version of the statement. By the Mostow–Palais theorem~\cite[Chapter 2 Theorem 10.1]{Bredon}, we can equivariantly embed $M$ into a finite-dimensional orthogonal representation $\R^q$. Let $U\subset\R^q$ be an invariant tubular neighborhood of $S$ and let $\rho:U\to S$ be the projection. Choose a smooth invariant function $\lambda:M\to[0,1]$ such that $S=\lambda^{-1}(0)$ and whose derivative is nowhere zero in directions normal to $S$ ($\lambda$ can be chosen to be Morse-Smale, in which case $S$ is a non-degenerate critical submanifold). Let $\alpha:\R\to[0,1]$ be a smooth increasing function such that $\alpha(u)=0$ for $u\leq 1$ and $\alpha(u)=1$ for $u\geq 2$.\\

Given $0<\epsilon<1$ let $\phi_\epsilon(x) = \alpha(\log_\epsilon(\lambda(x)))$ for $\lambda(x)>0$ and set $\phi_\epsilon(x)=1$ for $\lambda(x)=0$. By construction, $\phi_\epsilon$ is an invariant function such that $\phi_\epsilon(x)=1$ for $\lambda(x)\leq\epsilon^2$ and $\phi_\epsilon(x)=0$ for $\lambda(x)\geq \epsilon$.\\

Given $g\in \Diff^G_{\id,N}(M,S)$, define the homotopy
\[
g^t_\epsilon(x) = \rho\big(g(x) + t\phi_\epsilon(x)(x-g(x))\big)
\]
Because $g$ is $C^1$ close to the identity near $S$, an argument similar to the one presented in~\cite[Proposition 1.3]{Igusa-PseudoIsotopies} shows that we can find $\epsilon$ small enough so that $dg^t_\epsilon$ is invertible for all $t\in[0,1]$ and for all $\lambda(x)\leq\epsilon$. It follows that $g_\epsilon^t$ is a diffeomorphism and that this homotopy takes $\Diff^G_{\id,N}(M,S)$ into $\Diff^G_{\id,\near N}(M,S)$. Note that $g^t_\epsilon$ is equivariant for all $t$, as the $G$ action is linear on $\R^q$. Moreover, if $g$ belongs to a compact family $K\subset \Diff^G_{\id,N}(M,S)$, we can choose $\epsilon$ uniformly so that the family retracts into $\Diff^G_{\id,\near N}(M,S)$. Note that this equivariant retraction of $K$ is supported in an arbitrarily small (and fixed) tubular neighborhood $U$ of $S$.\\

We can further arrange the homotopy to that the resulting diffeomorphisms $g_\epsilon^t$ are the identity in nested increasing neighborhoods $V_\epsilon\subset V_{\epsilon+\delta}\subset U$. To see this, view $g_\epsilon^t$ as a 2-parameter family depending on $t\in[0,1]$ and on $\epsilon\in(0,1]$. Let $g_\epsilon:=g_\epsilon^1$ be the family obtained by taking $t=1$. We claim that $lim_{\epsilon\to0^+} g_\epsilon = g$ in the $C^1$ topology. Indeed, because $g$ is $C^1$ close to the identity near $S$, we can choose $U$ so that the norm $||g(x)-x||$ is bounded by $\lambda(x)A$ for some constant $A>0$. Similarly, we can bound $||dg_x-\id||$ by $\lambda(x)B$ for some $B>0$. It follows that we can bound $||dg-dg_\epsilon||$ by
\[
||dg-dg_\epsilon||\leq C\Big(||\id-dg||\phi_\epsilon + ||(\id-g)d\phi_\epsilon||\leq \lambda A\phi_\epsilon+\lambda B||d\phi_\epsilon ||  \Big).
\]
As $0\leq \phi_\epsilon\leq 1$, for all $x$, $\lim_{\epsilon\to0^+}\lambda(x) A\phi_\epsilon(x) = 0$. Similarly, as
\[
\left\|(d\phi_\epsilon)_x\right\| = \left\|\frac{d\alpha_x d\lambda_x}{\lambda(x)\log(\epsilon)}\right\| \leq \frac{D}{\lambda(x)|\log(\epsilon)|}
\]
for some $D>0$, we have
\[
\lim_{\epsilon\to0^+} \lambda(x) B||d\phi_\epsilon || 
\leq \lim_{\epsilon\to0^+} \lambda(x) B\frac{D}{\lambda(x)|\log(\epsilon)|} = 0,\quad \forall x.
\]
Consequently, the family $g_\epsilon$ extends $C^1$ continuously to $\epsilon=0$ by setting $g_0=g$. In particular, for $\epsilon>0$, $g_\epsilon=\id$ on the open neighbourhood $V_{\epsilon}:= \left\{x \in M ~|~ \lambda(x) < \epsilon^2 \right\}$. This gives us a homotopy $g_t$ such that $g_0=g$ and $g_1 \in\Diff^G_{\id,\near N}(M,S) $ which is smooth in the $C^1$-topology on $\Diff^G_{\id,N}(M,S)$. Finally, since the statement holds in the $C^1$ topology, by a standard approximation argument (as in \cite[Chapter 6, Theorem 4.2]{Bredon}) it holds in all $C^k$, $2\leq k\leq \infty$ topologies.\\

We now prove the statement in the symplectic category. Pick a symplectomorphism $g\in \Symp^G(M, T_N M)$ and choose $0<\epsilon<1$ small enough so that we have a smooth deformation retraction $g_\epsilon$ as above. For convenience, we reparametrize this homotopy by taking a parameter $t\in[0,1]$. Then $g_0=g$, $g_t=\id$ on $V_t$, and $g_t=g$ outside $U$. Let $\om_t=g_t^*\om$ be a 1-parameter family of $G$-invariant symplectic forms which equal $\om$ on $V_t$ by construction. 

Let $\sigma_t$ be a 1-form such that $\sigma_t=0$ on $V$ and $d\sigma_t=\om_t-\om_0$. This is possible by the equivariant Poincar\'e (Lemma \ref{lemma:RelativePoincare}). Consider the convex combination
\[\om_{t,s}=\om_0+s d\sigma_t.\]
There is some $t_0$ such that, for all $t<t_0$ and all $0\leq s\leq 1$, the form $\om_{t,s}$ is non-degenerate. Use equivariant Moser isotopy to get a rectifying equivariant isotopy $\psi_{t,s}$ such that $\psi_{t,s}$ is the identity on $V_t$ and $g_t^* \psi_{t,s}^*\om_0=\om_0$. Furthermore, as $\om_{t,s} = \om$ on $V_t$, we can choose $\psi_{t,s}$ such that $\psi_{s,t}=\id$ on some possibly smaller neighbourhood $V'_t \subset V_t$. Hence the homotopy $h_t$ is defined as $ h_t:= \psi_{1,t} \circ g_t$ for $t \in [0,t_0]$ is the required homotopy as at $t=0$, $h_0 = g$ and $h_{t_0}$ is a symplectomorphism that is the identity on a small neighbourhood $V_{t_0}:= \left\{x \in M ~|~ \lambda(x) < \epsilon_{t_0}^2 \right\}$. \\

This process can be done continuously along any compact family $K$ in the group $\Symp^G(M, T_N M)$, showing that the inclusion $\Symp^G(M, N)\into \Symp^G(M, T_N M)$ is a weak equivalence.
\end{proof}

Finally, we present a theorem of R. Palais that we repeatedly use in Chapters 3 and 6 to justify that some maps between infinite dimensional spaces are fibrations.\\

Let $X$ be a topological space with an action of a topological group $G$. We say $X$ admits local cross sections at $x_0 \in X$ if for there is a neighbourhood $U$ containing $x_0$ and a map $\chi: X \rightarrow G$ such that $\chi(u) \cdot x_0 = u$ for all $u \in U$.  We say $X$ admits local cross sections if this is true for all $x_0 \in X$. 

\begin{thm}[Local triviality of $G$-maps]\label{palais}
Let $X$, $Y$ be  topological spaces with a action of a topological group $G$. Let the $G$ action on $X$ admit local cross sections. Then any equivariant map $f$ from $Y$ to $X$ is locally trivial.
\end{thm}
\begin{proof}
This is Theorem A in \cite{Palais1960}.
\end{proof}

\chapter{Equivariant Automorphism Groups}\label{AppendixAutomorphisms}

In this section we determine the homotopy type of the equivariant automorphism groups that arise in lemma~\ref{gauge} and~lemma \ref{ngauge}.\\

Let $(M,\om)$ be a symplectic $4$-manifold equipped with an effective Hamiltonian action of the circle $S^1$. Let $S$ be an embedded symplectic sphere $S^1$ left invariant by the action. Further let $N(S)$ denote the $S^1$-invariant symplectic normal bundle of $S$ and let $\Aut^{S^1}(N(S))$ denote the space of $S^1$ equivariant fiberwise linear symplectic automorphisms of $N(S)$.
\begin{lemma}\label{Gauge(D)}
The group $\Aut^{S^1}(N(S))$ is homotopy equivalent to $S^1$.
\end{lemma}
\begin{proof}
If the self-intersection of $S$ is $m$, then the normal bundle $N(S)$ is equivariantly symplectomorphic to the standard K\"ahler bundle $\Oo(m)\to \CP^1$ where the $S^1$ action on $\Oo(m)$ lifts a standard circle action on $S^2$ fixing the two poles $\{P, Q\}$, and where the symplectic form has been appropriately rescaled. We trivialize $\Oo(m)$ on the upper and lower discs $D_P^2 = S^2-\{Q\}$, $D_Q^2 = S^2-\{P\}$ so that, over the cylinder $S^2-\{P,Q\}\simeq S^1\times (-1,1)$, the transition map is given by $(s,r,z_2)\mapsto (s,r,s^m z_2)$. On the trivializing discs $D_P^2$, $D_Q^2$, the circle action is then given by 
\[t\cdot(z_1,z_2) = (t^{-k} z_1,t^{p} z_2)\text{~and~} t\cdot(z_1,z_2) = (t^{k} z_1,t^{q} z_2)\]
where $\{-k,p\}$ and $\{k,q\}$ are, respectively, the weights at $P$ and $Q$ in the tangent and normal directions.\\

The restriction to $D_P^2$ of any $S^1$ fiberwise linear symplectic automorphisms $\phi\in \Aut(N(S))$ can be written
\[\phi((z_1,z_2))=(z_1,A_P(z_1)z_2)\]
for a unique map $A_P:D_P^2\to\Sp(2)$. The automorphism $\phi$ is $S^1$ invariant if, and only if, 
\begin{equation}\label{eq:ConditionForA}
t^pA_P(t^{-k}z_1)t^{-p} = A_P(z_1)
\end{equation}
for all $z_1\in D_P^2$ and all $t\in S^1$. In particular, $A_P(z_1)$ must commute with the action of any $t\in\Stab(z_1)$. Moreover, the values of $A_P$ along the orbit of $z_1$ is determined solely by its value at $z_1$ via the formula~\eqref{eq:ConditionForA}. Consequently, $A_P$ factors through a map $\bar A_P:D^2_P/S^1\to \Sp(2)$ verifying $\bar A_P([z_1])\in C(\Stab(z_1))\subset\Sp(2)$, where $C(G)$ denotes the centralizer of $G$ in $\Sp(2)$. As the pole $P$ is a fixed point of the circle action, $\bar A_P([0])\in C(S^1)=\U(1)$. Of course, a similar description holds for the restriction of the automorphism $\phi$ to the other chart $D_Q^2$. Over the cylinder $S^1\times(-1,1)$, the two maps $\bar A_P$ and $\bar A_Q$ are related by the formula $\bar A_Q([s,r])=s^m \bar A_P([s,r]) s^{-m}$, so that, when $z_1$ approaches the boundary of $D_P^2$, we must have
\[\lim_{|z_1|\to1} \bar A_P([z_1]) = \bar A_Q([0])\in\U(1).\]
We conclude that $\phi$ is uniquely determined by a map $\bar A:S^2/S^1\to\Sp(2)$ which takes one of the following three possible forms:
\begin{enumerate}
\item If $k=0$, that is, if $S$ is pointwise fixed, then $\bar A:S^2\to U(1)\subset\Sp(2)$.
\item If $k=1$, then $\bar A:[-1,1]\to \Sp(2)$ with $\bar A([\pm1])\in\U(1)$.
\item If $k\geq 2$, then $\bar A:[-1,1]\to C(\Z_k)\subset\Sp(2)$ with $\bar A([\pm1])\in\U(1)$.
\end{enumerate}
Conversely, any such map $\bar A$ defines a unique equivariant automorphism of $N(S)$, so that $\Aut^{S^1}(N(S))$ is identified with the corresponding space $\Maps_k$ of all such maps. Clearly, for $k=0$. the space $\Maps_0$ retracts onto $U(1)$. For $k=1$, the deformation retraction of $\Sp(2)$ onto $\U(1)$ given by the polar decomposition induces a homotopy equivalence $\Maps_1\simeq \U(1)$. Finally, because the polar decomposition is equivariant with respect to the adjoint action of any compact subgroup $G\subset\Sp(2)$, the centralizer $C(\Z_k)\subset\Sp(2)$, $k\geq 2$, deformation retracts onto $\U(1)$, showing  that $\Maps_k\simeq \U(1)$ for all $k\geq 2$.
\end{proof}

Let $C \vee F\subset (M,\om)$ be two embedded symplectic spheres which are invariant under the circle action and which intersectect $\om$-orthogonally at $P \in M$. Let $TM|_{C\vee F}$ be the tangent bundle of $M$ restricted to $C\vee F$, and let $N(C\vee F)\subset TM|_{C\vee F}$ be union of the symplectic normal bundles $N(C)$ and $N(F)$.
\begin{lemma} \label{Gauge(N(D))}
The group $\Aut^{S^1}(N(C \vee \overline{F}))$ of $S^1$-equivariant fiberwise symplectic automorphisms of the the symplectic normal bundle which are equal to the identity in a neighbourhood of the wedge point is contractible. 
\end{lemma}

\begin{proof}
Restricting any automorphisms $\phi\in\Aut^{S^1}(N(C \vee \overline{F}))$ to the normal bundle $N(C)$, the same argument as in the proof of lemma~\ref{Gauge(D)} show that $\phi$ is given by a map $\bar A\in\Maps_k$ which must satisfy the extra condition $\bar A([z_1])=\id$ near $[P]$. The subspace of all these maps is contractible. As the same holds for the restriction of $\phi$ to $F$, the conclusion follows.
\end{proof}

\chapter[Equivariant Gompf argument]{\texorpdfstring{$J$}{J}-Holomorphic configurations and equivariant Gompf argument}

\begin{thm}\label{transverse}
Let $G$ be a compact group. Let A and B be two $G$-invariant symplectic spheres in a 4-dimensional symplectic manifold $(M,\omega)$ intersecting $\omega$-orthogonally at a unique fixed point $p$ for the $G$ action. Then there exists an invariant $J \in \mathcal{J}_\om^{G}$ for which both A and B are $J$-holomorphic. Here $\mathcal{J}_\om^{G}$ denotes the space of $G$ invariant compatible almost complex structures on $M$. 
\end{thm}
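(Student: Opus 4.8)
The plan is to build the desired invariant almost complex structure locally near $p$ first, then extend it over the rest of each sphere, and finally over the rest of $M$, keeping track of invariance at each stage. First I would work in a $G$-invariant Darboux chart around the fixed point $p$. Since $p$ is fixed by the compact group $G$, the $G$-action on $T_pM$ is linear and symplectic, hence (after averaging a compatible metric) unitary; moreover $T_pA$ and $T_pB$ are $\omega$-orthogonal $G$-invariant symplectic $2$-planes, so $T_pM = T_pA \oplus T_pB$ is a $G$-invariant splitting into symplectic subspaces. Choose the unique $\omega$-compatible complex structure $J_p$ on $T_pM$ that preserves this splitting and restricts to the area-form-compatible rotation on each factor; this $J_p$ is automatically $G$-invariant. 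Using the equivariant Darboux/Weinstein theorem, identify a neighbourhood of $p$ equivariantly and symplectically with a neighbourhood of $0$ in $(T_pM,\omega_0)$ so that $A$ and $B$ become the two coordinate symplectic planes near $p$; transport the constant structure $J_p$ to get a $G$-invariant compatible $J$ defined near $p$ for which $A$ and $B$ are holomorphic in that neighbourhood.

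Next I would extend $J$ from the germ near $p$ to a $G$-invariant compatible almost complex structure defined on a neighbourhood of $A \cup B$ for which all of $A$ and all of $B$ are $J$-holomorphic. The key observation is that ``$A$ is $J$-holomorphic'' is a pointwise condition along $A$ constraining $J$ only on $TM|_A$; at each point $q\in A$ one needs $J(T_qA)=T_qA$ and $J$ compatible with $\omega_q$. The space of compatible complex structures on a symplectic vector space that preserve a given symplectic subspace is contractible (it retracts onto the product of the spaces of compatible structures on the subspace and on its symplectic orthogonal), so the bundle over $A$ whose fibre at $q$ is this space has contractible fibres and admits a section; since $A$ is $G$-invariant and $G$ compact, averaging produces a $G$-invariant section, and near $p$ we may take it to agree with the $J$ already constructed (the invariant section can be chosen to extend a given invariant germ because the relevant space of sections over the pair $(A,\text{germ at }p)$ is still nonempty and the obstruction vanishes). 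Doing the same for $B$, and noting the two prescriptions are compatible on the overlap near $p$ because they both agree with $J_p$ there, gives a $G$-invariant compatible $J$ on a tubular neighbourhood $U$ of $A\cup B$ with $A,B$ holomorphic. Concretely one can also phrase this via an invariant $\omega$-compatible metric: choose any $G$-invariant metric $g$ on $U$ making $A$ and $B$ totally geodesic near $p$ and for which $TA$, $TB$ are $g$-orthogonal to their symplectic complements, then let $J$ be the $g$-$\omega$ compatible structure; invariance of $g$ and $\omega$ forces invariance of $J$.

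Finally I would extend $J$ from $U$ to all of $M$. The space $\mathcal{J}_\omega^G(M)$ of $G$-invariant $\omega$-compatible almost complex structures is non-empty and contractible (average the standard contraction of the space of compatible structures, or use the fibrewise-contractible description above together with compactness of $G$). In particular, given the $G$-invariant compatible $J$ already defined on the open set $U\supset A\cup B$, one extends it over $M$ by choosing a $G$-invariant cutoff function and interpolating in the contractible fibres of the bundle of compatible structures: pick any global $J_1\in\mathcal{J}_\omega^G(M)$, and on each fibre connect $J|_U$ to $J_1$ along the canonical retraction, using an invariant bump function equal to $1$ on a smaller neighbourhood of $A\cup B$ and $0$ outside $U$. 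The result is a $J\in\mathcal{J}_\omega^G$ equal to the good local model near $A\cup B$, so both $A$ and $B$ are $J$-holomorphic. The main obstacle is the middle step: making the local-to-semilocal extension genuinely $G$-invariant while simultaneously (i) matching the prescribed invariant germ at the intersection point $p$ and (ii) ensuring the two spheres' constraints are mutually consistent on the overlap. This is handled cleanly by always producing sections of a bundle with contractible fibres and then averaging over $G$; the $\omega$-orthogonality hypothesis at $p$ is exactly what guarantees that the two local constraints from $A$ and $B$ are compatible there, so no further obstruction arises.
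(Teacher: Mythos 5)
Your proposal is correct and takes essentially the same route as the paper, which simply "mimics Evans' Lemma A.1 under the presence of a group action": fix the obvious invariant compatible structure near the $\omega$-orthogonal intersection point, extend along $A$ and $B$ using the contractible space of compatible structures preserving the tangent subbundles (equivalently, invariant metrics with $TA^{\perp_g}=TA^{\perp_\omega}$ along $A$ and $TB^{\perp_g}=TB^{\perp_\omega}$ along $B$), and extend globally with invariant cutoffs/partitions of unity. The one caveat is that $G$-invariance must be produced by averaging the associated metrics (a convex, $G$-stable condition) and then retracting to a compatible $J$ by polar decomposition, not by averaging almost complex structures or their sections directly — which is exactly what your concrete metric reformulation provides, and is how the paper's intended argument proceeds.
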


\begin{proof}
The proof of Lemma~A.1 in~\cite{Evans} can be made equivariant by standard averaging arguments.
\end{proof}

\begin{thm}{(Equivariant Gompf Argument)} \label{gmpf}
Let $G$ be a compact group. Let $p$ be a fixed point for the action. Let $A$,$B$ and $\overline{A}$ be $G$-invariant symplectic spheres in a 4-dimensional symplectic manifold $(M,\omega)$ such that 
\begin{itemize}
    \item $A \cap B = \{p\}$ and the intersection at $p$ is transverse
    \item $\overline{A} \cap B = \{p\}$ and the intersection at $p$ is $\om$-orthogonal.
\end{itemize} 
Then there exists an $S^1$-equivariant isotopy $A_t$ of $A$  such that $A_t$ intersects $B$ transversely at $p$ for all $t$,  $A_1$ = $\overline{A}$ in a small neighbourhood of $p$ and the curve $A_1$ agrees with $A$ outside some neighbourhood of $p$.
\end{thm}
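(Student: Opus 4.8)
The plan is to adapt Gompf's classical argument (as presented, e.g., in the non-equivariant literature on symplectic configurations) to the equivariant setting, exploiting the fact that near a fixed point $p$ the group action is linearisable. First I would fix an equivariant compatible almost complex structure: by Theorem~\ref{transverse}, choose $J \in \mathcal{J}_\om^G$ so that both $\overline{A}$ and $B$ are $J$-holomorphic, and arrange (using the equivariant symplectic neighbourhood theorem and Proposition~\ref{prop:linearize}) equivariant Darboux-type coordinates $(z,w)$ on an invariant neighbourhood $U$ of $p$ in which $\om=\om_0$, the $G$ action is linear and diagonal with respect to the splitting $\C_z\oplus\C_w$, $B=\{z=0\}$, and $\overline{A}=\{w=0\}$. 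Since $A$ meets $B$ transversally at $p$ and is $G$-invariant, its tangent space $T_pA$ is a $G$-invariant complement to $T_pB=\{z=0\}$ inside $T_pM=\C_z\oplus\C_w$; because the weights on the two summands govern the isotropy, $T_pA$ is necessarily a complex line of the form $\{w=\ell(z)\}$ for a linear map $\ell$ (or, in the degenerate weight cases, can be rotated to such a form), and in particular is itself a $G$-invariant complex line.

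The second step is the straightening of $A$ near $p$. Shrinking $U$, write $A\cap U$ as a graph $w=g(z)$ over a small disk in $\C_z$, where $g$ is $G$-equivariant (with respect to the weights), $g(0)=0$, and $dg_0=\ell$. I would first isotope $A$ through invariant symplectic graphs $w=g_s(z)$, $g_s = (1-s)g + s\,\ell$, to replace $A$ near $p$ by the linear invariant plane $T_pA$; here one checks, as in the non-equivariant case, that for $z$ small the interpolating graphs stay symplectic (the symplectic condition is open and holds at $s=0,1$) and invariant (the interpolation of two equivariant maps is equivariant), and that transversality with $B$ at $p$ is preserved since $dg_s|_0=\ell$ for all $s$ keeps $T_pA_s\cap T_pB=\{0\}$. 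Next I would rotate the linear invariant plane $T_pA=\{w=\ell(z)\}$ to the standard invariant plane $\overline{A}\cap U=\{w=0\}$ through a path of linear invariant symplectic planes transverse to $\{z=0\}$; this amounts to a path in a suitable homogeneous space of Lagrangian/symplectic complements, and equivariance forces each intermediate plane to again be $G$-invariant because any $G$-complement to $\{z=0\}$ is a graph over $\C_z$. Each of these modifications is supported in the shrinking neighbourhoods $U$ and can be realised, via an equivariant Moser/isotopy-extension argument, by an ambient $G$-equivariant Hamiltonian isotopy supported in $U$.

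The third step is to make the isotopy global and compactly supported away from $p$. I would cut off the local isotopy using a $G$-invariant bump function $\rho$ equal to $1$ on a smaller invariant neighbourhood $U'\ni p$ and supported in $U$; the generating Hamiltonians of the local isotopy, multiplied by $\rho$, produce a $G$-equivariant ambient symplectic isotopy $\Psi_t$ of $M$ that is the identity outside $U$. Setting $A_t:=\Psi_t(A)$ then gives an equivariant isotopy with $A_0=A$, with $A_1=\overline{A}$ on $U'$ (where $\rho\equiv 1$ and the straightening was completed), with $A_t=A$ outside $U$, and with $A_t\pitchfork B$ at $p$ for all $t$ since the construction kept $T_pA_t$ transverse to $T_pB$ throughout. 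I expect the main obstacle to be the careful bookkeeping in the degenerate weight cases --- where one of the coordinates $z,w$ may have weight $0$ (fixed surface) or the two weights coincide --- to ensure that "$G$-invariant complement of $T_pB$" really does force the graph form and that the rotation path of planes can be chosen invariant; handling this uniformly (rather than case by case on Karshon's graphs) is the delicate point, but it is precisely controlled by Proposition~\ref{prop:DefinitionOfWeights} and the linear model, so it should go through.
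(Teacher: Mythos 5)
Your overall setup (pass to an invariant local model near $p$ in which the action is linear, $B=\{z=0\}$, $\overline{A}=B^{\bot_\om}=\{w=0\}$, and $A$ is a graph) is the same as the paper's, but the heart of the Gompf argument is missing. The paper does not straighten $A$ to its tangent plane and then invoke an ambient isotopy; it rescales the graphing function radially, setting $A_t=\alpha_t(r^2)\,(f,g)$ with $\alpha_t=(1-t)+t\alpha(r^2)$ for an invariant cutoff $\alpha$, and the entire content of the proof is the quantitative verification that every intermediate graph stays symplectic: one writes $E=g\{f,r^2\}+f\{r^2,g\}$, uses that $E$ vanishes to second order at the origin to get $E\geq -cr^2$, and chooses $\alpha$ with $\alpha'(r^2)\leq \delta/(2cr^2)$ so that $1+\{\alpha_t f,\alpha_t g\}>0$ on the transition annulus. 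Your proposal has no substitute for this step. The convex interpolation $g_s=(1-s)g+s\ell$ is symplectic only over a small disk and does \emph{not} agree with $g$ near the boundary of that disk, so the deformed local piece cannot be glued to the untouched part of $A$; and the suggested repair --- cut off the generating Hamiltonians of the local isotopy --- presupposes that the local graph isotopy is realized by a $G$-equivariant ambient Hamiltonian isotopy defined on all of $U$, which is not constructed and is not supplied by the paper's tools (Lemma~\ref{Au} applies to closed invariant submanifolds of a compact manifold, not to germs of moving local graphs). Without either the paper's bracket estimate or a genuine equivariant relative isotopy-extension argument, the family you describe is not an isotopy of symplectic invariant spheres agreeing with $A$ outside a neighbourhood of $p$, which is exactly what the theorem asserts.

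A second, smaller gap is the "rotation" of $T_pA$ to $\{w=0\}$ through invariant symplectic planes transverse to $\{z=0\}$. If the weights $(\alpha,\beta)$ at $p$ are distinct and $\beta\neq-\alpha$, the only invariant real transversal is $\{w=0\}$ itself, so no rotation is needed; but when $\beta=-\alpha$ the invariant transversals are the graphs $w=b\bar z$, which are symplectic exactly when $|b|\neq 1$, a \emph{disconnected} set, so connecting $T_pA$ to $\{w=0\}$ requires an additional argument (e.g.\ via orientations/positivity of intersection) that you defer to "should go through". The paper's radial rescaling avoids this issue entirely, since near $p$ the tangent plane of $A_t$ is just $(1-t)\,d(f,g)_0$, automatically transverse to $T_pB$ and symplectic for all $t$, so you would need to close both gaps for your route to constitute a proof.
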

\begin{proof}Since this is a local problem, we can work in a trivialising chart in $\R^4$ in which the action is linear. Let $B^{\bot_\om}$ be the symplectic orthogonal to $B$. We can assume the image of  $B$ to be the two plane in $\R^4$ given by $(0,0,x,y)$, and $B^{\bot_\om}$ to the given by the plane $(x,y,0,0)$. As $\overline{A}$ is $\om$-orthogonal to $B$, we can choose the neighbourhood such that $\overline{A} = B^{\bot_\om}$ near $p$. As $A$ is transverse to $B$ at $p$, we can assume its image is given by the graph of function (which we also call A) $A:(f,g): \R^2 \rightarrow \R^2$.
\\

Next we observe that given a function $A:=(f,g): \R^2 \rightarrow \R^2$, the graph of $A$ is a symplectic (for the standard form) submanifold of $\R^4$ iff $\left\{f,g\right\} > -1$. This can be proven from  a direct computation.  We will construct an isotopy of graphs of function of the form $A_t:= \alpha_t(r^2) A$ where $\alpha_t$ is a bump function depending only on the radius squared (for a fixed $G$ invariant  metric) in $\R^2$, and such that \begin{itemize}
    \item $A_0 = A$,
    \item $A_1 = 0$  near (0,0),
    \item $A_t = A$ outside of some neighbourhood of the origin,
    \item $A_t$ is symplectic for all $t$.
\end{itemize}  
Note that as $\alpha_t$ is depends on the radius for a fixed $G$ invariant metric, $A_t$ is also $G$ invariant.
\\

Define $E = g\left\{f,r^2\right\} + f\left\{r^2,g\right\}$. Using the fact that $r^2(0,0) = 0$ and that $(r^2)^\prime (0,0) = 0$ we see that $E(0,0) = 0$ and $\frac{\partial}{\partial r}E(0,0) = 0$. By the intermediate value theorem, there exists $c > 0$, $\epsilon > 0$ and $u > 0$ such that on the ball of radius $u + \epsilon$ around the origin $B(0,u+\epsilon)$  we have $E(x) \geq -c r^2(x)$. Choose $\delta$ such that  on $B(0,u+\epsilon)$, $1 + \left\{f,g\right\} > \delta > 0$
\\

Pick $\alpha: \R \rightarrow \R$ satisfying the following properties:

\begin{itemize}
\item  $\alpha(r^2) = 1$ for $r^2 \geq u$,
\item $\alpha(r) = 0$ for r near $0$,
\item $\alpha^\prime(r^2) \leq \frac{\delta}{2cr^2} < \frac{1 + \{f,g\}}{2cr^2}$.
\end{itemize} 
Define $\alpha_t:= (1-t) + t \alpha(r^2)$ and $A_t := \alpha_t A$. To show that $A_t$ is symplectic for all $0 \leq t \leq 1$ we need to check that  $\left\{\alpha_tf,\alpha_tg\right\} > -1$ for all $0 \leq t \leq 1$. 
In the neighbourhood $B(u)$ we have
\begin{equation*}
1 + \left\{\alpha_t f,\alpha_t g\right\} = \underbrace{1 + \alpha_t^2 \{f,g\}}_{\geq \delta} + \underbrace{\alpha_t\alpha_t^\prime E}_{\geq \frac{-\delta}{2}} \geq 0
\end{equation*}
The inequality $1 + \alpha_t^2 \{f,g\} \geq \delta$ follows from the definition of $\delta$ and from noting that $0 \geq \alpha_t \geq 1$.  $\alpha_t\alpha_t^\prime E \geq \frac{-\delta}{2}$ follows from the  inequality 
\begin{align*}
    \alpha_t\alpha_t^\prime E &\geq \alpha_t\alpha_t^\prime (-cr^2) \\
    &\geq -\alpha_t\frac{\delta}{2cr^2} (cr^2) \\
    &\geq -\alpha_t \frac{\delta}{2} \\
    &\geq \frac{-\delta}{2}
\end{align*}
Thus in the neighbourhood $B(u)$ we have the inequality $1 + \left\{\alpha_t f,\alpha_t g\right\} > 0$ for all  t. Outside of $B(u)$, the derivative $\alpha_t^\prime$ is identically 0 and $\alpha_t \equiv 1$. Hence $\alpha_t\alpha_t^\prime E \equiv 0$ outside $B(u)$ and $1 + \left\{\alpha_t f,\alpha_t g\right\} = 1 + \alpha_t^2 \{f,g\} + \cancelto{0}{\alpha_t\alpha_t^\prime E} = 1 + \{f,g\} > 0$ outside of $B(u)$.\\

Finally we note that $A_1 =0$ in a neighbourhood of $(0,0)$ and it equals $A$ outside the ball of radius $u$ around the origin, thus proving the claim.
\end{proof}

\chapter{Alexander-Eells isomorphism}\label{Appendix-Alexander-Eells}

In this appendix, we prove the version of the Alexander-Eells isomorphism used in Section~\ref{subsection:CohomologyModule}. We first recall an isomorphism between the homology of a submanifold $Y\subset X$ and the homology of its complement $X-Y$ that is reminiscent of the Alexander-Pontryagin duality in the category of oriented, finite dimensional manifolds. This isomorphism, due to J. Eells, exists whenever the submanifold $Y$ is co-oriented and holds, in particular, for infinite dimensional Fréchet manifolds. We then give a geometric realization of this isomorphism in the special case $Y$ and $X-Y$ are orbits of a continuous action $G\times X\to X$ satisfying some mild assumptions. We closely follow the exposition in Eells~\cite{Eells}.\\

Let $X$ be a manifold, possibly infinite dimensional, and let $Y$ be a co-oriented submanifold of positive and finite codimension $p$. As explained in~\cite{Eells}, there exists an isomorphism of singular cohomology groups
\[\phi:H^{i}(Y)\to H^{i+p}(X, X-Y)\]
called the Alexander-Eells isomorphism. We define the fundamental class (Thom class) of the pair $(X,Y)$ as $u=\phi(1)\in H^{p}(X, X-Y)$.
\begin{prop}[Eells~\cite{Eells}, p. 113]
The pairing
\[\begin{aligned}
H^{*}(Y)\otimes H^{*}(X,X-Y)&\to H^{*}(X,X-Y)\\
y\otimes x &\mapsto y\cup x
\end{aligned}\]
makes $H^{*}(X,X-Y)$ into a free $H^{*}(Y)$-module of rank one, generated by $u$.
\end{prop}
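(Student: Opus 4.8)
The statement asserts that $H^{*}(X, X-Y)$ is a free module of rank one over $H^{*}(Y)$, with generator the Thom class $u = \phi(1)$. The plan is to deduce this directly from the defining properties of the Alexander--Eells isomorphism $\phi \colon H^{*}(Y) \to H^{*+p}(X, X-Y)$, together with the naturality of the cup product pairing relating $H^{*}(Y)$, $H^{*}(X)$, and $H^{*}(X, X-Y)$. The key algebraic fact I would use is that $\phi$ is an isomorphism of $H^{*}(X)$-modules, where $H^{*}(Y)$ acquires its $H^{*}(X)$-module structure via the restriction homomorphism $r \colon H^{*}(X) \to H^{*}(Y)$, and $H^{*}(X, X-Y)$ has the module structure coming from the cup product $H^{*}(X) \otimes H^{*}(X, X-Y) \to H^{*}(X, X-Y)$.

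First I would recall (from Eells, p.\ 113, or by construction of $\phi$ via a tubular neighbourhood of $Y$ together with the Thom isomorphism for its normal bundle) the multiplicativity property: for every $\xi \in H^{*}(X)$ and every $y \in H^{*}(Y)$,
\[
\phi(r(\xi) \cup y) = \xi \cup \phi(y),
\]
where on the left $\cup$ is the cup product in $H^{*}(Y)$ and on the right $\cup$ is the $H^{*}(X)$-action on $H^{*}(X, X-Y)$. Taking $y = 1$ gives $\phi(r(\xi)) = \xi \cup u$. More generally, writing an arbitrary element of $H^{*}(Y)$ in the form $y$ and using that $\phi$ is additive and compatible with the $H^{*}(X)$-module structures, one obtains $\phi(y) = y \cup u$ once we interpret the cup product $y \cup u \in H^{*}(X, X-Y)$ via the module pairing $H^{*}(Y) \otimes H^{*}(X, X-Y) \to H^{*}(X, X-Y)$ defined by lifting $y$ to a neighbourhood of $Y$; I would spell out that this pairing is well defined because elements of $H^{*}(X, X-Y)$ are supported near $Y$, so only the germ of a cohomology class along $Y$ is needed to cup with them. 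This identifies the abstract isomorphism $\phi$ with the map $y \mapsto y \cup u$.

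Once $\phi(y) = y \cup u$ is established, the conclusion is immediate: since $\phi$ is a bijection, every element of $H^{*}(X, X-Y)$ is uniquely of the form $y \cup u$ with $y \in H^{*}(Y)$, which is exactly the assertion that $H^{*}(X, X-Y)$ is a free $H^{*}(Y)$-module of rank one on the generator $u$. I expect the main obstacle to be purely bookkeeping rather than conceptual: one must carefully define the $H^{*}(Y)$-module structure on $H^{*}(X, X-Y)$ (the cup product of a class on $Y$ with a relative class supported near $Y$), verify it is associative and unital, and check that $\phi$ intertwines it with ordinary multiplication on $H^{*}(Y)$. In the infinite-dimensional Fréchet setting this requires knowing that $Y$ admits a tubular (normal bundle) neighbourhood and that the Thom isomorphism applies to the normal bundle; both are available here since $Y$ is a co-oriented submanifold of finite codimension $p$, which is precisely the hypothesis under which Eells constructs $\phi$ in the first place. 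No genuinely new difficulty arises beyond transcribing Eells' argument.
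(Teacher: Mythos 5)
First, note that the paper does not contain a proof of this proposition: it is quoted directly from Eells (p.~113) and used as a black box in the appendix, so the only comparison available is between your sketch and Eells's original argument, not anything written in the paper.

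Your overall architecture is the standard one and is sound in outline: make the pairing $H^{*}(Y)\otimes H^{*}(X,X-Y)\to H^{*}(X,X-Y)$ meaningful by excising to a neighbourhood of $Y$ (relative classes only see germs along $Y$), identify $\phi$ with $y\mapsto y\cup u$, and then deduce freeness of rank one from bijectivity of $\phi$. The genuine gap is the mechanism you propose for the crucial identification $\phi(y)=y\cup u$. The $H^{*}(X)$-module compatibility $\phi(r(\xi)\cup y)=\xi\cup\phi(y)$ only controls $\phi$ on the image of the restriction $r\colon H^{*}(X)\to H^{*}(Y)$: taking $y=1$ gives $\phi(r(\xi))=\xi\cup u$ and nothing beyond that. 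Additivity does not help, since a general class of $H^{*}(Y)$ need not lie in $\operatorname{im} r$; indeed, in the situation this appendix is used for, $X=\jsom$ is contractible, so $r$ hits only scalars and $H^{*}(X)$-linearity reduces to the tautology $\phi(1)=u$. What you actually need is multiplicativity of $\phi$ over classes defined only near $Y$, i.e.\ $\phi(y'\cup y)=y'\cup\phi(y)$ for the neighbourhood pairing, which with $y=1$ gives the formula; but that statement \emph{is} the proposition (the cup-product description of the Thom isomorphism of the co-oriented rank-$p$ normal bundle), so it cannot be waved in as "compatibility" — it must be proved from the construction of $\phi$ (for instance, reduce by excision and a tubular neighbourhood to the normal bundle and run a Leray--Hirsch or Mayer--Vietoris argument over a trivializing cover) or imported wholesale from Eells, which is exactly what the paper does. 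A secondary point: in the Fr\'echet setting the tubular neighbourhood you invoke is not automatic; the finite codimension, co-orientation, and Eells's specific neighbourhood constructions are what legitimize that step, so if you take the tubular-neighbourhood route this input should be cited or verified rather than treated as routine.
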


Let $\phi_{*}:H_{i+p}(X,X-Y)\to H_{i}(Y)$ be the dual of the Alexander-Eells isomorphism $\phi^{*}=\phi$. By definition, we have
\[\phi_{*}(a)=u\cap a\]

Suppose a topological group $G$ acts continuously on $X$ (on the left), leaving $Y$ invariant, and in such a way that both $X-Y$ and $Y$ are homotopy equivalent to orbits. We have continuous maps $\mu:G\times (X,X-Y)\to (X,X-Y)$, ~$\mu:G\times (X-Y)\to (X-Y)$, and $\mu:G\times Y\to Y$ inducing $H_{*}(G)$-module structures on $H_{*}(X,X-Y)$, $H_{*}(X-Y)$ and $H_{*}(Y)$. We write $\mu_{*}(c\otimes a)=c*a$ for the action of $c\in H_{i}(G)$.

\begin{lemma}\label{lemma:AlexanderEellsGmodule}
In this situation, the Alexander-Eells isomorphism preserves the $H_{*}(G)$-module structure, that is, the following diagram is commutative:
\[
\begin{tikzcd}
H_{*}(G)\otimes H_{*}(X,X-Y) \arrow{r}{\mu_{*}} \arrow[swap]{d}{1\otimes\phi_{*}} & H_{*}(X,X-Y) \arrow{d}{\phi_{*}} \\
H_{*}(G)\otimes H_{*}(Y) \arrow{r}{\mu_{*}} & H_{*}(Y)
\end{tikzcd}
\]
Thus for any $a\in H_{i+p}(X,X-Y)$, $c\in H_{i}(G)$, we have $\phi_{*}(c*a) = c*\phi_{*}(a)$.
\end{lemma}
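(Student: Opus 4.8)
The statement to prove is Lemma~\ref{lemma:AlexanderEellsGmodule}: the Alexander--Eells isomorphism $\phi_{*}$ is a morphism of $H_{*}(G)$-modules. The plan is to reduce everything to the defining property $\phi_{*}(a)=u\cap a$, where $u=\phi(1)\in H^{p}(X,X-Y)$ is the fundamental class, and then to exploit the naturality of the cap product together with the fact that $u$ is ``$G$-invariant'' in a suitable sense. First I would recall the compatibility of the cap product with continuous maps: for a map $f\colon(X',A')\to(X,A)$, one has $f_{*}(f^{*}\alpha\cap a')=\alpha\cap f_{*}(a')$. The action maps $\mu\colon G\times(X,X-Y)\to(X,X-Y)$ and $\mu\colon G\times Y\to Y$ will play the role of $f$, so the key point is to understand $\mu^{*}u$ in $H^{p}\bigl(G\times(X,X-Y)\bigr)$.

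The heart of the argument is the claim that $\mu^{*}u = 1\times u$ under the Künneth decomposition $H^{p}\bigl(G\times(X,X-Y)\bigr)\cong\bigoplus_{i+j=p} H^{i}(G)\otimes H^{j}(X,X-Y)$ (using field coefficients, as in Section~\ref{subsection:CohomologyModule}, so Künneth is clean). To see this, note that the inclusion $\{e\}\times(X,X-Y)\hookrightarrow G\times(X,X-Y)$ followed by $\mu$ is the identity, which forces the $H^{0}(G)\otimes H^{p}$-component of $\mu^{*}u$ to be $1\otimes u$; the components in $H^{i}(G)\otimes H^{p-i}(X,X-Y)$ with $i>0$ must vanish because $u$ restricts to the fundamental class of the pair on the slice and, more structurally, because the Alexander--Eells fundamental class is characterized by $\phi(1)=u$ and $\phi$ is natural with respect to the inclusion-preserving, fiber-preserving maps induced by $\mu$ (one applies the same uniqueness statement, $H^{*}(G\times X,\,G\times(X-Y))$ being a free $H^{*}(G\times Y)$-module of rank one on $\mu^{*}u$, and matches generators on the slice $\{e\}\times X$). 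Concretely, I would verify that $\mu$ carries the co-orientation of $Y$ pulled back appropriately, so that $\mu^{*}u$ is again a Thom class for the pair $\bigl(G\times X,\,G\times(X-Y)\bigr)$ relative to $G\times Y$, whence it equals $1\times u$ by the rank-one freeness (Eells, p.~113, quoted above) applied to this product pair.

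Granting $\mu^{*}u=1\times u$, the computation is then routine. For $c\in H_{i}(G)$ and $a\in H_{i+p}(X,X-Y)$, writing $\times$ for the homology cross product and using $\mu_{*}(c\otimes a)=\mu_{*}(c\times a)$,
\[
\phi_{*}(c*a)=u\cap \mu_{*}(c\times a)=\mu_{*}\bigl(\mu^{*}u\cap(c\times a)\bigr)=\mu_{*}\bigl((1\times u)\cap(c\times a)\bigr).
\]
Then the standard formula for the cap product of cross products, $(1\times u)\cap(c\times a)=\pm\,(1\cap c)\times(u\cap a)=c\times(u\cap a)$ (the sign is $+$ since $1$ has degree $0$), gives
\[
\phi_{*}(c*a)=\mu_{*}\bigl(c\times(u\cap a)\bigr)=\mu_{*}\bigl(c\otimes\phi_{*}(a)\bigr)=c*\phi_{*}(a),
\]
which is exactly the commutativity of the diagram. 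I expect the main obstacle to be the justification of $\mu^{*}u=1\times u$: one must be careful that the Alexander--Eells/Thom class is well-behaved for the product pair $\bigl(G\times X,\,G\times(X-Y)\bigr)$ in the infinite-dimensional Fréchet setting, i.e.\ that Eells's construction and its rank-one module property are natural under the continuous map $\mu$ and compatible with the external product with $H^{*}(G)$; once this naturality is in place, identifying the class on the slice $\{e\}\times X$ pins it down uniquely. All the remaining manipulations are formal consequences of the cap-product identities and require no hypotheses on $G$ beyond continuity of the action and the Künneth theorem with field coefficients.
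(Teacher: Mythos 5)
Your proposal follows essentially the same route as the paper: everything reduces to the identity $\mu^{*}u = 1\otimes u$, after which commutativity follows from naturality of the cap product and the cross-product formula $(1\times u)\cap(c\times a)=c\times(u\cap a)$ (the paper verifies this last identity by pairing against classes of the form $c\otimes j^{*}x$, which amounts to the same computation). The only real difference is how you justify $\mu^{*}u=1\otimes u$: the paper gets it in one line from the vanishing $H^{i}(X,X-Y)=0$ for $i<p$, which forces $\mu^{*}u$ to lie in $H^{0}(G)\otimes H^{p}(X,X-Y)$, the slice $\{e\}\times(X,X-Y)$ then pinning down the coefficient. Note that your first stated reason for killing the components in $H^{i}(G)\otimes H^{p-i}(X,X-Y)$ with $i>0$ --- that $u$ restricts correctly on the slice --- is not sufficient on its own, since restriction to the slice annihilates those components regardless of what they are; your backup argument via rank-one freeness of $H^{*}\bigl(G\times X,\,G\times(X-Y)\bigr)$ over $H^{*}(G\times Y)$ with generator $1\times u$ can be made to work (modulo connectedness considerations, satisfied in the application), but it is heavier than the degree argument the paper uses. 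With that one substitution, your computation coincides with the paper's proof.
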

\begin{proof}
We first note that if $u$ is the fundamental class of the pair $(X,Y)$, then $\mu^{*}(u)=1\otimes u\in H^{0}(G)\otimes H^{p}(X,X-Y)$, because $H^{i}(X,X-Y)=0$ for all $i<p$. The cap product is a bilinear pairing
\[
\cap: H^{*}\big(G\times (X,X-Y)\big)\otimes H_{*}\big(G\times (X,X-Y)\big) \to H_{*}\big(G\times X\big)
\]
that is adjoint to the cup product. Using the relative form of K\"unneth theorem, this bilinear map defines a pairing
\[
\cap:\big(H^{*}(G)\otimes H^{*}(X,X-Y)\big)\otimes\big(H_{*}(G)\otimes H_{*}(X,X-Y)\big)\to H_{*}(G)\otimes H_{*}(X)
\]
Let's write $j^{*}:H^{*}(X,X-Y)\to H^{*}(X)$.  Then, for any $c\otimes x\in H^{*}(G)\otimes H^{*}(X,X-Y)$, and any $b\otimes a\in H_{*}(G)\otimes H_{*}(X,X-Y)$ we have
\[\begin{aligned}
\big\langle c\otimes j^{*}x, \mu^{*}(u)\cap (b\otimes a)\big\rangle 
&= \big\langle (c\otimes x)\cup\mu^{*}(u),b\otimes a\big\rangle\\
&= \big\langle (c\otimes x)\cup(1\otimes u),b\otimes a\big\rangle\\
&= \big\langle (c\cup1)\otimes (x\cup u),b\otimes a\big\rangle\\
&= \big\langle c, b\big\rangle \big\langle (x\cup u),a\big\rangle\\
&= \big\langle c,b\big\rangle \big\langle j^{*}x, u\cap a\big\rangle\\
&=\big\langle c\otimes j^{*}x, b\otimes (u\cap a)\big\rangle
\end{aligned}\]
It follows that $\mu^{*}(u)\cap (b\otimes a) = b\otimes (u\cap a) = b\otimes \phi_{*}(a)$. We then compute
\begin{multline*}
\phi_{*}(\mu_{*}(c\otimes x)) 
=u\cap\mu_{*}(c\otimes x) 
=\mu_{*}\big(\mu^{*}(u)\cap(c\otimes x)\big)\\
=\mu_{*}\big(c\otimes\phi_{*}(x)\big)
=\mu_{*}\big((1\otimes \phi_{*})(c\otimes x)\big)
\end{multline*}
which is the desired relation.
\end{proof}
Since $H_{i}(X,X-Y)=0$ for $i<p$, the Universal Coefficient Theorem yields a canonical isomorphism $\beta:H^{p}(X,X-Y)\to \Hom(H_{p}(X,X-Y);\Z)$. If $Y$ is connected, $H_{p}(X,X-Y)$ is of rank one. In this case, define $a_{u}\in H_{p}(X,X-Y)$ as the unique class such that $\beta(u)a_{u}=1$. Suppose the Leray-Hirsch theorem applies to the evaluation fibration $G\to Y$. Then, $H_{*}(X,X-Y)$ becomes a $H_{*}(Y)$-module by identifying $H_{*}(Y)$ with $1\otimes H_{*}(Y)\subset H_{*}(G)$ and by setting
\[b*x := [1\otimes b]*x\]
\begin{thm}\label{thm:AlexanderEells}
Under the above assumptions, the isomorphism $\psi_{*}=\phi_{*}^{-1}:H_{i}(Y)\to H_{i+p}(X,X-Y)$ is given by $\psi_{*}(y)=y*a_{u}$. Thus $H_{*}(X,X-Y)$ is generated by $a_{u}$ as a $H_{*}(Y)$-module.
\end{thm}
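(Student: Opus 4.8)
The plan is to deduce the statement from Lemma~\ref{lemma:AlexanderEellsGmodule} together with a careful analysis of what the dual Alexander-Eells isomorphism does to the single generator $a_{u}\in H_{p}(X,X-Y)$. First I would record that, since $Y$ is connected and of positive codimension $p$, the group $H_{p}(X,X-Y)$ has rank one and is generated by the class $a_{u}$ characterized by $\langle u, a_{u}\rangle = \beta(u)(a_{u}) = 1$. Applying $\phi_{*}$ to $a_{u}$ gives $\phi_{*}(a_{u}) = u\cap a_{u}\in H_{0}(Y)$, and by the standard compatibility between the cap product and the evaluation pairing this is exactly the class $\langle u, a_{u}\rangle\cdot[\mathrm{pt}] = [\mathrm{pt}]= 1\in H_{0}(Y)$. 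So $\phi_{*}(a_{u}) = 1$, i.e. $a_{u}$ is the preimage of the canonical generator of $H_{0}(Y)$ under $\phi_{*}$; equivalently $\psi_{*}(1) = a_{u}$.

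Next I would use the $H_{*}(G)$-module (and hence $H_{*}(Y)$-module, via the Leray–Hirsch splitting $H_{*}(Y)\hookrightarrow H_{*}(G)$) structure. By Lemma~\ref{lemma:AlexanderEellsGmodule}, $\phi_{*}$ is a morphism of $H_{*}(G)$-modules, so its inverse $\psi_{*}$ is too. Therefore for any $y\in H_{i}(Y)$, writing $y = y * 1$ for the module action of $y$ (viewed inside $H_{*}(G)$) on the generator $1\in H_{0}(Y)$, one gets
\[
\psi_{*}(y) = \psi_{*}(y * 1) = y * \psi_{*}(1) = y * a_{u}.
\]
Here I must check that the module action of $H_{*}(Y)$ on $H_{0}(Y)$ is indeed "$y\mapsto y$", which is immediate since the evaluation map $G\to Y$ is a fibration with section after looping in the relevant range, or more simply because under the identification $1\otimes H_{*}(Y)\subset H_{*}(G)$ the Pontryagin-type action of $H_{*}(G)$ on $H_{*}(Y)=H_{*}(\text{orbit})$ restricts to the identity on the unit. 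Finally, since $\psi_{*}$ is an isomorphism and $\psi_{*}(H_{*}(Y)) = H_{*}(Y)*a_{u}$, the module $H_{*}(X,X-Y)$ is generated by $a_{u}$ over $H_{*}(Y)$, which is the last assertion.

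The main obstacle I anticipate is \emph{bookkeeping rather than substance}: one has to make precise the two different "module" structures in play — the genuine $H_{*}(G)$-action coming from $\mu:G\times(X,X-Y)\to(X,X-Y)$, and the induced $H_{*}(Y)$-action obtained by transporting $H_{*}(Y)$ into $H_{*}(G)$ via a Leray–Hirsch section of the evaluation fibration $G\to Y$ — and to verify that these are compatible with the maps $\phi_{*}$, $\psi_{*}$ and with the normalization $\phi_{*}(a_{u})=1$. Once the identity $\psi_{*}(1)=a_{u}$ is established and Lemma~\ref{lemma:AlexanderEellsGmodule} is invoked, the conclusion is formal; so the write-up will mostly consist of spelling out these identifications and the cap-product/evaluation-pairing compatibility $u\cap a_u = \langle u,a_u\rangle\,[\mathrm{pt}]$.
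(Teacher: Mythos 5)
Your proposal is correct and follows essentially the same route as the paper: establish $\phi_{*}(a_{u})=1$ (the paper does this by pairing with $1\in H^{0}(Y)$, i.e. $\langle 1,\phi_{*}(a_{u})\rangle=\langle u,a_{u}\rangle=1$, which is the same computation as your cap-product/evaluation compatibility), then invoke Lemma~\ref{lemma:AlexanderEellsGmodule} to conclude $\psi_{*}(y)=\psi_{*}(y*1)=y*\psi_{*}(1)=y*a_{u}$. The bookkeeping issues you flag are handled in the paper at the same level of detail, so no gap.
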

\begin{proof}
Since
\[\langle 1, \phi_{*}(a_{u})\rangle
=\langle 1, u\cap a_{u}\rangle 
=\langle 1\cup u, a_{u}\rangle
=\langle u,a_{u}\rangle=1\]
it follows that $\phi_{*}(a_{u})=1$, which is equivalent to $\psi_{*}(1)=a_{u}$. Since $\phi_{*}$ is an isomorphism of $H_{*}(G)$-modules, its inverse is also an isomorphism of $H_{*}(G)$-modules. We can then write
\[\psi_{*}(y) = \psi_{*}(y*1) = y*\psi_{*}(1) = y*a_{u}\]
\end{proof}
From the naturality of the connecting homomorphism $\partial$ in the long exact sequence of the pair $(X, X-Y)$, we get
\begin{cor}\label{cor:AlexanderEells}
Suppose in addition to the above hypotheses that $X$ is contractible. Then the isomorphism 
\[\lambda_{*}=\partial\circ\psi_{*}: H_{i}(Y)\to H_{i+p-1}(X-Y)\]
is given by $\lambda_{*}(y) = y*x_{u}$, where $x_{u}=\partial a_{u}$.
\end{cor}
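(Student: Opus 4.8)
\textbf{Proof plan for Corollary~\ref{cor:AlexanderEells}.}

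The plan is to simply combine Theorem~\ref{thm:AlexanderEells} with the long exact sequence of the pair $(X,X-Y)$ and with the naturality of that sequence under the $G$-action. First I would recall that, because $X$ is contractible, the long exact sequence of the pair $(X,X-Y)$ gives isomorphisms
\[
\partial : H_{i+p}(X,X-Y) \xrightarrow{\ \cong\ } H_{i+p-1}(X-Y)
\]
for all $i$ with $i+p\geq 2$, and similarly in the remaining low degrees one checks the statement directly. Composing $\psi_{*}:H_{i}(Y)\to H_{i+p}(X,X-Y)$ from Theorem~\ref{thm:AlexanderEells} with this $\partial$ yields an isomorphism $\lambda_{*}=\partial\circ\psi_{*}:H_{i}(Y)\to H_{i+p-1}(X-Y)$, so the content of the corollary is purely the explicit formula $\lambda_{*}(y)=y*x_{u}$ with $x_{u}:=\partial a_{u}$.

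Next I would establish that $\partial$ is a map of $H_{*}(G)$-modules. The action maps fit into a commuting diagram of pairs
\[
\begin{tikzcd}
G\times(X,X-Y) \arrow{r}{\mu} \arrow{d} & (X,X-Y) \arrow{d}\\
G\times(X-Y) \arrow{r}{\mu} & (X-Y)
\end{tikzcd}
\]
so the connecting homomorphism $\partial$ commutes with $\mu_{*}$ in the same way that $\phi_{*}$ does in Lemma~\ref{lemma:AlexanderEellsGmodule}; concretely, for $c\in H_{*}(G)$ and $a\in H_{*}(X,X-Y)$ one has $\partial(c*a)=c*\partial(a)$, which follows from the relative Künneth theorem exactly as in the proof of Lemma~\ref{lemma:AlexanderEellsGmodule} (the class $\mu^{*}(u)$ plays no role here; one only needs naturality of $\partial$ under a continuous map of pairs together with the Künneth identification $H_{*}(G\times(X,X-Y))\cong H_{*}(G)\otimes H_{*}(X,X-Y)$). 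Restricting the $H_{*}(G)$-action to the image of $H_{*}(Y)\hookrightarrow H_{*}(G)$ via the evaluation fibration $G\to Y$ (using Leray--Hirsch as in Theorem~\ref{thm:AlexanderEells}), $\partial$ is in particular a map of $H_{*}(Y)$-modules.

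Finally I would put the pieces together: for $y\in H_{i}(Y)$,
\[
\lambda_{*}(y)=\partial\big(\psi_{*}(y)\big)=\partial\big(y*a_{u}\big)=y*\partial(a_{u})=y*x_{u},
\]
where the second equality is Theorem~\ref{thm:AlexanderEells} and the third is the $H_{*}(Y)$-linearity of $\partial$ just established. Since $\psi_{*}$ and $\partial$ are both isomorphisms, so is $\lambda_{*}$, and the formula shows $H_{*}(X-Y)$ is generated by $x_{u}$ as an $H_{*}(Y)$-module. The only mild subtlety — and the step I would be most careful about — is the bookkeeping in the low-degree range of the long exact sequence of $(X,X-Y)$ where $X$ contractible forces $H_{*}(X)$ to vanish only in positive degrees; there $\partial$ must be checked to be an isomorphism by hand (equivalently one observes $H_{0}(X-Y)\cong H_{0}(Y)$ since both are connected, compatibly with the action), but this is routine and does not affect the module-theoretic statement.
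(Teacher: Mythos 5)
Your proposal is correct and follows essentially the same route as the paper, which deduces the corollary in one line from the naturality of the connecting homomorphism $\partial$ of the pair $(X,X-Y)$ applied after Theorem~\ref{thm:AlexanderEells}; you simply make explicit the $H_{*}(G)$-linearity of $\partial$ (via the action map of pairs and the K\"unneth identification) and the low-degree bookkeeping. No gap to report.
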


We now apply the Alexander-Eells isomorphism to the situation of Section~\ref{section:TwoOrbits}. Let $G$ be the centralizer $\Symp_{h}^{S^1}(\SSS,\oml)$, let $X=\jsom$ be the contractible space of invariant, compatible,  almost-complex structures, and let $Y$ be the codimension 2 stratum $\jsom\cap U_{m'}$. Let $p_m$ denote the map 
\[p_m: \Symp^{S^1}_h(\SSS,\oml)\rightarrow \Symp^{S^1}_h(\SSS,\oml)/\T_m \simeq \jsom \cap U_{m}\]
Further, let $y_1$ and $y_2$ denote the generators of the Pontryagin algebra $H_{*}(T^{2}_{m'};k)$.  Note that under the inclusion $i_m: \T_m \hookrightarrow \Symp_{h}^{S^1}(\SSS,\oml)$, the elements $y_1$ and $y_2$ correspond to the circle actions $(1,0;m)$ and $(0,1;m)$ respectively. 
The connecting isomorphism $\partial:H_{2}(\jsom, \jsom\cap U_{m})\to H_{1}(\jsom\cap U_{m})$ maps the generator $a_{u}$ to the link of $\jsom\cap U_{m'}$ in $\jsom\cap U_{m}$, that is, to the loop $p_{m}(y_{2})$ generated by the action of $y_{2}\subset G$. Consequently, Corollary~\ref{cor:AlexanderEells} immediately implies the following geometric description of the Alexander-Eells isomorphism.
\begin{prop}\label{prop:AlexanderEellsGeometric}
The Alexander-Eells isomorphism
\[\lambda_{*}:H_{p}(\jsom\cap U_{m'})\to H_{p+1}(\jsom\cap U_{m})\]
is given by
\[\lambda_{*}(y)= y*p_{m}(y_{2})=y*y_{2}*1=[y_{2}\otimes y]*1=\mu_{m}\big([y_{2}\otimes y]\otimes 1\big)=p_{m}\big([y_{2}\otimes y]\big)\]
In particular, if $p_{m'}(\tilde y) = y\in H_{*}(Y)$, then $\lambda_{*}(y)=\tilde y*y_{2}*1=p_{m}(\tilde y* y_{2})$. 
\end{prop}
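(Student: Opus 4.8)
\textbf{Proof plan for Proposition~\ref{prop:AlexanderEellsGeometric}.}

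The plan is to simply specialize the general machinery of Theorem~\ref{thm:AlexanderEells} and Corollary~\ref{cor:AlexanderEells} to the concrete situation at hand, so the bulk of the work is bookkeeping rather than new ideas. First I would check that all the hypotheses of the appendix apply: the ambient space $X=\jsom$ is contractible (it is the space of invariant compatible almost-complex structures), $Y=\jsom\cap U_{m'}$ is a co-oriented submanifold of codimension $p=2$ by Corollary~\ref{cor:StrataAreSubmanifolds} and Theorem~\ref{codimension_calc} (the codimension is complex one, hence real two, and the complex normal bundle gives the co-orientation), the group $G=\Symp^{S^1}_h(\SSS,\oml)$ acts on $X$ with $Y$ invariant, and both $Y=\jsom\cap U_{m'}$ and $X-Y=\jsom\cap U_{m}$ are homotopy equivalent to orbits $G/\T_{m'}$ and $G/\T_{m}$ respectively (this is Theorem~\ref{homogenous}, combined with the fibrations~(\ref{eq:TheTwoMainFibrations})). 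Finally the Leray--Hirsch hypothesis holds because the inclusions $\T_m\into G$ and $\T_{m'}\into G$ are injective in homology (Lemma~\ref{inj}), which is exactly what was used in Subsection~\ref{subsection:CohomologyModule} to split the cohomology.

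Next I would identify the distinguished class $x_u=\partial a_u\in H_1(X-Y)=H_1(\jsom\cap U_m)$ produced by Corollary~\ref{cor:AlexanderEells}. By construction $a_u\in H_2(X,X-Y)$ is the unique class dual to the Thom class $u$, and its image under the connecting homomorphism $\partial$ of the pair $(\jsom,\jsom\cap U_m)$ is the boundary of a small disk transverse to the codimension-$2$ stratum $Y$. The key geometric observation is that this boundary circle is precisely the loop obtained by acting with the circle subgroup $y_2=S^1(0,1;m)\subset\T_m\subset G$ on a basepoint of $\jsom\cap U_m$: near an integrable $J_m\in I^{S^1}_{\om,l}$ the normal direction to $Y=\jsom\cap U_{m'}$ is a one-dimensional complex representation of $\T_m$ on which $S^1(0,1;m)$ acts by rotation (this is the content of the isotropy-representation computation, Theorem~\ref{codimension_calc} and the discussion of the character $a+bj=0$ preceding it — the relevant weight is nonzero precisely on the $y_2$ factor). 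Hence $x_u=\partial a_u=p_m(y_2)$, where $p_m$ denotes the map $G\to G/\T_m\simeq \jsom\cap U_m$.

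With $x_u=p_m(y_2)$ in hand, the formula follows directly: Corollary~\ref{cor:AlexanderEells} gives $\lambda_*(y)=y*x_u=y*p_m(y_2)$ for $y\in H_*(Y)$, where $*$ denotes the $H_*(G)$-module structure on $H_*(\jsom\cap U_m)$ coming from the action map $\mu_m$. Using that $p_m(y_2)=y_2*1$ (the basepoint acted on by $y_2$), and that the module structure factors through Pontryagin multiplication in $H_*(G)$, I would rewrite $y*p_m(y_2)=y*y_2*1=[y_2\otimes y]*1=\mu_m\bigl([y_2\otimes y]\otimes 1\bigr)=p_m\bigl([y_2\otimes y]\bigr)$, where I use Lemma~\ref{lemma:AlexanderEellsGmodule} to move the $G$-action through the Alexander--Eells isomorphism and the commutativity/graded structure of the Pontryagin ring to reorder factors. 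For the last sentence, if $\tilde y\in H_*(G)$ is any class with $p_{m'}(\tilde y)=y$, then $y*y_2*1$ becomes $p_{m'}(\tilde y)*y_2*1=\tilde y* y_2*1=p_m(\tilde y*y_2)$, again by associativity of the module structure.

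The main obstacle I anticipate is the second step — pinning down that the link class $x_u=\partial a_u$ is exactly $p_m(y_2)$ and not some other combination, e.g. $p_m(y_2)$ only up to a sign or a unit, or a class that a priori could mix in $p_m(y_1)$. Resolving this cleanly requires matching the co-orientation of $Y$ chosen in defining the Thom class $u$ with the complex orientation of the normal bundle $H^{0,1}_{J_m}(M,TM)^{S^1}$ from Lemma~\ref{trans_strata}, and then reading off from the isotropy representation (the weight $a+bj$ vanishing forces the relevant $S^1$ to be a reparametrization of $S^1(0,1;m)$, i.e. $y_2$) that the generator of $\pi_1$ of the link circle is $p_m(y_2)$. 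Since we only need the statement up to the identifications already fixed in Section~\ref{section:TwoOrbits}, a sign ambiguity is harmless, but I would be careful to state the identification of the normal weight explicitly so that the geometric description of $\lambda_*$ is unambiguous.
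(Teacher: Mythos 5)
Your overall strategy is the same as the paper's: verify the hypotheses of the appendix (contractibility of $\jsom$, the codimension-two submanifold structure of $Y=\jsom\cap U_{m'}$, the orbit descriptions of the two strata, Leray--Hirsch via homological injectivity), identify $x_u=\partial a_u$ with the link of $Y$, and then conclude by the formal $H_*(G)$-module manipulations of Lemma~\ref{lemma:AlexanderEellsGmodule}, Theorem~\ref{thm:AlexanderEells} and Corollary~\ref{cor:AlexanderEells}. That part is fine. The genuine problem is your identification of the link class. You take $y_2=S^1(0,1;m)\subset\T_m$ and claim $x_u=p_m(y_2)$, reading the weight off the isotropy representation at $J_m$ from Theorem~\ref{codimension_calc}. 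This cannot be right: the composite $\T_m\into \Symp^{S^1}_h(\SSS,\oml)\xrightarrow{p_m}\Symp^{S^1}_h(\SSS,\oml)/\T_m\simeq\jsom\cap U_m$ is constant, so $p_{m*}$ annihilates every positive-degree class coming from $H_*(\T_m)$; with your choice of $y_2$ the right-hand side of the proposed formula is identically zero, whereas $\lambda_*$ is an isomorphism and $\lambda_*(1)=x_u$ must generate $H_1(\jsom\cap U_m)\cong k$. The class $y_2$ in the statement is a generator of $H_1(\T_{m'})$ complementary to the common circle $S^1(\pm1,b')$, i.e.\ a circle in the stabilizer torus of the \emph{codimension-two} stratum, and it is precisely because $y_2$ does not come from $\T_m$ that $p_m(y_2)\neq 0$.

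The geometric justification also has to be carried out at the right basepoint. The normal bundle of $Y=\jsom\cap U_{m'}$ lives over $Y$, so the relevant slice representation is that of the stabilizer of a point of $Y$ --- up to homotopy $\T_{m'}$ (or the K\"ahler isometry group of $J_{m'}$) --- acting on $H^{0,1}_{J_{m'}}(M,TM)^{S^1}$, as in Lemma~\ref{trans_strata} applied at $J_{m'}$. Theorem~\ref{codimension_calc} applied at $J_m$ computes the invariant normal space of the stratum \emph{through} $J_m$, which in the two-strata situation is the open stratum, so that space is zero and tells you nothing about the link of $U_{m'}$. The correct picture is: at $J_{m'}$ the invariant normal line is one complex dimension, the common circle acts trivially on it (it is the $S^1$-invariant part), so the complementary circle $y_2\subset\T_{m'}$ acts with nonzero weight, and the boundary of a small transverse disk is swept out by the $y_2$-orbit of a nearby point of $\jsom\cap U_m$; this is the class $p_m(y_2)$, after which your formal manipulations go through as written. (For what it is worth, the paper's own text contains a misprint at exactly this point --- it says ``under the inclusion $i_m\colon\T_m\into\cdots$'' while having just declared $y_1,y_2$ to be generators of $H_*(\T_{m'})$ --- but the usage in Section~\ref{section:TwoOrbits}, where $p_{m'}(y_2)=0$ and $p_m(y_2)\neq0$, fixes the intended meaning, and your version is the one that breaks the statement.)
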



\bibliographystyle{amsalpha}
\bibliography{Bibliography}


\end{document}